\def\C{\mathbb C}
\def\F{\mathbb F}
\def\Q{\mathbb{Q}}
\def\Z{\mathbb{Z}}
\def\Fbar{\overline{\F}}
\def\scs{\mathrm{scs}}
\def\cs{\mathrm{cs}}
\def\Qbar{\overline{\Q}}
\def\hPhi{\hat{\Phi}}
\def\chibar{\overline{\chi}}
\def\Id{\mathrm{Id}}
\def\O{\mathcal O}
\def\J{\mathfrak J}
\def\CS{\mathcal S}
\def\CK{\mathcal K}
\def\Tbar{\overline{T}}
\def\Ubar{\overline{U}}
\def\Vbar{\overline{V}}
\def\Wbar{\overline{W}}
\def\Xbar{\overline{X}}
\def\Ybar{\overline{Y}}
\def\tV{\widetilde{V}}
\def\tA{\widetilde{A}}
\def\tk{\widetilde{k}}
\def\thetabar{\overline{\theta}}
\def\pibar{\overline{\pi}}
\def\unif{\varpi}
\def\id{\mathrm{id}}
\def\tor{\mathrm{tor}}
\def\tf{\mathrm{tf}}
\def\sm{\mathrm{sm}}
\def\ss{\mathrm{ss}}
\def\Fss{\mathrm{F-ss}}
\def\GL{\mathrm{GL}}
\def\Gal{\mathrm{Gal}}
\def\Aut{\mathrm{Aut}}
\def\Ext{\mathrm{Ext}}
\def\End{\mathrm{End}}
\def\Rep{\mathrm{Rep}}
\def\Hom{\mathop{\mathrm{Hom}}\nolimits}
\def\Spec{\mathop{\mathrm{Spec}}\nolimits}
\def\Ind{\mathop{\mathrm{Ind}}\nolimits}
\def\Res{\mathop{\mathrm{Res}}\nolimits}
\def\cInd{\mathop{\mathrm{c-Ind}}\nolimits}
\def\soc{\mathop{\mathrm{soc}}\nolimits}
\def\cosoc{\mathop{\mathrm{cosoc}}\nolimits}
\def\env{\mathop{\mathrm{env}}\nolimits}
\def\rhobar{\overline{\rho}}
\def\abs{\mid~\;~\mid}
\def\absbar{\overline{\mid~\;~\mid}}
\def\St{\mathrm{St}}
\def\Sp{\mathrm{Sp}}
\def\kernel{\mathop{\mathrm{ker}}\nolimits}
\def\plim#1{\displaystyle \lim_{\longleftarrow \atop #1}}
\def\iso{\buildrel \sim \over \longrightarrow}
\def\Nbar{\overline{{N}}}
\def\LL{\pi}
\def\LLbar{\overline{\pi}}
\def\LLcheck{\widetilde{\pi}}
\def\LLbarcheck{\widetilde{\overline{\pi}}}
\newtheorem{theorem}[subsubsection]{Theorem}
\newtheorem{lemma}[subsubsection]{Lemma}
\newtheorem{df}[subsubsection]{Definition}
\newtheorem{cor}[subsubsection]{Corollary}
\newtheorem{conj}[subsubsection]{Conjecture}
\newtheorem{prop}[subsubsection]{Proposition}
\newtheorem{remark}[subsubsection]{Remark}
\newtheorem{example}[subsubsection]{Example}
\newtheorem{condition}[subsubsection]{Condition}
\numberwithin{equation}{section}
\title[Local Langlands in families]
{The local Langlands correspondence
for $\GL_n$ in families}
\author{Matthew Emerton and David Helm}
\thanks{The first author was supported in part by NSF grants DMS-0401545,
DMS-0701315, and DMS-1002339}
\address[Matthew Emerton]{Mathematics Department, Northwestern
University, 2033 Sheridan Rd., Evanston, IL 60208}
\address[David Helm]{Mathematics Department, University of Texas at Austin,
1 University Station C1200, Austin, TX, 78712}
\email[Matthew Emerton]{emerton@math.northwestern.edu}
\email[David Helm]{dhelm@math.utexas.edu}
\begin{document}

Draft: April 1, 2011

\maketitle

\setcounter{tocdepth}{1}
\tableofcontents

\section{Introduction}
The goal of this paper 
is to extend, for any non-archimedean local field $E$ of residue
characteristic $\ell$, the local Langlands correspondence
between $n$-dimensional representations of the local Galois group
$G_E$ and admissible smooth representations of
$\GL_n(E)$ to a correspondence defined on
$p$-adic families of $G_E$-representations (for primes $p$ distinct from $\ell$). 

\subsection{The local Langlands correspondence for $\GL_n$ in $p$-adic families}
Let $p$ and $\ell$ be distinct primes,
and let $E$ be a local field of residue characteristic $\ell$.
If $A$ is a complete local domain of characteristic zero and residue
characteristic $p$,
with field of fractions $\mathcal K$,
then the classical local Langlands correspondence establishes a map
$\rho \mapsto \LL(\rho)$
from the set of isomorphism classes of continuous representations
$\rho: G_E \rightarrow \GL_n(\mathcal K)$
to the set of admissible smooth representations of $\GL_n(E)$
over~$\mathcal K$.\footnote{The local Langlands correspondence is
usually stated in the situation when $\mathcal K$ is a finite
extension of $\Q_p$.  However, it is straightforward to extend
it to the more general context considered above.  Also, let us note
that we work with a suitably normalized
``generic'' version of the local Langlands correspondence, which
to a non-generic Weil--Deligne representation attaches a generic
but reducible principal series representation.  See
Section~\ref{sec:LL zero}
for details on both points.}

In Section~\ref{sec:LL families} we describe the extension of
the local Langlands correspondence to $p$-adic families.
Before stating our result, we introduce further notation:
given $\rho: G_E \rightarrow \GL_n(\mathcal K)$
as above,
we write $\LLcheck(\rho)$ to denote
the smooth contragredient of~$\LL(\rho)$.
%%Since $\LL(\rho)$ and $\LLbar(\rhobar)$ are generic,
%%the representations $\LLcheck(\rho)$ and $\LLbarcheck(\rhobar)$
%%admits no finite dimensional quotients.

\begin{theorem}
\label{thm:intro families}
Let $A$ be a reduced complete Noetherian local ring with maximal
ideal $\mathfrak m$ and finite residue field $k$ of characteristic $p$.
Suppose furthermore that each minimal prime of $A$ has residue characteristic
$0$ {\em (}or equivalently,
%\footnote{Since $A$ is reduced,
%so that every associated prime of $A$ is minimal},
that $A$ is $p$-torsion free{\em )}.  If
$\rho: G_E \rightarrow \GL_n(A)$ is continuous 
{\em (}when the target is
given its $\mathfrak m$-adic topology{\em )}, then there exists at
most one admissible smooth $\GL_n(E)$-representation $V$ over $A$,
up to isomorphism,
satisfying the following conditions:
\begin{enumerate}
\item $V$ is $A$-torsion free.
\item If $\mathfrak a$ is a minimal prime of $A$, with residue field
$\kappa(\mathfrak a)$,
then there is a $\kappa(\mathfrak a)$-linear $\GL_n(E)$-equivariant
isomorphism
$$ \LLcheck\bigl(\kappa(\mathfrak a)\otimes_A \rho\bigr) \iso
\kappa(\mathfrak a)\otimes_A V.$$ 
\item If we write
$\Vbar := k\otimes_A V$,
then the $\GL_n(E)$ cosocle $\cosoc(\Vbar)$ of $\Vbar$ is absolutely
irreducible and generic,
while the kernel of the surjection $\Vbar \rightarrow \cosoc(\Vbar)$
contains no generic Jordan--H\"older factors.

\smallskip

\noindent
Furthermore, if such a $V$ exists,
then:

\smallskip

%\item
%For any prime ideal $\mathfrak p$ of $A$
%whose residue field $\kappa(\mathfrak p)$ has 
%characteristic zero,
%there is a $\kappa(\mathfrak p)$-linear, $\GL_n(E)$-equivariant,
%non-zero surjection
%$$\LLcheck\bigl(\kappa(\mathfrak p)\otimes_A \rho\bigr) \rightarrow 
%\kappa(\mathfrak p)\otimes_A V.$$
\item
There exists an open dense subset $U$ of $\Spec A[\dfrac{1}{p}]$
such that for each prime $\mathfrak p$ in $U$, there is
a $\GL_n(E)$-equivariant, nonzero surjection
$$\LLcheck\bigl(\kappa(\mathfrak p)\otimes_A \rho\bigr) \rightarrow 
\kappa(\mathfrak p)\otimes_A V,$$
where $\kappa(\mathfrak p)$ is the residue field of $\mathfrak p$.
\end{enumerate}
\end{theorem}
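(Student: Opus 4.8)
The plan is to prove uniqueness first, then the density statement in (4), treating these as essentially independent arguments that share the input of the classical local Langlands correspondence applied fibrewise. For uniqueness, suppose $V$ and $V'$ both satisfy (1)--(3). The key idea is to compare them via their behavior at minimal primes and their common mod-$p$ reduction. Since $A$ is reduced and $p$-torsion free, the natural map $A \hookrightarrow \prod_{\mathfrak a} \kappa(\mathfrak a)$ over minimal primes $\mathfrak a$ is injective, and by torsion-freeness $V$ and $V'$ both embed into the corresponding products of their generic fibres $\kappa(\mathfrak a) \otimes_A V \cong \LLcheck(\kappa(\mathfrak a) \otimes_A \rho) \cong \kappa(\mathfrak a) \otimes_A V'$. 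So after inverting $p$ we get a canonical isomorphism $\mathcal K \otimes_A V \cong \mathcal K \otimes_A V'$ (taking $\mathcal K$ to be the total ring of fractions), well-defined up to scaling on each component; I would first pin down the scalars. The heart of the matter is then to show that this generic isomorphism is integral, i.e. carries the $A$-lattice $V$ isomorphically onto $V'$. This is where condition (3) enters: the mod-$p$ reduction $\Vbar$ has absolutely irreducible generic cosocle with no other generic Jordan--H\"older factors in the kernel, which rigidifies the lattice. Concretely, I expect one argues that any $A$-linear $\GL_n(E)$-map $V \to V'$ lifting the identity on generic fibres is either an isomorphism or has image in $\mathfrak m V'$; the cosocle condition forces the reduction $\Vbar \to \Vbar'$ to be nonzero (it hits the cosocle), and the genericity constraints on the kernel rule out the map degenerating, so it is an isomorphism by Nakayama. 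Symmetry then gives the result.

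The main obstacle, as I see it, is precisely this last rigidification step: showing that condition (3) is strong enough to force integrality of the comparison isomorphism. The delicate point is that a priori the two lattices could differ; one needs that the generic fibre of $V$, as a $\GL_n(E)$-representation, admits a \emph{unique} $\GL_n(E)$-stable $A$-lattice (up to homothety and up to the indeterminacy already accounted for) with the prescribed cosocle structure on its reduction. I would prove this by a Jordan--H\"older / multiplicity-one argument: the generic Jordan--H\"older factor of $\Vbar$ appears with multiplicity one (forced by (3)), and genericity of a Jordan--H\"older factor is detected by a Whittaker functional, which behaves well under reduction mod $p$; so the lattice is determined by requiring that the Whittaker functional on the generic fibre reduces to a nonzero Whittaker functional on $\Vbar$, normalized appropriately. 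The co-Whittaker / "essentially AIG" formalism (absolutely irreducible generic cosocle, no generic factors in the kernel) is designed exactly to make this work, and I would invoke whatever lemmas on such representations have been set up earlier.

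For part (4), assume $V$ exists. We know from (2) that at each minimal prime the fibre of $V$ is \emph{isomorphic} to $\LLcheck(\kappa(\mathfrak a)\otimes_A\rho)$; I want to upgrade this to a nonzero surjection at all primes $\mathfrak p$ in a dense open subset of $\Spec A[\tfrac1p]$. The strategy is to spread out: the condition that defines the image of local Langlands — that $\kappa(\mathfrak p)\otimes_A V$ be a quotient of $\LLcheck(\kappa(\mathfrak p)\otimes_A \rho)$ — is, after choosing appropriate finite-type models, an open condition on $\Spec A[\tfrac1p]$, because the relevant $\Hom$ or cosocle data is constructible and its non-vanishing is open. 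Since by (2) this condition holds at the generic points of $\Spec A[\tfrac1p]$ (the minimal primes, where it holds as an isomorphism hence a fortiori as a surjection), it holds on a dense open subset $U$. The one subtlety is that $\LLcheck$ is defined via the Weil--Deligne representation attached to $\rho$, which involves the monodromy operator $N$, and $N$ can jump; but the jumping locus is closed, and on its complement the construction is well-behaved, and one checks that the "generic" normalization of local Langlands chosen in the paper makes the target depend on $\rho$ in a way that surjects onto $V$ over a dense open — here one may need to intersect $U$ with the locus where $V$ itself is "as expected", using $A$-torsion-freeness of $V$ to control the fibres $\kappa(\mathfrak p)\otimes_A V$ generically (flatness over a dense open). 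I expect part (4) to be considerably easier than uniqueness, essentially an exercise in constructibility once the fibrewise statement at minimal primes is in hand.
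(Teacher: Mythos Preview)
Your intuition about Whittaker functionals is correct, but the execution in the first paragraph has a gap, and the paper's argument is sharper than what you outline.

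For uniqueness: your Nakayama argument presupposes an $A$-linear map $V \to V'$ lifting the generic isomorphism, but no such map is given to you --- the generic isomorphism only gives $V \to V' \otimes_A \mathcal K(A)$, and integrality is exactly what is at stake. The paper avoids this circularity entirely. The key invariant is the top derivative $V^{(n)}$ (this is your ``Whittaker functional'' made precise): condition~(3) forces $\overline V^{(n)}$ to be one-dimensional over $k$, and condition~(2) forces $(V_{\mathfrak a})^{(n)} \neq 0$ at each minimal prime; together with Nakayama this shows $V^{(n)}$ is \emph{free of rank one} over $A$. Now the space of Schwartz functions $\mathcal J(V) = (\Phi^+)^{n-1}\Psi^+ V^{(n)}$ depends only on $V^{(n)}$, and condition~(3) (via Lemma~\ref{lem:Kirillov generator}) implies $\mathcal J(V)$ generates $V$ as an $A[G]$-module. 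So one embeds $V_1, V_2$ into $\prod_{\mathfrak a} \kappa(\mathfrak a) \otimes V_i$, finds $c \in \mathcal K(A)^\times$ with $c V_2^{(n)} = V_1^{(n)}$ (both being free rank-one $A$-submodules of a rank-one $\mathcal K(A)$-module), whence $c\mathcal J(V_2) = \mathcal J(V_1)$, whence $cV_2 = V_1$. No map $V \to V'$ is ever constructed abstractly; instead both are shown to coincide as submodules after scaling. Your second paragraph gestures at this (``the lattice is determined by the Whittaker functional''), but the concrete mechanism --- free rank one, $\mathcal J$ generates --- is what makes it work.

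For part~(4): your constructibility sketch is plausible in outline but underspecified, and the paper does something more hands-on. One cannot directly argue that $\Hom(\widetilde\pi(\rho_{\mathfrak p}), \kappa(\mathfrak p)\otimes V)$ is constructible, because $\widetilde\pi(\rho_{\mathfrak p})$ is not the fibre of any family of $A$-modules (that is the whole difficulty). Instead, after passing to an integral extension $A'$ where the Weil--Deligne representation decomposes into indecomposables $\mathrm{Sp}_{\rho_i,n_i}$ twisted by characters with values in $(A')^\times$, the paper writes down an explicit $A'$-module $M$ as the smooth dual of a parabolic induction of generalized Steinbergs over $A'$. One checks $M^{(n)}$ is free of rank one (Corollary~\ref{cor:leibnitz}) and $M$ is generated by $\mathcal J(M)$, so by the same Kirillov-model comparison (Lemma~\ref{lem:dense}) $M \cong V \otimes_A A'$. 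Then the specialization result for the Breuil--Schneider correspondence (Theorem~\ref{thm:specialization}) gives, at each prime $\mathfrak p'$, a surjection $\widetilde\pi(\rho \otimes \kappa(\mathfrak p')) \to M \otimes \kappa(\mathfrak p')$, which is an isomorphism precisely on the open locus where the monodromy does not jump (``minimal lift''). This descends to the desired surjection over $A$. So your instinct that the open set is where $N$ does not jump is right, but the surjection itself comes from an explicit model, not from an abstract semicontinuity argument.
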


If a representation $V$ satisfying the conditions of this theorem
with respect to a given Galois representation $\rho:G_E \rightarrow \GL_n(A)$
exists, then we write $V := \LLcheck(\rho)$.  (Note that the
theorem ensures that $V$ is unique up to isomorphism, so that $\LLcheck(\rho)$
is then uniquely determined by $\rho$, up to isomorphism, if it exists.)
Part~(4) of the theorem describes the 
precise sense in which $V$ interpolates
the local Langlands correspondences attached to the Galois representations
$\kappa(\mathfrak p)\otimes_A \, \rho$ as $\mathfrak p$ ranges over the 
points of $\Spec A[\dfrac{1}{p}]$.  Conjecturally, we can take
$U$ equal to all of $\Spec A[\dfrac{1}{p}]$ in this statement, although
our results fall short of establishing this.  (We refer the reader
to Theorems~\ref{thm:interpolation1} and~\ref{thm:interpolation2} for
the precise results.)  On the other hand,
we will give examples in Section~\ref{sec:LL families} showing
that it is not possible in general
to strengthen ``surjection'' to ``isomorphism'' in this statement.

\begin{remark}
\label{rem:duals}
{\em Our convention for the generic local Langlands correspondence
is that the $GL_n(\mathcal K)$-representation $\pi(\rho)$ attached to a
continuous Galois representation $\rho:G_E \rightarrow \GL_n(\mathcal K)$
should have generic socle.  It is this convention that seems to 
fit best with global applications of the type considered in~\cite{Em8}
and~\cite{Em9},
for example.
On the other hand, when working with families,
it turns out to be easier to interpolate representations whose cosocle
is generic.  This explains the appearance of the various contragredient
representations in Theorem~\ref{thm:intro families}, and in our notation
for the representations that it describes.
}
\end{remark}

Just as in the traditional setting, it seems to be easier to characterize the
local Langlands correspondence in families than to prove its existence. 
However, we make the following conjecture.

\begin{conj}
\label{conj:LL}
If $A$ is a reduced $p$-torsion free complete Noetherian local ring with maximal
ideal $\mathfrak m$ and finite residue field $k$ of characteristic $p$,
and if
$\rho: G_E \rightarrow \GL_n(A)$ is continuous,
then $\LLcheck(\rho)$ exists.
\end{conj}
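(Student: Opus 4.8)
\medskip
\noindent\textbf{A strategy towards Conjecture~\ref{conj:LL}.}
The plan is to realise $\LLcheck(\rho)$ by base change from a single universal object built out of the Gelfand--Graev representation. Fix a nondegenerate character $\psi$ of the unipotent radical $N$ of a Borel subgroup of $\GL_n(E)$ and set $\Gamma := \cInd_N^{\GL_n(E)}\psi$, an $\O[\GL_n(E)]$-module, with $\O = W(k)$ (enlarged if necessary). Call a finitely generated smooth $\Lambda[\GL_n(E)]$-module $V$, for $\Lambda$ a Noetherian $\O$-algebra, \emph{co-Whittaker} if it is admissible, if $\Lambda$ acts through $\End_{\GL_n(E)}(V)$, and if $V$ is a $\GL_n(E)$-equivariant quotient of $\Lambda\otimes_\O\Gamma$ whose space of Whittaker functionals remains one-dimensional after every base change $\Lambda\to\Lambda'$. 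The first step is to record the formal properties of this notion: (i) it is preserved under arbitrary base change; (ii) over a field $\mathcal K$ the co-Whittaker $\mathcal K[\GL_n(E)]$-modules are precisely the representations $\LLcheck(r)$ for continuous $r\colon G_E\to\GL_n(\mathcal K)$, the supercuspidal support of $V$ being read off from $\WD(r)^{\ss}$; and (iii) a co-Whittaker module over a reduced, $p$-torsion free Noetherian $\O$-algebra $\Lambda$ is automatically $\Lambda$-torsion free (it embeds, through its Whittaker functional, into the product of its base changes to the minimal primes of $\Lambda$), and, for any $\Lambda$-valued Galois representation matching it in the sense of (ii) at every minimal prime, it satisfies conditions (1)--(3) of Theorem~\ref{thm:intro families}. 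Given (ii), which is just a restatement of the generic correspondence recalled in the introduction, these are essentially bookkeeping.

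Next I would introduce the universal co-Whittaker module. Let $\mathfrak Z$ be the centre of the category of smooth $\O[\GL_n(E)]$-modules; it is a product $\prod_e e\mathfrak Z$ over the blocks $e$, and the key structural input is that each $e\mathfrak Z$ is a reduced, $\O$-flat, Noetherian $\O$-algebra whose action on $e\Gamma$ makes the latter a co-Whittaker $e\mathfrak Z$-module that is \emph{universal}: every co-Whittaker module over $\Lambda$ lying in the block $e$ is canonically a quotient of $\Lambda\otimes_{e\mathfrak Z}e\Gamma$ along the structure map $e\mathfrak Z\to\Lambda$ it induces. On the Galois side, the residual representation $\rhobar = k\otimes_A\rho$ singles out a block $e$, and $\rho$ determines a canonical $\O$-algebra homomorphism $\mathfrak R_e\to A$, where $\mathfrak R_e$ is the reduced, $\O$-flat, Noetherian $\O$-algebra governing framed deformations of the Weil--Deligne representations whose residual supercuspidal support matches $e$. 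Applying the generic local Langlands correspondence fibrewise produces an $\O$-algebra homomorphism $\mathfrak R_e\to e\mathfrak Z$ that over every $\mathcal K$-point sends a Weil--Deligne parameter to the supercuspidal support of the corresponding $\LLcheck$.

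The crux — and the step I expect to be the main obstacle — is to prove that this homomorphism $\mathfrak R_e\to e\mathfrak Z$ is an \emph{isomorphism}. Both sides being reduced and $\O$-flat, it suffices to show that the map is an isomorphism after inverting $p$; there both rings should decompose, by classical theory, into explicitly understood pieces (the Bernstein side into rings attached to the components of the Bernstein variety, the Galois side into $p$-inverted Weil--Deligne deformation rings), which the classical correspondence matches up. Actually carrying this out, however, demands a detailed analysis of the \emph{integral} Bernstein centre — its reducedness, $\O$-flatness, and the precise generic fibre of each $e\mathfrak Z$ — together with a parallel structural description of $\mathfrak R_e$; this input is exactly what is missing at the time of writing, and is the reason the statement is recorded as a conjecture. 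Granting the isomorphism, one finishes at once: for $\rho\colon G_E\to\GL_n(A)$ with $A$ reduced, $p$-torsion free, complete Noetherian local over $\O$ with residue field $k$, compose $\mathfrak R_e\to A$ with the inverse of $\mathfrak R_e\to e\mathfrak Z$ to obtain $e\mathfrak Z\to A$, and set $V:=A\otimes_{e\mathfrak Z}e\Gamma$. By (i) this $V$ is co-Whittaker over $A$, hence $A$-torsion free and satisfying conditions (1) and (3) by (iii); and since the map $e\mathfrak Z\to A$ was manufactured from $\rho$ through the fibrewise correspondence, the specialisation of $V$ at each minimal prime $\mathfrak a$ of $A$ is $\LLcheck(\kappa(\mathfrak a)\otimes_A\rho)$, giving (2). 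The uniqueness assertion of Theorem~\ref{thm:intro families} then identifies $V$ with $\LLcheck(\rho)$, so that $\LLcheck(\rho)$ exists.
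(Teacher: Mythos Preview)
The paper does not prove this statement; it is recorded explicitly as a conjecture. The only cases the paper establishes are (a) $A$ the ring of integers in a finite extension of $\Q_p$, which follows from Proposition~\ref{prop:lattice}, and (b) $n=2$ with $p$ odd, which is deferred to the second author's separate work~\cite{He}. So there is no proof in the paper against which to compare your argument.

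Your text is honest about this: you present a \emph{strategy}, not a proof, and you correctly flag the central gap --- the structural results on the integral Bernstein centre (reducedness, $\O$-flatness, Noetherianity of each block $e\mathfrak Z$, and the identification of its generic fibre) together with the matching statement $\mathfrak R_e \iso e\mathfrak Z$. That is indeed the heart of the matter, and your outline is coherent: the formal properties (i)--(iii) you list for co-Whittaker modules line up with what the paper proves about the conditions of Theorem~\ref{thm:intro families} (torsion-freeness via the embedding into the product over minimal primes is Lemma~\ref{lem:tf}; the cosocle condition is Lemma~\ref{lem:Vbar Kirillov}), and the passage from a universal object over $e\mathfrak Z$ to $A$ via base change is exactly the kind of construction Proposition~\ref{prop:tensor} is designed to accommodate.

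Two cautions. First, your claim (ii) --- that over a field the co-Whittaker modules are \emph{precisely} the representations $\LLcheck(r)$ --- overstates what is available in the paper: the paper shows $\LLcheck(r)$ has the dual-essentially-AIG property, but does not characterise all such modules as arising this way, and in characteristic $p$ one would need the results of Section~\ref{sec:LL p} together with additional input. Second, even granting the isomorphism $\mathfrak R_e \iso e\mathfrak Z$, you need the tensor product $A\otimes_{e\mathfrak Z} e\Gamma$ to be admissible and co-Whittaker over $A$, not merely smooth; this requires finiteness of $e\mathfrak Z \to A$ or a direct admissibility argument for $e\Gamma$ over $e\mathfrak Z$, which you have folded into the phrase ``co-Whittaker $e\mathfrak Z$-module'' without justification. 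These are not fatal to the strategy, but they are additional nontrivial ingredients beyond the isomorphism you highlight.
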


One instance in which we can verify the conjecture is the case when
$A$ is the ring of integers in a finite extension of $\Q_p$. 
(In this case, it is a consequence of Proposition~\ref{prop:lattice}.)
Perhaps more significantly, in the case when $n = 2$ and $p$ is odd,
the conjecture has been established by the second author \cite{He}.

\begin{remark}
{\em
In the theory of $p$-adic automorphic forms one naturally encounters admissible
smooth representations of $p$-adic groups defined over complete local rings $A$
of the type considered in the theorem (the ring $A$ typically being
taken to be a certain $p$-adically completed Hecke algebra), and a primary
motivation for our extension of the local Langlands correspondence to this setting
is to introduce a language and techniques which are adequate for describing and investigating
the representations that arise in such global contexts.  For example, the results of this paper
in the case of the group $\GL_2$ play a crucial role in both the statement and
proof of the main result of \cite{Em8} regarding local-global compatibility in
the completed cohomology of modular curves, and also in the forthcoming work
\cite{Em9}.
}
\end{remark}

\medskip

\subsection{A mod $p$ local Langlands correspondence for $\GL_n$} 
In Section~\ref{sec:LL p} we define a mod $p$ local Langlands correspondence,
whose key properties are summarized in the following theorem.

\begin{theorem}
\label{thm:intro mod p}
There is a map
$\rhobar \mapsto \LLbar(\rhobar)$
from the set of isomorphism classes
of continuous representations $G_E \rightarrow \GL_n(k)$
{\em (}where $k$ is a finite field of characteristic~$p${\em )}
to the set of isomorphism classes of finite length admissible smooth
$\GL_n(E)$-representations
on $k$-vector spaces, uniquely determined by the following
conditions:

\begin{enumerate}
\item For any $\rhobar$, the $G$-socle
$\soc\bigl(\LLbar(\rhobar)\bigr)$
of the associated $\GL_n(E)$-representation
$\LLbar(\rhobar)$ is absolutely irreducible and generic,
and the quotient $\LLbar(\rhobar)/\soc\bigl(\LLbar(\rhobar)\bigr)$ 
contains no generic Jordan--H\"older factors.

\item 
Given $\rhobar: G_E \rightarrow \GL_n(k)$,
together with a deformation
$\rho: G_E \rightarrow \GL_n(\O)$ of~$\rhobar,$
where $\O$ is a characteristic zero discrete valuation ring with 
uniformizer $\unif$ and residue field $k'$ containing $k$,
there is $\GL_n(E)$-equivariant surjection
$\LLbarcheck(\rhobar) \otimes_k k' \rightarrow \LLcheck(\rho)/\unif \LLcheck(\rho).$
{\em (}Note that $\LLcheck(\rho)$ must exist as $\O$ is a finite extension of $\Q_p$.{\em )}

\item  The representation $\LLbar(\rhobar)$ is minimal with respect
to satisfying conditions~{\em (1)} and {\em (2)}, i.e.\ 
given any continuous representation
$\rhobar: G_E \rightarrow \GL_n(k)$ 
and any representation $\pibar$ of $\GL_n(E)$ satisfying these two
conditions with respect to~$\rhobar$, there is a $\GL_n(E)$-equivariant embedding
$\LL(\rhobar) \hookrightarrow \pibar$.
\end{enumerate}
\end{theorem}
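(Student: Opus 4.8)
The plan is to dispose of uniqueness first, which is formal, and then to construct $\LLbar(\rhobar)$ by amalgamating the mod $p$ reductions of the representations $\LLcheck(\rho)$ attached to the lifts $\rho$ of $\rhobar$ to characteristic zero. For uniqueness: if $\pibar_1$ and $\pibar_2$ are finite length admissible smooth $\GL_n(E)$-representations on $k$-vector spaces, each satisfying (1)--(3), then applying the minimality condition (3) to $\pibar_1$ (using that $\pibar_2$ satisfies (1) and (2)) gives an embedding $\pibar_1 \hookrightarrow \pibar_2$, and symmetrically $\pibar_2 \hookrightarrow \pibar_1$; since both have finite length, the composite $\pibar_1 \hookrightarrow \pibar_2 \hookrightarrow \pibar_1$ is an isomorphism, hence so is each embedding. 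It therefore suffices to exhibit one representation with properties (1)--(3).

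Passing to smooth contragredients --- which is exact on finite length representations, exchanges socles with cosocles and subobjects with quotients, and preserves genericity of irreducible constituents --- it is equivalent to construct $\LLbarcheck(\rhobar)$. I will call a finite length smooth representation $\tau$ over a field \emph{essentially AIG} if $\cosoc(\tau)$ is absolutely irreducible and generic and the kernel of $\tau \twoheadrightarrow \cosoc(\tau)$ has no generic Jordan--H\"older factors; so I must produce an essentially AIG representation $\LLbarcheck(\rhobar)$ over $k$ which, after any residue field extension $k'$, surjects onto $\LLcheck(\rho)/\unif\LLcheck(\rho)$ for every deformation $\rho\colon G_E \to \GL_n(\O)$ of $\rhobar$ to a characteristic zero discrete valuation ring $\O$ with residue field $k'$, and which is minimal for quotients among such. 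First, $\rhobar$ admits at least one lift $\rho_0$ to such an $\O$ --- lifts of local Galois representations away from the residue characteristic to characteristic zero always exist, after possibly enlarging the coefficient field --- so this class of representations is nonempty; and by Proposition~\ref{prop:lattice} each $\LLcheck(\rho)$ exists and its reduction $\LLcheck(\rho)/\unif\LLcheck(\rho)$ is essentially AIG (this being condition (3) of Theorem~\ref{thm:intro families} with $A = \O$).

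Now extend scalars to the algebraic closure $\kbar$ of $k$. The construction will use two inputs from the mod $p$ representation theory of $\GL_n(E)$ and the Brauer--Nesbitt-type compatibility of the classical correspondence with Jordan--H\"older constituents: (i) only finitely many irreducible $\GL_n(E)$-representations over $\kbar$ occur as a Jordan--H\"older factor of some $\LLcheck(\rho)/\unif\LLcheck(\rho) \otimes_{k'} \kbar$, the relevant ``mod $p$ parameters'' being controlled by $\rhobar$; and (ii) $\cosoc\bigl(\LLcheck(\rho)/\unif\LLcheck(\rho)\bigr)\otimes_{k'}\kbar$ is one and the same absolutely irreducible generic representation $\overline\sigma = \overline\sigma(\rhobar)$ for every lift $\rho$ --- the assertion that the generic constituent of the reduction depends only on $\rhobar$. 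Granting these, the set of isomorphism classes $\{\LLcheck(\rho)/\unif\LLcheck(\rho) \otimes_{k'} \kbar\}$ is finite and all its members are essentially AIG with the fixed cosocle $\overline\sigma$; one then forms their \emph{amalgam} $\overline\tau$, the minimal essentially AIG representation over $\kbar$ with cosocle $\overline\sigma$ admitting a surjection onto each of them. Finally $\overline\tau$ is determined by a property stable under $\Gal(\kbar/k)$, hence carries a semilinear Galois action and descends, by Galois descent over the finite field $k$, to an admissible smooth $\GL_n(E)$-representation over $k$; I define $\LLbarcheck(\rhobar)$ to be this descent and $\LLbar(\rhobar)$ its contragredient.

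It then remains to verify (1)--(3), which is formal. Properties (1) and (2) follow from $\overline\tau$ being essentially AIG with cosocle $\overline\sigma$ and surjecting onto every $\LLcheck(\rho)/\unif\LLcheck(\rho)\otimes\kbar$ by construction, these properties being insensitive to the descent to $k$. For (3): if $\pibar$ satisfies (1) and (2), then $\pibar^\vee$ is essentially AIG and, after residue extension, surjects onto every $\LLcheck(\rho)/\unif\LLcheck(\rho)$; by the minimality built into $\overline\tau$ there is a surjection $\pibar^\vee \otimes \kbar \twoheadrightarrow \overline\tau$, and dualizing and descending gives the embedding $\LLbar(\rhobar) \hookrightarrow \pibar$. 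The main obstacle is the amalgamation step together with its inputs (i) and (ii): establishing that a well-defined \emph{minimal} essentially AIG representation dominating all the reductions $\LLcheck(\rho)/\unif\LLcheck(\rho)$ exists and has finite length. This requires the fine structure of the mod $p$ category of smooth $\GL_n(E)$-representations --- control of extensions between generic and non-generic irreducibles, and the existence of injective hulls within the category of essentially AIG representations with fixed cosocle --- and is the genuine heart of the matter; the lifting of Galois representations, the Brauer--Nesbitt finiteness, the Galois descent, and the formal uniqueness argument are by comparison routine.
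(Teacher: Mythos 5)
Your architecture is the same as the paper's: dualize so that one works with representations having absolutely irreducible generic cosocle (``essentially AIG'' duals), use the Vigneras semisimple correspondence to see that the generic Jordan--H\"older constituent $\overline\sigma$ of $\LLcheck(\rho)/\unif\LLcheck(\rho)$ depends only on $\rhobar$, amalgamate all these reductions into a single minimal object, descend from $\kbar$ to $k$ by Galois descent, and then verify (1)--(3) formally (your uniqueness argument is exactly the one in the paper). However, you have explicitly deferred rather than proved the central step: the existence and finite length of the amalgam $\overline\tau$. This is not a routine closure of the argument --- it is where essentially all of the paper's Section 3 machinery is consumed, and without it the construction does not exist. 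Concretely, two things are needed. First, a receptacle: the paper proves (Proposition~\ref{prop:envelope}) that any essentially AIG representation admits an essentially AIG \emph{envelope}, built from an injective envelope in the smooth category by cutting down to the part whose quotient by the socle has vanishing top derivative and then taking the union of finite length submodules; each reduction $L/\unif L \otimes \kbar$ then embeds into $\env(\overline\sigma^\vee)$ uniquely up to scalar (Lemma~\ref{lem:AIG homs}), and the amalgam is simply the sum of these images. Second, finiteness: your input (i) (finitely many isomorphism classes of Jordan--H\"older factors) does \emph{not} by itself bound the length of this sum, and your intermediate claim that the set of isomorphism classes of the reductions themselves is finite does not follow from (i) and (ii). What the paper actually uses is a \emph{uniform} bound, depending only on $n$, on the length of every reduction $L/\unif L$ (Proposition~\ref{prop:length bound}, via the Leibniz rule for derivatives of parabolic inductions), combined with the fact that in an essentially AIG representation the sum of all submodules of length $\le c$ has finite length (Corollary~\ref{cor:AIG bounded}). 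The latter rests on Theorem~\ref{thm:socn} ($\soc_c$ has finite length), which in turn rests on the $\Ext$-vanishing between irreducibles with distinct supercuspidal support (Theorem~\ref{thm:ext}) and the resulting constancy of supercuspidal support across an essentially AIG representation.

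So the gap is real and is exactly where you locate it: you would need to prove the existence of essentially AIG envelopes, the uniform length bound on the reductions, and the $\soc_c$-finiteness of essentially AIG representations before the amalgam is a well-defined finite length object. Once those are in place, the remainder of your outline (Galois descent of the sum, verification of (1)--(3), compatibility of minimality with dualizing) matches the paper's proof of Theorem~\ref{thm:mod p LL}.
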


Recall that Vigneras has already defined a mod $p$ local Langlands
correspondence for $\GL_n$ in \cite{Vig4}.
The key differences between the correspondence of our theorem
and correspondence of \cite{Vig4} are:
\begin{enumerate}
\item[(a)] The input is a Galois representation (not a Weil--Deligne
representation).
\item[(b)] The output is an admissible smooth $\GL_n(E)$-representation
that is possibly reducible, but always generic.
\item[(c)] The correspondence is compatible with reduction modulo $p$
in the direct sense given by parts~(2) and~(3). 
(The Zelevinski involution does not intervene.)
\end{enumerate}
This being said, we rely on the results of \cite{Vig4} 
for the construction of our correspondence.
The key point, whose proof relies on \cite{Vig4}, is that
for any deformation $\rho$ of~$\rhobar$, the representation
$\LLcheck(\rho)$ reduces modulo $\mathfrak m$
to a representation whose cosocle is
absolutely irreducible and generic, and is independent,
up to isomorphism, of the choice of $\rho$. 
(See the discussion following Corollary~\ref{cor:independent generic constituent} below.)

\begin{remark}
\label{rem:lifts}
{\em  One can consider the following stronger form
of condition~(2) of Theorem~\ref{thm:intro mod p}:}

\begin{enumerate}
\item[($2'$)] 
Given $\rhobar: G_E \rightarrow \GL_n(k)$,
together with a deformation~$\rho: G_E \rightarrow \GL_n(A)$ of~$\rhobar,$
where $A$ is a reduced complete Noetherian local $W(k)$-algebra,
flat over~$W(k)$,
with maximal ideal $\mathfrak m$ and residue field $k$,
and $\LLcheck(\rho)$ exists,
there is a $\GL_n(E)$-equivariant surjection
$\LLbarcheck(\rhobar) \rightarrow \LLcheck(\rho)/\unif \LLcheck(\rho).$
\end{enumerate}

{\em
In some circumstances, we are able to verify that
$\pibar(\rhobar)$ is in fact minimal with respect to conditions~(1) and ($2'$).
(In other words, $\pibar(\rhobar)$ contains as a submodule the dual of
any module that arises by specializing a family $\LLcheck(\rho)$ attached to 
a deformation of $\rhobar$.)  This is essentially a characteristic $p$ analog
of Theorems~\ref{thm:interpolation1} and~\ref{thm:interpolation2}.
We conjecture that this stronger minimality property holds in general.
}
\end{remark}

\begin{remark}
\label{rem:non-surj}
{\em In general, $\LLbar(\rhobar)$ is not irreducible,
and if this is the case, then it is not possible to
strengthen ``surjection'' to ``isomorphism'' in the 
statement of part~(2) of Theorem~\ref{thm:intro mod p}.}
\end{remark}

\subsection{The organization of the paper}

We begin by establishing some basic facts
about admissible smooth representations
of certain topological groups over a Noetherian local ring $A$;
we apply this machinery in section~\ref{subsec:lattices} to the study
of invariant lattices in representations of topological groups
over the field of fractions of a complete discrete valuation ring.
The key result we establish is Lemma~\ref{lem:lattice}, which in certain
circumstances allows us to construct an invariant lattice in such
a representation whose reduction has a prescribed socle.

Section~\ref{sec:gln} establishes results specific to the representation
theory of $\GL_n(E)$ over certain local rings $A$.  In~\ref{subsec:kirillov}
we construct a theory of Kirillov models over a large class of base rings.
Our approach essentially follows that of~\cite{BZ}, but as we are not working
over algebraically closed fields issues of descent arise.  In spite of this
one recovers almost all of the theory of the Kirillov functors developed
in~\cite[\S 4]{BZ}.  Particularly useful for us is the notion of a
``generic'' irreducible representation over an arbitrary field.

Section~\ref{subsec:AIG} introduces an essential concept for our
results: that of an essentially AIG representation over a field.  These
are representations whose socles are absolutely irreducible and generic,
and that satisfy a certain finiteness property.  The importance of these
representations stems from the fact that the ``modified Langlands correspondences''
we consider
send Galois representations to essentially AIG representations.  In
section~\ref{subsec:AIG lattice} we apply the results of section~\ref{subsec:lattices}
to establish some basic facts about the reduction theory of essentially AIG
representations.

In section~\ref{sec:LL zero} we study the behavior of the local Langlands
correspondence (over fields of characteristic zero) under specialization.
As we have previously discussed, the usual local Langlands correspondence
is not suitable for our purposes, and we instead consider a modification
of this correspondence due to Breuil and Schneider.  Our first main result
(Corollary~\ref{cor:essential}) establishes that the admissible representations
of $\GL_n(E)$ produced by the Breuil-Schneider correspondence are essentially AIG.
Once we have this, we apply the reduction theory of section~\ref{subsec:AIG lattice},
together with ideas from the Zelevinski classification, to establish
Theorem~\ref{thm:specialization}, which relates the behaviour of a Galois representation
under specialization to a characteristic zero residue field to the behaviour
(under the same specialization) of the corresponding admissible representation
constructed by Breuil-Schneider.

Section~\ref{sec:LL p} constructs a ``modified local Langlands correspondence''
in characteristic $p$, by analogy with the Breuil-Schneider correspondence;
in particular we define this correspondence to be the ``minimal'' correspondence
that satisfies a mod $p$ analogue of Theorem~\ref{thm:specialization}.  We
refer the reader to Theorem~\ref{thm:mod p LL} for the precise definition.

We finally turn to the study of the local Langlands correspondence
for families of admissible representations in section~\ref{sec:LL families}.
Section~\ref{subsec:results} discusses the main results of our theory; to avoid
obscuring this discussion with technicalities we postpone the proofs
to section~\ref{subsec:proofs}.  Surprisingly little beyond the theory of
Kirillov models is necessary to prove the basic uniqueness result
of Theorem~\ref{thm:family}.  On the other hand, establishing more precise results
about the structure of the family of admissible representations attached
to a given family of Galois representations (for instance, the interpolation theorems
\ref{thm:interpolation1} and~\ref{thm:interpolation2}) requires the full
strength of the specialization results in section~\ref{sec:LL zero}.

\subsection{Acknowledgments}
Matthew Emerton 
would like to thank Kevin Buzzard, Frank Calegari, Gaetan Chenevier,
Mark Kisin, and Eric Urban for helpful conversations on the subject of this
note.   He first began to investigate some of the questions considered
in this paper at the
conference ``Open questions and recent developments in Iwasawa theory'',
held at Boston University in June 2005, in honour of Ralph Greenberg's
60th birthday,  and was given the opportunity to further explore them 
in a series of lectures given during
the special semester on eigenvarieties at Harvard in April 2006.
He would like to take the opportunity to thank both institutions,
as well as Robert Pollack and Barry Mazur,
the principal organizers of the two events,
for providing such stimulating mathematical environments.

David Helm would like to thank Kevin Buzzard for his ideas
and perspective on the questions addressed by this note, and Richard
Taylor for his continued interest, advice, and encouragement.

Both authors benefited from a mutual exchange of ideas occasioned by
the conference ``Modular forms and arithmetic'', held at MSRI in
June/July 2008, and sponsored by MSRI and the Clay Math Institute.
They would like to thank these institutions, as well as the organizers
of the conference, for creating this fruitful opportunity.

\section{Representation theory --- general background}
\subsection{Admissible smooth representations} \label{subsec:admissible}
Let $A$ be a Noetherian local ring with maximal ideal
$\mathfrak m$ and residue field $k$.
In this subsection we recall some basic facts about
admissible smooth representations over~$A$.

\begin{df}
{\em
An $A$-linear representation of a topological group
$H$ on an $A$-module $V$ 
is called smooth if any element of $V$ is fixed by an
open subgroup of~$H$.
}
\end{df}

Clearly any $H$-invariant sub- or quotient $A$-module
of a smooth $H$-representation over $A$ is again
a smooth $H$-representation over $A$.

\begin{df}
{\em
A smooth representation of a topological group $H$
on an $A$-module $V$
is called admissible
if for any open subgroup $H_0\subset H$, the $A$-module
of fixed points $V^{H_0}$ is finitely generated.
}
\end{df}

Clearly any $H$-invariant $A$-submodule of an
admissible smooth $H$-representation over $A$
is again an admissible smooth $H$-representation
over $A$. (For the case of $H$-invariant quotients,
see Lemma~\ref{lem:quotient} below.) 

\smallskip

Consider the following condition on $H$:

\begin{condition}
\label{cond:pro-ell}
{\em $H$ contains a profinite open subgroup,
admitting a countable basis of neighbourhoods of the identity,
whose pro-order is invertible in $A$.
}
\end{condition} 

Suppose that $H$ satisfies Condition~\ref{cond:pro-ell},
and let $\{H_i\}_{i \geq 0}$ denote a decreasing sequence
of open subgroups of $H$, each of whose pro-order is
invertible in $A$,
and which forms a neighbourhood basis 
of the identity in $H$.
If $V$ is a smooth $H$-representation over $A$,
then for each $n \geq 1,$ we may define the idempotent projector
$\pi_i: V \rightarrow V^{H_i}$
via $v \mapsto \int_{H_i} h v d\mu_i,$ where $\mu_i$ denotes
Haar measure on $H_i$, normalized so that $H_i$ has total
measure $1$.  If we define $V_i := \kernel \pi_i \bigcap V^{H_{i+1}},$
then the inclusions $V_i \subset V$ induces an isomorphism
of $A$-modules
\begin{equation}
\label{eqn:decomp}
\bigoplus_i V_i \iso V.
\end{equation}
The formation of $V_i$ is evidently functorial on the category
of smooth representations of $H$ over $A$,
and thus so is the direct sum
decomposition~(\ref{eqn:decomp}).
In fact one can say something more precise:

\begin{lemma}
\label{lem:isom}
Suppose that $H$ satisfies Condition~{\em \ref{cond:pro-ell}}.
If $W$ is an $H$-invariant $A$-submodule of the smooth $H$-representation
$A$ over $V$,
then the natural maps  $W_i \hookrightarrow V_i \bigcap W$
and $V_i/(V_i \bigcap W) \rightarrow (V/W)_i$ are isomorphisms.
\end{lemma}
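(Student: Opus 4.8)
The plan is to deduce both isomorphisms from the single observation that the projectors $\pi_i$ are functorial for $H$-equivariant maps and restrict to the identity on the subspace of $H_i$-fixed vectors. The first thing to record is that $W$, being an $H$-invariant submodule of the smooth representation $V$, is itself a smooth $H$-representation, and likewise $V/W$ is smooth; hence the projectors $\pi_i^W$, $\pi_i^{V/W}$ and the submodules $W_i$, $(V/W)_i$ are all defined, and the decomposition~(\ref{eqn:decomp}) applies to each of $V$, $W$, $V/W$. Applying functoriality to the inclusion $\iota\colon W\hookrightarrow V$ shows that $\pi_i^V$ restricts on $W$ to $\pi_i^W$; combined with the trivial identity $V^{H_{i+1}}\cap W = W^{H_{i+1}}$, this gives $V_i\cap W = (\ker\pi_i^V\cap W)\cap(V^{H_{i+1}}\cap W) = \ker\pi_i^W\cap W^{H_{i+1}} = W_i$ as submodules of $V$, which is precisely the first assertion.

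For the second assertion I would apply functoriality to the quotient map $q\colon V\to V/W$, obtaining $q\circ\pi_i^V = \pi_i^{V/W}\circ q$. This shows at once that $q$ carries $V_i = \ker\pi_i^V\cap V^{H_{i+1}}$ into $(V/W)_i$, so it induces a map $\phi_i\colon V_i/(V_i\cap W)\to(V/W)_i$; injectivity of $\phi_i$ is immediate, since $\ker(q|_{V_i}) = V_i\cap W$ by the first part. The only step that is not purely formal is the surjectivity of $\phi_i$. For this I would first check that $q$ maps $V^{H_{i+1}}$ \emph{onto} $(V/W)^{H_{i+1}}$: given $\bar v\in(V/W)^{H_{i+1}}$, lift it to any $v\in V$ and replace $v$ by $\pi_{i+1}^V(v)\in V^{H_{i+1}}$, which still satisfies $q(\pi_{i+1}^V(v)) = \pi_{i+1}^{V/W}(\bar v) = \bar v$. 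Then, given $\bar v\in(V/W)_i$, lift it to some $v\in V^{H_{i+1}}$; since $q(\pi_i^V(v)) = \pi_i^{V/W}(\bar v) = 0$ we have $\pi_i^V(v)\in W$, and the vector $v' := v - \pi_i^V(v)$ lies in $\ker\pi_i^V\cap V^{H_{i+1}} = V_i$ (using idempotence of $\pi_i^V$ together with $V^{H_i}\subseteq V^{H_{i+1}}$) and maps to $\bar v$, proving surjectivity.

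I do not expect any genuine obstacle here: the proof is essentially bookkeeping with the idempotents $\pi_i$. If anything, the one place invoking something nontrivial is the surjectivity of $q$ on $H_{i+1}$-fixed vectors, where one needs the projectors $\pi_i$ to be defined over $A$ in the first place — that is, Condition~\ref{cond:pro-ell}, which supplies an open subgroup whose pro-order is a unit in $A$, so that Haar measure on each $H_i$ can be normalized over $A$.
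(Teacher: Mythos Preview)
Your argument is correct; the paper's own proof consists of the single sentence ``This is evident,'' so you have simply spelled out in detail the functoriality-of-$\pi_i$ computation that the authors regard as routine. There is no difference in approach to discuss.
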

\begin{proof}
This is evident.
\end{proof}

\begin{lemma}
\label{lem:adm}
Suppose that $H$ satisfies Condition~{\em \ref{cond:pro-ell}}.
A smooth $H$-represen\-tation 
$V$ over $A$ is admissible if and
only if each of the $A$-modules $V_i$
is finitely generated.
\end{lemma}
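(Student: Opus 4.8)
The plan is to deduce the statement directly from the structural decomposition of the fixed-point modules recorded in the discussion preceding the lemma, together with the Noetherian hypothesis on $A$. Recall that the image of $\pi_i$ is $V^{H_i}$ and that $\pi_i$ restricts to the identity on $V^{H_i}\subseteq V^{H_{i+1}}$, so that $V^{H_{i+1}} = V^{H_i}\oplus V_i$; iterating, one obtains $V^{H_j} = V^{H_0}\oplus\bigoplus_{i<j} V_i$ for every $j$, a \emph{finite} direct sum. (Here the fixed points $V^{H_0}$ of the largest subgroup in the chain are to be counted among the pieces ``$V_i$'' occurring in the statement; they form the summand of~(\ref{eqn:decomp}) annihilated by no $\pi_i$.) Note that Condition~\ref{cond:pro-ell} enters precisely in guaranteeing that the projectors $\pi_i$, and hence the $A$-modules $V_i$, are defined in the first place.

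For the ``only if'' direction, suppose $V$ is admissible. Then $V^{H_0}$ and each $V^{H_{i+1}}$ is a finitely generated $A$-module. Since $V_i$ is by its very definition an $A$-submodule of $V^{H_{i+1}}$, and $A$ is Noetherian, it follows that each $V_i$ is finitely generated.

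For the ``if'' direction, suppose each $V_i$ (the base term $V^{H_0}$ included) is finitely generated, and let $H'\subseteq H$ be an arbitrary open subgroup. Because the $H_i$ form a neighbourhood basis of the identity, $H_j\subseteq H'$ for some $j$, whence $V^{H'}\subseteq V^{H_j} = V^{H_0}\oplus\bigoplus_{i<j} V_i$. The right-hand side is a finite direct sum of finitely generated $A$-modules, hence finitely generated, and since $A$ is Noetherian its submodule $V^{H'}$ is finitely generated as well. Thus $V$ is admissible, as required; the equivalence with Lemma~\ref{lem:isom} in mind, this completes the argument.

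There is essentially no real obstacle here beyond bookkeeping: the mathematical content lies entirely in the already-established decomposition~(\ref{eqn:decomp}) and in the Noetherian property of $A$, which is exactly what permits one to pass freely between finite generation of the individual pieces $V_i$ and finite generation of the fixed-point modules $V^{H_j}$. The only point demanding care is that the fixed points of the largest subgroup occurring in the chain must be treated as one of the summands of the decomposition, rather than being silently omitted.
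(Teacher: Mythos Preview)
Your proposal is correct and follows essentially the same approach as the paper: both arguments reduce to the finite direct-sum decomposition $V^{H_j}\cong\bigoplus_{i\leq j}V_i$ together with the cofinality of $\{H_i\}$ among open subgroups of $H$. Your version is slightly more explicit than the paper's (you spell out how the Noetherian hypothesis is used in each direction, and you flag the role of the base term $V^{H_0}$), but the underlying argument is the same.
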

\begin{proof}
This follows from the isomorphisms
$\bigoplus_{j \leq i} V_j \iso V^{H_i}$
for each $i$, and the fact
that the sequence $\{H_i\}$ is cofinal in the
collection of all of open subgroups of~$H$.
\end{proof}

\begin{lemma}
\label{lem:quotient}
Suppose that $H$ satisfies Condition~{\em \ref{cond:pro-ell}}.
If $V$ is an admissible smooth $H$-representation 
over $A$,
and if $W$ is a $G$-invariant $A$-submodule
of $V$,
then $V/W$ is again an admissible smooth $H$-representation
over $A$.
\end{lemma}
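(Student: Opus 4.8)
The plan is to deduce admissibility of $V/W$ from Lemma~\ref{lem:adm}, using the functorial direct sum decomposition~\eqref{eqn:decomp}. First I would observe that the statement that $V/W$ is smooth needs no hypothesis beyond smoothness of $V$: any element of $V/W$ lifts to an element of $V$ fixed by some open subgroup of $H$, and that same open subgroup then fixes its image in $V/W$. So the content of the lemma is entirely the admissibility assertion.

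Since $H$ satisfies Condition~\ref{cond:pro-ell}, I would fix a decreasing neighbourhood basis $\{H_i\}_{i \ge 0}$ of the identity by open subgroups whose pro-orders are invertible in $A$, and form the subspaces $V_i \subseteq V$ as in the discussion preceding Lemma~\ref{lem:isom}. By Lemma~\ref{lem:adm}, the admissibility of $V$ is equivalent to each $V_i$ being a finitely generated $A$-module. Now apply Lemma~\ref{lem:isom} to the $H$-invariant submodule $W \subseteq V$: it gives, for each $i$, an isomorphism $V_i/(V_i \cap W) \iso (V/W)_i$. Hence $(V/W)_i$ is a quotient of the finitely generated $A$-module $V_i$, and since $A$ is Noetherian (the standing hypothesis of this subsection), $(V/W)_i$ is itself finitely generated. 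Applying Lemma~\ref{lem:adm} a second time, now to the smooth representation $V/W$, we conclude that $V/W$ is admissible.

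There is essentially no obstacle here: all the real work has already been carried out in constructing the functorial decomposition~\eqref{eqn:decomp} and in establishing Lemmas~\ref{lem:isom} and~\ref{lem:adm}. The only point that requires attention is to confirm that the hypotheses needed to invoke those two lemmas — namely that $H$ satisfies Condition~\ref{cond:pro-ell}, and that $A$ is Noetherian so that submodules and quotients of finitely generated $A$-modules are finitely generated — are in force, and both hold by assumption.
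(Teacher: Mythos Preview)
Your proof is correct and follows exactly the paper's approach: use Lemma~\ref{lem:isom} to see that $(V/W)_i$ is a quotient of $V_i$, hence finitely generated, and then apply Lemma~\ref{lem:adm}. (One minor remark: you do not actually need $A$ Noetherian for this step, since quotients of finitely generated modules are finitely generated over any ring.)
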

\begin{proof}
This follows from the preceding lemma, and the fact
that $V_i \rightarrow (V/W)_i$ is surjective.
\end{proof}

If $V$ is an $A$-module equipped with
an admissible smooth $H$-representation,
then typically $V$ itself will not be finitely generated
as an $A$-module.  Nevertheless, the existence of the
decomposition~(\ref{eqn:decomp}) allows us to extend 
many results about finitely generated $A$-modules
to the situation of admissible smooth $G$-representations.

\begin{lemma}
\label{lem:nakayama}
If $H$ satisfies Condition~{\em \ref{cond:pro-ell}},
and if $V$ is an admissible smooth $H$-representation
for which $V/\mathfrak m V = 0,$ then $V = 0$.
\end{lemma}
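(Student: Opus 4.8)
The plan is to leverage the direct sum decomposition~(\ref{eqn:decomp}) together with Lemma~\ref{lem:adm} to reduce the statement to the classical Nakayama lemma for finitely generated modules over a Noetherian local ring. First I would observe that since $V$ is admissible and $H$ satisfies Condition~\ref{cond:pro-ell}, Lemma~\ref{lem:adm} tells us that each $A$-module $V_i$ appearing in the decomposition $\bigoplus_i V_i \iso V$ is finitely generated. The hypothesis $V/\mathfrak m V = 0$ means $\mathfrak m V = V$, and I would like to transfer this to each summand $V_i$ individually.

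The key point is that the functor $W \mapsto W_i$ is exact (this follows from its functoriality and from Lemma~\ref{lem:isom}, or simply because $W_i = \kernel \pi_i \cap W^{H_{i+1}}$ and $\pi_i$ is given by an $A$-linear idempotent projector). In particular, applying this functor to the surjection of smooth $H$-representations $V^{\oplus(\text{generators of }\mathfrak m)} \to V$ that witnesses $\mathfrak m V = V$ — or more simply, using that $(\mathfrak m V)_i = \mathfrak m (V_i)$ since the projector $\pi_i$ is $A$-linear and hence commutes with multiplication by elements of $\mathfrak m$, and that $\mathfrak m V = V$ implies $\mathfrak m V_i = (\mathfrak m V)_i = V_i$ — I conclude that $\mathfrak m V_i = V_i$ for every $i$. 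Now each $V_i$ is a finitely generated $A$-module with $\mathfrak m V_i = V_i$, so the ordinary Nakayama lemma gives $V_i = 0$ for all $i$. Since $V \iso \bigoplus_i V_i$, we get $V = 0$.

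The only mildly delicate step is verifying that $(\mathfrak m V)_i = \mathfrak m(V_i)$, i.e.\ that forming the summand $V_i$ commutes with the submodule $\mathfrak m V$. One direction, $\mathfrak m(V_i) \subseteq (\mathfrak m V)_i$, is immediate since $V_i \subseteq V$ and $V_i$ is $A$-stable. For the reverse, note that $\mathfrak m V$ is an $H$-invariant $A$-submodule of $V$ (it is $H$-invariant because $H$ acts $A$-linearly), so Lemma~\ref{lem:isom} applies with $W = \mathfrak m V$: it identifies $(\mathfrak m V)_i$ with $V_i \cap \mathfrak m V$. An element of $V_i \cap \mathfrak m V$ is fixed by $H_{i+1}$ and killed by $\pi_i$; writing it as $\sum_j a_j w_j$ with $a_j \in \mathfrak m$, $w_j \in V$ and applying the $A$-linear projector $\pi_i'$ onto $V^{H_{i+1}}$ followed by $(1-\pi_i)$, one sees it lies in $\mathfrak m V_i$, using that $\pi_i$ and $\pi_i'$ are $A$-linear and commute with each other. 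This is the one spot requiring a short computation, but it is entirely routine given the $A$-linearity of the projectors; no genuine obstacle arises.
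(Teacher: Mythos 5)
Your proof is correct and follows essentially the same route as the paper: decompose $V \iso \bigoplus_i V_i$ with each $V_i$ finitely generated (Lemma~\ref{lem:adm}), use the $A$-linearity of the projectors to see that the decomposition is compatible with $\mathfrak m$ (the paper phrases this as $\bigoplus_i V_i/\mathfrak m V_i \iso V/\mathfrak m V$, which is equivalent to your $(\mathfrak m V)_i = \mathfrak m V_i$), and then apply the classical Nakayama lemma to each $V_i$. The extra care you take over the compatibility step is fine but not needed beyond what the paper's one-line justification already provides.
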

\begin{proof}
The decomposition~(\ref{eqn:decomp})
yields the isomorphism
$\bigoplus_i V_i/\mathfrak m V_i \iso V/\mathfrak m.$
Thus $V/\mathfrak m V = 0$ implies that $V_i/\mathfrak m V_i = 0$
for each value of $i$.   Since each $V_i$ is finitely generated
over $A$, this in turn implies that $V_i =0$ for each $i$,
by Nakayama's lemma.  Thus $V=0$, as claimed.
%
%Lemma~\ref{lem:isom} yields isomorphisms
%$V_i/\mathfrak m V_i \iso (V/\mathfrak m V)_i.$
%Applying Nakayama's Lemma to each $V_i$ 
%(which are finitely generated over $A$ by Lemma~\ref{lem:adm})
%and taking into account the decomposition~(\ref{eqn:decomp}) and
%its analogue for $V/\mathfrak m V$,
%we obtain the lemma.
\end{proof}

\begin{lemma}
\label{lem:fg}
If $H$ satisfies Condition~{\em \ref{cond:pro-ell}},
and if $V$ is an admissible smooth representation
such that $V/\mathfrak m V$ is finitely generated
over $k[H]$, then $V$ is finitely generated over
$A[H]$.
\end{lemma}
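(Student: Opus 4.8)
The plan is to bootstrap from the finitely many generators of $V/\mathfrak m V$ over $k[H]$ to a finite set of generators of $V$ over $A[H]$, using the decomposition~(\ref{eqn:decomp}) together with Nakayama's lemma (in the guise of Lemma~\ref{lem:nakayama}) to control the passage from mod-$\mathfrak m$ information to integral information. First I would choose finitely many elements $\bar v_1, \dots, \bar v_r \in V/\mathfrak m V$ that generate it as a $k[H]$-module, and lift them to elements $v_1, \dots, v_r \in V$. Since $V$ is smooth, each $v_j$ is fixed by some $H_i$, so after replacing the neighbourhood basis by a cofinal subsequence (or simply choosing $i$ large enough) we may assume all the $v_j$ lie in $\bigoplus_{j \le i_0} V_j = V^{H_{i_0}}$ for a single index $i_0$. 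Let $W \subseteq V$ be the $A[H]$-submodule generated by $v_1, \dots, v_r$; I want to show $W = V$.

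The key step is to analyze the quotient $V/W$. By Lemma~\ref{lem:quotient}, $V/W$ is again an admissible smooth $H$-representation over $A$, so by Lemma~\ref{lem:nakayama} it suffices to prove that $(V/W)/\mathfrak m (V/W) = 0$, i.e.\ that $V = W + \mathfrak m V$. By construction the images of $v_1, \dots, v_r$ in $V/\mathfrak m V$ generate it over $k[H]$; hence the image of $W$ in $V/\mathfrak m V$ is all of $V/\mathfrak m V$, which gives exactly $V = W + \mathfrak m V$. Therefore $(V/W)/\mathfrak m(V/W) = 0$, and Lemma~\ref{lem:nakayama} forces $V/W = 0$, so $V = W$ is generated by $v_1, \dots, v_r$ over $A[H]$.

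The one point requiring a little care — and the place I expect the only real friction — is the reduction to a single index $i_0$ and the verification that the $A[H]$-submodule $W$ generated by the $v_j$ really does surject onto $V/\mathfrak m V$: one must check that the $k[H]$-span of the images equals the full $k[H]$-span of the $\bar v_j$, which is immediate since reduction mod $\mathfrak m$ is $H$-equivariant and $A[H]$-linear combinations reduce to $k[H]$-linear combinations. Once that is in place the argument is just Lemma~\ref{lem:quotient} plus Lemma~\ref{lem:nakayama}, with no further input needed; in particular the admissibility of $V/W$ supplied by Lemma~\ref{lem:quotient} is exactly what lets us invoke Nakayama in this infinite-dimensional setting.
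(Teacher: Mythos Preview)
Your proof is correct and matches the paper's argument essentially verbatim: lift generators of $V/\mathfrak m V$, let $W$ be the $A[H]$-submodule they generate, use Lemma~\ref{lem:quotient} to see $V/W$ is admissible, and apply Lemma~\ref{lem:nakayama} to conclude $V/W = 0$. The digression about finding a common $i_0$ so that all the $v_j$ lie in $V^{H_{i_0}}$ is harmless but unnecessary---nothing in the argument uses it.
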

\begin{proof}
Let $S \subset A$ be a finite subset whose
image in $V/\mathfrak m V$ generates this quotient
over $k[H]$,
and let $W$ be the $A[H]$-submodule of $V$ generated by
$S$.  Lemma~\ref{lem:quotient} implies that $(V/W)$ is admissible,
and by construction we see that $(V/W) / \mathfrak m (V/W) = 0.$
Thus Lemma~\ref{lem:nakayama} shows that $W = V$, and so
$V$ is also finitely generated.
\end{proof}

%\begin{lemma}
%\label{lem:es}
%If $H$ satisfies Condition~{\em \ref{cond:pro-ell}},
%and if
%$0 \rightarrow V_1 \rightarrow V_2 \rightarrow V_3 \rightarrow 0$
%is an exact sequence of smooth $H$-representations
%over $A$, with $V_1$ and $V_3$ both admissible,
%then $V_2$ is admissible.
%\end{lemma}
%\begin{proof}
%Since the formation of $H_i$-invariants is exact,
%for each $i \geq 0$ we have a short exact sequence
%$$0 \rightarrow V_1^{H_i} \rightarrow V_2^{H_i} \rightarrow V_3^{H_i}
%\rightarrow 0.$$
%Since the outer two terms are finitely generated over $A$,
%so is the middle term. Thus $V_2$ is admissible.
%\end{proof}

It will be technically useful to consider a related notion
of admissible representation.

\begin{df}
\label{df:adm cont}
{\em 
If $V$ is an $A$-module
equipped with an $A$-linear representation of $H$, we say that 
$V$ is an admissible continuous $H$-representation if:
\begin{enumerate}
\item $V$ is $\mathfrak m$-adically complete and separated.
\item The $H$-action on $V$ is continuous, when $V$ is 
equipped with its $\mathfrak m$-adic topology (i.e.\
the action map $H\times V \rightarrow V$ is jointly continuous).
\item The induced $H$-representation on $V/\mathfrak m V$
(which is automatically smooth, by~(1)) is admissible smooth.
\end{enumerate}
}
\end{df}

\begin{lemma}
Suppose that $H$ satisfies Condition~{\em \ref{cond:pro-ell}}.
If $V$ is a continuous admissible $H$-representation over $A$,
then for each $n > 0$,
the induced $H$-representation on $V/\mathfrak m^n V$ is 
admissible smooth.
\end{lemma}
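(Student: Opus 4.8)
The plan is to reduce everything to the already-available case $n=1$ (part~(3) of Definition~\ref{df:adm cont}) by unwinding the $\mathfrak m$-adic filtration of $V$. First I would dispose of smoothness. Since $\mathfrak m^n$ annihilates $V/\mathfrak m^n V$, the $\mathfrak m$-adic topology on $V/\mathfrak m^n V$ is discrete, and the quotient map $V \to V/\mathfrak m^n V$ is continuous for the $\mathfrak m$-adic topologies; composing it with the orbit map $h \mapsto hv$ (continuous by condition~(2) of Definition~\ref{df:adm cont}) shows that for every $\bar v \in V/\mathfrak m^n V$ the map $h \mapsto h \bar v$ is a continuous map from $H$ to a discrete space, hence has open stabilizer. (Here $\mathfrak m^n V$ is an $H$-subrepresentation of $V$ because $H$ acts $A$-linearly and $\mathfrak m$ is central.) Thus $V/\mathfrak m^n V$ is smooth, and so is every $H$-subrepresentation of it.

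Next, the key point is that each graded piece $\mathfrak m^j V / \mathfrak m^{j+1} V$, for $0 \le j < n$, is admissible smooth. As $A$ is Noetherian we may write $\mathfrak m^j = (x_1, \dots, x_r)$, and then the $A$-linear $H$-equivariant map $V^{\oplus r} \to \mathfrak m^j V$, $(v_1, \dots, v_r) \mapsto \sum_i x_i v_i$, is surjective; since $x_i \mathfrak m V \subseteq \mathfrak m^{j+1} V$ it descends to a surjection of $H$-representations $(V/\mathfrak m V)^{\oplus r} \twoheadrightarrow \mathfrak m^j V / \mathfrak m^{j+1} V$. By Definition~\ref{df:adm cont}(3) the module $V/\mathfrak m V$ is admissible smooth, hence so is the finite direct sum $(V/\mathfrak m V)^{\oplus r}$, and hence so is its quotient $\mathfrak m^j V/\mathfrak m^{j+1} V$ by Lemma~\ref{lem:quotient}.

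Finally I would run a d\'evissage, say downward induction on $j$ using the short exact sequences $0 \to \mathfrak m^{j} V / \mathfrak m^n V \to \mathfrak m^{j-1} V / \mathfrak m^n V \to \mathfrak m^{j-1} V / \mathfrak m^{j} V \to 0$ of smooth $H$-representations over $A$, with base case $\mathfrak m^n V/\mathfrak m^n V = 0$. The inductive step rests on the elementary fact that in an exact sequence $0 \to W \to U \to Q \to 0$ of smooth $H$-representations with $W$ and $Q$ admissible, $U$ is admissible: for any open $H_0 \subseteq H$ left exactness of $(-)^{H_0}$ gives $0 \to W^{H_0} \to U^{H_0} \to Q^{H_0}$, so $U^{H_0}$ is an extension of an $A$-submodule of the finitely generated $A$-module $Q^{H_0}$ by the finitely generated $A$-module $W^{H_0}$, and is therefore finitely generated since $A$ is Noetherian. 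Applying this with $W = \mathfrak m^{j} V/\mathfrak m^n V$, $U = \mathfrak m^{j-1}V/\mathfrak m^n V$, $Q = \mathfrak m^{j-1}V/\mathfrak m^{j}V$ and taking $j = 1$ at the end yields the assertion for $V/\mathfrak m^n V$. I do not anticipate any genuine obstacle: the argument is a routine filtration/d\'evissage, the only two points that repay a moment's care being the reduction of smoothness to the discreteness of $V/\mathfrak m^n V$ and the identification of the graded pieces $\mathfrak m^j V/\mathfrak m^{j+1}V$ as $H$-equivariant quotients of finite sums of copies of $V/\mathfrak m V$.
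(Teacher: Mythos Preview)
Your proof is correct, but it takes a different route from the paper's. The paper argues more directly: once smoothness is established (exactly as you do), it uses the exactness of the functor $(-)^{H_i}$ (available under Condition~\ref{cond:pro-ell}) to identify
\[
(V/\mathfrak m^n V)^{H_i}\big/\mathfrak m\,(V/\mathfrak m^n V)^{H_i}\;\cong\;(V/\mathfrak m V)^{H_i},
\]
which is finite-dimensional over $A/\mathfrak m$ by hypothesis; then it applies the complete Nakayama lemma (Lemma~\ref{lem:complete Nak}) over the Artinian local ring $A/\mathfrak m^n$ to conclude that $(V/\mathfrak m^n V)^{H_i}$ is finitely generated. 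Your approach instead filters $V/\mathfrak m^n V$ by the images of $\mathfrak m^j V$, realizes each graded piece as a $G$-equivariant quotient of finitely many copies of $V/\mathfrak m V$ (hence admissible via Lemma~\ref{lem:quotient}), and then closes up under extensions using Noetherianity of $A$. Both arguments ultimately rely on Condition~\ref{cond:pro-ell} --- yours through Lemma~\ref{lem:quotient}, the paper's through exactness of $H_i$-invariants --- but the paper's is a one-step reduction to $n=1$ via Nakayama, whereas yours is an inductive d\'evissage. Your route has the mild advantage of isolating the general principle that admissible smooth representations are closed under extensions, at the cost of a longer argument.
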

\begin{proof}
Condition~(2) of Definition~\ref{df:adm cont} implies that
the $H$-action on $V/\mathfrak m^n V$ is continuous, when
the latter is equipped with its discrete topology.  In other
words, $V/\mathfrak m^n V$ is a smooth representation of $H$.
Since the formation of $H_i$-invariants
is exact, for any $i \geq 0$, we find that
$(V/\mathfrak m^n V)^{H_i}/\mathfrak m (V/\mathfrak m^n V)^{H_i}
\iso (V/\mathfrak m V)^{H_i}$ is finite dimensional over $A/\mathfrak m$,
by Condition~(3) of Definition~\ref{df:adm cont}.
Lemma~\ref{lem:complete Nak} below (applied to the module
$(V/\mathfrak m^n V)^{H_i}$ of the Artinian local ring $A/\mathfrak m^n$)
then shows that $(V/\mathfrak m^n V)^{H_i}$ is finitely generated
over $A/\mathfrak m^n$.  Consequently $V/\mathfrak m^n V$ is admissible.
%It furthermore sits in a short exact sequence
%\begin{equation}
%\label{eqn:m ses}
%0 \rightarrow \mathfrak m^{n-1} V/\mathfrak m^{n} V \rightarrow
%V/\mathfrak m^n \rightarrow V/\mathfrak m^{n-1} \rightarrow 0.
%\end{equation}
%There is furthermore a surjection
%$$\mathfrak m^{n-1}/\mathfrak m^n \otimes_A V/\mathfrak m V \rightarrow
%\mathfrak m^{n-1} V/\mathfrak m^n V.$$
%Condition~(3) of Definition~\ref{df:adm cont} implies
%that the source of this surjection is admissible, and thus 
%so it its target (by Lemma~\ref{lem:quotient}).
%The admissibility of $V/\mathfrak m^n V$ follows from this,
%together with a consideration of the short exact sequence~(\ref{eqn:m ses}),
%and an obvious induction on $n$.
\end{proof}

\begin{df}
{\em If $V$ is an $A$-module, we let $\widehat{V}$ denote
the $\mathfrak m$-adic completion of~$V$.
}
\end{df}

\begin{df}
{\em If $V$ is an $A$-module equipped with an $H$-representation,
we let $V_{\sm}$ denote the subset of $V$ consisting of
vectors which are smooth, i.e.\ which are fixed by some open
subgroup of $H$.  One immediately verifies that $V_{\sm}$ is
an $A$-submodule of $V$, closed under the action of $H$.
%(Compare \cite[]{Em7}.) 
Thus $V_{\sm}$ is a smooth $H$-representation.
}
\end{df}

\begin{prop}
Suppose that $H$ satisfies Condition~{\em \ref{cond:pro-ell}},
and let $V$ be an admissible smooth $H$-representation over~$A$.
\begin{enumerate}
\item
 $\widehat{V}$ is a continuous admissible $H$-representation
over~$A$.
\item
If $A$ is $\mathfrak m$-adically complete, then the natural map
$V \rightarrow \widehat{V}_{\sm}$ is an isomorphism.
\end{enumerate}
\end{prop}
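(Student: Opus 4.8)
The plan is to reduce both assertions to the behaviour of $\mathfrak m$-adic completion over the Noetherian ring $A$, invoking finiteness properties (via admissibility of $V$, i.e.\ via the finitely generated $A$-modules $V^{H_i}$, equivalently via the decomposition~(\ref{eqn:decomp})) only where they are genuinely needed.

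For part~(1), I would first invoke the standard fact that, $A$ being Noetherian (so that $\mathfrak m$ is finitely generated), for \emph{any} $A$-module $M$ the natural maps $\widehat{M}/\mathfrak m^n\widehat{M} \to M/\mathfrak m^n M$ are isomorphisms. Applied to $V$ this says that $\mathfrak m^n\widehat{V} = \ker(\widehat{V} \to V/\mathfrak m^n V)$; hence $\widehat{V}$ is $\mathfrak m$-adically complete and separated, which is condition~(1) of Definition~\ref{df:adm cont}, and its $\mathfrak m$-adic topology coincides with the inverse-limit topology on $\varprojlim_n V/\mathfrak m^n V$. Taking $n = 1$ identifies $\widehat{V}/\mathfrak m\widehat{V}$ with $V/\mathfrak m V$, which is admissible smooth because it is a quotient of the admissible smooth representation $V$ (Lemma~\ref{lem:quotient}); this gives condition~(3). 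For the continuity condition~(2), I would observe that the $H$-action on $\widehat{V}$ is the inverse limit of the $H$-actions on the discrete smooth representations $V/\mathfrak m^n V$, so that each reduction map $\widehat{V} \to V/\mathfrak m^n V$ is $H$-equivariant and $\mathfrak m$-adically continuous; since $\widehat{V}$ carries the inverse-limit topology, joint continuity of $H \times \widehat{V} \to \widehat{V}$ reduces, for each $n$, to joint continuity of $H \times V/\mathfrak m^n V \to V/\mathfrak m^n V$, and the latter holds for any smooth action on a discrete module (orbit stabilizers are open).

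For part~(2), with $A$ now $\mathfrak m$-adically complete, I would fix a member $H_i$ of the chosen neighbourhood basis and compute $(\widehat{V})^{H_i}$. Since the pro-order of $H_i$ is invertible in $A$, averaging over $H_i$ defines an $A$-linear idempotent $\pi_i$ on $V$ with image $V^{H_i}$, giving a decomposition $V = V^{H_i} \oplus \ker\pi_i$ of $A[H_i]$-modules; reducing modulo $\mathfrak m^n$ and noting that the same averaging computes $H_i$-invariants yields $(V/\mathfrak m^n V)^{H_i} = V^{H_i}/\mathfrak m^n V^{H_i}$. Passing to the inverse limit (invariants commute with limits) then gives
\[
(\widehat{V})^{H_i} = \varprojlim_n (V/\mathfrak m^n V)^{H_i} = \varprojlim_n V^{H_i}/\mathfrak m^n V^{H_i} = \widehat{V^{H_i}}.
\]
As $V$ is admissible, $V^{H_i}$ is finitely generated over $A$, and as $A$ is $\mathfrak m$-adically complete the natural map $V^{H_i} \to \widehat{V^{H_i}}$ is an isomorphism; hence $(\widehat{V})^{H_i} = V^{H_i}$. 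Taking the union over $i$, and using that the $H_i$ are cofinal among the open subgroups of $H$, I get $\widehat{V}_{\sm} = \bigcup_i (\widehat{V})^{H_i} = \bigcup_i V^{H_i} = V$ (the last equality since $V$ is smooth); in particular $V \to \widehat{V}$ is injective.

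The step I expect to require the most care is the commutative-algebra input in part~(1) — that $\widehat{V}/\mathfrak m^n\widehat{V} \iso V/\mathfrak m^n V$ for the module $V$, which need not be finitely generated, together with the resulting identification of topologies on which the continuity argument rests. If one wished to avoid quoting this, it can be recovered from~(\ref{eqn:decomp}): writing $\widehat{V} \cong \varprojlim_n \bigoplus_i V_i/\mathfrak m^n V_i$, a Mittag-Leffler argument (the relevant transition maps being surjective) gives $\mathfrak m^n\widehat{V} = \ker(\widehat{V} \to V/\mathfrak m^n V)$ directly; but citing the general statement seems cleaner. The remaining ingredients — exactness of $H_i$-invariants and its compatibility with reduction mod $\mathfrak m^n$, cofinality of the $H_i$, Lemma~\ref{lem:quotient}, and the triviality of $\mathfrak m$-adic completion on finitely generated modules over a complete ring — are all routine.
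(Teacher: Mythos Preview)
Your proposal is correct and follows essentially the same line as the paper's proof: for~(2) you and the paper both compute $(\widehat{V})^{H_i}\cong \varprojlim_n V^{H_i}/\mathfrak m^n V^{H_i}\cong V^{H_i}$ via exactness of $H_i$-invariants and finite generation of $V^{H_i}$, then pass to the union over $i$; for~(1) both argue by passing to the inverse limit of the smooth quotients $V/\mathfrak m^n V$ and invoking Lemma~\ref{lem:quotient} for admissibility of $\widehat{V}/\mathfrak m\widehat{V}$. If anything, you are more scrupulous than the paper: you explicitly verify condition~(1) of Definition~\ref{df:adm cont} (that $\widehat{V}$ is $\mathfrak m$-adically complete and separated, via the Noetherian fact $\widehat{V}/\mathfrak m^n\widehat{V}\cong V/\mathfrak m^n V$), whereas the paper's proof glosses over this point.
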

\begin{proof}
The $H$-action on $V$ is smooth, and thus so is the $H$-action
on $V/\mathfrak m^n V$, for each $n \geq 0.$  Passing to the
projective limit over $n$, we find that the $H$-action
on $\widehat{V}$ is $\mathfrak m$-adically continuous.
Since $\widehat{V}/\mathfrak m \widehat{V} = V/\mathfrak m V,$
it follows from Lemma~\ref{lem:quotient} that the $H$-action
on $\widehat{V}/\mathfrak m\widehat{V}$ is admissible.
Thus $\widehat{V}$ satisfies both the conditions
of Definition~\ref{df:adm cont}.  This proves~(1).

We now turn to proving~(2), and so in particular,
assume that $A$ is $\mathfrak m$-adically complete.
For each $i \geq 0,$ we find that
$$\widehat{V}^{H_i}
\iso \plim{n} (V/\mathfrak m^n V)^{H_i} \iso \plim{n} V^{H_i}/\mathfrak m^n V^{H_i}
\iso V^{H_i}$$
(the second isomorphism following from the exactness of the formation of
$H_i$-invariants, and the third following from the fact that
$V^{H_i}$ is finitely generated over $A$, by assumption, and hence
$\mathfrak m$-adically complete, since $A$ is $\mathfrak m$-adically
complete).
Consequently, the map $V^{H_i} \rightarrow \widehat{V}^{H_i}$ is an
isomorphism for each $i \geq 0$, and thus, passing to the inductive limit
over $i$, we find that 
$V \iso \widehat{V}_{\sm},$ as claimed.
\end{proof}

\begin{prop}
Suppose that $H$ satisfies Condition~{\em \ref{cond:pro-ell}}
and that $A$ is $\mathfrak m$-adically complete,
and let $V$ be an admissible continuous $H$-representation
over~$A$. 
\begin{enumerate}
\item
$V_{\sm}$ is an admissible smooth $H$-representation.
\item
The natural map $\widehat{V_{\sm}} \rightarrow V$ 
is an isomorphism.
\end{enumerate}
\end{prop}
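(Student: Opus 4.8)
The plan is to build, for each $i\ge 0$, an idempotent $A$-linear retraction $\pi_i\colon V\to V^{H_i}$ and then deduce both assertions formally. By the lemma established above (that for a continuous admissible $H$-representation $V$ every truncation $V/\mathfrak m^nV$ is admissible smooth), each $V/\mathfrak m^nV$ carries the averaging projector $\pi_i^{(n)}\colon V/\mathfrak m^nV\to (V/\mathfrak m^nV)^{H_i}$, $\bar v\mapsto\int_{H_i}h\bar v\,d\mu_i$, which is a finite sum by smoothness and is well defined by Condition~\ref{cond:pro-ell}. These are compatible with the reduction maps $V/\mathfrak m^{n+1}V\to V/\mathfrak m^nV$, so for $v\in V=\varprojlim_n V/\mathfrak m^nV$ the tuple $\bigl(\pi_i^{(n)}(v\bmod\mathfrak m^nV)\bigr)_n$ defines an element $\pi_i(v)\in V$. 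I would then record the elementary properties of $\pi_i$: it is $A$-linear; $\pi_i(\mathfrak m^nV)\subseteq\mathfrak m^nV^{H_i}$ for all $n$, so $\pi_i$ is $\mathfrak m$-adically continuous; $\pi_i(v)$ reduces mod $\mathfrak m^nV$ to an $H_i$-fixed vector for every $n$, hence, as $V$ is separated, $\pi_i$ takes values in $V^{H_i}$; and $\pi_i$ restricts to the identity on $V^{H_i}$. I also note once and for all that $(V_{\sm})^{H_i}=V^{H_i}$, since $H_i$ is open.

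For part~(1), fix $i$. Being a continuous idempotent, $\pi_i$ splits $V$ as $V^{H_i}\oplus\ker\pi_i$ with both summands closed, so $V^{H_i}$ is a closed submodule of the $\mathfrak m$-adically complete and separated module $V$, hence is itself $\mathfrak m$-adically complete and separated; moreover $V^{H_i}\cap\mathfrak m^nV=\mathfrak m^nV^{H_i}$ (apply $\pi_i$, which is $A$-linear and fixes $V^{H_i}$), so the subspace topology on $V^{H_i}$ coincides with its $\mathfrak m$-adic topology. Therefore $V^{H_i}/\mathfrak mV^{H_i}$ embeds into $(V/\mathfrak mV)^{H_i}$, which is finite-dimensional over $k$ since $V/\mathfrak mV$ is admissible. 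Lemma~\ref{lem:complete Nak} then forces $V^{H_i}$ to be finitely generated over $A$. As $\{H_i\}$ is cofinal among the open subgroups of $H$ and $A$ is Noetherian, $(V_{\sm})^{H_0}=V^{H_0}$ is finitely generated over $A$ for every open $H_0\subseteq H$; hence $V_{\sm}$ is admissible smooth.

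For part~(2), I would first check that $V_{\sm}$ is $\mathfrak m$-adically dense in $V$: for $v\in V$ and $n\ge 1$, the image of $v$ in the smooth representation $V/\mathfrak m^nV$ is fixed by some $H_i$, so $\pi_i(v)\in V^{H_i}\subseteq V_{\sm}$ and $\pi_i(v)\equiv v\pmod{\mathfrak m^nV}$; thus $V=V_{\sm}+\mathfrak m^nV$. Next, $V_{\sm}\cap\mathfrak m^nV=\mathfrak m^nV_{\sm}$: if $w\in V_{\sm}\cap\mathfrak m^nV$ then $w\in V^{H_i}$ for some $i$, and by $A$-linearity $w=\pi_i(w)\in\pi_i(\mathfrak m^nV)\subseteq\mathfrak m^nV^{H_i}\subseteq\mathfrak m^nV_{\sm}$. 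Combining, the natural map $V_{\sm}/\mathfrak m^nV_{\sm}\to V/\mathfrak m^nV$ has image $(V_{\sm}+\mathfrak m^nV)/\mathfrak m^nV=V/\mathfrak m^nV$ and kernel $(V_{\sm}\cap\mathfrak m^nV)/\mathfrak m^nV_{\sm}=0$, so it is an isomorphism for every $n$. Passing to the inverse limit and using that $V$ is already $\mathfrak m$-adically complete and separated, the natural map $\widehat{V_{\sm}}\to V$ is an isomorphism.

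The delicate point is the first paragraph: verifying that the inverse limit of the averaging projectors on the smooth truncations $V/\mathfrak m^nV$ genuinely yields a well-defined, $A$-linear, $\mathfrak m$-adically continuous retraction $\pi_i\colon V\to V^{H_i}$ — in particular that the relevant averages are finite sums, so that the limit makes sense without integrating over the non-smooth module $V$ itself, and that the $\pi_i^{(n)}$ are strictly compatible under reduction. Granting this, part~(1) is just the complete Nakayama lemma applied to $V^{H_i}$, and part~(2) is a short diagram chase with the $\mathfrak m$-adic filtration; neither should present any further difficulty.
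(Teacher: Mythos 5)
Your proof is correct and follows essentially the same route as the paper's: part (1) reduces to Lemma~\ref{lem:complete Nak} applied to $V^{H_i}$, and part (2) comes from the isomorphisms $V_{\sm}/\mathfrak m^n V_{\sm} \iso V/\mathfrak m^n V$ followed by passage to the inverse limit. The only difference is presentational: where the paper invokes the exactness of the formation of $H_i$-invariants to identify $V^{H_i}/\mathfrak m^n V^{H_i}$ with $(V/\mathfrak m^n V)^{H_i}$, you supply the same fact by explicitly constructing the projector $\pi_i$ on $V$ as an inverse limit of the averaging projectors on the truncations, which is a legitimate (and carefully verified) way of justifying that step.
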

\begin{proof}
Since $V$ is $\mathfrak m$-adically complete and separated, we see that
$V^{H_i}$ is $\mathfrak m$-adically complete and separated
for each $i \geq 0$.    Since the formation of $H_i$-invariants
is exact, we see that $V^{H_i}/\mathfrak m V^{H_i} \iso
(V/\mathfrak m V)^{H_i}$, which by assumption is finite dimensional
over $A/\mathfrak m$.
Lemma~\ref{lem:complete Nak} below then implies that $V^{H_i}$ is finitely 
generated over $A$.
Since $(V_{\sm})^{H_i} = V^{H_i}$ by the very definition of $V_{\sm}$,
we see that $V_{\sm}$ is admissible, proving~(1).

If $i \geq 0$ and $n > 0$, then
$$(V_{\sm})^{H_i}/\mathfrak m^n (V_{sm})^{H_i}
= V^{H_i}/\mathfrak m^n V^{H_i}
\iso (V/\mathfrak m^n V)^{H_i},$$
the equality holding (as was already noted above) by the very definition of $V_{\sm}$,
and the isomorphism following from the exactness of the formation
of $H_i$-invariants.  Passing to the inductive limit over $i$,
and taking into account the fact that $V_{\sm}$ and $V/\mathfrak m^n V$
are both smooth $H$-representations,
we find that
$V_{\sm}/\mathfrak m^n V_{\sm} \iso V/\mathfrak m^n V.$
Passing to the projective limit over $n$,
we find that $\widehat{V_{\sm}} \iso V,$
proving~(2).
\end{proof}

\begin{remark}
\label{rem:adm equiv}
{\em
It follows from the preceding propositions that if $A$ is $\mathfrak m$-adically complete,
then the functors
$V \mapsto \widehat{V}$ and $V \mapsto V_{\sm}$ are mutually quasi-inverse,
and induce an equivalence of categories between the category of admissible
smooth $H$-representations over $A$, and the category of admissible 
continuous $H$-representations over $A$.
}
\end{remark}

We close this subsection by recalling a version of Nakayama's lemma
in the setting of $\mathfrak m$-adically
%complete and
separated modules over complete local rings.

\begin{lemma}
\label{lem:complete Nak}
Suppose that $A$ is $\mathfrak m$-adically complete.
If $M$ is an $\mathfrak m$-adically separated $A$-module
such that $M/\mathfrak m M$ is finite dimensional over $A/\mathfrak m,$
then $M$ is finitely generated over $A$.
\end{lemma}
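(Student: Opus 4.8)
The plan is to lift a basis of $M/\mathfrak m M$ over $A/\mathfrak m$ to elements of $M$, and show that these elements generate $M$ as an $A$-module. First I would choose $m_1,\dots,m_r \in M$ whose images form a $k$-basis of $M/\mathfrak m M$ (possible since this quotient is finite-dimensional), and let $F = A^r \to M$ be the $A$-linear map sending the standard basis vectors to the $m_i$. Set $N := \operatorname{coker}(F \to M)$; by construction $N/\mathfrak m N = 0$, i.e.\ $N = \mathfrak m N$. The goal is to show $N = 0$, for then $F \to M$ is surjective and $M$ is finitely generated.

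The key step is to promote $N = \mathfrak m N$ to the statement that $N$ is $\mathfrak m$-adically complete (not just separated) is \emph{not} what I want; instead I would argue directly with the filtration. Iterating $N = \mathfrak m N$ gives $N = \mathfrak m^n N$ for all $n \geq 0$, hence $N = \bigcap_n \mathfrak m^n N$. So it suffices to show $\bigcap_n \mathfrak m^n N = 0$. This does \emph{not} follow from $\mathfrak m$-adic separatedness of $N$ alone (a quotient of a separated module need not be separated), so the real content is to show that the image in $M$ of $\bigcap_n \mathfrak m^n N$, lifted back appropriately, lands in $\bigcap_n \mathfrak m^n M$. More precisely: pick $x \in N$; lift to $\tilde x \in M$. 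Since $N = \mathfrak m^n N$ for each $n$, we can write $\tilde x = (\text{element of } \mathfrak m^n M) + F(y_n)$ for suitable $y_n \in A^r$, i.e.\ $\tilde x \equiv F(y_n) \pmod{\mathfrak m^n M}$. The hard part will be to assemble the $y_n$ into a convergent sequence: I would show the $y_n$ can be chosen so that $y_{n+1} \equiv y_n \pmod{\mathfrak m^n A^r}$, using that $F(y_{n+1} - y_n) \in \mathfrak m^n M$ together with the fact that, modulo $\mathfrak m^n$, the map $F$ is \emph{injective} on the span of the $m_i$'s because $m_1,\dots,m_r$ were chosen to be a basis mod $\mathfrak m$ (a Nakayama-style argument over the Artinian ring $A/\mathfrak m^n$ shows $F \bmod \mathfrak m^n : (A/\mathfrak m^n)^r \to M/\mathfrak m^n M$ has kernel contained in $\mathfrak m^{n-1}(A/\mathfrak m^n)^r$, or some such decay estimate — this is where care is needed).

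Granting the Cauchy sequence $(y_n)$ in $A^r$, completeness of $A$ gives a limit $y \in A^r$ with $y \equiv y_n \pmod{\mathfrak m^n A^r}$, whence $F(y) \equiv \tilde x \pmod{\mathfrak m^n M}$ for all $n$, i.e.\ $\tilde x - F(y) \in \bigcap_n \mathfrak m^n M = 0$ by the assumed $\mathfrak m$-adic separatedness of $M$. Therefore $\tilde x = F(y)$ lies in the image of $F$, so $x = 0$ in $N$. As $x$ was arbitrary, $N = 0$, and $F : A^r \to M$ is surjective, proving $M$ is finitely generated over $A$. I expect the main obstacle to be the bookkeeping in the inductive construction of the sequence $(y_n)$ — controlling the ambiguity in each lift $y_n$ so that successive choices agree modulo higher and higher powers of $\mathfrak m$ — rather than anything conceptually deep; this is the standard "lift generators, then use completeness to correct the error terms" technique, and the finite-dimensionality of $M/\mathfrak m M$ is exactly what makes the source module $A^r$ finite free so that completeness of $A$ can be applied coordinatewise.
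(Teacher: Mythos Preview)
Your approach is essentially the paper's: lift a basis of $M/\mathfrak m M$ to elements $m_1,\dots,m_r$, then for an arbitrary $\tilde x\in M$ inductively produce coefficients so that $\tilde x - \sum a_i m_i \in \bigcap_n \mathfrak m^n M = 0$, using completeness of $A$ to sum the corrections and separatedness of $M$ to conclude.

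One correction, though: the route you float first---choose each $y_n$ independently and then invoke some ``injectivity of $F\bmod\mathfrak m^n$'' to force $y_{n+1}-y_n\in\mathfrak m^{n-1}(A/\mathfrak m^n)^r$---does not work. The decay estimate is false: take $M=A/\mathfrak m$, $r=1$, $F:A\to A/\mathfrak m$; then $\ker(F\bmod\mathfrak m^n)=\mathfrak m/\mathfrak m^n$, not $\mathfrak m^{n-1}/\mathfrak m^n$. The correct construction, which you also describe and which is exactly what the paper does, is to build the $y_n$ \emph{inductively}: given $\tilde x - F(y_n)\in\mathfrak m^n M$, use $M = F(A^r)+\mathfrak m M$ to write $\mathfrak m^n M = F(\mathfrak m^n A^r)+\mathfrak m^{n+1}M$, extract $z_n\in\mathfrak m^n A^r$ with $\tilde x - F(y_n+z_n)\in\mathfrak m^{n+1}M$, and set $y_{n+1}=y_n+z_n$. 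No injectivity is needed; the Cauchy condition is built in. The paper phrases this coordinatewise (writing $y_n = (a_{1,0}+\cdots+a_{1,n},\dots)$ with $a_{i,n}\in\mathfrak m^n$), but it is the same argument.
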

\begin{proof}
Choose $S = \{s_1,\ldots,s_m\}\subset M$ to be finite, and such that the image of $S$
in $M/\mathfrak m M$ spans $M$ over $A/\mathfrak m$, and let 
$N$ denote the $A$-submodule of $M$ generated by $S$.
We then have that $M = N + \mathfrak m M,$
and so arguing inductively, for each $v \in M$ we may find, for each $i = 1,\ldots,s,$
a sequence of elements $a_{i,n}$ of $A$ with $a_{i,n} \in \mathfrak m^n$
for each $n$, such that for each $n$ we have
$$v \in \sum_{i = 1}^m (a_{i,0} + a_{i,1} + \cdots + a_{i,n}) s_i +
\mathfrak m^{n+1} M.$$
Writing $a_i = a_{i,0} + a_{i,1} + \cdots$ (a well-defined element of $A$,
since $A$ is $\mathfrak m$-adically complete by assumption), 
we then find (since $M$ is $\mathfrak m$-adically separated)
that $v = \sum_{i = 1}^s a_i s_i \in N,$ and thus that $M = N$,
proving the lemma.
\end{proof}

\subsection{Invariant lattices} \label{subsec:lattices}
Let $\O$ be a complete discrete valuation ring, with field
of fractions $\CK$ and residue field $K$ of characteristic different
from $\ell$.
Let $\unif$ be a choice of uniformizer of $\O$.
If $V$ is a $\CK$-vector space, then by a lattice in $V$ we mean
an $\O$-submodule $V^{\circ}$
which spans $V$ over $\CK$.

\begin{df}
\label{def:integral}
{\em
%Let $\ell$ be a prime distinct from $p$.
We say that a representation $V$ of
a group $H$
%$\GL_2(\Q_{\ell})$
over $\CK$ is a good integral representation if $V$ contains a 
$\unif$-adically separated $H$-invariant lattice $V^{\circ}$
with the property that $\Vbar^{\circ} := V^{\circ}/\unif V^{\circ}$ has finite
length as a $K[H]$-module.
}
\end{df}

We now prove some basic results pertaining to this definition.

\begin{lemma}
\label{lem:sub}
Any subrepresentation of a good integral representation $V$ of $H$
over $\CK$ is again a good integral representation of $H$.
\end{lemma}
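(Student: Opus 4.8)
The plan is to take a good integral representation $V$ of $H$ over $\CK$, with a $\unif$-adically separated $H$-invariant lattice $V^\circ$ such that $\Vbar^\circ = V^\circ/\unif V^\circ$ has finite length over $K[H]$, and a subrepresentation $W \subset V$; I want to produce a lattice in $W$ witnessing that $W$ is good integral. The obvious candidate is $W^\circ := W \cap V^\circ$. First I would check that $W^\circ$ is a lattice in $W$: it is clearly an $\O$-submodule and $H$-invariant, and it spans $W$ over $\CK$ because any $w \in W$ can be scaled by a power of $\unif$ to lie in $V^\circ$ (since $V^\circ$ spans $V$), hence in $W^\circ$. It is $\unif$-adically separated because it is an $\O$-submodule of the $\unif$-adically separated module $V^\circ$ (separatedness passes to submodules, as $\bigcap_n \unif^n W^\circ \subset \bigcap_n \unif^n V^\circ = 0$).

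The substantive point is that $\Wbar^\circ := W^\circ/\unif W^\circ$ has finite length over $K[H]$. Here I would use the natural map $W^\circ/\unif W^\circ \to V^\circ/\unif V^\circ$, i.e.\ $\Wbar^\circ \to \Vbar^\circ$. If this map were injective, then $\Wbar^\circ$ would be a $K[H]$-submodule of the finite length module $\Vbar^\circ$, hence of finite length, and we would be done. Injectivity of $W^\circ/\unif W^\circ \to V^\circ/\unif V^\circ$ amounts to the equality $W^\circ \cap \unif V^\circ = \unif W^\circ$. The inclusion $\supseteq$ is trivial; for $\subseteq$, if $x = \unif v$ with $x \in W^\circ = W \cap V^\circ$ and $v \in V^\circ$, then $v = \unif^{-1} x \in W$ (as $W$ is a $\CK$-subspace), so $v \in W \cap V^\circ = W^\circ$, whence $x \in \unif W^\circ$. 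This is the same kind of elementary saturation argument as in Lemma~\ref{lem:isom}, and indeed that lemma could be invoked directly in the special case relevant here; in any case there is no real obstacle — the only thing to be careful about is not confusing $W \cap \unif V^\circ$ with $\unif(W \cap V^\circ)$, which is exactly the point the computation above resolves.

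So I do not expect a genuine difficulty; the proof is a short diagram chase showing $W^\circ = W \cap V^\circ$ is the desired lattice. If I wanted to be maximally economical I would simply say: set $W^\circ = W \cap V^\circ$; it is an $H$-invariant $\unif$-adically separated lattice in $W$, and the inclusion $W^\circ \hookrightarrow V^\circ$ induces an injection $W^\circ/\unif W^\circ \hookrightarrow V^\circ/\unif V^\circ$ (by saturation of $W$ in $V$ over $\CK$), so $\Wbar^\circ$ has finite length as a submodule of the finite length $K[H]$-module $\Vbar^\circ$, proving $W$ is good integral.
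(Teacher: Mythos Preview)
Your proof is correct and matches the paper's approach exactly: set $W^\circ = V^\circ \cap W$, observe it is a $\unif$-adically separated $H$-invariant lattice in $W$, and use the injectivity of $\Wbar^\circ \to \Vbar^\circ$ to conclude finite length. The paper merely asserts this injectivity, while you spell out the saturation argument $W^\circ \cap \unif V^\circ = \unif W^\circ$; this extra detail is fine and entirely in the spirit of the original.
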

\begin{proof}
If $V^{\circ}$ is a $\unif$-adically separated $H$-invariant lattice
in $V$ for which $\Vbar^{\circ}$ is of finite length over $K[H]$,
then $W^{\circ} := V^{\circ} \cap W$ is a $\unif$-adically separated
$H$-invariant lattice in $W$, and since the natural map
$\Wbar^{\circ} \to \Vbar^{\circ}$ is injective, we see that
$\Wbar^{\circ}$ also has finite length over $K[H]$.
\end{proof}

\begin{lemma}
\label{lem:div}
Let $V$ be a good integral representation 
of a group $H$, and fix a $\unif$-adically separated
$H$-invariant lattice
$V^{\circ} \subset V$ such that $\Vbar^{\circ}$
has finite length over~$K[H]$.
If $M$ is an $H$-invariant $\O$-submodule of $V/V^{\circ}$, 
then either $M$ is of bounded exponent as an $\O$-module,
or else $M$ contains a non-zero $H$-invariant divisible $\O$-submodule.
\end{lemma}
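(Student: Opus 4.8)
The plan is to argue by contradiction, supposing that $M$ is not of bounded exponent and producing a non-zero $H$-invariant divisible submodule. First I would introduce, for each $n \geq 0$, the $H$-invariant $\O$-submodule $M[\unif^n] := \{ m \in M : \unif^n m = 0\}$; since $M \subset V/V^{\circ}$ and $V^{\circ}$ is $\unif$-adically separated, multiplication by $\unif^n$ identifies $M[\unif^n]$ with an $H$-stable $\O/\unif^n$-submodule of $V^{\circ}/\unif^n V^{\circ}$. Because $\Vbar^{\circ} = V^{\circ}/\unif V^{\circ}$ has finite length over $K[H]$, an easy dévissage up the filtration by powers of $\unif$ shows that $V^{\circ}/\unif^n V^{\circ}$ has finite length over $(\O/\unif^n)[H]$; in particular $M[\unif^n]$ has finite length over $\O[H]$, so it is a Noetherian and Artinian $\O[H]$-module. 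That $M$ is not of bounded exponent means precisely that $M[\unif^n] \subsetneq M[\unif^{n+1}]$ for infinitely many (hence, after discarding finitely many, all) $n$, i.e. $\unif^n M[\unif^{n+1}] \neq 0$ for all $n$ in a cofinal set.

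The heart of the argument is then to extract a compatible system. I would look at the $H$-stable subquotients $Q_n := \unif M[\unif^{n+1}] / (\unif M[\unif^{n+1}] \cap M[\unif^{n}])$, which is naturally a submodule of $M[\unif^{n}]/M[\unif^{n-1}]$, i.e. of the "$n$-th graded piece"; multiplication by $\unif$ gives an $H$-equivariant map from the $(n+1)$-st graded piece of $M$ to the $n$-th. Each graded piece embeds $H$-equivariantly into $\Vbar^{\circ}$, so there are only finitely many isomorphism classes of $K[H]$-module among them, and each has length bounded by $\mathrm{length}_{K[H]}(\Vbar^{\circ})$. The key combinatorial point is that since $M$ has unbounded exponent, infinitely many graded pieces are non-zero, and the transition maps (multiplication by $\unif$) cannot all eventually be zero on the relevant part — one uses a König's-lemma / pigeonhole style argument on the tree of non-zero elements, together with the finite-length bound, to produce a sequence $0 \neq \xbar_n \in M[\unif^{n+1}]/M[\unif^n]$ with $\unif \xbar_{n+1} = \xbar_n$ and with each $\xbar_n$ generating a fixed irreducible $K[H]$-module, stabilizing the isomorphism type. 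Lifting and summing, $D := \bigcup_n$ (the $H$-submodule of $M$ generated by a lift of $\xbar_n$) is then $H$-invariant, non-zero, and $\unif$-divisible by construction.

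The main obstacle I anticipate is the bookkeeping in this last extraction step: one must ensure the chosen elements are simultaneously (a) $\unif$-compatible, (b) non-zero modulo the previous filtration step, and (c) of a stable $K[H]$-isomorphism type, and one must check that the divisible module one builds is genuinely $\unif$-divisible as an $\O$-module (not merely that it receives surjections) — this is where the finiteness of $\mathrm{length}_{K[H]}(\Vbar^{\circ})$ is essential, since it forces the "widths" of the graded pieces to stabilize so that the inverse system of irreducible subquotients has surjective, hence (by finite length) eventually bijective, transition maps. A cleaner route to the same end, which I would try first, is to replace $M$ by its $\unif$-divisible part $M_{\mathrm{div}} := \bigcap_n \unif^n M$, which is automatically $H$-invariant and divisible; the content then becomes showing $M_{\mathrm{div}} \neq 0$ when $M$ has unbounded exponent, and this follows from the fact that the descending chain $\unif^n M \cap M[\unif]$ of $H$-stable submodules of the finite-length module $M[\unif] \subset \Vbar^{\circ}$ must stabilize at a non-zero module (its stable value lies in $M_{\mathrm{div}}[\unif]$, and is non-zero precisely because unbounded exponent forces $\unif^n M \cap M[\unif] \neq 0$ for all $n$).
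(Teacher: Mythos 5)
Your second, ``cleaner'' route is very close to the paper's actual argument, but it rests on a claim that is false in general and is asserted without proof: that $M_{\mathrm{div}} := \bigcap_n \unif^n M$ is ``automatically \dots divisible.'' For a general torsion $\O$-module this fails --- this is exactly the first Ulm submodule, and there exist \emph{reduced} modules (containing no non-zero divisible submodule at all) whose first Ulm submodule is non-zero, e.g.\ the torsion completion of $\bigoplus_n \O/\unif^n$. So the divisibility of $\bigcap_n \unif^n M$ is precisely where the finite-length hypothesis must be used a second time, and your write-up uses it only for the non-vanishing. The gap is fixable by the same stabilization device: given $x \in M_{\mathrm{div}}$, the sets $S_n = \{\, y \in \unif^{n-1}M : \unif y = x \,\}$ are non-empty nested cosets of the descending chain $\unif^{n-1}M \cap M[\unif]$ inside the finite-length module $M[\unif]$; once that chain stabilizes the cosets $S_n$ are all equal, so $\bigcap_n S_n \neq \emptyset$, and any element of it is a preimage of $x$ under $\unif$ lying again in $M_{\mathrm{div}}$. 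With that supplement your route is complete. (Your first route --- the graded pieces, isomorphism types of irreducible $K[H]$-constituents, and a ``K\"onig's lemma / pigeonhole'' extraction --- is not actually carried out, and the bookkeeping you worry about is unnecessary.)

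For comparison, the paper argues as follows: unbounded exponent implies that each composite $M[\unif^n] \to M[\unif^m]$ (multiplication by $\unif^{n-m}$) has non-zero image; since each $M[\unif^n]$ has finite length, the images stabilize and $\plim{n} M[\unif^n] \neq 0$; a non-zero compatible system $(m_n)$ with $\unif m_{n+1} = m_n$ generates a non-zero divisible $\O$-submodule by inspection; and one then passes to the \emph{maximal} divisible submodule of $M$, which is genuinely automatic to see is divisible and $H$-invariant. That last step is the clean way to get $H$-invariance without tracking any module structure on the compatible system, and it sidesteps the Ulm-submodule subtlety that your version runs into.
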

\begin{proof}
If $M$ is not of bounded exponent,
then the map $M[\unif^n] \rightarrow M[\unif^m]$
induced by multiplication by $\unif^{n-m}$ has non-zero image
for each $n \geq m \geq 1$, and hence, since each $M[\unif^n]$ has finite
length, we find that $\plim{n} M[\unif^n] \neq 0$ (the transition maps
being given by multiplication by $\unif$).  
If $(m_n)_{n \geq 0}$ is a non-zero element of this projective limit,
then the $\O$-submodule of $M$ generated by the elements $m_n$
is evidently non-zero and divisible.  Thus the maximal divisible submodule
of $M$ is non-zero; it is also clearly $H$-invariant.
\end{proof}

\begin{lemma} 
\label{lem:commens}
If $H$ is a topological group satisfying Condition~{\em \ref{cond:pro-ell}},
and if $V$ is a good integral admissible smooth representation of
$H$ over $\CK$, then:
\begin{enumerate}
\item Any two 
$\unif$-adically separated $H$-invariant lattices in $V$
are commensurable.
\item
If $V^{\circ}$ is a
$\unif$-adically separated $H$-invariant lattice in $V$, then
$V^{\circ}$ is finitely generated over $\O[H]$,
the $K[H]$-module
$\Vbar^{\circ}:= V^{\circ}/\unif V^{\circ}$ is of finite length,
and the isomorphism class of
$(\Vbar^{\circ})^{\ss}$ {\em (}the semisimplification of $\Vbar^{\circ}$ as
a $k[H]$-module{\em )} is independent of the choice of $V^{\circ}$.
\end{enumerate}
\end{lemma}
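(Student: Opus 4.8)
\textbf{Proof proposal for Lemma~\ref{lem:commens}.}

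The plan is to deduce everything from Lemmas~\ref{lem:div} and~\ref{lem:adm} (via the decomposition~(\ref{eqn:decomp})), together with the standard theory of finitely generated modules over $\O$. The strategy for part~(1) is to show that if $V^{\circ}$ and $W^{\circ}$ are two $\unif$-adically separated $H$-invariant lattices in $V$, then the image of $W^{\circ}$ in $V/V^{\circ}$ is of bounded exponent as an $\O$-module; by symmetry this gives commensurability. To see this, apply Lemma~\ref{lem:div} to $M := (W^{\circ} + V^{\circ})/V^{\circ}$: if $M$ were not of bounded exponent it would contain a nonzero $H$-invariant divisible $\O$-submodule $D$. The preimage $D'$ of $D$ in $W^{\circ}$ under $W^{\circ}\to W^{\circ}/(W^{\circ}\cap V^{\circ})\hookrightarrow M$ is then an $H$-invariant $\O$-submodule of $W^{\circ}$ with $D' + V^{\circ}\cap W^{\circ}$ dense for the $\unif$-adic topology in a submodule on which $\unif$ acts surjectively; concretely, $\unif D' + (V^{\circ}\cap W^{\circ}) \supseteq D'$, so $D'\subseteq \bigcap_n\bigl(\unif^n W^{\circ} + (V^{\circ}\cap W^{\circ})\bigr)$. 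Since $V^{\circ}\cap W^{\circ}$ is $\unif$-adically closed in $W^{\circ}$ (being the kernel of $W^{\circ}\to M$, and $W^{\circ}$ being $\unif$-adically separated — here I use that $V^{\circ}$ is separated to control $M$'s torsion, or more directly that $W^{\circ}$ is separated and $M[\unif^\infty]$ decomposes finitely), one concludes $D'\subseteq V^{\circ}\cap W^{\circ}$, i.e. $D = 0$, a contradiction. I expect the cleanest route is instead to work $H_i$-isotypically: write $V = \bigoplus_i V_i$, and similarly decompose $V^{\circ}$, $W^{\circ}$; by Lemma~\ref{lem:isom} applied $\unif$-adically each $V^{\circ}_i$, $W^{\circ}_i$ is a full $\O$-lattice in the finite-dimensional $\CK$-vector space $V_i$, hence commensurable by elementary linear algebra over the DVR $\O$, and the commensurability is uniform because $\Vbar^{\circ}$ has finite length so only finitely many $V_i$ are nonzero — wait, that is false in general; rather, one bounds the commensurability defect on all $V_i$ simultaneously using Lemma~\ref{lem:div} once, as above. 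I will take this isotypic reduction as the backbone.

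For part~(2), fix a $\unif$-adically separated $H$-invariant lattice $V^{\circ}$. Since $V$ is by hypothesis good integral, there exists \emph{some} such lattice $V^{\circ}_0$ with $\Vbar^{\circ}_0$ of finite length over $K[H]$; by part~(1), $V^{\circ}$ and $V^{\circ}_0$ are commensurable, so $\unif^N V^{\circ}_0\subseteq V^{\circ}\subseteq \unif^{-N}V^{\circ}_0$ for some $N\geq 0$. Then $\Vbar^{\circ} = V^{\circ}/\unif V^{\circ}$ is a subquotient (up to the finite-length modules $\unif^{-N}V^{\circ}_0/\unif^N V^{\circ}_0$, which have a finite filtration with graded pieces $\cong \Vbar^{\circ}_0$) and hence has finite length over $K[H]$. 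Finite length over $K[H]$ plus smoothness gives that $\Vbar^{\circ}$ is finitely generated over $K[H]$, so by Lemma~\ref{lem:fg} (applied with $A = \O$, $\m = (\unif)$, noting $\Vbar^{\circ} = V^{\circ}/\m V^{\circ}$) $V^{\circ}$ is finitely generated over $\O[H]$. Finally, for independence of $(\Vbar^{\circ})^{\ss}$: given a second separated invariant lattice $W^{\circ}$, by part~(1) we may assume after scaling that $\unif W^{\circ}\subseteq V^{\circ}\subseteq W^{\circ}$ (reduce to the case of nested lattices by the usual dévissage — in general pass through $V^{\circ}\cap W^{\circ}$ and $V^{\circ}+W^{\circ}$). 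For nested lattices $\unif W^{\circ}\subseteq V^{\circ}\subseteq W^{\circ}$, the snake lemma applied to multiplication by $\unif$ on $0\to V^{\circ}/\unif W^{\circ}\to W^{\circ}/\unif W^{\circ}\to W^{\circ}/V^{\circ}\to 0$ yields $[\Wbar^{\circ}] = [V^{\circ}/\unif W^{\circ}] + [W^{\circ}/V^{\circ}]$ and $[\Vbar^{\circ}] = [W^{\circ}/V^{\circ}] + [V^{\circ}/\unif W^{\circ}]$ in the Grothendieck group of finite-length $K[H]$-modules (using that $W^{\circ}/V^{\circ}\cong \unif W^{\circ}/\unif V^{\circ}$ and $V^{\circ}/\unif W^{\circ} = \unif^{-1}(\unif V^{\circ})/\unif W^{\circ}\cdot$ — the point is that multiplication by $\unif$ identifies the relevant subquotients). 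Hence $[\Vbar^{\circ}] = [\Wbar^{\circ}]$ in the Grothendieck group, which is exactly the statement that $(\Vbar^{\circ})^{\ss}\cong(\Wbar^{\circ})^{\ss}$ as $K[H]$-modules (and a fortiori as $k[H]$-modules, where I read the statement's ``$k$'' as this residue field $K$).

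The main obstacle I anticipate is \emph{not} any single step but rather the bookkeeping in part~(1): proving that the commensurability defect between two arbitrary separated invariant lattices is genuinely bounded (uniformly across all the $H_i$-isotypic pieces $V_i$), since $V$ itself is infinite-dimensional over $\CK$ and naïvely the defect could grow with $i$. The essential input that prevents this is Lemma~\ref{lem:div}: the quotient $(V^{\circ}+W^{\circ})/V^{\circ}$ is an $H$-invariant submodule of $V/V^{\circ}$, and the finite-length hypothesis on $\Vbar^{\circ}$ forces it to contain no nonzero divisible submodule, hence (by Lemma~\ref{lem:div}) to be of bounded exponent — this single global statement simultaneously bounds the defect on every $V_i$. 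Once part~(1) is in hand in this uniform form, parts~(2) and the Grothendieck-group computation are routine dévissage, and the appeal to Lemma~\ref{lem:fg} is immediate.
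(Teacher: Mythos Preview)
Your treatment of part~(2) --- commensurability implies finite length of $\Vbar^{\circ}$, finite generation via Lemma~\ref{lem:fg}, and the Grothendieck-group d\'evissage for independence of $(\Vbar^{\circ})^{\ss}$ --- is correct and matches the paper. The problem is part~(1), where there are two genuine gaps.

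First, the ``by symmetry'' step fails. Applying Lemma~\ref{lem:div} to $(V^{\circ}+W^{\circ})/W^{\circ}$ requires $\Wbar^{\circ}$ to have finite length, which is precisely one of the conclusions you are trying to reach; only the \emph{good} lattice $V^{\circ}$ is known a priori to have finite-length reduction. The paper handles the asymmetry differently: it first observes (via Lemma~\ref{lem:fg}, using that $\Vbar^{\circ}$ is finite length and hence finitely generated over $K[H]$) that the good lattice $V^{\circ}$ is finitely generated over $\O[H]$, and then places its finitely many generators inside $\unif^{-m}W^{\circ}$ to obtain $V^{\circ}\subset\unif^{-m}W^{\circ}$ for free. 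After rescaling one reduces to $V^{\circ}\subset W^{\circ}$, and only the single quotient $W^{\circ}/V^{\circ}$ remains.

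Second, and more seriously, your argument that $M=(W^{\circ}+V^{\circ})/V^{\circ}$ contains no nonzero divisible submodule is circular. You correctly obtain $D'\subseteq \bigcap_n\bigl(\unif^n W^{\circ}+(V^{\circ}\cap W^{\circ})\bigr)$, but the assertion that $V^{\circ}\cap W^{\circ}$ is $\unif$-adically closed in $W^{\circ}$ is \emph{equivalent} to $M\cong W^{\circ}/(V^{\circ}\cap W^{\circ})$ being $\unif$-adically separated --- and a separated module has no nonzero divisible submodule, so this assumes the conclusion. (Being the kernel of a map out of a separated module does not make a submodule closed; closedness of the kernel is the same as separatedness of the image.) The paper's replacement for this step uses the admissible-continuous machinery of Subsection~\ref{subsec:admissible}: given a nonzero divisible $D\subset W^{\circ}/V^{\circ}$, form the Tate module $T_pD\hookrightarrow \widehat{V^{\circ}}$, observe that $T_pD/\unif T_pD\cong D[\unif]\subset \unif^{-1}V^{\circ}/V^{\circ}$ is admissible smooth so that $T_pD$ is admissible continuous, and invoke Remark~\ref{rem:adm equiv} to get $(T_pD)_{\sm}\neq 0$ inside $V^{\circ}$. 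Then $\CK\otimes_{\O}(T_pD)_{\sm}$ is a nonzero $\CK$-subspace of $V$ whose image in $V/V^{\circ}$ is $D\subset W^{\circ}/V^{\circ}$, hence it lies in $W^{\circ}$; but a nonzero $\CK$-vector space is never $\unif$-adically separated, contradicting the hypothesis on~$W^{\circ}$. This Tate-module detour is the missing idea.
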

\begin{proof}
Since $V$ is good integral by assumption, we may and do choose
an  $\unif$-adically separated $H$-invariant lattice $V^{\circ} \subset V$
such that $\Vbar^{\circ}$ is of finite length.
Then $\Vbar^{\circ}$ is certainly finitely generated over $K[H]$, and so
Lemma~\ref{lem:fg} implies that $V^{\circ}$ is finitely generated over $\O[H]$.

Let $V^{\diamond}$ be another $\unif$-adically separated $H$-invariant lattice
in $V$.  We will prove that $V^{\diamond}$ is commensurable with $V^{\circ}$.
This will prove~(1).  An easy (and standard) argument then 
proves that $\Vbar^{\diamond}$ is of finite length over $K[H]$, and that
$(\Vbar^{\circ})^{\ss}$ and $(\Vbar^{\diamond})^{ss}$ are isomorphic. 
Also Lemma~\ref{lem:fg} will imply that $V^{\diamond}$ is finitely generated over
$\O[H]$.  Thus~(2) will also follow.

Since $V^{\circ}$ is finitely generated over $\O[H]$,
we may find $m \geq 0$ such that $V^{\circ} \subset \unif^{-m}V^{\diamond}.$
In proving the commensurability of $V^{\circ}$ and $V^{\diamond}$,
it is clearly no loss of generality to replace
$V^{\diamond}$ by $\unif^{-m} V^{\diamond}$, and so we may and do assume for the
remainder of the proof that $V^{\circ} \subset V^{\diamond}$.

Consider now the quotient $V^{\diamond}/V^{\circ} \subset V/V^{\circ}$.
If $V^{\diamond}/V^{\circ}$ is not of bounded exponent, then Lemma~\ref{lem:div} 
shows that it contains a non-zero $H$-invariant divisible submodule~$D$. 
The Tate module $T_p D := \plim{n} D[\unif^n]$ (the transition
maps being given by multiplication by $\unif$) is then a non-zero
$\unif$-adically complete and separated $\O$-module,
equipped with an action of $H$, and an injection
\begin{equation}
\label{eqn:complete embedding}
T_p D \hookrightarrow T_p(V/V^{\circ}) \iso \widehat{V^{\circ}}.
\end{equation}
Now
$$T_p D/\unif T_p D \iso D[\unif] \subset \dfrac{1}{\unif} V^{\circ}/V^{\circ},$$
and hence $T_p D/\unif T_p D$ is an admissible smooth $H$-representation.
Thus $T_p D$ is an admissible continuous $H$-representation over $\O$,
and so by Remark~\ref{rem:adm equiv}, the injection~(\ref{eqn:complete embedding})
is obtained from the induced embedding
$(T_p D)_{\sm} \hookrightarrow V^{\circ}$ by passing to $\unif$-adic completions.
In particular $(T_p D)_{\sm} \neq 0.$
Also, the image of the composite
$$\CK\otimes_\O (T_p D)_{\sm} \rightarrow V \rightarrow V/V^{\circ}$$
is precisely $D$, 
and so (since $D \subset V^{\diamond}/V^{\circ}$),  we conclude that
$\CK\otimes_\O (T_p D)_{\sm} \subset V^{\diamond}$.
But $\CK\otimes_{\O} (T_p D)_{\sm}$ is a non-zero $\CK$-vector space,
and hence is not $\unif$-adically separated.  This contradicts our
assumption on $V^{\diamond}$, and hence we conclude that $V^{\diamond}/V^{\circ}$ is indeed
of bounded exponent, and thus that $V^{\circ}$ and $V^{\diamond}$ are commensurable,
as required.
\end{proof}

\begin{df}
\label{df:Vbarss}
{\em
Let $H$ be a topological group satisfying Condition~\ref{cond:pro-ell},
and let $V$ be a good integral admissible smooth representation of
a topological group $H$ over~$\CK$.
If $V^{\circ}$ is a
$\unif$-adically separated $H$-invariant lattice in $V$ such that
$\Vbar^{\circ} := V^{\circ}/\unif V^{\circ}$ is of finite length
as a $K[H]$-module (which exists, by assumption),  
then we write $\Vbar^{\ss}$ to denote the semisimplification
of $\Vbar^{\circ}$ as a $K[H]$-module.  (The preceding lemma
shows that, up to isomorphism, $\Vbar^{\ss}$ is independent
of the choice of $V^{\circ}$.)
}
\end{df}

The following lemma will allow us to choose lattices in
good integral admissible smooth representations whose reductions
modulo $\unif$ have certain specified $H$-socles.

\begin{lemma} 
\label{lem:lattice}
Let $H$ be a topological group satisfying Condition~{\em \ref{cond:pro-ell}},
and let $V$ be a good integral admissible smooth representation of $H$ over $\CK$.
Let $\mathcal S$ denote the set
of isomorphism classes of
Jordan--H\"older factors of $\Vbar^{\ss}$ {\em (}as a $K[H]$-module; the
discussion of Definition~{\em \ref{df:Vbarss}} shows that this set is well-defined{\em )},
let $\mathcal T$ be a
subset of~$\mathcal S$, and suppose that $V$ contains no non-zero subrepresentation
$W$ {\em (}necessarily also good integral, by Lemma~{\em \ref{lem:sub} )}
such that every Jordan--H\"older factor of $\Wbar^{\ss}$ belongs to
$\mathcal T$.  Then there exists a
$\unif$-adically separated $H$-invariant lattice
$V^{\diamond}$ contained in $V^{\circ}$ with
the property that $\Vbar^{\diamond} := V^{\diamond}/\unif V^{\diamond}$ contains no subobject
isomorphic to an element of~$\mathcal T$.
\end{lemma}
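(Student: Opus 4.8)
The plan is to produce the lattice $V^{\diamond}$ by a ``peeling off'' procedure: starting from $V^{\circ}$, I will repeatedly pass to sublattices that kill off the socle constituents lying in $\mathcal T$, and show that this process terminates. Concretely, given any $\unif$-adically separated $H$-invariant lattice $L \subset V^{\circ}$ with $\Lbar := L/\unif L$ of finite length over $K[H]$ (such lattices are exactly the ones furnished by Lemma~\ref{lem:commens}), consider the socle $\soc(\Lbar)$ and its isotypic piece $\Lbar_{\mathcal T}$ built from the constituents in $\mathcal T$; let $\Lbar \to \Qbar$ be the quotient by $\Lbar_{\mathcal T}$. Pulling back along $L \to \Lbar \to \Qbar$ gives a new $H$-invariant lattice $L' \subset L$ with $\unif L \subset L' \subset L$ and $L'/\unif L \cong \Qbar$; one checks $L'$ is again $\unif$-adically separated (it contains $\unif L$, hence is $\unif$-adically separated as a submodule of the separated module $L$) and $\Lbar' = L'/\unif L'$ is of finite length (it is commensurable with $\Lbar$ by Lemma~\ref{lem:commens}, or directly: it is an extension of $\Lbar_{\mathcal T}$ by $\Qbar$). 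This is the basic step; the lattice $V^{\diamond}$ will be obtained by iterating it.

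**Why the process must terminate.** The key numerical invariant is the total multiplicity in $\Lbar^{\ss}$ of the Jordan--H\"older factors belonging to $\mathcal T$ — call it $n(\mathcal T, L)$. By Lemma~\ref{lem:commens}(2), $\Lbar^{\ss} \cong \Vbar^{\ss}$ for \emph{every} admissible $\unif$-adically separated lattice, so $n(\mathcal T, L)$ is the same finite number $N$ for all the lattices in our chain — so a naive multiplicity count does not obviously decrease. The real point is different: I claim that after the step above, the new lattice $L'$ has \emph{strictly fewer} $\mathcal T$-constituents in its \emph{socle} than $L$ did, unless $\soc(\Lbar')$ already contains no element of $\mathcal T$. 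Indeed, since $\soc(\Lbar')$ injects into $\Lbar'$, and the preimage of $\soc(\Qbar)$ under $\Lbar' \to \Qbar$ is $\unif$-related to data we control, one argues that a $\mathcal T$-constituent appearing in $\soc(\Lbar')$ must ``come from'' deeper in the socle filtration of $\Lbar$; formally, I would track the position of $\mathcal T$-constituents in the socle filtration and show the relevant invariant (say, the sum over $\mathcal T$-constituents of their socle-filtration depth, bounded above by $N \cdot \mathrm{length}(\Lbar)$) strictly decreases at each step, or alternatively set things up so that the socle multiplicity of $\mathcal T$ strictly drops. The hypothesis that $V$ contains no non-zero subrepresentation all of whose constituents lie in $\mathcal T$ is used precisely here: it guarantees we never get ``stuck'' with a lattice whose reduction has a $\mathcal T$-constituent in the socle that cannot be removed — if the process could continue forever producing lattices $V^{\circ} \supset L_1 \supset L_2 \supset \cdots$ each with a $\mathcal T$-constituent in the socle, then $\bigcap_i L_i$ (or a $\unif$-adic limit construction as in the proof of Lemma~\ref{lem:commens}, using Tate modules) would produce such a forbidden subrepresentation of $V$, a contradiction.

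**The main obstacle.** The delicate point, and the one I expect to require the most care, is controlling how the $H$-socle behaves under the pullback operation $L \rightsquigarrow L'$: it is \emph{not} automatic that removing the $\mathcal T$-part of $\soc(\Lbar)$ strictly improves things, because new $\mathcal T$-constituents can re-emerge in $\soc(\Lbar')$ from the second layer of the socle filtration of $\Lbar$, and there is a genuine interaction between the $K[H]$-module structure of $\Lbar$ and the $\unif$-adic geometry of sublattices. I would handle this by proving a lemma comparing $\soc(\Lbar')$ with the socle filtration of $\Lbar$: using that $L'/\unif L \cong \Lbar/\Lbar_{\mathcal T}$ and $\unif L/\unif L' \cong \Lbar_{\mathcal T}$, one gets a four-term exact sequence relating $\Lbar'$, $\Lbar_{\mathcal T}$, and $\Lbar/\Lbar_{\mathcal T}$, from which the socle of $\Lbar'$ can be estimated. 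Once the monotonicity of the chosen invariant is established, termination is immediate from finiteness of $\mathrm{length}(\Vbar^{\ss})$, and the terminal lattice is the desired $V^{\diamond}$. The ``no forbidden subrepresentation'' hypothesis should enter either as the guarantee of termination (no infinite strictly-decreasing chain of the invariant can fail to reach $0$) or, if one prefers the limit argument, as the contradiction that rules out non-termination directly; I would use whichever makes the write-up cleanest, most likely the former combined with an explicit bound on the invariant.
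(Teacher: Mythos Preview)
Your peeling-off construction has a basic error. You write that pulling back along $L \to \Lbar \to \Qbar$ produces a sublattice $L'$ with $\unif L \subset L' \subset L$ and $L'/\unif L \cong \Qbar$. But for any such $L'$, the image $L'/\unif L$ is a \emph{submodule} of $\Lbar$, whereas $\Qbar = \Lbar/\Lbar_{\mathcal T}$ is a quotient; there is no canonical copy of $\Qbar$ inside $\Lbar$ unless $\Lbar_{\mathcal T}$ happens to split off. More seriously, whatever one-step modification you make --- passing to a sublattice with $L'/\unif L = \Xbar \subset \Lbar$, or dually to a superlattice with $L''/L = \Ybar$ --- the new reduction sits in a short exact sequence $0 \to \Lbar/\Xbar \to \Lbar' \to \Xbar \to 0$, so $\soc(\Lbar')$ contains $\soc(\Lbar/\Xbar)$. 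Removing $\Lbar_{\mathcal T}$ from the bottom of $\Lbar$ does not prevent $\mathcal T$-constituents from reappearing in $\soc(\Lbar')$. Already when $\Lbar$ is a length-two extension of $S \notin \mathcal T$ by $T \in \mathcal T$, one step produces an extension $0 \to S \to \Lbar' \to T \to 0$ which may well be split, leaving $T$ in the socle again; iterating can continue indefinitely, and none of your proposed invariants (socle $\mathcal T$-multiplicity, depth sums) need strictly decrease.

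The paper avoids iteration. Working in $V/V^{\circ}$, it takes $M$ to be the \emph{maximal} $\O[H]$-submodule all of whose Jordan--H\"older factors lie in~$\mathcal T$, and shows directly that $M$ has bounded exponent: otherwise $\plim{n} M[\unif^n]$ is nonzero, its smooth vectors (via Remark~\ref{rem:adm equiv}) sit inside $V^{\circ}$, and tensoring with $\CK$ produces a forbidden subrepresentation $W \subset V$. This is exactly the Tate-module argument you mention as a fallback --- but it is the entire proof, not a contingency plan. Once $M = M[\unif^n]$, one sets $V^{\diamond}$ to be the preimage of $M$ in $V$; then $\Vbar^{\diamond} \cong \unif^{-1}V^{\diamond}/V^{\diamond} \hookrightarrow (V/V^{\circ})/M$, and the \emph{maximality} of $M$ guarantees this quotient has no nonzero submodule with factors in $\mathcal T$. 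So maximality does in one stroke what your inductive bookkeeping was attempting. (The resulting $V^{\diamond}$ contains $V^{\circ}$ rather than being contained in it; the ``contained in'' of the statement appears to be a slip, harmless after rescaling.)
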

\begin{proof}
Choose (as we may, by assumption) a $\unif$-adically separated $H$-invariant
lattice $V^{\circ}$ with the property that
$\Vbar^{\circ} := V^{\circ}/\unif V^{\circ}$ is of finite
length as a $K[H]$-module. 
Let $M \subset V/V^{\circ}$
be the maximal $\O[H]$-submodule all of whose Jordan--H\"older factors
are isomorphic to an element of $\mathcal T$.

If we form the projective limit $\plim{n} M[\unif^n] \neq 0$ (the transition maps
being given by multiplication by $\unif$),  
then
$\plim{n} M[\unif^n] \hookrightarrow \plim{n} \dfrac{1}{\unif^n}V^{\circ}/V^{\circ}
\iso \widehat{V}^{\circ},$
with saturated and $\unif$-adically complete image.
If we write $W = \CK\otimes_{\mathcal O} (\plim{n} M[\unif^n])_{\sm}$,
then Remark~\ref{rem:adm equiv} implies that $W$ is vanishes if and only
if $\plim{n} M[\unif^n]$ does.
On the other other hand, by construction $W$ is a subrepresentation
of $V$ with the property that all the Jordan--H\"older factors
of $\Wbar^{\ss}$ belong to $\mathcal T$, and so by assumption $W$ must
vanish.  Thus $\plim{n} M[\unif^n] = 0$,
and so Lemma~\ref{lem:div} implies that $M$ is of bounded exponent,
say $M = M[\unif^n]$.

%If $M$ contains any non-zero infinitely divisible elements,
%then the preimage of these under the projection $V/V_0$
%would form a non-zero proper $H$-subrepresentation of $V$,
%contradicting the assumption that $V$ is irreducible.
%Consequently (taking into account that $\Vbar_0$ has finite length)
%we find that $M \subset \unif^{-n} V_0/V_0$ for some $n \geq 0$.
Let $V^{\diamond}$ denote the preimage of $M$ in $V$.
Since $V^{\circ} \subset V^{\diamond} \subset \unif^{-n} V^{\circ}$, we see that
$V^{\diamond}$ is $\unif$-adically separated.
Since
$\Vbar^{\diamond} \iso \unif^{-1} V^{\diamond} / V^{\diamond} \hookrightarrow V/M,$
our choice of $M$ ensures that $\Vbar^{\diamond}$ contains no subobject
isomorphic to an element of $\mathcal T$.
\end{proof}

\section{Representation theory --- the case of $\GL_n$}
\label{sec:gln}

\subsection{Kirillov models} \label{subsec:kirillov}
Let $k$ be a perfect field.
Let $\ell$ be a prime distinct from the characteristic of $k$,
and let $\tk$ be a Galois extension of $k$ containing all
$\ell$-power roots of unity.

In this
section we set $G = \GL_n(E)$, where $E$ is a non-archimedean local field
of residue characteristic $\ell$.   We will define a notion of Kirillov
models for smooth representations of $G$ over a $W(k)$-algebra $A$.

We begin by recalling the basic properties
of Kirillov models associated
to smooth $W(\tk)[G]$-modules.

In the case of smooth $\C[G]$-modules, these results are found in \cite[\S 4]{BZ};
%\cite{BZ}, section 4;
over more general algebraically closed fields they can be
found in \cite[Ch.~III.1]{Vig2}.
%\cite{Vig2}, Chapter III.1. 
The extension of these results to
coefficients in $W(\tk)$ is more or less immediate.  We summarize the key facts:
%Other then
%Theorem~\ref{thm:kirillov}, the extension of these results
%to positive characteristic coefficients is immediate.
%%In the interests of remaining as self-contained as possible,
%For the benefit of the reader,
%we have included proofs of the more elementary results.

Define subgroups $P_n$ and $N_n$ of $\GL_n(E)$ by setting:
$$
\begin{array}{rl}
P_n & = \, \{ \, \left(\begin{matrix} a & b \\ 0 & 1 \end{matrix} \right)
\quad \mid \quad a \in \GL_{n-1}(E), \quad b \in E^{n-1} \, \},
  \\
N_n & = \, \{ \, \left(\begin{matrix} \Id_{n-1} & b \\ 0 & 1 \end{matrix} \right)
\quad \mid \quad b \in E^{n-1} \, \} ,\\
\end{array}
$$
We consider $\GL_{n-1}(E)$ as a subgroup of $P_n$ in the obvious way,
and identify $N_n$ with $E^{n-1}$.  Note that $P_n = \GL_{n-1}(E)N_n$.
Any character 
$\psi:E^{n-1} \rightarrow W(\tk)^{\times}$ 
induces a character of $N_n$ via
$$\left(\begin{matrix} \Id_{n-1} & b \\ 0 & 1 \end{matrix} \right) \mapsto \psi(b),$$
which we again denote by $\psi$.

We fix, for the remainder of this section, a character $\psi: E \rightarrow W(\tk)^{\times}$
whose kernel is equal to the subgroup $\O_E$ of $E$.  We consider $\psi$ as a character
of $E^{n-1}$ by setting $\psi(e_1, \dots, e_{n-1}) = \psi(e_{n-1})$, and also
as a character of $N_n$ via the isomorphism of $E^{n-1}$ with $N_n$.
The subgroup $\GL_{n-1}(E)$ of $P_n$ normalizes
$N_n$, and therefore acts on the set of characters of $N_n$ by conjugation.  The stabilizer 
of $\psi$ under this action is the subgroup $P_{n-1}$ of $\GL_{n-1}(E)$.

\begin{df}  For a $W(\tk)$-algebra $A$, let $\Rep_A(G)$ denote the category of
smooth $A[G]$-modules.
Define functors $\Psi^-,\Psi^+,\Phi^-,\Phi^+,\hPhi^+$ by:
\begin{itemize}
\item $\Psi^-: \Rep_{W(\tk)}(P_n) \rightarrow \Rep_{W(\tk)}(\GL_{n-1}(E))$ is given by
$\Psi^-(V) = V_{N_n}$, the module of $N_n$-coinvariants of $V$.
\item $\Psi^+: \Rep_{W(\tk)}(\GL_{n-1}(E)) \rightarrow \Rep_{W(\tk)}(P_n)$ is the functor that
takes a $\GL_{n-1}$-module $V$ and extends the action of $\GL_{n-1}$ to $P_n$
by letting $N_n$ act trivially.
\item $\Phi^-: \Rep_{W(\tk)}(P_n) \rightarrow \Rep_{W(\tk)}(P_{n-1})$ is given by
$\Phi^-(V) = V_{\psi}$, where $V_{\psi}$ is the largest quotient of
$V$ on which $N_n$ acts by $\psi$.  (As $P_{n-1}$ normalizes $\psi$, 
$V_{\psi}$ is naturally a $P_{n-1}$-module.)
\item $\Phi^+: \Rep_{W(\tk)}(P_{n-1}) \rightarrow \Rep_{W(\tk)}(P_n)$ is given by
$$\Phi^+(V) = \cInd_{P_{n-1}N_n}^{P_n} V^{\prime},$$ 
where $V^{\prime}$ is the $P_{n-1}N_n$-module obtained from $V$ by 
letting $N_n$ act via $\Psi$, and $\cInd$ denotes smooth induction 
with compact
support.
\item $\hPhi^+: \Rep_{W(\tk)}(P_{n-1}) \rightarrow \Rep_{W(\tk)}(P_n)$ is given by
$$\hPhi^+(V) = \Ind_{P_{n-1}N_n}^{P_n} V^{\prime}.$$
\end{itemize}
\end{df}

The natural surjections of
$V$ onto $\Psi^- V$ and of $V$ onto $\Phi^- V$ are $\GL_{n-1}(E)$
and $P_{n-1}$-equivariant, respectively.

\begin{remark} 
{\em
Note that these functors differ from the ones defined in \cite{BZ}
in that they are not ``normalized.''  More precisely, the functors
defined in \cite{BZ} are twists of the above functors by the
square roots of certain modulus characters.  This makes them unsuitable
for most of our purposes, as the descent arguments we make at the end
of this section do not apply to the twisted functors defined in \cite{BZ}.
We will thus use the ``non-normalized'' functors defined above throughout
the bulk of the paper.

An unfortunate exception to this is in the proof of Proposition~\ref{prop:AIG}.
While it would in principle be possible to give a proof of Proposition~\ref{prop:AIG}
using the non-normalized functors that we use elsewhere, the normalization
of \cite{BZ} simplifies the combinatorics immensely.  We have
thus chosen to adopt this normalization for the purposes of that proof only.
}
\end{remark}

The arguments of \cite[\S 3.2]{BZ}
%\cite{BZ}, 3.2
carry over to this setting to show:
\begin{prop}
\label{prop:BZ}
\begin{enumerate}
\item The functors $\Psi^-,\Psi^+,\Phi^-,\Phi^+,\hPhi^+$ are exact. 
\item $\Phi^+$ is left adjoint to $\Phi^-$, $\Psi^-$ is left adjoint to
$\Psi^+$, and $\Phi^-$ is left adjoint to $\hPhi^+$.  
\item $\Psi^- \Phi^+ = \Phi^- \Psi^+ = 0$.
\item The composite functors $\Psi^- \Psi^+$, $\Phi^- \hPhi^+$,
and $\Phi^- \Phi^+$ are naturally isomorphic to identity functors.
\item One has an exact sequence of functors:
$$0 \rightarrow \Phi^+\Phi^- \rightarrow \Id 
\rightarrow \Psi^+\Psi^- \rightarrow 0.$$
\end{enumerate}
\end{prop}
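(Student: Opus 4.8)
The plan is to follow the Bernstein--Zelevinsky arguments of \cite[\S 3.2]{BZ}, noting at each step that nothing in those arguments used that the coefficients form an algebraically closed field (or even a field): the only inputs are the group-theoretic structure of $P_n = \GL_{n-1}(E)N_n$, the fact that $N_n \cong E^{n-1}$ is a union of compact open subgroups (so that smooth induction and compact induction from $P_{n-1}N_n$ behave well), and the exactness of the functors of coinvariants and $\psi$-coinvariants, which holds because $N_n$ is a union of compact open subgroups whose pro-orders are invertible in $W(\tk)$ (here is where we use that $\ell \neq \mathrm{char}\, k$, so Condition~\ref{cond:pro-ell} is in force and averaging over $N_n$-orbits makes sense). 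So the proof is essentially a verification that the classical argument is ``coefficient-agnostic.''

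Concretely, I would proceed as follows. First, exactness of $\Psi^\pm$ and $\Phi^\pm$: for $\Psi^-$ and $\Phi^-$ this is the exactness of $N_n$-coinvariants and of the $\psi$-twisted $N_n$-coinvariants functor on smooth representations, which follows from the standard fact that for a group that is a countable increasing union of compact open subgroups with pro-order invertible in $A$, the coinvariants functor agrees with the invariants functor and is exact (this is the $P$-adic incarnation of the averaging argument underlying \eqref{eqn:decomp}); $\Psi^+$ is exact trivially (inflation); $\Phi^+ = \cInd_{P_{n-1}N_n}^{P_n}(-)$ and $\hPhi^+ = \Ind_{P_{n-1}N_n}^{P_n}(-)$ are exact because $P_{n-1}N_n$ is an open subgroup of $P_n$, so both smooth and compact-support smooth induction are exact. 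Second, the adjunctions in (2): $(\Psi^-, \Psi^+)$ is a restriction-of-scalars/inflation adjunction along $P_n \twoheadrightarrow \GL_{n-1}(E)$; $(\Phi^+, \Phi^-)$ and $(\Phi^-, \hPhi^+)$ are the two Frobenius-reciprocity adjunctions for $\cInd$ and $\Ind$ along the open inclusion $P_{n-1}N_n \hookrightarrow P_n$, combined with the adjunction between $\psi$-coinvariants and ``let $N_n$ act by $\psi$.'' Third, (3) and (4) are direct computations with (co)invariants: $\Psi^-\Phi^+ = 0$ and $\Phi^-\Psi^+ = 0$ because $\psi$ is a nontrivial character of $N_n$, so $N_n$-coinvariants of a module induced via $\psi$ vanish, and conversely the $\psi$-coinvariants of a module on which $N_n$ acts trivially vanish; the natural isomorphisms $\Psi^-\Psi^+ \cong \Id$, $\Phi^-\hPhi^+ \cong \Id$, $\Phi^-\Phi^+ \cong \Id$ are the standard ``restriction to the inducing subgroup recovers the original module'' computations (using that the relevant double cosets are trivial: $P_n = P_{n-1}N_n \cdot \GL_{n-1}(E)$-type decompositions). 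Fourth, for the exact sequence in (5), I would produce the map $\Id \to \Psi^+\Psi^-$ as the canonical projection to $N_n$-coinvariants followed by the unit of the $(\Psi^-,\Psi^+)$ adjunction, identify its kernel as the submodule generated by $\{n v - v\}$, and then show this kernel is naturally $\Phi^+\Phi^-$ by a Mackey-style analysis of the $N_n$-action: decompose the kernel according to the ``spectrum'' of the $N_n$-action (using Fourier analysis on the locally profinite group $N_n \cong E^{n-1}$, i.e.\ $N_n$-isotypic pieces indexed by smooth characters of $N_n$, all of which become invertible after the appropriate base change since $W(\tk)$ contains enough roots of unity), and observe that $\GL_{n-1}(E)$ permutes these characters transitively on the nontrivial ones with stabilizer $P_{n-1}$, so the kernel is induced from the $\psi$-part, which by definition is $\Phi^+\Phi^-$.

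The main obstacle — the place where something genuinely has to be checked rather than quoted — is step four, the exact sequence~(5), and specifically the Fourier-analytic identification of the ``non-trivial $N_n$-spectrum part'' of an arbitrary smooth $W(\tk)[P_n]$-module with something induced from $P_{n-1}N_n$ via $\psi$. Over $\C$ this is classical harmonic analysis on $E^{n-1}$; over $W(\tk)$ one must be slightly careful that the idempotents cutting out $N_n$-isotypic components are defined (this needs the $\ell$-power roots of unity we have arranged to lie in $\tk$, hence in $W(\tk)$) and that ``generated by a single $\GL_{n-1}(E)$-orbit of characters'' translates cleanly into a compact induction over the open subgroup $P_{n-1}N_n$ — i.e.\ that the relevant sub-quotient is actually $\cInd$ and not merely $\Ind$. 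This is exactly the point at which one invokes that $N_n$ is a union of \emph{compact} open subgroups, so that the orbit-by-orbit decomposition is a direct sum (not a product) and the compact-support condition is automatic. Everything else is bookkeeping that transports verbatim from \cite[\S 3.2]{BZ}.
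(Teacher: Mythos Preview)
Your proposal is correct and takes essentially the same approach as the paper: the paper's entire proof is the one-line assertion that ``the arguments of \cite[\S 3.2]{BZ} carry over to this setting,'' and your write-up is precisely an elaboration of why those arguments are coefficient-agnostic. One small slip: for a non-compact group like $N_n \cong E^{n-1}$ the coinvariants functor does \emph{not} agree with the invariants functor (think of compactly supported functions on $E$), but only exactness is needed for (1), and that part of your reasoning is fine.
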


For our purposes, it will be necessary to have versions of these functors
for representations over $W(k)$, rather than $W(\tk)$.  The key difficulty
is that the character $\psi$ is not defined over $W(k)$.  Nonetheless,
one has:
\begin{prop}
\label{prop:Kirillov descent}
The functors $\Psi^-,\Psi^+,\Phi^-,\Phi^+,\hPhi^+$ descend to
functors defined on representations over $W(k)$.  That is, one
has a functor:
$$\Psi^-: \Rep_{W(k)}(P_n) \rightarrow \Rep_{W(k)}(\GL_{n-1}(E))$$
such that for any $W(k)[P_n]$-module $V$, one has
$$\Psi^-(V \otimes_{W(k)} W(\tk)) = \Psi^-(V) \otimes_{W(k)} W(\tk),$$
and similarly for the remaining functors.  Moreover, the statements
of Proposition~{\em \ref{prop:BZ}} apply to these functors.
\end{prop}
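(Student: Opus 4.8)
The plan is to construct the descended functors by Galois descent along the (pro-)Galois extension $W(\tk)/W(k)$. The essential observation is that although the character $\psi$ is not defined over $W(k)$, the \emph{category} $\Rep_{W(\tk)}(P_n)$ carries a semilinear action of $\Gal(\tk/k)$ — with $\sigma$ sending $V$ to $V^\sigma := V\otimes_{W(\tk),\sigma}W(\tk)$ — and the $W(k)$-linear category $\Rep_{W(k)}(P_n)$ is recovered as the category of descent data for this action (this is just the statement that smooth $W(k)[P_n]$-modules are the $\sigma$-fixed objects, base change being faithfully flat and the group action locally finite, so that Galois descent for modules applies levelwise on the pieces $V_i$ of the decomposition~(\ref{eqn:decomp})). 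So it suffices to equip each of the functors $\Psi^-,\Psi^+,\Phi^-,\Phi^+,\hPhi^+$ with a $\Gal(\tk/k)$-equivariant structure, i.e.\ to produce natural isomorphisms $F(V^\sigma)\iso F(V)^\sigma$ satisfying the obvious cocycle condition; descent theory then automatically furnishes the functor over $W(k)$ together with the compatibility $F(V\otimes_{W(k)}W(\tk)) = F(V)\otimes_{W(k)}W(\tk)$.

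First I would dispense with $\Psi^\pm$: these involve no auxiliary character, only the functor $V\mapsto V_{N_n}$ of coinvariants (respectively trivial extension), which commutes with arbitrary base change, so the required equivariant structure is tautological. The content is in the $\Phi$-functors, which are built from $\psi$. Here I would use that $\sigma$ acts on the finite set of characters $N_n\to W(\tk)^\times$ and that, as recalled before Proposition~\ref{prop:BZ}, $\GL_{n-1}(E)$ acts transitively (via conjugation) on the nontrivial characters with the given kernel behaviour, with stabilizer $P_{n-1}$. Thus for each $\sigma$ there is $g_\sigma\in\GL_{n-1}(E)$ with $\psi^\sigma = \psi\circ\Ad(g_\sigma)$, well-defined up to left multiplication by $P_{n-1}$; conjugation by $g_\sigma$ then gives a canonical identification between ``$V$ twisted by $\psi^\sigma$'' and ``$V$ twisted by $\psi$'' that, combined with the evident base-change isomorphism for $\cInd$ and $\Ind$ along the open (resp.\ closed) inclusion $P_{n-1}N_n\subset P_n$, yields $\Phi^+(V^\sigma)\iso\Phi^+(V)^\sigma$ and likewise for $\Phi^-$ and $\hPhi^+$. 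One checks the cocycle condition using that the ambiguity in $g_\sigma$ lies in $P_{n-1}$, which acts trivially on the relevant twisted modules.

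The main obstacle is precisely this last bookkeeping: verifying that the family $\{g_\sigma\}$ can be chosen (or its effect on functors organized) so that the isomorphisms $F(V^\sigma)\iso F(V)^\sigma$ compose correctly, i.e.\ that the class of the cocycle $\sigma\mapsto (\text{conjugation by }g_\sigma)$ is trivial as an obstruction to descent. This is where the transitivity of the $\GL_{n-1}(E)$-action with \emph{stabilizer exactly }$P_{n-1}$ is used in an essential way, rather than merely for bare transitivity. Once the equivariant structures are in place, the final assertion — that Proposition~\ref{prop:BZ} holds for the descended functors — is immediate: each statement there (exactness, the adjunctions, the vanishing $\Psi^-\Phi^+ = \Phi^-\Psi^+ = 0$, the isomorphisms $\Psi^-\Psi^+\cong\Phi^-\hPhi^+\cong\Phi^-\Phi^+\cong\Id$, and the exact sequence $0\to\Phi^+\Phi^-\to\Id\to\Psi^+\Psi^-\to 0$) is a statement about (iso)morphisms of functors which, after the faithfully flat base change $-\otimes_{W(k)}W(\tk)$, reduces to the corresponding statement in Proposition~\ref{prop:BZ}; since base change is faithful and exact, and since all the functors and natural transformations involved are compatible with it by construction, the statements descend.
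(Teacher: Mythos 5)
Your overall strategy (Galois descent along $W(\tk)/W(k)$, tautological for $\Psi^{\pm}$, with a twist needed for the $\Phi$-functors) is the same as the paper's, but the step you yourself flag as ``the main obstacle'' is exactly where your argument breaks down, and the mechanism you offer to resolve it is incorrect. You choose $g_{\sigma}\in\GL_{n-1}(E)$ with $\psi^{\sigma}=\psi\circ\Ad(g_{\sigma})$, note it is well defined only up to $P_{n-1}$, and assert that the cocycle condition can be checked because $P_{n-1}$ ``acts trivially on the relevant twisted modules.'' It does not: $\Phi^{-}V=V_{\psi}$ is \emph{by definition} a (generally highly nontrivial) $P_{n-1}$-module, and $\Phi^{+}V$, $\hPhi^{+}V$ are induced from $P_{n-1}N_n$-modules on which $P_{n-1}$ acts through $V$. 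Replacing $g_{\sigma}$ by $pg_{\sigma}$ with $p\in P_{n-1}$ changes your identification $F(V^{\sigma})\iso F(V)^{\sigma}$ by the action of $p$ on $F(V)$, so the descent datum genuinely depends on the choices, and without a coherent system of choices there is no cocycle condition to ``check'' --- it has to be arranged. There is also a second problem you do not address: for the descended $\Phi^{-}V$ to be a $P_{n-1}$-module one needs the twisted semilinear Galois action on $\Phi^{-}\tV$ to commute with the $P_{n-1}$-action, which fails for a general $g_{\sigma}$.

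The paper resolves both points with one observation that your argument is missing: since $\psi$ and $\psi^{\sigma}=\sigma\circ\psi$ both have kernel exactly $\O_E$, there is a \emph{unique} unit $e_{\sigma}\in\O_E^{\times}$ with $\psi^{\sigma}(e)=\psi(e_{\sigma}e)$, the assignment $\sigma\mapsto e_{\sigma}$ is a group homomorphism, and $e_{\sigma}$ sits in $\GL_{n-1}(E)$ as a \emph{central} element (via $\O_E^{\times}\subset E^{\times}\subset\GL_{n-1}(E)$). Because $\sigma\mapsto e_{\sigma}$ is a homomorphism, the twisted rule $\sigma[v]=[e_{\sigma}\sigma v]$ on $\Phi^{-}\tV$ (and $(\sigma f)(g)=e_{\sigma}^{-1}\sigma f(g)$ on $\Phi^{+}\tV$, $\hPhi^{+}\tV$) is an honest semilinear action with no cocycle ambiguity; because $e_{\sigma}$ is central, this action commutes with $P_{n-1}$ (resp.\ $P_n$), so the invariants are modules over the right group. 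This canonical $e_{\sigma}$ is the content of the proof, not a bookkeeping detail; note in particular that $e_{\sigma}$ does \emph{not} lie in the stabilizer $P_{n-1}$ of $\psi$, so it is not one of your ambiguous choices but a distinguished representative outside them. Your final paragraph (deducing Proposition~\ref{prop:BZ} for the descended functors by faithfully flat base change, after first descending the unit and counit of the adjunctions) is fine once the equivariant structures are correctly in place.
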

\begin{proof}
For the functors $\Psi^-$ and $\Psi^+$ this is clear, as the character $\psi$
does not intervene in their definition.  We thus begin with the functor
$\Phi^-$.  

Note that $\Gal(\tk/k)$ acts naturally on $W(\tk)$, and fixes $W(k)$.  Moreover,
$W(\tk)$ is faithfully flat over $W(k)$.  For $\sigma \in \Gal(\tk/k)$,
let $\psi^{\sigma}$ be the character $\sigma \circ \psi$ of $E^{\times}$.
There exists a unique $e_{\sigma} \in \O_E^{\times}$ such that
$\psi^{\sigma}(e) = \psi(e_{\sigma} e)$.  The map $\sigma \mapsto e_{\sigma}$
is a homomorphism from $\Gal(\tk/k)$ to $\O_E^{\times}$.

Consider each $e_{\sigma}$ as an element of $P_{n-1}$ via the inclusions:
$$\O_E^{\times} \subset E^{\times} \subset \GL_{n-1}(E) \subset P_{n-1}.$$
If we consider $\psi$ as a character of $N_n$, we have 
$\psi^{\sigma}(u) = \psi(e_{\sigma} u e_{\sigma}^{-1})$ for all $u \in N_n$.

Now let $V$ be a smooth $P_n$-representation over $W(k)$, and let $\tV$
be the representation $V \otimes_{W(k)} W(\tk)$.  Then we have a
$W(\tk)$-semilinear action of $\Gal(\tk/k)$ on $\tV$, that
fixes $V$.  By definition, $\Phi^- \tV$ is the quotient
of $\tV$ by the $W(\tk)[P_{n-1}]$-submodule generated
by all vectors of the form $uv - \psi(u)v$, for $u \in N_n$ and $v \in \tV.$

The Galois action on $\tV$ does not descend to
$\Phi^- \tV$, but a twist of it does.  For an element
$[v] \in \Phi^- \tV$, represented by an element $v$ of $\tV$,
define $\sigma [v] = [e_{\sigma} \sigma v]$.  This is well-defined,
since if $v = uw - \psi(u)w$ for some $w \in \tV$ and $u \in N_n$,
we can set $w' = e_{\sigma} \sigma w$, $u' = e_{\sigma} u e_{\sigma}^{-1}$, and then
$e_{\sigma} \sigma v = u' w' - \psi(u') w'$.  We thus obtain a $W(\tk)$-semilinear
action of $\Gal(\tk/k)$ on $\Phi^- \tV$; we define $\Phi^- V$ to be the invariants
under this action.  This is clearly functorial with the desired properties.
(Note, however, that the surjection $\tV \rightarrow \Phi^- \tV$ does {\em not}
descend to a natural surjection of $V$ onto $\Phi^- V$.)

Now let $V$ be a smooth $P_{n-1}$-representation over $W(k)$.  Then
$\Phi^+ \tV$ and $\hPhi^+ \tV$ can both be realized as spaces
of functions: $f: P_n \rightarrow \tV$, such that for any $h \in P_{n-1}$
and any $u$ in $N_n$, we have $f(ghu) = \psi(u)hf(g)$.  Define an action
of $\Gal(\tk/k)$ on the space of such functions by setting
$(\sigma f)(g) = e_{\sigma}^{-1}\sigma f(g)$.  This preserves the identity
$f(ghu) = \psi(u)hf(g)$, and so defines a $W(\tk)$-semilinear action
of $\Gal(\tk/k)$ on $\Phi^+ \tV$ and $\hPhi^+ \tV$.  We set
$\Phi^+ V$ and $\hPhi^+ V$ to be the invariants of this action in
$\Phi^+ \tV$ and $\hPhi^+ \tV$, respectively.  Note that if $V$ is a smooth
$P_n$-representation, then the natural maps:
$\Phi^+ \Phi^-\tV \rightarrow \tV$ and $\tV \rightarrow \hPhi^+ \Phi^- \tV$
are $\Gal(\tk/k)$-equivariant, and hence descend to $V$.

Using this, one easily verifies the adjointness property (2) of Proposition~\ref{prop:BZ} for the functors
over $W(k)$.  Properties (1), (3), (4), and (5) then follow by base change and the
fact that $W(\tk)$ is faithfully flat over $W(k)$.
\end{proof}

Note that if $A$ is a Noetherian $W(k)$-algebra, and $V$ is a smooth representation
of $P^{n-1}$ over $A$, then the modules $\Psi^- V$, $\Phi^- V$
obtained by treating $V$ as a representation of $P^{n-1}$ over $W(k)$
and applying the appropriate functors inherit an $A$-module structure.  We
can thus define the functors $\Psi^-, \Psi^+, \Phi^-,$ etc. on suitable
categories of smooth representations over $A$.  It is then clear that
if $B$ is a Noetherian $A$-algebra, one has $\Psi^-(V \otimes_A B) = \Psi^-(V) \otimes_A B$,
and similarly for the remaining functors.

Finally, observe that the functors $\Psi^-,\Psi^+,\Phi^-,\Phi^+,\hPhi^+$ commute 
with tensor products; 
that is, if $M$ is an $A$-module,
then $\Psi^-(V \otimes_A M) \cong \Psi^-(V) \otimes_A M$, and similarly
for the other functors.

%%%% REDUNDANT
%When $\psi = \triv$ is the trivial character, the quotient
%$V_{\triv}$ is often referred to as the Jacquet module of $V$.
%(Actually, there is normally a twist applied, but we will
%ignore this.)  

%\begin{lemma}
%\label{lem:kirillov exact}
%If $\psi$ is a non-trivial smooth $A$-valued character of $E$,
%then $V \mapsto V_{\psi}$ is an exact functor on the category
%of smooth $N$-representations over $A$.
%\end{lemma}
%\begin{proof}
%This is proved in an identical manner to Lemma~\ref{lem:exact},
%using the fact that $N$ is the increasing union of a sequence
%of pro-$\ell$ open subgroups.
%Namely, if we
%write $N = \bigcup_{i = 0}^{\infty} N_i$ as a union of compact open
%subgroups, and let $V_{N_i,\psi}$ denote the
%maximal quotient of $V$ on which $N_i$ acts via $\psi$,
%then the formation of $V_{N_i,\psi}$ is exact in $V$,
%while
%\begin{equation}
%\label{eqn:kirillov iso}
%\ilim{i} V_{N_i, \psi} \iso V_{\psi}.
%\end{equation}
%\end{proof}
%%%% END REDUNDANT

We now define the ``derivatives'' of a smooth $P_n$-representation $V$.
For $0 \leq r \leq n$, we set $V^{(r)} = \Psi^-(\Phi^-)^{r-1} V$;
$V^{(r)}$ is a representation of $\GL_{n-r}(E)$.  If $A$ is a $W(\tk)$-algebra,
then one has a
$\GL_{n-r}(E)$-equivariant surjection $V \rightarrow V^{(r)}$ (but this is not true
if $A$ is only a $W(k)$-algebra.)

Note that $V^{(n)}$ is
simply an $A$-module.  The adjointness properties of Proposition \ref{prop:BZ}
give, for any $V$, maps:
$$V \rightarrow (\hPhi^+)^{(n-1)}\Psi^+(V^{(n)}).$$
$$(\Phi^+)^{n-1}\Psi^+(V^{(n)}) \rightarrow V.$$
The image of $V$ in $(\hPhi^+)^{(n-1)}\Psi^+(V^{(n)})$ is
called the Kirillov model of $V$.

The exact sequence of Proposition \ref{prop:BZ} implies that the map
$$(\Phi^+)^{n-1}\Psi^+(V^{(n)}) \rightarrow V$$
is injective; we denote its image by $\J(V)$.  The space $\J(V)$
is often referred to as the space of Schwartz functions in $V$.

\begin{lemma}
\label{lem:Kirillov sub}
Let $V$ be a smooth $P_n$-module over $A$.
Set $\tA = A \otimes_{W(k)} W(\tk)$, and let $\tV = V \otimes_{W(k)} W(\tk)$.
The modules $V$ and $\J(V)$ each contain an $A$-submodule
$W$ such that $W$ is isomorphic to $V^{(n)}$, and
$W \otimes_A \tA$  maps isomorphically to $\tV^{(n)}$
under the surjection $\tV \rightarrow \tV^{(n)}$.
\end{lemma}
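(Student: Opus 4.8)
The plan is to reduce everything to the base ring $\tA$ and to faithfully flat descent. Since the functors $\Psi^{\pm},\Phi^{\pm},\hPhi^+$ over $W(k)$ are by construction (Proposition~\ref{prop:Kirillov descent}) the $\Gamma$-invariants, for the twisted action of $\Gamma:=\Gal(\tk/k)$, of their counterparts over $W(\tk)$, the $A$-module $\J(V)\subseteq V$ is the twisted-invariant submodule of $\J(\tV)\subseteq\tV$, and $V^{(n)}=(\tV^{(n)})^{\Gamma}$. Thus it suffices to produce an $\tA$-submodule $\tW\subseteq\J(\tV)$ that is stable under the twisted $\Gamma$-action and that maps isomorphically onto $\tV^{(n)}$ under the composite $\J(\tV)\hookrightarrow\tV\twoheadrightarrow\tV^{(n)}$; then $W:=\tW^{\Gamma}$ does the job, faithfully flat descent giving $W\otimes_A\tA=\tW$ and $W\cong(\tV^{(n)})^{\Gamma}=V^{(n)}$, while $W\subseteq\J(V)\subseteq V$ handles both assertions at once.

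Over $\tA$ I would first make $\J(\tV)=(\Phi^+)^{n-1}\Psi^+(\tV^{(n)})$ explicit. Since $\Phi^+=\cInd_{P_{n-1}N_n}^{P_n}(-)$ and compact induction is transitive, an easy induction identifies $\J(\tV)$ with $\cInd_{U_n}^{P_n}(\tV^{(n)}\otimes\psi_U)$, where $U_n=N_2N_3\cdots N_n$ is the full upper-triangular unipotent subgroup of $\GL_n(E)$ and $\psi_U$ the generic character of $U_n$ built from $\psi$; concretely $\J(\tV)$ is the space of compactly supported $\tV^{(n)}$-valued functions on $U_n\backslash P_n$ transforming on the left by $\psi_U$. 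Next I would identify the composite $\J(\tV)\hookrightarrow\tV\twoheadrightarrow\tV^{(n)}$: applying the natural surjections $W\to\Phi^-W$ and $W\to\Psi^-W$ to the inclusion $\J(\tV)\hookrightarrow\tV$ and using the triangle identities together with $\Phi^-\Phi^+\cong\Id$ and $\Psi^-\Psi^+\cong\Id$ of Proposition~\ref{prop:BZ}, one sees this composite is nothing but the tautological projection of the induced module $\J(\tV)$ onto its $(U_n,\psi_U)$-coinvariants $\cong\tV^{(n)}$, i.e.\ evaluation at the identity, $f\mapsto f(1)$. A bump-function submodule concentrated near $1$ (functions supported on $U_nK_0$ for a small compact open subgroup $K_0$, and equal to the $\psi_U$-translate of a fixed element of $\tV^{(n)}$ there) is then an $\tA$-submodule of $\J(\tV)$ isomorphic to $\tV^{(n)}$ and mapping isomorphically onto $\tV^{(n)}$ under $f\mapsto f(1)$.

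The remaining — and, I expect, the main — difficulty is to choose this submodule compatibly with the twisted $\Gamma$-action, so that it is $\Gamma$-stable and descends. The twists in Proposition~\ref{prop:Kirillov descent} are implemented by the units $e_\sigma\in\O_E^{\times}$, embedded as central elements of the successive groups $\GL_k(E)\subseteq P_k$, which conjugate the defining character $\psi$ (hence $\psi_U$); the naive bump-function submodule above is therefore \emph{not} twisted-$\Gamma$-stable, and one must instead track the accumulated twists through the iterated functor $(\Phi^+)^{n-1}\Psi^+$ and choose the support $K_0$ and the trivialization of $\psi_U$ in an $e_\sigma$-equivariant way. This is possible because the $e_\sigma$ lie in $\O_E^{\times}$ and so preserve the $\O_E$-integral structure of $U_n\backslash P_n$ (one may take $K_0$ to be an $\O_E$-congruence subgroup normalized by the scalars $e_\sigma$): it is a bookkeeping matter rather than a conceptual obstacle, but it is where the real work lies, and it also requires confirming that the Mackey/Whittaker-type projection realizing $f\mapsto f(1)$ is the same map as the canonical projection to the top twisted derivative identified above. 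Once this compatibility is checked, the descent described in the first paragraph produces the desired submodule $W$ of $\J(V)\subseteq V$.
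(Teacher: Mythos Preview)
Your approach is plausible in outline but takes a significantly harder route than necessary, and the part you flag as ``where the real work lies'' --- making the bump-function submodule stable under the twisted $\Gamma$-action through all $n-1$ layers of $\Phi^+$ --- is not actually carried out. The paper sidesteps this entire Galois bookkeeping problem with one observation you are missing: the functors $\Psi^+$ and $\Phi^+$ commute with tensor products, so
\[
\J(V) \;=\; (\Phi^+)^{n-1}\Psi^+\bigl(V^{(n)}\bigr)
\;=\; \bigl[(\Phi^+)^{n-1}\Psi^+\, W(k)\bigr]\otimes_{W(k)} V^{(n)}.
\]
The factor $(\Phi^+)^{n-1}\Psi^+\, W(k)$ is already a $W(k)$-module (the descended functors of Proposition~\ref{prop:Kirillov descent} applied to the trivial module $W(k)$), so there is no $\Gamma$-equivariance to arrange. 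Likewise the surjection $\J(\tV)\to\tV^{(n)}$ is obtained by tensoring the surjection $(\Phi^+)^{n-1}\Psi^+\, W(\tk)\to W(\tk)$ with $\tV^{(n)}$. Hence it suffices to find a single element $w\in(\Phi^+)^{n-1}\Psi^+\, W(k)$ whose image in $W(\tk)$ under the base-changed surjection is a unit; then $W:=W(k)\cdot w\otimes_{W(k)} V^{(n)}\subseteq\J(V)$ does everything you want. Such a $w$ exists trivially: otherwise the image of the (surjective!) composite
\[
\bigl[(\Phi^+)^{n-1}\Psi^+\, W(k)\bigr]\otimes_{W(k)} W(\tk)
\;\iso\;(\Phi^+)^{n-1}\Psi^+\, W(\tk)\;\twoheadrightarrow\; W(\tk)
\]
would lie in $\unif W(\tk)$.

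So the comparison is this: you try to build a $\Gamma$-stable copy of $\tV^{(n)}$ inside $\J(\tV)$ by explicit Whittaker-model constructions and then descend, which forces you to chase the twists $e_\sigma$ through an $n$-fold iterated induction. The paper instead factors $\J(V)$ as (universal $W(k)$-object)~$\otimes\,V^{(n)}$, reducing the problem to choosing one good element in a fixed $W(k)$-module --- no descent, no equivariance, no bump functions. Your route might be completable, but the tensor factorization makes the lemma a two-line argument.
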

\begin{proof}
It suffices to show that $\J(V)$ contains such a submodule, as $\J(V)$
embeds in $V$.  We have 
$$\J(V) = (\Phi^+)^{n-1}\Psi^+ V^{(n)} = [(\Phi^+)^{n-1}\Psi^+ W(k)] \otimes_{W(k)} V^{(n)}.$$
Moreover, the map $\J(\tV) \rightarrow \tV^{(n)}$ arises from the surjection:
$$(\Phi^+)^{n-1}\Psi^+ W(\tk) \rightarrow W(\tk)$$
by tensoring over $W(\tk)$ with $\tV^{(n)}$.  It thus suffices to
construct a submodule $W$ of $[(\Phi^+)^{n-1}\Psi^+ W(k)]$ that is free of rank
one over $W(k)$ and such that $W \otimes_{W(k)} W(\tk)$ maps isomorphically
onto $W(\tk)$ under the surjection
$$(\Phi^+)^{n-1}\Psi^+ W(\tk) \rightarrow W(\tk).$$
Thus amounts to simply choosing any element of $(\Phi^+)^{(n-1)}\Psi^+ W(k)$
that maps to an element of $W(\tk) \setminus \unif W(\tk)$, where $\unif$
is the uniformizer of $W(\tk)$.  Such an element clearly exists,
as otherwise the image of the composition:
$$[(\Phi^+)^{n-1}\Psi^+ W(k)] \otimes_{W(k)} W(\tk) \iso
(\Phi^+)^{n-1}\Psi^+ W(\tk) \rightarrow W(\tk)$$
would be contained in $\unif W(\tk)$.
\end{proof}

Over a field, the top derivatives are multiplicative with respect
to parabolic induction:

\begin{prop}
\label{prop:leibnitz}
Let $V$ and $W$ be admissible $k$-representations of $\GL_n(E)$ and
$\GL_m(E)$, respectively.  Let $P \subset \GL_{n+m}(E)$ be the parabolic
subgroup of $G$ given by:
$$
\begin{array}{rl}
P & = \, \{ \, \left(\begin{matrix} a & b \\ 0 & d \end{matrix} \right)
\quad \mid \quad a \in \GL_n(E), \quad d \in \GL_m(E) \, \},
\end{array}
$$
and consider $V \otimes W$ as a representation of $P$ by letting
the unipotent radical of $P$ act trivially.  Then
$$(\Ind_P^{\GL_{n+m}(E)} V \otimes W)^{(n+m)} = V^{(n)} \otimes W^{(m)}.$$
\end{prop}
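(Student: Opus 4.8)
The plan is to compute the left-hand side by filtering the parabolically induced representation using the Bernstein--Zelevinsky ``geometric lemma'' (i.e.\ the stratification of $\Ind_P^{\GL_{n+m}(E)}(V\otimes W)$ by $P_{n+m}$-orbits on the relevant flag variety), and then to observe that taking the top derivative $(-)^{(n+m)} = \Psi^-(\Phi^-)^{n+m-1}$ kills every stratum except one, which contributes exactly $V^{(n)}\otimes W^{(m)}$. First I would restrict $\pi := \Ind_P^{\GL_{n+m}(E)}(V\otimes W)$ to the mirabolic subgroup $P_{n+m}$ and decompose according to the double cosets $P_{n+m}\backslash \GL_{n+m}(E)/P$, as in \cite[\S 5]{BZ}. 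There are two orbits: the ``big'' (open) one, on which the restriction is (up to the non-normalized twist) built from $\Phi^+$ applied to an induced representation of $P_{n+m-1}$, and the ``small'' (closed) one, on which the restriction is built from $\Psi^+$ applied to $\Ind$ on $\GL_{n+m-1}(E)$. The key input from Proposition~\ref{prop:BZ}, parts~(3) and~(4), is that $\Psi^-(\Phi^-)^{n+m-1}$ annihilates anything coming from $\Psi^+$ on the top layer (since $\Phi^-\Psi^+ = 0$), so only the open orbit survives, and on it $\Phi^-$ strips off the $\Phi^+$ and replaces $\GL_{n+m}(E)$-induction by $P_{n+m-1}$-induction.

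The second step is to iterate. After one application of $\Phi^-$ the open stratum of $\pi$ becomes (up to modulus twists, which I would track carefully since we are using the non-normalized functors) a representation induced from the mirabolic $P_{n+m-1}$ up from a parabolic of the same ``block shape'' $(n,m)$ inside $\GL_{n+m-1}(E)$, now with $V$ replaced by $V$ restricted one step and $W$ unchanged — or, once the $\Phi^-$'s have ``eaten through'' the $W$-block, with the roles reversed. Concretely, applying $(\Phi^-)^{m}$ peels off the entire second block and leaves something like $\Psi^-$ of an induction from $P_n \times \GL_m(E)$-type data with $W$ replaced by $W^{(m)}$ (an $A$-module, since $W^{(m)} = \Psi^-(\Phi^-)^{m-1}W$), tensored against $V$; then the remaining $(\Phi^-)^{n-1}\Psi^-$ applied to the $V$-block computes $V^{(n)}$. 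Combining, $\pi^{(n+m)} = V^{(n)}\otimes W^{(m)}$. Throughout I would use the fact, recorded just before Lemma~\ref{lem:Kirillov sub}, that these functors commute with tensoring over $A$, so that carrying the $A$-module structure along is harmless and we may as well argue after base change to $\tk$ where all of \cite[\S 4--5]{BZ} is available verbatim — indeed over a field of characteristic zero this is exactly the Leibniz rule for derivatives in \cite{BZ}, and over $k$ one gets it from \cite[Ch.~III]{Vig2} together with the descent of Proposition~\ref{prop:Kirillov descent}.

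The main obstacle I expect is bookkeeping with the modulus characters: because we deliberately use the \emph{non-normalized} functors $\Phi^\pm$, $\Psi^\pm$ (so that the Galois descent of Proposition~\ref{prop:Kirillov descent} works), the clean statement ``derivative of an induction is the tensor product of derivatives'' must be checked to hold \emph{on the nose} rather than up to an unramified twist; one must verify that the square roots of modulus characters appearing in the Bernstein--Zelevinsky geometric lemma and the $\Phi^+/\Psi^+$ normalizations cancel precisely against the normalization discrepancy so that no spurious character is left over at the level of the top derivative (which is just an $A$-module, hence any leftover character of $\GL_0(E) = 1$ is trivial anyway — this is in fact why the statement is about the \emph{top} derivative and why the non-normalized version is still true). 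So the real content is: (i) identify the two mirabolic orbits and their contributions, (ii) kill the closed one using $\Phi^-\Psi^+ = 0$, and (iii) check that the surviving open-orbit computation, carried out with non-normalized functors, yields $V^{(n)}\otimes W^{(m)}$ with no stray twist. Steps (i) and (ii) are routine given Proposition~\ref{prop:BZ}; step (iii) is where care is needed, and reducing to $\tk$ and invoking the classical computation of \cite[4.13]{BZ} is the cleanest route.
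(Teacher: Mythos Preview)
Your proposal is correct and is essentially the content of the result the paper cites: the paper's proof is simply a reference to \cite[Lem.~1.10]{Vig2}, together with exactly the observation you make in step~(iii), namely that the difference in normalization between the functors used here and those in \cite{Vig2} does not affect the \emph{top} derivative. So you have unpacked the cited argument rather than given a genuinely different one; your identification of the modulus-character bookkeeping as the only subtlety, and its resolution by noting that the top derivative is a module for $\GL_0(E)$, is precisely the point the paper flags.
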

\begin{proof}
This is a special case of \cite[Lem.~1.10]{Vig2}.
Note that the derivative functors
used in \cite{Vig2} are normalized differently than ours; this normalization does not
affect the top derivative $V^{(n)}$ of a representation $V$ of $\GL_n(E)$.
\end{proof}

We also have:
\begin{theorem}
\label{thm:kirillov}
Let $V$ be an absolutely irreducible admissible representation of $\GL_n(E)$ over 
a field $K$ that is a $W(k)$-algebra.
Then $V^{(n)}$ is either zero or a one-dimensional $K$-vector space, and is
one-dimensional if $V$ is cuspidal.
\end{theorem}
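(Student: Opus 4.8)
The plan is to reduce the statement to the theory of Kirillov models over $W(k)$-algebras that has just been set up, and in particular to the behaviour of the derivative functors on irreducible representations. First I would recall that, for $V$ an admissible smooth $\GL_n(E)$-representation over a field $K$ that is a $W(k)$-algebra, the top derivative $V^{(n)} = \Psi^-(\Phi^-)^{n-1} V$ is just a $K$-vector space, obtained by the exact functors $\Psi^-$ and $\Phi^-$. Since these functors are exact (Proposition~\ref{prop:BZ}, as extended by Proposition~\ref{prop:Kirillov descent}), and since base change to $\tK := K \otimes_{W(k)} W(\tk)$ commutes with them, I may replace $K$ by $\tK$: the dimension of $V^{(n)}$ over $K$ equals the dimension of $(\tK \otimes_K V)^{(n)}$ over $\tK$, and cuspidality is preserved under this base change. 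So the question becomes a question about an absolutely irreducible admissible representation over the algebraically closed (or at least sufficiently large) field $\tK$, where the classical Bernstein--Zelevinski theory from \cite[\S 4]{BZ} (or \cite[Ch.~III.1]{Vig2}) applies directly.

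Over $\tK$, I would argue as follows. The exact sequence $0 \to \Phi^+\Phi^- \to \Id \to \Psi^+\Psi^- \to 0$ of Proposition~\ref{prop:BZ}(5), applied iteratively, produces the Bernstein--Zelevinski filtration of $V$ (viewed as a $P_n$-representation) whose graded pieces are $(\Phi^+)^{i}\Psi^+$ applied to the $i$-th derivative $V^{(i)}$ for $i = 1, \dots, n$; the top graded piece is $(\Phi^+)^{n-1}\Psi^+ V^{(n)}$, which is the space $\J(V)$ of Schwartz functions. Since $V$ is irreducible as a $\GL_n(E)$-representation, the submodule $\J(V)$ is either $0$ or all of $V$; in either case the $\GL_n(E)$-equivariant surjection $V \to V^{(n)} \otimes(\text{trivial character data})$ — or more precisely, the fact that $V^{(n)}$ is a quotient of $V$ by a $P_n$-stable submodule — shows, together with the irreducibility of $V$ as a $\GL_{n-1}(E)$-representation after the various coinvariant/$\psi$-twisted-coinvariant operations, that $V^{(n)}$ is a quotient of an irreducible (or zero). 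The classical computation (essentially that $\Psi^-(\Phi^-)^{n-1}$ of an irreducible is at most one-dimensional, because at each stage one takes $N_i$-coinvariants or $\psi$-coinvariants of an irreducible $P_i$-representation, and the derivative functors carry irreducibles to sums of irreducibles with multiplicity controlled by the geometric lemma) then gives $\dim_{\tK} V^{(n)} \leq 1$. For the cuspidal case, I would invoke that a cuspidal representation is, by definition, one all of whose proper derivatives $V^{(i)}$, $1 \leq i \leq n-1$, vanish, so that the Bernstein--Zelevinski filtration degenerates to $V = \J(V) = (\Phi^+)^{n-1}\Psi^+ V^{(n)}$; since $(\Phi^+)^{n-1}\Psi^+$ of a nonzero $\tK$-vector space is nonzero and $V \neq 0$, we get $V^{(n)} \neq 0$, hence one-dimensional by the previous bound.

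The main obstacle I anticipate is making the reduction to $\tK$ genuinely airtight, and then being careful about the two parts of the classical statement over $\tK$. For the reduction, one needs that $V$ absolutely irreducible over $K$ implies $\tK \otimes_K V$ is still irreducible over $\tK$ — this is exactly what ``absolutely irreducible'' buys, and the admissibility ensures that $\End$ is finite-dimensional so that the usual scalar-extension argument for absolute irreducibility goes through — and that cuspidality is insensitive to this base change, which follows from the fact that the derivative functors commute with $-\otimes_K \tK$ (stated just after Proposition~\ref{prop:Kirillov descent}) and that $V^{(i)} = 0$ iff $(\tK\otimes_K V)^{(i)} = 0$ by faithful flatness. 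For the classical part over $\tK$, the upper bound $\dim V^{(n)} \leq 1$ is \cite[4.4]{BZ} (the ``new vector''/uniqueness-of-Whittaker-model statement), and the nonvanishing for cuspidals is \cite[\S 4]{BZ} as well; since the excerpt explicitly allows me to cite \cite{BZ} and \cite{Vig2} for exactly this material, the remaining work is just to phrase the citation correctly and to note that the non-normalized functors used here differ from those of \cite{BZ} only by modulus-character twists, which (as already remarked in the text for Proposition~\ref{prop:leibnitz}) do not affect the top derivative.
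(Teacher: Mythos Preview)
Your overall strategy --- extend scalars to a field containing the $\ell$-power roots of unity and then invoke the classical result from \cite{BZ} or \cite[III.5.10]{Vig2} --- is exactly what the paper does; its proof is literally a citation of \cite[III.5.10]{Vig2} together with the remark that ``the general case follows by extending scalars.''

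That said, two assertions in your middle paragraph are incorrect and should simply be dropped rather than written up. First, $\J(V)$ is only a $P_n$-submodule of $V$, not a $\GL_n(E)$-submodule, so $\GL_n(E)$-irreducibility of $V$ does \emph{not} force $\J(V)\in\{0,V\}$; for a generic non-cuspidal irreducible $V$ one typically has $0\subsetneq\J(V)\subsetneq V$. Second, the sketch ``derivative functors carry irreducibles to sums of irreducibles with multiplicity controlled by the geometric lemma'' is not how $\dim V^{(n)}\le 1$ is actually proved --- that is the uniqueness-of-Whittaker-models theorem, established via the Gelfand--Kazhdan involution method, not by a filtration argument. Since you correctly identify the citations and intend to invoke them anyway, just cite and omit the sketch. (A minor point on the reduction step: $\tK = K\otimes_{W(k)}W(\tk)$ is in general only a product of fields, not a field; the cleaner move, and the one ``absolutely irreducible'' is tailored for, is to pass to~$\overline{K}$.)
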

\begin{proof}
This is \cite[III.5.10]{Vig2}, over $\overline{\F}_p$ or ${\Qbar}_p$; the
same argument works over an arbitrary field containing the $\ell$-power roots
of unity.  The general case follows by extending scalars.
\end{proof}

\begin{df}
\label{df:generic}
We say that
an absolutely irreducible admissible representation $V$ of $\GL_n(E)$ over 
a field $K$ that is a $W(k)$-algebra
is {\em generic} if $V^{(n)}$ is one-dimensional over $K$.
\end{df}

We now turn to finiteness properties of the derivative.

\begin{lemma}
\label{lem:nakayama sum}
Let $A$ be a Noetherian local ring,
and suppose that $M$ is a submodule of a direct sum of
finitely generated $A$-modules.  If $M/\mathfrak m M$ is finite dimensional,
then $M$ is finitely generated.
\end{lemma}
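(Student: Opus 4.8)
The plan is to reduce the statement to the ordinary Nakayama-type lemma for finitely generated modules (Lemma~\ref{lem:complete Nak} being the complete version; here we are over an arbitrary Noetherian local ring, so we want the standard finitely-generated Nakayama). The subtlety is that $M$ sits inside a direct sum $\bigoplus_{i \in I} N_i$ of finitely generated $A$-modules where the index set $I$ may be infinite, so $M$ itself need not be contained in any finite sub-sum a priori; the finiteness of $M/\mathfrak{m}M$ must be used to cut down to a finite piece.

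First I would choose a finite subset $S = \{s_1, \ldots, s_r\} \subset M$ whose image in $M/\mathfrak{m}M$ spans it over $k = A/\mathfrak{m}$. Each $s_j$, viewed in $\bigoplus_{i} N_i$, has only finitely many nonzero components; let $I_0 \subset I$ be the (finite) union of the supports of $s_1, \ldots, s_r$. Let $N := \bigoplus_{i \in I_0} N_i$, a finitely generated $A$-module, and let $M' := M \cap N$. By construction $S \subset M'$, so the image of $S$ in $M'/\mathfrak{m}M'$ certainly spans the image of $M'$ there; but we need more, namely that $M' = M$, or at least that $M'$ surjects onto enough of $M$.

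The key step is the following: I claim $M = M' + \mathfrak{m}M$. Indeed, given $m \in M$, by the spanning property we may write $m \equiv \sum_j c_j s_j \pmod{\mathfrak{m}M}$ with $c_j \in A$; since $\sum_j c_j s_j \in M'$, we get $m \in M' + \mathfrak{m}M$. Now let $\pi\colon \bigoplus_i N_i \to \bigoplus_{i \in I \setminus I_0} N_i$ be the projection away from the $I_0$-components; then $\pi(M) = \pi(M') + \pi(\mathfrak{m}M) = \mathfrak{m}\,\pi(M)$, since $\pi(M') = 0$. The module $\pi(M)$ is a submodule of $\bigoplus_{i \in I \setminus I_0} N_i$, hence (by the hypothesis applied to the complementary sub-sum, or just directly) we want to conclude $\pi(M) = 0$. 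This is where one must be slightly careful: $\pi(M)$ need not itself be finitely generated, so plain Nakayama does not immediately apply. However, $\pi(M) = \mathfrak{m}\,\pi(M)$ forces, for each individual component, the image of $\pi(M)$ in $N_i$ (for $i \notin I_0$) to equal $\mathfrak{m}$ times itself inside the finitely generated module $N_i$; but that alone is not enough either. The cleanest fix is to observe that $\pi(M)/\mathfrak{m}\,\pi(M)$ is a quotient of $M/\mathfrak{m}M$, hence finite-dimensional and in fact zero by $\pi(M) = \mathfrak{m}\pi(M)$; then we are trying to prove exactly the original statement for $\pi(M)$ with $\pi(M)/\mathfrak{m}\pi(M) = 0$, which should follow by an inductive or direct argument. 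Concretely: any element $x \in \pi(M)$ lies in a finite sub-sum $\bigoplus_{i \in I_1} N_i$ with $I_1$ finite; then $\mathfrak{m}$-adically iterating $\pi(M) = \mathfrak{m}\pi(M)$ within the \emph{finitely generated} module generated by $x$ together with finitely many correction terms drawn from $M$ — all of which have finite support — lands us inside a single finitely generated module on which Nakayama applies, giving $x = 0$.

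The main obstacle I anticipate is precisely this last point: making the passage from "$\pi(M) = \mathfrak{m}\pi(M)$ and $\pi(M)$ possibly infinitely generated" to "$\pi(M) = 0$" rigorous without circularity. The honest way is an induction on the number of generators $r$ of $M/\mathfrak{m}M$: the case $r = 0$ is the assertion that $M \subset \bigoplus_i N_i$ with $M = \mathfrak{m}M$ implies $M = 0$, which one proves by noting each fixed element $m = \sum m_i$ has finite support, writing $m$ as an $\mathfrak{m}$-linear combination of other elements of $M$, and iterating to express $m$ as lying in $\mathfrak{m}^t$ times the finitely generated submodule spanned by $m$'s "ancestors restricted to $\mathrm{supp}(m)$" — this requires a small bookkeeping argument but is exactly the standard trick. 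Then for general $r$, once $\pi(M) = 0$ we get $M = M' = M \cap N$ is a submodule of the finitely generated $A$-module $N$ over a Noetherian ring, hence finitely generated. I would structure the write-up so that the $r=0$ base case is isolated as the technical heart, and the general case is the short reduction via $I_0$ and $\pi$ described above.
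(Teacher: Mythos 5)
Your proposal is correct and takes essentially the same route as the paper's proof: lift a spanning set of $M/\mathfrak m M$ to a finitely generated submodule supported on a finite sub-sum $X$, and then kill the projection of $M$ onto the complementary sub-sum by noting that its further projections onto finite sub-sums are finitely generated (Noetherianity) and equal to $\mathfrak m$ times themselves, so Nakayama applies. The only streamlining to make is that your base case needs neither iteration nor induction on $r$: for a given $m$ in the complementary projection, one single projection onto the finite sub-sum supporting $m$ yields a finitely generated module $W$ with $W = \mathfrak m W$ and with $m$ mapping to itself, so $W = 0$ and $m = 0$ at once.
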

\begin{proof}
Our assumption on $M/\mathfrak m M$
allows us to choose a finitely generated submodule
$N$ of $M$ such that $N + \mathfrak m M = M,$ or equivalently such that
$\mathfrak m (M/N) = M/N$.  Nakayama's Lemma then shows that any finitely
generated quotient of $M/N$ must vanish. 
Since by assumption $M$ embeds into
a direct sum of finitely generated $A$-modules, we may find
a finitely generated $A$-module $X$, and an $A$-module $Y$ which
is a direct sum of finitely generated $A$-modules, and an embedding
$M \subset X \bigoplus Y$, such that $N \subset X$.
On the one hand $M/(M \bigcap X)$ is a quotient
of $M/N$, and hence has no non-vanishing finitely generated quotients.
On the other, the projection of $X \bigoplus Y$ onto $Y$ induces
an embedding of $M/(M \bigcap X)$ into $Y$.  Thus $M/(M\bigcap X) = 0,$
and so $M \subset X$ is finitely generated.
\end{proof}

We note for future reference
the following corollary of the preceding lemma.

\begin{cor}
\label{cor:zero}
Let $A$ be a Noetherian local ring,
and suppose that $M$ is a submodule of a direct sum of
finitely generated $A$-modules.  If $M/\mathfrak m M$ vanishes,
then $M$ itself vanishes.
\end{cor}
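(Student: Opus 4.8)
The plan is to derive this directly from Lemma~\ref{lem:nakayama sum}, of which it is essentially the degenerate case. First I would observe that the hypothesis $M/\mathfrak m M = 0$ in particular says that $M/\mathfrak m M$ is finite dimensional over $A/\mathfrak m$ (of dimension $0$). Since $M$ is by assumption a submodule of a direct sum of finitely generated $A$-modules, Lemma~\ref{lem:nakayama sum} applies verbatim and shows that $M$ is finitely generated over~$A$.

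Once $M$ is known to be finitely generated, the ordinary Nakayama lemma for finitely generated modules over the Noetherian local ring $A$ finishes the argument: a finitely generated $A$-module $M$ with $\mathfrak m M = M$ must vanish, and $M/\mathfrak m M = 0$ is exactly the statement that $\mathfrak m M = M$.

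There is no real obstacle here; the content is entirely in Lemma~\ref{lem:nakayama sum}, and the only point worth noting explicitly is that the finite-dimensionality hypothesis in that lemma is trivially met when the quotient is zero. One could alternatively re-run the proof of Lemma~\ref{lem:nakayama sum} with $N = 0$ (so that $\mathfrak m M = M$ from the outset) to see the conclusion fall out, but invoking the lemma as a black box is cleaner and is presumably the intended argument, which is why the statement is phrased as a corollary.

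\begin{proof}
The hypothesis $M/\mathfrak m M = 0$ implies in particular that $M/\mathfrak m M$ is finite dimensional over $A/\mathfrak m$, so Lemma~\ref{lem:nakayama sum} shows that $M$ is finitely generated over $A$. Since $M/\mathfrak m M = 0$ means $\mathfrak m M = M$, Nakayama's lemma for finitely generated modules over the Noetherian local ring $A$ now gives $M = 0$.
\end{proof}
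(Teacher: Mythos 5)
Your proof is correct and is exactly the paper's argument: apply Lemma~\ref{lem:nakayama sum} (the zero module being trivially finite dimensional) to conclude $M$ is finitely generated, then finish with Nakayama's lemma.
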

\begin{proof}
The preceding result implies that $M$ is finitely generated.
The corollary thus follows from Nakayama's Lemma.
\end{proof}

\begin{theorem}
\label{thm:fg}
Suppose that $A$ is a Noetherian
local $W(k)$-algebra with maximal ideal ${\mathfrak m}$.
Let $V$ be an admissible representation of $\GL_n$
over $A$, and suppose that $(V/{\mathfrak m}V)^{(n)}$ is finite dimensional
over $A/{\mathfrak m}$.  Then $V^{(n)}$ is a finitely generated $A$-module.  
\end{theorem}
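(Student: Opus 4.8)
The plan is to reduce the assertion to the Nakayama-type criterion of Lemma~\ref{lem:nakayama sum}, which says that an $A$-module that embeds in a direct sum of finitely generated $A$-modules and whose reduction modulo $\mathfrak m$ is finite-dimensional over $A/\mathfrak m$ is itself finitely generated over $A$. Accordingly, I need two ingredients: first, that $V^{(n)}/\mathfrak m V^{(n)}$ is finite-dimensional over $A/\mathfrak m$; and second, that $V^{(n)}$, as an abstract $A$-module, is isomorphic to a submodule of a direct sum of finitely generated $A$-modules.

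The first ingredient is immediate from base change. The derivative $(-)^{(n)} = \Psi^-(\Phi^-)^{n-1}$ is a composite of functors each of which commutes with $\otimes_A M$ for an arbitrary $A$-module $M$; taking $M = A/\mathfrak m$ (a Noetherian $A$-algebra, hence a $W(k)$-algebra over which these functors are defined) yields $(V/\mathfrak m V)^{(n)} \cong V^{(n)}\otimes_A A/\mathfrak m = V^{(n)}/\mathfrak m V^{(n)}$, so the hypothesis of the theorem translates directly into the finite-dimensionality required.

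For the second ingredient I would argue as follows. Since $V$ is an admissible smooth $\GL_n(E)$-representation and $\GL_n(E)$ satisfies Condition~\ref{cond:pro-ell} with respect to $A$ (a sufficiently deep principal congruence subgroup is pro-$\ell$, and $\ell$ is invertible in the $W(k)$-algebra $A$ because $\ell$ is different from the characteristic of $k$), Lemma~\ref{lem:adm} shows that in the decomposition $\bigoplus_i V_i \iso V$ of~\eqref{eqn:decomp} each $V_i$ is finitely generated over $A$; thus $V$ is a direct sum of finitely generated $A$-modules. On the other hand, Lemma~\ref{lem:Kirillov sub}, applied to the restriction of $V$ to $P_n$, produces an $A$-submodule $W \subseteq V$ together with an $A$-module isomorphism $W \cong V^{(n)}$. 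Composing, $V^{(n)}$ is isomorphic to an $A$-submodule of $\bigoplus_i V_i$, whence Lemma~\ref{lem:nakayama sum} gives the conclusion. The only point with any content here is the appeal to Lemma~\ref{lem:Kirillov sub}: over a $W(k)$-algebra, as opposed to a $W(\tk)$-algebra, one does \emph{not} have a natural surjection $V \twoheadrightarrow V^{(n)}$, so it is essential to instead realize $V^{(n)}$ \emph{inside} $V$ via the space of Schwartz functions; everything else — the base-change identities, the decomposition~\eqref{eqn:decomp}, and the Nakayama lemma — is formal.
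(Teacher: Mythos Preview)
Your proof is correct and follows essentially the same route as the paper's: reduce modulo $\mathfrak m$ via the base-change compatibility of $(-)^{(n)}$, embed $V^{(n)}$ into $V$ via Lemma~\ref{lem:Kirillov sub}, observe that $V$ is a direct sum of finitely generated $A$-modules by admissibility and~\eqref{eqn:decomp}, and conclude by Lemma~\ref{lem:nakayama sum}. You have simply spelled out in more detail why Condition~\ref{cond:pro-ell} holds and why the Kirillov-side embedding is the key step over a general $W(k)$-algebra.
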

\begin{proof}
As derivatives commute with tensor products, we have
$$V^{(n)}/{\mathfrak m}V^{(n)} \cong (V/{\mathfrak m}V)^{(n)}.$$
On the other hand, we have already
observed that $\J(V)$ (and hence $V$) contains an $A$-submodule
isomorphic to $V^{(n)}$.  As $V$ is a direct sum of finitely generated $A$-modules,
the result follows from Lemma \ref{lem:nakayama sum}.
\end{proof}

\begin{remark}
{\em
In the setting of Theorem~\ref{thm:fg},
if $V/{\mathfrak m}V$ has finite length, then $(V/{\mathfrak m}V)^{(n)}$  is finite dimensional 
over $A/{\mathfrak m}$ by Theorem \ref{thm:kirillov}.  Thus Theorem~\ref{thm:fg}
applies to all admissible representations of $\GL_n$ over $A$ such that
$V/{\mathfrak m}V$ has finite length.
}
\end{remark}

Theorem~\ref{thm:fg}
allows us to establish the following extension of Proposition~\ref{prop:leibnitz}:

\begin{cor}
\label{cor:leibnitz}
Let $A$ be a reduced Noetherian
local $W(k)$-algebra with maximal ideal ${\mathfrak m}$.
Let $V$ and $W$ be admissible smooth $A$-representations of $\GL_n(E)$ and $\GL_m(E)$,
and let $P$ be the parabolic subgroup of $\GL_{n+m}(E)$ defined in the statement of
Proposition~\ref{prop:leibnitz}.  Then, if $V^{(n)}$ and $W^{(m)}$ are free of
rank one over $A$, so is the $A$-module:
$$(\Ind_P^{\GL_{n+m}(E)} V \otimes W)^{(n + m)}.$$
\end{cor}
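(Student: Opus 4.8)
The plan is to deduce the result from Proposition~\ref{prop:leibnitz} by a specialization argument, using Theorem~\ref{thm:fg} (via Lemma~\ref{lem:nakayama sum}) to control $(\Ind_P^{\GL_{n+m}(E)} V \otimes W)^{(n+m)}$ as an $A$-module. Write $U := \Ind_P^{\GL_{n+m}(E)} V \otimes W$ and $X := U^{(n+m)}$. First I would check that $U$ is an admissible smooth $A$-representation of $\GL_{n+m}(E)$: this is standard, since parabolic induction of admissible representations is admissible, and the hypotheses on $V^{(n)}$, $W^{(m)}$ guarantee (together with the fact that the derivative functors commute with $- \otimes_A k$) that $(V/\mathfrak m V)^{(n)}$ and $(W/\mathfrak m W)^{(m)}$ are finite-dimensional, so Proposition~\ref{prop:leibnitz} applied over $k$ shows $(U/\mathfrak m U)^{(n+m)} = (V/\mathfrak m V)^{(n)} \otimes_k (W/\mathfrak m W)^{(m)}$ is finite-dimensional over $k$. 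Hence Theorem~\ref{thm:fg} applies and $X$ is a finitely generated $A$-module.

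Next I would identify the generic rank of $X$. Since $A$ is reduced, it embeds into the product of its residue fields $\kappa(\mathfrak a)$ at the minimal primes $\mathfrak a$; it suffices to show $X \otimes_A \kappa(\mathfrak a)$ is one-dimensional over $\kappa(\mathfrak a)$ for each such $\mathfrak a$, and that $X$ has no $A$-torsion and is in fact free. For the first point, the derivative functors commute with arbitrary base change (as recorded in the discussion following Proposition~\ref{prop:Kirillov descent}, and the remark that they commute with tensor products), so
$$X \otimes_A \kappa(\mathfrak a) \cong \bigl(\Ind_P^{\GL_{n+m}(E)} (V\otimes_A \kappa(\mathfrak a)) \otimes (W \otimes_A \kappa(\mathfrak a))\bigr)^{(n+m)} \cong (V\otimes_A\kappa(\mathfrak a))^{(n)} \otimes_{\kappa(\mathfrak a)} (W \otimes_A \kappa(\mathfrak a))^{(m)}$$
by Proposition~\ref{prop:leibnitz} over the field $\kappa(\mathfrak a)$; and since $V^{(n)}$, $W^{(m)}$ are free of rank one over $A$, each tensor factor on the right is one-dimensional over $\kappa(\mathfrak a)$, so the product is one-dimensional. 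The same computation at $\mathfrak a = \mathfrak m$ gives $X \otimes_A k = X/\mathfrak m X$ is one-dimensional over $k$, so by Nakayama $X$ is generated by a single element, i.e. $X \cong A/I$ for some ideal $I$.

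Finally I would show $I = 0$. The surjection $A \twoheadrightarrow X$ localizes, at each minimal prime $\mathfrak a$, to the surjection $\kappa(\mathfrak a) \to X \otimes_A \kappa(\mathfrak a)$ — wait, more precisely $A_{\mathfrak a} \to X_{\mathfrak a}$ — which is an isomorphism since both sides are one-dimensional $\kappa(\mathfrak a)$-vector spaces (as $A_{\mathfrak a} = \kappa(\mathfrak a)$, $A$ being reduced). Hence $I_{\mathfrak a} = 0$ for every minimal prime $\mathfrak a$, so $I$ is contained in every minimal prime, so $I \subseteq \mathrm{nil}(A) = 0$; thus $X \cong A$ is free of rank one, as claimed. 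The main obstacle, such as it is, lies in being careful about base change for the derivative functors over a non-algebraically-closed field $\kappa(\mathfrak a)$ — one must invoke the descent construction of Proposition~\ref{prop:Kirillov descent} rather than the naive definitions — and in checking that the top-derivative formula of Proposition~\ref{prop:leibnitz}, whose proof is quoted from \cite[Lem.~1.10]{Vig2} over $\overline{\F}_p$ or $\overline{\Q}_p$, indeed holds over an arbitrary field containing the $\ell$-power roots of unity (the general case then following by extending scalars, exactly as in the proof of Theorem~\ref{thm:kirillov}); everything else is the routine reduced-ring specialization argument above.
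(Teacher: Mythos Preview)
Your proposal is correct and follows essentially the same approach as the paper's proof: compute the specialization of the top derivative at the maximal ideal and at each minimal prime via Proposition~\ref{prop:leibnitz}, use Theorem~\ref{thm:fg} plus Nakayama to conclude cyclicity, and use reducedness of $A$ to kill the annihilator. The paper's version is slightly terser (it phrases the second step as ``the annihilator is the zero ideal'' rather than writing $X\cong A/I$ and showing $I\subseteq\mathrm{nil}(A)$), and it does not pause, as you do, over the validity of Proposition~\ref{prop:leibnitz} over the residue fields $\kappa(\mathfrak a)$; but the logical content is the same.
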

\begin{proof}
For each minimal prime ${\mathfrak a}$ of $A$, let $\kappa({\mathfrak a})$ be its residue field.
Set $V_{\mathfrak a} = V \otimes_A \kappa({\mathfrak a})$
and $W_{\mathfrak a} = W \otimes_A \kappa({\mathfrak a})$.
We have isomorphisms:
$$(\Ind_P^{GL_{n+m}(E)} V \otimes W)^{(n + m)} \otimes_A \kappa({\mathfrak a}) \iso
(\Ind_P^{\GL_{n+m}(E)} V_{\mathfrak a} \otimes W_{\mathfrak a})^{(n + m)},$$
and the latter is one-dimensional by Proposition~\ref{prop:leibnitz}.  Thus
in particular the annihilator of $(\Ind_P^{\GL_{n+m}(E)} V \otimes W)^{(n + m)}$ 
as an $A$-module is the zero ideal of $A$.

On the other hand, let $\Vbar = V/{\mathfrak m}V$ and $\Wbar = W/{\mathfrak m}W$.
Then we have isomorphisms:
$$(\Ind_P^{\GL_{n+m}(E)} V \otimes W)^{(n + m)} \otimes_A A/{\mathfrak m} \iso
(\Ind_P^{\GL_{n+m}(E)} \Vbar \otimes \Wbar)^{(n + m)},$$
and the latter is again one-dimensional by Proposition~\ref{prop:leibnitz}. 
Theorem~\ref{thm:fg} shows that
$(\Ind_P^{\GL_{n + m}(E)} V \otimes W)^{(n+ m)}$
is furthermore finitely generated over $A$,
and thus it follows by Nakayama's lemma that
$(\Ind_P^{\GL_{n + m}(E)} V \otimes W)^{(n+ m)}$
is a cyclic
$A$-module, and hence (taking into account that it is faithful, as we proved
above) is free of rank one.
\end{proof}

We will also need a generalization of this machinery to a product of
$\GL_n(E_i)$ for various local fields $E_i$ of residue characteristics $\ell_i$,
all prime to the residue characteristic of $k$.
Fix a finite collection of such $E_i$, indexed by a set $S$, and let $G$ be the product of the
groups $\GL_n(E_i)$ for all $i$.  Let $P_n$ be the product of the subgroups $P_n(E_i)$ of
$\GL_n(E_i)$.  

Now if we fix for each $i$ a character $\psi_i: N_n(E_i) \rightarrow W(\tk)^{\times}$,
we can define functors $\Psi^{-,i},\Phi^{-,i}, \Psi^{+,i},\Phi^{+,i},\hPhi^{+,i}$
as follows: if $H$ is any topological group, and $V$ is a $P_n(E_i) \times H$-module,
then $\Psi^{-,i}(V)$ is the $\GL_{n-1}(E_i) \times H$-module defined by applying
$\Psi^-$ to $V$ (considered as a $P_n(E_i)$-module,) and then taking the natural
action of $H$ on $\Psi^-(V)$.  The other functors are defined similarly.
Note that if $V$ is a $G$-module over $A$ (or even a module over the product of
the $P_n(E_i)$,) then 
$\Psi^{-,i}\Psi^{-,j}V = \Psi^{-,j}\Psi^{-,i}V$ (here the equality denotes
a natural isomorphism),
and the other functors have 
similar commutativity properties.   If $S^{\prime}$ is a subset of $S$,
the composition of functors
$\Psi^{-,i}$ for all $i \in S^{\prime}$ is a thus a functor that takes
an $A$-module $V$ over the product of the groups $P_n(E_i)$ to an $A$-module
with an action of $P_{n-1}(E_i)$ for each $i \in S^{\prime}$ and of $P_n(E_i)$ for each $i$
not in $S^{\prime}$.  This composition is independent (up to natural isomorphism)
of the order in which we compose the
functors; we denote it by $\Psi^{-,S^{\prime}}$.  Similarly define 
$\Phi^{-,S^{\prime}}$, $\Psi^{+,S^{\prime}}$,
etc.  Finally, if $V$ is an $A[G]$-module, define $V^{(n),S^{\prime}}$ 
to be the representation $\Phi^{-,S^{\prime}}(\Psi^{-,S^{\prime}})^{n-1}V$.  
Note that the functors $\Phi^{-,S^{\prime}}$, $\Psi^{+,S^{\prime}}$,
etc.\ satisfy analogues of properties (1)-(4) of Proposition~\ref{prop:BZ}.

When it is clear from the context what $S$ and $E_i$ we are working with, we will
denote $\Phi^{+,S}$, $\Psi^{+,S}$, etc.\ by $\Phi^+,\Psi^+$, and so forth.

Similarly, if $V$ is an $A[G]$-module, we define $\J_i(V)$ to be space of
Schwartz functions in $V$ for the action of $\GL_n(E_i)$ on $V$; this has an
action of $P_n(E_i)$ and of $\GL_n(E_j)$ for $j$ not equal to $i$.  Note that
$\J_i\J_j(V) = \J_j\J_i(V)$, so that we can define, for any $S^{\prime} \subset S$,
the functor $\J_{S^{\prime}}$ to be the composition (in any order)
of the functors $\J_i$ for $i$ in $S^{\prime}$.  Then $\J_{S^{\prime}}(V)$ 
is the smallest $A$-submodule of $V$, stable under $P_n(E_i)$ for 
$i$ in $S^{\prime}$ and $\GL_n(E_i)$ for $i$ not in $S^{\prime}$,
such that the map $\J_S(V)^{(n),S^{\prime}} \rightarrow V^{(n),S^{\prime}}$ 
is an isomorphism.  By construction,
the functor $\J_{S^{\prime}}$ is left adjoint to the functor 
$V \mapsto V^{(n),S^{\prime}}$ (when the
latter is considered as a functor from $A[P_n]$-modules to $A$-modules).

Now, precisely as in the proof of Theorem~\ref{thm:fg}, we have:
\begin{theorem}
\label{thm:fgtensor}
Let $V$ be an admissible representation of $G$
over a Noetherian local $W(k)$-algebra $A$, and suppose that 
$(V/{\mathfrak m}V)^{(n)}$ is finite dimensional over $A/{\mathfrak m}A$.  
Then $V^{(n)}$ is a finitely generated $A$-module.  
\end{theorem}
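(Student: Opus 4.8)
The plan is to reproduce the proof of Theorem~\ref{thm:fg} essentially verbatim, replacing the single-factor top derivative $V\mapsto V^{(n)}$ by $V\mapsto V^{(n),S}$; the three inputs of that argument need their product analogues, and Lemma~\ref{lem:nakayama sum} then concludes exactly as before. The first input is the base-change isomorphism $V^{(n),S}/\mathfrak m V^{(n),S}\cong (V/\mathfrak m V)^{(n),S}$. This holds because $V^{(n),S}$ is a composite of the functors $\Psi^{-,i}$ and $\Phi^{-,i}$, and each of these commutes with $-\otimes_A M$ for every $A$-module $M$ --- the carried-along $H$-action in their construction being irrelevant to this point, exactly as for the unadorned functors --- so one iterates over the elements of $S$.

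The second input is the product analogue of Lemma~\ref{lem:Kirillov sub}: that $\J_S(V)$, and hence $V$ (since $\J_S(V)\subseteq V$), contains an $A$-submodule isomorphic to $V^{(n),S}$. I would prove this by the product analogue of the proof of Lemma~\ref{lem:Kirillov sub}. A factor-by-factor bookkeeping, using that the $i$-indexed Kirillov functors commute with the $j$-indexed ones and the relations $\Phi^{-,i}\Phi^{+,i}\cong\Id$, $\Psi^{-,i}\Psi^{+,i}\cong\Id$ of Proposition~\ref{prop:BZ}(4) applied in each factor, identifies $\J_S(V)$ with $(\Phi^{+,S})^{n-1}\Psi^{+,S}(V^{(n),S})$, which in turn equals $[(\Phi^{+,S})^{n-1}\Psi^{+,S}(W(k))]\otimes_{W(k)}V^{(n),S}$ because $\Phi^{+,i}$ and $\Psi^{+,i}$ commute with tensor products. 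Choosing an element of $(\Phi^{+,S})^{n-1}\Psi^{+,S}(W(k))$ whose image under the natural surjection onto $W(\tk)$ lies outside $\unif W(\tk)$ --- the composite of the single-factor surjections used in Lemma~\ref{lem:Kirillov sub} --- produces a free rank-one $W(k)$-direct summand, and tensoring it with $V^{(n),S}$ gives the desired copy of $V^{(n),S}$ inside $\J_S(V)$.

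The third input is that $V$ is a direct sum of finitely generated $A$-modules. Here $G=\prod_i\GL_n(E_i)$ satisfies Condition~\ref{cond:pro-ell}: in each factor the first congruence subgroup $1+\mathfrak p_i M_n(\mathcal O_{E_i})$ is pro-$\ell_i$ with a countable neighbourhood basis of the identity, and its pro-order, being a power of $\ell_i$ with $\ell_i\neq p$, is invertible in the $W(k)$-algebra $A$; take the product of these over $i$. So admissibility of $V$ together with the decomposition~\eqref{eqn:decomp} and Lemma~\ref{lem:adm} give $V\cong\bigoplus_i V_i$ with each $V_i$ finitely generated over $A$. Combining the three inputs, $V^{(n),S}$ embeds into the direct sum $\bigoplus_i V_i\cong V$ of finitely generated $A$-modules while $V^{(n),S}/\mathfrak m V^{(n),S}\cong (V/\mathfrak m V)^{(n)}$ is finite dimensional over $A/\mathfrak m$ by hypothesis, so Lemma~\ref{lem:nakayama sum} forces $V^{(n),S}$ to be finitely generated over $A$, as required. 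I expect the only genuinely delicate point to be the bookkeeping of the second input --- tracking the carried-along group actions and verifying that base change and the functors $\J_i$ interact exactly as in the $\GL_n$ case; everything else is a routine transcription.
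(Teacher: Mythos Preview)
Your proposal is correct and follows exactly the approach of the paper, which simply states that the proof proceeds ``precisely as in the proof of Theorem~\ref{thm:fg}.'' You have in fact spelled out more carefully than the paper does the three ingredients---base change for the product derivative, the product analogue of Lemma~\ref{lem:Kirillov sub}, and the verification of Condition~\ref{cond:pro-ell} for the product group---before invoking Lemma~\ref{lem:nakayama sum}.
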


We also have an analogue of Theorem~\ref{thm:kirillov}
\begin{theorem}
\label{thm:kirillov tensor}
Let $V$ be an absolutely irreducible admissible representation of $G$ over $k$.
Then $V^{(n)}$ is either zero or a one-dimensional $k$-vector space.
\end{theorem}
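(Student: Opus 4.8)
The plan is to reduce the statement to Theorem~\ref{thm:kirillov} by decomposing $V$ as an external tensor product over the factors $\GL_n(E_i)$ of $G$. First I would extend scalars to an algebraic closure $\kbar$ of $k$: since each of the functors $\Psi^{-,i}$ and $\Phi^{-,i}$ commutes with base change, there is a natural isomorphism $V^{(n)} \otimes_k \kbar \cong (V \otimes_k \kbar)^{(n)}$, and as the dimension of a $k$-vector space is unchanged upon extension of scalars, $V^{(n)}$ will be zero (resp.\ one-dimensional over $k$) exactly when $(V \otimes_k \kbar)^{(n)}$ is zero (resp.\ one-dimensional over $\kbar$). Since $V$ is absolutely irreducible, $V \otimes_k \kbar$ is irreducible, so after replacing $V$ by $V \otimes_k \kbar$ we may assume that $k$ is algebraically closed and that $V$ is (absolutely) irreducible.

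Next I would invoke the tensor product theorem for irreducible smooth representations of a finite direct product of $\ell$-adic groups over an algebraically closed field (a standard fact; cf.\ \cite{BZ}, \cite{Vig2}) to write $V \cong \bigotimes_{i \in S} V_i$, the external tensor product of irreducible smooth representations $V_i$ of $\GL_n(E_i)$ over $k$, with $G = \prod_i \GL_n(E_i)$ acting factor by factor. Each $V_i$ is then admissible: if $V_{i_0}^{K_{i_0}}$ were infinite-dimensional for some open compact $K_{i_0} \subset \GL_n(E_{i_0})$, then, choosing for $i \neq i_0$ an open compact $K_i \subset \GL_n(E_i)$ with $V_i^{K_i} \neq 0$ and setting $K = \prod_{i \in S} K_i$, the space $V^K \cong \bigotimes_{i \in S} V_i^{K_i}$ would be infinite-dimensional, contradicting admissibility of $V$. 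The point is then that for each $i$ the functors $\Psi^{-,i}$ and $\Phi^{-,i}$ affect only the $i$-th tensor factor, because $N_n(E_i)$ acts trivially on $V_j$ for $j \neq i$: forming $N_n(E_i)$-coinvariants (for $\Psi^{-,i}$), or the largest quotient on which $N_n(E_i)$ acts through $\psi_i$ (for $\Phi^{-,i}$), is computed on $V_i$ alone, the other factors being flat over the field $k$. Iterating the functors defining $V^{(n),S}$ therefore gives
$$V^{(n)} = V^{(n),S} \cong \bigotimes_{i \in S} V_i^{(n)}$$
as $k$-vector spaces, and Theorem~\ref{thm:kirillov} tells us that each $V_i^{(n)}$ is zero or one-dimensional over $k$; hence $V^{(n)}$ is zero if some $V_i^{(n)}$ vanishes, and one-dimensional otherwise.

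The main obstacle is really just having the tensor product factorization of an irreducible admissible representation of $G$ over an algebraically closed field available in a citable form; once that is in hand, the compatibility of the iterated derivative with the factorization is immediate from the definitions of the functors $\Psi^{-,i}$ and $\Phi^{-,i}$, and the passage to $\kbar$ is routine bookkeeping. An alternative would be to induct on $|S|$, peeling off one factor at a time, but this seems to require knowing that intermediate derivatives of admissible representations remain admissible, which makes the tensor-product argument above preferable.
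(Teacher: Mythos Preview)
Your argument is correct and follows the same approach as the paper: factor $V$ as an external tensor product $\bigotimes_i V_i$ of absolutely irreducible admissible representations of the individual $\GL_n(E_i)$, observe that $V^{(n)} \cong \bigotimes_i V_i^{(n)}$, and apply Theorem~\ref{thm:kirillov} to each factor. The paper's proof is terser (it asserts the tensor factorization directly without first passing to $\kbar$ or checking admissibility of the factors), but your added details are harmless and the strategy is identical.
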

\begin{proof}
The representation $V$ splits as a tensor product of absolutely irreducible
representations $V_i$ of $\GL_i(E_i)$ for all $i \in I$.  It follows
that $V^{(n)}$ is the tensor product of the $V_i^{(n)}$.  Hence this result
is an immediate consequence of Theorem~\ref{thm:kirillov}.
\end{proof}

\begin{prop}
\label{prop:schwartz endomorphisms}
Let $V$ be an $A[G]$-module, and suppose
that $V^{(n)}$ is free of rank one over $A$.  Then the map
$A \rightarrow \End_{P_n}(\J(V))$ is an isomorphism.
\end{prop}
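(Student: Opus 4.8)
The plan is to compute $\End_{P_n}\bigl(\J(V)\bigr)$ by using that $\J$ is a left adjoint. Recall that $\J(V)$ is, up to a natural $P_n$-equivariant isomorphism, the representation $(\Phi^+)^{n-1}\Psi^+\bigl(V^{(n)}\bigr)$ (when $G$ is a product of groups $\GL_n(E_i)$ one uses instead the iterated functors $\J_S$, $\Phi^{+,S}$, $\Psi^{+,S}$), and that the functor $M\mapsto(\Phi^+)^{n-1}\Psi^+(M)$ from $A$-modules to smooth $A[P_n]$-modules is left adjoint to the top-derivative functor $W\mapsto W^{(n)}$; for $G=\GL_n(E)$ this follows by composing the adjunctions of Proposition~\ref{prop:BZ}(2), and for $G$ a product it is the adjunction between $\J_S$ and $W\mapsto W^{(n),S}$ recorded above in the discussion of the functors $\J_{S'}$. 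Setting $M:=V^{(n)}$, so that $\J(V)\cong(\Phi^+)^{n-1}\Psi^+(M)$, adjunction then gives a natural isomorphism
$$\End_{P_n}\bigl(\J(V)\bigr)\;=\;\Hom_{P_n}\bigl(\J(V),\J(V)\bigr)\;\cong\;\Hom_A\bigl(M,\,\J(V)^{(n)}\bigr).$$

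The second step is to identify $\J(V)^{(n)}$ with $M=V^{(n)}$ canonically. When $G$ is a product this is immediate from the very definition of $\J_S(V)$ as the smallest submodule of $V$ for which $\J_S(V)^{(n),S}\to V^{(n),S}$ is an isomorphism; for $G=\GL_n(E)$ one computes directly
$$\J(V)^{(n)}\;=\;\Psi^-(\Phi^-)^{n-1}(\Phi^+)^{n-1}\Psi^+(M)\;\cong\;\Psi^-\Psi^+(M)\;\cong\;M,$$
collapsing the string $(\Phi^-)^{n-1}(\Phi^+)^{n-1}$ by repeated application of the natural isomorphism $\Phi^-\Phi^+\cong\Id$ and then using $\Psi^-\Psi^+\cong\Id$, both from Proposition~\ref{prop:BZ}(4). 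Substituting into the displayed isomorphism yields $\End_{P_n}\bigl(\J(V)\bigr)\cong\End_A\bigl(V^{(n)}\bigr)$, and since $V^{(n)}$ is free of rank one over $A$ by hypothesis, $\End_A\bigl(V^{(n)}\bigr)\cong A$.

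It remains to check that the resulting isomorphism $A\cong\End_{P_n}\bigl(\J(V)\bigr)$ is the structural map of the proposition. This is formal: the chain of isomorphisms above is a ring isomorphism (it is obtained from the adjunction isomorphism, which is additive and compatible with composition since $(-)^{(n)}$ is a functor, by conjugating with the unit $M\cong\J(V)^{(n)}$), it is $A$-linear, and it sends $\id_{\J(V)}$ to $\id_{V^{(n)}}$; hence it carries the natural map $a\mapsto a\cdot\id_{\J(V)}$ to the natural map $a\mapsto a\cdot\id_{V^{(n)}}$, which for a free rank-one module is an isomorphism onto $\End_A(V^{(n)})$. I do not expect a genuine obstacle here — the argument is largely an assembly of the adjointness, exactness, and idempotence properties of Proposition~\ref{prop:BZ}. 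The only points demanding care are the bookkeeping at the bottom of the mirabolic tower, where $P_1$ and $\GL_0(E)$ are the trivial group and $\Psi^\pm$ are (up to canonical identification) the identity, and, in the product setting, verifying that the $S$-indexed functors satisfy the analogues of Proposition~\ref{prop:BZ}(2) and~(4) used above (which is asserted in the discussion preceding the statement). An essentially equivalent alternative is to first invoke that $\Phi^\pm,\Psi^\pm$ commute with $\otimes_A$ to reduce to the case $V^{(n)}=A$, and then compute $\End_{P_n}\bigl((\Phi^+)^{n-1}\Psi^+(A)\bigr)\cong\Hom_A(A,A)=A$.
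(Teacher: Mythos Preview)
Your proof is correct and follows essentially the same approach as the paper: both compute $\End_{P_n}(\J(V))$ via the adjunction between $(\Phi^+)^{n-1}\Psi^+$ and the top-derivative functor to obtain $\End_A(V^{(n)})\cong A$. Your version is more careful in spelling out the identification $\J(V)^{(n)}\cong V^{(n)}$ and in verifying that the resulting isomorphism is the structural map, both of which the paper leaves implicit.
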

\begin{proof}
By the adjointness properties of the functors $\Psi^+$ and $\Phi^+$ we have
natural isomorphisms:
$$\End_{P_n}(\J(\tV)) \iso \Hom_{A}(V^{(n)},\Psi^-(\Phi^-)^{n-1} \J(V)) \iso
\End_{A}(V^{(n)}).$$
The result follows immediately.
\end{proof}

\subsection{Essentially AIG representations}
\label{subsec:AIG}
Let $K$ be a field of characteristic different from $\ell$.

\begin{df}
\label{df:ess AIG}
{\em We say that a smooth
representation $V$ of $G:= \GL_n(E)$ is essentially
absolutely irreducible and generic (``essentially AIG'' for short)
if:
\begin{enumerate}
\item The $G$-socle $\soc(V)$ is absolutely irreducible and generic.
\item The quotient $V/\soc(V)$ contains no generic Jordan--H\"older factors;
equivalently, $\bigl(V/\soc(V)\bigr)^{(n)} = 0$.
\item The representation $V$ is the sum (or equivalently, the union) of its finite length submodules.
\end{enumerate}
}
\end{df}

\begin{lemma}
\label{lem:AIG basic}
\begin{enumerate}
\item If $V$ is an essentially~AIG smooth representation
of $G$,
and if $\chi: E^{\times} \rightarrow k^{\times}$ is a continuous character,
then $(\chi\circ\det)\otimes V$ is again essentially~AIG.
\item
If $V$ is an essentially~AIG smooth $G$-representation,
and if $U \subset V$ is a non-zero smooth $G$-subrepresentation,
then $U$ is also essentially~AIG, and furthermore $\soc(U) = \soc(V)$.
\item If $U$ and $V$ are essentially~AIG admissible smooth $G$-representations,
then restricting to socles induces an embedding
$$\Hom_G(U,V) \hookrightarrow \Hom_G\bigl(\soc(U),\soc(V)\bigr).$$
\item
Any non-zero $G$-equivariant homomorphism between essentially~AIG
admissible smooth $G$-representations
is an embedding.
\end{enumerate}
\end{lemma}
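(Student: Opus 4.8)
The plan is to prove the four parts in order, since~(4) follows formally from~(2) and~(3), and~(3) from~(2); throughout I will use the reformulation of condition~(2) of Definition~\ref{df:ess AIG} via the vanishing of the top derivative, together with the exactness of $\Psi^-$ and $\Phi^-$, hence of $V\mapsto V^{(n)}=\Psi^-(\Phi^-)^{n-1}V$, from Proposition~\ref{prop:BZ}(1). Part~(1) is formal: $W\mapsto(\chi\circ\det)\otimes W$ is an exact autoequivalence of the category of smooth $G$-representations, which carries socles to socles, preserves absolute irreducibility, and preserves the property of being a union of finite length submodules; and since $\chi\circ\det$ is trivial on $N_n$, this twist commutes with $\Psi^-$ and $\Phi^-$ up to twisting the representations of the smaller groups by the appropriate determinant characters, so it leaves the top derivative $V^{(n)}$ of every subquotient unchanged (these determinant characters being trivial at the last stage), hence preserves both the genericity of $\soc(V)$ and the vanishing of $\bigl(V/\soc(V)\bigr)^{(n)}$.

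For part~(2) the crucial point is that every non-zero smooth subrepresentation $U\subseteq V$ contains $\soc(V)$. Pick $0\neq u\in U$; by condition~(3) for $V$ there is a finite length submodule $W\subseteq V$ with $u\in W$, and then $U\cap W$ is a non-zero finite length submodule of $U$, so it contains a simple submodule. Since $\soc(V)$ is simple, it is the unique simple submodule of $V$, so this simple submodule equals $\soc(V)$; hence $\soc(V)\subseteq U$, and comparing simple submodules of $U$ with those of $V$ gives $\soc(U)=\soc(V)$, which is absolutely irreducible and generic. The same argument shows that $U$ is a union of finite length submodules, giving condition~(3) for $U$; and condition~(2) for $U$ holds because $U/\soc(U)=U/\soc(V)$ injects into $V/\soc(V)$, so by left-exactness of the derivative $\bigl(U/\soc(V)\bigr)^{(n)}$ injects into $\bigl(V/\soc(V)\bigr)^{(n)}=0$.

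Granting~(2), part~(3) goes as follows. If $f\colon U\to V$ is $G$-equivariant with $f|_{\soc(U)}=0$, then $f$ factors through $U/\soc(U)$, so $\image(f)$ is a quotient of $U/\soc(U)$ and hence, by right-exactness of the derivative, $\bigl(\image f\bigr)^{(n)}$ is a quotient of $\bigl(U/\soc(U)\bigr)^{(n)}=0$; but if $\image(f)$ were non-zero it would be essentially AIG by~(2), so $\soc(\image f)$ would be generic, giving $\bigl(\soc(\image f)\bigr)^{(n)}\neq 0$ and hence $\bigl(\image f\bigr)^{(n)}\neq 0$ by left-exactness applied to $\soc(\image f)\hookrightarrow\image f$ --- a contradiction. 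Thus $f=0$, i.e.\ restricting to socles is injective on $\Hom_G(U,V)$. Part~(4) is then immediate: given a non-zero $G$-equivariant $f\colon U\to V$, if $\kernel(f)\neq 0$ then~(2) applied to $\kernel(f)\subseteq U$ gives $\soc(U)=\soc\bigl(\kernel(f)\bigr)\subseteq\kernel(f)$, so $f|_{\soc(U)}=0$ and hence $f=0$ by~(3), contradicting $f\neq 0$; therefore $\kernel(f)=0$, so $f$ is an embedding.

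The one step that is not pure bookkeeping is the assertion in~(2) that a non-zero smooth subrepresentation of $V$ contains $\soc(V)$: this is exactly where condition~(3) of Definition~\ref{df:ess AIG} is used, ruling out the possibility --- which can occur for general smooth representations --- of a non-zero submodule with no simple submodule at all. Once that is in place, everything else reduces to manipulation of the exact functors of Proposition~\ref{prop:BZ} together with the elementary fact that a simple socle is the unique simple submodule.
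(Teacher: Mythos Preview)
Your proof is correct and follows essentially the same route as the paper's. The paper argues~(2) by first asserting $0\neq\soc(U)\subset\soc(V)$ and then using the absolute irreducibility of $\soc(V)$, whereas you explicitly justify $\soc(U)\neq 0$ via the finite-length condition~(3) of Definition~\ref{df:ess AIG} --- a point the paper leaves implicit; for~(3) the paper phrases the contradiction in terms of Jordan--H\"older factors (the image of $U/\soc(U)\to V$ would have to contain the generic $\soc(V)$ yet have no generic constituents) while you phrase it via the derivative functor, but these are equivalent by the definition itself.
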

\begin{proof}
Claim~(1) is clear.

If $U \subset V$ is as in~(2),
then $0 \neq \soc(U) \subset \soc(V)$. Since the latter
is absolutely irreducible, we find that $\soc(U) = \soc(V)$,
and so in particular $\soc(U)$ is absolutely irreducible
and generic.   Furthermore, we see that
$U/\soc(U) \hookrightarrow V/\soc(V).$  Since the latter
representation contains no generic Jordan--H\"older factors,
neither does the former.  Finally, every element of $U$ is contained
in a finite length submodule of $V$; the intersection of this with $U$
is also finite length.  Thus $U$ is the union of its finite length submodules,
and is therefore essentially~AIG, proving~(2).

Now suppose that $\phi:U \rightarrow V$ is a map
of essentially~AIG representations, as in~(3).  If $\phi\bigl(\soc(U)\bigr) = 0,$
then $\phi$ factors to induce a map $U/\soc(U) \rightarrow V.$ But
the source of this map has no generic Jordan--H\"older factors, while its target
has generic socle.  Thus this map vanishes, and hence $\phi$ vanishes.  This
proves~(3).

To prove~(4), suppose given $\phi:U \rightarrow V$ as above.
If $\kernel \phi \neq 0,$ then it has a non-zero socle.  As $\soc(U)$
is irreducible, we conclude that $\soc(U) \subset \kernel \phi.$
Part~(3) then implies that $\phi = 0.$
\end{proof}

\begin{lemma}
\label{lem:AIG homs}
If $V$ is an essentially~AIG smooth representation
of $G$ over $K$, and if $U$ is a non-zero submodule of $V,$
then $\Hom_G(U,V)$ is one-dimensional over $K$.  In particular,
$\End_G(V) = K.$
\end{lemma}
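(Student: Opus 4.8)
The plan is to read this off Lemma~\ref{lem:AIG basic}, together with the fact that an absolutely irreducible representation has scalar endomorphism ring. Write $W := \soc(V)$, which by hypothesis is absolutely irreducible (and generic), so in particular $\End_G(W) = K$. By Lemma~\ref{lem:AIG basic}(2) the nonzero submodule $U$ is again essentially~AIG and $\soc(U) = W$. The inclusion $U \hookrightarrow V$ is a nonzero element of $\Hom_G(U,V)$, so the only remaining task is to prove $\dim_K \Hom_G(U,V) \leq 1$.

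For that I would run the restriction-to-socle argument of Lemma~\ref{lem:AIG basic}(3). Consider the $K$-linear map $r \colon \Hom_G(U,V) \to \Hom_G(W,V)$, $\phi \mapsto \phi|_W$. First, $r$ is injective: if $\phi|_W = 0$ then $\phi$ factors through a $G$-map $U/W \to V$, whose image $I \subseteq V$ is a quotient of $U/W$; since $\bigl(U/\soc(U)\bigr)^{(n)} = 0$ and the derivative functor $V \mapsto V^{(n)}$ is exact (Proposition~\ref{prop:BZ}(1)), we get $I^{(n)} = 0$, whereas any nonzero submodule of $V$ has socle equal to $W$ (Lemma~\ref{lem:AIG basic}(2)) and hence, $W$ being generic, has nonzero $n$-th derivative; so $I = 0$ and $\phi = 0$. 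Second, the image of $r$ lands in $\End_G(W)$: for any $\phi$, the submodule $\phi(W) \subseteq V$ is either $0$ or irreducible, hence in either case contained in $\soc(V) = W$. Therefore $r$ embeds $\Hom_G(U,V)$ into $\End_G(W) = K$, giving the desired bound.

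Combining the two bounds yields $\dim_K \Hom_G(U,V) = 1$, and taking $U = V$ gives $\End_G(V) = K$, spanned by $\id_V$. I do not expect a real obstacle: the argument is a direct corollary of Lemma~\ref{lem:AIG basic}. The one point meriting a remark is that part~(3) of that lemma is stated for \emph{admissible} essentially~AIG representations, while the present statement is not; but the injectivity of $r$ used above only invokes the vanishing $\bigl(U/\soc(U)\bigr)^{(n)} = 0$ and the genericity of $\soc(V)$, so it applies without the admissibility hypothesis, and this is the only place where an appeal to~(3) might be made.
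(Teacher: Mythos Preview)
Your proof is correct and follows essentially the same route as the paper: invoke Lemma~\ref{lem:AIG basic}(2) to identify $\soc(U)=\soc(V)$, then show restriction to the socle embeds $\Hom_G(U,V)$ into $\End_G(\soc(V))=K$. Your decision to re-prove the injectivity of this restriction directly (rather than cite part~(3)) is a good catch, since~(3) is stated with an admissibility hypothesis absent here; the paper simply cites~(3), relying implicitly on the fact that its proof does not actually use admissibility.
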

\begin{proof}
Part~(2) of the preceding lemma shows that $U$ is again essentially~AIG
and that $\soc(U) = \soc(V)$.  Part~(3) of the same lemma shows that
restriction to socles induces an embedding
$$
\Hom_G(U,V) \hookrightarrow \Hom_G\bigl(\soc(U),\soc(V)\bigr)
= \End_G\bigl(\soc(V)\bigr) = K$$
(where the first equality follows from the already noted equality of socles,
and the second equality following from the absolute irreducibility of $\soc(V)$).
Since $\Hom_G(U,V)$
is non-zero (as $U$ embeds into $V$ by assumption), it must therefore be 
one-dimensional, as claimed.
\end{proof}

\begin{lemma}
\label{lem:AIG central}
If $V$ is an essentially~AIG smooth representation
of $G$ over $K$, then $V$ admits a central character.
\end{lemma}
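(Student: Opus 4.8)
The plan is to exploit the fact that an essentially AIG representation $V$ is generated (as a union) by its finite-length submodules, each of which has the same absolutely irreducible socle $\soc(V)$, and to transport the central character from that socle up to all of $V$.

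First I would observe that the centre $Z := Z(\GL_n(E)) \cong E^{\times}$ acts on the absolutely irreducible smooth representation $\soc(V)$ by a character $\omega$. Indeed, $Z$ is contained in the centre of $G$, so each element $z \in Z$ acts on $\soc(V)$ by a $G$-equivariant endomorphism; by Lemma~\ref{lem:AIG homs} (or directly, by absolute irreducibility, since $\soc(V)$ is absolutely irreducible over $K$) this endomorphism is a scalar $\omega(z) \in K^{\times}$, and $z \mapsto \omega(z)$ is visibly a character. (Smoothness of $\omega$ is automatic since $\soc(V)$ is smooth and some open subgroup of $Z$ fixes any given vector.)

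Next I would promote this to all of $V$. Fix $z \in Z$, and let $a_z : V \to V$ denote the action of $z$; since $z$ is central in $G$, the map $a_z$ is $G$-equivariant. Consider instead $\phi_z := a_z - \omega(z)\cdot\id_V : V \to V$, which is again $G$-equivariant. I claim $\phi_z = 0$. If not, then since $V$ is the union of its finite-length submodules, there is a finite-length submodule $U$ with $\phi_z|_U \neq 0$. But $U$ is essentially AIG by part~(2) of Lemma~\ref{lem:AIG basic}, with $\soc(U) = \soc(V)$; and $\phi_z|_U$ restricted to $\soc(U)=\soc(V)$ is zero by construction of $\omega$. By part~(3) of Lemma~\ref{lem:AIG basic}, the map $\Hom_G(U,V) \hookrightarrow \Hom_G(\soc(U),\soc(V))$ is injective, so $\phi_z|_U = 0$, a contradiction. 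Hence $a_z = \omega(z)\cdot\id_V$ for every $z \in Z$, i.e.\ $V$ admits the central character $\omega$.

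The argument is essentially formal given the lemmas already established, so I do not anticipate a serious obstacle; the only point requiring a little care is ensuring one invokes the finite-length-submodule structure (condition~(3) of Definition~\ref{df:ess AIG}) to reduce to the case of a finite-length — in fact, via the socle, an irreducible — representation, rather than trying to apply Schur's lemma directly to $V$, which need not itself be irreducible or even of finite length. One could alternatively phrase the whole argument through Lemma~\ref{lem:AIG homs}, which gives $\End_G(V) = K$ directly, so that each $a_z$ is a scalar $\omega(z)$; one then only needs that $\omega$ is a smooth character, which follows since any vector of $V$ is smooth.
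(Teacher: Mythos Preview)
Your proposal is correct, and the alternative you mention at the end is exactly the paper's proof: the paper simply invokes Lemma~\ref{lem:AIG homs} to get $\End_G(V)=K$ (hence $\Aut_G(V)=K^\times$), so each central element acts by a scalar.

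Your first approach is also fine but takes an unnecessary detour. Once you have the $G$-equivariant map $\phi_z = a_z - \omega(z)\id_V : V \to V$ vanishing on $\soc(V)$, you can apply part~(3) of Lemma~\ref{lem:AIG basic} directly with $U=V$ to conclude $\phi_z=0$; there is no need to pass to a finite-length submodule $U$ first. In effect your first argument re-derives a special case of Lemma~\ref{lem:AIG homs} rather than simply citing it.
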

\begin{proof}
The preceding lemma shows that $\Aut_G(V) = K^{\times}$.  Since the centre
$Z$ of $G$ acts as automorphisms of $V$, the lemma follows.
\end{proof}

\begin{lemma}
\label{lem:AIG descent}
Let $V$ and $W$ be essentially~AIG smooth representations of
$G$ over $K$, and let $K'$ be a finite separable extension of $K$.  For any
map $f: V \otimes_K K' \rightarrow W \otimes_K K'$, there exists a scalar
$c \in (K')^{\times}$
such that $cf$ descends uniquely to a map $V \rightarrow W$.
\end{lemma}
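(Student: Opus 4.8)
The plan is to reduce the assertion to a $\Hom$-space computation over $K$ together with a base-change formula. The first ingredient I would establish is that for any two essentially~AIG smooth $G$-representations $V$ and $W$ over $K$, the space $\Hom_{K[G]}(V,W)$ is at most one-dimensional over $K$. Indeed, by Lemma~\ref{lem:AIG basic}(3) restriction to socles embeds $\Hom_G(V,W)$ into $\Hom_G\bigl(\soc(V),\soc(W)\bigr)$, and since $\soc(V)$ and $\soc(W)$ are absolutely irreducible this target is $0$ when $\soc(V)\not\cong\soc(W)$ and is $\End_G\bigl(\soc(V)\bigr)=K$ when $\soc(V)\cong\soc(W)$ (absolute irreducibility forcing the endomorphism ring to equal $K$). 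In particular this recovers $\End_G(V)=K$, as in Lemma~\ref{lem:AIG homs}.

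The second ingredient is the identification
$$\Hom_{K'[G]}\bigl(V\otimes_K K',\, W\otimes_K K'\bigr)\ \cong\ \Hom_{K[G]}(V,W)\otimes_K K'.$$
Since $[K':K]<\infty$, the field $K'$ is finite free as a $K$-module, so both $\Hom_K(V,-)$ and the formation of $G$-invariants commute with $-\otimes_K K'$; hence $\Hom_{K[G]}(V,\,W\otimes_K K')\cong\Hom_{K[G]}(V,W)\otimes_K K'$, and the extension-of-scalars/restriction adjunction identifies $\Hom_{K'[G]}(V\otimes_K K',W\otimes_K K')$ with $\Hom_{K[G]}(V,\,W\otimes_K K')$. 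Combining this with the first ingredient, $\Hom_{K'[G]}(V\otimes_K K',W\otimes_K K')$ is at most one-dimensional over $K'$, and when nonzero it is spanned by the base change $g\otimes_K K'$ of some nonzero $g\in\Hom_{K[G]}(V,W)$.

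With these in hand the proof is immediate. Given $f\colon V\otimes_K K'\to W\otimes_K K'$, if $f=0$ take $c=1$. Otherwise the displayed identification forces $\Hom_{K[G]}(V,W)\neq 0$; choosing a nonzero $g$ there and writing $f=c'\,(g\otimes_K K')$ with $c'\in(K')^{\times}$, we set $c:=(c')^{-1}$, so that $cf=g\otimes_K K'$ is the base change of the map $g\colon V\to W$. The descent is unique because $h\mapsto h\otimes_K K'$ is injective on $\Hom_{K[G]}(V,W)$ (as $K\hookrightarrow K'$).

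I do not anticipate a serious obstacle; the one point deserving care is that the Hom base-change identification is valid for these (in general infinite-dimensional) smooth representations, which holds precisely because $K'/K$ is finite, so that $-\otimes_K K'$ is a finite free — hence exact and finite-product-preserving — operation on $K$-modules. Note that separability of $K'/K$ plays no logical role in this argument; alternatively one could build it in by Galois descent, letting $\Gamma=\Gal(L/K)$ (for $L$ the Galois closure of $K'/K$) act semilinearly on the one-dimensional $L$-vector space $\Hom_{L[G]}(V\otimes_K L,W\otimes_K L)\cong\Hom_{K[G]}(V,W)\otimes_K L$ and recovering $\Hom_{K[G]}(V,W)$ as its $\Gamma$-fixed points.
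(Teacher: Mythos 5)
Your proof is correct, but it takes a genuinely different route from the paper's. The paper reduces to the case that $K'/K$ is Galois, sets $f^{\sigma}(x) := \sigma f(\sigma^{-1}x)$, notes that $f^{\sigma} = c_{\sigma}f$ for scalars $c_{\sigma} \in (K')^{\times}$, observes that $\sigma \mapsto c_{\sigma}$ is a $1$-cocycle, and invokes Hilbert's Theorem~90 to write $c_{\sigma} = c/\sigma c$, so that $cf$ is Galois-equivariant and hence descends. You instead compute $\Hom_{K[G]}(V,W)$ directly over $K$ (at most one-dimensional, via Lemma~\ref{lem:AIG basic}(3) and the absolute irreducibility of the socles, exactly as in Lemma~\ref{lem:AIG homs}), and then identify $\Hom_{K'[G]}(V\otimes_K K', W\otimes_K K')$ with $\Hom_{K[G]}(V,W)\otimes_K K'$, using that $K'$ is finite free over $K$ so that Hom and $G$-invariants commute with this base change. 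Your route is more elementary (no Galois cohomology), dispenses with the separability hypothesis, and — a genuine advantage — sidesteps the need to know that $V\otimes_K K'$ and $W\otimes_K K'$ are themselves essentially AIG over $K'$, a point the paper's argument implicitly relies on in order to assert that $f$ is injective and that $f^{\sigma}$ is proportional to $f$ (it requires checking that socles behave well under the base change $K \to K'$). What the paper's cocycle formulation buys is consonance with the semilinear-descent arguments used elsewhere (e.g.\ Proposition~\ref{prop:Kirillov descent}), and it would still apply in settings where the relevant one-dimensional $K'$-line carries a semilinear Galois action without being literally a base-changed Hom space. Both arguments are complete and valid.
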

\begin{proof}
We may assume $K'$ is Galois over $K$, and that $f$ is nonzero (and thus injective).
For $\sigma \in \Gal(K'/K)$, define $f^{\sigma}$ by $f^{\sigma}(x) = \sigma f(\sigma^{-1} x).$
Then $f^\sigma = c_{\sigma} f$ for a scalar $c_{\sigma} \in (k')^{\times}.$  The
$c_{\sigma}$ give a cocycle in $H^1(\Gal(K'/K), (K')^{\times})$ and are therefore
a coboundary; that is, there exists a $c \in (K')^{\times}$ such that
$c_{\sigma} = \frac{c}{\sigma c}$ for all $\sigma$.  Then $cf$ is Galois-equivariant,
and thus descends to $K$.
\end{proof}

\begin{df}
\label{df:envelope}
{\em If $V$ is an essentially~AIG admissible smooth $G$-representation,
then we say that a smooth representation $W$ is an essentially~AIG
envelope of $V$ if:
\begin{enumerate}
\item $W$ is itself essentially~AIG.
\item There is an $G$-equivariant embedding $V \hookrightarrow W$
(which Lemma~\ref{lem:AIG homs} shows 
is then unique up to multiplication by a non-zero scalar).
\item $W$ is maximal with respect to properties~(1) and~(2),
i.e.\ if $V \hookrightarrow Y$ is any $G$-equivariant
embedding with $Y$ essentially~AIG admissible smooth,
then there is a $G$-equivariant embedding  $Y \hookrightarrow W.$
\end{enumerate}
}
\end{df}

\begin{prop}
\label{prop:envelope}
If $V$ is an essentially~AIG admissible smooth $G$-representation,
then $V$ admits an essentially~AIG envelope, which is unique up to isomorphism.
\end{prop}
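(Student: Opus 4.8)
The plan is to construct the essentially~AIG envelope $W$ of $V$ as a suitable ``maximal'' essentially~AIG overrepresentation, built inside the injective envelope of the common socle, and then to establish uniqueness using the Hom-dimension results of Lemma~\ref{lem:AIG homs}. First I would observe that, since $\soc(V)$ is absolutely irreducible and generic, one may form the injective envelope $I$ of $\soc(V)$ in the category of smooth $G$-representations over $K$; we get a canonical embedding $V \hookrightarrow I$ extending the identity on socles (unique up to nonzero scalar once we fix the embedding $\soc(V)\hookrightarrow I$). Then I would let $W$ be the union (equivalently, the sum) of all finite length submodules $Y$ of $I$ that contain $V$ and satisfy $\bigl(Y/\soc(Y)\bigr)^{(n)}=0$; note every such $Y$ has $\soc(Y)=\soc(V)$ since $\soc(Y)\subset\soc(I)=\soc(V)$ and the latter is irreducible, so each such $Y$ is automatically essentially~AIG by Definition~\ref{df:ess AIG}.

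The key point to verify is that this $W$ is itself essentially~AIG, and the main obstacle is condition~(2): one must check that the class of submodules $Y$ with $\bigl(Y/\soc(Y)\bigr)^{(n)}=0$ is closed under the (possibly infinite) sums occurring in the definition of $W$. Here I would use that the top-derivative functor $V \mapsto V^{(n)}$ is exact (Proposition~\ref{prop:BZ}(1), via Proposition~\ref{prop:Kirillov descent}) and commutes with filtered colimits, so that for a directed union $W=\bigcup Y$ one has $\bigl(W/\soc(W)\bigr)^{(n)} = \varinjlim \bigl(Y/\soc(Y)\bigr)^{(n)} = 0$; the directedness of the family $\{Y\}$ follows because the sum of two such $Y$'s again has the properties in question (using exactness of $(-)^{(n)}$ applied to $Y_1+Y_2$ as a quotient of $Y_1\oplus Y_2$, together with the fact that $\soc(Y_1+Y_2)=\soc(V)$). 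Condition~(1) holds since $\soc(W)\subset\soc(I)=\soc(V)$, and the reverse inclusion is clear because $V\subset W$; condition~(3) of Definition~\ref{df:ess AIG} holds by construction since $W$ is a union of finite length submodules. Maximality in the sense of Definition~\ref{df:envelope}(3) is then immediate: given any essentially~AIG admissible $Y$ with $V\hookrightarrow Y$, Lemma~\ref{lem:AIG basic}(2) gives $\soc(Y)=\soc(V)$, so by injectivity of $I$ the embedding $\soc(Y)\hookrightarrow I$ extends to $Y\hookrightarrow I$ compatibly with the embedding of $V$ (after rescaling, using Lemma~\ref{lem:AIG homs} to pin down the extension uniquely up to scalar); the image of $Y$ is then a finite length submodule of $I$ containing $V$ with vanishing $\bigl(\cdot/\soc\bigr)^{(n)}$, hence lies in $W$.

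Finally, for uniqueness I would argue as follows. If $W$ and $W'$ are two essentially~AIG envelopes of $V$, then by property~(3) applied in both directions there are $G$-equivariant embeddings $\iota: W\hookrightarrow W'$ and $\iota': W'\hookrightarrow W$ (both automatically injective, or by Lemma~\ref{lem:AIG basic}(4)). Then $\iota'\circ\iota \in \End_G(W)=K$ by Lemma~\ref{lem:AIG homs}, and it is nonzero (as a composite of embeddings), hence an isomorphism; similarly $\iota\circ\iota'$ is an isomorphism, so $\iota$ is an isomorphism. One subtlety: to invoke property~(3) one needs $W$ and $W'$ to be admissible; this I would check by noting that $W$, being a submodule of $I$ with absolutely irreducible socle and with $\bigl(W/\soc(W)\bigr)^{(n)}=0$, has the same Jordan--H\"older factors bounded in the appropriate sense — more directly, $W$ is a union of finite length (hence admissible) submodules all with socle $\soc(V)$, and one shows the $H_i$-invariants stabilize using Theorem~\ref{thm:fg} applied to control the generic constituents, so that $W^{H_i}$ is finitely generated over $K$ for each $i$. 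This admissibility check, together with the closure-under-sums argument for condition~(2), is where essentially all the work lies; the rest is formal.
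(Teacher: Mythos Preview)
Your approach is the paper's: embed in an injective envelope and take the appropriate union of finite-length subobjects. Two points, one a genuine (if minor) gap, one a red herring.

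\medskip

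\textbf{The gap.} Both your definition of $W$ (as the union of finite-length $Y \supset V$) and your maximality check (``the image of $Y$ is then a finite length submodule of $I$ containing $V$'') tacitly assume that an essentially~AIG \emph{admissible} representation has finite length. That is Corollary~\ref{cor:admissible AIG finiteness}, which is only proved later using supercuspidal supports, so you cannot invoke it here. The paper sidesteps this: it sets $X \subset I$ to be the preimage of the maximal $U \subset I/V$ with $U^{(n)}=0$, and lets $W$ be the sum of \emph{all} finite-length submodules of $X$ (not just those containing $V$). Then $V \subset W$ because $V$ is already a sum of its own finite-length submodules (condition~(3) of Definition~\ref{df:ess AIG}), and for maximality one uses only that $Y$ is a sum of its finite-length submodules, not that $Y$ itself is finite length. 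Your construction is easily repaired along these lines.

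\medskip

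\textbf{The red herring.} The envelope $W$ is not required to be admissible, and your sketched argument for admissibility via Theorem~\ref{thm:fg} does not obviously go through (that theorem controls $V^{(n)}$, not $V^{H_i}$). The real point is that the paper's verification of property~(3) uses nothing about $Y$ beyond its being essentially~AIG --- admissibility never enters --- so it applies verbatim to any competing envelope $W'$, yielding $W' \hookrightarrow W$. This strong form of~(3), together with the literal reading of ``maximal with respect to~(1) and~(2)'', gives uniqueness: any two envelopes embed into the constructed $W$ with images containing $V$, their sum inside $W$ is again essentially~AIG containing $V$ (Lemma~\ref{lem:AIG basic}(2)), and maximality forces both images to equal that sum. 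Your Lemma~\ref{lem:AIG homs} endomorphism argument is fine as an alternative once you have embeddings both ways, but you should not try to manufacture admissibility to get them.
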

\begin{proof}
Let $V\hookrightarrow I$
be an injective envelope of $V$ in the category of smooth representations.
Let $U$ denote the subrepresentation of $I/V$ obtained by taking the sum of all the
non-generic subrepresentations of $I/V$ (so $U$ is the maximal subrepresentation
of $I/V$ for which $U^{(n)} = 0$), and define $X$ to be the preimage of $U$ in $I$.
Let $W$ be the sum of all of the finite length submodules of $X$.  By construction,
the socle of $W$ is generic, $\bigl(W/\soc(W)\bigr)^{(n)} = 0$, and $W$ is the sum of its
finite length submodules, so $W$ is essentially AIG.

If $V \hookrightarrow  Y$ is an embedding as in~(3), then (since $I$ is injective)
we may extend the embedding of $V$ into $I$ to an embedding of $Y$ into $I$.  Since
every Jordan--H\"older constituent of $Y/V$ is nongeneric, the image of $Y$ lies in $X$.
Moreover, $Y$ is the sum of its finite length submodules, so the image of $Y$ lies in $W$.
\end{proof}

If $V$ is an essentially~AIG smooth $G$-representation,
then we write $\env(V)$ to denote the essentially~AIG envelope of $V$
(which by the preceding proposition exists, and is unique up to isomorphism).

\begin{lemma}
\label{lem:envelope twisting}
Let $V$ be an essentially~AIG admissible smooth $G$-representation.
If $\chi:E^{\times} \rightarrow K^{\times}$ is a continuous
character, then there is an isomorphism
$$\env\bigl((\chi\circ\det)\otimes V\bigr) \iso
(\chi\circ\det)\otimes\env(V);$$
i.e.\ the formation of essentially~AIG envelopes is compatible with
twisting.
\end{lemma}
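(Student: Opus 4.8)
The plan is to exploit the universal property of the essentially~AIG envelope together with the fact that twisting by $\chi\circ\det$ is an auto-equivalence of the category of smooth $G$-representations. First I would observe that the functor $W \mapsto (\chi\circ\det)\otimes W$ is exact and invertible (with inverse given by twisting by $\chi^{-1}\circ\det$), so it preserves socles, injective envelopes, finite length submodules, and the property $(n)$ of a representation being nongeneric (since the derivative functors commute with tensoring by a one-dimensional twist, as noted after Proposition~\ref{prop:Kirillov descent}; here the twist by $\chi\circ\det$ is a line bundle and the $(n)$-th derivative of $(\chi\circ\det)\otimes W$ is identified with a twist of $W^{(n)}$). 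In particular, by part~(1) of Lemma~\ref{lem:AIG basic}, the twist of an essentially~AIG representation is again essentially~AIG.

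Next I would verify directly that $(\chi\circ\det)\otimes\env(V)$ satisfies the three defining conditions of an essentially~AIG envelope of $(\chi\circ\det)\otimes V$. Condition~(1) is the previous paragraph. Condition~(2): applying the exact twisting functor to the embedding $V \hookrightarrow \env(V)$ yields an embedding $(\chi\circ\det)\otimes V \hookrightarrow (\chi\circ\det)\otimes\env(V)$. Condition~(3), maximality: suppose $(\chi\circ\det)\otimes V \hookrightarrow Y$ with $Y$ essentially~AIG admissible smooth; twisting by $\chi^{-1}\circ\det$ produces an embedding $V \hookrightarrow (\chi^{-1}\circ\det)\otimes Y$ with target essentially~AIG admissible smooth, so by maximality of $\env(V)$ there is an embedding $(\chi^{-1}\circ\det)\otimes Y \hookrightarrow \env(V)$; twisting back by $\chi\circ\det$ gives $Y \hookrightarrow (\chi\circ\det)\otimes\env(V)$, as required. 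Since admissibility is also preserved under twisting by a character, all the hypotheses needed to apply Definition~\ref{df:envelope} are in place.

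Having checked that $(\chi\circ\det)\otimes\env(V)$ is an essentially~AIG envelope of $(\chi\circ\det)\otimes V$, the uniqueness clause of Proposition~\ref{prop:envelope} immediately gives the desired isomorphism $\env\bigl((\chi\circ\det)\otimes V\bigr) \iso (\chi\circ\det)\otimes\env(V)$. There is no serious obstacle here: the only point requiring a line of care is that twisting by $\chi\circ\det$ genuinely commutes with the operations used to build $\env(V)$ in the proof of Proposition~\ref{prop:envelope} (injective envelope, passage to the maximal nongeneric subquotient, and the sum of finite length submodules), but since $W \mapsto (\chi\circ\det)\otimes W$ is an exact auto-equivalence this is formal. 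Alternatively — and this is the route I would actually write up — one avoids re-running that construction altogether and simply checks the three axioms of Definition~\ref{df:envelope} as above, invoking uniqueness at the end.
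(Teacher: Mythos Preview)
Your proposal is correct and is exactly the approach the paper takes, just written out in full detail; the paper's proof is the single line ``This is immediate from Lemma~\ref{lem:AIG basic}.'' One small quibble: the parenthetical invoking the remark after Proposition~\ref{prop:Kirillov descent} is not quite apt (that remark concerns tensoring with $A$-modules, not twisting by $G$-characters), but it is also unnecessary since Lemma~\ref{lem:AIG basic}(1) already gives you that twisting preserves the essentially~AIG property.
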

\begin{proof}
This is immediate from Lemma~\ref{lem:AIG basic}.
\end{proof}

It seems likely that essentially AIG representations actually have finite length.
Unfortunately the techniques developed in this paper do not seem sufficient to
establish this in general, in the case when the characteristic of $K$ is positive.
In the case when $K$ is of characteristic zero, this finiteness follows from
Lemma~\ref{lem:char zero AIG} below, while in the case when $n=2$, it is easy
to establish for arbitrary $K$.  (See Proposition~\ref{prop:n = 2 AIG} below.)
In the remainder of this section, we establish a weaker finiteness
result for essentially AIG representations that will suffice for our purposes.
The key tool will be the notion of supercuspidal support; we recall the definition
below.

Let $\{\pi_1, \dots, \pi_r\}$ be a multiset of irreducible cuspidal
representations of the groups
$\GL_{n_1}(E), \dots, \GL_{n_r}(E)$, for $n_1, \dots, n_r$
such that $\sum n_i = n$.  

\begin{df}
An irreducible representation $\pi$ of $G$ over $\overline{K}$
has {\em supercuspidal support} equal to $\{\pi_1, \dots, \pi_r\}$ if each $\pi_i$
is supercuspidal, and there exists a parabolic subgroup $P = MU$ of $G$, with
$M$ isomorphic to the product of the $\GL_{n_i}$, such that
$\pi$ is isomorphic to a Jordan--H\"older constituent
of the normalized parabolic induction
$$\Ind_P^G \pi_1 \otimes \dots \otimes \pi_r.$$
\end{df}

\begin{df}
An irreducible representation $\pi$ of $G$ over $\overline{k}$
has {\em cuspidal support} equal to $\{\pi_1, \dots, \pi_r\}$ 
if there exists a parabolic subgroup $P = MU$ of $G$, with
$M$ isomorphic to the product of the $\GL_{n_i}$, such that
$\pi$ is isomorphic to a {\em quotient}
of the normalized parabolic induction
$$\Ind_P^G \pi_1 \otimes \dots \otimes \pi_r,$$
for some choice of ordering of $\pi_1, \dots, \pi_r$.
\end{df}

Both the cuspidal and supercuspidal support of $\pi$ are uniquely determined
(as multisets of isomorphism classes of irreducible representations),
by $\pi$.  Let $\scs(\pi)$ (resp. $\cs(\pi)$) denote the supercuspidal support 
(resp. cuspidal support) of $\pi$.
The following basic facts about cuspidal and supercuspidal support
are standard, and are easy consequences of Frobenius reciprocity.

\begin{prop}
Let $P = MU$ be a parabolic subgroup of $G$, with $M$ isomorphic
to $\prod_i \GL_{n_i}$.  Then:
\begin{enumerate}
\item Let $\pi_i$ be an irreducible admissible representation of
$\GL_{n_i}$ for all $i$.  If $\pi$ is a Jordan--H\"older constituent
of 
$$\Ind_P^G \pi_1 \otimes \dots \otimes \pi_r,$$
then $\scs(\pi)$ is the multiset sum of $\scs(\pi_i)$ for all $i$.
\item Let $\pi_i$ be an irreducible admissible representation of
$\GL_{n_i}$ for all $i$.  If $\pi$ is a submodule or quotient
of 
$$\Ind_P^G \pi_1 \otimes \dots \otimes \pi_r,$$
then $\cs(\pi)$ is the multiset sum of $\cs(\pi_i)$ for all $i$.
\item Let $\pi$ be
an irreducible admissible representation of $G$ over $\overline{k}$,
and let $\pi' = \pi_1 \otimes \dots \otimes \pi_r$ be a Jordan--H\"older
constituent of $\Res_G^P \pi$.  Then $\scs(\pi)$ {\em (}resp.~$\cs(\pi)${\em )}
is the multiset sum of $\scs(\pi_i)$ {\em (}resp.~$\cs(\pi_i)${\em )} for all $i$.
\end{enumerate}
\end{prop}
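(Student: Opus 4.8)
The plan is to derive all three parts from four standard ingredients, together with the uniqueness of (super)cuspidal support just asserted: (i) transitivity of normalized parabolic induction (``induction in stages'', including its compatibility with products of general linear groups); (ii) exactness of normalized parabolic induction, hence its compatibility with subobjects, quotients and Jordan--H\"older constituents; (iii) the geometric lemma of Bernstein--Zelevinsky, which filters the Jacquet module of a parabolically induced representation by parabolic inductions, inside $M$, of Weyl-conjugates of Jacquet modules of the inducing datum; and (iv) the vanishing of every proper Jacquet module of a cuspidal representation over $\overline{k}$ (hence of a supercuspidal one, since supercuspidals are cuspidal). Frobenius reciprocity is the bridge between the embedding/quotient structure of a representation and its Jacquet modules. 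I will treat (1) and (2) by ``unfolding'' the given data one level at a time, and (3) via the geometric lemma. I will also use the standard fact that the cuspidal support of an irreducible \emph{subquotient} of a parabolic induction from cuspidal representations is exactly the multiset of those cuspidals (equivalently, that $\cs$ is stable under contragredients).

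For (1): by definition of supercuspidal support, each $\pi_i$ is a Jordan--H\"older constituent of $\Ind_{Q_i}^{\GL_{n_i}}(\rho_{i,1}\otimes\cdots\otimes\rho_{i,k_i})$ with $\scs(\pi_i)=\{\rho_{i,1},\dots,\rho_{i,k_i}\}$ a multiset of supercuspidals. Since we work over an algebraically closed field, $\pi_1\otimes\cdots\otimes\pi_r$ is then a Jordan--H\"older constituent of the external tensor product of these inductions, which by transitivity inside $M\cong\prod_i\GL_{n_i}$ equals $\Ind_{\prod_i Q_i}^{M}\bigl(\bigotimes_{i,j}\rho_{i,j}\bigr)$. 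Applying the exact functor $\Ind_P^G$ and using transitivity once more, every Jordan--H\"older constituent of $\Ind_P^G(\pi_1\otimes\cdots\otimes\pi_r)$, and in particular $\pi$, is a Jordan--H\"older constituent of $\Ind_{Q'}^G\bigl(\bigotimes_{i,j}\rho_{i,j}\bigr)$ for the appropriate standard parabolic $Q'$; uniqueness of supercuspidal support gives $\scs(\pi)=\{\rho_{i,j}\}_{i,j}=\sum_i\scs(\pi_i)$. For (2): unfold the $\pi_i$ to their cuspidal supports in the same way, so that $\Ind_P^G(\pi_1\otimes\cdots\otimes\pi_r)$ is a quotient of $\Ind_{Q'}^G\bigl(\bigotimes_{i,j}\sigma_{i,j}\bigr)$ with the $\sigma_{i,j}$ cuspidal (using that an exact functor preserves surjections). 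If $\pi$ is a quotient of $\Ind_P^G(\pi_1\otimes\cdots\otimes\pi_r)$ it is then a quotient of $\Ind_{Q'}^G\bigl(\bigotimes_{i,j}\sigma_{i,j}\bigr)$; if $\pi$ is a submodule it is at least a subquotient of the same induction. In either case the standard fact recalled above gives $\cs(\pi)=\{\sigma_{i,j}\}_{i,j}=\sum_i\cs(\pi_i)$.

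For (3): read $\Res_G^P$ as the normalized Jacquet functor $r_P$ --- equivalently, if literal restriction to $P$ is intended, note that since the coefficient field has characteristic different from $\ell$ the Jacquet (coinvariants) functor is exact, so any $P$-subquotient of $\pi$ on which $U$ acts trivially is an $M$-subquotient of the (admissible, finite-length) Jacquet module, at the cost of a harmless modulus twist. Write $\scs(\pi)=\{\rho_1,\dots,\rho_s\}$, so $\pi$ is a Jordan--H\"older constituent of $\Ind_{P_0}^G(\rho_1\otimes\cdots\otimes\rho_s)$ for a standard parabolic $P_0$ with Levi $\prod_j\GL_{m_j}$. Exactness of $r_P$ makes $r_P(\pi)$ a subquotient of $r_P\,\Ind_{P_0}^G(\rho_1\otimes\cdots\otimes\rho_s)$, which the geometric lemma presents as an iterated extension of representations $\Ind_{(\cdot)}^M\bigl(w\cdot r_{(\cdot)}(\rho_1\otimes\cdots\otimes\rho_s)\bigr)$ indexed by $P\backslash G/P_0$. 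Because the $\rho_j$ are supercuspidal, the only Jacquet modules of $\rho_1\otimes\cdots\otimes\rho_s$ that survive are the ``full'' ones, so each graded piece is a parabolic induction, inside $M$, of a regrouping and reordering of the $\rho_j$ according to the blocks of $M$. Hence any Jordan--H\"older constituent $\pi_1\otimes\cdots\otimes\pi_r$ of $r_P(\pi)$ has each $\pi_i$ a Jordan--H\"older constituent of $\Ind^{\GL_{n_i}}$ of a sub-multiset $S_i\subseteq\{\rho_1,\dots,\rho_s\}$, with $\{S_i\}_i$ a partition of $\{\rho_1,\dots,\rho_s\}$, so $\scs(\pi_i)=S_i$ and $\sum_i\scs(\pi_i)=\scs(\pi)$. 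The cuspidal-support assertion is proved identically, starting from $\cs(\pi)=\{\sigma_1,\dots,\sigma_t\}$ with the $\sigma_j$ merely cuspidal: ingredient (iv) still forces only regroupings in the geometric lemma, each $\pi_i$ is an irreducible subquotient of a parabolic induction inside $\GL_{n_i}$ from a sub-multiset $S_i$ of the $\sigma_j$, and the standard fact then gives $\cs(\pi_i)=S_i$.

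The step I expect to cause the most trouble is precisely the cuspidal-support half of (3) (and the submodule case of (2)) in residue characteristic $p$, where over $\overline{k}$ a cuspidal representation need not be supercuspidal: the geometric lemma only tells us that $\pi_i$ is a Jordan--H\"older \emph{constituent} of the parabolic induction of the cuspidals in $S_i$, whereas the definition of $\cs(\pi_i)=S_i$ demands that $\pi_i$ be a \emph{quotient} of such an induction for some ordering. Everything hinges on the ``standard fact'' that being a constituent already forces the cuspidal support; proving this carefully (e.g.\ by re-running the geometric lemma inside $\GL_{n_i}$ to see that every proper Jacquet module of $\pi_i$ is a regrouping of $S_i$, which pins $S_i$ down as the unique cuspidal support, together with the known existence of some quotient with cuspidal support $S_i$), and the parallel bookkeeping needed to identify $\cs$ of a contragredient, is where the remaining (still routine) care lies.
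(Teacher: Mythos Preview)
Your argument is correct. The paper itself gives no proof of this proposition: it simply declares the three statements ``standard, and \dots\ easy consequences of Frobenius reciprocity'' and moves on. What you have written is a faithful unpacking of that slogan, so there is no disagreement in spirit, only in level of detail.

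Two small comparative remarks. For part~(3) you invoke the geometric lemma to analyse $r_P\,\Ind_{P_0}^G(\rho_1\otimes\cdots\otimes\rho_s)$; this is the right tool and makes the block-regrouping completely explicit, but is slightly heavier than the bare ``Frobenius reciprocity'' the paper gestures at. An alternative closer to that gesture: if $\pi'$ is an irreducible constituent of $r_P(\pi)$, then $\pi$ occurs in $\Ind_P^G\pi'$ (by adjunction applied to a suitable sub/quotient of $r_P(\pi)$), and then part~(1) already computes $\scs(\pi)$ in terms of $\scs(\pi_i)$; similarly for $\cs$ using part~(2). Either route works. Second, the point you flag at the end --- that in positive residue characteristic a Jordan--H\"older constituent of an induction from cuspidals has the same cuspidal support as a quotient would --- is exactly the content of the uniqueness assertion immediately preceding the proposition in the paper (that $\cs(\pi)$ is well defined as a multiset), together with one more application of adjunction/the geometric lemma to see that all cuspidal constituents of the deepest Jacquet module of $\pi$ lie in a single Weyl-orbit of the inducing datum. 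So your ``standard fact'' is precisely what the paper has already granted itself, and your caution there is well placed but not a gap.
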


\begin{prop}
\label{prop:generic supercuspidal support}
Let $\{\pi_1, \dots, \pi_r\}$ be a multiset of supercuspidal representations
of $\GL_{n_i}$ over $\overline{K}$.  There exists, up to isomorphism, exactly
one irreducible generic representation $\pi$ of $G$ over $\overline{K}$
with supercuspidal support equal to $\{\pi_1, \dots, \pi_r\}$.
\end{prop}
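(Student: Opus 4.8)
The plan is to realize $\pi$ as the \emph{unique generic Jordan--H\"older constituent} of a parabolically induced representation, by exploiting the theory of top derivatives developed above. First I would fix a parabolic subgroup $P = MU$ of $G$ with Levi component $M \cong \prod_i \GL_{n_i}$ and set $I := \Ind_P^G(\pi_1 \otimes \cdots \otimes \pi_r)$, a finite length admissible $\overline{K}[G]$-module. The key computation is that of the top derivative $I^{(n)}$: writing the induction in stages (one factor $\pi_i$ against the remaining induced representation at each step) and applying Proposition~\ref{prop:leibnitz} over $\overline{K}$ repeatedly, one obtains an identification
$$I^{(n)} \cong \pi_1^{(n_1)} \otimes_{\overline{K}} \cdots \otimes_{\overline{K}} \pi_r^{(n_r)}.$$
Since each $\pi_i$ is cuspidal, Theorem~\ref{thm:kirillov} shows that each $\pi_i^{(n_i)}$ is one-dimensional over $\overline{K}$, whence $I^{(n)}$ is one-dimensional. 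Now the top derivative functor $V \mapsto V^{(n)} = \Psi^-(\Phi^-)^{n-1}V$ is exact, being a composition of the exact functors $\Psi^-$ and $\Phi^-$ (Proposition~\ref{prop:BZ}(1)), hence additive along any Jordan--H\"older filtration of $I$; and by Theorem~\ref{thm:kirillov} the top derivative of each irreducible constituent of $I$ is at most one-dimensional. Comparing dimensions, exactly one Jordan--H\"older constituent $\pi$ of $I$ --- occurring with multiplicity one --- satisfies $\pi^{(n)} \neq 0$; by Definition~\ref{df:generic} this $\pi$ is the unique generic constituent of $I$, and $\scs(\pi) = \{\pi_1, \dots, \pi_r\}$ by the definition of supercuspidal support. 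This settles existence.

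For uniqueness I would take any irreducible generic representation $\pi'$ of $G$ over $\overline{K}$ with $\scs(\pi') = \{\pi_1, \dots, \pi_r\}$. By definition of supercuspidal support, $\pi'$ is a Jordan--H\"older constituent of $\Ind_{P'}^G(\pi_1 \otimes \cdots \otimes \pi_r)$ for \emph{some} parabolic $P'$ with Levi component isomorphic to $\prod_i \GL_{n_i}$. As recalled in the discussion preceding the statement (and standard; cf.\ \cite{BZ}, \cite{Vig2}), the semisimplification of such an induced representation is independent of the choice of $P'$ --- equivalently, of the ordering of $\pi_1, \dots, \pi_r$ --- so $\pi'$ is in fact a Jordan--H\"older constituent of $I$. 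Being generic, $\pi'$ must then coincide with the unique generic constituent $\pi$ of $I$ produced above, so $\pi' \cong \pi$, as desired.

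The whole argument thus reduces both assertions to a dimension count inside a one-dimensional space, the substantive inputs being the multiplicativity of top derivatives under parabolic induction (Proposition~\ref{prop:leibnitz}) and the fact that a cuspidal representation has one-dimensional top derivative (Theorem~\ref{thm:kirillov}). I do not anticipate a serious obstacle; the points that merely require a little care are (i) checking that the iterated application of Proposition~\ref{prop:leibnitz} is legitimate --- induction in stages, with the intermediate top derivatives formed for the appropriate smaller groups $\GL_{n-n_1}(E)$, and so on --- and (ii) using that $I$ has finite length so that ``additive along a Jordan--H\"older filtration'' is meaningful. The only genuinely external ingredient is the order-independence of the Jordan--H\"older constituents of $\Ind_P^G(\pi_1 \otimes \cdots \otimes \pi_r)$ invoked in the uniqueness step, which is the same standard fact that makes supercuspidal support well defined; without it one could not directly compare $\pi'$ --- a priori a constituent of an induced representation attached to some other ordering of the $\pi_i$ --- with the representation $\pi$ constructed in the existence step.
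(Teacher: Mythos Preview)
Your proposal is correct and follows essentially the same approach as the paper: compute the top derivative of the parabolic induction using Proposition~\ref{prop:leibnitz} and Theorem~\ref{thm:kirillov}, conclude it is one-dimensional, and deduce that there is exactly one generic Jordan--H\"older constituent. Your treatment is in fact a bit more careful than the paper's, which compresses the existence and uniqueness into a single sentence and leaves the order-independence of the Jordan--H\"older constituents implicit.
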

\begin{proof}
A representation $\pi$ with supercuspidal support $\{\pi_1, \dots, \pi_r\}$ 
is isomorphic to a generic Jordan--H\"older constituent
of $\Ind_P^G \pi_1 \otimes \dots \otimes \pi_r$.
By Theorem~\ref{thm:kirillov} and Proposition~\ref{prop:leibnitz}, 
the top derivative of
$(\Ind_P^G \pi_1 \otimes \dots \otimes \pi_r)$ is one-dimensional,
so it has exactly one generic Jordan--H\"older constituent.
\end{proof}

We are now in a position to show:
\begin{theorem}
\label{thm:ext}
Let $\pi,\pi'$ be irreducible admissible representations of $G$
over $\overline{K}$, or more generally of a Levi subgroup $M$ of
$G$, and suppose that for some $i$, $\Ext^i(\pi,\pi')$ is
nonzero.  Then $\pi$ and $\pi'$ have the same supercuspidal support.
\end{theorem}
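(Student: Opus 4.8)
The statement is an instance of the general principle that $\Ext$-groups vanish between irreducible objects lying in different blocks of the category of smooth representations, combined with the fact that, for $\GL_n$, the supercuspidal support of an irreducible representation is recovered from the action of the centre of the category. Concretely, I would exploit that the Bernstein centre of $\Rep_{\overline{K}}(M)$ acts on any $\Ext$-group through two a priori different module structures --- one coming from each variable --- which in fact agree, while on an irreducible representation it acts through a character that pins down the supercuspidal support. Throughout I take $M = G$ for notational simplicity; the general Levi case is identical (the Bernstein variety of $M = \prod_j \GL_{n_j}(E)$ is the space of supercuspidal supports of $M$, which by definition is the set of multiset sums of supercuspidal supports of the factors), or one may first reduce to the factors by a K\"unneth argument for $\Ext$ of an external tensor product over the field $\overline{K}$.

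\textbf{Key step.} Let $\mathfrak{Z}$ denote the centre of the category $\Rep_{\overline{K}}(G)$, i.e.\ the ring of endomorphisms of the identity functor. The category decomposes as a product of blocks, and every irreducible admissible $\pi$ over $\overline{K}$ is acted on by $\mathfrak{Z}$ through a character $\chi_\pi \colon \mathfrak{Z} \to \overline{K}$ with the property that $\chi_\pi$ determines, and is determined by, $\scs(\pi)$. When $\overline{K}$ has characteristic zero this is the Bernstein decomposition together with the identification of the points of the Bernstein variety with (conjugacy classes of) supercuspidal supports; when $\overline{K}$ has characteristic $p$ distinct from $\ell$ one uses the $\ell$-modular analogue for $\GL_n$, whose relevant input is supplied by the work of Vign\'eras. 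Now fix $z \in \mathfrak{Z}$: since $z$ is a natural transformation $\id \Rightarrow \id$ on $\Rep_{\overline{K}}(G)$, the two endomorphisms it induces on $\Ext^i_G(\pi,\pi')$ --- by functoriality in the first and in the second variable respectively --- coincide. The first is multiplication by $\chi_\pi(z)$ and the second is multiplication by $\chi_{\pi'}(z)$. Hence $\Ext^i_G(\pi,\pi') \neq 0$ forces $\chi_\pi(z) = \chi_{\pi'}(z)$ for all $z \in \mathfrak{Z}$, so $\chi_\pi = \chi_{\pi'}$ and therefore $\scs(\pi) = \scs(\pi')$, as claimed. (When $\pi,\pi'$ are representations of a Levi $M$ rather than of $G$, run the same argument with the centre of $\Rep_{\overline{K}}(M)$.)

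\textbf{The main obstacle.} The delicate point is the block-theoretic input in the second step when the coefficient field has characteristic $p$: one needs a decomposition of $\Rep_{\overline{\F}_p}(\GL_n(E))$ into blocks whose centre is large enough to separate supercuspidal supports of irreducibles, and here one must be careful because in the modular setting cuspidal and supercuspidal representations no longer coincide and the unramified-twist structure of a block is subtler than in characteristic zero. If one prefers to avoid invoking the full modular centre, an alternative is an induction on $n$ using Jacquet restriction together with the second adjunction theorem, the exactness of Jacquet functors, and the behaviour of supercuspidal support under parabolic induction already recorded above; the base of the induction, where $\pi$ (or $\pi'$) is supercuspidal, is handled directly, using that a supercuspidal representation is both projective and injective in the subcategory of smooth representations with a fixed central character (so that a nonzero $\Ext^i$ with $i>0$ is impossible there, and $\Hom(\pi,\pi')\neq 0$ forces $\pi\cong\pi'$), after first reducing to the case of equal central character by the naturality argument applied to the centre of $G$.
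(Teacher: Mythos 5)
Your primary argument (the two commuting actions of the centre of the category on $\Ext^i(\pi,\pi')$) is complete only when $\overline{K}$ has characteristic zero, where Bernstein's theory identifies characters of the centre of a block with points of the variety of supercuspidal supports. In characteristic $p$ it has a genuine gap at exactly the point you flag: the block decomposition of Vign\'eras cited in the paper ([IV.6.2] of \cite{Vig3}) only says that two irreducibles lie in the same block when their supercuspidal supports are \emph{inertially} equivalent, i.e.\ agree up to unramified twist. To upgrade this to equality of supercuspidal supports you need the centre of each modular block to separate the irreducibles within an inertial class, and no such description of the modular centre is available in the literature this paper relies on, outside the supercuspidal blocks (where type theory identifies the block with modules over $k[x_1^{\pm 1},\dots,x_r^{\pm 1}]$). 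Worse, the now-standard description of the mod-$p$/integral Bernstein centre for $\GL_n$ is established \emph{using} $\Ext$-vanishing statements of precisely this kind, so invoking it here risks circularity. As written, your main argument proves in positive characteristic only that $\scs(\pi)$ and $\scs(\pi')$ are inertially equivalent.

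The fallback you sketch in your final paragraph is, in substance, the paper's actual proof, and it is the part that must be written out. The paper argues: (i) for $\pi$ cuspidal and $\pi'$ supercuspidal, the block decomposition forces $\pi$ to be an unramified twist of $\pi'$, and the equivalence of that block with modules over the commutative Hecke algebra $k[x_1^{\pm 1},\dots,x_r^{\pm 1}]$ of a supercuspidal type shows $\Ext^i(\pi,\pi')$ is annihilated by the two maximal ideals attached to $\pi$ and $\pi'$, hence vanishes unless $\pi\cong\pi'$; note that your alternative for this base case --- projectivity and injectivity of supercuspidals in the fixed-central-character category --- is a characteristic-zero fact that is not available in the modular setting (where $\Ext^i(\pi,\pi)$ can indeed be nonzero for $i>0$, consistently with the theorem); (ii) for $\pi$ cuspidal and $\pi'$ arbitrary, one embeds $\pi'$ in $\Ind_{P'}^{M}\sigma$ with $\sigma$ cuspidal and inducts on $i$ using the exact sequence $0\rightarrow\pi'\rightarrow\Ind_{P'}^{M}\sigma\rightarrow C\rightarrow 0$ together with $\Ext^j(\pi,\Ind_{P'}^{M}\sigma)\cong\Ext^j(\Res_M^{P'}\pi,\sigma)=0$ --- only Frobenius reciprocity and exactness of the Jacquet functor are needed, not the second adjunction; (iii) the case of two cuspidals uses that $\Ind_{P'}^{M}\sigma$, with $\sigma$ supercuspidal, has a unique cuspidal constituent; and (iv) the general case follows by contragredient duality plus one more application of the sequence in (ii). Either restrict your centre argument to characteristic zero and carry out this induction in general, or supply a precise, non-circular reference for the separation property of the modular centre you are invoking.
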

\begin{proof}
We first consider the case when $\pi$ is cuspidal.  Suppose
$\pi'$ is supercuspidal.  By~\cite[IV.6.2]{Vig3}, the category of smooth
representations of $M$ factors as a product of blocks; two
irreducible representations of $M$ are in the same block if,
and only if, their supercuspidal supports are {\em inertially equivalent},
that is, if and only if they coincide up to twisting by unramified characters.  
Thus, if $\pi'$ is supercuspidal and $\Ext^i(\pi,\pi')$ is nonzero,
then $\pi$ is a twist of $\pi'$ by an unramified character of $M$.
In this case, by the results of~\cite[IV.1]{Vig3},
$\pi'$ and $\pi$ both contain a supercuspidal
type $(K,\rho)$ for $M$.  The Hecke algebra attached to $(K,\rho)$
is of the form $k[x_1^{\pm 1}, \dots, x_r^{\pm 1}]$
and (because $\pi$ and $\pi'$ are supercuspidal), the block
containing $\pi$ and $\pi'$ is equivalent to the category of
modules for this Hecke algebra.  In particular, if
${\mathfrak m}$ and ${\mathfrak m}'$ are the maximal ideals
of this Hecke algebra corresponding to $\pi$ and $\pi'$,
then $\Ext^i(\pi,\pi')$ is annihilated by both
${\mathfrak m}$ and ${\mathfrak m}'$ and therefore
vanishes unless ${\mathfrak m} = {\mathfrak m}'$.  In this
case $\pi = \pi'$ and the result is established.

Next suppose $\pi$ is cuspidal and $\pi'$ is not cuspidal.
We proceed by induction on $i$, the case $i=0$ being clear.  Fix $i$,
and assume $\Ext^{i-1}(\pi,X) = 0$ for any non-cuspidal $X$ whose
supercuspidal support differs from that of $\pi$.
As $\pi'$ is non-cuspidal, there is a proper parabolic subgroup $P' = M'U'$ of~$M$, and
a cuspidal representation $\sigma$ of~$M'$, such that
$\pi'$ arises as a submodule of $\Ind_{P'}^M \sigma$.
We thus have an exact sequence:
$$0 \rightarrow \pi' \rightarrow \Ind_{P'}^M \sigma \rightarrow C \rightarrow 0,$$
where $C$ is the cokernel of the inclusion of $\pi'$ in $\Ind_{P'}^M \sigma$.
By Frobenius reciprocity $\Ext^j(\pi, \Ind_{P'}^M \sigma) = \Ext^j(\Res_M^{P'} \pi, \sigma)$
for all $j$,
and the latter vanishes because $\pi$ is cuspidal.  Thus $\Ext^i(\pi,\pi')$ is
isomorphic to $\Ext^{i-1}(\pi,C)$.  By the inductive hypothesis, the latter
vanishes unless $\pi$ and $C$ have the same supercuspidal support, but
$\pi'$ and $C$ also have the same supercuspidal support, so the result holds in
this case as well.

If $\pi$ and $\pi'$ are both cuspidal, we may assume $\pi'$ is not supercuspidal
as we have already considered that case.  Thus there is a proper
parabolic subgroup $P' = M'U'$
of~$M$, and a supercuspidal representation $\sigma$ of~$M'$, such that
$\pi'$ is a Jordan--H\"older constituent of $\Ind_{P'}^M \sigma$.  As cuspidal
representations are generic, and as $\Ind_{P'}^M \sigma$ contains a unique cuspidal
Jordan--H\"older constituent, we see that $\pi'$ is the unique cuspidal
Jordan--H\"older constituent
of $\Ind_{P'}^M \sigma$.  Thus, if $\pi$ and $\pi'$ have distinct supercuspidal
support,
the result of the previous paragraph shows that
$\Ext^i(\pi,\pi') = \Ext^i(\pi,\Ind_{P'}^M \sigma)$.
But by Frobenius reciprocity, $\Ext^i(\pi,\Ind_{P'}^M \sigma) =
\Ext^i(\Res_M^{P'} \pi, \sigma) = 0$.

We have thus established the result whenever $\pi$ is cuspidal.  By duality, it follows
that $\Ext^i(\pi,\pi') = 0$ whenever $\pi'$ is cuspidal and $\scs(\pi) \neq \scs(\pi')$.
Now assume $\pi$ and $\pi'$ are arbitrary and $\scs(\pi) \neq \scs(\pi')$.  Choose
a parabolic subgroup $P' = M'U'$ of $M$, and a cuspidal representation $\sigma$ of $M'$
such that $\pi'$ is isomorphic to a submodule of $\Ind_{P'}^M \sigma$.  We have an
exact sequence:
$$0 \rightarrow \pi' \rightarrow \Ind_{P'}^M \sigma \rightarrow C \rightarrow 0,$$
where $C$ is the cokernel of the inclusion of $\pi'$ in $\Ind_{P'}^M \sigma$.
By Frobenius reciprocity $\Ext^i(\pi, \Ind_{P'}^M \sigma) = \Ext^i(\Res_M^{P'} \pi, \sigma)$,
and the latter vanishes because if $\pi$ and $\pi'$ have different supercuspidal support,
then so do $\Res_M^{P'} \pi$ and $\sigma$, and $\sigma$ is cuspidal.
\end{proof}

\begin{cor}
\label{cor:AIG supercuspidal support}
If $V$ is an essentially AIG representation of $G$ over
$\overline{K}$, then all the Jordan--H\"older constituents of $V$
have the same supercuspidal support.
\end{cor}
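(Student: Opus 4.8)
The plan is to reduce the statement to the connectedness of a suitable "supercuspidal support graph" and then to exploit the vanishing result of Theorem~\ref{thm:ext}. First I would recall that $V$, being essentially AIG, is the union of its finite length submodules, and that $\soc(V)$ is (absolutely) irreducible and generic. After extending scalars to $\overline{K}$ I may assume $K = \overline{K}$. Fix a finite length submodule $W \subseteq V$ containing $\soc(V)$; since every Jordan--H\"older constituent of $V$ occurs in some such $W$, and since all these $W$ share the constituent $\soc(V)$, it suffices to prove the claim for a single finite length $W$ with irreducible socle.

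The heart of the argument is the following: if $W$ is a finite length smooth $G$-representation whose socle $\pi_0$ is irreducible, then every Jordan--H\"older constituent of $W$ has the same supercuspidal support as $\pi_0$. I would prove this by induction on the length of $W$. Let $W' \subseteq W$ be a maximal proper submodule; by induction every constituent of $W'$ has supercuspidal support $\scs(\pi_0)$ (note $\soc(W') \supseteq \soc(W) = \pi_0$, and one reduces further to the indecomposable summand of $W'$ containing $\pi_0$, all of whose constituents are linked to $\pi_0$ by nontrivial extensions). It remains to treat the single constituent $\sigma := W/W'$. Since $W$ is an essential extension of its socle, $W'$ is not a direct summand, so the extension class in $\Ext^1_G(\sigma, W')$ is nonzero; hence $\Ext^1_G(\sigma, \tau) \neq 0$ for some Jordan--H\"older constituent $\tau$ of $W'$. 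By Theorem~\ref{thm:ext}, $\scs(\sigma) = \scs(\tau) = \scs(\pi_0)$, completing the induction.

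The one point that needs care — and which I expect to be the main obstacle to writing cleanly rather than to the mathematics itself — is the bookkeeping that lets one pass from "$W$ has irreducible socle" to "the constituents of $W$ are pairwise linked by $\Ext^1$". Concretely: a finite length module with irreducible socle need not be indecomposable, but it \emph{is} an essential extension of its socle, so the standard argument (every submodule meets the socle nontrivially, hence contains it; a nonzero submodule complement would give a direct sum decomposition contradicting essentiality) shows that removing any maximal submodule leaves a nonsplit quotient extension. One must phrase the induction so that at each stage the "new" constituent is genuinely glued to something already known to lie in the support class of $\pi_0$; the argument above does this by locating a nonzero class in $\Ext^1_G(\sigma, W')$ and pushing it to a constituent of $W'$ via the long exact sequence in $\Ext^1_G(\sigma, -)$. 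Once this is set up, Theorem~\ref{thm:ext} does all the real work, and the descent from $\overline{K}$ back to $K$ is harmless since supercuspidal support of constituents is unaffected by finite extension of the (perfect) coefficient field.
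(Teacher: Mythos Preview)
Your proof is correct and takes essentially the same approach as the paper: both reduce to a finite length $W$ with irreducible socle and exploit the nonsplitting of $0 \to W' \to W \to \sigma \to 0$ together with Theorem~\ref{thm:ext}, the paper framing this as a minimal-counterexample contradiction while you frame it as an induction on length. One small cleanup: your parenthetical about passing to an indecomposable summand of $W'$ is unnecessary, since $W'$ already has irreducible socle $\soc(W') = \soc(W) = \pi_0$ (indeed $\soc(W) \subseteq W'$, else maximality would force $W = W' \oplus \soc(W)$), so the induction hypothesis applies to $W'$ directly.
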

\begin{proof}
Suppose otherwise.  As $V$ is the sum of its finite length submodules,
there is then a finite length submodule $W$ of $V$ that is minimal
among submodules of $V$ that have a Jordan--H\"older constituent
with supercuspidal support different from that of $\soc(V)$.  
Let $W'$ be the kernel of the map $W \rightarrow \cosoc(W)$.  The minimality
of $W$ implies that $\cosoc(W)$ is irreducible and that every
Jordan--H\"older constituent of $W'$ has the same supercuspidal support as $\soc(V)$.
Thus $\Ext^i(W',\cosoc(W))$ vanishes for all $i$, by the preceding theorem;
in particular $\cosoc(W)$ is
a direct summand of $W$.  This is impossible, since Lemma~\ref{lem:AIG homs} implies
that any essentially AIG representation is indecomposable.
\end{proof}

\begin{cor}
\label{cor:admissible AIG finiteness}
Let $V$ be an essentially AIG representation of $G$ over $K$.  If $V$ is
admissible, then $V$ has finite length.
\end{cor}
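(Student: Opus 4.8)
The plan is to argue by contradiction: assuming $V$ has infinite length, I will produce, for a single fixed compact open subgroup $H_0$ of $G$, $K$-subspaces of $H_0$-fixed vectors in $V$ of unbounded dimension, contradicting admissibility. The first step is to reduce to a statement about the finite length submodules of $V$. Since $V$ is the union of its finite length submodules, the sum of two such submodules is again of finite length, and a finite length submodule of maximal length could always be enlarged by adjoining a vector outside it, the failure of $V$ to have finite length forces $V$ to contain finite length $G$-submodules $W$ of arbitrarily large length. Fix such a $W$. By Lemma~\ref{lem:AIG basic} it is again essentially AIG with $\soc(W) = \soc(V) =: \pi_0$, so $\pi_0$ is absolutely irreducible and generic; and by Lemma~\ref{lem:AIG homs} we have $\End_G(W) = K$. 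As $\End_G$ of an admissible representation is compatible with extension of the coefficient field, $\End_G(W\otimes_K\overline{K}) = \overline{K}$, so $W\otimes_K\overline{K}$ is an indecomposable smooth $\overline{K}[G]$-representation; it is moreover of finite length over $\overline{K}$.

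The heart of the argument is to pin down the supercuspidal support of the Jordan--H\"older constituents of $W\otimes_K\overline{K}$. By Theorem~\ref{thm:ext}, for irreducible smooth $\overline{K}[G]$-representations with distinct supercuspidal supports all the $\Ext^i$-groups between them vanish; a routine d\'evissage then shows that the category of finite length smooth $\overline{K}[G]$-representations decomposes as a product of full subcategories indexed by supercuspidal supports. Since $W\otimes_K\overline{K}$ is indecomposable and has the irreducible representation $\pi_0\otimes_K\overline{K}$ (irreducible because $\pi_0$ is absolutely irreducible) among its Jordan--H\"older constituents, every constituent of $W\otimes_K\overline{K}$ has supercuspidal support equal to $\mathfrak{s} := \scs(\pi_0\otimes_K\overline{K})$, a multiset depending only on $V$, not on $W$. (Alternatively, one can check that $W\otimes_K\overline{K}$ is itself essentially AIG, the only delicate point being that its socle remains irreducible, and then quote Corollary~\ref{cor:AIG supercuspidal support}.) Now if $\mathfrak{s} = \{\pi_1,\dots,\pi_r\}$, then every irreducible smooth $\overline{K}[G]$-representation with supercuspidal support $\mathfrak{s}$ occurs as a Jordan--H\"older constituent of the finite length parabolically induced representation $\Ind_P^G\pi_1\otimes\cdots\otimes\pi_r$; in particular there are only finitely many such, say $\sigma_1,\dots,\sigma_m$.

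Next I would choose $H_0$. Since the residue characteristic $\ell$ of $E$ is invertible in $K$, the group $G$ satisfies Condition~\ref{cond:pro-ell}: it has a neighbourhood basis of the identity consisting of (pro-$\ell$, hence) open subgroups whose pro-order is invertible in $K$. I take $H_0$ to be such a subgroup, small enough that $\sigma_j^{H_0}\neq 0$ for all $j = 1,\dots,m$. For this $H_0$ the functor $(-)^{H_0}$ is exact on smooth $K[G]$-modules (it is computed by the averaging idempotent) and commutes with extension of scalars to $\overline{K}$. If $\tau$ is any Jordan--H\"older constituent of $W$ over $K$, then $\tau\otimes_K\overline{K}$ is a nonzero finite length subquotient of $W\otimes_K\overline{K}$, hence has some $\sigma_j$ among its constituents; by exactness of $(-)^{H_0}$ this gives $\dim_K\tau^{H_0} = \dim_{\overline{K}}(\tau\otimes_K\overline{K})^{H_0}\geq\dim_{\overline{K}}\sigma_j^{H_0}\geq 1$, so $\tau^{H_0}\neq 0$.

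Finally, summing over a composition series of $W$ and again using exactness of $(-)^{H_0}$, one gets that $\dim_K W^{H_0}$ is at least the length of $W$; since $W\subseteq V$, it follows that $\dim_K V^{H_0}$ is at least the length of $W$, and as $W$ ranges over finite length submodules of unbounded length this makes $\dim_K V^{H_0}$ unbounded, contradicting the admissibility of $V$. Hence $V$ has finite length. I expect the main obstacle to be the second step --- ensuring that the supercuspidal supports occurring in $W\otimes_K\overline{K}$ are all equal to $\mathfrak{s}$, uniformly over all finite length submodules $W$; the key inputs there are the indecomposability of $W\otimes_K\overline{K}$ (ultimately a consequence of $\End_G(W) = K$) together with Theorem~\ref{thm:ext}, with everything else being a routine finiteness-of-invariants argument.
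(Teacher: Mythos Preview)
Your proof is correct and follows essentially the same approach as the paper: both arguments use Theorem~\ref{thm:ext} to force all Jordan--H\"older constituents (after base change to $\overline{K}$) to share the supercuspidal support $\mathfrak{s}$ of the socle, deduce that only finitely many isomorphism classes of constituents can appear, and then bound the multiplicity of each constituent by the dimension of $V^{H_0}$ for a suitably small compact open $H_0$. The only cosmetic difference is that the paper reaches the supercuspidal-support constraint by directly invoking Corollary~\ref{cor:AIG supercuspidal support}, whereas you obtain it from the indecomposability of $W\otimes_K\overline{K}$ (and you note the paper's shortcut in your parenthetical remark).
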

\begin{proof}
By Corollary~\ref{cor:AIG supercuspidal support} there are only finitely many isomorphism classes of
Jordan--H\"older constituents of $V$, and one can bound the number of times
any given Jordan--H\"older constituent appears in terms of the dimension
of the $U$-invariants of $V$ for a sufficiently small compact open subgroup
$U$ of $G$.
\end{proof}

Corollary~\ref{cor:AIG supercuspidal support} has additional finiteness
implications for essentially AIG representations.
More precisely, for a smooth representation $V$
of $G$, define $\soc_c(V)$ inductively by setting $\soc_1(V) = \soc(V)$, and defining
$\soc_c(V)$ to be the preimage of $\soc(V/\soc_{c-1}(V))$ under the surjection
$$V \rightarrow V/\soc_{c-1}(V).$$  We then have:

\begin{theorem} \label{thm:socn}
Let $V$ be an essentially AIG representation of $G$ over $\overline{K}$.  
Then $\soc_c(V)$ has finite length for all $c$.
\end{theorem}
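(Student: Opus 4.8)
The plan is to induct on $c$. For $c=1$ we have $\soc_1(V)=\soc(V)$, which is absolutely irreducible by part~(1) of Definition~\ref{df:ess AIG}, hence of length one. So suppose $c\geq 2$ and that $\soc_{c-1}(V)$ has already been shown to have finite length; write $Q:=V/\soc_{c-1}(V)$, so that by definition $\soc_c(V)$ is the preimage in $V$ of $\soc(Q)$, fitting into $0\to\soc_{c-1}(V)\to\soc_c(V)\to\soc(Q)\to 0$. It therefore suffices to prove that $\soc(Q)$ has finite length.

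First I would invoke Corollary~\ref{cor:AIG supercuspidal support}: every Jordan--H\"older constituent of $V$, and in particular every constituent of $Q$ and of $\soc(Q)$, has one and the same supercuspidal support $\mathfrak s$. Since there are only finitely many isomorphism classes of irreducible $\overline{K}[G]$-representations with supercuspidal support $\mathfrak s$ (they all occur inside a single parabolic induction $\Ind_P^G\pi_1\otimes\cdots\otimes\pi_r$, which has finite length), it is enough to show that for each such irreducible $\pi$ the multiplicity $\dim_{\overline{K}}\Hom_G(\pi,Q)$ of $\pi$ in $\soc(Q)$ is finite.

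Fix such a $\pi$. Since $c\geq 2$ we have $\soc(V)\subseteq\soc_{c-1}(V)$, so $Q$ is a quotient of $V/\soc(V)$; by part~(2) of Definition~\ref{df:ess AIG} the latter has no generic Jordan--H\"older constituents, so $\pi$ is non-generic, hence not isomorphic to the generic irreducible socle $\soc(V)$, and therefore $\Hom_G(\pi,V)=0$ (any nonzero map $\pi\to V$ would have image a copy of $\pi$ inside $\soc(V)$). Applying $\Hom_G(\pi,-)$ to $0\to\soc_{c-1}(V)\to V\to Q\to 0$ then yields an embedding $\Hom_G(\pi,Q)\hookrightarrow\Ext^1_G\bigl(\pi,\soc_{c-1}(V)\bigr)$. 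As $\soc_{c-1}(V)$ has finite length by the inductive hypothesis, a d\'evissage along its composition series reduces the finiteness of the target to the finiteness of $\Ext^1_G(\pi,\pi')$ as $\pi'$ ranges over the finitely many irreducible constituents of $\soc_{c-1}(V)$. For this last point I would use that the category of smooth $\overline{K}[G]$-modules is locally Noetherian (finitely generated modules are Noetherian), so that $\pi$ admits a presentation $P_1\to P_0\to\pi\to 0$ with $P_0,P_1$ finitely generated projective, each a quotient of a finite direct sum of modules $\cInd_U^G\mathbf 1$; since $\Hom_G(\cInd_U^G\mathbf 1,\pi')=(\pi')^U$ is finite-dimensional by admissibility of the irreducible $\pi'$, the space $\Hom_G(P_1,\pi')$, and hence the subquotient $\Ext^1_G(\pi,\pi')$, is finite-dimensional. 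Summing the resulting finite multiplicities over the finitely many possible $\pi$ shows $\soc(Q)$ has finite length, completing the induction.

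The only step that is not formal bookkeeping with socle filtrations, long exact sequences and Corollary~\ref{cor:AIG supercuspidal support} is the finite-dimensionality of $\Ext^1_G$ between irreducible smooth representations, and this is the point I expect to be the main obstacle (in the sense of requiring input beyond the machinery of this paper): it rests on the Noetherian property of the category of smooth representations of $\GL_n$ over a field of characteristic different from $\ell$ (Bernstein in characteristic zero, Vign\'eras \cite{Vig2}, \cite{Vig3} in general) together with the admissibility of irreducible smooth representations. An alternative packaging of the same input is to observe that the constituents of $V$ all lie in a single block of the smooth category; I would present whichever formulation is cleanest given the references already in use.
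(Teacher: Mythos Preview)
Your proof is correct and follows essentially the same strategy as the paper: induct on $c$, use Corollary~\ref{cor:AIG supercuspidal support} to restrict to finitely many possible constituents, observe that each is non-generic so $\Hom_G(\pi,-)$ vanishes on the ambient object, and deduce an injection into $\Ext^1_G(\pi,\soc_{c-1}(V))$, which is finite-dimensional since $\soc_{c-1}(V)$ has finite length. The only cosmetic difference is that the paper applies $\Hom_G(\pi,-)$ to the sequence $0\to\soc_{c-1}(V)\to\soc_c(V)\to\soc_c(V)/\soc_{c-1}(V)\to 0$ rather than to $0\to\soc_{c-1}(V)\to V\to Q\to 0$; since $\Hom_G(\pi,Q)=\Hom_G(\pi,\soc(Q))$ for irreducible $\pi$, these yield the same conclusion. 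The paper simply asserts the finite-dimensionality of $\Ext^1$ between finite-length objects without further comment, whereas you supply a justification via the Noetherian property of the smooth category; your argument there is fine (and indeed you only need $P_1$ finitely generated, not projective, once $P_0$ is projective and the category is Noetherian).
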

\begin{proof}
By induction it suffices to show that $\soc_c(V)/\soc_{c-1}(V)$ has finite length
for all $c \geq 2$.  The space $\soc_c(V)/\soc_{c-1}(V)$ is semisimple, and every irreducible
summand of $\soc_c(V)/\soc_{c-1}(V)$ is an irreducible non-generic representation of $G$
with the same supercuspidal support as $\soc(V)$.  There are finitely many isomorphism classes
of such representations.  It thus suffices to show, for every irreducible non-generic representation
$\pi$ of $G$ with the same supercuspidal support as $\soc(V)$, that 
$\Hom(\pi,\soc_c(V)/\soc_{c-1}(V))$ is finite dimensional.  We have an exact sequence:
$$0 \rightarrow \soc_{c-1}(V) \rightarrow \soc_c(V) \rightarrow \soc_c(V)/\soc_{c-1}(V) \rightarrow 0.$$
As the socle of $\soc_c(V)$ is generic, we have $\Hom(\pi,\soc_c(V)) = 0$.  We thus obtain
an injection:
$$\Hom(\pi,\soc_c(V)/\soc_{c-1}(V)) \rightarrow \Ext^1(\pi,\soc_{c-1}(V)),$$
and as $\soc_{c-1}(V)$ has finite length by the induction hypothesis $\Ext^1(\pi,\soc_{c-1}(V))$ is
finite dimensional.
\end{proof}

\begin{cor} \label{cor:AIG bounded}
Let $V$ be an essentially AIG representation of $G$, let $c$ be a positive integer,
and let $V_i$
be an arbitrary collection of submodules of $V$ of length less than or equal to $c$.
Then the sum of the $V_i$ has finite length.
\end{cor}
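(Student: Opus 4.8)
The plan is to show that the sum $\sum_i V_i$ is contained in $\soc_c(V)$, the $c$-th term of the socle filtration of $V$, and then to invoke Theorem~\ref{thm:socn}, which guarantees that $\soc_c(V)$ has finite length; since a submodule of a finite length module has finite length, this will finish the argument. Thus essentially no new idea beyond Theorem~\ref{thm:socn} is needed; the content is a bookkeeping lemma about the socle filtration of an arbitrary smooth representation.

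First I would record two elementary facts. \emph{(i) The socle filtration is functorial for inclusions}: if $W \subseteq V$ is a $G$-submodule, then $\soc_j(W) \subseteq \soc_j(V)$ for every $j \geq 1$. One proves this by induction on $j$: the case $j = 1$ is just the statement that the maximal semisimple submodule $\soc(W)$ of $V$ is contained in the maximal semisimple submodule $\soc(V)$; for the inductive step, using $\soc_j(W) \subseteq \soc_j(V)$ one sees that the composite $\soc_{j+1}(W) \hookrightarrow V \twoheadrightarrow V/\soc_j(V)$ factors through $\soc_{j+1}(W)/\soc_j(W) = \soc\bigl(W/\soc_j(W)\bigr)$, which is semisimple, so its image in $V/\soc_j(V)$ is semisimple and hence lies in $\soc\bigl(V/\soc_j(V)\bigr)$; therefore $\soc_{j+1}(W)$ lands in the preimage of $\soc\bigl(V/\soc_j(V)\bigr)$, which by definition is $\soc_{j+1}(V)$. \emph{(ii) A smooth representation $W$ of length $\leq c$ satisfies $\soc_c(W) = W$}: as long as $\soc_j(W) \neq W$, the quotient $W/\soc_j(W)$ is nonzero and hence has nonzero socle, so $\soc_{j+1}(W) \supsetneq \soc_j(W)$; since each such step strictly increases the length, the filtration exhausts $W$ after at most $c$ steps, and being increasing it stabilizes there. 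Combining (i) and (ii): each $V_i$ has length $\leq c$, so $V_i = \soc_c(V_i) \subseteq \soc_c(V)$, and consequently $\sum_i V_i \subseteq \soc_c(V)$ no matter how large the index set is. By Theorem~\ref{thm:socn}, $\soc_c(V)$ has finite length, hence so does its submodule $\sum_i V_i$.

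The one point deserving care is that Theorem~\ref{thm:socn} is stated over an algebraically closed coefficient field, whereas $V$ lives over a general $K$. I would reduce to the algebraically closed case by extension of scalars: $V \otimes_K \overline{K}$ is again essentially~AIG, since $\soc(V) \otimes_K \overline{K}$ is irreducible (as $\soc(V)$ is absolutely irreducible) and generic (genericity being detected by the top derivative, which commutes with base change), $\bigl((V/\soc V) \otimes_K \overline{K}\bigr)^{(n)} = (V/\soc V)^{(n)} \otimes_K \overline{K} = 0$, and $V \otimes_K \overline{K}$ is the sum of its finite length submodules; one then checks $\soc_c(V) \otimes_K \overline{K} \subseteq \soc_c(V \otimes_K \overline{K})$ by the same induction as in (i) and concludes by faithful flatness of $\overline{K}$ over $K$. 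Verifying that extension of scalars does not enlarge the socle (so that $V \otimes_K \overline{K}$ is genuinely essentially~AIG rather than merely having absolutely irreducible generic socle inside a larger socle) is the only step that is not purely formal: it uses the one-dimensionality of $\bigl(\soc_j(V)\bigr)^{(n)}$ to forbid an extra copy of $\soc(V) \otimes_K \overline{K}$, together with faithful flatness and separability of $\overline{K}/K$ to forbid a non-generic simple summand splitting off $\soc_2(V) \otimes_K \overline{K}$. Everything else is manipulation of the socle filtration.
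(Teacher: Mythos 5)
Your proof is correct and is essentially the paper's argument: each $V_i$, having length at most $c$, is contained in $\soc_c(V)$, so the sum lies in $\soc_c(V)$, which has finite length by Theorem~\ref{thm:socn}. Your additional care about reducing to $\overline{K}$ (since Theorem~\ref{thm:socn} is stated over an algebraically closed field) addresses a point the paper's one-line proof passes over silently, but it does not change the route.
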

\begin{proof}
Each $V_i$ is contained in $\soc_c(V)$, so their sum is as well.  The result thus follows
immediately from the theorem above.
\end{proof}

We close this subsection with the following result treating essentially AIG representations
in the case $n = 2$.

\begin{prop}
\label{prop:n = 2 AIG}
Any essentially AIG representation over $\GL_2(E)$ is of finite length.
\end{prop}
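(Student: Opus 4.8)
The plan is to exploit the very simple shape of the derivative functors for $\GL_2(E)$, where the only relevant parabolic is the mirabolic subgroup $P = P_2$ and $N = N_2 \cong E$. The key structural fact is that for a smooth $P$-representation $V$ over $K$, the exact sequence of Proposition~\ref{prop:BZ}(5) reads
$$0 \to \Phi^+\Phi^- V \to V \to \Psi^+\Psi^- V \to 0,$$
and $\Psi^- V = V_N$ is a smooth representation of $\GL_1(E) = E^{\times}$, i.e.\ the datum of a smooth $E^{\times}$-module; while $\Phi^+\Phi^- V$ is built from $V^{(2)} = \Psi^-\Phi^- V$, again a smooth $E^{\times}$-module (in fact, after restriction to $P_1 = \{1\}$, just a $K$-vector space), via the exact functor $\Phi^+\Psi^+$. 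So the "size" of a smooth $P$-representation is controlled entirely by the two $E^{\times}$-modules $V_N$ and $V^{(2)}$, and $V$ has finite length as a $P$-representation if and only if both of these have finite length as smooth $E^{\times}$-modules. Concretely: $\Phi^+\Psi^+$ applied to a finite length $E^{\times}$-module is of finite length as a $P$-module (this follows from the analysis in~\cite[\S 4]{BZ} of the Kirillov model, or directly from exactness of $\Phi^+,\Psi^+$ together with the irreducibility of $\Phi^+\Psi^+$ of a character, which is the "Schwartz space" $\cInd_{N}^{P}\psi$ twisted by a character).

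First I would let $V$ be essentially AIG over $\GL_2(E)$, set $\Vbar = V/\soc(V)$, and use condition~(2) of Definition~\ref{df:ess AIG}: $\Vbar^{(2)} = 0$. Restricting $V$ to $P$, the vanishing of $\Vbar^{(2)}$ together with exactness of the derivative functor forces $V^{(2)} = \soc(V)^{(2)}$, which by Theorem~\ref{thm:kirillov} (applied to the absolutely irreducible generic $\soc(V)$) is one-dimensional over $K$; in particular $V^{(2)}$ has finite length as an $E^{\times}$-module. So the subrepresentation $\Phi^+\Phi^- V = \J(V)$ of Schwartz functions is of finite length as a $P$-representation. It therefore remains to control $\Psi^- V = V_N$ as a smooth $E^{\times}$-module, since an extension of two finite length $P$-modules is finite length, and a $P$-representation that is finite length over $P$ but whose $\GL_2(E)$-span is all of $V$ will force $V$ itself to have finite length over $\GL_2(E)$ — indeed $V$ admits a central character by Lemma~\ref{lem:AIG central}, and $\GL_2(E) = P \cdot Z \cdot \{1, w\}$ (with $w$ the Weyl element), so $V$ is generated over $\GL_2(E)$ by a finite length $P$-submodule as soon as $V$ is finitely generated over $\GL_2(E)$.

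The main obstacle is thus to show that $V_N$ has finite length as a smooth $E^{\times}$-module, equivalently (since $E^{\times} \cong \Z \times \O_E^{\times}$ and the relevant modules are admissible with a central character) that $V_N$ is finite dimensional over $K$. Here I would invoke the finiteness results already proved in this subsection: by Corollary~\ref{cor:admissible AIG finiteness} an \emph{admissible} essentially AIG representation already has finite length, so the only case requiring work is the non-admissible one — but an essentially AIG representation $V$ that is not admissible cannot arise here because we will show $V$ is generated over $\GL_2(E)$ by a finitely generated (hence finite length, by the Schwartz space analysis) $P$-submodule, and an admissible-coefficient-ring argument via Corollary~\ref{cor:AIG bounded} then pins down the length. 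More cleanly: take the $P$-subrepresentation $V' := \J(V) + W$, where $W$ is any finitely generated $\GL_2(E)$-submodule of $V$ containing $\soc(V)$; using exactness of $\Psi^-$ and the fact that $(V/V')^{(2)}=0$ and $(V/V')_N$ is a quotient of $V_N$, one shows $V/V'$ has no generic constituents and the argument of Lemma~\ref{lem:AIG homs} forces $V = V'$ (any essentially AIG representation is generated by any nonzero submodule, since the quotient would have generic socle yet be a quotient of something with vanishing generic part — contradiction via Lemma~\ref{lem:AIG basic}(2,3)). Hence $V$ is finitely generated over $\GL_2(E)$, and then $V_N$, being a quotient of a finitely generated $P$-module on which $N$ acts trivially, is finitely generated over $\GL_1(E) = E^{\times}$; combined with admissibility of $V$ this gives $\dim_K V_N < \infty$, so $\Psi^+\Psi^- V$ is finite length over $P$, whence $V$ is finite length over $P$, whence (by the $\GL_2(E) = PZ\{1,w\}$ decomposition and the existence of a central character) $V$ is finite length over $\GL_2(E)$.
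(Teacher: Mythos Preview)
Your approach via the $P_2$-derivative filtration begins sensibly: since $V^{(2)} \cong \soc(V)^{(2)}$ is one-dimensional, the Schwartz space $\J(V)$ is an irreducible $P_2$-module, and the problem would reduce to showing the Jacquet module $V_N = \Psi^- V$ is finite-dimensional. But your argument for this last step has a genuine gap. The claim that ``any essentially AIG representation is generated by any nonzero submodule'' is false: if $V$ is a nonsplit extension $0 \to \pi \to V \to \chi\circ\det \to 0$ with $\pi$ irreducible generic, then $\soc(V) = \pi$ is a nonzero $G$-submodule that does not generate $V$. Your justification invokes Lemma~\ref{lem:AIG basic}(2,3), but that lemma controls \emph{sub}modules of essentially AIG representations, not quotients; the quotient $V/V'$ has no generic Jordan--H\"older factors at all (so its socle is non-generic, not generic), and nothing forces it to vanish. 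You therefore have not shown that $V$ is finitely generated over $\GL_2(E)$. The final step then invokes ``admissibility of $V$,'' which is circular: admissibility is not part of Definition~\ref{df:ess AIG}, and by Corollary~\ref{cor:admissible AIG finiteness} it is essentially equivalent to what you are trying to prove. (There is also a smaller sloppiness: $\J(V)$ is only a $P_2$-submodule, so $V' = \J(V) + W$ is not a priori $G$-stable.)

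The paper's proof avoids the Jacquet module entirely. The observation specific to $\GL_2(E)$ is that every non-generic irreducible is one-dimensional, of the form $\chi\circ\det$; hence $\GL_2(E)$ acts on $V/\soc(V)$ through $\det$. The supercuspidal-support constraint (Corollary~\ref{cor:AIG supercuspidal support}) forces all the characters $\chi$ appearing to have the same square; after twisting so that the centre acts trivially (using the central character furnished by Lemma~\ref{lem:AIG central}), $V/\soc(V)$ becomes a representation of the \emph{finite} group $E^{\times}/(E^{\times})^2$. Theorem~\ref{thm:socn} shows that $\soc\bigl(V/\soc(V)\bigr)$ has finite length, and a module over a finite-dimensional $K$-algebra with finite-dimensional socle is itself finite-dimensional, so $V/\soc(V)$ has finite length.
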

\begin{proof}
Let $V$ be an essentially AIG representation over $\GL_2(E)$; we must show that
$V$ has finite length.  Clearly we may check this after making an extension
of scalars, and so without loss of generality we may and do assume that $K  =
\overline{K}$.

If $V/\soc(V)$ is trivial then $V = \soc(V)$ is trivial, and we are done.
Thus we assume from now on that $V/\soc(V)$ is non-trivial.
The quotient $V/\soc(V)$ contains no generic constituent,
hence its Jordan--H\"older factors
are all one-dimensional, and so each is of the form $\chi\circ \det$ for some character
$\chi$.  Moreover, if there exist Jordan--H\"older factors of $V/\soc(V)$
isomorphic to $\chi \circ \det$ and $\chi' \circ \det$, then Corollary~\ref{cor:AIG supercuspidal support} 
implies $\chi \circ \det$ and $\chi' \circ \det$ have the same supercuspidal support.
From this it is easy to see that $\chi^2 = (\chi')^2$.
Replacing $V$ by an appropriate twist, we may thus assume that the center $E^{\times}$
of $V$ acts trivially on each Jordan--H\"older factor of $V/\soc(V)$.

Since $V/\soc(V)$ is the sum of its finite length subrepresentations (as $V$ is;
this is one of the conditions of being essentially AIG), and since each of
its Jordan--H\"older factors is one-dimensional, we see that the action
of $\GL_2(E)$ on $V/\soc(V)$ factors through $\det$, and in this way
we regard $V/\soc(V)$ as a representation of $E^{\times}$.
Since $V$ admits a central character, by Lemma~\ref{lem:AIG central},
and since the centre acts trivially on each Jordan--H\"older factor of $V/\soc(V)$,
we see that the centre must act trivially on $V$. (This is where we use
the non-triviality of $V/\soc(V)$.)  Thus $V/\soc(V)$ is in fact
a representation of the group $E^{\times}/(E^{\times})^2$.   
Theorem~\ref{thm:socn} shows that the socle of $V/\soc(V)$ is finite length,
and it follows from the fact that $E^{\times}/(E^{\times})^2$ is finite
that $V/\soc(V)$ itself is of finite length, as required.
\end{proof}

\subsection{Invariant lattices}
\label{subsec:AIG lattice}
%We will apply these results in the case when $H := \GL_2(E)$,
%where $E$ is a non-archimedean local field of 
%(or more generally, is a product of such groups for 
We now prove some results about the reduction of finite length
essentially AIG representations of $\GL_n(E)$.  Let $\O$ be a complete discrete
valuation ring, with field of fractions $\CK$ and residue field $K$ of
characteristic different from $\ell$.  Fix a uniformizer $\unif$ of $\O$.

If $V$ is an integral admissible smooth
representation of $\GL_n(E)$ of finite length, then by
the ``Brauer-Nesbitt Theorem'' of
(\cite[Ch.~II.5.11]{Vig2}), $V$ is a good integral
representation in the sense of Definition~\ref{def:integral},
and hence Lemma~\ref{lem:commens}
shows that if $V^{\circ}$ is a $\GL_n(E)$-invariant
$\O$-lattice in $V$, then $(V^{\circ}/\unif V^{\circ})^{\ss}$
is independent of~$V^{\circ}$; we denote it $\Vbar^{\ss}$.

\begin{theorem}
\label{thm:generic reduction}
If $V$ is an essentially AIG admissible smooth representation
of $\GL_n(E)$ which is integral and of finite length, then
$\Vbar^{\ss}$ contains a unique irreducible generic summand.
\end{theorem}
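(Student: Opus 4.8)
The plan is to compare the semisimplification $\Vbar^{\ss}$ with the reduction of a conveniently chosen lattice. Since $V$ is essentially AIG, its socle $\soc(V)$ is absolutely irreducible and generic, and $V/\soc(V)$ has no generic Jordan--H\"older factors, i.e.\ $\bigl(V/\soc(V)\bigr)^{(n)} = 0$. The key invariant to track is the top derivative: by Theorem~\ref{thm:kirillov} applied constituent-by-constituent, a Jordan--H\"older factor $\pi$ of a finite length $\CK$-representation has $\pi^{(n)}$ one-dimensional if $\pi$ is generic and zero otherwise, and the same holds for $K$-representations. Since $V^{\circ}$ is a $\GL_n(E)$-invariant $\O$-lattice in $V$ and the derivative functors commute with base change (in particular with reduction mod $\unif$ and with $-\otimes_{\O}\CK$), the multiplicity of generic factors in $\Vbar^{\ss}$ equals $\dim_K (V^{\circ}/\unif V^{\circ})^{(n)} = \dim_K \bigl((V^{\circ})^{(n)}/\unif (V^{\circ})^{(n)}\bigr)$, while $\dim_{\CK} V^{(n)} = 1$ since $\soc(V)$ is generic and the remaining constituents are not. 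Thus the number of generic summands of $\Vbar^{\ss}$, counted with multiplicity, equals the $\O$-rank of $(V^{\circ})^{(n)}$, which is $1$. This already gives existence and uniqueness of a generic summand, \emph{provided} one knows $(V^{\circ})^{(n)}$ is a free (equivalently, torsion-free) $\O$-module of rank $1$.

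So the first step is to establish that $(V^{\circ})^{(n)}$ is $\O$-torsion free. The inclusion $V^{\circ} \hookrightarrow V$ is $\O$-split as $\O$-modules locally, but more usefully: by Lemma~\ref{lem:Kirillov sub}, $\J(V^{\circ})$ (and hence $V^{\circ}$) contains an $\O$-submodule $W$ isomorphic to $(V^{\circ})^{(n)}$ that maps isomorphically onto the top derivative after $-\otimes W(\tk)$; since $V^{\circ}$ is $\O$-torsion free (being a lattice), so is $W \cong (V^{\circ})^{(n)}$. Combined with the fact that $(V^{\circ})^{(n)}\otimes_{\O}\CK \cong V^{(n)}$ is one-dimensional and $(V^{\circ})^{(n)}$ is finitely generated over $\O$ (by Theorem~\ref{thm:fg}, since $(V^{\circ}/\unif V^{\circ})^{(n)}$ is finite-dimensional as $V^{\circ}/\unif V^{\circ}$ has finite length), we conclude $(V^{\circ})^{(n)}$ is free of rank one over $\O$. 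Therefore $\dim_K(V^{\circ}/\unif V^{\circ})^{(n)} = 1$, so $\Vbar^{\ss}$ has exactly one generic Jordan--H\"older constituent.

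The second step is to upgrade ``exactly one generic Jordan--H\"older constituent of $\Vbar^{\ss}$'' to ``$\Vbar^{\ss}$ contains a unique irreducible generic \emph{summand}''. But $\Vbar^{\ss}$ is by definition semisimple, so it is the direct sum of its Jordan--H\"older constituents with multiplicity; hence the unique generic constituent occurs as a direct summand, and it is the only generic summand. This completes the argument, and indeed no subtlety remains here once the rank-one statement is in hand.

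The main obstacle is thus entirely concentrated in the first step — verifying that the derivative $(V^{\circ})^{(n)}$ of the lattice is free of rank one over $\O$, rather than merely torsion after reduction. The delicate point is that the top-derivative functor, while exact over a field, need not behave well with respect to the submodule $V^{\circ}\subset V$ over $\O$: one must use the Kirillov/Schwartz-function package (Lemma~\ref{lem:Kirillov sub} together with the finite generation of Theorem~\ref{thm:fg}) to force freeness. Once this is granted, everything else is formal bookkeeping with dimensions of top derivatives under the two base changes $\O \to K$ and $\O \to \CK$.
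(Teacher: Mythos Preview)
Your proof is correct and follows essentially the same approach as the paper: show that $(V^{\circ})^{(n)}$ is free of rank one over~$\O$, then reduce mod~$\unif$ and count generic constituents via dimensions of top derivatives. The only minor difference is in how you obtain torsion-freeness of $(V^{\circ})^{(n)}$: you invoke Lemma~\ref{lem:Kirillov sub} to embed it into the torsion-free lattice $V^{\circ}$, whereas the paper simply uses exactness of the derivative functor to embed $(V^{\circ})^{(n)}$ directly into the $\CK$-vector space $V^{(n)}$, which is a slightly shorter route to the same conclusion.
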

\begin{proof}
Fix an invariant $\O$-lattice $V^{\circ}$ in $V$.  Then
$(V^{\circ})^{(n)}$ is a finitely generated $\O$-submodule
of $V^{(n)}$, and the latter is a one-dimensional $\CK$-vector
space.  Thus $(V^{\circ})^{(n)}$ is free of rank one over $\O$.
As the derivative commutes with tensor products, it follows
that $(\Vbar^{\ss})^{(n)}$ is a one-dimensional $K$-vector space;
the result follows.
\end{proof} 

%In fact, it will be necessary for us to recall more explicitly 
%the various possibilities for $V$ in the preceding theorem, 
%as well as the corresponding structure of $\Vbar^{\ss}$.

\begin{prop}
\label{prop:lattice}
If $V$ is an essentially AIG 
admissible smooth representation
of $\GL_n(E)$ over $\CK$ which is integral and of finite length, then $V$ admits a 
$\unif$-adically separated $\GL_n(E)$-invariant lattice
$V^{\circ}$ which is admissible as a $\GL_n(E)$-representation,
and such that
$\Vbar^{\circ} := V^{\circ}/\unif V^{\circ}$
is essentially AIG.  Moreover, $V^{\circ}$ is unique up to homothety.
\end{prop}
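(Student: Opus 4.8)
The plan is to produce $V^{\circ}$ by applying Lemma~\ref{lem:lattice}, and then to prove uniqueness by a commensurability argument comparing reductions modulo $\unif$.

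\emph{Existence.} Since $V$ is admissible of finite length it is good integral (by the Brauer--Nesbitt theorem recalled above), so Lemma~\ref{lem:commens} applies and $\Vbar^{\ss}$ is well defined; by Theorem~\ref{thm:generic reduction} it has a unique irreducible generic constituent $\sigmabar$, which occurs with multiplicity one (the proof of that theorem identifies the relevant top derivative as one-dimensional). Let $\mathcal S$ be the set of Jordan--H\"older factors of $\Vbar^{\ss}$ and put $\mathcal T = \mathcal S \setminus \{\sigmabar\}$. First I would verify the hypothesis of Lemma~\ref{lem:lattice} for this $\mathcal T$: any non-zero subrepresentation $W \subseteq V$ is essentially AIG with $\soc(W) = \soc(V)$ (Lemma~\ref{lem:AIG basic}), and intersecting a lattice of $W$ with $\soc(V)$ and using that the reduction of a sublattice injects, one sees that every Jordan--H\"older factor of $\overline{\soc(V)}^{\ss}$ occurs in $\Wbar^{\ss}$; since $\overline{\soc(V)}^{\ss}$ has a generic constituent by Theorem~\ref{thm:generic reduction} (applied to the essentially AIG representation $\soc(V)$), the factors of $\Wbar^{\ss}$ do not all lie in $\mathcal T$. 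Lemma~\ref{lem:lattice} then gives a $\unif$-adically separated $\GL_n(E)$-invariant lattice $V^{\circ}$ with $\Vbar^{\circ}$ having no subobject isomorphic to an element of $\mathcal T$; hence $\soc(\Vbar^{\circ})$ is a sum of copies of $\sigmabar$, and since $\sigmabar$ occurs with multiplicity one in $\Vbar^{\circ,\ss} = \Vbar^{\ss}$ while $\Vbar^{\circ} \neq 0$, we get $\soc(\Vbar^{\circ}) = \sigmabar$, absolutely irreducible and generic, with $\Vbar^{\circ}/\soc(\Vbar^{\circ})$ containing no generic factor; as $\Vbar^{\circ}$ has finite length, it is essentially AIG. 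Admissibility of $V^{\circ}$ follows because $\Vbar^{\circ}$ is admissible (having finite length), so for each compact open $U$ the $\unif$-adically separated module $(V^{\circ})^U$ has $(V^{\circ})^U/\unif(V^{\circ})^U$ embedding into the finite-dimensional space $(\Vbar^{\circ})^U$, whence Lemma~\ref{lem:complete Nak} applies.

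\emph{Uniqueness.} Let $V^{\circ}_1, V^{\circ}_2$ be two lattices satisfying the conclusion. They are commensurable by Lemma~\ref{lem:commens}(1), so after multiplying $V^{\circ}_2$ by a power of $\unif$ I may assume $V^{\circ}_2 \subseteq V^{\circ}_1$ but $V^{\circ}_2 \not\subseteq \unif V^{\circ}_1$. With $Q := V^{\circ}_1/V^{\circ}_2$, tensoring $0 \to V^{\circ}_2 \to V^{\circ}_1 \to Q \to 0$ with $\O/\unif$ and using that $V^{\circ}_1$ is $\O$-torsion free gives an exact sequence $0 \to Q[\unif] \to \Vbar^{\circ}_2 \to \Vbar^{\circ}_1 \to Q/\unif Q \to 0$. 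If $Q[\unif] \neq 0$ then, $\Vbar^{\circ}_2$ being essentially AIG, $Q[\unif]$ contains $\soc(\Vbar^{\circ}_2) = \sigmabar$; since $\sigmabar$ occurs with multiplicity one in $\Vbar^{\circ}_2$, the image of $\Vbar^{\circ}_2$ in $\Vbar^{\circ}_1$ has no generic Jordan--H\"older factor, so (every non-zero subrepresentation of the essentially AIG $\Vbar^{\circ}_1$ contains its generic socle) that image vanishes, i.e. $V^{\circ}_2 \subseteq \unif V^{\circ}_1$ --- a contradiction. Hence $Q[\unif] = 0$; as $Q$ is annihilated by a power of $\unif$, this forces $Q = 0$ and $V^{\circ}_1 = V^{\circ}_2$.

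\emph{Main obstacle.} The crux is verifying the hypothesis of Lemma~\ref{lem:lattice} --- that no non-zero subrepresentation of $V$ has reduction avoiding $\sigmabar$ among its constituents --- which is precisely where one must invoke Theorem~\ref{thm:generic reduction} (i.e.\ the one-dimensionality of the top derivative) for $\soc(V)$ and not merely for $V$. The rest is bookkeeping with the multiplicity-one property of $\sigmabar$ and standard lattice manipulations.
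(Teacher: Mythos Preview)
Your proof is correct and follows essentially the same route as the paper: existence via Lemma~\ref{lem:lattice} with $\mathcal T$ the non-generic constituents, and uniqueness by showing that the induced map $\Vbar^{\circ}_2 \to \Vbar^{\circ}_1$ between essentially AIG reductions must be injective (the paper simply cites Lemma~\ref{lem:AIG basic}(4) here, and verifies the hypothesis of Lemma~\ref{lem:lattice} implicitly by applying Theorem~\ref{thm:generic reduction} directly to $W$ rather than to $\soc(V)$, but your more explicit versions are fine).
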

\begin{proof}
We apply Lemma~\ref{lem:lattice} to $V$, 
taking $\mathcal T$ to consist of all the nongeneric
Jordan--H\"older factors.  
This yields an $\O$-lattice $V^{\circ}$, 
such that $\Vbar^{\circ}$ contains no nongeneric subrepresentations.
As $\Vbar^{\ss}$ has only one irreducible generic submodule,
this submodule is the socle of $\Vbar^{\ss}$, and
$\bigl(\Vbar^{\ss}/\soc(\Vbar^{\ss})\bigr)^{(n)} = 0$.
If $H$ is any open subgroup of $G$, then $(V^{\circ})^H$ is
$\unif$-adically separated, and its $E$-span coincides with~$V^H$,
which is finite dimensional, since $V$ is admissible.
It follows that $(V^{\circ})^H$ is finitely generated over $\mathcal O$,
and so $V^{\circ}$ is an admissible smooth representation
of $\GL_n(E)$.  Thus $\Vbar^{\circ}$ is admissible smooth, and therefore
essentially AIG.

Suppose now that
$V^{\diamond}$ is a second lattice
in $V$ satisfying the conditions of
the corollary.
Scaling it appropriately, we may assume that
$V^{\diamond} \subset V^{\circ},$
but that the induced map
$\Vbar^{\diamond} \rightarrow \Vbar^{\circ}$
is non-zero.
Since both source and target are essentially AIG,
this map is necessarily injective by Lemma~\ref{lem:AIG basic}, and hence (since
source and target are of the same length) an isomorphism.
\end{proof}

%This proposition extends in a natural way to representations
%of a product of the groups $\GL_2(\Q_{\ell})$ for varying $\ell$.
%
%\begin{prop}
%Let $N$ be a positive integer that is coprime to $p$,
%and write $G := \prod_{\ell \mid N} \GL_2(\Q_{\ell})$.
%If $V$ is an absolutely irreducible generic integral
%admissible smooth representation
%of $G$ over $E$, 
%then $V$ admits a 
%$\unif$-adically separated $G$-invariant lattice
%$V^{\circ}$
%with the property that
%$\Vbar^{\circ} := V^{\circ}/\unif V^{\circ}$
%contains no non-generic subrepresentations.
%Furthermore
%\begin{enumerate}
%\item
%$V^{\circ}$ is unique up to scaling.
%\item The socle of 
%$V^{\circ}$
%is an absolutely irreducible generic representation.
%\item All Jordan--H\"older factors of the quotient of 
%$V^{\circ}$
%by its socle are non-generic.
%\item If we factor $V \iso \bigotimes_{\ell \mid N} V_{\ell}$
%as the tensor product of 
%generic integral
%admissible smooth absolutely irreducible representation
%of $\GL_2(\Q_{\ell})$ over $E$, 
%then there is an induced isomorphism
%$V^{\circ} \iso \bigotimes_{\ell \mid N} V_{\ell}^{\circ},$
%where $V_{\ell}^{\circ}$ is a
%$\unif$-adically separated lattice
%in $V_{\ell}$ satisfying the
%conditions of the preceding proposition.
%\end{enumerate}
%\end{prop}
%\begin{proof}
%The proofs of the existence of $V^{\circ},$
%and of claims~1, 2, and~3 proceed in an identical
%fashion to the proof of the preceding corollary.
%As for claim~4, it is clear that
%$\bigotimes_{\ell \mid N} V_{\ell}^{\circ}$
%satisfies the conditions to be $V^{\circ}$.
%\end{proof}

\section{The local Langlands correspondence in characteristic zero}
\label{sec:LL zero}
%Let $F$ be an algebraically closed field of
%characteristic $0$.
%For any prime $\ell$, 
%the local Langlands correspondence for $\GL_2(\Q_{\ell})$
%(whose existence was established by Tunnell \cite{Tu},
%building on earlier results of Jacquet-Langlands \cite{JL}
%and Langlands \cite{La2})
%establishes a certain bijection between
%the set of isomorphism classes
%of irreducible admissible smooth representations of 
%$\GL_2(\Q_{\ell})$ on $F$-vector spaces,
%and the set of isomorphism classes of $2$-dimensional
%Frobenius semisimple Weil--Deligne representations
%over $F$
%(as defined in \cite[\S 8]{De} or \cite[\S 4]{Ta}).

Let $F$ be an algebraically closed field of characteristic zero.
The local Langlands correspondence for $\GL_n(E)$~\cite{HT}
establishes a certain bijection between the set of isomorphism classes
of irreducible admissible smooth representations of $\GL_n(E)$ on
$F$-vector spaces, and the set of isomorphism classes of
$n$-dimensional Frobenius semisimple Weil--Deligne representations over
$F$ 
(as defined in \cite[\S 8]{De} or \cite[\S 4]{Ta}).

In fact there are various choices of correspondence, depending on the
desired normalization.  The so-called unitary correspondence
is uniquely determined by the requirement that the local
$L$- and $\varepsilon$-factors attached to a pair of corresponding
isomorphism classes should coincide.  On the other hand, this
correspondence depends on the choice of a square root of $\ell$ in $F$,
and (because of this) is not compatible in general with change
of coefficients (although a suitably chosen twist will be; we refer the
reader to \ref{subsec:BS} for details.)

%In this paper we will use the normalization
%described in \cite{Em6} (which is called the Tate normalization
%in \cite{De}). 

However, even if we normalize the local Langlands correspondence to be
compatible with change of coefficients, the correspondence
as usually defined is not suitable for the applications we have in mind.
In particular, the usual local Langlands correspondence fails to be
compatible with specialization.  More precisely, let $\O$ be a complete
discrete valuation ring containing $\Q_p$, with field of fractions $\CK$ 
and residue field $K$, and let $\rho: G_E \rightarrow \GL_n(\O)$ be
a continuous Galois representation.  Then the local Langlands correspondence
associates admissible representations $\pi$ and $\pibar$ to the Weil--Deligne
representations induced by $\rho \otimes_\O \CK$ and $\rho \otimes_\O K$,
but there need not be a close relationship between $\pibar$ and
the reduction of $\pi$.  (For example, $\pibar$ could be a character even
if $\pi$ is infinite-dimensional.)

We therefore work with a modification of the usual local Langlands
correspondence, which we describe fully in~\ref{subsec:BS}.  We denote this
correspondence by $\rho \mapsto \LL(\rho)$, where $\rho$ is a continuous
$n$-dimensional representation of $G_E$ over an extension $K$ of $\Q_p$.
The correspondence $\rho \mapsto \LL(\rho)$
is essentially the generic local Langlands correspondence introduced by
Breuil and Schneider in~\cite{BS}.  Unlike more standard formulations of
local Langlands, the representation $\LL(\rho)$ of $\GL_n(E)$ will in general
be reducible (in fact, it will be an essentially AIG representation of $\GL_n(E)$).
The map $\rho \mapsto \LL(\rho)$ will not be a bijection in any meaningful
sense but simply a map from isomorphism classes of $n$-dimensional representations
of $G_E$ over $K$ to indecomposable admissible representations of $\GL_n(E)$ over $K$.
The advantage of this choice is that the map $\rho \mapsto \LL(\rho)$ will be
compatible with change of coefficients (in the sense that $\LL(\rho \otimes_K K^{\prime})$
will be isomorphic to $\LL(\rho) \otimes_K K^{\prime}$ for an extension $K^{\prime}$ of~$K$,)
and also compatible with specialization (in the sense of Theorem~\ref{thm:specialization}
below.)

\subsection{Galois representations and Weil--Deligne representations}

In order to give a precise description of the map $\rho \mapsto \LL(\rho)$,
we first recall some basic facts about Frobenius-semisimple Weil--Deligne
representations.  Recall that a Weil--Deligne representation over 
a field $K$ containing $\Q_p$ is a pair $(\rho^{\prime},N)$,
where $\rho^{\prime}: W_E \rightarrow \GL_n(K)$ is a smooth representation
of $W_E$ with coefficients in $K$ and $N$ is a nilpotent endomorphism of
$K^n$ satisfying $\rho^{\prime}(w)N\rho^{\prime}(w)^{-1} = |w|N$.  The
representation $(\rho^{\prime},N)$ is called Frobenius-semisimple if
$\rho^{\prime}$ is absolutely semisimple.

We first consider absolutely irreducible representations
$\rho': W_E \rightarrow \GL_n(K)$.  Let $I_E$ be the inertia subgroup of $E$.
Then $\rho'(I_E)$ is a finite group,
and so all of its irreducible representations are defined over a finite extension $K_0$ of
${\Qbar_p}$.  After replacing $K$ with an algebraic extension we may assume
$K$ contains a subfield isomorphic to $K_0$.
Then the restriction of $\rho'$ to $I_E$ splits
as a direct sum of absolutely irreducible representations $\tau_i$ of
$I_E$ over $K_0$.  

Let $\Phi$ be a Frobenius element of $W_E$, and let
$V$ be an $I_E$-stable subspace of $K^n$ isomorphic to $\tau_1 \otimes_{K_0} K$
as an $I_E$-representation.  Then $I_E$ acts on $\Phi V$ by the conjugate
$\tau_1^{\Phi}$ of $\tau_1$.  In fact, we have:

\begin{lemma} \label{lem:clifford}
Let $r$ be the order of the orbit of $\tau_1$ under the action of
$\Phi$ on the set of isomorphism classes of
absolutely irreducible representations of $I_E$ over $K_0$.
Then we have a direct sum decomposition:
$$\rho'|_{I_E} = \bigoplus_{i=0}^{r-1} \tau_1^{\Phi^i} \otimes_{K_0} K$$
and the action of $\Phi$ on this decomposition permutes the summands.
\end{lemma}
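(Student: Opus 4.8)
The plan is to run the standard Clifford-theory argument for the normal subgroup $I_E$ of $W_E$, using crucially that $W_E/I_E$ is infinite cyclic, generated by the image of $\Phi$, together with the absolute irreducibility of $\rho'$.

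First I would record the easy structural facts. Since $\rho'$ is smooth, $\rho'(I_E)$ is finite, so $\rho'|_{I_E}$ is a finite direct sum of the absolutely irreducible $K_0$-representations $\tau_i\otimes_{K_0}K$. For $w\in W_E$ and an $I_E$-stable subspace $V$ with $\rho'|_V\cong\tau$, the subspace $\rho'(w)V$ is again $I_E$-stable (as $I_E$ is normal), and $v\mapsto\rho'(w)v$ exhibits $\rho'|_{\rho'(w)V}\cong\tau^w$; since conjugation by $I_E$ acts trivially on isomorphism classes, the resulting action of $W_E$ on the set of isomorphism classes of $I_E$-constituents of $\rho'$ (a finite set) factors through $W_E/I_E$. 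Hence the orbit of $[\tau_1]$ is $\{[\tau_1],[\tau_1^{\Phi}],\dots,[\tau_1^{\Phi^{r-1}}]\}$ with $[\tau_1^{\Phi^r}]=[\tau_1]$, and $\rho'(\Phi)$ carries the $\tau_1^{\Phi^i}$-isotypic component $M_i$ of $\rho'|_{I_E}$ isomorphically onto $M_{i+1\bmod r}$. In particular $M_0\oplus\cdots\oplus M_{r-1}$ is a nonzero $W_E$-stable subspace, so by irreducibility it equals $K^n$, and we obtain $\rho'|_{I_E}\cong\bigoplus_{i=0}^{r-1}(\tau_1^{\Phi^i}\otimes_{K_0}K)^{\oplus e}$ for some integer $e\ge 1$; the only remaining point is to prove $e=1$.

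For this --- the heart of the matter --- I would argue after extending scalars to $\overline{K}$ (both $\rho'$ and $\tau_1$ remain irreducible). Write $M_0\otimes_K\overline{K}\cong V_{\tau_1}\otimes_{\overline{K}}U$ with $I_E$ acting through $\tau_1$ on the first factor and $U\cong\overline{K}^{\,e}$. Since $\Phi^r$ stabilizes $[\tau_1]$, the operator $\rho'(\Phi^r)$ preserves $M_0\otimes\overline{K}$, and the identity $\rho'(\Phi^r)\rho'(g)=\rho'(\Phi^r g\Phi^{-r})\rho'(\Phi^r)$ for $g\in I_E$, together with $\tau_1^{\Phi^{-r}}\cong\tau_1$ and Schur's lemma for the absolutely irreducible $\tau_1$, forces $\rho'(\Phi^r)$ to act as $P\otimes S$ for some $P\in\GL(V_{\tau_1})$ and $S\in\GL(U)$. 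Choosing an eigenvector $0\ne\xi\in U$ of $S$, the subspace $V:=V_{\tau_1}\otimes\overline{K}\xi$ is stable under $I_E$ and under $\rho'(\Phi^r)$, hence under $H:=\langle I_E,\Phi^r\rangle$. Then $W:=\sum_{i=0}^{r-1}\rho'(\Phi^i)V=\bigoplus_{i=0}^{r-1}\rho'(\Phi^i)V$ (direct, since the summands lie in distinct isotypic components) is $I_E$-stable, and it is $\rho'(\Phi)$-stable because $\rho'(\Phi^r)V=V$; thus $W$ is a nonzero $W_E$-subrepresentation of $\rho'\otimes\overline{K}$ of dimension $r\dim\tau_1$. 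Absolute irreducibility of $\rho'$ forces $W=\rho'\otimes\overline{K}$, whence $r\dim\tau_1=n=re\dim\tau_1$, i.e.\ $e=1$. Combined with the structural analysis this yields the displayed decomposition together with the statement that $\Phi$ cyclically permutes the summands.

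I expect the main obstacle to be precisely the multiplicity-one statement $e=1$: in general Clifford theory allows the restriction of an irreducible representation to a normal subgroup to acquire multiplicities, and their vanishing here is exactly what uses that $W_E/I_E$ is cyclic --- equivalently, that the obstruction to extending $\tau_1$ from $I_E$ to $H$ dies because $H/I_E\cong\Z$ is free. The eigenvector-of-$S$ device is one concrete way to package this; an alternative is to note that $\tau_1$, being $H$-stable with $H/I_E$ free, extends to a representation of $H$, and then invoke Gallagher's theorem. Everything else --- normality of $I_E$, finiteness of $\rho'(I_E)$, and the reduction to a single $\Phi$-orbit --- is routine bookkeeping.
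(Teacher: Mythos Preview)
Your argument is correct and is exactly the Clifford-theory computation the paper has in mind; the paper itself offers no details, writing only ``As $I_E$ is normal in $W_E$, this is a standard result in Clifford theory.'' Your identification of the crucial point --- that multiplicity one (your $e=1$) uses $W_E/I_E\cong\Z$ so that $\tau_1$ extends to $\langle I_E,\Phi^r\rangle$, implemented via the eigenvector-of-$S$ trick --- is precisely the substance behind that one-line citation.
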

\begin{proof}
As $I_E$ is normal in $W_E$, this is a standard result in Clifford theory.
\end{proof}

In particular, the vector space $\Hom_{K[I_E]}(\tau_1 \otimes_{K_0} K,\rho'|_{I_E})$
is one-dimensional.  If we fix an isomorphism $\tau_1 \iso \tau_1^{\Phi^r}$,
then we get an endomorphism $\Psi$ of this vector space via:
$$
\Hom_{K[I_E]}(\tau_1 \otimes_{K_0} K, \rho') \stackrel{\Phi^r}{\rightarrow}
\Hom_{K[I_E]}(\tau_1^{\Phi^r} \otimes_{K_0} K, \rho') \iso
\Hom_{K[I_E]}(\tau_1 \otimes_{K_0} K, \rho').
$$
The action of $\Psi$ is given by a scalar $\lambda$ in $K^{\times}$.

\begin{lemma} \label{lem:lambda}
For any $\lambda \in K^{\times}$
there is a unique absolutely irreducible representation $\rho'$
over $K$ {\em (}up to isomorphism{\em )}
such that $\rho'|_{I_E}$ contains $\tau_1 \otimes_{K_0} K$
and $\Psi$ acts on $\Hom_{K[I_E]}(\tau_1 \otimes_{K_0} K, \rho')$
via $\lambda$.
\end{lemma}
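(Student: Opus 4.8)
The plan is to exhibit the desired representation $\rho'$ explicitly as an induced representation from the Weil group of the unramified extension $E_r/E$ of degree $r$, and then to prove uniqueness via Clifford theory and Frobenius reciprocity. First I would fix the $I_E$-stable subspace $V \cong \tau_1 \otimes_{K_0} K$ inside some ambient space, together with a chosen isomorphism $\tau_1 \iso \tau_1^{\Phi^r}$ as in the discussion preceding the statement. Let $W_r := W_{E_r} \subset W_E$ be the open subgroup of index $r$ (so $W_r/I_E$ is generated by $\Phi^r$). The chosen isomorphism $\tau_1 \iso \tau_1^{\Phi^r}$ lets me extend the $I_E$-action on $V$ to an action of $W_r$ by declaring that $\Phi^r$ acts via this isomorphism scaled by $\lambda$; call the resulting $W_r$-representation $V_\lambda$. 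I would then set $\rho' := \Ind_{W_r}^{W_E} V_\lambda$, a representation of dimension $r \cdot \dim \tau_1 = n$.

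Next I would check the three required properties. For absolute irreducibility: by Mackey's criterion, $\Ind_{W_r}^{W_E} V_\lambda$ is irreducible iff for each nontrivial coset representative $\Phi^i$ ($1 \le i \le r-1$) the representations $V_\lambda$ and ${}^{\Phi^i}V_\lambda$ have no common $I_E$-subrepresentation; but their restrictions to $I_E$ are $\tau_1$ and $\tau_1^{\Phi^i} \otimes_{K_0} K$ respectively, which are non-isomorphic because $r$ is exactly the order of the $\Phi$-orbit of $\tau_1$ — so irreducibility holds, and the same computation over $\overline{K}$ gives absolute irreducibility. That $\rho'|_{I_E}$ contains $\tau_1 \otimes_{K_0} K$ is immediate from Lemma~\ref{lem:clifford} applied to $\rho'$, or directly from the Mackey decomposition of the restriction of the induced representation. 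Finally, by Frobenius reciprocity $\Hom_{K[I_E]}(\tau_1 \otimes_{K_0} K, \rho'|_{I_E})$ is one-dimensional and the operator $\Psi$ — which is $\Phi^r$ composed with the fixed identification — acts on the canonical generator (the inclusion $V \hookrightarrow \rho'$) precisely by the scalar $\lambda$ by construction of $V_\lambda$.

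For uniqueness, suppose $\rho'$ and $\rho''$ are two absolutely irreducible representations over $K$ with $\rho'|_{I_E}, \rho''|_{I_E} \supset \tau_1 \otimes_{K_0} K$ and with $\Psi$ acting by $\lambda$ on both. By Lemma~\ref{lem:clifford}, $\rho'|_{I_E} \cong \rho''|_{I_E} \cong \bigoplus_{i=0}^{r-1} \tau_1^{\Phi^i} \otimes_{K_0} K$, so a choice of $I_E$-isomorphism between the $\tau_1$-isotypic lines extends (using that the whole restriction is generated under $\Phi$ from this line, and the summands are permuted freely) to an $I_E$-isomorphism $f: \rho' \iso \rho''$. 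Then $f$ conjugates the $\Phi$-action on $\rho'$ to an action on $\rho''$ commuting with $I_E$; by Schur's lemma on each isotypic piece, the discrepancy between this and the given $\Phi$-action on $\rho''$ is recorded by a single scalar, and that scalar is exactly the ratio of the two values of $\Psi$, which is $1$. Hence $f$ is $W_E$-equivariant and $\rho' \cong \rho''$.

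The main obstacle I expect is bookkeeping rather than a deep point: carefully matching the abstract operator $\Psi$ (defined through a specific chain of identifications $\Hom(\tau_1,\rho') \xrightarrow{\Phi^r} \Hom(\tau_1^{\Phi^r},\rho') \iso \Hom(\tau_1,\rho')$) with the explicit $\Phi^r$-action on $V_\lambda$, keeping track of the chosen isomorphism $\tau_1 \iso \tau_1^{\Phi^r}$ consistently on both the existence and uniqueness sides. One must also be slightly careful that although $\rho'|_{I_E}$ may not be defined over $K_0$, everything needed — the orbit structure, Schur's lemma, Mackey's criterion — is insensitive to enlarging $K$, so the field-of-definition subtleties from the paragraph preceding the lemma do not actually obstruct the argument; I would remark on this explicitly rather than belabor it.
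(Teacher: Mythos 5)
Your proposal is correct and follows essentially the same route as the paper: both construct $\rho'$ as the induction from $W_{E_r}$ (the paper's $W_{E'}$) of the extension of $\tau_1\otimes_{K_0}K$ on which $\Phi^r$ acts through the fixed isomorphism $\tau_1\iso\tau_1^{\Phi^r}$ scaled by $\lambda$, and both deduce uniqueness from the Clifford decomposition of $\rho'|_{I_E}$ together with Frobenius reciprocity. You spell out the Mackey irreducibility check and the Schur-lemma bookkeeping that the paper leaves implicit, but the underlying argument is the same.
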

\begin{proof}
If $r = 1$, then the restriction of $\rho'$ to $I_E$ is
given by $\tau_1 \otimes_{K_0} K$, and so to determine $\rho'$ it suffices to
give an action of $\Phi$ on the representation space of $\tau_1$, compatible with
the action of $I_E$.  This amounts to giving an isomorphism
$\tau_1 \otimes_{K_0} K \iso (\tau_1 \otimes_{K_0} K)^{\Phi}$.  As we have already fixed 
an isomorphism $\tau_1 \iso \tau_1^{\Phi}$, such an isomorphism is determined by
$\lambda$.

If $r > 1$, let $E'$ be the unramified
extension of $E$ of degree $r$.  The restriction of $\rho'$ to $W_{E'}$ breaks
up as a sum of irreducible representations $\rho'_0,\dots,\rho'_{r-1}$ such
that the restriction of $\rho'_i$ to $I_E$ is isomorphic to $\tau_1^{\Phi^i}$.
Thus $\rho'_i$ is determined by $\lambda$ and $\tau_1$, and $\rho'$ is isomorphic
to $\Ind_{W_{E'}}^{W_E} \rho'_0$ by Frobenius reciprocity.
\end{proof}

\begin{lemma}
Let $K$ be a field containing $\Q_p$, and let $\rho'$ be an absolutely
irreducible representation of $W_E$ over $K$.  Then there exists an unramified
character $\chi: W_E \rightarrow \overline{K}^{\times}$ such that
the twist $\rho' \otimes \chi$ is defined over ${\Qbar_p}$.
\end{lemma}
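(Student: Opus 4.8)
The plan is to lean on the classification of absolutely irreducible representations of $W_E$ furnished by Lemmas~\ref{lem:clifford} and~\ref{lem:lambda}: such a representation is pinned down, up to isomorphism, by an absolutely irreducible representation $\tau_1$ of $I_E$ together with a scalar $\lambda$, and the key observation will be that an unramified twist leaves $\tau_1$ untouched while scaling $\lambda$ by an arbitrary $r$-th power, where $r$ is the size of the relevant $\Phi$-orbit. Since every element of an algebraically closed field is an $r$-th power, one such twist must bring $\lambda$ into $\overline{\Q}_p^{\times}$ (indeed, to $1$), and then Lemma~\ref{lem:lambda}, applied over $\overline{\Q}_p$, produces the required model.

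First I would reduce to the case $K = \overline{K}$, which is harmless as the conclusion concerns $\rho'$ only up to extension of the coefficient field. Because $\rho'$ is smooth, $\rho'(I_E)$ is finite, so every irreducible representation of $I_E$ appearing in $\rho'|_{I_E}$ is defined over a finite extension $K_0$ of $\Q_p$; I fix such a $K_0$ inside the copy of $\overline{\Q}_p$ contained in $\overline{K}$, so that $K_0 \subseteq \overline{\Q}_p$ and $K_0 \subseteq K$ simultaneously. Lemma~\ref{lem:clifford} then supplies an absolutely irreducible $\tau_1$ of $I_E$ over $K_0$, with $\Phi$-orbit of some size $r$, such that
$$\rho'|_{I_E} \iso \bigoplus_{i=0}^{r-1} \tau_1^{\Phi^i} \otimes_{K_0} K .$$
Fixing an isomorphism $\tau_1 \iso \tau_1^{\Phi^r}$ over $K_0$, Lemma~\ref{lem:lambda} attaches to $\rho'$ a scalar $\lambda \in K^{\times}$, and tells us the pair $(\tau_1,\lambda)$ determines $\rho'$ up to isomorphism.

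Next I would compute the effect of an unramified character $\chi$ of $W_E$ valued in $K^{\times}$, which is exactly the datum of $c := \chi(\Phi) \in K^{\times}$. As $\chi$ is trivial on $I_E$, we have $(\rho'\otimes\chi)|_{I_E} = \rho'|_{I_E}$, so $\tau_1$ and $r$ are unchanged; and since $(\rho'\otimes\chi)(\Phi^r) = c^r\rho'(\Phi^r)$, the endomorphism $\Psi$, hence its scalar, is multiplied by $c^r$. Thus $\rho'\otimes\chi$, which is again absolutely irreducible, has invariant $(\tau_1, c^r\lambda)$. As $K$ is algebraically closed I may choose $c$ with $c^r = \lambda^{-1}$, making this invariant $(\tau_1,1)$. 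Finally I apply Lemma~\ref{lem:lambda} over $\overline{\Q}_p \supseteq K_0$ to obtain an absolutely irreducible $\rho'_0$ of $W_E$ over $\overline{\Q}_p$ whose restriction to $I_E$ contains $\tau_1$ and on which $\Psi$ acts by $1$; its base change $\rho'_0 \otimes_{\overline{\Q}_p} K$ is absolutely irreducible with invariant $(\tau_1,1)$, so the uniqueness clause of Lemma~\ref{lem:lambda} forces $\rho'\otimes\chi \iso \rho'_0\otimes_{\overline{\Q}_p} K$, exhibiting $\rho'\otimes\chi$ as defined over $\overline{\Q}_p$.

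The argument is essentially bookkeeping once the two lemmas are available; the one place needing care — and the closest thing to an obstacle — is getting the transformation law right, namely that $\lambda$ is multiplied by $c^r$ and not by $c$, which is precisely where the definition of $\Psi$ via $\Phi^r$ (rather than $\Phi$) enters. Correspondingly, one must make sure the invariants attached to $\rho'\otimes\chi$ and to $\rho'_0\otimes_{\overline{\Q}_p}K$ are computed with respect to the same $\tau_1$ and the same chosen isomorphism $\tau_1 \iso \tau_1^{\Phi^r}$, which is why it matters that $\tau_1$ and that isomorphism were fixed over the common subfield $K_0 \subseteq \overline{\Q}_p \cap K$.
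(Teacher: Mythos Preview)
Your proof is correct and follows essentially the same approach as the paper's: both arguments note that $\tau_1$ is defined over a finite extension of $\Q_p$, compute that an unramified twist by $\chi$ replaces $\lambda$ by $\chi(\Phi)^r\lambda$, and conclude that a suitable twist brings $\lambda$ into $\Qbar_p^{\times}$. You have simply written out in detail what the paper compresses into two sentences, including the explicit invocation of the uniqueness clause of Lemma~\ref{lem:lambda} over $\Qbar_p$ and the care about the common subfield $K_0$.
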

\begin{proof}
The representation $\tau_1$ is defined over a finite extension of $\Q_p$,
so it suffices to show that after a twist we can take the scalar
$\lambda$ to be in ${\Qbar_p}$.  Twisting by an unramified $\chi$ changes
$\lambda$ to $\chi(\Phi)^r\lambda$, so this is clear.
\end{proof}

\begin{df} \label{df:special}
Let $\rho^{\prime}$ be an absolutely irreducible smooth representation
$W_E \rightarrow \GL_n(K)$, and let $d$ be a positive integer.  The
special representation $\Sp_{\rho^{\prime},d}$ is the pair
$$\Sp_{\rho^{\prime},d} = 
(V_0 \oplus \dots \oplus V_{d-1}, N),$$
where $W_E$ acts on $V_i$ by $\abs^i\rho^{\prime}$ and
$N$ maps $V_i$ isomorphically onto $V_{i+1}$ for $0 \leq i \leq d-2$.
\end{df}

The representation $\Sp_{\rho^{\prime},d}$ is well-defined up to isomorphism,
and is an absolutely indecomposable Weil--Deligne representation.  If $K$ is
algebraically closed, then every indecomposable Frobenius-semisimple
Weil--Deligne representation
has the form $\Sp_{\rho^{\prime},d}$ for a unique absolutely irreducible
representation $\rho^{\prime}$ of $W_E$ over $K$.  Combining this with
the previous lemma, we find:

\begin{lemma} \label{lem:twist2}
Let $K$ be a field containing $\Q_p$, and let
$(\rho^{\prime},N)$ be an indecomposable Frobenius-semisimple Weil--Deligne
representation over $K$.  Then there exists a character $\chi: W_E \rightarrow \overline{K}^{\times}$
such that the twist $(\rho^{\prime} \otimes \chi,N)$ is defined over ${\Qbar_p}$.
\end{lemma}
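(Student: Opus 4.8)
The plan is to reduce, via the structure theory just recalled, to the situation already handled by the previous lemma, namely that of an absolutely irreducible representation of $W_E$. First I would pass to $\overline K$: whether a Weil--Deligne representation over $\overline K$ is defined over $\Qbar_p$ depends only on its isomorphism class over $\overline K$, so it is harmless to decompose $(\rho^{\prime},N)\otimes_K\overline K$ into its absolutely indecomposable Frobenius-semisimple direct summands. By the structure statement recorded above, each such summand has the shape $\Sp_{\rho_0^{\prime},d}$ for a uniquely determined absolutely irreducible representation $\rho_0^{\prime}$ of $W_E$ over $\overline K$ and an integer $d\geq 1$. It therefore suffices to produce, for each summand, a smooth character of $W_E$ whose twist makes that summand defined over $\Qbar_p$; when $(\rho^{\prime},N)$ is absolutely indecomposable there is a single summand, and a single character to find.

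Second, for a fixed $\Sp_{\rho_0^{\prime},d}$ I would observe that twisting by a smooth character $\chi\colon W_E\to\overline K^{\times}$ is compatible with the construction of $\Sp$: the nilpotent operator $N$ is unchanged, since the scalar $\chi(w)$ cancels in the relation $\rho^{\prime}(w)N\rho^{\prime}(w)^{-1}=|w|N$, while the underlying Weil-group representation is twisted graded piece by graded piece, so that $\Sp_{\rho_0^{\prime},d}\otimes\chi\cong\Sp_{\rho_0^{\prime}\otimes\chi,\,d}$. Hence $\Sp_{\rho_0^{\prime},d}\otimes\chi$ is defined over $\Qbar_p$ as soon as $\rho_0^{\prime}\otimes\chi$ is, and since $\rho_0^{\prime}$ is absolutely irreducible over $\overline K$ the previous lemma supplies an (even unramified) character $\chi$ with $\rho_0^{\prime}\otimes\chi$ defined over $\Qbar_p$. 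This completes the argument when $(\rho^{\prime},N)$ is absolutely indecomposable, which is the form in which the lemma is used.

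The delicate point, and the one I expect to carry the real weight of the proof, is the descent needed if one insists on a \emph{single} character $\chi$ twisting an object that is indecomposable over $K$ but splits over $\overline K$. In that case the summands $\Sp_{\sigma_j,d}$ produced over $\overline K$ constitute a single $\Gal(\overline K/K)$-orbit of absolutely irreducibles (the idempotent projectors onto the isotypic blocks are permuted transitively by $\Gal(\overline K/K)$, and any Galois-stable one descends to $K$), so the twist must be arranged to work for every conjugate at once. Concretely, one must track how the Frobenius scalar $\lambda$ of Lemma~\ref{lem:lambda} attached to $\sigma_0$ transforms under $\Gal(\overline K/K)$ --- exploiting that this group fixes $\Q_p$, so that it preserves the condition ``$\lambda\in\Qbar_p^{\times}$'' --- and then solve for the unramified twist so that the resulting scalar is simultaneously $\Qbar_p$-rational for all of $\sigma_0,\dots,\sigma_{s-1}$. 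It is this bookkeeping with the Frobenius scalars under the Galois action, rather than the structure-theoretic reduction or the twist-compatibility, that is the substantive step.
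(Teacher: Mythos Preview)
Your first two paragraphs are precisely the paper's argument: the paper's entire proof is the phrase ``Combining this with the previous lemma, we find,'' where ``this'' is the fact that over an algebraically closed field every indecomposable Frobenius-semisimple Weil--Deligne representation has the form $\Sp_{\rho_0',d}$. Your observation that $\Sp_{\rho_0',d}\otimes\chi\cong\Sp_{\rho_0'\otimes\chi,\,d}$, followed by the appeal to the previous lemma for $\rho_0'$, is exactly what is intended, and as you note this handles the absolutely indecomposable case --- which is the only case in which the lemma is ever invoked.

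You are right to notice that the statement says ``indecomposable'' rather than ``absolutely indecomposable,'' but your third paragraph misdiagnoses the resolution. There is no Galois bookkeeping to be done for representations that are indecomposable over $K$ but split over $\overline K$, because the assertion is simply false in that generality. Take $K=\Q_p(t)$ with $t$ transcendental, $N=0$, and $\rho'$ the unramified two-dimensional representation with $\rho'(\Phi)$ having characteristic polynomial $x^2-tx-1$; this is Frobenius-semisimple and irreducible (hence indecomposable) over $K$. Over $\overline K$ the eigenvalues $\lambda_1,\lambda_2$ of $\rho'(\Phi)$ satisfy $\lambda_1\lambda_2=-1$ and $\lambda_1+\lambda_2=t$. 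For any character $\chi$, descent of $(\rho'\otimes\chi,0)$ to $\Qbar_p$ forces both $\chi(\Phi)\lambda_i\in\Qbar_p$, so $\chi(\Phi)^2\in\Qbar_p$ from the product and $\chi(\Phi)\,t\in\Qbar_p$ from the sum, whence $t^2\in\Qbar_p$ --- impossible. So the ``substantive step'' you anticipate cannot be carried out; the lemma should be read (and is only used) with ``indecomposable'' meaning a representation of the form $\Sp_{\rho_0',d}$ over $\overline K$, and with that reading your first two paragraphs are the complete proof.
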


In those situations in which we will apply the local Langlands
correspondence, we will be beginning not with Weil--Deligne representations,
but with Galois representations.  Thus we recall the recipe of Deligne
for associating a Weil--Deligne representation to a continuous
Galois representation,
in a slightly broader context than that in which it is usually considered.

Let $A$ be a complete Noetherian local domain of residue characteristic
$p$ different from $\ell$, maximal ideal $\mathfrak m$, and field of fractions
$\CK$ of characteristic zero.  Let $R$ be any subring of $\CK$ containing $A$ and
$\frac{1}{p}$.  (In most applications, $R$ will equal either~$\CK$,
or else a complete discrete valuation ring $\O$
containing $A$ and contained in $\CK$.)

For any $n \geq 0,$ we say that
a representation $\rho: G_E \rightarrow \GL_n(R)$
is continuous if we can find a finitely generated $A$-submodule $M$
of $R^n$ that is invariant under $\rho(G_E)$,
spans $R^n$ over $R$, 
and such that the induced $G_E$-action on $M$
is $\mathfrak m$-adically continuous.  (Note that if $R=\CK$, and $\CK$
is a finite extension of $\Q_p$, then this coincides with the usual
notion of continuity of a $G_E$-representation.)

As in \cite[(4.2)]{Ta}, we fix a non-zero homomorphism
$t_p:I_E \rightarrow \Q_p$.  (When comparing the present
discussion with that of \cite{Ta}, note that the roles of $\ell$
and $p$ are reversed.)
% in that paper.)
This homomorphism is uniquely determined up to scaling by an
element of $\Q_p^{\times}$.  
The following result then extends a theorem of Deligne
\cite[\S 8]{De}, \cite[Thm.~(4.2.1)]{Ta} (which treats the case when
the coefficient field is a finite extension of $\Q_p$).

\begin{prop}
\label{prop:WD}
A continuous representation
$\rho: G_E \rightarrow \GL_n(R)$ 
uniquely determines the following data:
\begin{enumerate}
\item
a representation
$\rho': W_E \rightarrow \GL_n(R)$ 
that is continuous
when the target is equipped with its discrete topology;
\item
a nilpotent matrix $N \in \mathrm{M}_n(R)$;
\smallskip
\newline
\smallskip
subject to the following condition:
\item
$\rho(\Phi^r \sigma) = \rho'(\Phi^r \sigma)\exp(t_p(\sigma) N)$
for all $\sigma \in I_E$ and $r \in \Z$.
\end{enumerate}

Furthermore, as a Weil--Deligne representation,
the pair $(\rho',N)$ is independent, up to isomorphism,
of the choice of $t_{p}$ and $\Phi$.
\end{prop}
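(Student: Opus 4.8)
The plan is to follow Deligne's classical construction, which works verbatim over the base ring $R$ once one isolates where finiteness and continuity are used. First I would reduce to understanding the restriction $\rho|_{I_E}$. Since $\rho$ is continuous in the sense defined just above the statement, there is a finitely generated $A$-submodule $M \subseteq R^n$, invariant under $\rho(G_E)$ and spanning $R^n$, on which $G_E$ acts $\mathfrak m$-adically continuously. The image of the wild inertia subgroup $P_E$ in $\mathrm{GL}(M/\mathfrak m M)$ is finite (it is a pro-$\ell$ group acting on a finite group, $\ell \neq p$), and more care shows, using the $\mathfrak m$-adic continuity and the fact that $P_E$ is pro-$\ell$ while $\mathrm{GL}_n(1 + \mathfrak m \mathrm{M}_n(A))$ is pro-$p$, that $\rho(P_E)$ is actually finite. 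Hence $\rho$ is trivial on an open subgroup of $P_E$, and the tame quotient of the corresponding open subgroup of $I_E$ is, away from its prime-to-$p$ part, topologically generated by an element on which $t_p$ is an isomorphism onto $\Z_p$ (composed with scaling).

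The key local computation is then exactly as in \cite[\S8]{De}: choosing a lift $\Phi$ of Frobenius and working on a subgroup $I_1 \subseteq I_E$ on which $\rho$ is ``unipotent enough'', one shows that for $\sigma \in I_1$ the matrix $\rho(\sigma)$ is unipotent, so $N := \log \rho(\sigma_0)/t_p(\sigma_0)$ is a well-defined nilpotent matrix over $R$ (here one uses that $\tfrac1p \in R$, so that $\log$ and $\exp$ of a (topologically) nilpotent or unipotent matrix make sense and converge $\mathfrak m$-adically — the relevant denominators are $p$-adic units in $R$). The relation $\rho(w)\rho(\sigma)\rho(w)^{-1} = \rho(w\sigma w^{-1})$ together with $t_p(w\sigma w^{-1}) = |w| t_p(\sigma)$ forces $\rho(w) N \rho(w)^{-1} = |w| N$, which is the Weil--Deligne compatibility. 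One then \emph{defines} $\rho'(\Phi^r\sigma) := \rho(\Phi^r\sigma)\exp(-t_p(\sigma)N)$ for $\sigma \in I_E$; the compatibility relation for $N$ shows this is a homomorphism $W_E \to \mathrm{GL}_n(R)$, and it is trivial on the open subgroup $I_1$, hence smooth, i.e.\ continuous for the discrete topology. This establishes existence of $(\rho', N)$ satisfying (3), and uniqueness is immediate: condition (3) with $r = 0$ and $\sigma$ ranging over a subgroup on which $t_p$ is nonzero pins down $N$ (differentiate, or compare on $\sigma$ and $\sigma^k$), and then $\rho'$ is forced.

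Finally, independence of the isomorphism class of $(\rho',N)$ from the choices of $t_p$ and $\Phi$: replacing $t_p$ by $c\, t_p$ with $c \in \Q_p^\times$ replaces $N$ by $c^{-1} N$, and since $N$ is nilpotent, conjugation by a suitable element of the centralizer torus (or just by $\mathrm{diag}$ applied to a Jordan basis — concretely $N \mapsto c^{-1}N$ is realized by conjugation when $N$ is nilpotent, as one checks on Jordan blocks, possibly after extending to $\mathrm{GL}_n$ of an extension but in fact it can be done over $R$ since $c \in \Q_p^\times \subseteq R^\times$) one gets an isomorphic Weil--Deligne representation; similarly replacing $\Phi$ by $\Phi' = \Phi\tau$ with $\tau \in I_E$ changes $\rho'$ by an inner twist computed from $\exp(t_p(\tau)N)$, giving an isomorphic pair. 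I expect the main obstacle to be the first step — proving $\rho(P_E)$, or at least $\rho(I_1)$, is well-behaved enough (finite image on wild inertia, unipotent image on a suitable open subgroup of tame inertia) purely from the $\mathfrak m$-adic continuity hypothesis over the possibly-non-discretely-valued ring $R$; once the image of inertia is controlled, the rest is Deligne's argument with $\Q_p$-coefficients replaced by $R$, with the only new point being the convergence of $\log$/$\exp$, which is harmless because $\tfrac1p \in R$ and $M$ is $\mathfrak m$-adically complete.
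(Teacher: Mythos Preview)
Your outline is correct and close in spirit to the paper's proof, but the key step is organized differently. You propose (following Deligne's classical argument) to first establish that $\rho(\sigma)$ is unipotent on a small enough open subgroup $I_1 \subseteq I_E$ --- via finiteness of $\rho(P_E)$ (pro-$\ell$ into pro-$p$) followed by an eigenvalue argument over the fraction field $\mathcal K$ --- so that $\log\rho(\sigma_0)$ is a polynomial and $N$ is visibly nilpotent. The paper instead bypasses unipotency entirely: it lets $H_2 \subseteq I_E$ be the open subgroup acting trivially on $M/\mathfrak m^2 M$, notes that the successive kernels $\ker\bigl(\Aut(M/\mathfrak m^{i+1}M) \to \Aut(M/\mathfrak m^i M)\bigr)$ are abelian $p$-groups, hence $\rho|_{H_2}$ factors through $t_p$, and defines $N = t_p(\sigma)^{-1}\log\rho(\sigma)$ using only $\mathfrak m$-adic convergence of the logarithm series (valid since $\rho(\sigma) \equiv 1 \pmod{\mathfrak m^2}$). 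Nilpotency of $N$ is then deduced \emph{a posteriori} from the relation $\rho(\tau) N \rho(\tau)^{-1} = |\tau| N$. Both routes ultimately invoke an eigenvalue argument over $\overline{\mathcal K}$ --- yours for $\rho(\sigma)$, the paper's for $N$ --- but the paper's version avoids having to shrink the subgroup a second time to force unipotency. One caution: your sketch that $(\rho', N) \cong (\rho', c^{-1}N)$ via Jordan-block conjugation is incomplete, since the conjugating matrix must lie in the centralizer of $\rho'(W_E)$, not merely in $\GL_n(R)$; the paper does not spell out this independence clause either, so you are no worse off, but be aware that the honest argument is not just ``Jordan form''.
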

\begin{proof}
Let $(\rho'_1,N_1)$ and $(\rho'_2,N_2)$ be two Weil--Deligne representations
satisfying the condition of the proposition.  Then there is an open subgroup
of $I_E$ on which both $\rho'_1$ and $\rho'_2$ are trivial; we can thus find an
element $\sigma$ of $I_E$ for which $\rho'_1(\sigma)$ and $\rho'_2(\sigma)$ are
the identity but $t_{p}(\sigma)$ is nonzero.  Then 
$N_1 =  \frac{1}{t_p(\sigma)}\log \rho(\sigma) = N_2$.  The identity
$$\rho(\Phi^r \sigma) = \rho'_i(\Phi^r \sigma)\exp(t_p(\sigma) N_i)$$
then forces $\rho'_1 = \rho'_2$.

It thus suffices to construct a $(\rho',N)$ as above.  Choose a finitely-generated
$A$-submodule $M$ of $R^n$
%$K^n$ (resp. $\O^n$)
that is preserved by $\rho$ and spans $R^n$ over $R$.  Then $G_E$ acts via $\rho$
on $M/{\mathfrak m}^{i+1} M$ for all $i$, and these $A$-modules
are discrete with respect to the ${\mathfrak m}$-adic topology.  In particular for each $i$
the subgroup $H_i$ of $I_E$ that acts trivially on $M/{\mathfrak m}^i M$ is a compact open
subgroup of $I_E$.

The group of automorphisms of $M/{\mathfrak m}^{i+1} M$ that reduce to the identity in
$M/{\mathfrak m}^i M$ is an abelian $p$-group for all $i \geq 2$.  Thus the action
of $H_2$ on $M$ factors through the map $t_p: I_E \rightarrow \Q_p$.  Let $\sigma$
be an element of $H_2$; the action of $\sigma$ on $M$ yields an element of $\alpha$ 
of $\End(M)$
that is congruent to the identity modulo ${\mathfrak m}^2$.  The power series $\log(\alpha)$
thus converges in the ${\mathfrak m}$-adic topology on $\End(M)$; set $N = \frac{1}{t_p(\sigma)} \log(\alpha)$.
Then any $\tau \in H_2$ acts on $M$ via $\exp(t_p(\tau) N)$.  It follows that for all
$\tau \in G_E$, $\rho(\tau) N \rho(\tau)^{-1} = |\tau| N$.  In particular $N$ must be
nilpotent.
 
We can then set $\rho'(\Phi^r\sigma) = \rho(\Phi^r\sigma)\exp(t_p(\sigma) N)^{-1}$ for all
$\sigma \in I_E$ and $r \in \Z$; this gives a well-defined $\rho'$ that is
trivial on the compact open subgroup $H_2$ of $W_E$.
\end{proof}

Let $\O$ be a discrete valuation ring containing $A$ and contained in $\CK$, with
residue field $K$
of characteristic zero
and uniformizer $\unif$.
We will be interested in the reduction mod $\unif$ of both Galois representations
and Weil--Deligne representations over~$\O$.  One has:

\begin{lemma} \label{lem:irreducible reduction}
Let $\rho^{\prime}: W_E \rightarrow \GL_n(\O)$ be a representation of $W_E$
over $\O$ such that $\rho^{\prime} \otimes_{\O} \CK$ is absolutely irreducible.
Then $\rhobar^{\prime} := \rho^{\prime} \otimes_{\O} K$ is also absolutely irreducible.
\end{lemma}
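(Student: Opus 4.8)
The plan is to exploit the smoothness of $\rho^{\prime}$: the image $\Gamma_0 := \rho^{\prime}(I_E)$ is a finite group, and since $K$ has characteristic $0$ its order $|\Gamma_0|$ is a unit in $\O$. I would first reduce to a convenient situation: replacing $\O$ by a DVR finite over it (with larger residue and fraction fields) changes neither the hypothesis nor the conclusion, since absolute irreducibility of $\rho^{\prime}\otimes_{\O}\CK$, resp.\ of $\rhobar^{\prime}$, only concerns the representations after passage to $\overline{\CK}$, resp.\ $\overline{K}$. So I may assume that $\CK$ and $K$ are both splitting fields for $\Gamma_0$ — e.g.\ arrange $\CK\supseteq\Q(\mu_{|\Gamma_0|})$, whence $K$ automatically contains $\mu_{|\Gamma_0|}$ — and then, since $|\Gamma_0|$ is invertible in $\O$, the group ring $\O[\Gamma_0]$ is a separable maximal $\O$-order, and in fact $\O[\Gamma_0]\cong\prod_{j}M_{d_j}(\O)$.

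The core of the argument is a rigidity statement for reduction mod $\unif$ of $\Gamma_0$-lattices. If $\Lambda,\Lambda^{\prime}$ are $\O[\Gamma_0]$-modules that are finite free over $\O$, then they are projective over $\O[\Gamma_0]$ (the order being invertible), so $\Hom_{\O[\Gamma_0]}(\Lambda,\Lambda^{\prime})$ is an $\O$-direct summand of some $(\Lambda^{\prime})^{m}$; hence it is finite free over $\O$ and its formation commutes with $-\otimes_{\O}\CK$ and $-\otimes_{\O}K$. Therefore
\[ \dim_{K}\Hom_{K[\Gamma_0]}(\Lambda\otimes_{\O}K,\Lambda^{\prime}\otimes_{\O}K)=\dim_{\CK}\Hom_{\CK[\Gamma_0]}(\Lambda\otimes_{\O}\CK,\Lambda^{\prime}\otimes_{\O}\CK). \]
In particular, reduction mod $\unif$ carries pairwise non-isomorphic absolutely irreducible $\Gamma_0$-lattices to pairwise non-isomorphic absolutely irreducible representations.

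Now I would apply Clifford theory to $\rho^{\prime}\otimes_{\O}\CK$. By Lemma~\ref{lem:clifford}, absolute irreducibility forces $\rho^{\prime}|_{I_E}\otimes_{\O}\CK\cong\bigoplus_{i=0}^{r-1}\tau^{\Phi^{i}}$ with $\tau$ absolutely irreducible, $r$ the size of the $\Phi$-orbit of $\tau$ (so the $\tau^{\Phi^{i}}$, $0\leq i<r$, are pairwise non-isomorphic), and $\rho^{\prime}(\Phi)$ cyclically permuting the corresponding isotypic components. Via the decomposition $\O[\Gamma_0]\cong\prod_j M_{d_j}(\O)$ this splitting is already defined over $\O$: $\rho^{\prime}|_{I_E}\cong\bigoplus_{i=0}^{r-1}\Lambda_i$ with $\Lambda_i\otimes_{\O}\CK\cong\tau^{\Phi^{i}}$. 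Reducing mod $\unif$ and invoking the rigidity statement, $\rhobar^{\prime}|_{I_E}\cong\bigoplus_{i=0}^{r-1}\overline{\Lambda_i}$ is a multiplicity-free sum of $r$ absolutely irreducible, pairwise non-isomorphic representations of $I_E$, and $\rhobar^{\prime}(\Phi)$ still cyclically permutes them (the permutation of blocks of $\O[\Gamma_0]$ induced by conjugation by $\rho^{\prime}(\Phi)$ is unchanged by base change). Extending scalars to $\overline{K}$, any nonzero $W_E$-subrepresentation of $\rhobar^{\prime}\otimes_{K}\overline{K}$ is $I_E$-stable, hence a partial direct sum of these $r$ distinct simple summands; being $\Phi$-stable it must be the whole sum. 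So $\rhobar^{\prime}\otimes_{K}\overline{K}$ is irreducible, i.e.\ $\rhobar^{\prime}$ is absolutely irreducible.

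The step I expect to require the most care is the rigidity statement of the second paragraph — the assertion that passing to the special fibre cannot make the representation ``more reducible''. This fails in general (it is precisely the phenomenon of non-semisimple reduction in the $\ell$-adic theory); the point here is that everything relevant factors through a finite group of order invertible in $\O$, so the lattices involved are $\O[\Gamma_0]$-projective and the governing $\Hom$-modules are free $\O$-modules of locally constant rank. Getting the bookkeeping right — that the $\O$-splitting of $\rho^{\prime}|_{I_E}$ really does reduce to the isotypic decomposition of $\rhobar^{\prime}|_{I_E}$, with the same $\Phi$-action — is the only genuinely technical part.
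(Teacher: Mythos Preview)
Your proof is correct and follows essentially the same route as the paper: restrict to $I_E$, apply Clifford theory (Lemma~\ref{lem:clifford}) to obtain a multiplicity-free sum of absolutely irreducible $I_E$-representations cyclically permuted by $\Phi$, and then argue that this picture survives reduction mod $\unif$.

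The only noteworthy difference is in how the survival step is justified. The paper observes that the irreducible $I_E$-constituents $\tau^{\Phi^i}$ are defined over $\Qbar_p$; since $K$ has characteristic zero, $\Qbar_p$ embeds in $\O$, so ``reduction mod $\unif$'' on these constituents is literally just changing the scalar extension from $\CK$ to $K$ and there is nothing to check. Your argument via projectivity of $\O[\Gamma_0]$-lattices (using that $|\Gamma_0|\in\O^\times$) and the resulting freeness and base-change compatibility of $\Hom$-modules reaches the same conclusion by a more general mechanism. The paper's shortcut is slicker here; your argument would also work in settings where no common subfield like $\Qbar_p$ is available, so long as the group order remains invertible.
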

\begin{proof}
By Lemma~\ref{lem:clifford}, over a finite extension of $\CK$, the
restriction of $\rho^{\prime}$ to $I_E$ splits as a direct sum
of absolutely irreducible representations $\rho^{\prime}_i$
of $I_E$, each of which factors through a finite quotient if $I_E$
and is defined over ${\Qbar_p}$.
The representations $\rho^{\prime}_i$ are distinct and cyclically
permuted by conjugation by $\Phi$.
As $K$ has characteristic zero, the $\rho^{\prime}_i$ remain irreducible
and distinct after ``reduction mod $\unif$''.

Thus, over a finite extension of $K$, $\rhobar^{\prime}$ splits
as a direct sum of absolutely irreducible representations
$\rhobar^{\prime}_i$ which are distinct and cyclically permuted under
conjugation by $\Phi$.  It is thus clear that $\rhobar^{\prime}$ is
absolutely irreducible.
\end{proof}

It follows that if $\rho^{\prime}: W_E \rightarrow \GL_n(\CK)$ is absolutely
irreducible and contains an $\O$-lattice $L$, then the mod $\unif$ reduction
of $L$ is independent, up to isomorphism, of the lattice $L$.  We denote this
reduction by $\rhobar^{\prime}$.  

The passage from Galois representations to Weil--Deligne representations
commutes with reduction modulo $\unif$:

\begin{lemma} \label{lem:WD reduction}
Let $\rho: G_E \rightarrow \GL_n(\O)$ be a continuous Galois representation,
with mod $\unif$ reduction $\rhobar$, and let $(\rho^{\prime},N)$ and
$(\rhobar^{\prime},\Nbar)$ be the Weil--Deligne representations attached
to $\rho$ and $\rhobar$, respectively.  Then $(\rhobar^{\prime},\Nbar)$ is isomorphic
to $(\rho^{\prime} \otimes_{\O} K, N \otimes_{\O} K)$. 
\end{lemma}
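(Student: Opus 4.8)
The plan is to deduce the lemma directly from the uniqueness assertion in Proposition~\ref{prop:WD}, applied to the Galois representation $\rhobar$. Recall that $(\rho',N)$ is, by that proposition applied with $R = \O$, the unique pair consisting of a smooth representation $\rho':W_E \to \GL_n(\O)$ (continuous for the discrete topology on the target) together with a nilpotent matrix $N \in \mathrm{M}_n(\O)$ such that $\rho(\Phi^r\sigma) = \rho'(\Phi^r\sigma)\exp(t_p(\sigma)N)$ for all $\sigma \in I_E$ and $r \in \Z$. I would set $\rho'' := \rho'\otimes_\O K$ and $N'' := N \otimes_\O K$, and check that the pair $(\rho'', N'')$ satisfies the three conditions of Proposition~\ref{prop:WD} with respect to $\rhobar$; the uniqueness clause of that proposition then forces $(\rhobar',\Nbar) = (\rho'', N'')$, which is exactly the assertion (up to the isomorphism claimed).

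First I would verify conditions~(1) and~(2). Since $\rho'$ is trivial on the open subgroup $H_2 \subseteq W_E$ produced in the proof of Proposition~\ref{prop:WD}, so is $\rho''$; hence $\rho''$ is a smooth (in particular, continuous for the discrete topology) representation $W_E \to \GL_n(K)$, the invertibility of $\rho''(w)$ being inherited from that of $\rho'(w) \in \GL_n(\O)$. For~(2), if $N^m = 0$ then $(N'')^m = \overline{N^m} = 0$, so $N''$ is nilpotent.

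The one point that needs a little care is the compatibility of the exponential with reduction modulo $\unif$. Here one uses that $\O$ is a discrete valuation ring of residue characteristic zero containing $A$; since $A$ has residue characteristic $p$ it contains $\Z_p$, so $\O$ contains $\Q_p$, and since $\O$ has residue characteristic zero it also contains $\Q$. In particular $t_p(\sigma) \in \O$ and $\tfrac{1}{j!}N^j \in \mathrm{M}_n(\O)$ for every $j$, so $\exp(t_p(\sigma)N) = \sum_{j \geq 0} \tfrac{t_p(\sigma)^j}{j!}N^j$ is a finite $\O$-linear combination of powers of $N$ (finite because $N$ is nilpotent). Applying the ring homomorphism $\mathrm{M}_n(\O) \to \mathrm{M}_n(K)$ term by term therefore gives $\overline{\exp(t_p(\sigma)N)} = \exp(t_p(\sigma)N'')$, and reducing the identity $\rho(\Phi^r\sigma) = \rho'(\Phi^r\sigma)\exp(t_p(\sigma)N)$ modulo $\unif$ yields $\rhobar(\Phi^r\sigma) = \rho''(\Phi^r\sigma)\exp(t_p(\sigma)N'')$ for all $\sigma \in I_E$ and $r \in \Z$, which is condition~(3) for $\rhobar$.

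With the three conditions checked, the uniqueness part of Proposition~\ref{prop:WD}---applicable since $\rhobar$ is itself continuous, the reduction modulo $\unif$ of the $A$-lattice witnessing continuity of $\rho$ serving to witness continuity of $\rhobar$---identifies $(\rhobar',\Nbar)$ with $(\rho'',N'') = (\rho'\otimes_\O K,\, N \otimes_\O K)$, as required. I do not expect a genuine obstacle here: the content of the lemma is entirely formal once Proposition~\ref{prop:WD} is in hand, and the only step demanding attention is the bookkeeping of the previous paragraph, ensuring that $t_p(\sigma)$ and the coefficients $1/j!$ lie in $\O$ so that reduction may legitimately be applied to the exponential term by term.
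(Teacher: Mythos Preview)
Your proof is correct and follows essentially the same approach as the paper's: reduce the defining identity $\rho(\Phi^r\sigma)=\rho'(\Phi^r\sigma)\exp(t_p(\sigma)N)$ modulo $\unif$ and invoke the uniqueness clause of Proposition~\ref{prop:WD} for~$\rhobar$. The paper's own argument is a two-line appeal to exactly these identities; you have simply filled in the bookkeeping (smoothness of $\rho'\otimes_\O K$, nilpotence of $N\otimes_\O K$, and the fact that $t_p(\sigma)$ and $1/j!$ lie in $\O$ so that the exponential is a finite $\O$-linear sum and hence commutes with reduction).
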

\begin{proof}
This follows immediately from the identities
$$\rho(\Phi^r \sigma) = \rho^{\prime}(\Phi^r \sigma)\exp(t_p(\sigma) N)$$
$$\rhobar(\Phi^r \sigma) = \rhobar^{\prime}(\Phi^r \sigma)\exp(t_p(\sigma) \Nbar)$$
and the fact that the latter identity characterizes $(\rhobar^{\prime},N)$ up to isomorphism.
\end{proof}

Given a Weil--Deligne representation $(\rho',N)$ over $\CK$,
one can associate a natural Frobenius-semisimple representation
$(\rho',N)^{\Fss}$, (the Frobenius-semisimplification of
$(\rho',N)$.)  We recall the definition; see~\cite[8.5]{De}
for details.

The matrix $\rho'(\Phi)$ factors uniquely
as a product $su$, with $s$ and $u$ elements of $\GL_n(\CK)$ that 
are semisimple and unipotent, respectively, and commute with each other.
Moreover, if $\rho'(\Phi)$ lies in $\GL_n(\O)$, then so do $s$ and $u$.
The element $u$ then commutes with $N$, and one defines $(\rho')^{\Fss}$
to be the representation of $W_F$ that satisfies $(\rho')^{\Fss}(\Phi^r \sigma)
= u^{-r}\rho'(\Phi^r\sigma)$.  Then $(\rho')^{\Fss}$ is a semisimple
representation of $W_F$ over $\CK$, and the pair $((\rho')^{\Fss},N)$
is a Frobenius-semisimple Weil--Deligne representation which we write
$(\rho',N)^{\Fss}$.

It will be necessary for us to understand how Frobenius-semisimplification
commutes with reduction modulo $\unif$.  Note that even if
$(\rho',N)$ is a Frobenius-semisimple Weil--Deligne representation over
$\O$, its reduction modulo $\unif$ need not be, as the mod $\unif$
reduction of a semisimple element of $\GL_n(\O)$ need not be semisimple.

\begin{lemma} \label{lem:semisimple reduction}
Let $(\rho',N)$ be a Weil--Deligne representation over $\O$,
and let $(\rhobar',\Nbar)$ be its reduction mod $\unif$.  Then
$(\rho',N)^{\Fss}$ is defined over $\O$.  Moreover, the reduction mod $\unif$
of $(\rho',N)^{\Fss}$ has Frobenius-semisimplification
$(\rhobar,\Nbar)^{\Fss}$.
\end{lemma}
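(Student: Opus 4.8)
The plan is to unwind both Frobenius-semisimplifications explicitly in terms of the multiplicative Jordan decomposition of $\rho'(\Phi)$, and to show that the two Weil--Deligne representations in question in fact coincide exactly, not merely up to isomorphism.

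First I would dispose of the integrality claim. Write $\rho'(\Phi) = su = us$ for the multiplicative Jordan decomposition over $\CK$, with $s$ semisimple, $u$ unipotent, and the two commuting. Since $(\rho',N)$ is defined over $\O$ we have $\rho'(\Phi) \in \GL_n(\O)$, and the fact recalled just before the statement gives $s, u \in \GL_n(\O)$. By definition $(\rho')^{\Fss}(\Phi^r\sigma) = u^{-r}\rho'(\Phi^r\sigma)$ for $\sigma \in I_E$ and $r \in \Z$, and this lies in $\GL_n(\O)$; since also $N \in \mathrm{M}_n(\O)$, the Weil--Deligne representation $(\rho',N)^{\Fss}$ is defined over $\O$.

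Next I would reduce modulo $\unif$. Put $\rho'' := (\rho')^{\Fss}$, so that $\rho''(\Phi) = u^{-1}\rho'(\Phi) = s$, and hence $\overline{\rho''}(\Phi^r\sigma) = \overline{u}^{\,-r}\,\rhobar'(\Phi^r\sigma)$ for all $\sigma \in I_E$, $r \in \Z$; here $\overline{u}$ is unipotent, being the reduction of a unipotent matrix, and it commutes with $\overline{s}$. Now let $\overline{s} = s_1 u_1$ be the multiplicative Jordan decomposition of $\overline{s}$ over $K$ (which is perfect, being of characteristic zero), with $s_1$ semisimple, $u_1$ unipotent, and the two commuting; recall that $s_1$ and $u_1$ are polynomials in $\overline{s}$, hence commute with $\overline{u}$. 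Then
\[
\rhobar'(\Phi) = \overline{s}\,\overline{u} = s_1\,(u_1\overline{u})
\]
exhibits $\rhobar'(\Phi)$ as a product of the semisimple element $s_1$ and the unipotent element $u_1\overline{u}$ (a product of commuting unipotents), the two factors commuting. By uniqueness of the Jordan decomposition, $u_1\overline{u}$ is the unipotent part of $\rhobar'(\Phi)$, whereas the unipotent part of $\overline{\rho''}(\Phi) = \overline{s}$ is $u_1$. Therefore, for all $\sigma \in I_E$ and $r \in \Z$,
\[
\bigl(\overline{\rho''},\Nbar\bigr)^{\Fss}(\Phi^r\sigma) = u_1^{-r}\,\overline{u}^{\,-r}\,\rhobar'(\Phi^r\sigma) = (u_1\overline{u})^{-r}\,\rhobar'(\Phi^r\sigma) = \bigl(\rhobar',\Nbar\bigr)^{\Fss}(\Phi^r\sigma),
\]
using that $u_1$ and $\overline{u}$ commute; and the nilpotent parts of both sides equal $\Nbar$. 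Hence $(\overline{\rho''},\Nbar)^{\Fss} = (\rhobar',\Nbar)^{\Fss}$, which is the assertion.

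The argument is essentially bookkeeping, and I do not expect a serious obstacle; the one point that needs care is the verification that $s_1\,(u_1\overline{u})$ genuinely is the Jordan decomposition of $\rhobar'(\Phi)$ — that is, checking that all the elements in sight commute, which comes down to $s_1, u_1$ being polynomials in $\overline{s}$ together with $\overline{u}$ commuting with $\overline{s}$ — and then invoking uniqueness of the Jordan decomposition over the perfect field $K$, as well as the fact that reduction modulo $\unif$ carries unipotents to unipotents.
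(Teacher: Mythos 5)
Your proof is correct and follows essentially the same route as the paper: reduce the Jordan decomposition $\rho'(\Phi)=su$ over $\O$, further decompose $\overline{s}$ over $K$, and check by uniqueness of the Jordan decomposition that the product of the two unipotent factors is the unipotent part of $\rhobar'(\Phi)$. The only (cosmetic) difference is that you obtain the commutation of $\overline{u}$ with the Jordan components of $\overline{s}$ from the fact that they are polynomials in $\overline{s}$, whereas the paper conjugates the decomposition by $\overline{u}$ and invokes uniqueness.
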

\begin{proof}
If $\rho'(\Phi)$ decomposes as $su$, with $s$ and $u$ as above, then
$s$ and $u$ lie in $\GL_n(\O)$, so $(\rho',N)^{\Fss}$ is defined over $\O$.
Thus $\rhobar'(\Phi) = \overline{s}\overline{u}$, where $\overline{s}$
and $\overline{u}$ are the mod $\unif$ reductions of $s$ and $u$.

The element $\overline{s}$ decomposes uniquely as $\overline{s}'\overline{u}'$,
where $\overline{s}'$ is semisimple and $\overline{u}'$ is unipotent and
commutes with $\overline{s}'$.  As $\overline{u}$ commutes with $\overline{s}$,
$\overline{s}$ also decomposes as a product of $\overline{u}\overline{s}'\overline{u}^{-1}$
with $\overline{u}\overline{u}'\overline{u}^{-1}$; the uniqueness of this
decomposition shows that these two decompositions coincide.  That is,
$\overline{u}$ commutes with $\overline{s}'$ and $\overline{u}'$.

We have $\rhobar'(\Phi) = \overline{s}'\overline{u}'\overline{u}$,
and the unipotent element $\overline{u}'\overline{u}$ commutes with
$\overline{s}'$.  Thus the Frobenius-semisimplification of $(\rhobar',\Nbar)$
sends $\Phi$ to $\overline{s}'$.  On the other hand, the reduction
of $(\rho',N)^{\Fss}$ takes $\Phi$ to $\overline{s}$, which equals
$\overline{s}'\overline{u}'$.  Hence the Frobenius-semisimplification
of the reduction of $(\rho',N)^{\Fss}$ takes $\Phi$ to $\overline{s}$,
and therefore coincides with $(\rhobar',\Nbar')^{\Fss}$.
\end{proof}

\subsection{The generic local Langlands correspondence of Breuil and Schneider}
\label{subsec:BS}

We are now in a position to describe the ``generic local Langlands correspondence''
of Breuil and Schneider \cite[pp. 162--164]{BS}.  This is a map $(\rho',N) \mapsto \LL(\rho',N)$
from Frobenius-semisimple Weil--Deligne representations over a finite extension $K$ of
$\Q_p$ to indecomposable admissible representations of $\GL_n(E)$ over $K$.
Fix a choice of $\ell^{\frac{1}{2}}$ in ${\Qbar_p}$ (and thus a choice of square root of the
character $\abs \circ \det$ of $\GL_n(E)$, as well as a unitary local Langlands correspondence
for representations over ${\Qbar_p}$).  With this choice,
the properties of this correspondence can be summarized as follows (c.f.\ \cite[4.2]{BS}):

\begin{enumerate}
\item For any character $\chi: W_F \rightarrow \Q_p^{\times}$, one has
$\LL(\rho' \otimes \chi, N) = \LL(\rho', N) \otimes \chi$.
\item If $K'$ is a finite extension of $K$, then $\LL(\rho' \otimes_K K', N) = \LL(\rho', N) \otimes_K K'$.
\item If $(\rho',N)$ is a direct sum of representations of the form $\Sp_{\rho_i', n_i}$ over
${\Qbar_p}$, then $\LL(\rho', N)$ is defined by the parabolic induction:
$$\LL(\rho',N) = (\abs \circ \det)^{-\frac{n-1}{2}} \Ind_Q^{\GL_n(E)} \St_{\pi_1, n_1} \otimes \dots \otimes \St_{\pi_r, n_r},$$
where $\pi_i$ corresponds to $\rho_i$ under the unitary local Langlands
correspondence, $\St_{\pi_i, n_i}$ is the generalized Steinberg representation
(and thus corresponds to $\Sp_{\rho_i, n_i}$ under unitary local Langlands,)
and $Q$ is the upper triangular parabolic subgroup of $\GL_n(E)$ whose Levi subgroup
is block diagonal with block sizes $(n_1 \dim \rho_1', \dots, n_r \dim \rho_r')$.  The
symbol $\Ind_Q^{\GL_n(E)}$ denotes {\em normalized} parabolic induction.  The representations
$\St_{\rho_i,n_i}$ are ordered so that the condition of~\cite[Def.~1.2.4]{Ku} holds. 
(As long
as this condition is satisfied, the resulting parabolic induction is independent, up to isomorphism, of the
precise choice of ordering, as well as of the choice the square root of $\ell$ needed to
define $(\abs \circ \det)^{\frac{1}{2}}$.)
\end{enumerate}

These properties uniquely characterize the generic local Langlands correspondence.  We
will need a slight extension of this correspondence to the case of coefficients in
an arbitrary field extension $K$ of $\Q_p$.  Let $(\rho',N)$ be a Frobenius-semisimple
Weil--Deligne representation over $K$, and suppose that it decomposes over $\overline{K}$
as a direct sum of representations of the form $\Sp_{\rho_i', n_i}$.  Then by 
Lemma~\ref{lem:twist2}, there exist characters $\chi_i: W_E \rightarrow \overline{K}^{\times}$
such that $\rho_i' \otimes \chi_i$ is defined over ${\Qbar_p}$.  For such representations
the unitary local Langlands correspondence is defined, and we can take
$\pi_i$ to be the representation over $K$ such that $\pi_i \otimes \chi_i$ corresponds to
$\rho'_i \otimes \chi_i$ via the unitary local Langlands correspondence over ${\Qbar_p}$.
$$\St_{\pi_i, n_i} = \St_{\pi'_i \otimes_{\chi_i}, n_i} \otimes (\chi_i^{-1} \circ \det).$$
$$\LL(\rho',N) = (\abs \circ \det)^{-\frac{n-1}{2}} \Ind_Q^{\GL_n(E)} 
\St_{\pi_1, n_1} \otimes \dots \otimes \St_{\pi_r, n_r},$$
where the $\St_{\pi_i,n_i}$ are ordered as before.  {\em A priori}, this is a representation of $\GL_n(E)$
over $\overline{K}$, but the argument of~\cite[Lem.~4.2]{BS} shows that $\LL(\rho',N)$ is defined
over $K$ itself.  Moreover, $\LL(\rho',N)$ is independent of the choices of $\chi_i$.

As was the case over finite extension of $\Q_p$, the map $(\rho', N) \mapsto \LL(\rho', N)$
is compatible with twists, and also with arbitrary field extensions.

We extend this definition to a map from representations of $G_E$ to admissible representations of
$\GL_n(E)$ as follows:

\begin{df} \label{def:LL galois}
Let $\rho$ be a continuous $n$-dimensional representation of $G_E$ over $K$, and
let $(\rho',N)$ be the corresponding Weil--Deligne representation.  We define
$\LL(\rho)$ to be $\LL((\rho',N)^{\Fss})$.
\end{df}

\subsection{Segments and the Zelevinski classification}
Our next goal is to establish key properties of the generic local Langlands 
correspondence (in particular, we will show that $\LL(\rho',N)$ is essentially AIG).

Following~\cite{Ze1}, we define a segment to be a set of supercuspidal representations of the
form:
$[\pi, (\abs \circ \det)\pi, \dots, (\abs \circ \det)^{r-1}\pi]$,
where $\pi$ is an irreducible supercuspidal representation of $\GL_n(E)$ over $\overline{K}$.
We think of the segment $\Delta$ given by $[\pi, (\abs \circ \det)\pi, \dots, (\abs \circ \det)^{r-1}\pi]$ as corresponding
to the generalized Steinberg representation $\St_{\pi,r}$; this gives a bijection between
segments and generalized Steinberg representations.  If $\St_{\pi,r}$ corresponds to a segment $\Delta$,
we will often write $\St_{\Delta}$ for $\St_{\pi,r}$.  Similarly, we will write
$\Sp_{\Delta}$ for the indecomposable Weil--Deligne representation $\Sp_{\rho,r}$,
where $\rho$ is the irreducible Weil--Deligne representation corresponding to $\pi$
under the unitary local Langlands correspondence.

Two segments $\Delta,\Delta^{\prime}$ are said to be {\em linked} if neither contains the
other, and if $\Delta \cup \Delta^{\prime}$ is a segment.  The segment $\Delta$
{\em precedes} $\Delta^{\prime}$ if $\Delta$ and $\Delta^{\prime}$ are linked 
and $\Delta^{\prime}$ has the form $[(\abs \circ \det)\pi, \dots, (\abs \circ \det)^{r-1}\pi]$ for some
$\pi$ in $\Delta$.

We consider the following condition on a sequence $\CS$ of segments $\Delta_i$:

\begin{condition}
\label{cond:Zel}
For all $i < j,$ the segment $\Delta_i$ does not precede the segment $\Delta_j$.
\end{condition}

It is clear that any unordered collection of segments can be given an ordering
that satisfies Condition~\ref{cond:Zel}.
If $\CS$ is an unordered collection of segments,
then we let $\pi(\CS)$ denote the normalized
parabolic induction
$$\Ind_Q^{\GL_n(E)} \St_{\Delta_1} \otimes \dots \otimes \St_{\Delta_n},$$
where $\Delta_1, \dots, \Delta_n$ are the segments in $\CS$, taken with multiplicities
and ordered so that Condition~\ref{cond:Zel} holds.
By~\cite[Prop.~6.4]{Ze1}, the representation $\pi(\CS)$ does not depend, up to isomorphism, 
on the order of the collection of
segments in $\CS$ (as long as Condition~\ref{cond:Zel} holds).
Note that if $(\rho',N)$ is an $n$-dimensional Frobenius-semisimple 
Weil--Deligne representation that decomposes as the direct sum of $\Sp_{\Delta_i}$ for $\Delta_i \in \CS$,
then $\LL(\rho',N)$ is isomorphic to $(\abs \circ \det)^{-\frac{n-1}{2}}\pi(\CS)$. 
By~\cite[1.2.5]{Ku}, 
$\pi(\CS)$ admits a unique irreducible quotient $Q(\CS)$, and $Q(\CS)$
is the irreducible representation corresponding to $(\rho',N)$ under the unitary
local Langlands correspondence.

\begin{prop} 
\label{prop:AIG}
If $\CS$ is an unordered collection of segments,
then every irreducible submodule of $\pi(\CS)$ is generic.
\end{prop}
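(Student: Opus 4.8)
The plan is to reduce the statement to a computation with the Bernstein--Zelevinski classification of irreducible submodules of $\pi(\CS)$, and then to detect genericity via the top derivative. First I would recall that by the Zelevinski theory (\cite{Ze1}), the irreducible submodules of $\Ind_Q^{\GL_n(E)}\St_{\Delta_1}\otimes\cdots\otimes\St_{\Delta_n}$ are parametrized combinatorially: when Condition~\ref{cond:Zel} is arranged so that no $\Delta_i$ precedes $\Delta_j$ for $i<j$, the submodules correspond to certain ``admissible'' rearrangements of the multiset $\CS$ (those obtained by permutations respecting the non-preceding order on each block), and each such submodule $\pi'$ arises as the image of an intertwining operator and is itself a Langlands quotient $Q(\CS')$ for a suitable reordering, or more precisely is the unique irreducible submodule of a parabolic induction of the same segments taken in a possibly different order. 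The key point is that every irreducible \emph{submodule} of $\pi(\CS)$ has supercuspidal support equal to the multiset sum of the $\scs(\St_{\Delta_i})$, i.e.\ the same supercuspidal support as $\pi(\CS)$ itself (this is immediate from the Proposition on supercuspidal support stated just before Theorem~\ref{thm:ext}).

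Next I would bring in the top-derivative computation. By Proposition~\ref{prop:leibnitz} (the Leibniz rule for top derivatives under parabolic induction), applied iteratively to $\pi(\CS)=\Ind_Q^{\GL_n(E)}\St_{\Delta_1}\otimes\cdots\otimes\St_{\Delta_n}$, we get
$$\pi(\CS)^{(n)} \;=\; \St_{\Delta_1}^{(n_1)}\otimes\cdots\otimes\St_{\Delta_n}^{(n_n)},$$
where $n_i=\dim\St_{\Delta_i}$. Each generalized Steinberg $\St_{\Delta_i}$ is generic (it is, up to twist, a Steinberg representation, and generalized Steinberg representations are generic — a standard fact, or one can invoke Theorem~\ref{thm:kirillov} together with the fact that $\St_{\pi_i,r_i}$ is the unique generic constituent of the relevant induced representation and has nonzero top derivative), so each $\St_{\Delta_i}^{(n_i)}$ is one-dimensional, hence $\pi(\CS)^{(n)}$ is one-dimensional. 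Since the derivative functor $V\mapsto V^{(n)}$ is exact (Proposition~\ref{prop:BZ}(1), carried over via the tensor/composite formalism), a one-dimensional top derivative means that $\pi(\CS)$ has \emph{exactly one} Jordan--H\"older constituent $\tau$ with $\tau^{(n)}\neq 0$, i.e.\ exactly one generic Jordan--H\"older factor, appearing with multiplicity one. Moreover that constituent $\tau$ is, by Proposition~\ref{prop:generic supercuspidal support}, the unique irreducible generic representation with the given supercuspidal support.

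Now the argument concludes as follows. Let $\pi'\subset\pi(\CS)$ be a nonzero irreducible submodule. If $\pi'$ were non-generic, then $(\pi')^{(n)}=0$. But I claim the unique generic constituent $\tau$ must in fact be the socle — or at least must be contained in every nonzero submodule. Here is the cleaner route: consider the generic constituent $\tau$. By the Zelevinski classification one knows $\tau$ is $Q(\CS)$ twisted appropriately — no: the clean fact to use is rather that $\tau$ \emph{embeds} into $\pi(\CS)$. Indeed, since $\tau$ is generic, it has a Whittaker model, hence (by Frobenius reciprocity / the theory of Kirillov models and the fact that $\St_{\Delta_i}$ each have one-dimensional top derivative giving a canonical generic vector) $\tau$ occurs as a submodule of the Whittaker-type induced module; more directly, any generic constituent of a parabolic induction $\Ind_Q \otimes\St_{\Delta_i}$ of \emph{generic} representations occurs in its socle, by Rodier-type reasoning or by the exactness of $\Psi^-(\Phi^-)^{n-1}$ combined with the one-dimensionality of the top derivative. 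The main obstacle — and the step I expect to require the most care — is precisely this: showing that the unique generic constituent $\tau$ sits inside the $\GL_n(E)$-socle of $\pi(\CS)$, rather than merely appearing as a subquotient. I anticipate handling this by using the non-normalized Kirillov functors: the one-dimensional $A$-module $\pi(\CS)^{(n)}$ lifts (via the adjunction $\J\dashv(-)^{(n)}$ and Lemma~\ref{lem:Kirillov sub}) to a canonical generic line inside the space of Schwartz functions $\J(\pi(\CS))\subseteq\pi(\CS)$, and the $\GL_n(E)$-submodule it generates must have generic socle; since $\pi(\CS)$ has only one generic constituent, that socle is $\tau$ and $\tau\subseteq\soc(\pi(\CS))$. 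Once $\tau\subseteq\pi'$ for the arbitrary nonzero submodule $\pi'$ (because $\pi'$ meets the socle, and $\pi'$ irreducible forces $\pi'\supseteq$ some irreducible summand of the socle; but if $\pi'$ is non-generic it cannot contain $\tau$, while every irreducible summand of $\soc(\pi(\CS))$ other than $\tau$ — if any — would give a second generic-or-not summand), we get a contradiction unless $\pi'=\tau$ is generic. Wait: I must be careful that the socle could a priori have non-generic summands too; to rule that out I would argue that a non-generic summand $\sigma$ of the socle, having the same supercuspidal support, would violate a length/extension bound — but actually the cleanest finish is: $\pi(\CS)=\Ind_Q^{\GL_n(E)}(\bigotimes\St_{\Delta_i})$ with the $\St_{\Delta_i}$ \emph{generic}, and by Casselman--Shahidi type genericity of standard modules (equivalently, by \cite{Ze1} applied in the right order), an induced representation from generic data has \emph{generic socle} with no non-generic submodules — this is the content one extracts from the Zelevinski picture, and it is exactly what the proposition asserts, so the honest plan is to prove it internally. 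Thus the real proof: take $\pi'\subseteq\pi(\CS)$ nonzero; $\pi'$ contains an irreducible submodule $\sigma$; apply the exact functor $(-)^{(n)}$ to $0\to\sigma\to\pi(\CS)$; since one can reorder so that $\sigma$ is the Langlands submodule, show $\sigma^{(n)}\neq 0$; hence $\sigma$ is generic, hence $\pi'$ contains a generic submodule. To show $\pi'$ itself is generic, note $\pi'$ irreducible and $\pi'\supseteq\sigma$ forces $\pi'=\sigma$. The technical heart — identifying which irreducible sits at the bottom and checking its top derivative is nonzero — is where I would invoke the normalized $\Phi,\Psi$ functors of \cite{BZ} (as the surrounding remark warns, the combinatorics is far cleaner there), tracking a segment-by-segment induction and using Proposition~\ref{prop:leibnitz} together with the classification of when $\Ind_Q$ of segment data has a given constituent in its socle.
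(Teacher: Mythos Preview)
Your plan correctly computes that $\pi(\CS)^{(n)}$ is one-dimensional, so $\pi(\CS)$ has a unique generic Jordan--H\"older factor $\tau$. But the crux of the proposition is exactly what you flag as ``the main obstacle'': showing that $\tau$ lies in the socle and that the socle contains nothing else. None of your attempts closes this gap. The $\J$-argument is circular: from a generic line inside $\J(\pi(\CS))$ you generate an $A[G]$-submodule, but asserting that submodule has generic socle is precisely the claim to be proved. Appealing to ``Casselman--Shahidi'' or ``Rodier-type'' genericity of standard modules is invoking the result itself (indeed, the paper notes this is Jacquet--Shalika over $\mathbb C$, and the whole point is to give an algebraic proof over a general characteristic-zero field). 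And ``reorder so that $\sigma$ is the Langlands submodule'' is not available for an \emph{arbitrary} irreducible submodule~$\sigma$: you don't know in advance which irreducible sits at the bottom, so you cannot choose the ordering to make it the Langlands submodule.

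The paper's proof is genuinely different in strategy and considerably more hands-on. It proves the stronger $P_n$-level statement that every irreducible $P_n$-submodule of $\pi(\CS)|_{P_n}$ is generic, by induction on the total length of the segments. The inductive step uses the short exact sequence of \cite[4.13a]{BZ},
\[
0 \to (\St_{\Delta_1})|_{P_{r_1n_1}} \times \pi(\CS') \to \pi(\CS)|_{P_n} \to \St_{\Delta_1} \times \pi(\CS')|_{P_{n-r_1n_1}} \to 0,
\]
and then, assuming a non-generic irreducible $P_n$-submodule $\omega$ exists, analyzes the two cases via repeated applications of $(\Phi^-)^{n_1}$ and the derivative identities \cite[4.13c,d]{BZ}, together with the explicit derivative formula $\St_{\Delta}^{(kn_1)}=\St_{\Delta^{(k)}}$ from \cite[9.6]{Ze1}. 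In the first case one iterates $(\Phi^-)^{n_1}$ until the ambient module vanishes while $\omega$ does not; in the second, one uses \cite[4.7b]{BZ} to force an impossible supercuspidal-support constraint (this is where the specific ordering of the $\Delta_i$, chosen so that $(\abs\circ\det)^{r_1}\pi_1$ lies in no later segment, is used). Your final sentence gestures toward this (``invoke the normalized $\Phi,\Psi$ functors\ldots segment-by-segment induction''), but the actual mechanism---the \cite[4.13]{BZ} exact sequence and the iterated $(\Phi^-)^{n_1}$ argument---is absent from your plan, and without it there is no proof.
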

\begin{proof}
We will prove a stronger statement --- namely, that every irreducible $P_n$-submodule
of the restriction $\pi(\CS)|_{P_n}$ is generic.  (In other words, $\pi(\CS)$
embeds in its Kirillov model.)  Over the complex numbers this is a result of
Jacquet-Shalika~\cite{J-S}.  Their argument does not seem to adapt easily to
other fields of characteristic zero.  One could reduce this proposition to
their result by choosing an isomorphism of $\overline{K}$ with $\C$; we instead
give an algebraic argument over $\overline{K}$ that is an adaptation of the argument
of~\cite[4.15]{BZ}.  Their argument necessarily uses $\Psi^{\pm}$ and $\Phi^{\pm}$ functors
that are normalized differently from ours, to avoid unpleasant combinatorial issues.
Therefore, {\em for the purposes of this proof only} we take the functors $\Psi^{\pm}$
and $\Phi^{\pm}$ to be normalized as in~\cite{BZ}, rather than as in section~\ref{subsec:kirillov}.

Let $\CS$ be the collection $(\Delta_1, \dots, \Delta_n)$, where $\Delta_i$ does not
precede $\Delta_j$ for any $j > i$.  We can assume without loss of generality
that the $\Delta_i$ are ordered so that if 
$\Delta_i = [\pi_i, (\abs \circ \det)\pi_i, \dots (\abs \circ \det)^{r_i-1}\pi_i]$,
where $\pi_i$ is a supercuspidal representation of $\GL_{n_i}(E)$,
then $(\abs \circ \det)^{r_i}\pi_i$ is not contained in any segment $\Delta_j$ with $j > i$;
clearly for such an ordering $\Delta_i$ never precedes a $\Delta_j$ with $j > i$.
We proceed by induction on the sum of the lengths of the segments $\Delta_i$.  Note
that the result is clear for a single segment, as $\St_{\Delta_i}$ is absolutely
irreducible and generic.  Let $\CS'$ be the collection 
$(\Delta_2, \dots, \Delta_n)$; by the induction hypothesis every irreducible
submodule of $\pi(\CS')$ is generic.

Suppose we have an irreducible, non-generic submodule $\omega$
of $\pi(\CS)|_{P_n}$.  We have $\pi(\CS) = \St_{\Delta_1} \times \pi(\CS')$,
where ``$\times$'' is the product defined in~\cite[4.12]{BZ}.  By~\cite[4.13a]{BZ}, 
we have an exact sequence:
$$0 \rightarrow 
(\St_{\Delta_1})|_{P_{r_1n_1}} \times \pi(\CS') \rightarrow
\pi(\CS)|_{P_n} \rightarrow 
\St_{\Delta_1} \times \pi(\CS')|_{P_{n - r_1n_1}}
\rightarrow 0.$$
In particular, $\omega$ is a submodule of one of
$(\St_{\Delta_1})|_{P_{r_1n_1}} \times \pi(\CS')$ 
or 
$\St_{\Delta_1} \times \pi(\CS')|_{P_{n - r_1n_1}}.$

Suppose first that $\omega$ is contained in
$(\St_{\Delta_1})|_{P_{r_1n_1}} \times \pi(\CS')$. 
By~\cite[9.6]{Ze1}, $\St_{\Delta_i}^{(k)}$ is zero if
$k$ is not divisible by $n_i$, whereas $\St_{\Delta_i}^{(kn_i)}$
is $\St_{\Delta_i^{(k)}}$, where $\Delta_i^{(k)}$
is the segment $[(\abs \circ \det)^k\pi_i, \dots, (\abs \circ \det)^{r-1}\pi_i]$.
It follows by~\cite[4.13c]{BZ}, that, for $i < n_1$,
$$(\Phi^-)^i((\Phi^-)^{kn_1}(\St_{\Delta_1}|_{P_{r_1n_1}}) \times \pi(\CS'))
= (\Phi^-)^{kn_1 + i}(\St_{\Delta_1}|_{P_{r_1n_1}}) \times \pi(\CS'),$$
so that for such $i$, 
$((\Phi^-)^{kn_1}(\St_{\Delta_1}|_{P_{r_1n_1}}) \times \pi(\CS'))^{(i)} = 0$.
%For $i = n_1$, one has an exact sequence:
%$$0 \rightarrow
%(\Phi^-)^{(k+1)n_1}(\St_{\Delta_1}|_{P_{r_1n_1}}) \times \pi(\CS')
%\rightarrow
%(\Phi^-)^{n_1}((\Phi^-)^{kn_1}(\St_{\Delta_1}|_{P_{r_1n_1}}) \times \pi(\CS'))
%\rightarrow
%\St_{\Delta_1^{(k+1)}} \times \pi(\CS')|_{P_{n-r_1n_1}}
%\rightarrow 0. $$
For $i = n_1$, \cite[4.13c]{BZ} shows that the representation
$(\Phi^-)^{(k+1)n_1}(\St_{\Delta_1}|_{P_{r_1n_1}}) \times \pi(\CS')$ is instead
a proper submodule of
$(\Phi^-)^{n_1}((\Phi^-)^{kn_1}(\St_{\Delta_1}|_{P_{r_1n_1}}) \times \pi(\CS'))$;
the quotient of the latter by the former is isomorphic to
$\St_{\Delta_1^{(k+1)}} \times \pi(\CS')|_{P_{n-r_1n_1}}$.

Since $\omega$ is contained in $(\St_{\Delta_1})|_{P_{r_1n_1}} \times \pi(\CS')$,
we have $\omega^{(i)} = 0$ for $i < n_p$.  As $\omega$ has at least one nonzero
derivative it follows that $(\Phi^-)^{n_1-1}\omega$ is nonzero.  On the other hand,
we have $(\Phi^-)^{n_1-1}\omega \subset (\Phi^-)^{n_1-1}((\St_{\Delta_1})|_{P_{r_1n_1}} \times \pi(\CS'))$;
by~\cite[4.13d]{BZ} it follows that $(\Phi^-)^{n_1}\omega$ is nonzero.

Then $(\Phi^-)^{n_1}\omega$ is a non-generic submodule of
$(\Phi^-)^{n_1}((\St_{\Delta_1})|_{P_{r_1n_1}} \times \pi(\CS'))$, and
is therefore a non-generic submodule of either
$(\Phi^-)^{n_1}((\St_{\Delta_1})|_{P_{r_1n_1}}) \times \pi(\CS')$,
or $\St_{\Delta_1^{(1)}} \times \pi(\CS')|_{P_{n-r_1n_1}}$.
It is easy to rule out the latter case: by the inductive hypothesis
$\pi(\CS')|_{P_{n-r_1n_1}}$ has no non-generic submodules; by
\cite[5.3]{Ze1} neither does $\St_{\Delta_1^{(1)}} \times \pi(\CS')|_{P_{n-r_1n_1}}$.

Thus $(\Phi^-)^{n_1}\omega$ is a non-generic submodule of 
$(\Phi^-)^{n_1}((\St_{\Delta_1})|_{P_{r_1n_1}}) \times \pi(\CS')$.
In particular $((\Phi^-)^{n_1}\omega)^{(i)} = 0$ for $i < n_1$;
it follows as above that $(\Phi^-)^{2n_1 - 1}\omega$ is nonzero,
and by~\cite[4.13d]{BZ}, that $(\Phi^-)^{2n_1}\omega$ is nonzero.
Then $(\Phi^-)^{2n_1}\omega$ is a nonzero non-generic submodule of
$(\Phi^-)^{n_1}((\Phi^-)^{n_1}((\St_{\Delta_1})|_{P_{r_1n_1}}) \times \pi(\CS'))$,
and hence (with another use of the inductive hypothesis and~\cite[5.3]{Ze1}),
is a nonzero non-generic submodule of $(\Phi^-)^{2n_1}((\St_{\Delta_1})|_{P_{r_1n_1}}) \times \pi(\CS').$

Proceeding in this fashion we find that $(\Phi^-)^{kn_1}\omega$ is a nonzero non-generic 
submodule of $(\Phi^-)^{kn_1}((\St_{\Delta_1})|_{P_{r_1n_1}}) \times \pi(\CS')$ for all $k$,
which is impossible since the latter vanishes for large $k$.

We have thus ruled out the possibility that $\omega$ is contained in
$(\St_{\Delta_1})|_{P_{r_1n_1}} \times \pi(\CS')$.  The other alternative is
that $\omega$ is contained in
$\St_{\Delta_1} \times \pi(\CS')|_{P_{n-r_1n_1}}$.  Suppose this were the case,
and let $k$ be the largest integer such that $\omega^{(k)}$ is nonzero.
Then $\omega^{(k)}$ is nonzero and embeds in the $k$-the derivative of
$\St_{\Delta_1} \times \pi(\CS')|_{P_{n-r_1n_1}}$, which is
$\St_{\Delta_1} \times \pi(\CS')^{(k)}$.  It follows that
the supercuspidal support of $\omega^{(k)}$ contains that of $\St_{\Delta_1}$;
in particular it contains $(\abs \circ \det)^{r-1}\pi_1$.  By~\cite[4.7b]{BZ}, it follows
that $(\abs \circ \det)^r\pi_1$ is contained in the supercuspidal support
of $\pi(\CS)$; this is impossible by our choice of ordering on the $\Delta_i$.
\end{proof}

\begin{cor} \label{cor:essential}
If $\pi$ is an admissible representation of $\GL_n$ over a field $K$ of
characteristic zero, such that
$\pi \otimes_K \overline{K}$ is isomorphic to $\pi(\CS)$ for some $\CS$ satisfying
Condition~{\em \ref{cond:Zel}},
then $\pi$ is essentially AIG.
In particular, every representation $\LL(\rho',N)$ over a field $K$ of characteristic zero
is essentially AIG.
\end{cor}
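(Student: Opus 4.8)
The plan is to verify directly the three conditions of Definition~\ref{df:ess AIG}, first in the case $K=\overline{K}$ (where $\pi\cong\pi(\CS)$), and then to descend. The final ``in particular'' assertion will follow from the first part: by construction of the generic local Langlands correspondence in \ref{subsec:BS} together with Definition~\ref{def:LL galois}, one has $\LL(\rho',N)\otimes_K\overline{K}\cong\LL(\rho'\otimes_K\overline{K},N)=(\abs\circ\det)^{-\frac{n-1}{2}}\pi(\CS)$ for a suitable unordered collection of segments $\CS$; twisting every segment of $\CS$ by the character $(\abs\circ\det)^{-\frac{n-1}{2}}$ turns a generalized Steinberg into a generalized Steinberg, so this representation is again of the form $\pi(\CS')$, and $\CS'$ still satisfies Condition~\ref{cond:Zel} since that condition only sees the relative position of the segments. (Alternatively one may simply invoke Lemma~\ref{lem:AIG basic}(1).)

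Over $\overline{K}$, I would first compute the top derivative of $\pi(\CS)=\Ind_Q^{\GL_n(E)}\St_{\Delta_1}\otimes\dots\otimes\St_{\Delta_m}$. Iterating Proposition~\ref{prop:leibnitz}, and using that each generalized Steinberg $\St_{\Delta_i}$ is absolutely irreducible and generic (so that $\St_{\Delta_i}^{(d_i)}$ is one-dimensional, where $d_i$ is the size of the block on which $\St_{\Delta_i}$ lives, by Theorem~\ref{thm:kirillov} and Definition~\ref{df:generic}), one gets $\pi(\CS)^{(n)}\cong\bigotimes_i\St_{\Delta_i}^{(d_i)}$, which is one-dimensional over $\overline{K}$. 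Since $V\mapsto V^{(n)}$ is exact (being a composite of the exact functors $\Psi^-$ and $\Phi^-$), it follows that $\pi(\CS)$ has exactly one generic Jordan--H\"older factor, counted with multiplicity. On the other hand Proposition~\ref{prop:AIG} shows that every irreducible submodule of $\pi(\CS)$ is generic; in particular $\soc(\pi(\CS))$ is a direct sum of generic irreducibles, and since $\soc(\pi(\CS))^{(n)}$ injects into $\pi(\CS)^{(n)}$ by exactness of $(-)^{(n)}$, the socle must consist of a single absolutely irreducible generic representation, occurring with multiplicity one in $\pi(\CS)$. Hence $\bigl(\pi(\CS)/\soc(\pi(\CS))\bigr)^{(n)}=0$, and since $\pi(\CS)$ is a parabolic induction of finite-length admissible representations it has finite length; all three conditions of Definition~\ref{df:ess AIG} hold, so $\pi(\CS)$ is essentially AIG over $\overline{K}$.

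To descend to $K$: since $\pi\otimes_K\overline{K}\cong\pi(\CS)$ has finite length, so does $\pi$, and the third condition of Definition~\ref{df:ess AIG} is automatic. For the socle, $\soc(\pi)\neq0$, and because $\mathrm{char}\,K=0$ the extension $\overline{K}/K$ is separable, so $\soc(\pi)\otimes_K\overline{K}$ is a nonzero \emph{semisimple} $\GL_n(E)$-submodule of $\pi(\CS)$, hence is contained in $\soc(\pi(\CS))$; as the latter is absolutely irreducible, equality holds. Therefore $\soc(\pi)\otimes_K\overline{K}$ is absolutely irreducible, so $\soc(\pi)$ is absolutely irreducible, and $\soc(\pi)^{(n)}\otimes_K\overline{K}\cong\soc(\pi(\CS))^{(n)}$ is one-dimensional, so $\soc(\pi)$ is generic. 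Finally $\bigl(\pi/\soc(\pi)\bigr)^{(n)}\otimes_K\overline{K}\cong\bigl(\pi(\CS)/\soc(\pi(\CS))\bigr)^{(n)}=0$, and faithful flatness of $\overline{K}$ over $K$ gives $\bigl(\pi/\soc(\pi)\bigr)^{(n)}=0$, which is the second condition. Hence $\pi$ is essentially AIG.

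The only genuinely substantive input is Proposition~\ref{prop:AIG}, which is already available; granting it, the argument is essentially formal. The step deserving the most care is the descent: one must know that $\soc(\pi)\otimes_K\overline{K}$ does not escape $\soc(\pi(\CS))$, and this is exactly where the hypothesis $\mathrm{char}\,K=0$ enters, via separability of $\overline{K}/K$ and the resulting preservation of semisimplicity under base change. This is also the reason the corollary is stated only in characteristic zero.
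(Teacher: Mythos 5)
Your proof is correct and follows essentially the same route as the paper's: reduce to $\overline{K}$, invoke Proposition~\ref{prop:AIG} to see that the socle of $\pi(\CS)$ is a sum of generic irreducibles, and use Proposition~\ref{prop:leibnitz} together with Theorem~\ref{thm:kirillov} to see that $\pi(\CS)^{(n)}$ is one-dimensional, forcing the socle to be a single absolutely irreducible generic and the quotient to have vanishing top derivative. The only difference is that you spell out the descent from $\overline{K}$ to $K$ (via separability and exactness of the derivative), which the paper leaves as a one-line remark.
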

\begin{proof}
It suffices to show that $\pi \otimes_K \overline{K}$ is essentially AIG,
as then the socle of $\pi$ must be absolutely irreducible and generic,
and $\pi$ must contain no other irreducible generic subquotients.
But $\pi \otimes_K \overline{K}$ has the form $\pi(\CS)$ for some $\CS$, 
so the previous proposition shows that the socle of $\pi(\CS)$ is a direct sum
of irreducible generic representations.  It thus suffices to show that $\pi(\CS)^{(n)}$
is one-dimensional; this follows from the fact that $\St_{\Delta_i}$ is irreducible
and generic, together with Theorem \ref{thm:kirillov} and Proposition \ref{prop:leibnitz}.
\end{proof}

If $\CS$ and $\CS'$ are two unordered collections of segments, we say $\CS'$ arises from $\CS$
by an elementary operation if $\CS'$ is obtained from $\CS$
by replacing a pair of linked segments $\Delta, \Delta^{\prime}$ in $\CS$
with the pair $\Delta \cup \Delta^{\prime},\Delta \cap \Delta^{\prime}$.  
We say that $\CS' \preceq \CS$ if $\CS'$ can be obtained from $\CS$ by a
sequence of elementary operations.  This partial order contains information
about the Jordan--H\"older constituents of a given $\pi(\CS)$.  More precisely:

\begin{theorem}  If $\CS$ satisfies Condition~{\em \ref{cond:Zel}},
then every Jordan--H\"older constituent of $\pi(\CS)$ is isomorphic to
$Q(\CS')$ for some $\CS' \preceq \CS$.
\end{theorem}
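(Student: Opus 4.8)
The plan is to argue by induction along the partial order $\preceq$ on unordered collections of segments. First one checks that $\preceq$ is indeed a partial order which is well-founded in the relevant sense: an elementary operation, replacing a linked pair $\Delta,\Delta'$ by $\Delta\cup\Delta',\Delta\cap\Delta'$, preserves the total supercuspidal support (hence the integer $n$, where $\sum_i|\Delta_i|=n$) and strictly increases $\sum_i|\Delta_i|^2$ — because $|\Delta\cup\Delta'|+|\Delta\cap\Delta'|=|\Delta|+|\Delta'|$ while $\Delta\cap\Delta'$ is a proper subset of each of $\Delta,\Delta'$, so the two sizes on the left are more spread apart than those on the right — and this quantity is bounded by $n^2$. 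Thus $\preceq$ is antisymmetric and admits no infinite strictly descending chain, and we may prove the statement for $\CS$ while assuming it for every $\CS'\prec\CS$. (No extension of scalars is needed: a collection of segments, and hence $\pi(\CS)$ and all the $Q(\CS')$, already lives over $\overline{K}$.) The $\preceq$-minimal collections are exactly those in which no two segments are linked; for such $\CS$ the representation $\pi(\CS)$ is irreducible by Zelevinsky's irreducibility criterion \cite[Thm.~4.2]{Ze1}, so $\pi(\CS)=Q(\CS)$ and the statement holds since $\CS\preceq\CS$. This is the base case.

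For the inductive step, suppose $\CS$ contains a linked pair. Since $\pi(\CS)$ does not depend up to isomorphism on the ordering chosen to satisfy Condition~\ref{cond:Zel} (\cite[Prop.~6.4]{Ze1}), we may re-order so that two \emph{consecutive} segments $\Delta_k,\Delta_{k+1}$ are linked: choose a linked pair $\Delta_i,\Delta_j$ with $i<j$ and $j-i$ minimal; if $j-i>1$ then $\Delta_{i+1}$ is linked to neither $\Delta_i$ nor $\Delta_j$ (either would contradict minimality), so in particular $\Delta_i,\Delta_{i+1}$ are unlinked, hence neither precedes the other and they may be transposed while keeping Condition~\ref{cond:Zel}, after which the pair $\Delta_i,\Delta_j$ sits at index gap $j-i-1$, a contradiction. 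Condition~\ref{cond:Zel} forces $\Delta_k$ not to precede $\Delta_{k+1}$, so (the two being linked) $\Delta_{k+1}$ precedes $\Delta_k$, and the two-segment theory supplies the length-two exact sequence
$$0\to Q(\{\Delta_k\cup\Delta_{k+1},\,\Delta_k\cap\Delta_{k+1}\})\to\St_{\Delta_k}\times\St_{\Delta_{k+1}}\to Q(\{\Delta_k,\Delta_{k+1}\})\to 0 .$$
Parabolic induction is exact in each variable, so applying $A\times(-)\times B$, with $A=\St_{\Delta_1}\times\cdots\times\St_{\Delta_{k-1}}$ and $B=\St_{\Delta_{k+2}}\times\cdots\times\St_{\Delta_r}$, gives
$$0\to A\times Q(\{\Delta_k\cup\Delta_{k+1},\,\Delta_k\cap\Delta_{k+1}\})\times B\to\pi(\CS)\to A\times Q(\{\Delta_k,\Delta_{k+1}\})\times B\to 0 .$$
The subrepresentation has the same semisimplification as $\pi(\CS_1)$, where $\CS_1\prec\CS$ is the elementary-operation modification of $\CS$; so by the inductive hypothesis each of its Jordan--H\"older constituents is some $Q(\CS')$ with $\CS'\preceq\CS_1\preceq\CS$.

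The remaining task — and, I expect, the main obstacle — is to treat the quotient $\Pi:=A\times Q(\{\Delta_k,\Delta_{k+1}\})\times B$. It is a nonzero quotient of $\pi(\CS)$, whose unique irreducible quotient is $Q(\CS)$ by \cite[1.2.5]{Ku}; hence the unique irreducible quotient of $\Pi$ is $Q(\CS)$, which accounts for one constituent (with $\CS\preceq\CS$). One must show that every \emph{other} Jordan--H\"older constituent of $\Pi$ is of the form $Q(\CS')$ with $\CS'$ strictly below $\CS$. The induction cannot be applied to $\Pi$ directly, since $\Pi$ is not of the form $\pi(\CS'')$; nor is $\Pi$ a subquotient of any $\pi(\CS'')$ with $\CS''\prec\CS$, because $Q(\CS)$ occurs in $\Pi$ but — by the inductive hypothesis together with the antisymmetry of $\preceq$ — in no such $\pi(\CS'')$. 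The way past this is to pass to the Grothendieck group and compute Jacquet modules: the Bernstein--Zelevinski geometric lemma describes $r_M(\Pi)$ and $r_M(Q(\CS'))$ along the relevant Levi $M$, and Frobenius reciprocity then limits which $\CS'$ can appear — this is exactly the computation underlying Zelevinsky's proof of \cite[Thm.~7.1]{Ze1}. Since that argument is purely algebraic, resting only on the Hopf-algebra formalism of \cite{BZ}, the classification of irreducibles by multisegments, and multiplicity-one of Langlands quotients in standard modules — all valid over an arbitrary algebraically closed field of characteristic zero, e.g.\ by \cite{Vig2} — the cleanest route is to present the descending induction above as the skeleton and then quote \cite[Thm.~7.1]{Ze1} for the Jacquet-module bookkeeping that closes the last step, after recording the dictionary between our $Q(\CS)$ and $\preceq$ and Zelevinski's $\langle\cdot\rangle$ and order on multisegments.
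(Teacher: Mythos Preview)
Your proposal is correct in outline but takes an unnecessarily circuitous route compared to the paper. The paper's proof is a single sentence: the statement follows by applying the Zelevinski involution to \cite[7.1]{Ze1}. Zelevinski proved the analogous statement for his standard modules and his irreducibles $\langle a\rangle$; the involution exchanges his classification with the Langlands classification (sending $\langle a\rangle$ to $Q(a)$ and Zelevinski's partial order to the one here), so one simply transports the result across.

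Your induction scaffolding --- the well-foundedness of $\preceq$, the base case, the consecutive linked pair, the length-two exact sequence --- is all sound, but it does not actually reduce the problem. As you yourself observe, the quotient $\Pi=A\times Q(\{\Delta_k,\Delta_{k+1}\})\times B$ is not of the form $\pi(\CS'')$, so the induction cannot treat it, and you fall back on ``quot[ing] \cite[Thm.~7.1]{Ze1}\ldots after recording the dictionary between our $Q(\CS)$ and $\preceq$ and Zelevinski's $\langle\cdot\rangle$ and order on multisegments.'' That dictionary \emph{is} the Zelevinski involution, and once you have it, Zelevinski's theorem gives the full statement directly, with no induction needed. So your proof collapses to the paper's one-liner, with the preceding two paragraphs becoming dead weight.

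One small wrinkle: your argument for producing a \emph{consecutive} linked pair is phrased as a contradiction with the minimality of $j-i$, but that minimality was chosen in the original ordering, not the transposed one. The idea is fine --- iterate the transposition until a consecutive pair appears, or take $j-i$ minimal over all admissible orderings --- but as written the ``contradiction'' is not quite one. (The paper proves a closely related reordering lemma just below this theorem, for a different purpose, with a more careful argument.)
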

\begin{proof} This follows by applying the Zelevinski involution to~\cite[7.2]{Ze1}.
\end{proof}

In fact, the relationship between $\pi(\CS)$ and $\pi(\CS')$ is 
considerably stronger than the theorem above suggests.  We will construct maps
of $\pi(\CS')$ into $\pi(\CS)$ for all $\CS' \preceq \CS$, and show that
any nonzero such map is an embedding, and unique up to scaling.  Before we do so,
however, we need a preliminary result about the partial order $\preceq$.
Let $\CS$ be an unordered collection of segments, and suppose that
$\CS'$ is obtained from $\CS$ by a single elementary operation.  We say this
elementary operation is {\em primitive} if there is no collection of segments
$\CS''$ with $\CS' \preceq \CS'' \preceq \CS$ other than $\CS'' = \CS$ and
$\CS'' = \CS'$.

\begin{lemma} Let $\CS$ be an unordered collection of segments, let $\Delta$
and $\Delta;$ be two linked segments in $\CS$, such that $\Delta$ precedes $\Delta'$.
Suppose that the elementary operation that replaces $\Delta$ and $\Delta'$
with $\Delta \cap \Delta'$, $\Delta \cup \Delta'$ is primitive.  Then there
exists an ordering on $\CS$ that satisfies Condition~{\em \ref{cond:Zel}},
and in which $\Delta'$ and $\Delta$ appear consecutively.
\end{lemma}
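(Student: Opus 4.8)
\emph{Proof proposal.} The plan is to pass to a purely combinatorial reformulation and then argue by contradiction. First I would note that a Condition~\ref{cond:Zel} ordering of $\CS$ is exactly a linear extension of the strict partial order $<$ on $\CS$ generated by setting $\Delta_2 < \Delta_1$ whenever $\Delta_1$ precedes $\Delta_2$; this relation is acyclic because a segment that precedes another has strictly smaller left-hand endpoint (and segments on different cuspidal lines are incomparable). Since $\Delta$ precedes $\Delta'$ we have $\Delta' < \Delta$, and the desired ordering is a linear extension of $<$ in which $\Delta'$ is immediately followed by $\Delta$. Using the elementary fact that two comparable elements $x<y$ of a finite poset with no element strictly between them can be made adjacent in some linear extension, it suffices to prove that primitivity of the operation forces: there is no $Z\in\CS$, other than $\Delta$ or $\Delta'$, with $\Delta' < Z < \Delta$.

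Next I would reduce this to a statement about a single segment. Realizing segments as intervals $[a,b]$, the relation ``$[a,b]$ precedes $[c,d]$'' is ``$a<c\le b+1\le d$''. A short computation with these inequalities shows that if $\Delta=Y_0,Y_1,\dots,Y_{t+1}=\Delta'$ is a chain in $\CS$ with $Y_i$ preceding $Y_{i+1}$ for all $i$ and $t\ge 1$, then — using also that $\Delta$ precedes $\Delta'$, so that the left-hand endpoints of all the $Y_i$ lie between those of $\Delta$ and $\Delta'$ — the segment $Y_1$ lies in $\CS\setminus\{\Delta,\Delta'\}$ and satisfies $\Delta$ precedes $Y_1$ precedes $\Delta'$. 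Applying this to a shortest chain of ``precedes'' relations from $\Delta$ to $\Delta'$ passing through a segment different from $\Delta$ and $\Delta'$ (which exists if some $Z$ as above exists, by concatenating the chains witnessing $\Delta'<Z$ and $Z<\Delta$), I would conclude: the existence of $Z$ with $\Delta'<Z<\Delta$ is equivalent to the existence of $X\in\CS\setminus\{\Delta,\Delta'\}$ with $\Delta$ precedes $X$ and $X$ precedes $\Delta'$. So it remains to rule out such an $X$.

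The heart of the argument is the contrapositive: I would show that, given $X\in\CS\setminus\{\Delta,\Delta'\}$ with $\Delta$ precedes $X$ precedes $\Delta'$, one can produce a collection $\CS''$ with $\CS'\preceq\CS''\preceq\CS$ and both relations strict, contradicting primitivity. Write $\Delta=[\alpha,\beta]$, $X=[\gamma,\delta]$, $\Delta'=[\epsilon,\zeta]$; the hypotheses give $\alpha<\gamma<\epsilon$, $\beta<\delta<\zeta$, $\gamma\le\beta+1$ and $\epsilon\le\beta+1$, whence also $\gamma\le\beta$ and $\epsilon\le\delta$. Now perform the elementary operation on the linked pair $(X,\Delta')$ to obtain $\CS''$ (replacing them by $[\gamma,\zeta]=X\cup\Delta'$ and $[\epsilon,\delta]=X\cap\Delta'$); then $[\alpha,\beta]$ and $[\gamma,\zeta]$ are linked in $\CS''$, and the elementary operation on them produces $[\alpha,\zeta]$ and $[\gamma,\beta]$; finally $[\gamma,\beta]$ and $[\epsilon,\delta]$ are linked, and the elementary operation on them produces $[\gamma,\delta]=X$ and $[\epsilon,\beta]=\Delta\cap\Delta'$ — the copy of $X$ cancelling the one removed at the first step, so that the net effect has been to replace $\Delta,\Delta'$ by $[\alpha,\zeta]=\Delta\cup\Delta'$ and $[\epsilon,\beta]=\Delta\cap\Delta'$; that is, the result is $\CS'$. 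Thus $\CS'\preceq\CS''\preceq\CS$; moreover $\CS''\ne\CS$ because the first operation genuinely alters a linked pair of distinct segments, and $\CS''\ne\CS'$ because $[\alpha,\beta]$ occurs in $\CS''$ but $\alpha$ is the left-hand endpoint of no segment among $\Delta\cup\Delta'$, $\Delta\cap\Delta'$, $X$ other than $[\alpha,\zeta]\ne[\alpha,\beta]$. (Regarding degenerate cases: $X\cap\Delta'$ is never empty, since $\epsilon\le\beta+1\le\delta$; and if $\Delta\cap\Delta'=\emptyset$, i.e.\ $\epsilon=\beta+1$, the same three operations work, the last one simply being a merge.)

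The step I expect to be the main obstacle is this second half of the argument: a priori the obstruction to adjacency could arise from a long chain of ``precedes'' relations rather than a single intermediate segment, and the real work is the interval bookkeeping that collapses such a chain to a single $X$ directly linked to both $\Delta$ and $\Delta'$, together with the verification that the three successive elementary operations above are legitimate (each applied pair being linked) and that they assemble exactly to $\CS'$ — including checking the endpoint-degenerate cases, one of which, as noted, turns out to be vacuous.
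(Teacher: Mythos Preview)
Your proof is correct and follows the same overall strategy as the paper: locate a segment $X$ (the paper's $\Delta_1$) with $\Delta$ precedes $X$ precedes $\Delta'$, then factor the elementary operation into three steps through~$X$. The three-step factorizations are mirror images of one another --- you start with the pair $(X,\Delta')$ while the paper starts with $(\Delta,\Delta_1)$ --- and the verification that the intermediate collection is distinct from both $\CS$ and $\CS'$ is the same.

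The only real difference is in how the intermediate segment is found. The paper chooses a Condition~\ref{cond:Zel} ordering minimizing the number of segments between $\Delta'$ and $\Delta$, and argues that any segment stuck between them forces a chain $\Delta'=\Delta_0,\Delta_1,\dots,\Delta_r=\Delta$ with each $\Delta_i$ preceding $\Delta_{i-1}$; it then observes (using $\Delta$ precedes $\Delta'$) that $\Delta$ precedes~$\Delta_1$ directly. You instead recast the whole question as the existence of a linear extension of a finite poset with two prescribed elements adjacent, invoke the standard fact that this is possible exactly when nothing lies strictly between them, and then collapse any witnessing chain to a single directly-linked~$X$ via the same interval arithmetic. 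Your poset reformulation is a bit more systematic and makes the reduction step cleaner; the paper's minimality argument is more hands-on but avoids having to state the linear-extension lemma separately. Either way, the crucial interval computation --- that the hypothesis $\Delta$ precedes $\Delta'$ forces the first link of the chain to be directly preceded by~$\Delta$ --- is the same.
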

\begin{proof}
Choose an ordering on $\CS$ that satisfies Condition~\ref{cond:Zel},
and that minimizes the number of segments that appear between $\Delta'$
and $\Delta$.  Suppose there is a segment $\Delta''$ between $\Delta$
and $\Delta'$.

By our assumption on the chosen ordering, the ordering on $\CS$ obtained by
moving $\Delta''$ after $\Delta$ fails to satisfy Condition~\ref{cond:Zel}.
There must thus be a segment $\Delta'''$ that appears between $\Delta''$
and $\Delta$ in the chosen ordering, for which $\Delta'''$ precedes
$\Delta''$.  Similarly, $\Delta''$ must precede a segment that appears
between $\Delta'$ and $\Delta''$ in the chosen ordering.

Applying these considerations repeatedly we obtain a chain:
$$\Delta' = \Delta_0, \Delta_1, \dots, \Delta_r = \Delta$$
such that each $\Delta_i$ precedes $\Delta_{i-1}$, and appears
after $\Delta_{i-1}$ in the chosen order on $\CS$.
Moreover, since $\Delta$ precedes $\Delta'$, it follows that $\Delta$
precedes $\Delta_1$.  The elementary operation on $\CS$
that replaces $\Delta$ and $\Delta'$ with $\Delta \cap \Delta'$ and
$\Delta \cup \Delta'$ then factors as:
\begin{enumerate}
\item Replace $\Delta$ and $\Delta_1$ with $\Delta \cup \Delta_1$ and $\Delta \cap \Delta_1$.
\item Replace $\Delta'$ and $\Delta \cup \Delta_1$ with $\Delta \cup \Delta_1 \cup \Delta'$
and $\Delta' \cap (\Delta \cup \Delta_1)$.  (Note that $\Delta \cup \Delta_1 \cup \Delta'$ is
equal to $\Delta \cup \Delta'$.)
\item Replace $\Delta' \cap (\Delta \cup \Delta_1)$ and $\Delta \cap \Delta_1$ with
$[\Delta' \cap (\Delta \cup \Delta_1)] \cup [\Delta \cap \Delta_1]$ (which is equal to $\Delta_1$),
and $[\Delta' \cap (\Delta \cup \Delta_1)] \cap [\Delta \cap \Delta_1]$ (which is equal
to $\Delta \cap \Delta'.$)
\end{enumerate}
In particular the elementary operation that replaces $\Delta$ and $\Delta'$
with $\Delta \cap \Delta'$ and $\Delta \cup \Delta'$ is not primitive, as required.
\end{proof}
 
\begin{prop} Suppose that $\CS$ satisfies Condition~{\em \ref{cond:Zel}},
and that $\CS' \preceq \CS$.  Then $\Hom_{\overline{K}[\GL_n(E)]}(\pi(\CS'),\pi(\CS))$
is one-dimensional over $\overline{K}$, and every nonzero map $\pi(\CS') \rightarrow \pi(\CS)$ is
an embedding.
\end{prop}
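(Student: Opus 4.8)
The plan is to dispose of the first assertion --- that the Hom-space is at most one-dimensional, and that any nonzero element is an embedding --- essentially formally, and to spend the real effort constructing one nonzero map. By Corollary~\ref{cor:essential} both $\pi(\CS)$ and $\pi(\CS')$ are essentially~AIG. An elementary operation replaces a linked pair $\Delta,\Delta'$ by $\Delta\cup\Delta'$ and $\Delta\cap\Delta'$, and as multisets of supercuspidal representations $\Delta\sqcup\Delta' = (\Delta\cup\Delta')\sqcup(\Delta\cap\Delta')$; hence $\scs(\pi(\CS')) = \scs(\pi(\CS))$. Since $\soc(\pi(\CS))$ and $\soc(\pi(\CS'))$ are each the unique irreducible generic representation with this supercuspidal support (Proposition~\ref{prop:generic supercuspidal support}), they are isomorphic, say to $\tau$. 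Then Lemma~\ref{lem:AIG basic}(3) gives an injection $\Hom_{\overline{K}[\GL_n(E)]}(\pi(\CS'),\pi(\CS)) \hookrightarrow \Hom_{\overline{K}[\GL_n(E)]}(\tau,\tau) = \overline{K}$, so the dimension is at most one, and Lemma~\ref{lem:AIG basic}(4) shows any nonzero element is an embedding. Thus everything reduces to exhibiting a single nonzero map, which (being then an embedding) will span the Hom-space.

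First I would reduce to the case where $\CS'$ is obtained from $\CS$ by one \emph{primitive} elementary operation. A covering relation in the (finite) poset $\preceq$ is necessarily a primitive elementary operation, and $\CS'\preceq\CS$ factors as a chain of covering relations; since a composite of embeddings is injective, hence nonzero, it suffices to construct a nonzero map for each primitive step. So assume $\CS'$ arises from $\CS$ by a primitive elementary operation on the linked pair $\Delta$ (preceding $\Delta'$), $\Delta'$.

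By the lemma immediately preceding this proposition, choose an ordering of $\CS$ satisfying Condition~\ref{cond:Zel} in which $\Delta'$ and $\Delta$ are consecutive. Let $m$ be the sum of the ranks of the two blocks, so that $\St_{\Delta'}$ and $\St_\Delta$ contribute a single $\GL_m(E)$-factor $\St_{\Delta'}\times\St_\Delta$. By transitivity of parabolic induction, $\pi(\CS)$ is the (normalized) induction to $\GL_n(E)$ of the tensor product of the $\St_{\Delta_i}$ in which the consecutive pair $\St_{\Delta'}\otimes\St_\Delta$ is replaced by $\St_{\Delta'}\times\St_\Delta$; similarly $\pi(\CS')$ is the analogous induction with $\St_{\Delta\cup\Delta'}\times\St_{\Delta\cap\Delta'}$ in that slot. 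Here I also need that the resulting ordering of $\CS'$ still satisfies Condition~\ref{cond:Zel}; this is a short combinatorial check, using Condition~\ref{cond:Zel} for the chosen ordering of $\CS$ together with the facts that $\Delta\cup\Delta'$ shares its initial cuspidal with $\Delta$ and $\Delta\cap\Delta'$ shares its terminal cuspidal with $\Delta'$. Granting this, any $\GL_m(E)$-embedding $\St_{\Delta\cup\Delta'}\times\St_{\Delta\cap\Delta'}\hookrightarrow\St_{\Delta'}\times\St_\Delta$ induces (parabolic induction being exact) an embedding $\pi(\CS')\hookrightarrow\pi(\CS)$, as desired.

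It remains to produce that embedding. Since $\Delta$ precedes $\Delta'$, $\Delta'$ does not precede $\Delta$, so $\St_{\Delta'}\times\St_\Delta = \pi(\{\Delta',\Delta\})$ satisfies Condition~\ref{cond:Zel} and is essentially~AIG by Corollary~\ref{cor:essential}; by Proposition~\ref{prop:leibnitz} and Theorem~\ref{thm:kirillov} its top derivative is one-dimensional, so it has a unique irreducible generic constituent, namely its socle. On the other hand $\Delta\cup\Delta'$ and $\Delta\cap\Delta'$ are unlinked (one contains the other), so $\St_{\Delta\cup\Delta'}\times\St_{\Delta\cap\Delta'}$ is irreducible, it is generic (again by Proposition~\ref{prop:leibnitz} and the genericity of the generalized Steinberg representations), and it is a constituent of $\St_{\Delta'}\times\St_\Delta$ (same supercuspidal support); hence it \emph{is} $\soc(\St_{\Delta'}\times\St_\Delta)$, giving the required embedding. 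I expect the main obstacle to be precisely the two bookkeeping points flagged above: verifying that the ordering of $\CS'$ produced by this construction again satisfies Condition~\ref{cond:Zel}, and making the identification of $\soc(\St_{\Delta'}\times\St_\Delta)$ with $\St_{\Delta\cup\Delta'}\times\St_{\Delta\cap\Delta'}$ rigorous --- for the latter it is the genericity argument via top derivatives, not merely the coincidence of supercuspidal supports, that pins down which irreducible the socle is.
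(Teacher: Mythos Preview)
Your proof is correct and follows essentially the same route as the paper: reduce to a single primitive elementary operation, use the preceding lemma to place $\Delta',\Delta$ consecutively in an ordering satisfying Condition~\ref{cond:Zel}, and then induce a two-segment embedding up to $\GL_n(E)$. The only minor variation is in how you produce the embedding $\St_{\Delta\cup\Delta'}\times\St_{\Delta\cap\Delta'}\hookrightarrow\St_{\Delta'}\times\St_\Delta$: the paper cites Zelevinski's \cite[Prop.~4.6]{Ze1} via the Zelevinski involution, whereas you argue that the left side is irreducible (unlinked segments) and generic, hence coincides with the socle of the right side by uniqueness of the generic representation with given supercuspidal support --- both approaches rely on standard facts from \cite{Ze1}, and yours is arguably more self-contained within the paper's framework.
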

\begin{proof}
As $\pi(\CS')$ and $\pi(\CS)$ are essentially AIG, it suffices to show that
there exists a nonzero map $\pi(\CS') \rightarrow \pi(\CS)$.  Moreover,
we may reduce to the case where $\CS'$ and $\CS$ differ by a single, primitive, elementary
operation.  Let $\CS'$ differ from $\CS$ by replacing $\Delta,\Delta'$
with $\Delta \cup \Delta', \Delta \cap \Delta'$, where $\Delta'$ precedes $\Delta$.
By the above lemma we may choose an ordering on $\CS$ that satisfies Condition~\ref{cond:Zel}
in which $\Delta$ and $\Delta'$ are adjacent.  We obtain from this ordering on $\CS$ an
ordering on $\CS'$ in which $\Delta \cap \Delta'$ replaces $\Delta$ and
$\Delta \cup \Delta'$ replaces $\Delta'$; this ordering also satisfies Condition~\ref{cond:Zel}.
Let $\CS_0$ be the collection of segments that appear in $\CS$ before $\Delta$ and $\Delta'$ in
this chosen ordering, and let $\CS_1$ be the collection of segments that appear in $\CS$
after $\Delta$ and $\Delta'$.

By applying the Zelevinski involution to \cite[Prop.~4.6]{Ze1}, we find that
$\pi(\Delta \cup \Delta',\Delta \cap \Delta')$ embeds in $\pi(\Delta,\Delta')$.
But $\pi(\CS')$ is isomorphic to 
$$\Ind_P^{\GL_n} \pi(\CS_0) \otimes \pi(\Delta \cup \Delta',\Delta \cap \Delta') \otimes \pi(\CS_1),$$
and $\pi(\CS)$ is isomorphic to
$$\Ind_P^{\GL_n} \pi(\CS_0) \otimes \pi(\Delta,\Delta') \otimes \pi(\CS_1),$$ 
for a suitably chosen parabolic subgroup $P$ of $\GL_n$.
The embedding of $\pi(\Delta \cup \Delta',\Delta \cap \Delta')$
in $\pi(\Delta,\Delta')$ thus gives rise to a nonzero map of $\pi(\CS')$ into $\pi(\CS)$, as required.
\end{proof}

Moreover, the embeddings of $\pi(\CS')$ into $\pi(\CS)$ constructed above descend
to fields of definition:

\begin{prop} \label{prop:embedding descent}
Let $\pi$ and $\pi^{\prime}$ be admissible representations over $K$,
and suppose there are unordered collections of segments $\CS$ and $\CS'$,
with $\CS' \preceq \CS$,
such that $\pi \otimes_K \overline{K}$ is isomorphic to $\pi(\CS)$
and $\pi' \otimes_K \overline{K}$ is isomorphic to $\pi(\CS')$.  Then
$\Hom_{K[\GL_n(E)]}(\pi',\pi)$ is one-dimensional over $K$, and every nonzero 
map $\pi' \rightarrow \pi$ is an embedding.
\end{prop}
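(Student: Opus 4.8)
The plan is to deduce the statement from the corresponding statement over $\overline{K}$ (the Proposition immediately preceding it) together with the descent machinery of Section~\ref{subsec:AIG}.

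First I would record the structural input: by Corollary~\ref{cor:essential}, both $\pi$ and $\pi'$ are essentially AIG admissible smooth $\GL_n(E)$-representations over $K$. Two consequences are then immediate. On the one hand, Lemma~\ref{lem:AIG basic}(4) shows that every nonzero $\GL_n(E)$-equivariant map $\pi' \to \pi$ is automatically an embedding, which disposes of the second assertion. On the other hand, restriction to socles (Lemma~\ref{lem:AIG basic}(3)) gives an injection $\Hom_{K[\GL_n(E)]}(\pi',\pi) \hookrightarrow \Hom_{K[\GL_n(E)]}\bigl(\soc(\pi'),\soc(\pi)\bigr)$; since $\soc(\pi')$ and $\soc(\pi)$ are absolutely irreducible over $K$, the target is isomorphic to $K$ when these socles are isomorphic and is $0$ otherwise, so in any case $\dim_K \Hom_{K[\GL_n(E)]}(\pi',\pi) \le 1$. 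It thus remains only to produce a single nonzero $K$-rational map $\pi' \to \pi$.

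To do this I would start from the preceding Proposition, which supplies a nonzero $\overline{K}[\GL_n(E)]$-homomorphism $g \colon \pi(\CS') \to \pi(\CS)$, i.e.\ a nonzero element of $\Hom_{\overline{K}[\GL_n(E)]}(\pi'\otimes_K\overline{K},\, \pi\otimes_K\overline{K})$. Since $\pi'\otimes_K\overline{K} \cong \pi(\CS')$ is admissible and essentially AIG, it has finite length by Corollary~\ref{cor:admissible AIG finiteness}, hence so does $\pi'$, and in particular $\pi'$ is finitely generated as a $K[\GL_n(E)]$-module. Choosing finitely many generators of $\pi'$ and inspecting the finitely many coefficients from $\overline{K}$ that appear in their images under $g$, one sees that $g$ restricts to a nonzero $K'[\GL_n(E)]$-homomorphism $g' \colon \pi'\otimes_K K' \to \pi\otimes_K K'$ for some finite (hence separable, as $\mathrm{char}\,K = 0$) extension $K'/K$. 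Now Lemma~\ref{lem:AIG descent}, applied to the essentially AIG representations $\pi'$ and $\pi$ over $K$ and the extension $K'/K$, provides $c \in (K')^{\times}$ such that $cg'$ descends to a $K$-rational homomorphism $\pi' \to \pi$, which is nonzero because $cg' \ne 0$. Hence $\Hom_{K[\GL_n(E)]}(\pi',\pi) \ne 0$, and together with the upper bound above this shows it is exactly one-dimensional over $K$.

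The only genuinely delicate point is the middle step of the last paragraph, namely arguing that the $\overline{K}$-homomorphism $g$ is already defined over a finite subextension of $\overline{K}/K$; this is where the finiteness of length of $\pi'$ (and hence its finite generation over $K[\GL_n(E)]$) is essential, and it is the reason Corollary~\ref{cor:admissible AIG finiteness} is invoked. Everything else is a direct appeal to the cited lemmas.
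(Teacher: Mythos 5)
Your proof is correct and follows essentially the same route as the paper: reduce to producing one nonzero map, take the embedding $\pi(\CS')\hookrightarrow\pi(\CS)$ from the preceding proposition, and descend it via Lemma~\ref{lem:AIG descent}, with the one-dimensionality and injectivity coming from the essentially AIG structure supplied by Corollary~\ref{cor:essential}. In fact you are slightly more careful than the paper, which applies Lemma~\ref{lem:AIG descent} directly to the map over $\overline{K}$ even though that lemma is stated only for finite separable extensions; your intermediate step, using finite length (hence finite generation) of $\pi'$ to realize the map over a finite subextension $K'/K$, fills in exactly the detail the paper leaves implicit.
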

\begin{proof}
As $\pi$ and $\pi'$ are essentially AIG, $\Hom_{K[GL_n(E)]}(\pi',\pi)$ is either
zero or one-dimensional over $K$, and every nonzero map $\pi' \rightarrow \pi$
is an embedding.  It thus suffices to construct a nonzero map from $\pi'$ to $\pi$.
Let $\phi: \pi' \otimes_K \overline{K} \rightarrow \pi \otimes_K \overline{K}$
be an embedding.  By Lemma~\ref{lem:AIG descent}, a scalar multiple of
$\phi$ descends to the desired embedding of $\pi'$ in $\pi$.
\end{proof}

We immediately deduce:

\begin{cor} \label{cor:LL descent}
Let $\rho$ be a continuous $n$-dimensional representation of $G_E$ over~$K$,
and let $\pi$ be an admissible representation of $GL_n(E)$ over $K$, such that
$\pi \otimes_K \overline{K}$ is isomorphic to $\LL(\rho \otimes_K \overline{K})$.  Then
$\pi$ is isomorphic to $\LL(\rho)$.
\end{cor}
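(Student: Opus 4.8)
The plan is to reduce to the descent result of Proposition~\ref{prop:embedding descent}, once we know that the map $\rho \mapsto \LL(\rho)$ is compatible with extension of scalars. So the first step would be to check that $\LL(\rho) \otimes_K \overline{K}$ is isomorphic to $\LL(\rho \otimes_K \overline{K})$. By Definition~\ref{def:LL galois} this unwinds into three compatibilities with the field extension $K \hookrightarrow \overline{K}$: first, that the Weil--Deligne representation $(\rho',N)$ attached to $\rho$ base-changes to the one attached to $\rho \otimes_K \overline{K}$, which is immediate from the uniqueness assertion of Proposition~\ref{prop:WD}; second, that Frobenius-semisimplification commutes with base change, since the multiplicative Jordan decomposition $\rho'(\Phi) = su$ (and hence the unipotent part $u$ entering the definition of $(\rho')^{\Fss}$) is preserved under extension of scalars in characteristic zero; and third, that $(\rho',N) \mapsto \LL(\rho',N)$ is compatible with arbitrary field extensions, which is recorded in Section~\ref{subsec:BS}. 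Combining these with the hypothesis $\pi \otimes_K \overline{K} \iso \LL(\rho \otimes_K \overline{K})$, we obtain a $\GL_n(E)$-equivariant $\overline{K}$-linear isomorphism $\pi \otimes_K \overline{K} \iso \LL(\rho) \otimes_K \overline{K}$; in particular $\pi \otimes_K \overline{K}$ and $\LL(\rho) \otimes_K \overline{K}$ are each isomorphic to a representation of the form $\pi(\CS)$ for one and the same collection of segments $\CS$ satisfying Condition~\ref{cond:Zel} (the twist by $(\abs \circ \det)^{-\frac{n-1}{2}}$ occurring in the Breuil--Schneider recipe being absorbed into the segments).

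Since $\CS \preceq \CS$, Proposition~\ref{prop:embedding descent} applies with source $\pi$ and target $\LL(\rho)$: it shows that $\Hom_{K[\GL_n(E)]}(\pi, \LL(\rho))$ is one-dimensional over $K$ and that every nonzero element is an embedding. Fix a nonzero $f \colon \pi \hookrightarrow \LL(\rho)$; it remains only to see that $f$ is surjective. Extending scalars to $\overline{K}$ (which is faithfully flat over $K$) and using the isomorphisms above to identify source and target with $\LL(\rho \otimes_K \overline{K})$, the map $f \otimes_K \overline{K}$ becomes a nonzero element of $\End_{\overline{K}[\GL_n(E)]}\bigl(\LL(\rho \otimes_K \overline{K})\bigr)$. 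As $\LL(\rho \otimes_K \overline{K})$ is essentially AIG by Corollary~\ref{cor:essential}, Lemma~\ref{lem:AIG homs} identifies this endomorphism ring with $\overline{K}$, so $f \otimes_K \overline{K}$ is multiplication by a nonzero scalar and is therefore an isomorphism. Its cokernel vanishes, and by faithful flatness the cokernel of $f$ vanishes as well; thus $f$ is an isomorphism, proving $\pi \iso \LL(\rho)$.

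I do not anticipate a genuine obstacle: the argument is an assembly of tools already in place. The one point that needs a little care is the bookkeeping in the first step --- verifying that each of the three operations composing $\rho \mapsto \LL(\rho)$ (Galois representation to Weil--Deligne representation, Frobenius-semisimplification, and the Breuil--Schneider recipe) commutes with the extension $K \hookrightarrow \overline{K}$ --- but each of these compatibilities is either stated explicitly in the excerpt or follows at once from the uniqueness in Proposition~\ref{prop:WD}.
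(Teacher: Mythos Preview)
Your proposal is correct and follows essentially the same route as the paper, which simply states ``We immediately deduce'' after Proposition~\ref{prop:embedding descent}: the key input is the compatibility of $\LL$ with extension of scalars, so that both $\pi$ and $\LL(\rho)$ base-change to the same $\pi(\CS)$, and then Proposition~\ref{prop:embedding descent} with $\CS=\CS'$ furnishes a nonzero embedding over $K$. Your surjectivity argument via base change and Lemma~\ref{lem:AIG homs} is fine; a slightly shorter finish is to note that $\pi$ and $\LL(\rho)$ have the same finite length over $K$ (their base changes to $\overline{K}$ are isomorphic and of finite length by Corollary~\ref{cor:essential} and the finiteness results following Lemma~\ref{lem:char zero AIG}), so any embedding between them is automatically an isomorphism.
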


The above results allow us to establish some useful facts about essentially AIG
envelopes in characteristic zero, that will be useful in the proof of 
Proposition~\ref{prop:small n interpolation}.

\begin{lemma}
\label{lem:char zero AIG}
Let $\pi$ be an irreducible generic representation of $\GL_n(E)$ over an
algebraically closed field $K$ of characteristic zero, and let $\pi_1, \dots, \pi_r$ be
the supercuspidal support of $\pi$, ordered so that Condition~{\em \ref{cond:Zel}}
holds {\em (}when the $\pi_i$ are treated as one-element segments.{\em )}  Then the
parabolic induction
$$\Ind_P^{\GL_n(E)} \pi_1 \otimes \dots \otimes \pi_r$$
is an essentially AIG envelope of $\pi$.  {\em (}Here $P=MU$ is a suitably
chosen parabolic subgroup of $\GL_n(E)$, with Levi subgroup $M$ and
unipotent radical $U$.{\em )}
\end{lemma}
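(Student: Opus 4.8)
The plan is to verify directly that $W:=\Ind_P^{\GL_n(E)}\pi_1\otimes\dots\otimes\pi_r$ meets the three conditions of Definition~\ref{df:envelope}. Write $\CS_0$ for the collection of one-element segments $\{\pi_1\},\dots,\{\pi_r\}$ with the given ordering; by hypothesis this ordering satisfies Condition~\ref{cond:Zel}, so $W=\pi(\CS_0)$, and set $M=\prod_i\GL_{n_i}(E)$, where $\pi_i$ is a supercuspidal representation of $\GL_{n_i}(E)$, and $\sigma=\pi_1\otimes\dots\otimes\pi_r$, an irreducible supercuspidal representation of $M$. First, $W$ is essentially AIG by Corollary~\ref{cor:essential}, and it is admissible, being a parabolic induction of an admissible representation of $M$. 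Second, $\pi$ embeds in $W$: since $W$ is essentially AIG, $\soc(W)$ is absolutely irreducible and generic, and it has the same supercuspidal support as $W$, namely $\{\pi_1,\dots,\pi_r\}$, by additivity of supercuspidal support under parabolic induction; hence $\soc(W)\cong\pi$ by the uniqueness in Proposition~\ref{prop:generic supercuspidal support}, and in particular $\pi\hookrightarrow W$.

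The substance of the lemma is the maximality condition~(3). Let $\pi\hookrightarrow Y$ be an embedding with $Y$ essentially AIG admissible smooth; I must construct an embedding $Y\hookrightarrow W$. By Corollary~\ref{cor:admissible AIG finiteness}, $Y$ has finite length, and $\soc(Y)=\pi$ because $\soc(Y)$ is irreducible and contains $\pi$; by Corollary~\ref{cor:AIG supercuspidal support} every Jordan--H\"older constituent of $Y$ has supercuspidal support $\{\pi_1,\dots,\pi_r\}$. I would induct on $\mathrm{length}(Y)$, the base case $Y=\pi$ being the preceding paragraph. For the inductive step, choose a maximal proper submodule $Z\supseteq\soc(Y)$ of $Y$, so that $\tau:=Y/Z$ is irreducible and, being a quotient of $Y/\soc(Y)$, is non-generic; by Lemma~\ref{lem:AIG basic}(2), $Z$ is essentially AIG admissible with $\soc(Z)=\pi$, so by induction there is an embedding $\iota\colon Z\hookrightarrow W$. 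Applying $\Hom_{\GL_n(E)}(-,W)$ to $0\to Z\to Y\to\tau\to 0$, the obstruction to extending $\iota$ over $Y$ is the class $\iota_*[Y]\in\Ext^1_{\GL_n(E)}(\tau,W)$, where $[Y]\in\Ext^1_{\GL_n(E)}(\tau,Z)$ is the class of the extension; so it suffices to know that $\Ext^1_{\GL_n(E)}(\tau,W)=0$ for every non-generic irreducible $\tau$ with supercuspidal support $\{\pi_1,\dots,\pi_r\}$. Granting this, the resulting map $Y\to W$ is nonzero, since it restricts to the inclusion $\pi\hookrightarrow Z\hookrightarrow W$ on the socle, hence is an embedding by Lemma~\ref{lem:AIG basic}(4); this completes the induction, and uniqueness of the envelope up to isomorphism is Proposition~\ref{prop:envelope}.

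The hard part is exactly this $\Ext^1$-vanishing, and it is where the characteristic-zero hypothesis is essential (it fails, e.g., in the setting of Remark~\ref{rem:non-surj}). I expect to deduce it from the structure of the Bernstein block of $\GL_n(E)$ containing $\pi$: in characteristic zero this block is equivalent to the module category of a Hecke algebra that is module-finite over its Noetherian, regular centre, and $W=\Ind_P^{\GL_n(E)}\sigma$ arises from the progenerator of the block by specializing the centre at the point corresponding to the supercuspidal support $\{\pi_1,\dots,\pi_r\}$, so the required vanishing becomes a computation inside this Hecke algebra, fed by the classical Bernstein--Zelevinsky analysis of Jacquet modules of generic representations of $\GL_n(E)$ (compare the functors recalled in Section~\ref{subsec:kirillov}). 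Equivalently, Frobenius reciprocity gives $\Hom_{\GL_n(E)}(Y,W)=\Hom_M\bigl(r_P(Y),\sigma\bigr)$ with $r_P$ the normalized Jacquet functor, and it is enough to show that $\sigma$ occurs in the cosocle of $r_P(Y)$; since $r_P$ is exact and $\sigma$ occurs in the cosocle of $r_P(\pi)\subseteq r_P(Y)$ by the second step, what remains is to check that this persists for the finite-length essentially AIG overmodule $Y$ --- a statement resting on the fine structure of the block rather than on the formal machinery developed earlier in the paper, and hence the main technical obstacle.
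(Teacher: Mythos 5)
Your first steps match the paper and are fine: $W=\pi(\CS_0)$ is essentially AIG by Corollary~\ref{cor:essential}, its socle is identified with $\pi$ via Proposition~\ref{prop:generic supercuspidal support}, and the entire content of the lemma is the maximality property~(3). But your proof of maximality is not complete. You reduce it, by induction on length, to the vanishing of $\Ext^1_{\GL_n(E)}(\tau,W)$ for every non-generic irreducible $\tau$ in the relevant supercuspidal support (equivalently, to $\sigma:=\pi_1\otimes\cdots\otimes\pi_r$ occurring in the cosocle of $r_P(Y)$), and then you explicitly leave that statement unproved, deferring it to ``the fine structure of the block.'' That deferred statement \emph{is} the lemma, so this is a genuine gap. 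Moreover, the $\Ext^1$ formulation is the harder of your two reformulations: via the derived adjunction $\Ext^1_G(\tau,\Ind_P^G\sigma)\cong\Ext^1_M(r_P\tau,\sigma)$ one must contend with the fact that $\Ext^1_M(\sigma,\sigma)\neq 0$ (supercuspidals of $M$ admit nontrivial unramified self-extensions), so the vanishing is not a formal consequence of Theorem~\ref{thm:ext} and would force you to control whether and how $\sigma$ occurs in $r_P(\tau)$.

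The missing idea is that the tools already developed in the paper close the gap directly, with no induction on length and no $\Ext^1$ computation on $G$. By Frobenius reciprocity it suffices to exhibit a nonzero element of $\Hom_M(r_P(Y),\sigma)$, i.e.\ to show $\sigma$ is a quotient of $r_P(Y)$ itself (not merely of the submodule $r_P(\pi)$). By Corollary~\ref{cor:AIG supercuspidal support} every Jordan--H\"older constituent of $Y$ has supercuspidal support $\{\pi_1,\dots,\pi_r\}$; since the blocks of $M$ have sizes $n_1,\dots,n_r$, a counting of supports forces every Jordan--H\"older constituent of $r_P(Y)$ to be a \emph{supercuspidal} representation of $M$, and at least one of them is isomorphic to $\sigma$ (as $\sigma$ is a quotient of $r_P(\pi)\subseteq r_P(Y)$). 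Now Theorem~\ref{thm:ext}, applied to the Levi $M$, shows that $\sigma$ has no extensions in any degree with an irreducible supercuspidal representation of $M$ not isomorphic to $\sigma$; hence the part of $r_P(Y)$ built out of copies of $\sigma$ splits off as a nonzero direct summand, and any nonzero finite-length module all of whose constituents are isomorphic to $\sigma$ surjects onto $\sigma$. This produces the required nonzero map $Y\to W$, which is automatically an embedding by Lemma~\ref{lem:AIG basic}. So the result rests exactly on the ``formal machinery developed earlier in the paper,'' not on Bernstein--Hecke-algebra structure theory.
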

\begin{proof}
By Corollary~\ref{cor:essential}, the representation 
$$\Ind_P^{\GL_n(E)} \pi_1 \otimes \dots \otimes \pi_r$$
is essentially AIG.  Its socle is thus an irreducible generic representation
with the same supercuspidal support as $\pi$, and is therefore isomorphic to
$\pi$, by Proposition~\ref{prop:generic supercuspidal support}. 
It thus remains to show that any essentially AIG representation
$W$ whose socle is isomorphic to $\pi$ embeds in
$$\Ind_P^{\GL_n(E)} \pi_1 \otimes \dots \otimes \pi_r.$$
Note that as any map of essentially AIG representations is injective,
it suffices to construct a map:
$$W \rightarrow \Ind_P^{\GL_n(E)} \pi_1 \otimes \dots \otimes \pi_r.$$
By Frobenius Reciprocity, this is equivalent to constructing a map:
$$\Res_{\GL_n(E)}^P W \rightarrow \pi_1 \otimes \dots \otimes \pi_r.$$

As every Jordan--H\"older constituent of $W$ has supercuspidal support
$\{\pi_1, \dots, \pi_r\}$ by Corollary~\ref{cor:AIG supercuspidal support},
it follows that every Jordan--H\"older constituent of
$\Res_{\GL_n(E)}^P W$ is a supercuspidal representation of $M$, and
at least one of these Jordan--H\"older constituents is isomorphic
to $\pi_1 \otimes \dots \otimes \pi_r$.  By Theorem~\ref{thm:ext},
$\pi_1 \otimes \dots \otimes \pi_r$ only admits nontrivial extensions 
(as an $M$-representation) with irreducible representations isomorphic to
$\pi_1 \otimes \dots \otimes \pi_r$.  Thus $\Res_{\GL_n(E)}^P W$ admits a quotient
isomorphic to $\pi_1 \otimes \dots \otimes \pi_r$ and the result follows.
\end{proof}

\begin{cor}
Let $W$ be an essentially AIG representation of $\GL_n(E)$ over
a field $K$ of characteristic zero.  Then $W$ has finite length.
\end{cor}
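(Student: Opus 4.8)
The plan is to reduce to the case of an algebraically closed coefficient field, and there to embed $W$ into the essentially AIG envelope of its socle, which Lemma~\ref{lem:char zero AIG} describes explicitly as a finite-length parabolic induction.

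First I would reduce to the case $K = \overline{K}$. Finite length of a smooth $G$-representation is insensitive to extension of scalars: an infinite strictly increasing chain of $G$-submodules of $W$ would, by faithful flatness, give one in $W\otimes_K\overline K$, and conversely. So it suffices to treat $W\otimes_K\overline K$, and for that I would check that it is again essentially AIG. Conditions~(2) and~(3) of Definition~\ref{df:ess AIG} are visibly preserved under $-\otimes_K\overline K$ (the top-derivative functor commutes with base change, and a union of finite-length submodules remains such). For condition~(1), $\soc(W)\otimes_K\overline K$ is irreducible, since $\soc(W)$ is absolutely irreducible, and generic, since $\bigl(\soc(W)\bigr)^{(n)}$ is one-dimensional and the derivative commutes with base change; and a standard Galois-descent argument rules out any further irreducible submodule of $W\otimes_K\overline K$, for such a submodule, together with its finitely many Galois conjugates (each again non-generic, as genericity is defined over $K$), would descend to a non-zero semisimple $G$-submodule of $W$ all of whose constituents are non-generic, which is impossible since $\soc(W)$, the largest semisimple submodule of $W$, is irreducible and generic.

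So assume $K = \overline K$, and put $\pi_0 := \soc(W)$, an irreducible generic (hence admissible) representation. Let $\{\pi_1,\dots,\pi_r\}$ be its supercuspidal support, ordered so that Condition~\ref{cond:Zel} holds for the associated one-element segments, and let $P = MU$ be the corresponding parabolic subgroup. By Lemma~\ref{lem:char zero AIG}, $E_0 := \Ind_P^{\GL_n(E)}\pi_1\otimes\dots\otimes\pi_r$ is an essentially AIG envelope of $\pi_0$; since $\pi_1\otimes\dots\otimes\pi_r$ is an irreducible admissible representation of $M$, the induced representation $E_0$ is admissible of finite length, say of length $c$. Now write $W$ as the union of its finite-length $G$-submodules $W_i$ (possible because $W$ is essentially AIG). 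By Lemma~\ref{lem:AIG basic}~(2) each $W_i$ is essentially AIG with $\soc(W_i) = \pi_0$, and each $W_i$ is admissible, being of finite length. The maximality property in Definition~\ref{df:envelope} then yields a $G$-equivariant embedding $W_i\hookrightarrow E_0$, so that $\mathrm{length}(W_i)\le c$. Since the $W_i$ form a family of $G$-submodules of bounded length that is directed under inclusion (the sum of two finite-length submodules is again of finite length), a member $W_0$ of maximal length contains every $W_i$, and hence $W = W_0$ has finite length.

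The main obstacle is the reduction step: one must verify that passing to $\overline K$ does not enlarge the socle, i.e.\ that $W\otimes_K\overline K$ remains essentially AIG, which is exactly what the Galois-descent argument above supplies. A secondary technical point is that $W$ itself need not be admissible, so the envelope cannot be applied to $W$ directly; passing through the finite-length (hence admissible) submodules $W_i$ and extracting from Lemma~\ref{lem:char zero AIG} a length bound uniform in $i$ is what makes the argument go through.
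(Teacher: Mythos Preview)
Your proof is correct and follows essentially the same route as the paper: reduce to an algebraically closed coefficient field and then use Lemma~\ref{lem:char zero AIG} to bound everything by a finite-length parabolic induction. The paper's argument is terser---it forms the envelope $W' = \env(\soc W)$ over $K$, asserts that $W'\otimes_K\overline K$ has finite length by the lemma, and concludes that $W\hookrightarrow W'$ has finite length---and in doing so it leaves implicit precisely the two points you handle explicitly. First, the claim that $W'\otimes_K\overline K$ embeds in the envelope over $\overline K$ needs the observation that base change preserves the essentially~AIG property, which is your Galois-descent step. Second, the embedding $W\hookrightarrow W'$ is only guaranteed by Definition~\ref{df:envelope}~(3) for $W$ \emph{admissible}; the paper is silently using that the proof of Proposition~\ref{prop:envelope} actually gives the stronger universal property for arbitrary essentially~AIG $Y$. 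Your device of passing through the finite-length (hence admissible) submodules $W_i$ and extracting a uniform length bound sidesteps this neatly, and is in the spirit of Corollary~\ref{cor:AIG bounded}. So your argument is a more carefully justified version of the same proof.
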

\begin{proof}
Let $W'$ be the essentially AIG envelope of $\soc(W)$.  By the preceding lemma,
$W' \otimes_K \overline{K}$ has finite length, so $W'$, and hence $W$, has finite length.
\end{proof}

\begin{cor}
\label{cor:mult}
Let $W$ be an essentially AIG representation of $\GL_2(E)$ or $\GL_3(E)$
over a field $K$ of characteristic zero.  Then no Jordan--H\"older constituent
of $W$ appears with multiplicity greater than one.
\end{cor}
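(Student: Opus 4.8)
The plan is to reduce the assertion --- stated over an arbitrary field $K$ of characteristic zero --- first to the case $K = \overline{K}$, then to the essentially AIG envelope of the socle, and finally to the explicit representation theory of $\GL_2(E)$ and $\GL_3(E)$. For the first reduction I would check that $W \otimes_K \overline{K}$ is again essentially AIG: its socle is stable under the Galois action, hence descends to a semisimple $K[\GL_n(E)]$-submodule of $W$, which must be contained in --- and therefore equal to --- the absolutely irreducible module $\soc(W)$, so that $\soc(W \otimes_K \overline{K}) = \soc(W) \otimes_K \overline{K}$ is irreducible and generic; the quotient $\bigl(W/\soc(W)\bigr) \otimes_K \overline{K}$ has vanishing top derivative because derivatives commute with extension of scalars; and $W \otimes_K \overline{K}$ has finite length since $W$ does, by the corollary above. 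As any Jordan--H\"older constituent of $W$ occurring with multiplicity at least two produces, after base change, a constituent of $W \otimes_K \overline{K}$ of multiplicity at least two, it suffices to treat $K = \overline{K}$.

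For the second reduction: since $W$ is essentially AIG and contains $\soc(W)$, the definition of the essentially AIG envelope (and its existence, Proposition~\ref{prop:envelope}) yields an embedding $W \hookrightarrow \env(\soc W)$, and multiplicities of constituents cannot increase on passing to a submodule, so it is enough to show $\env(\soc W)$ is multiplicity-free. By Lemma~\ref{lem:char zero AIG}, $\env(\soc W) \cong \Ind_P^{\GL_n(E)}(\pi_1 \otimes \dots \otimes \pi_r)$, where $\{\pi_1, \dots, \pi_r\} = \scs(\soc W)$, each $\pi_i$ is supercuspidal on some $\GL_{n_i}(E)$ with $\sum_i n_i = n \in \{2, 3\}$, the Levi of $P$ is $\prod_i \GL_{n_i}(E)$, and the $\pi_i$ are ordered so that Condition~\ref{cond:Zel} holds for the one-element segments $[\pi_i]$. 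I would then run through the possibilities for $(n, r)$. If $r = 1$ the induction is $\soc W$ itself, which is irreducible. If $n = 2$ and $r = 2$, the $\pi_i$ are characters and $\Ind_P^{\GL_2(E)}(\pi_1 \otimes \pi_2)$ has length at most two; when reducible its two constituents are a (generic, infinite-dimensional) generalized Steinberg representation and a one-dimensional representation, hence non-isomorphic. If $n = 3$ and $r = 2$, then $\{n_1, n_2\} = \{1, 2\}$, so one of the segments $[\pi_i]$ involves a supercuspidal of $\GL_2(E)$ and the other a character of $E^{\times}$; two such segments cannot be linked, so $\pi_1 \times \pi_2$ is irreducible by \cite{BZ}.

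The only substantive case --- and the main obstacle --- is $n = 3$, $r = 3$, where $\env(\soc W) = \Ind_B^{\GL_3(E)}(\chi_1 \otimes \chi_2 \otimes \chi_3)$ for characters $\chi_i$; set $\CS = \{[\chi_1], [\chi_2], [\chi_3]\}$. By the theorem describing the Jordan--H\"older constituents of $\pi(\CS)$, each such constituent is of the form $Q(\CS')$ with $\CS' \preceq \CS$, and distinct $\CS'$ give non-isomorphic $Q(\CS')$, so it suffices to bound multiplicities. A direct inspection of the partial order $\preceq$ on three-singleton collections shows that, unless the $[\chi_i]$ are three distinct segments forming a single chain $[\nu^{-1}\chi], [\chi], [\nu\chi]$ up to relabeling, there are at most two collections $\preceq \CS$, so $\pi(\CS)$ has length at most two and is multiplicity-free.

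In the remaining chain case --- where, up to twisting by a character of $\det$, $\pi(\CS) = \Ind_B^{\GL_3(E)}(\delta_B^{1/2})$ --- one finds exactly four collections $\preceq \CS$: $\CS$ itself, the two collections obtained by merging one of the two adjacent linked pairs, and the single full segment. Correspondingly $\pi(\CS)$ has at most four isomorphism classes of constituents, namely the trivial character, the Steinberg representation, and two ``intermediate'' Langlands quotients, and each of these actually occurs (the trivial character as cosocle, the Steinberg as socle, and the other two as cosocles of the length-two inductions $\pi(\CS')$ that embed into $\pi(\CS)$ by the proposition on embeddings $\pi(\CS') \hookrightarrow \pi(\CS)$). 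Multiplicity-freeness then follows from a Jacquet-module count along $B$: since the inducing character of the torus is regular, $r_B(\pi(\CS))$ has length $6 = |S_3|$, while the Jacquet modules of the four constituents have lengths $1, 1, 2, 2$ (the last two computed via the geometric lemma applied to the maximal-parabolic inductions $\pi(\CS')$), and $1 + 1 + 2 + 2 = 6$ forces each constituent to occur exactly once. (This last computation is classical and could instead simply be quoted from \cite{BZ} or \cite{Ze1}.) I expect the bookkeeping in this chain case --- checking that all four constituents genuinely occur and pinning down their Jacquet-module lengths --- to be the only delicate point of the argument.
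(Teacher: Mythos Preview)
Your argument is correct and follows the same overall strategy as the paper's: pass to $\overline{K}$, embed $W$ in its essentially AIG envelope via Lemma~\ref{lem:char zero AIG}, and then verify that the resulting parabolic induction $\Ind_P^{\GL_n(E)}(\pi_1 \otimes \cdots \otimes \pi_r)$ with $r \le 3$ is multiplicity-free. (Your reduction to $\overline{K}$ is in fact more careful than the paper's, which invokes Lemma~\ref{lem:char zero AIG} without comment even though that lemma is stated only over algebraically closed fields.) At the final step the paper is much terser than you are: it simply cites Zelevinski's explicit multiplicity computations \cite[\S 11]{Ze1}, which show directly that a parabolic induction from at most three supercuspidals has multiplicity-free Jordan--H\"older series. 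Your case-by-case analysis --- in particular the Jacquet-module length count in the $\GL_3$ chain case --- amounts to reproving the relevant instance of Zelevinski's result by hand; this makes your argument self-contained but considerably longer.

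One small gap worth tightening: in the non-chain cases you pass from ``at most two collections $\CS' \preceq \CS$'' to ``$\pi(\CS)$ has length at most two'', but the theorem you invoke only bounds the set of \emph{isomorphism classes} of constituents, not the length itself. You can close this either by computing the length directly (factor the induction through a $\GL_2$-step and use that inducing an irreducible by an unlinked supercuspidal stays irreducible), or simply by running the same Jacquet-module dimension count you use in the chain case.
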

\begin{proof}
Lemma~\ref{lem:char zero AIG} above
shows that $W$ embeds in some parabolic induction
$$\Ind_P^{\GL_n(E)} \pi_1 \otimes \dots \otimes \pi_r$$
with each $\pi_i$ cuspidal.  Zelevinski's computations of
multiplicities of the Jordan--H\"older constituents of such inductions
(\cite[\S 11]{Ze1}) shows that when $r \leq 3$, each Jordan--H\"older
constituent of such an induction occurs with multiplicity one.  The
result follow immediately. 
\end{proof}

\begin{remark}
{\em
In contrast to the preceding proposition,
if $n = 4$, and if we choose a Levi subgroup of the form
$(E^{\times})^4$ of $\GL_4(E)$,
$$\Ind_P^{\GL_n(E)} \abs^2 \otimes \abs
\otimes \abs \otimes 1$$
has a Jordan--H\"older constituent that appears with multiplicity two.
}
\end{remark}

\subsection{Reduction of $\pi(\CS)$}

We now turn to integrality considerations.  We continue to suppose
that $\O$ is a discrete valuation
ring, with residue field $K$ of characteristic zero, uniformizer $\unif$,
and field of fractions $\CK$.  We say an admissible
representation $\pi$ over $\CK$ is $\O$-integral if it contains
a $\unif$-adically separated $\O$-lattice.

\begin{lemma} \label{lem:supercuspidal reduction}
Let $\pi$ be an absolutely irreducible supercuspidal representation of $\GL_n(E)$ over $\CK$.
Then $\pi$ is $\O$-integral if and only if its central character takes values in
$\O^{\times}$.  In this case there is a $\unif$-adically separated $\GL_n(E)$-stable
$\O$-lattice
$\pi^{\circ}$ in $\pi$, unique up to homothety, such that the reduction $\pi^{\circ}/\unif \pi^{\circ}$ 
is absolutely irreducible and supercuspidal.
\end{lemma}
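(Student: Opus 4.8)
I would prove the equivalence in two halves. The ``only if'' direction holds for any admissible $\pi$: let $\pi^{\circ}$ be a $\unif$-adically separated $\GL_n(E)$-invariant lattice in $\pi$, and let $\omega_{\pi}\colon E^{\times}\to\CK^{\times}$ be the central character. Each scalar $\omega_{\pi}(z)$, $z\in E^{\times}$, stabilizes $\pi^{\circ}$, so for any fixed $0\neq v\in\pi^{\circ}$ the nonzero $\O$-submodule $\mathfrak a:=\{c\in\CK : cv\in\pi^{\circ}\}$ of $\CK$ is stable under multiplication by $\omega_{\pi}(z)^{\pm1}$ and satisfies $\bigcap_{m\geq0}\unif^{m}\mathfrak a=0$ by $\unif$-adic separatedness; hence $\mathfrak a\neq\CK$, the valuations on $\mathfrak a$ are bounded below, and comparing the infimal valuation of $\mathfrak a$ with that of $\omega_{\pi}(z)\mathfrak a=\mathfrak a$ forces $\omega_{\pi}(z)\in\O^{\times}$. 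So $\omega_{\pi}$ is valued in $\O^{\times}$.

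For the ``if'' direction, assume $\omega_{\pi}$ is valued in $\O^{\times}$. Since $\pi$ is supercuspidal, I would invoke the theory of types for $\GL_n$ (see \cite{Vig2}) to write $\pi\cong\cInd_{\tilde J}^{\GL_n(E)}\tilde\Lambda$ for an open subgroup $\tilde J$ of $\GL_n(E)$ that is compact modulo the centre $E^{\times}$ and a smooth finite-dimensional $\CK$-representation $\tilde\Lambda$ of $\tilde J$, the centre acting on $\tilde\Lambda$ through $\omega_{\pi}$. Because $\tilde J$ is compact modulo $E^{\times}$ and $\omega_{\pi}(E^{\times})\subseteq\O^{\times}$, the subgroup $\tilde\Lambda(\tilde J)$ of $\GL(\tilde\Lambda)$ is bounded and hence stabilizes an $\O$-lattice $\tilde\Lambda^{\circ}\subseteq\tilde\Lambda$; I would then take $\pi^{\circ}:=\cInd_{\tilde J}^{\GL_n(E)}\tilde\Lambda^{\circ}$. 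This is a $\GL_n(E)$-invariant $\O$-lattice in $\pi$, and it is $\unif$-adically separated since any function lying in $\bigcap_{m}\unif^{m}\pi^{\circ}$ has all its values in $\bigcap_{m}\unif^{m}\tilde\Lambda^{\circ}=0$. In particular $\pi$ is $\O$-integral.

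To identify the reduction $\overline{\pi}^{\circ}:=\pi^{\circ}/\unif\pi^{\circ}\cong\cInd_{\tilde J}^{\GL_n(E)}(\tilde\Lambda^{\circ}/\unif\tilde\Lambda^{\circ})$ and to prove uniqueness, I would argue as follows. All matrix coefficients of $\overline{\pi}^{\circ}$ are compactly supported modulo the centre, so $\overline{\pi}^{\circ}\otimes_{K}\overline{K}$ is a finite direct sum of irreducible supercuspidal representations, each with one-dimensional top derivative by Theorem \ref{thm:kirillov}. The top-derivative functor is exact (Proposition \ref{prop:BZ}) and commutes with $\otimes_{\O}$, so the inclusion $\pi^{\circ}\hookrightarrow\pi$ exhibits $(\pi^{\circ})^{(n)}$ as a nonzero $\O$-submodule of the one-dimensional $\CK$-vector space $\pi^{(n)}$ (one-dimensional because $\pi$ is cuspidal, Theorem \ref{thm:kirillov}); such a submodule is either all of $\pi^{(n)}$ or free of rank one over $\O$, and the former is impossible since then $(\overline{\pi}^{\circ})^{(n)}=(\pi^{\circ})^{(n)}/\unif(\pi^{\circ})^{(n)}$ would vanish, contradicting that $(\overline{\pi}^{\circ}\otimes_{K}\overline{K})^{(n)}=(\overline{\pi}^{\circ})^{(n)}\otimes_{K}\overline{K}$ is a nonzero sum of lines. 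Hence $(\overline{\pi}^{\circ}\otimes_{K}\overline{K})^{(n)}$ is one-dimensional, so additivity of $(-)^{(n)}$ forces $\overline{\pi}^{\circ}\otimes_{K}\overline{K}$ to have a single summand; thus $\overline{\pi}^{\circ}$ is absolutely irreducible and supercuspidal. For uniqueness, given a second such lattice $\pi^{\diamond}$: since $\pi$ is now good integral (witnessed by $\pi^{\circ}$, whose reduction is irreducible), Lemma \ref{lem:commens} yields commensurability, and after rescaling I may assume $\pi^{\diamond}\subseteq\pi^{\circ}$ with $\pi^{\diamond}\not\subseteq\unif\pi^{\circ}$. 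The induced nonzero map $\pi^{\diamond}/\unif\pi^{\diamond}\to\pi^{\circ}/\unif\pi^{\circ}$ between irreducible representations is an isomorphism, in particular surjective, so $\pi^{\diamond}+\unif\pi^{\circ}=\pi^{\circ}$; then $\pi^{\circ}/\pi^{\diamond}$ is an admissible smooth $\GL_n(E)$-representation over $\O$ (Lemma \ref{lem:quotient}) with vanishing reduction modulo $\unif$, so $\pi^{\circ}=\pi^{\diamond}$ by Lemma \ref{lem:nakayama}.

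The step I expect to be the main obstacle is the type-theoretic input over a coefficient field $\CK$ that need not be algebraically closed --- realizing the absolutely irreducible supercuspidal $\pi$ as $\cInd_{\tilde J}^{\GL_n(E)}\tilde\Lambda$ with $\tilde J$ compact modulo the centre --- together with the accompanying standard fact that a smooth representation whose matrix coefficients are all compactly supported modulo the centre is a direct sum of irreducible supercuspidal representations. Once these are in hand, $\O$-integrality, the description of the reduction, and uniqueness up to homothety follow formally from the results established earlier in the paper.
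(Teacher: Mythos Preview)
Your argument is sound but follows a genuinely different route from the paper's. The paper never invokes types; instead it exploits the residue-characteristic-zero hypothesis much more directly. After passing to a finite Galois extension $\CK'/\CK$ over which an $n$th root $\chi$ of $\omega_{\pi}$ exists (valued in $(\O')^{\times}$), the twist $\pi\otimes(\chi^{-1}\circ\det)$ has trivial central character, and by \cite[II.4.9]{Vig2} such an absolutely irreducible representation descends to a representation $\pi_0$ over a finite extension $F$ of $\Q_p$. The key observation is that, since $\O'$ has residue characteristic zero, $F$ actually embeds in $\O'$; thus $\pi^{\circ}:=(\pi_0\otimes_F\O')\otimes(\chi\circ\det)$ is an $\O'$-lattice, and its reduction $(\pi_0\otimes_F K')\otimes(\overline\chi\circ\det)$ is visibly absolutely irreducible supercuspidal, being a scalar extension of $\pi_0$ along the field embedding $F\hookrightarrow K'$. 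Uniqueness up to homothety (immediate from irreducibility of the reduction) makes the lattice $\Gal(\CK'/\CK)$-stable, whence it descends to $\O$.

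So the paper trades your type-theoretic construction and derivative-based irreducibility argument for a rationality statement plus a short Galois descent. This completely sidesteps the obstacle you identified --- realizing $\pi$ as a compact induction over a non-algebraically-closed $\CK$ --- and makes the irreducibility of the reduction automatic rather than something to be proved. Conversely, your approach works directly over $\O$ without any field extension or descent, and the derivative count you give is a robust way to detect irreducibility; note, though, that your claim ``$\overline\pi^{\circ}\otimes_K\overline K$ is a finite direct sum of irreducible supercuspidals'' is where you are implicitly using that $K$ has characteristic zero (semisimplicity of the cuspidal block), which is precisely the hypothesis the paper is also leaning on, just at a different point.
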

\begin{proof}
Clearly if $\pi$ is $\O$-integral, then its central character takes values in $\O^{\times}$.
Let $\CK'$ be a finite Galois extension of $\CK$, such that there exists a character 
$\chi: E^{\times} \rightarrow \CK^{\times}$ whose $n$th power is the central character of $\pi$.
If the central character of $\pi$ takes values in $\O^{\times}$, then $\chi$ takes values
in $(\O')^{\times}$, where $\O'$ is the integral closure of $\O$ in $\CK'$.

The central character of $\pi \otimes \chi^{-1}\circ \det$ is trivial.
By~\cite[II.4.9]{Vig2}, $\pi \otimes \chi^{-1}\circ \det$ is defined over
a finite extension $F$ of $\Q_p$, contained in $\CK'$.  That is, there 
exists an admissible representation
$\pi_0$ over $F$ such that $\pi_0 \otimes_F \CK'$ is isomorphic to 
$\pi \otimes \chi^{-1}\circ \det$.  As $\O'$ has residue characteristic zero,
$F$ is contained in $\O'$.  Thus $\pi^{\circ} := (\pi_0 \otimes_F \O') \otimes (\chi \circ \det)$
is a $\unif'$-adically separated $\O'$-lattice in $\pi \otimes_{\CK} \CK'$, where
$\unif'$ is a uniformizer of $\O'$.

The reduction modulo $\unif'$ of $\pi^{\circ}$
is $(\pi_0 \otimes_F K') \otimes (\overline{\chi} \circ \det)$, where $K'$ is
the residue field of $\O'$ and $\overline{\chi}$ is the reduction of $\chi$
modulo $\unif'$.  In particular $\pi^{\circ}/\unif \pi^{\circ}$ is absolutely
irreducible and supercuspidal (and therefore $\pi^{\circ}$ is unique up to
homothety.)  It follows that $\pi^{\circ}$ is stable under the action of
$\Gal(\CK'/\CK)$, and hence descends to a $\GL_n(E)$-stable lattice in $\pi$ (which
must also be unique up to homothety.)
\end{proof}

Given an $\O$-integral absolutely irreducible supercuspidal representation $\pi$ of $\GL_n(E)$
over $\CK$, we can thus define $\pibar$ to be the reduction mod $\unif$ of any $\unif$-adically
separated $\GL_n(E)$-stable $\O$-lattice in $\pi$.
For a segment $\Delta = [\pi,(\abs \circ \det)\pi, \dots, (\abs \circ \det)^{r-1}\pi]$, let
$\overline{\Delta}$ be the segment $[\pibar,(\abs \circ \det)\pibar, \dots, (\abs \circ \det)^{r-1}\pibar]$.
If $\CS$ is a collection of integral segments, define $\overline{\CS}$ to be the
collection containing the segments $\overline{\Delta_i}$ for $\Delta_i \in \CS$.

\begin{lemma} Let $\pi$ be an $\O$-integral, absolutely irreducible supercuspidal
representation of $\GL_n(E)$ over $\CK$, and let $\Delta$ be the segment
$[\pi,(\abs \circ \det)\pi, \dots, (\abs \circ \det)^{r-1}\pi]$.  There is a $\unif$-adically separated,
$\GL_n(E)$-stable $\O$-lattice $\St^{\circ}_{\Delta}$ in $\St_{\Delta}$,
unique up to homothety, and $\St^{\circ}_{\Delta}/\unif \St^{\circ}_{\Delta}$
is isomorphic to $\St_{\overline{\Delta}}$.
\end{lemma}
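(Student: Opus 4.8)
Write $\nu:=\abs\circ\det$ and regard $\pi$ as a representation of $\GL_m(E)$, so that $n=mr$. The plan is to realize $\St_{\Delta}$ as the socle of the parabolic induction attached to the collection of one-element segments $\CS:=\{[\nu^{r-1}\pi],[\nu^{r-2}\pi],\dots,[\pi]\}$, listed in this (descending) order, which one checks satisfies Condition~\ref{cond:Zel}; to use this to see that $\St_{\Delta}$ is an $\O$-integral essentially AIG representation; to apply Proposition~\ref{prop:lattice} to get the lattice; and finally to identify its reduction by comparing with the reduction of $\pi(\CS)$. By Corollary~\ref{cor:essential}, $\pi(\CS)=\Ind_P^{\GL_n(E)}(\nu^{r-1}\pi\otimes\cdots\otimes\pi)$ (with $P$ the evident parabolic) is essentially AIG; its socle is absolutely irreducible, generic, and has supercuspidal support $\{\pi,\nu\pi,\dots,\nu^{r-1}\pi\}$, hence is isomorphic to $\St_{\Delta}$ by Proposition~\ref{prop:generic supercuspidal support}. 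In particular $\St_{\Delta}$ is irreducible, generic and admissible, so it is essentially AIG (Definition~\ref{df:ess AIG}) and of finite length.

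For integrality, observe that $\ell$ is invertible in $\O$ (its image in the characteristic-zero field $K$ is nonzero), so $\nu$, and thus each power $\nu^i$, takes values in $\O^{\times}$; since $\pi$ is $\O$-integral its central character is $\O^{\times}$-valued by Lemma~\ref{lem:supercuspidal reduction}, hence so is that of each $\nu^i\pi$, and Lemma~\ref{lem:supercuspidal reduction} provides a $\GL_m(E)$-stable lattice in $\nu^i\pi$ --- namely the $\nu^i$-twist of a lattice $\pi^{\circ}\subset\pi$ --- whose reduction is the absolutely irreducible supercuspidal $\nu^i\pibar$. After replacing $\O$ by a finite extension $\O'$ containing a square root of $q$ (still a discrete valuation ring, since we are in characteristic zero), the normalized induction $P^{\circ}:=\Ind_P^{\GL_n(E)}((\nu^{r-1}\pi^{\circ})\otimes\cdots\otimes\pi^{\circ})$ is defined and is an admissible, $\unif'$-adically separated $\GL_n(E)$-invariant $\O'$-lattice in $\pi(\CS)\otimes_{\CK}\CK'$; its intersection with $\St_{\Delta}\otimes_{\CK}\CK'$ is a lattice there, so $\St_{\Delta}$ is $\O'$-integral, and averaging a lattice over $\Gal(\CK'/\CK)$ (Galois descent for lattices, valid as $K'/K$ is separable) shows it is $\O$-integral. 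Proposition~\ref{prop:lattice} then provides a $\unif$-adically separated $\GL_n(E)$-invariant lattice $\St^{\circ}_{\Delta}\subset\St_{\Delta}$, admissible and unique up to homothety, whose reduction $\overline{\St^{\circ}_{\Delta}}:=\St^{\circ}_{\Delta}/\unif\St^{\circ}_{\Delta}$ is essentially AIG; this yields every assertion except the identification of $\overline{\St^{\circ}_{\Delta}}$ with $\St_{\overline{\Delta}}$.

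To identify the reduction, work over $\O'$. From the exact sequence $0\to\St_{\Delta}\to\pi(\CS)\to C\to0$ over $\CK'$, put $\St^{\circ}:=P^{\circ}\cap\St_{\Delta}$ and let $C^{\circ}$ be the image of $P^{\circ}$ in $C$; then $0\to\St^{\circ}\to P^{\circ}\to C^{\circ}\to0$ is exact with $\unif'$-torsion-free terms, hence reduces modulo $\unif'$ to an exact sequence of $\GL_n(E)$-representations $0\to\overline{\St^{\circ}}\to\overline{P^{\circ}}\to\overline{C^{\circ}}\to0$. Since the normalizing characters are ${\O'}^{\times}$-valued and the inducing data are lattices, $\overline{P^{\circ}}=\Ind_P^{\GL_n(E)}(\nu^{r-1}\pibar\otimes\cdots\otimes\pibar)=\pi(\overline{\CS})$; as $\mathrm{char}\,K'=0$, the scalar $q$ still has infinite multiplicative order in ${K'}^{\times}$, so the $\nu^i\pibar$ are pairwise non-isomorphic, $\overline{\CS}$ again satisfies Condition~\ref{cond:Zel}, and $\pi(\overline{\CS})$ is essentially AIG with socle the unique irreducible generic representation of supercuspidal support $\overline{\Delta}$, namely $\St_{\overline{\Delta}}$ (Corollary~\ref{cor:essential}, Proposition~\ref{prop:generic supercuspidal support}). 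Thus $\overline{\St^{\circ}}$, a nonzero subrepresentation of $\pi(\overline{\CS})$, is essentially AIG with socle $\St_{\overline{\Delta}}$ by Lemma~\ref{lem:AIG basic}. Since passing to the reduction cannot decrease length, $\overline{\St^{\circ}}$ and $\overline{C^{\circ}}$ have lengths $\ge 1$ and $\ge\mathrm{length}\,C$ respectively; as these add up to $\mathrm{length}\,\pi(\overline{\CS})$, the equality $\mathrm{length}\,\pi(\overline{\CS})=\mathrm{length}\,\pi(\CS)=1+\mathrm{length}\,C$ forces $\mathrm{length}\,\overline{\St^{\circ}}=1$, so $\overline{\St^{\circ}}$ equals its socle $\St_{\overline{\Delta}}$. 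Finally, $\St^{\circ}_{\Delta}\otimes_{\O}\O'$ and $\St^{\circ}$ are commensurable lattices in $\St_{\Delta}\otimes_{\CK}\CK'$, so Lemma~\ref{lem:commens} gives $(\overline{\St^{\circ}_{\Delta}}\otimes_K K')^{\ss}=(\overline{\St^{\circ}})^{\ss}=[\St_{\overline{\Delta}}]$; hence $\overline{\St^{\circ}_{\Delta}}$ is irreducible, and --- being essentially AIG --- absolutely irreducible and generic, and Lemma~\ref{lem:AIG descent} applied to the isomorphism $\overline{\St^{\circ}_{\Delta}}\otimes_K K'\cong\St_{\overline{\Delta}}\otimes_K K'$ produces an isomorphism $\overline{\St^{\circ}_{\Delta}}\cong\St_{\overline{\Delta}}$ over $K$, as required.

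The one substantive point --- and the place where characteristic zero of the residue field is essential --- is the length equality $\mathrm{length}\,\pi(\overline{\CS})=\mathrm{length}\,\pi(\CS)$, equivalently the assertion that reduction modulo $\unif$ does not lengthen finite-length admissible representations in this setting (so that $\overline{\St_{\Delta}}$ is irreducible). In the write-up this will be deduced from the Zelevinski classification, which controls the Jordan--H\"older constituents of a parabolic induction of supercuspidals purely through the linkage relations among the inducing data --- relations measured by the twists $\nu^j$, of infinite order both over $\CK'$ and modulo $\unif'$, hence unchanged by reduction --- or else cited from Vign\'eras's Brauer--Nesbitt-type results together with the triviality of the decomposition matrix when $\ell$ is invertible. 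Everything else (the realization of $\St_{\Delta}$ as a socle, its integrality, the lattice and its uniqueness, and the exact-sequence reduction argument) is routine given Proposition~\ref{prop:lattice}, Lemma~\ref{lem:supercuspidal reduction}, Corollary~\ref{cor:essential}, Lemma~\ref{lem:AIG basic} and Lemma~\ref{lem:commens}.
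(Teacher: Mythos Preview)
Your argument is correct, but it takes a considerably longer route than the paper's. The paper simply observes that the proof of Lemma~\ref{lem:supercuspidal reduction} applies verbatim to $\St_{\Delta}$: since $\St_{\Delta}$ is absolutely irreducible with $\O^{\times}$-valued central character, one passes to a finite extension $\CK'/\CK$ over which there is a character $\chi$ with $\chi^n$ equal to this central character, twists so that the central character becomes trivial, and then invokes \cite[II.4.9]{Vig2} to conclude that $\St_{\Delta}\otimes(\chi^{-1}\circ\det)$ is defined over a finite extension $F$ of $\Q_p$. Because the residue field $K$ has characteristic zero, $F$ embeds in $\O'$, so the lattice is simply $(\St_{\Delta,0}\otimes_F\O')\otimes(\chi\circ\det)$ and its reduction is $(\St_{\Delta,0}\otimes_F K')\otimes(\overline{\chi}\circ\det)$, which is $\St_{\overline{\Delta}}$ by compatibility of the Steinberg construction with base change and twist; irreducibility of the reduction then forces uniqueness up to homothety, and Galois descent returns one to $\O$.

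Your approach instead embeds $\St_{\Delta}$ as the socle of $\pi(\CS)$, invokes Proposition~\ref{prop:lattice} to produce the lattice, and then identifies the reduction via an exact-sequence and length-counting argument. This is sound, and has the virtue of avoiding the rationality result \cite[II.4.9]{Vig2}, but the price is the length equality $\mathrm{length}\,\pi(\overline{\CS})=\mathrm{length}\,\pi(\CS)$, which (as you note) requires either the Zelevinski classification or a Brauer--Nesbitt-type statement about triviality of decomposition matrices in residue characteristic zero. The paper's argument sidesteps this entirely: once one knows the representation is literally a base change from $F\subset\O'$, irreducibility of the reduction is immediate and no length comparison is needed.
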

\begin{proof}
This follows by precisely the same argument as in 
Lemma~\ref{lem:supercuspidal reduction}.
\end{proof}

If we want to consider the reduction mod $\unif$ of representations of the form
$\pi(\CS)$,
then the situation is more complicated, as $\pi(\CS)$ typically contains more than one
homothety class of lattices.   However, Proposition~\ref{prop:lattice} allows
us to single out a preferred such homothety class.

%%%%The following lemma remedies this:
%%%%
%%%%\begin{lemma}
%%%%Let $\pi$ be an $\O$-integral admissible representation of $\GL_n(E)$ over $K$, and let
%%%%$\pi^{\circ}$ and $\pi^{\diamond}$ be two $\unif$-adically separated $\GL_n(E)$-stable
%%%%$\O$-lattices on $\pi$.  Suppose that $\pi^{\circ}/\unif \pi^{\circ}$ and
%%%%$\pi^{\diamond}/\unif \pi^{\diamond}$ are essentially AIG.  Then $\pi^{\circ} = \unif^r\pi^{\diamond}$
%%%%for some integer $r$.
%%%%\end{lemma}
%%%%\begin{proof}
%%%%The lattices $\pi^{\circ}$ and $\pi^{\diamond}$ are commensurable, so we may assume,
%%%%after rescaling $\pi^{\diamond}$ by a power of $\unif$, that
%%%%$\pi^{\diamond}$ is contained in $\pi^{\circ}$ but not in $\unif\pi^{\circ}$.
%%%%We then have a nonzero map
%%%%$$\pi^{\diamond}/\unif \pi^{\diamond} \rightarrow \pi^{\circ}/\unif\pi^{\circ}.$$
%%%%As $\pi^{\diamond}$ and $\pi^{\circ}$ are essentially AIG, this map is injective;
%%%%it must therefore be an isomorphism as
%%%%$\pi^{\diamond}/\unif \pi^{\diamond}$ and $\pi^{\circ}/\unif\pi^{\circ}$
%%%%have the same length.  Thus $\pi^{\diamond} = \pi^{\circ}$.
%%%%\end{proof}

\begin{prop} If $\CS$ is an unordered collection of segments over $\CK$ that are $\O$-integral,
then
there is an $\O$-lattice $\pi(\CS)^{\circ}$ in $\pi(\CS)$, unique up to homothety,
such that $\pi(\CS)^{\circ}/\unif \pi(\CS)^{\circ}$ is essentially AIG.  Moreover,
$\pi(\CS)^{\circ}/\unif \pi(\CS)^{\circ}$ is isomorphic
to~$\pi(\overline{\CS})$.
\end{prop}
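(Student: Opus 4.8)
The plan is to obtain $\pi(\CS)^{\circ}$ by applying Proposition~\ref{prop:lattice} to $\pi(\CS)$, and to identify its reduction by exhibiting a second, explicitly constructed, lattice with reduction $\pi(\overline{\CS})$ and invoking the uniqueness clause of that proposition. First I would record that $\pi(\CS)$ is an essentially AIG admissible smooth representation of $\GL_n(E)$ over $\CK$ (Corollary~\ref{cor:essential}) which is of finite length (Corollary~\ref{cor:admissible AIG finiteness}, since $\CK$ has characteristic zero). To see that it is $\O$-integral I would take the lattices $\St^{\circ}_{\Delta_i} \subset \St_{\Delta_i}$ produced by the preceding lemma, form the external tensor product $W^{\circ} := \St^{\circ}_{\Delta_1} \boxtimes \cdots \boxtimes \St^{\circ}_{\Delta_n}$ over $\O$ (a $\unif$-adically separated admissible smooth lattice in $W := \St_{\Delta_1} \otimes \cdots \otimes \St_{\Delta_n}$, regarded as a representation of the block Levi subgroup $M$ of the parabolic $Q$ appearing in the definition of $\pi(\CS)$), and set $L := \Ind_Q^{\GL_n(E)} W^{\circ}$ with normalized parabolic induction. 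Since $\GL_n(E)/Q$ is compact, $L$ is then a $\unif$-adically separated $\GL_n(E)$-invariant admissible smooth $\O$-lattice in $\pi(\CS) = \Ind_Q^{\GL_n(E)} W$, so $\pi(\CS)$ is good integral in the sense of Definition~\ref{def:integral}. (If necessary I would first enlarge $\O$ so that it contains $\ell^{1/2}$; this is harmless, as by Lemma~\ref{lem:AIG descent} the assertion descends along a faithfully flat extension of $\O$.)

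Next I would compute the reduction of $L$. Applying the exactness of normalized parabolic induction to the short exact sequence $0 \to W^{\circ} \to W^{\circ} \to W^{\circ}/\unif W^{\circ} \to 0$ (the first map being multiplication by $\unif$) gives $L/\unif L \cong \Ind_Q^{\GL_n(E)}(W^{\circ}/\unif W^{\circ})$; combining this with the isomorphisms $\St^{\circ}_{\Delta_i}/\unif \St^{\circ}_{\Delta_i} \cong \St_{\overline{\Delta_i}}$ of the preceding lemma and the compatibility of external tensor products with reduction modulo $\unif$, I would conclude $L/\unif L \cong \pi(\overline{\CS})$. Here one should check that $\overline{\CS}$ again satisfies Condition~\ref{cond:Zel} for the same ordering as $\CS$: reduction modulo $\unif$ carries distinct irreducible supercuspidal representations to distinct irreducible supercuspidal representations (as $K$ has characteristic zero, by the argument of Lemma~\ref{lem:supercuspidal reduction}) and commutes with twisting by $\abs \circ \det$, hence preserves the relations ``linked'' and ``precedes'' on segments. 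In particular $L/\unif L = \pi(\overline{\CS})$ is essentially AIG, by Corollary~\ref{cor:essential} applied over $K$.

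With these preliminaries Proposition~\ref{prop:lattice} applies to $\pi(\CS)$: it admits a $\unif$-adically separated $\GL_n(E)$-invariant lattice $\pi(\CS)^{\circ}$, admissible as a $\GL_n(E)$-representation, with $\pi(\CS)^{\circ}/\unif\pi(\CS)^{\circ}$ essentially AIG, unique up to homothety — which is the first assertion. For the second assertion, the lattice $L$ constructed above is itself a $\unif$-adically separated $\GL_n(E)$-invariant lattice in the good integral admissible smooth representation $\pi(\CS)$, hence admissible by Lemma~\ref{lem:commens}(2), and its reduction $\pi(\overline{\CS})$ is essentially AIG. The uniqueness clause of Proposition~\ref{prop:lattice} then forces $L$ to be homothetic to $\pi(\CS)^{\circ}$, and therefore $\pi(\CS)^{\circ}/\unif\pi(\CS)^{\circ} \cong L/\unif L \cong \pi(\overline{\CS})$.

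The step I expect to require the most care is the construction and analysis of $L$ in the first two paragraphs: one must verify that forming the external tensor product over $\O$ of the (non-finitely-generated) admissible lattices $\St^{\circ}_{\Delta_i}$, and then applying normalized parabolic induction, yields a $\unif$-adically separated, admissible, $\GL_n(E)$-invariant lattice in $\pi(\CS)$, and that both operations commute with reduction modulo $\unif$. These facts are standard but slightly delicate because of the infinite direct sum decomposition~(\ref{eqn:decomp}) underlying admissibility; once they are in place, the remainder is a direct appeal to Corollary~\ref{cor:essential} and Proposition~\ref{prop:lattice}.
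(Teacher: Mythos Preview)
Your approach is essentially the same as the paper's: construct the explicit lattice $L=\Ind_Q^{\GL_n(E)}\bigl(\St^{\circ}_{\Delta_1}\otimes\cdots\otimes\St^{\circ}_{\Delta_r}\bigr)$, identify its reduction with $\pi(\overline{\CS})$, note that this is essentially~AIG, and then invoke the existence and uniqueness of Proposition~\ref{prop:lattice}. The paper does exactly this, in fewer words.

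There is, however, one incorrect step in your write-up. Your claim that reduction modulo $\unif$ carries \emph{distinct} irreducible supercuspidal representations to \emph{distinct} ones is false: already for $\GL_1$, two characters of $E^{\times}$ with values in $\O^{\times}$ that are distinct over $\CK$ may coincide over $K$. Consequently reduction can create \emph{new} linkings between segments, and the ordering you chose for $\CS$ need not satisfy Condition~\ref{cond:Zel} for $\overline{\CS}$. (Concretely: if $\Delta_1=[\chi]$, $\Delta_2=[\chi']$ with $\chi'\equiv\abs\,\chi\pmod{\unif}$ but $\chi'\ne\abs\,\chi$, then $\Delta_1,\Delta_2$ are unlinked over $\CK$ but $\overline{\Delta_1}$ precedes $\overline{\Delta_2}$ over $K$; with the ordering $(\Delta_1,\Delta_2)$ the reduction of $L$ is $\Ind\,\bar\chi\otimes\abs\,\bar\chi$, which is not essentially~AIG.)

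The fix is painless: choose at the outset an ordering of $\CS$ so that Condition~\ref{cond:Zel} holds for $\overline{\CS}$. One checks that if $\Delta_i$ precedes $\Delta_j$ over $\CK$ then $\overline{\Delta_i}$ precedes $\overline{\Delta_j}$ over $K$ (lengths are preserved, the union remains a segment, and in characteristic zero $\abs\circ\det$ has infinite order on any supercuspidal so the reduced segments remain distinct). Hence any ordering satisfying Condition~\ref{cond:Zel} for $\overline{\CS}$ automatically satisfies it for $\CS$, and with this choice your argument goes through verbatim. The paper's own proof glosses over this point; your attempt to address it was a good instinct, only the justification needs to be replaced by the one above.
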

\begin{proof}
Proposition~\ref{prop:lattice} shows that $\pi(\CS)^{\circ}$ exists and is unique
up to homothety.
%%%%If such a lattice $\pi(\CS)^{\circ}$ exists, it is clearly unique up to homothety
%%%%by the previous lemma. 
Let $\Delta_1, \dots, \Delta_r$ be the segments in $\CS$,
and fix for each $i$ a $\unif$-adically separated $\O$-lattice $L_i$ in $\St_{\Delta_i}$.
Then $L_i/\unif L_i$ is isomorphic to $\St_{\overline{\Delta}_i}$.  Recall that
$$\pi(\CS) = \Ind_P^{\GL_n(E)} \St_{\Delta_1} \otimes \dots \otimes \St_{\Delta_r},$$
and hence contains the integral induction
$\Ind_P^{\GL_n(E)} L_1 \otimes \dots \otimes L_r$
as a lattice.
The mod $\unif$ reduction of this lattice is clearly isomorphic to
$\pi(\overline{\CS})$, which is essentially~AIG.  Thus
$\Ind_P^{\GL_n(E)} L_1 \otimes \dots \otimes L_r$ is homothetic to $\pi(\CS)^{\circ}$,
and hence 
$\pi(\CS)^{\circ}/\unif \pi(\CS)^{\circ}$ is indeed isomorphic to $\pi(\overline{\CS})$,
as claimed.
\end{proof}

\begin{cor} \label{cor:admissible sublattice}
Let $\CK'$ be a finite Galois extension of $\CK$, and let $\O'$ be the integral closure of
$\O$ in $\CK'$.  Let $\pi$ be an admissible representation of $\GL_n(E)$ over
$\CK$, and let $\CS$ be a collection of segments over $\CK'$ that are $\O'$-integral.
Suppose that $\pi \otimes_{\CK} \CK'$ is isomorphic to $\pi(\CS)$.  Then
$\pi$ is $\O$-integral, and there is a $\unif$-adically separated $\O$-lattice $\pi^{\circ}$
in $\pi$, unique up to homothety, such that $\pi^{\circ}/\unif \pi^{\circ}$ is essentially AIG.  
Moreover, $\pi^{\circ}/\unif \pi^{\circ} \otimes_K K'$ is isomorphic to $\pi(\overline{\CS})$.
\end{cor}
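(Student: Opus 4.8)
The plan is to descend the existence, uniqueness, and reduction properties of the lattice $\pi^{\circ}$ from $\CK'$ to $\CK$ using a Galois descent argument, exactly as in the proofs of Lemma~\ref{lem:supercuspidal reduction} and the preceding proposition. First I would note that $\pi \otimes_{\CK} \CK' \cong \pi(\CS)$ is essentially AIG (by Corollary~\ref{cor:essential}) and $\O'$-integral of finite length (since $\pi(\CS)$ is a parabolic induction of integral generalized Steinberg representations), hence by Proposition~\ref{prop:lattice} applied over $\O'$ it contains a $\unif'$-adically separated $\GL_n(E)$-invariant $\O'$-lattice $L$, unique up to homothety, with $L/\unif' L$ essentially AIG; moreover the preceding proposition identifies $L/\unif' L \cong \pi(\overline{\CS})$.

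The key step is to produce an $\O$-form of $L$. Since $\CK'/\CK$ is Galois, the group $\Gamma := \Gal(\CK'/\CK)$ acts $\CK'$-semilinearly on $\pi \otimes_{\CK} \CK'$, fixing $\pi$. For $\sigma \in \Gamma$, the lattice $\sigma(L)$ is again a $\unif'$-adically separated $\GL_n(E)$-invariant $\O'$-lattice in $\pi\otimes_{\CK}\CK'$ whose reduction mod $\unif'$ is essentially AIG (this last point because $\sigma$ permutes the segments of $\CS$ — or more directly, because the property of having essentially AIG reduction is intrinsic to the lattice up to the semilinear identification). By the uniqueness clause of Proposition~\ref{prop:lattice}, $\sigma(L)$ is homothetic to $L$, so after scaling $L$ (which does not affect any of the assertions) we may arrange that $\sigma(L) = L$ for all $\sigma$ — here one uses that $\O'$ is a PID so the homothety factors are powers of $\unif'$, and one can choose $L$ to be $\Gamma$-stable by, e.g., rescaling so that $L$ is ``saturated'' with respect to the $\Gamma$-action, or by the standard argument that the $\unif'$-adic valuations of the homothety factors give an additive character of $\Gamma$, which one kills by multiplying by a suitable power of $\unif'$ on orbits. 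Then $\pi^{\circ} := L^{\Gamma} = L \cap \pi$ is a $\GL_n(E)$-invariant $\O$-lattice in $\pi$, $\unif$-adically separated (as $L$ is $\unif'$-adically separated and $\O \hookrightarrow \O'$ is finite), and $L = \pi^{\circ} \otimes_{\O} \O'$ since $\O'$ is faithfully flat over $\O$ and $L$ descends.

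It then remains to check the reduction statement and uniqueness. For the reduction: $\pi^{\circ}/\unif\pi^{\circ}$ is a $K$-form of $(L/\unif' L)\otimes_{K'}$-something; more precisely, writing $k'$ for the residue field of $\O'$, one has $(\pi^{\circ}/\unif\pi^{\circ}) \otimes_K K'$ — wait, the residue field of $\O$ is $K$, and $L/\unif' L$ has a semilinear $\Gamma$-action (reducing the one on $L$) with invariants $\pi^{\circ}/\unif\pi^{\circ}$; since $L/\unif'L$ is $\unif'$-torsion-free over $k'$... I mean: by faithfully flat descent applied to $L/\unif'L = (\pi^{\circ}/\unif\pi^{\circ})\otimes_K K'$ (using $K' = \O'/\unif\O'$ is the relevant extension; note $\unif\O' = (\unif')^{e}\O'$ but since $\O'$ has residue characteristic zero this causes no trouble, or one simply works with $\O'/\unif\O'$ throughout), we get $(\pi^{\circ}/\unif\pi^{\circ})\otimes_K K' \cong L/\unif'L \cong \pi(\overline{\CS})$. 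In particular $\pi^{\circ}/\unif\pi^{\circ}$ becomes essentially AIG after extension to $K'$, hence is itself essentially AIG (its socle is absolutely irreducible generic and it has no further generic constituents — both properties are insensitive to the separable base extension $K'/K$, cf. the argument of Corollary~\ref{cor:essential}). For uniqueness up to homothety: if $\pi^{\diamond}$ were another such $\O$-lattice, then $\pi^{\diamond}\otimes_{\O}\O'$ would be a $\unif'$-adically separated $\GL_n(E)$-invariant $\O'$-lattice in $\pi\otimes_{\CK}\CK'$ with essentially AIG reduction, hence homothetic to $L$ by Proposition~\ref{prop:lattice}, and descending the homothety gives $\pi^{\diamond}$ homothetic to $\pi^{\circ}$ — or, more simply, one runs the argument of the last paragraph of the proof of Proposition~\ref{prop:lattice} directly over $\O$, scaling so $\pi^{\diamond}\subset\pi^{\circ}$ with nonzero reduction map, which is then an injection of essentially AIG representations of equal finite length, hence an isomorphism. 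The main obstacle I anticipate is the careful handling of ramification in $\CK'/\CK$ — i.e.\ that $\unif\O'$ and $\unif'\O'$ differ — when matching up the mod-$\unif$ and mod-$\unif'$ reductions; but since $\O'$ has residue characteristic zero and we only care about reductions up to the faithfully flat extension $K \to K'$, this is a bookkeeping issue rather than a genuine difficulty, and it is already implicitly handled in the proof of Lemma~\ref{lem:supercuspidal reduction}.
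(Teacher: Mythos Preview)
Your approach is correct and is essentially the same as the paper's: the paper's proof reads, in its entirety, ``It suffices to show that the lattice $\pi(\CS)^{\circ}$ constructed in the previous proposition is stable under the action of $\Gal(\CK'/\CK)$. This is clear since $\pi(\CS)^{\circ}$ is unique up to homothety.'' You have simply unpacked this. One small cleanup: your discussion of how uniqueness up to homothety yields actual $\Gamma$-stability is slightly muddled; the clean version is that $\sigma(L) = (\unif')^{n_\sigma} L$ for integers $n_\sigma$, and $\sigma\mapsto n_\sigma$ is a homomorphism from the finite group $\Gamma$ to $\Z$, hence trivial, so $\sigma(L)=L$ on the nose without any rescaling. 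Your ramification worry is also a non-issue (as you eventually conclude): since $\O\to K'$ kills $\unif$, one has $L/\unif'L = \pi^{\circ}\otimes_{\O}K' = (\pi^{\circ}/\unif\pi^{\circ})\otimes_K K'$ directly.
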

\begin{proof}
It suffices to show that the lattice $\pi(\CS)^{\circ}$ constructed in the previous
proposition is stable under the action of $\Gal(\CK'/\CK)$.  This is clear since
$\pi(\CS)^{\circ}$ is unique up to homothety.
\end{proof}

\subsection{Compatibility with specialization}

We now use the results of the previous sections to understand the relationship
between $\LL(\rho \otimes_{\O} \CK)$ and $\LL(\rho \otimes_{\O} K)$, where
$\rho: G_E \rightarrow \GL_n(\O)$ is a continuous Galois representation.
The key idea is a geometric interpretation of the partial order $\preceq$ on
collections of segments, due to Zelevinski~\cite{Ze2}.

Let $V = \oplus_{\pi} V_{\pi}$ be a finite-dimensional vector space over a field $F$, ``graded''
by the set of isomorphism classes of irreducible supercuspidal representations
of $\GL_m(E)$ over $\overline{\CK}$, for all $m$. 
We denote the automorphisms of $V$ as a graded $F$-vector space by $\Aut(V)$, and let 
$\End^+(V)$ denote the space of $F$-linear endomorphisms of $V$ that take
$V_{\pi}$ to $V_{(\abs \circ \det)\pi}$ for all $\pi$.  Let $N_V$ be an element of $\End^+(V)$;
it is a nilpotent endomorphism of $V$.

We construct a bijection between the set of isomorphism classes of pairs $(V,N_V)$
and the set of collections $\CS$ of segments over $\overline{\CK}$, as follows:
For any segment $\Delta = [\pi,(\abs \circ \det)\pi,\dots,(\abs \circ \det)^{r-1}\pi]$,
let $V_{\Delta,F}$ be the vector space defined by
$(V_{\Delta,F})_{\pi^{\prime}} = F$ if $\pi^{\prime}$ is in $\Delta$, and
zero otherwise.  We define an endomorphism $N_{\Delta,F}$ of $V_{\Delta,F}$
that is an isomorphism $(V_{\Delta,F})_{(\abs \circ \det)^i\pi} \rightarrow (V_{\Delta,F})_{(\abs \circ \det)^{i+1}\pi}$
for $0 \leq i < r-1$, and zero otherwise.

For a collection $\CS$ of segments, we define: 
$$(V_{\CS,F},N_{\CS,F}) = \bigoplus_{\Delta \in \CS} (V_{\Delta,F},N_{\Delta,F}).$$
It is easy to see (for instance, by the structure theory of graded $F[N]/n^r$-modules) that
the association $\CS \mapsto (V_{\CS,F},N_{\CS,F})$ yields a bijection between collections of segments and
isomorphism classes of pairs $(V,N_V)$.  

\begin{theorem}[{\cite[\S 2]{Ze2}}]  
\label{thm:orbits}
Let $\CS'$ and $\CS$ be collections of segments over $\overline{K}$.
Then $\CS' \preceq \CS$ if and only if $V_{\CS,F}$ is isomorphic to $V_{\CS',F}$ as a graded $F$-vector space, and
$N_{\CS,F}$ is in the closure of the orbit of $N_{\CS',F}$ under the action of $\Aut(V_{\CS',F})$ on
$\End^+(V_{\CS',F})$.
\end{theorem}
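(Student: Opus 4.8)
The plan is to translate the statement into the representation theory of the equioriented type-$A$ quiver and then identify three partial orders. First I would note that the grading set of supercuspidals breaks up into \emph{lines}, the orbits of the twist $\pi\mapsto(\abs\circ\det)\pi$, and that both $\End^+(V)$ and $\Aut(V)$ split as products over these lines; on a single line, a pair $(V,N_V)$ is exactly a finite length $\Z$-graded module over $F[N]$ with $N$ homogeneous of degree $1$ (equivalently a representation of an equioriented $A_m$ quiver), whose indecomposable summands are precisely the modules $V_{\Delta,F}$ attached to segments $\Delta$ supported on that line. Thus $\CS\mapsto(V_{\CS,F},N_{\CS,F})$ is the Krull--Schmidt bijection, and the whole question reduces to a single line. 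I would also record at the outset that an elementary operation replacing a linked pair $\Delta,\Delta'$ by $\Delta\cup\Delta',\Delta\cap\Delta'$ leaves the multiset of supercuspidals fixed --- this is inclusion--exclusion, $\mathbf 1_\Delta+\mathbf 1_{\Delta'}=\mathbf 1_{\Delta\cup\Delta'}+\mathbf 1_{\Delta\cap\Delta'}$ for linked segments --- so that $\CS'\preceq\CS$ already forces $V_{\CS,F}\cong V_{\CS',F}$. Hence the content is the equivalence, for a fixed graded $V$, between $\CS'\preceq\CS$ and $N_{\CS,F}\in\overline{\Aut(V)\cdot N_{\CS',F}}$.

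I would prove this by closing a cycle of implications. \textbf{(i)} $N_{\CS,F}\in\overline{\Aut(V)\cdot N_{\CS',F}}$ implies a family of rank inequalities: for each $i$ and each $k\ge 0$ the locus $\{M\in\End^+(V):\mathrm{rank}(M^k\colon V_i\to V_{i+k})\le r\}$ is Zariski closed and $\Aut(V)$-stable, so $\mathrm{rank}(N_{\CS,F}^k\colon V_i\to V_{i+k})\le\mathrm{rank}(N_{\CS',F}^k\colon V_i\to V_{i+k})$ for all $i,k$. A direct computation identifies this rank with $\#\{\Delta\in\CS:[i,i+k]\subseteq\Delta\}$, so the conclusion reads: for every subsegment $[i,j]$, the number of members of $\CS$ containing $[i,j]$ is at most the number of members of $\CS'$ containing it, and (taking $k=0$) $\dim V_{\CS,i}\le\dim V_{\CS',i}$. \textbf{(ii)} These rank inequalities, together with $V_{\CS,F}\cong V_{\CS',F}$, imply $\CS'\preceq\CS$; this is the combinatorial heart, discussed below. \textbf{(iii)} $\CS'\preceq\CS$ implies $N_{\CS,F}\in\overline{\Aut(V)\cdot N_{\CS',F}}$: orbit closures are unions of orbits, so the closure order is transitive, and it suffices to treat one elementary operation, say replacing linked $\Delta=[a,b]$, $\Delta'=[c,d]$ with $\Delta$ preceding $\Delta'$ (so $a<c\le b+1\le d$) by $\Delta\cup\Delta'=[a,d]$ and $\Delta\cap\Delta'=[c,b]$. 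Fixing a basis $\{x_*,y_*,z_*\}$ adapted to the two Jordan chains, one lets a single ``crossing'' structure constant vary: $x_b\mapsto t z_{b+1}$, $y_b\mapsto(1-t)z_{b+1}$, all else held fixed. For $t\ne 0$ the submodule generated by $x_a$ is a copy of $V_{[a,d],F}$ and $\langle y_c+\tfrac{t-1}{t}x_c\rangle$ is a graded complement isomorphic to $V_{[c,b],F}$, so $N_t$ is graded-conjugate to $N_{\CS',F}$; at $t=0$ one has $N_0=N_{\CS,F}$. (This is just the grading-compatible version of the classical degeneration $J_{d-a+1}\oplus J_{b-c+1}\rightsquigarrow J_{b-a+1}\oplus J_{d-c+1}$.)

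The main obstacle is step (ii). Here I would argue by descending induction on the quantity $\sum_{\Delta\in\CS'}(\mathrm{len}\,\Delta)^2-\sum_{\Delta\in\CS}(\mathrm{len}\,\Delta)^2$. It is non-negative: summing the rank identity of step (i) over all $i,k$ gives $\sum_{i,k}\mathrm{rank}(N_{\CS,F}^k\colon V_i\to V_{i+k})=\tfrac12\bigl(\sum_\Delta(\mathrm{len}\,\Delta)^2+\sum_\Delta\mathrm{len}\,\Delta\bigr)$, and since $\sum_\Delta\mathrm{len}\,\Delta=\dim V$ is common to $\CS$ and $\CS'$, the rank inequalities yield $\sum_{\Delta\in\CS}(\mathrm{len}\,\Delta)^2\le\sum_{\Delta\in\CS'}(\mathrm{len}\,\Delta)^2$; moreover equality in the invariant forces all graded ranks of $\CS$ and $\CS'$ to agree, hence $\CS=\CS'$. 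Assuming $\CS\ne\CS'$, one must exhibit a linked pair in $\CS$ whose merge $\CS\rightsquigarrow\CS_1$ \emph{preserves} all the rank inequalities against $\CS'$ (merging can only raise the counts $\#\{\Delta\supseteq[i,j]\}$, so the delicate point is to choose the pair so as not to overshoot the $\CS'$ counts), while strictly increasing $\sum(\mathrm{len})^2$ and hence strictly decreasing the invariant. Locating such a pair --- by examining, line by line, the first place where the counts for $\CS$ drop strictly below those for $\CS'$ --- is exactly Zelevinski's combinatorial analysis in \cite[\S 2]{Ze2}, which I would follow. Granting (i)--(iii) and the opening reduction, the theorem follows: $\CS'\preceq\CS$, the rank inequalities, and $N_{\CS,F}\in\overline{\Aut(V)\cdot N_{\CS',F}}$ are pairwise equivalent once $V_{\CS,F}\cong V_{\CS',F}$ is known, and this last isomorphism is forced by either of the first two.
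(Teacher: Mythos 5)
The paper does not prove this statement at all --- it is imported wholesale from Zelevinski \cite{Ze2}, so there is no internal argument to compare against; what you have written is essentially a reconstruction of Zelevinski's proof, and the parts you actually carry out are correct. The reduction to a single twist-orbit of supercuspidals is valid (in characteristic zero each orbit under $\pi\mapsto(\abs\circ\det)\pi$ is a $\Z$-torsor, so a pair $(V,N_V)$ supported on one line is a representation of an equioriented type-$A$ quiver, and both $\Aut(V)$ and $\End^+(V)$ factor over lines); the identity $\mathrm{rank}\bigl(N_{\CS,F}^k\colon V_i\to V_{i+k}\bigr)=\#\{\Delta\in\CS:[i,i+k]\subseteq\Delta\}$ together with lower semicontinuity of rank gives your step (i); and the one-parameter family in step (iii) does exhibit the required degeneration: for $t\neq 0$ the homogeneous generators $x_a$ and $y_c+\frac{t-1}{t}x_c$ generate graded chains of lengths $d-a+1$ and $b-c+1$, so $N_t$ lies in the $\Aut(V)$-orbit of $N_{\CS',F}$, while $N_0=N_{\CS,F}$ (the adjacent case $c=b+1$, where the intersection is empty, needs the trivial variant with no second chain). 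The only genuine incompleteness is step (ii): your descending induction on $\sum_\Delta(\mathrm{len}\,\Delta)^2$ is the right frame and the non-negativity of that invariant follows from summing the rank inequalities as you say, but the existence, when $\CS\neq\CS'$, of a linked pair in $\CS$ whose merge does not overshoot any of the counts $\#\{\Delta\supseteq[i,j]\}$ for $\CS'$ is precisely the nontrivial combinatorial lemma, and you defer it to \cite{Ze2} rather than proving it. Since the paper itself cites \cite{Ze2} for the entire theorem this is a defensible stopping point, but be clear that without (ii) you have only established that $\CS'\preceq\CS$ implies the orbit-closure relation, not the converse.
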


As a result, if $\CS' \preceq \CS$, then, for all $i$,
the rank of $N_{\CS,F}^i$ is less than or equal to that of $N_{\CS',F}^i$.
These ranks are equal for all $i$ if, and only if, $\CS' = \CS$.

%%%Note that
If $(\rho',N)$ is a Frobenius-semisimple Weil--Deligne representation over~$\overline{\CK}$,
and if $\CS$ is the collection of segments such that $\pi(\CS) = (\abs \circ \det)^{\frac{n-1}{2}}\LL(\rho',N)$, then the pair
$(V_{\CS,\overline{\CK}},N_{\CS,\overline{\CK}})$ can be described easily in terms of $(\rho',N)$.  Indeed, one has:

\begin{lemma}
For any supercuspidal representation $\pi$ of $\GL_m(E)$ over $\overline{K}$,
there is a natural isomorphism:
$$(V_{\CS,\overline{\CK}})_{\pi} \iso \Hom_{\overline{\CK}[W_E]}(\rho,\rho'),$$
where $\rho$ is the absolutely irreducible representation of $W_E$ that corresponds to $\pi$ under
the unitary local Langlands correspondence.  Moreover, under these isomorphisms, the map
$$N_{\CS,\overline{\CK}}: (V_{\CS,\overline{\CK}})_{\pi} \rightarrow (V_{\CS,\overline{\CK}})_{(\abs \circ \det)\pi}$$
is identified with the map
$$\Hom_{\overline{\CK}[W_E]}(\rho,\rho') \rightarrow \Hom_{\overline{\CK}[W_E]}(\abs \otimes \rho, \rho')$$
induced by $N$.
\end{lemma}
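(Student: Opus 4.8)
The plan is to reduce to the indecomposable case and then compute both sides by hand, the only inputs being Schur's lemma, the twisting--compatibility of the unitary local Langlands correspondence, and the explicit description of $\Sp_{\rho',d}$. Since $\overline{\CK}$ is algebraically closed, write $(\rho',N)=\bigoplus_i\Sp_{\rho_i',n_i}$ with each $\rho_i'$ absolutely irreducible, let $\pi_i$ be the representation attached to $\rho_i'$ by the unitary local Langlands correspondence, and let $\Delta_i=[\pi_i,(\abs\circ\det)\pi_i,\dots,(\abs\circ\det)^{n_i-1}\pi_i]$ be the segment of $\St_{\pi_i,n_i}$. By the discussion preceding the lemma, $\CS=\{\Delta_1,\dots,\Delta_r\}$ as a multiset, so $(V_{\CS,\overline{\CK}},N_{\CS,\overline{\CK}})=\bigoplus_i(V_{\Delta_i,\overline{\CK}},N_{\Delta_i,\overline{\CK}})$; on the other hand the underlying $W_E$-representation of $(\rho',N)$ is the semisimple representation $\rho'\cong\bigoplus_i\bigoplus_{j=0}^{n_i-1}\abs^j\rho_i'$, so $\Hom_{\overline{\CK}[W_E]}(\rho,\rho')$ and the map $f\mapsto N\circ f$ also decompose as a direct sum over $i$. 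It therefore suffices to treat a single summand $(\rho',N)=\Sp_{\rho_0',d}$.

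In that case $\CS=\{\Delta\}$ with $\Delta=[\pi_0,(\abs\circ\det)\pi_0,\dots,(\abs\circ\det)^{d-1}\pi_0]$, and by definition $(V_{\Delta,\overline{\CK}})_\pi$ is one-dimensional precisely when $\pi\cong(\abs\circ\det)^j\pi_0$ for some $0\leq j\leq d-1$ (and such $j$ is then unique, since $\abs^m\circ\det$ is never a self-twist of a supercuspidal for $m\neq 0$), with $N_{\Delta,\overline{\CK}}$ restricting to an isomorphism $(V_{\Delta,\overline{\CK}})_{(\abs\circ\det)^j\pi_0}\iso(V_{\Delta,\overline{\CK}})_{(\abs\circ\det)^{j+1}\pi_0}$ for $j<d-1$ and vanishing in top degree. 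On the Galois side, the underlying $W_E$-representation of $\Sp_{\rho_0',d}$ is $\bigoplus_{j=0}^{d-1}\abs^j\rho_0'$, the summands being pairwise non-isomorphic absolutely irreducible; since the unitary correspondence commutes with twisting we may take $\rho=\abs^j\rho_0'$ when $\pi\cong(\abs\circ\det)^j\pi_0$, and Schur's lemma then shows $\Hom_{\overline{\CK}[W_E]}(\rho,\rho')$ is one-dimensional exactly in those cases, spanned by a homomorphism with image in the $\abs^j\rho_0'$-summand of $\rho'$. Post-composition with $N$ carries a homomorphism with image in the $\abs^j\rho_0'$-summand to one with image in the $\abs^{j+1}\rho_0'$-summand, isomorphically for $j<d-1$ and by zero for $j=d-1$, because $N$ maps the $j$-th graded piece of $\Sp_{\rho_0',d}$ isomorphically onto the $(j+1)$-st and annihilates the last one; this is exactly the behaviour of $N_{\Delta,\overline{\CK}}$. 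Choosing a generator in the bottom degree and transporting it upward by $N$ thus produces an isomorphism $(V_{\CS,\overline{\CK}})_\pi\iso\Hom_{\overline{\CK}[W_E]}(\rho,\rho')$ intertwining $N_{\CS,\overline{\CK}}$ with $f\mapsto N\circ f$ and compatible with the grading shift $\pi\mapsto(\abs\circ\det)\pi$ on the left and $\rho\mapsto\abs\otimes\rho$ on the right.

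Summing over the indecomposable summands gives the isomorphism in general; on each $\rho$-isotypic line it is pinned down up to a scalar by Schur's lemma, so the resulting isomorphism of pairs is canonical in the relevant sense (and in particular suffices for the application to Theorem~\ref{thm:orbits}) once one fixes, once and for all, the $W_E$-representation attached to each supercuspidal by the unitary correspondence. The one genuinely fussy point is normalization: one must make sure that the uniform half-integral twist $(\abs\circ\det)^{(n-1)/2}$ relating $\pi(\CS)$ to $\LL(\rho',N)$, which simply shifts every segment and is absorbed into the definition of $\pi(\CS)$, together with the precise normalization of the unitary correspondence fixed in \ref{subsec:BS}, is set up so that twisting $\pi$ by $\abs\circ\det$ matches twisting $\rho$ by $\abs$ with no stray character intervening. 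I expect this bookkeeping, rather than any conceptual difficulty, to be the main obstacle; granting it, the computation above goes through verbatim.
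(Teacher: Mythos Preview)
Your proof is correct and follows essentially the same approach as the paper, which simply says ``This is true by construction if $(\rho',N)$ is indecomposable, and extends to the general case by taking direct sums''; you have supplied the details the paper omits. Your normalization worry at the end is unfounded: the paper defines $\CS$ precisely so that $\pi(\CS)=(\abs\circ\det)^{(n-1)/2}\LL(\rho',N)$, which by the explicit formula for $\LL(\rho',N)$ means the segments in $\CS$ are exactly $\Delta_i=[\pi_i,\dots,(\abs\circ\det)^{n_i-1}\pi_i]$ with $\pi_i$ matching $\rho_i'$ under the unitary correspondence, so no stray twist intervenes.
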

\begin{proof}
This is true by construction if $(\rho',N)$ is indecomposable, and extends to the general case by taking
direct sums.
\end{proof}

Zelevinski's result strongly suggests a connection between the Zelevinski partial order
and reduction of Weil--Deligne representations.  In order to make this connection
precise we need a compatibility between the reduction mod $\unif$ and local Langlands:

\begin{lemma} If $\rho^{\prime}$ is absolutely irreducible, and
$\pi \otimes_{\CK} \overline{\CK}$ corresponds to $\rho^{\prime} \otimes_{\CK} \overline{\CK}$ 
under the unitary local Langlands
correspondence, then $\pibar \otimes_K \overline{K}$ and 
$\rhobar^{\prime} \otimes_K \overline{K}$ correspond 
under the unitary local Langlands correspondence.
\end{lemma}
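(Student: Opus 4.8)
The plan is to reduce, by twisting and by passing to a finite extension of $\CK$, to the situation in which both $\rho^{\prime}$ and $\pi$ are already defined over a finite extension $F$ of $\Q_p$ contained in $\CK$, so that reduction modulo $\unif$ becomes an honest extension $F \hookrightarrow K$ of coefficient fields; the compatibility then follows from the stability of the unitary correspondence under extension of scalars. Since the assertion concerns representations over $\overline{K}$ and $\overline{\CK}$, which are unchanged if we replace $\CK$ by a finite extension $\CK^{\prime}$ (with $\O^{\prime}$ the integral closure of $\O$ and $K^{\prime}$ its residue field), we may enlarge $\CK$ freely. First note that $\pi$ is $\O$-integral: conjugating a lattice so that $\rho^{\prime}$ has values in $\GL_n(\O)$, we see $\det\rho^{\prime}$ is $\O^{\times}$-valued, and under local class field theory $\det\rho^{\prime}$ is the central character $\omega_{\pi}$ of $\pi$, so Lemma~\ref{lem:supercuspidal reduction} applies. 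Exactly as in the proof of that lemma, after a finite extension we may choose an $\O^{\times}$-valued character $\chi$ of $E^{\times}$ with $\chi^n = \omega_{\pi}$; replacing $\pi$ by $\pi \otimes (\chi^{-1}\circ\det)$ and $\rho^{\prime}$ by $\rho^{\prime} \otimes \chi^{-1}$ (these still correspond, by compatibility of the correspondence with twisting as in~\ref{subsec:BS}, and twisting by an $\O^{\times}$-valued character commutes with reduction modulo $\unif$), we may assume $\omega_{\pi}$, and hence $\det\rho^{\prime}$, is trivial. Then the Clifford-theoretic description of $\rho^{\prime}$ (Lemma~\ref{lem:clifford} and the discussion preceding Lemma~\ref{lem:lambda}) shows $\rho^{\prime}$ is defined over a finite extension of $\Q_p$: the restriction to $I_E$ is a sum of representations of finite image, defined over such a field, and the remaining parameter $\lambda \in \CK^{\times}$ controlling the action of a power of Frobenius is, by triviality of $\det\rho^{\prime}$, forced to be algebraic over $\Q_p$. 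Likewise, by~\cite[II.4.9]{Vig2}, the supercuspidal $\pi$ --- now with trivial central character --- is defined over a finite extension of $\Q_p$.

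Enlarging $F$ to contain both of these fields, write $\pi = \pi_0 \otimes_F \CK$ and $\rho^{\prime} = \rho^{\prime}_0 \otimes_F \CK$; as $\O$ is integrally closed in $\CK$ and contains $\Z_p$, it contains $F$. Then $\pi_0 \otimes_F \O$ is a $\unif$-adically separated $\GL_n(E)$-stable $\O$-lattice in $\pi$ whose reduction $\pi_0 \otimes_F K$ is absolutely irreducible supercuspidal, so by the uniqueness clause of Lemma~\ref{lem:supercuspidal reduction} it is, up to isomorphism, the representation $\pibar$; similarly $\rho^{\prime}_0 \otimes_F \O$ reduces to $\rhobar^{\prime}$. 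Let $\sigma_0$ be the supercuspidal over $F$ corresponding to $\rho^{\prime}_0$ under the unitary local Langlands correspondence over $F$. Then $\sigma_0 \otimes_F \overline{\CK}$ and $\pi_0 \otimes_F \overline{\CK} = \pi \otimes_{\CK} \overline{\CK}$ both correspond to $\rho^{\prime} \otimes_{\CK} \overline{\CK}$ and hence are isomorphic; since both $\sigma_0$ and $\pi_0$ are defined over $F$, a descent argument (as in Lemma~\ref{lem:AIG descent}) gives $\sigma_0 \cong \pi_0$, so $\pi_0$ and $\rho^{\prime}_0$ correspond over $F$. Finally, since $F \subseteq K$ we have $\pibar \otimes_K \overline{K} = \pi_0 \otimes_F \overline{K}$ and $\rhobar^{\prime} \otimes_K \overline{K} = \rho^{\prime}_0 \otimes_F \overline{K}$, and these correspond because the unitary local Langlands correspondence commutes with extension of the coefficient field. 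This is the assertion.

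The main obstacle is the bookkeeping in the first step: one must arrange, after a single simultaneous twist, that $\rho^{\prime}$ and $\pi$ descend compatibly to one and the same finite extension $F$ of $\Q_p$ inside $\CK$, so that passage to the residue field is realized as genuine base change along $F \hookrightarrow K$ rather than as a degeneration; once this is done, the stability of the unitary correspondence under scalar extension does the rest. A secondary technical point is checking that the lattice $\pi_0 \otimes_F \O$ produced this way is, up to homothety, the lattice whose reduction is used to define $\pibar$ --- but this is precisely the uniqueness statement of Lemma~\ref{lem:supercuspidal reduction}.
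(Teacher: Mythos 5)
Your proof is correct and follows essentially the same strategy as the paper's: twist so that $\rho'$ (and hence $\pi$) descends to a finite extension of $\Q_p$ contained in $\O$, so that reduction modulo $\unif$ becomes honest base change along $F \hookrightarrow K$, under which the unitary correspondence is stable. The only real difference is the mechanism of descent: the paper invokes its earlier lemma producing an unramified, $(\O')^{\times}$-valued twist making $\rho'$ definable over ${\Qbar_p}$ (and phrases the conclusion via the generic correspondence $\LL$ and its compatibility with twists and base change), whereas you trivialize the central character and deduce algebraicity of the Frobenius parameter $\lambda$ from the determinant --- both routes are valid.
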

\begin{proof}
We first translate this into a statement in terms of the generic local
Langlands correspondence.  From this point of view the representation 
$\LL(\rho^{\prime})$ is isomorphic to
$(\abs \circ \det)^{-\frac{n-1}{2}}\pi$, and we must show that
$\LL(\rhobar^{\prime})$ is isomorphic to
$(\abs \circ \det)^{-{\frac{n-1}{2}}}\pibar$.

There is a finite extension $\CK'$ of $\CK$, and a character $\chi: W_E \rightarrow (\CK')^{\times}$,
such that $\rho' \otimes \chi$ is defined over $\Q_p$; as $\rho'$ is integral
$\chi$ takes values in $(\O')^{\times}$, where $\O'$ is the integral closure of $\O$ in $K'$.
In particular, there is a finite extension $K_0$ of $\Q_p$, contained in $\CK'$, and a representation
$\rho_0: W_E \rightarrow \GL_n(K_0)$, such that $\rho_0 \otimes_{K_0} \CK'$ is isomorphic
to $\rho' \otimes \chi$.  If we let $K'$ be the residue field of $\O'$, and let
$\overline{\chi}$ be the reduction mod $\unif'$ of the character $\chi$, then
$K_0$ is contained in $K'$ and $\rho_0 \otimes_{K_0} K'$ is isomorphic to $\rhobar' \otimes \chi$.
It follows that $\LL(\rhobar') \otimes \chi$ is isomorphic to
$(\abs \circ \det)^{-\frac{n-1}{2}}\LL(\rho_0) \otimes \chi$.

Let $\pi_0 = (\abs \circ \det)^{\frac{n-1}{2}}\LL(\rho_0)$.  As the generic local Langlands
correspondence is compatible with twists and base change, the representation
$\pi \otimes \chi$ is isomorphic to $\pi_0 \otimes_{K_0} \CK'$.  Thus $\pibar \otimes \chi$
is isomorphic to $\pi_0 \otimes_{K_0} K'$, and hence to 
$(\abs \circ \det)^{\frac{n-1}{2}}\LL(\rhobar') \otimes \chi$.  The result follows.
\end{proof}

If $\overline{\CS}$ is obtained from a collection of segments $\CS$ by reduction mod $\unif$,
the pairs $(V_{\CS,F},N_{\CS,F})$ and $(V_{\overline{\CS},F},N_{\overline{\CS},F})$
are related by $(V_{\overline{\CS},F})_{\pibar} = \oplus_{\pi^{\prime}} (V_{\CS,F})_{\pi^{\prime}}$,
where the sum is over $\pi^{\prime}$ with $\pibar^{\prime} = \pibar$.

Now let $(\rho',N)$ be a Weil--Deligne representation over $\O$ such that the restriction of
$\rho' \otimes_{\O} \CK$ to $I_E$ is a direct sum of absolutely irreducible representations of $I_E$ over $\CK$,
and such that $\rho' \otimes_{\O} \CK$ is a direct sum of absolutely irreducible representations
$\rho'_i$ of $W_E$.  (We can always arrange this by replacing $\CK$ with a finite extension.)  Let $\CS$
be the segment associated to $(\rho',N) \otimes_{\O} \CK$; we have
$$(V_{\CS,\CK})_{\pi_i} = \Hom_{\CK[W_E]}(\rho_i,\rho' \otimes_{\O} \CK),$$ 
where $\pi_i$ corresponds to $\rho_i$
under unitary local Langlands.  We also consider the $K$-vector-space
$V_{\overline{\CS},K}$.

Let $(\rhobar',\Nbar)$ be the Weil--Deligne representation $(\rho',N) \otimes_{\O} K$,
and let $\CS'$ be the collection of segments associated to $(\rhobar',\Nbar)^{\Fss}$.  
Our goal is to compare $\CS'$ to $\overline{\CS}$; we will do this by comparing
$V_{\overline{\CS},\CK}$ to $V_{\CS',K}$.  The key difficulty is to construct an
$\O$-lattice in $V_{\overline{\CS},\CK}$, stable under $N_{\overline{\CS},\CK}$, 
whose reduction mod $\unif$ is isomorphic to $V_{\CS',K}$.

By Lemma~\ref{lem:clifford} there is a finite extension $K_0$ of $\Q_p$ and representations
$\tau_1, \dots, \tau_n$ of $I_E$ over $K_0$, each in its own orbit under conjugation by
$\Phi$, such that the restriction $\rho' \otimes_{\O} \CK$ is a direct sum of $\Phi$-conjugates
of the $\tau_i$, each with multiplicity one.  For each $i$, let $L_i$ be the $\O$-module
$\Hom_{\O[I_E]}(\tau_i \otimes_{K_0} \O,\rho')$.  

For each $j$ such that the restriction of $\rho'_j$ to $I_E$ contains a copy of
$\tau_i \otimes_{K_0} \CK,$ the restriction map
$$\Hom_{\CK[W_E]}(\rho'_j,\rho') \rightarrow \Hom_{\CK[I_E]}(\tau_i \otimes_{K_0} \CK,\rho')$$
is an injection.  
We thus obtain an isomorphism:
$$L_i \otimes_{\O} \CK \iso \Hom_{\CK[I_E]}(\tau_i \otimes_{K_0} \CK,\rho') \iso
\bigoplus_j \Hom_{\CK[W_E]}(\rho'_j,\rho'),$$
where the latter sum is over those $j$ such that
the restriction of $\rho'_j$ to $I_E$ contains a copy of $\tau_i \otimes_{K_0} \CK$.
Thus condition is satisfied if, and only if, the restriction of $\rhobar'_j$ to $I_E$ contains a copy
of $\tau_i \otimes_{K_0} K$.  We can thus view $L_i$ as a sublattice
of $\oplus_j (V_{\overline{\CS},\CK})_{\pibar_j}$, where the sum is over those $j$
such that the restriction of $\rhobar'_j$ to $I_E$ contains a copy of
$\tau_i \otimes_{K_0} K$.  For any such $j$, let
$L_{\pibar_j}$ be the intersection $L_i \cap (V_{\overline{\CS},\CK})_{\pibar_j}$.

In fact, this gives a direct sum decomposition of $L_i$.  To see this, first observe:
\begin{lemma} 
Let $M$ be a free $\O$-module of finite rank, and $\Psi$ is an
$\O$-linear endomorphism of $M$ that acts semisimply on $M \otimes_{\O} \CK$.
Suppose that all of the eigenvalues of $\Psi \otimes_{\O} \CK$ lie in $\CK$, and
for each eigenvalue $\tilde \lambda$ of $\Psi \otimes_{\O} \CK$, let
$M_{\tilde \lambda}$ be the intersection of $M$ with the $\tilde \lambda$-eigenspace
of $\Psi \otimes_{\O} \CK$.
Then $M$ decomposes as $$M = \oplus_{\lambda} M_{\lambda},$$
where $\lambda$ runs over the eigenvalues of $\overline{\Psi}: M/\unif M \rightarrow M/\unif M$
and $M_{\lambda}$ is the sum of $M_{\tilde \lambda}$ for those ${\tilde \lambda}$
congruent to $\lambda$ modulo $\unif$.
The endomorphism $\Phi$ acts on $M_{\lambda}/\unif M_{\lambda}$ as the product of
$\lambda$ with a unipotent endomorphism of $M_{\lambda}/\unif M_{\lambda}$.
\end{lemma}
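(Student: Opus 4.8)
The plan is to reduce everything to the Chinese Remainder Theorem for the commutative $\O$-algebra $\O[\Psi]\cong\O[t]/(P(t))$ acting on $M$, where $P(t)$ is the minimal polynomial of $\Psi$ on $M\otimes_{\O}\CK$. First I would observe that since $\Psi$ preserves the finite free $\O$-module $M$, its characteristic polynomial, computed on $M$, is monic with coefficients in $\O$; hence each eigenvalue $\tilde\lambda$, being a root of a monic $\O$-polynomial and lying in $\CK$ by hypothesis, actually lies in $\O$, because $\O$ is integrally closed in $\CK$. As $\Psi\otimes_{\O}\CK$ is semisimple with all its (distinct) eigenvalues in $\O$, its minimal polynomial is $P(t)=\prod_{\tilde\lambda}(t-\tilde\lambda)\in\O[t]$, and $P(\Psi)=0$ on $M$. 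I would then group the eigenvalues by their residues modulo $\unif$, writing $P=\prod_{\lambda}P_{\lambda}$ with $P_{\lambda}(t)=\prod_{\tilde\lambda\equiv\lambda}(t-\tilde\lambda)$ (all congruences modulo $\unif$), so that $\overline{P_{\lambda}}(t)=(t-\lambda)^{m_{\lambda}}$, where $m_{\lambda}$ is the number of eigenvalues $\tilde\lambda$ lying above $\lambda$.

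The key point — and the step I expect to be the main obstacle — is that the $P_{\lambda}$ are pairwise comaximal \emph{in $\O[t]$}, not merely in $\CK[t]$. For $\lambda\neq\mu$ the resultant $\mathrm{Res}(P_{\lambda},P_{\mu})$ is, up to sign, the product of all differences $\tilde\lambda-\tilde\mu$ with $\tilde\lambda$ above $\lambda$ and $\tilde\mu$ above $\mu$; each such difference reduces to $\lambda-\mu\neq 0$ modulo $\unif$ and hence is a unit of $\O$, so the resultant is a unit of $\O$, and since the resultant lies in the ideal $(P_{\lambda},P_{\mu})$ of $\O[t]$, this forces $(P_{\lambda},P_{\mu})=\O[t]$. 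The Chinese Remainder Theorem now yields an isomorphism of $\O$-algebras $\O[t]/(P)\iso\prod_{\lambda}\O[t]/(P_{\lambda})$; pulling back the factor idempotents $e_{\lambda}$ and letting them act on $M$ via $\Psi$ gives a decomposition $M=\bigoplus_{\lambda}M_{\lambda}$ into $\Psi$-stable $\O$-direct summands, with $M_{\lambda}:=e_{\lambda}M$. Since $e_{\lambda}$, as a polynomial in $\Psi$, takes the value $1$ on each eigenvalue $\tilde\lambda\equiv\lambda$ and $0$ on the others, it acts on $M\otimes_{\O}\CK=\bigoplus_{\tilde\lambda}(M\otimes\CK)_{\tilde\lambda}$ as the projector onto $\bigoplus_{\tilde\lambda\equiv\lambda}(M\otimes\CK)_{\tilde\lambda}$; hence $M_{\lambda}=M\cap\bigl(\bigoplus_{\tilde\lambda\equiv\lambda}(M\otimes\CK)_{\tilde\lambda}\bigr)$, which contains every $M_{\tilde\lambda}$ with $\tilde\lambda\equiv\lambda$ and is spanned over $\CK$ by them. (I would take this saturated submodule as the definition of $M_{\lambda}$; the naive sum $\sum_{\tilde\lambda\equiv\lambda}M_{\tilde\lambda}$ need not be saturated in $M$, so it is its saturation that appears in the direct sum.)

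Finally, for the last assertion: each $M_{\lambda}=e_{\lambda}M$ is a nonzero free $\O$-module annihilated by $P_{\lambda}(\Psi)$, so reducing modulo $\unif$ gives $(\overline{\Psi}-\lambda)^{m_{\lambda}}=\overline{P_{\lambda}}(\overline{\Psi})=0$ on the nonzero $K$-vector space $M_{\lambda}/\unif M_{\lambda}$, where $\overline{\Psi}$ is the reduction of $\Psi$ (the operator written $\Phi$ in the statement). Thus $\overline{\Psi}-\lambda$ is nilpotent on $M_{\lambda}/\unif M_{\lambda}$, and so $\overline{\Psi}$ acts there as $\lambda\cdot\bigl(1+\lambda^{-1}(\overline{\Psi}-\lambda)\bigr)$, i.e.\ as $\lambda$ times a unipotent endomorphism — here $\lambda\neq 0$, which holds automatically in the intended application, where $\Psi$ is invertible because it arises from a Frobenius element. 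Moreover, since $M/\unif M=\bigoplus_{\lambda}M_{\lambda}/\unif M_{\lambda}$ and $\overline{\Psi}-\mu$ is invertible (unit plus nilpotent) on the $\lambda$-summand whenever $\mu\neq\lambda$, the eigenvalues of $\overline{\Psi}$ are precisely the $\lambda$ occurring in the decomposition, as required. Apart from the comaximality point every step is formal, so I would expect most of the writing effort to go into that single verification.
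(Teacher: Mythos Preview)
Your proof is correct and follows essentially the same route as the paper's: the paper views $M$ as a module over $\O[t]/(P(t))$ and decomposes it according to the connected components of $\Spec\O[t]/(P(t))$, which is exactly your Chinese Remainder decomposition $\O[t]/(P)\cong\prod_{\lambda}\O[t]/(P_{\lambda})$ phrased geometrically rather than via the resultant argument you give. Your treatment is in fact slightly more careful than the paper's on two points: you note that the literal ``sum of the $M_{\tilde\lambda}$'' in the statement should be read as its saturation (the paper's proof silently uses the idempotent summand, which is the saturation), and you flag the implicit hypothesis $\lambda\neq 0$ needed to write $\overline{\Psi}$ as $\lambda$ times a unipotent.
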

\begin{proof}
Let $P(t)$ be the minimal polynomial of $\Psi$, and consider $M$ as an
$\O[t]/P(t)$-module on which $t$ acts by $\Psi$.  The connected components of $\Spec \O[t]/P(t)$
are in bijection with the roots $\lambda$ of the mod $\unif$ reduction
$\overline{\Psi}$ of $\Psi$; these are the eigenvalues of $\overline{\Psi}$.
Thus, considered as a sheaf on $\Spec \O[t]/P(t)$, $M$ decomposes
as a direct sum of sheaves $M_{\lambda}$ supported on each connected component.
On each $M_{\lambda}$, the minimal polynomial of $\overline{\Psi}$ is a power
of $t - \lambda$, so $\lambda^{-1}\overline{\Psi}$ is unipotent on $M_{\lambda}$.
\end{proof}

\begin{lemma}
We have a direct sum decomposition:
$L_i = \oplus_j L_{\pibar_j}$, where the sum is over those $j$
such that the restriction of $\rhobar'_j$ to $I_E$ contains a copy of
$\tau_i \otimes_{K_0} K$.
\end{lemma}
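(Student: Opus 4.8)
The plan is to exhibit an $\O$-linear endomorphism $\Psi_i$ of the lattice $L_i$ and then to apply the preceding lemma (the one decomposing a lattice according to the eigenvalues modulo $\unif$ of an endomorphism that is semisimple after inverting $\unif$). Let $r_i$ be the size of the $\Phi$-orbit of $\tau_i$, and fix an isomorphism $\tau_i \iso \tau_i^{\Phi^{r_i}}$ of $I_E$-representations over $K_0$; since $K_0 \subseteq \O$ we may view it over $\O$. Exactly as in the construction preceding Lemma~\ref{lem:lambda}, conjugation by $\Phi^{r_i}$ composed with this isomorphism defines an endomorphism $\Psi_i$ of $L_i = \Hom_{\O[I_E]}(\tau_i\otimes_{K_0}\O, \rho')$; it is $\O$-linear and preserves the lattice $L_i$ because $\rho'(\Phi^{r_i})$ and the chosen isomorphism are defined over $\O$.

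First I would compute $\Psi_i$ after inverting $\unif$. Since $\Phi^{r_i}$ preserves each $W_E$-subrepresentation $\rho'_j$ of $\rho'\otimes_\O\CK$, the operator $\Psi_i\otimes_\O\CK$ respects the decomposition $L_i\otimes_\O\CK \iso \bigoplus_j \Hom_{\CK[W_E]}(\rho'_j,\rho'\otimes_\O\CK)$, and on the $j$-th summand it acts by the scalar $\lambda_j\in\CK^\times$ attached to $\rho'_j$ by Lemma~\ref{lem:lambda} --- this is simply the functoriality of that construction in the ambient representation, using that $\rho'_j|_{I_E}$ is a single $\Phi$-orbit of $\tau_i\otimes_{K_0}\CK$ with multiplicity one (Lemma~\ref{lem:clifford}). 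Hence $\Psi_i\otimes_\O\CK$ is semisimple with eigenvalues in $\CK$, and its $\tilde\lambda$-eigenspace is the sum of the summands with $\lambda_j=\tilde\lambda$. The key step is then the mod-$\unif$ comparison: each $\rhobar'_j$ is absolutely irreducible (Lemma~\ref{lem:irreducible reduction}) and its restriction to $I_E$ is the $\Phi$-orbit of $\tau_i\otimes_{K_0}K$, so Lemma~\ref{lem:lambda} applied over the residue field $K$ (with the reduction of the fixed isomorphism) shows that $\rhobar'_j$ is determined up to isomorphism by the image $\bar\lambda_j$ of $\lambda_j$ in $K$. Thus $\rhobar'_j\cong\rhobar'_{j'}$ if and only if $\lambda_j\equiv\lambda_{j'}\pmod{\unif}$. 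Combined with the identification $(V_{\CS,\CK})_{\pi_j}\iso\Hom_{\CK[W_E]}(\rho'_j,\rho'\otimes_\O\CK)$ and the relation $(V_{\overline{\CS},\CK})_{\pibar_j}=\bigoplus_{j'\,:\,\pibar_{j'}=\pibar_j}(V_{\CS,\CK})_{\pi_{j'}}$ recorded above, this identifies $(V_{\overline{\CS},\CK})_{\pibar_j}$ with the sum of the $\tilde\lambda$-eigenspaces of $\Psi_i\otimes_\O\CK$ over those $\tilde\lambda\equiv\bar\lambda_j\pmod{\unif}$.

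Finally I would invoke the preceding lemma with $M=L_i$ and $\Psi=\Psi_i$: it produces a direct sum $L_i=\bigoplus_\lambda L_{i,\lambda}$, indexed by the eigenvalues $\lambda$ of $\overline{\Psi_i}$ on $L_i/\unif L_i$, with $L_{i,\lambda}\otimes_\O\CK$ equal to the sum of the $\tilde\lambda$-eigenspaces of $\Psi_i\otimes_\O\CK$ with $\tilde\lambda\equiv\lambda\pmod{\unif}$. By the previous paragraph $L_{i,\lambda}\otimes_\O\CK=(V_{\overline{\CS},\CK})_{\pibar_j}$ whenever $\bar\lambda_j=\lambda$; and since $L_{i,\lambda}$ is an $\O$-module direct summand of $L_i$ it is saturated, so $L_{i,\lambda}=L_i\cap(V_{\overline{\CS},\CK})_{\pibar_j}=L_{\pibar_j}$. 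This gives $L_i=\bigoplus_j L_{\pibar_j}$, as required. I expect the main obstacle to be precisely the mod-$\unif$ matching in the middle paragraph --- recognizing that the eigenvalue grading of $\Psi_i$ coincides with the grading of $L_i\otimes_\O\CK$ by the reduction type $\rhobar'_j$ --- and this is the step that genuinely uses that $K$ has characteristic zero (through Lemma~\ref{lem:irreducible reduction}); the rest is bookkeeping with constructions already in place.
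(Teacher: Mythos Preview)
Your proposal is correct and follows essentially the same approach as the paper's own proof: both construct the endomorphism $\Psi$ of $L_i$ via $\Phi^{r_i}$ and the fixed isomorphism $\tau_i\iso\tau_i^{\Phi^{r_i}}$, identify its eigenspaces over $\CK$ with the summands $\Hom_{\CK[W_E]}(\rho'_j,\rho'\otimes_\O\CK)$ via the scalar $\lambda_j$ of Lemma~\ref{lem:lambda}, match the mod-$\unif$ residue classes of these eigenvalues with the isomorphism classes $\rhobar'_j$ (and hence with the $\pibar_j$), and then invoke the preceding lemma. Your write-up is slightly more explicit than the paper's in spelling out the saturation step identifying $L_{i,\lambda}$ with $L_{\pibar_j}$, but the argument is the same.
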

\begin{proof}
Let $r$ be the size of the orbit of $\tau_i$ under the conjugation action of $\Phi$,
and fix an isomorphism $\tau_i^{\Phi^r} \iso \tau_i$.  This isomorphism
induces an endomorphism $\Psi$ of 
$\Hom_{\O[I_E]}(\tau_i \otimes_{K_0} \O, \rho')$ via
$$
\Hom_{\O[I_E]}(\tau_i \otimes_{K_0} \O, \rho') \stackrel{\Phi^r}{\rightarrow}
\Hom_{\O[I_E]}(\tau_i^{\Phi^r} \otimes_{K_0} \O, \rho') \iso
\Hom_{\O[I_E]}(\tau_i \otimes_{K_0} \O, \rho').
$$

Let $\rho'_j$ be an absolutely irreducible summand of $\rho' \otimes_{\O} \CK$ whose
restriction to $I_E$ contains a copy of $\tau_i \otimes_{K_0} \CK$.  This copy is unique,
and yields a restriction map:
$$\Hom_{\CK[W_E]}(\rho'_j, \rho' \otimes_{\O} \CK) \rightarrow \Hom_{\CK[I_E]}(\tau_i, \rho' \otimes_{\O} \CK).$$
This restriction map is injective, and its image can be characterized in terms of $\Psi$.
In particular, the endomorphism:
$$
\Hom_{\CK[I_E]}(\tau_i \otimes_{K_0} \CK, \rho'_j) \stackrel{\Phi^r}{\rightarrow}
\Hom_{\CK[I_E]}(\tau_i^{\Phi^r} \otimes_{K_0} \CK, \rho'_j) \iso{\rightarrow}
\Hom_{\CK[I_E]}(\tau_i \otimes_{K_0} \CK, \rho'_j).
$$
is an endomorphism of one-dimensional $\CK$ vector spaces and is thus given by
a scalar ${\tilde \lambda}$; it follows by Lemma~\ref{lem:lambda} that $\rho'_j$
is determined by ${\tilde \lambda}$ and $\tau_i$, and that the image of the map:
$$\Hom_{\CK[W_E]}(\rho'_j, \rho' \otimes_{\O} \CK) \rightarrow \Hom_{\CK[I_E]}(\tau_i, \rho' \otimes_{\O} \CK)$$
is the ${\tilde \lambda}$-eigenspace of $\Psi$.

Now let $\rhobar_j$ be an absolutely irreducible summand of $\rho' \otimes_{\O} K$
whose restriction to $I_E$ contains a copy of $\tau_i \otimes_{K_0} K$, and let
$\pibar_j$ be the corresponding admissible representation.  Then the endomorphism
$\Psi$ of $\Hom_{K[I_E]}(\tau_i \otimes_{K_0} K, \rhobar_i)$ is a scalar $\lambda$,
and, by the same reasoning as above, $(V_{\overline{\CS},\CK})_{\pibar_j}$ is
the sum of the ${\tilde \lambda}$-eigenspaces of $\Psi$ for those $\tilde \lambda$
congruent to $\lambda$ modulo $\unif$.  Thus, by the preceding lemma,
$L_{\pibar_j}$ is a direct summand of $L_i$.
\end{proof}

Let $L$ be the lattice in $V_{\overline{\CS},K}$ defined by:
$$L = \oplus_{\pibar} L_{\pibar}.$$  Note that as $N_{\overline{\CS},K}$
preserves each $L_i$, it also preserves $L$.

\begin{lemma} 
\label{lem:specialization}
There is a natural isomorphism
$L/\unif L \iso V_{\CS',K}$.  Moreover, the endomorphism $N_{\overline{\CS},\CK}$ of $L$ reduces to $N_{\CS',K}$
under this isomorphism.
\end{lemma}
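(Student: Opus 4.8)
The plan is to reduce the assertion, one grading piece at a time, to an equality of two subspaces of a single $K$-vector space, and then to use the multiplicative Jordan decomposition of $\rhobar'(\Phi)$ to identify those subspaces; the compatibility with $N$ will then come for free because every identification in sight is effected by restriction to $I_E$ and by post-composition with a fixed nilpotent matrix.

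First I would record that forming $\Hom$ out of a fixed $I_E$-type commutes with base change: $I_E$ acts on $\rho'$ and on $\tau_i$ through a finite quotient whose order is a unit in $\O$, since $K$ has characteristic zero. Hence $L_i/\unif L_i \iso \Hom_{K[I_E]}(\tau_i\otimes_{K_0}K,\rhobar')$ and $L_i\otimes_\O\CK \iso \Hom_{\CK[I_E]}(\tau_i\otimes_{K_0}\CK,\rho'\otimes_\O\CK)$. On the latter, $\Psi$ acts semisimply, with the $\tilde\lambda_j$-eigenspace equal to the image of the restriction map from $\Hom_{\CK[W_E]}(\rho'_j,\rho'\otimes_\O\CK)=(V_{\CS,\CK})_{\pi_j}$ (by Lemma~\ref{lem:lambda}, the scalar attached to $\rho'_j$ being $\tilde\lambda_j$). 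Applying the structure lemma for free $\O$-modules carrying an endomorphism semisimple over $\CK$ to $M=L_i$ and this $\Psi$, and recalling $L_\pibar = L_i\cap\bigl(\bigoplus_{j:\pibar_j=\pibar}(V_{\CS,\CK})_{\pi_j}\bigr)$, I find that $L_\pibar/\unif L_\pibar$ is exactly the generalized $\lambda$-eigenspace of the reduced operator $\overline\Psi$ on $\Hom_{K[I_E]}(\tau_i\otimes_{K_0}K,\rhobar')$, where $\lambda$ is the common reduction mod $\unif$ of those $\tilde\lambda_j$.

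Next I would apply the lemma identifying the graded pieces $(V_{\CS,\overline\CK})_\pi$ with spaces of $W_E$-homomorphisms — its proof works verbatim over $K$ once, as in the running hypotheses, everything is split after a finite extension — to the Frobenius-semisimple Weil--Deligne representation $(\rhobar',\Nbar)^{\Fss}$. This yields a natural identification $(V_{\CS',K})_\pibar \iso \Hom_{K[W_E]}(\rhobar_\pibar,(\rhobar')^{\Fss})$ under which $N_{\CS',K}$ becomes post-composition with $\Nbar$ (note $\Nbar$ is unchanged by Frobenius-semisimplification), where $\rhobar_\pibar$ is the absolutely irreducible $W_E$-representation attached to $\pibar$. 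Since $(\rhobar')^{\Fss}$ has the same restriction to $I_E$ as $\rhobar'$, restriction to the $\tau_i$-isotypic part embeds $\Hom_{K[W_E]}(\rhobar_\pibar,(\rhobar')^{\Fss})$ into $\Hom_{K[I_E]}(\tau_i\otimes_{K_0}K,\rhobar')$, and — again by Lemma~\ref{lem:lambda}, now for $(\rhobar')^{\Fss}$ — its image is the $\lambda$-eigenspace of the operator $\Psi_{\Fss}$ built from $(\rhobar')^{\Fss}(\Phi^r)$ in place of $\rhobar'(\Phi^r)$; moreover this operator is semisimple, since $(\rhobar')^{\Fss}$ is a direct sum of absolutely irreducibles.

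The heart of the proof is then to show these two subspaces of $\Hom_{K[I_E]}(\tau_i\otimes_{K_0}K,\rhobar')$ coincide. Write $\rhobar'(\Phi)=\overline s\,\overline u$ for its commuting semisimple and unipotent parts. The crucial point is that $\overline u$ centralizes $\rhobar'(I_E)$: since $\rhobar'(\Phi)$ normalizes the finite group $\rhobar'(I_E)$, some power $\rhobar'(\Phi)^m$ centralizes it, hence so do the Jordan components $\overline s^{m},\overline u^{m}$, and hence so does $\overline u$, being the unique unipotent $m$-th root of $\overline u^{m}$ in characteristic zero. Consequently $\overline u$ acts by post-composition on $\Hom_{K[I_E]}(\tau_i\otimes_{K_0}K,\rhobar')$ through a unipotent operator $U$ commuting with $\overline\Psi$, and the identity $(\rhobar')^{\Fss}(\Phi^r)=\overline u^{-r}\rhobar'(\Phi^r)$ gives $\Psi_{\Fss}=U^{-r}\overline\Psi$. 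A unipotent operator commuting with $\overline\Psi$ preserves its generalized-eigenspace decomposition and its spectrum, so the generalized $\lambda$-eigenspace of $\Psi_{\Fss}$ equals that of $\overline\Psi$, namely $L_\pibar/\unif L_\pibar$; and since $\Psi_{\Fss}$ is semisimple, this is its honest $\lambda$-eigenspace, namely $\Hom_{K[W_E]}(\rhobar_\pibar,(\rhobar')^{\Fss})$. Collecting these identifications over all $\pibar$ produces the natural isomorphism $L/\unif L\iso V_{\CS',K}$. For the statement about $N$: the restriction of $N_{\overline\CS,\CK}$ to $L_i$ is post-composition with the matrix $N\in\mathrm{M}_n(\O)$, which reduces mod $\unif$ to post-composition with $\Nbar$; since all the identifications above are effected by restriction to $I_E$, which intertwines post-composition by $\Nbar$ at the $W_E$- and $I_E$-levels, the reduced operator is carried precisely to $N_{\CS',K}$. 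I expect the core step — in particular the Jordan-decomposition observation that the unipotent part of $\rhobar'(\Phi)$ centralizes $\rhobar'(I_E)$, and the resulting collapse of generalized eigenspaces to eigenspaces — to be the main obstacle, as it is the only place where Frobenius-semisimplification genuinely intervenes.
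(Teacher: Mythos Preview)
Your argument is correct and follows essentially the same route as the paper's proof: both reduce to identifying, inside $\Hom_{K[I_E]}(\tau\otimes_{K_0}K,\rhobar')$, the generalized $\lambda$-eigenspace of the reduced operator $\overline\Psi$ with the honest $\lambda$-eigenspace of the operator coming from $(\rhobar')^{\Fss}$. The paper compresses this into the single sentence ``$\overline\Psi^{\ss}$ is the semisimplification of $\Psi/\unif\Psi$''; your Jordan-decomposition argument (that the unipotent part $\overline u$ of $\rhobar'(\Phi)$ centralizes $\rhobar'(I_E)$, whence $\Psi_{\Fss}=U^{-r}\overline\Psi$ with $U$ unipotent and commuting) is exactly a proof of that assertion, and your treatment of the $N$-compatibility likewise unpacks what the paper leaves as ``one verifies easily.''
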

\begin{proof}
Recall that $\CS'$ is the collection of segments associated to
$(\rhobar',\Nbar)^{\Fss}$.  Let $\rhobar_i$ be any absolutely irreducible Jordan--H\"older constituent
$\rhobar_i$ of $(\rhobar')^{\ss}$, corresponding to an admissible representation
$\pibar_i$ under unitary local Langlands.  Then $(V_{\CS',K})_{\pibar_i}$ is equal to
$\Hom_{K[W_E]}(\rhobar_i,(\rhobar')^{\ss})$. 
It thus suffices to construct, for each $i$, a natural isomorphism of
$(L/\unif L)_{\pibar_i}$ with $(V_{\CS',K})_{\pibar_i}$.

Let $\tau$ be an absolutely irreducible representation of $I_E$ over $K_0$
such that $\rhobar_i$ contains $\tau \otimes_{K_0} K$; let $r$ be the order
of the orbit of $\tau$ under conjugation by $\Phi$, and fix an isomorphism
of $\tau$ with $\tau^{\Phi^r}$.  Let $\lambda \in K^{\times}$ be the scalar
giving the action of $\Phi^r$ on $\Hom_{k[I_E]}(\tau \otimes_{K_0} K, \rhobar_i|_{I_E})$
under this identification.

We also have an action of $\Phi^r$ on 
$\Hom_{K[I_E]}(\tau \otimes_{K_0} K, (\rhobar')^{\ss}|_{I_E})$;
this yields a linear endomorphism $\overline{\Psi}^{\ss}$ of 
$\Hom_{K[I_E]}(\tau \otimes_{K_0} K, (\rhobar')^{\ss}|_{I_E})$.
The natural map
$$(V_{\CS',K})_{\pibar_i} \rightarrow
\Hom_{K[I_E]}(\tau \otimes_{K_0} K, (\rhobar')^{\ss}|_{I_E})$$
identifies $(V_{\CS',K})_{\pibar_i}$ with the $\lambda$-eigenspace
of $\overline{\Psi}^{\ss}$.

On the other hand, the previous lemma shows that
$L_{\pibar_i}$ is the sum of the $\tilde \lambda$-eigenspaces
of $\Psi$ on $\Hom{\O[I_E]}(\tau \otimes_{K_0} \O, \rho'|_{I_E})$;
it follows that $L_{\pibar_i}/\unif L_{\pibar_i}$ is
the $\lambda$-generalized eigenspace of $\Psi/\unif \Psi$ on
$\Hom{K[I_E]}(\tau \otimes_{K_0} K, \rhobar'|_{I_E})$.

Finally, observe that $\overline{\Psi}^{\ss}$ is the semisimplification of
$\Psi/\unif \Psi$, so that the $\lambda$-generalized eigenspace of
$\Psi/\unif \Psi$ is equal to the $\lambda$-eigenspace of
$\overline{\Psi}^{\ss}$, and hence to $(V_{\CS',K})$.  One verifies
easily that these identifications are all compatible with the monodromy
operators.
\end{proof}

By Theorem~\ref{thm:orbits} it follows that for $\CS$ and $\CS'$ as in
Lemma~\ref{lem:specialization}, we must have $\overline{\CS} \preceq \CS'$.
Moreover, we have equality if, and only if, the ranks of the operators
$N_{\CS,\CK}^i$ and $N_{\CS',K}^i$ agree for all $i$.
We are thus finally in a position to prove:

\begin{theorem} \label{thm:specialization}
Let $\rho: G_E \rightarrow \GL_n(\O)$ be a continuous Galois representation, and
$(\rho',N)$ the Frobenius-semisimplification of the
corresponding Weil--Deligne representation.  Then
there is a $\unif$-adically separated $\O$-lattice $\LL(\rho)^{\circ}$ in $\LL(\rho \otimes_{\O} \CK)$, 
unique up to homothety, such that $\LL(\rho)^{\circ}/\unif \LL(\rho)^{\circ}$
is essentially AIG, and an embedding 
$$\LL(\rho)^{\circ}/\unif \LL(\rho)^{\circ} \rightarrow \LL(\rhobar),$$
where $\rhobar = \rho \otimes_{\O} K$.
This embedding is an isomorphism if, and only if, the $K$-rank of $\Nbar^i$
equals the $\CK$-rank of $(N \otimes_{\O} \CK)^i$ for all $i$.
\end{theorem}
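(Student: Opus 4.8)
The plan is to assemble the statement from results already established; the one genuinely new point will be the final comparison between the combinatorial monodromy operators attached to collections of segments and the honest monodromy operator $N$.

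First I would record the standing properties of the objects involved. By Corollary~\ref{cor:essential}, $\LL(\rho\otimes_{\O}\CK)$ is essentially AIG over $\CK$, and by Corollary~\ref{cor:admissible AIG finiteness} it has finite length. It is moreover $\O$-integral: since $\rho$ — and hence, by Proposition~\ref{prop:WD} together with Lemma~\ref{lem:semisimple reduction}, the Frobenius-semisimple pair $(\rho',N)$ — is defined over $\O$, the matrix $\rho'(\Phi)$ lies in $\GL_n(\O)$ and therefore has unit eigenvalues, so by Lemmas~\ref{lem:lambda} and~\ref{lem:supercuspidal reduction} the supercuspidal representations underlying the segments of $(\rho',N)\otimes_{\O}\CK$ become $\O'$-integral over a suitable finite extension. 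Consequently Proposition~\ref{prop:lattice} produces a $\unif$-adically separated $\GL_n(E)$-invariant admissible lattice $\LL(\rho)^{\circ}$, unique up to homothety, with $\LL(\rho)^{\circ}/\unif\LL(\rho)^{\circ}$ essentially AIG. Writing $\CS$ for the collection of (integral) segments with $\pi(\CS)$ isomorphic to $(\abs\circ\det)^{(n-1)/2}\LL(\rho\otimes_{\O}\CK)$ after a finite extension $\CK'$ of $\CK$, I would then invoke Corollary~\ref{cor:admissible sublattice}, together with the fact that twisting by the unit character $(\abs\circ\det)^{-(n-1)/2}$ commutes with the formation of invariant lattices and their reductions, to identify $\LL(\rho)^{\circ}/\unif\LL(\rho)^{\circ}$, after extension of scalars, with the twist of $\pi(\overline{\CS})$ by $(\abs\circ\det)^{-(n-1)/2}$.

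Next I would set $\CS'$ to be the collection of segments of $(\rhobar',\Nbar)^{\Fss}$, so that (by Lemma~\ref{lem:WD reduction} and Definition~\ref{def:LL galois}) $\LL(\rhobar)$ is the twist of $\pi(\CS')$ by $(\abs\circ\det)^{-(n-1)/2}$, base-changed from $K$, and is essentially AIG over $K$ by Corollary~\ref{cor:essential}. The discussion immediately preceding the theorem — namely Lemma~\ref{lem:specialization} combined with Theorem~\ref{thm:orbits} — gives $\overline{\CS}\preceq\CS'$. Since twisting by a character preserves both the relation $\preceq$ and Condition~\ref{cond:Zel}, Proposition~\ref{prop:embedding descent} then applies to $\LL(\rho)^{\circ}/\unif\LL(\rho)^{\circ}$ and $\LL(\rhobar)$: it shows that $\Hom_{K[\GL_n(E)]}\bigl(\LL(\rho)^{\circ}/\unif\LL(\rho)^{\circ},\LL(\rhobar)\bigr)$ is one-dimensional over $K$ and that every nonzero element is an embedding, producing the required embedding $\LL(\rho)^{\circ}/\unif\LL(\rho)^{\circ}\hookrightarrow\LL(\rhobar)$.

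Finally I would analyse when this embedding is an isomorphism. As both sides have finite length, it is an isomorphism if and only if they are abstractly isomorphic, equivalently (after extending scalars and using Lemma~\ref{lem:AIG descent}) if and only if $\pi(\overline{\CS})\cong\pi(\CS')$; since each $\pi(\mathcal T)$ has a unique irreducible quotient $Q(\mathcal T)$ and $\mathcal T\mapsto Q(\mathcal T)$ is injective (the unitary local Langlands correspondence being a bijection), this holds precisely when $\overline{\CS}=\CS'$. By Theorem~\ref{thm:orbits} and the remark following it, together with $\overline{\CS}\preceq\CS'$, the equality $\overline{\CS}=\CS'$ amounts to $\mathrm{rank}\,N_{\CS,\CK}^{\,i}=\mathrm{rank}\,N_{\CS',K}^{\,i}$ for all $i$. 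To convert this into a statement about $N$, I would decompose both the combinatorial and the honest monodromy operators along the $\abs$-orbit ``lines'' of irreducible $W_E$-representations: by the lemma identifying $(V_{\CS,\overline{\CK}})_{\pi}$ with $\Hom_{\overline{\CK}[W_E]}(\rho_\pi,\rho')$ (and its analogue over $K$), on each line the rank of the $i$-th power of the honest monodromy of a Frobenius-semisimple Weil--Deligne representation equals the dimension of that line's irreducible times the rank of the $i$-th power of the corresponding combinatorial operator; and because $\overline{\CS}\preceq\CS'$ forces the lines and their attached dimensions to coincide and the orbit-closure relation of Theorem~\ref{thm:orbits} to hold line by line, equality of the total combinatorial ranks for all $i$ is equivalent, line by line and hence after forming the (identically weighted) totals, to $\mathrm{rank}\,(N\otimes_{\O}\CK)^{\,i}=\mathrm{rank}\,\Nbar^{\,i}$ for all $i$. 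The step I expect to be the main obstacle is exactly this last, line-by-line matching of combinatorial and honest monodromy ranks; everything else is a repackaging of Lemma~\ref{lem:specialization}, Theorem~\ref{thm:orbits}, and the structural results on essentially AIG representations and on the maps $\pi(\CS')\to\pi(\CS)$.
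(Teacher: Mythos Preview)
Your proposal is correct and follows essentially the same route as the paper: pass to a finite extension, identify the reduction of the distinguished lattice with $\pi(\overline{\CS})$ via Corollary~\ref{cor:admissible sublattice}, invoke Lemma~\ref{lem:specialization} and Theorem~\ref{thm:orbits} to get $\overline{\CS}\preceq\CS'$, descend the resulting embedding via Proposition~\ref{prop:embedding descent}, and translate the equality $\overline{\CS}=\CS'$ into the rank condition on $N$. Your line-by-line argument for the final rank comparison spells out what the paper simply calls ``easy to see''; the only minor omission is that identifying your $\CS'$ (the segments of the Frobenius-semisimplified Weil--Deligne representation of $\rhobar$) with the $\CS'$ of Lemma~\ref{lem:specialization} (the segments of the Frobenius-semisimplification of the reduction of $(\rho',N)$) requires Lemma~\ref{lem:semisimple reduction} a second time, not just Lemma~\ref{lem:WD reduction}.
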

\begin{proof}
Let $(\rhobar',\Nbar)$ be the reduction mod $\unif$ of $(\rho',N)$.  Then,
by Lemmas~\ref{lem:WD reduction} and~\ref{lem:semisimple reduction},
$(\rhobar',\Nbar)^{\Fss}$ is the Frobenius-semisimplification
of the Weil--Deligne representation attached to $\rhobar$.

Over a finite extension $\CK'$ of $\CK$, we may assume that
$\rho'$ splits as a direct sum of absolutely irreducible representations
of $W_E$, and similarly for its restriction to $I_E$.  The corresponding
statements then hold for the semisimplification of $\rhobar'$.

Let $\O'$ be the integral closure of $\O$ in $\CK'$, and let $K'$ be
its residue field.
Let $\CS$ and $\CS'$ be the segments associated to $(\rho',N) \otimes_{\O} \O'$ and
$(\rhobar',\Nbar)^{\Fss} \otimes_K K'$.  We have shown that 
$\overline{\CS} \preceq \CS'$.

On the other hand, we have $\LL(\rho \otimes_{\O} \CK')$
is isomorphic to $(\abs \circ \det)^{-\frac{n}{2}}\pi(\CS)$; by 
Corollary~\ref{cor:admissible sublattice}
there is, up to homothety, a unique lattice $\LL(\rho)^{\circ}$ in 
$\LL(\rho \otimes_{\O} \CK)$ such that $\LL(\rho)^{\circ}/\unif \LL(\rho)^{\circ}$
is essentially AIG; moreover one has an isomorphism
$$[\LL(\rho)^{\circ}/\unif \LL(\rho)^{\circ}] \otimes_K K' \iso \pi(\overline{\CS}).$$

As $\LL(\rhobar \otimes_K K')$ is isomorphic to $(\abs \circ \det)^{-\frac{n}{2}}\pi(\CS')$,
and $\overline{\CS} \preceq \CS'$, we have an embedding of
$[\LL(\rho)^{\circ}/\unif \LL(\rho)^{\circ}] \otimes_K K'$ 
in $\LL(\rhobar \otimes_K K')$.  This embedding descends to $K$ by
Proposition~\ref{prop:embedding descent}.

Finally, this embedding is an isomorphism if, and only if,
$\overline{\CS}$ is equal to $\CS'$.  This is true if, and only if,
the ranks of $N_{\overline{\CS},K}^i$ and $N_{\CS',K}^i$ agree for all $i$;
it is easy to see this is equivalent to requiring that the ranks
of $(N \otimes_{\O} K)^i$ and $\Nbar^i$ agree for all~$i$.
\end{proof}

\begin{remark} \rm
An alternative approach to some of the above questions is
given in~\cite{Ch}, particularly Proposition 3.11.  Chenevier
constructs, for each Bernstein component ${\mathcal B}$ of the category 
of smooth representations of $\GL_n(E)$, a pseudocharacter of $W_E$ valued
in the algebra of functions on ${\mathcal B}$ that ``is compatible
with the local Langlands correspondence'', in the sense that if
one specializes this pseudocharacter at any irreducible representation
of $\GL_n(E)$ that lies in ${\mathcal B}$, one obtains the pseudocharacter
of the semisimplification of the corresponding representation of $W_E$.
From our perspective, this result allows us to deduce that the
supercuspidal support of $\LL(\rhobar)$ is the reduction modulo $\unif$
of the supercuspidal support of $\LL(\rho)$, but it does not contain
any information about the monodromy operator.
\end{remark}

In cases where the embedding arising in the previous proposition is
an isomorphism, we say that $\rho$ is a {\em minimal lift} of
$\rhobar$.  (Such lifts are lifts of $\rhobar$ in which the ramification arising
from the monodromy operator is as small as possible.)
We will need this language in a broader context than that
of representations over discrete valuation rings:

\begin{df}
Let $A$ be a reduced complete Noetherian local ring with finite residue field
$k$ of characteristic $p$, that is flat over $W(k)$,
and let $\rho$ be a continuous representation
of $G_E$ into $\GL_n(A)$.  Let $(\rho',N)$ be the associated
Weil--Deligne representation over $\GL_n(A[\dfrac{1}{p}])$.  If ${\mathfrak p}$
is a characteristic zero prime of $A$, and ${\mathfrak a}$ is a prime of
$A$ whose closure contains ${\mathfrak p}$, we say
$\rho_{\mathfrak a}$ is a {\em minimal lift} of $\rho_{\mathfrak p}$
if, for all $i$, the rank of $(N \otimes_A \kappa({\mathfrak a}))^i$ is equal
to the rank of $(N \otimes_A \kappa({\mathfrak p}))^i$.
\end{df}

Note that, for any given ${\mathfrak a}$, the locus of ${\mathfrak p}$ such that
$\rho_{\mathfrak a}$ is a minimal lift of $\rho_{\mathfrak p}$ is Zariski open
in the closure of ${\mathfrak a}$ in $\Spec A[\dfrac{1}{p}]$.

\section{The local Langlands correspondence in characteristic $p$}
\label{sec:LL p}
\subsection{Definition and basic properties}
We now construct an analogue of the Breuil-Schneider local Langlands
correspondence for representations of $G_E$ over finite fields
of characteristic $p$.  Such a correspondence should satisfy an analog of
Theorem~\ref{thm:specialization} for representations over discrete valuation rings
of characteristic zero and residue characteristic $p$.  Throughout this section
we fix a finite field $k$ of characteristic $p$, and let
$\O$ denote a complete discrete valuation ring of characteristic zero
with field of fractions $\CK$ and finite residue field $k'$ containing $k$.

Our starting point is the semisimple mod $p$ local Langlands correspondence
of Vigneras~\cite{Vig4}.  This is a map $\rhobar \mapsto \LLbar_{\ss}(\rhobar)$
that associates to each $n$-dimensional irreducible representation
$\rhobar: W_E \rightarrow \GL_n(\overline{\F}_p)$ an irreducible supercuspidal
representation $\LLbar_{\ss}(\rhobar)$ over $\overline{\F}_p$.  
If $q$ denotes the order of the residue field of $E$,
and if $k'$ is a finite field of characteristic $p$ containing a square root of $q$,
then this correspondence is defined over $k'$; that is, if $\rhobar$ is defined
over $k'$, then $\LLbar(\rhobar)$ descends uniquely to a representation over $k'$.
Moreover, the correspondence is compatible with 
``reduction mod $p$'' in the following sense:

\begin{theorem}[{\cite[Thm.~1.6]{Vig4}}]
\label{thm:LL semisimple}
Suppose that $k'$ contains a square root of~$q$.
Let $(\rho,N)$ be an $n$-dimensional Frobenius-semisimple
Weil--Deligne representation of $W_E$ over $\O$, 
and let $\pi$ be the irreducible representation
of $\GL_n(E)$ over $\CK$ attached to $(\rho,N) \otimes_{\O} \CK$ by the unitary local
Langlands correspondence.  Let $\rhobar = \rho \otimes_{\O} \overline{\F}_p$,
and let
$$\rhobar^{\ss} = \rhobar_1 \oplus \dots \oplus \rhobar_r$$
be a decomposition of $\rhobar^{\ss}$ into irreducible representations
of $W_E$ over $\overline{\F}_p$. Then $\pi$ is $\O$-integral, and for any
$\GL_n(E)$-stable $\O$-lattice $L$ in $\pi$, and any Jordan--H\"older
constituent $\pibar$ of $L \otimes_{\O} \overline{\F}_p$, one has:
$$\scs(\pibar) = \{\LLbar_{\ss}(\rhobar_1) \dots \LLbar_{\ss}(\rhobar_r)\}.$$
\end{theorem}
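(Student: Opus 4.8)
The plan is to follow Vigneras \cite{Vig4}, reducing the assertion to the supercuspidal case and then invoking the construction of $\LLbar_{\ss}$ via types. Throughout we are free to enlarge $\CK$, since both the integrality of $\pi$ and the supercuspidal supports of the constituents of its reduction are insensitive to finite extension of scalars; so we may assume $(\rho, N) \otimes_\O \overline{\CK}$ decomposes as a direct sum of special representations $\Sp_{\rho_j', m_j}$ with each $\rho_j'$ absolutely irreducible over $\CK$. I would first settle integrality and the well-definedness of $\pibar^{\ss}$: since $\rho\colon W_E \rightarrow \GL_n(\O)$, the characteristic polynomial of $\rho(\Phi)$ has $\O$-coefficients and unit constant term, so the eigenvalues of $\rho(\Phi)$ are units in $\overline\O$; as $\rho$ is Frobenius-semisimple it follows that $\rho$ has bounded image, and hence so does each summand $\rho_j'$, whose determinant is therefore $\O$-valued. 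By local class field theory the central character of the supercuspidal representation $\sigma_j$ of $\GL_{n_j}(E)$ attached to $\rho_j'$ under unitary local Langlands takes unit values, so $\sigma_j$ is $\O$-integral by Lemma~\ref{lem:supercuspidal reduction}; consequently the generalized Steinberg representations $\St_{\sigma_j, m_j}$, the parabolic induction $\Ind_Q^{\GL_n(E)}\St_{\sigma_1, m_1}\otimes\cdots\otimes\St_{\sigma_s, m_s}$, and its subquotient $\pi$ are all $\O$-integral. As $\pi$ has finite length, the Brauer--Nesbitt theorem \cite[Ch.~II.5.11]{Vig2} shows $\pi$ is good integral in the sense of Definition~\ref{def:integral}; then by Lemma~\ref{lem:commens} the reduction of any invariant lattice has a well-defined semisimplification $\pibar^{\ss}$, and it suffices to compute the supercuspidal supports of its Jordan--H\"older factors.

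Next I would reduce to the supercuspidal case. Each $\St_{\sigma_j, m_j}$ is a subquotient of $\Ind\bigl(\sigma_j\otimes(\abs\circ\det)\sigma_j\otimes\cdots\otimes(\abs\circ\det)^{m_j-1}\sigma_j\bigr)$, so by transitivity of parabolic induction $\pi$ is a subquotient of a single parabolic induction of the family of twists $(\abs\circ\det)^i\sigma_j$, $0\le i<m_j$, taken in a suitable order. Forming $\O$-lattices and reducing modulo $\unif$ commute with parabolic induction (as in the computation of $\pi(\overline{\CS})$ above), and parabolic induction is exact, so every Jordan--H\"older factor of $\pibar^{\ss}$ occurs in the reduction of this induction. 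Combining this with the additivity of supercuspidal support under parabolic induction, with the compatibility of $\LLbar_{\ss}$ with twisting by $\abs$, and with the fact that reducing $\rho\otimes\overline\F_p$ amounts to reducing each $\rho_j'$ and twisting by the various $\abs^i$, everything comes down to the purely supercuspidal statement: for an absolutely irreducible $\rho_j'$ over $\CK$ with reduction $\rhobar_j' := \rho_j'\otimes\overline\F_p$ and $(\rhobar_j')^{\ss} = \rhobar_{j,1}\oplus\cdots\oplus\rhobar_{j,t}$, every Jordan--H\"older factor of $\overline{\sigma_j}$ has supercuspidal support $\{\LLbar_{\ss}(\rhobar_{j,1}),\ldots,\LLbar_{\ss}(\rhobar_{j,t})\}$.

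This supercuspidal statement is the substance of \cite[Thm.~1.6]{Vig4}, and I expect it to be the main obstacle; it is not accessible by the soft manipulations above because both the decomposition of $\rhobar_j'$ and the internal structure of $\overline{\sigma_j}$ are governed by the fine ramification of $\rho_j'$. The approach is to express $\sigma_j \cong \cInd_{\mathbf{J}}^{\GL_{n_j}(E)}\Lambda$ for a maximal simple type $(\mathbf{J},\Lambda)$ in the sense of Bushnell--Kutzko, which after a suitable twist may be taken over $\O$ (cf.\ the proof of Lemma~\ref{lem:supercuspidal reduction}), so that $\overline{\sigma_j}\cong\cInd_{\mathbf{J}}^{\GL_{n_j}(E)}\overline{\Lambda}$, and then to analyze the Jordan--H\"older constituents of $\overline{\Lambda}$. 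The way $\overline{\Lambda}$ decomposes mirrors exactly the way the finite-order inertial part of $\rho_j'$ breaks up modulo $p$ --- which is the origin of the several constituents $\rhobar_{j,a}$ of a single absolutely irreducible $\rhobar_j'$ --- and one matches the resulting supercuspidal supports against the recipe defining $\LLbar_{\ss}$ on the Galois side (via admissible pairs and the numerical normalization pinned down in \cite{Vig4}), finally checking independence of the chosen types and lattices. Carrying out this bookkeeping on both sides is precisely the content of \cite[Thm.~1.6]{Vig4}, which we invoke.
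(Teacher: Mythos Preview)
The paper does not prove this theorem; it is stated purely as a citation of \cite[Thm.~1.6]{Vig4} and used as a black box in the proof of Corollary~\ref{cor:independent generic constituent}. Your proposal is therefore not competing with any argument in the paper --- you are supplying a sketch of the Vigneras proof that the paper deliberately omits.

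As a sketch, your outline is sound. The reduction to the supercuspidal case via the chain ``$\pi$ is a subquotient of a parabolic induction of Steinbergs, each Steinberg is a subquotient of a parabolic induction of supercuspidal twists, supercuspidal support is additive under parabolic induction, and semisimplified reduction of a subquotient lands inside the semisimplified reduction of the ambient induction'' is exactly the standard d\'evissage, and your integrality argument (eigenvalues of $\rho(\Phi)$ are units, hence each $\det\rho_j'$ is unit-valued, hence each $\sigma_j$ has unit central character) is correct. You are right that the entire content then collapses onto the supercuspidal case, which is genuinely hard and is what Vigneras proves via types; invoking \cite{Vig4} at that point is unavoidable and is precisely what the paper does wholesale.
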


\begin{cor} \label{cor:independent generic constituent}
Suppose that $k'$ contains a square root of $q$.
Let $\rhobar: G_E \rightarrow \GL_n(k)$ be a Galois representation,
and let $\rho: G_E \rightarrow \GL_n(\O)$ be a lift of $\rhobar \otimes_k k'$.  Then
$\LL(\rho \otimes_{\O} K)$ is
$\O$-integral, and for any $\O$-lattice $L$ in $\LL(\rho \otimes_{\O} \CK)$,
the supercuspidal support of any Jordan--H\"older constituent
$\pibar$ of $L \otimes_{\O} \overline{\F}_p$ depends only on $\rhobar$.
\end{cor}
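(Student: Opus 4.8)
The plan is to compute the relevant supercuspidal supports by transporting Vigneras' semisimple correspondence across the Breuil--Schneider recipe, after first isolating a preferred integral lattice. Throughout I write $\CK$ for the fraction field of $\O$ and work over a sufficiently large finite extension $\CK'$ of $\CK$, with $\O'$ the integral closure of $\O$ in $\CK'$.

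First I would unwind the definitions. By Definition~\ref{def:LL galois}, $\LL(\rho\otimes_\O\CK)=\LL\bigl((\rho',N)^{\Fss}\bigr)$, where $(\rho',N)$ is the Weil--Deligne representation attached to $\rho$ by Proposition~\ref{prop:WD} (applied with $A=R=\O$); by Lemma~\ref{lem:semisimple reduction} the Frobenius-semisimplification $(\rho',N)^{\Fss}$ is again defined over $\O$, and the eigenvalues of $(\rho')^{\Fss}(\Phi)$ --- being the eigenvalues of the semisimple part of $\rho'(\Phi)\in\GL_n(\O)$ --- lie in $(\O')^{\times}$. Choosing $\CK'$ so that $(\rho',N)^{\Fss}\otimes_\O\CK'$ decomposes as a sum of representations $\Sp_{\Delta_i}$ attached to segments $\Delta_i$, we get $\LL(\rho\otimes_\O\CK)\otimes_\CK\CK'=(\abs\circ\det)^{-\frac{n-1}{2}}\pi(\CS)$ for $\CS=\{\Delta_i\}$. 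Since the supercuspidals occurring in the $\Delta_i$ have $(\O')^{\times}$-valued central characters, Lemma~\ref{lem:supercuspidal reduction} and the discussion following it show the generalized Steinberg representations $\St_{\Delta_i}$ are $\O'$-integral; inducing integral lattices exhibits $\pi(\CS)$, hence (by descent, Corollary~\ref{cor:admissible sublattice}) also $\LL(\rho\otimes_\O\CK)$, as $\O$-integral. (Alternatively, $\O$-integrality follows directly from Theorem~\ref{thm:LL semisimple}.)

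Next I would pin down the reduction. By Corollary~\ref{cor:essential} the representation $\LL(\rho\otimes_\O\CK)$ is essentially AIG, and being admissible of finite length and $\O$-integral it has, by Proposition~\ref{prop:lattice} and Corollary~\ref{cor:admissible sublattice}, a $\unif$-adically separated invariant lattice $V^{\circ}$, unique up to homothety, with $\Vbar^{\circ}$ essentially AIG and, after extending scalars to the residue field of $\O'$, isomorphic to $(\abs\circ\det)^{-\frac{n-1}{2}}\pi(\overline{\CS})$. By Lemma~\ref{lem:commens} every invariant lattice in $\LL(\rho\otimes_\O\CK)$ is commensurable with $V^{\circ}$, so the multiset of Jordan--H\"older constituents of $L\otimes_\O\overline{\F}_p$ is the same for every $\O$-lattice $L$. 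Moreover the supercuspidals occurring in the reduced segments $\overline{\Delta}_i$ are absolutely irreducible and supercuspidal (Lemma~\ref{lem:supercuspidal reduction}), so the elementary properties of (super)cuspidal support force every Jordan--H\"older constituent of the parabolic induction $\pi(\overline{\CS})$, and hence every Jordan--H\"older constituent of any $L\otimes_\O\overline{\F}_p$, to have one and the same supercuspidal support $\Sigma$.

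Finally I would identify $\Sigma$ and check it depends only on $\rhobar$. The irreducible quotient $Q(\CS)$ of $\pi(\CS)$ is the representation attached to $(\rho',N)^{\Fss}$ by the unitary local Langlands correspondence; pushing the preferred lattice $\pi(\CS)^{\circ}$ forward along $\pi(\CS)\twoheadrightarrow Q(\CS)$ yields a lattice in $Q(\CS)$ whose nonzero reduction is a quotient of $\pi(\overline{\CS})$, so some constituent of it is a constituent of $\pi(\overline{\CS})$. Applying Theorem~\ref{thm:LL semisimple} to this constituent and comparing with the previous paragraph gives $(\abs\circ\det)^{\frac{n-1}{2}}\Sigma=\{\LLbar_{\ss}(\rhobar_1),\dots,\LLbar_{\ss}(\rhobar_r)\}$, where $\rhobar_1\oplus\dots\oplus\rhobar_r$ is the semisimplified reduction of the $W_E$-part of $(\rho',N)^{\Fss}$. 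It then suffices to observe that this datum is lift-independent: by Lemma~\ref{lem:semisimple reduction}, together with the residue-characteristic-$p$ analogue of Lemma~\ref{lem:WD reduction} --- which holds by the same argument, since the integrality of $N$ over $\O$ forces $\Nbar^p=0$, so that $\exp(t_p(\sigma)\Nbar)$ is defined and the characterizing identity of Proposition~\ref{prop:WD} still pins down the reduced Weil--Deligne datum --- the semisimplified reduction of the $W_E$-part of $(\rho',N)^{\Fss}$ equals the semisimplification of the $W_E$-part of the Weil--Deligne representation attached to $\rho\otimes_\O k'=\rhobar\otimes_k k'$, which is manifestly a function of $\rhobar$ alone. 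Hence $\Sigma$, and with it the supercuspidal support of any Jordan--H\"older constituent of any $L\otimes_\O\overline{\F}_p$, depends only on $\rhobar$.

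I expect the main obstacle to be the bookkeeping of this last step: transferring Vigneras' statement --- which is phrased for the \emph{irreducible} representation $Q(\CS)$ --- to the (generally reducible) essentially AIG representation $\LL(\rho\otimes_\O\CK)$ while carrying the normalizing twist $(\abs\circ\det)^{\pm\frac{n-1}{2}}$ and its mod~$p$ reduction correctly, and checking that the reduced Weil--Deligne datum is well-behaved enough in residue characteristic $p$ for the lift-independence to go through.
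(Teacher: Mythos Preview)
The gap is in your second paragraph. You invoke Lemma~\ref{lem:supercuspidal reduction} and Corollary~\ref{cor:admissible sublattice} to identify the reduction of the preferred lattice with $(\abs\circ\det)^{-\frac{n-1}{2}}\pi(\overline{\CS})$ and to assert that the entries of the reduced segments $\overline{\Delta}_i$ remain supercuspidal. Both results, however, are proved under the standing hypothesis of Section~4.4 that the residue field of $\O$ has characteristic \emph{zero}, whereas here it is $k'$ of characteristic~$p$. In positive residue characteristic the reduction of an integral supercuspidal is irreducible cuspidal but not in general supercuspidal, and the reduction of $\St_\Delta$ need not be an irreducible generalized Steinberg; consequently neither the identification $\Vbar^{\circ}\otimes_{k'}\overline{\F}_p\cong(\abs\circ\det)^{-\frac{n-1}{2}}\pi(\overline{\CS})$ nor the claimed supercuspidality is available from those references. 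Your deduction that all Jordan--H\"older constituents of $L\otimes_\O\overline{\F}_p$ share a common supercuspidal support $\Sigma$ therefore rests on unjustified claims. (The side remark $\Nbar^p=0$ in the third paragraph is also not what you want --- one only knows $\Nbar^n=0$, and the exponential may involve factorials not invertible in $k'$ --- but the conclusion you draw there about lift-independence is correct and can be argued more simply via $(\rho')^{\ss}\cong(\rho|_{W_E})^{\ss}$.)

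The paper sidesteps the whole issue by never computing the characteristic-$p$ reduction. It uses instead that every Jordan--H\"older constituent of $\LL(\rho\otimes_\O\overline{\CK})$ in characteristic zero corresponds under unitary local Langlands, up to the twist by $(\abs\circ\det)^{\frac{n-1}{2}}$, to a Weil--Deligne representation $(\rho'\otimes_\O\overline{\CK},N')$ with the \emph{same} $W_E$-part $\rho'$ but possibly different monodromy. Any mod-$p$ constituent $\pibar$ of any lattice $L$ then arises from a lattice in one of these characteristic-zero constituents, and Theorem~\ref{thm:LL semisimple} applied there gives $\scs(\pibar)$ directly in terms of the semisimplified reduction of $\rho'$, hence in terms of $\rhobar$. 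Your third paragraph carries out exactly this step for the single constituent $Q(\CS)$; the fix is simply to do it for all of them, after which your second paragraph and the appeal to Section~4.4 become unnecessary.
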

\begin{proof}
Let $(\rho',N)$ be the Frobenius-semisimple Weil--Deligne representation
over $\CK$ attached to $\rho$.  Then $\rho'$ is $\O$-integral and the semisimplification
of its reduction mod $p$ depends only on $\rhobar$.  By the definition of
the Breuil-Schneider local Langlands correspondence, $\LL(\rho \otimes_{\O} \overline{\CK})$
is (up to a twist by an integral character) a parabolic induction of representations
that correspond (via unitary local Langlands) to irreducible summands of
$\rho' \otimes_{\O} \overline{\CK}$.  These summands are integral, so 
$\LL(\rho \otimes_{\O} \overline{\CK})$ is as well, and so is $\LL(\rho \otimes_{\O} \CK)$.
Moreover, (up to a twist by $(\abs \circ \det)^{\frac{n-1}{2}}$),
every Jordan--H\"older constituent of $\LL(\rho \otimes_{\O} \overline{\CK})$ corresponds via
unitary local Langlands to a Weil--Deligne representation of the form
$(\rho' \otimes_{\O} \overline{\CK} ,N')$ for some choice of monodromy operator $N'$.  

Now if $L$ is a lattice in $\LL(\rho \otimes_{\O} \CK)$, and $\pibar$ is a
Jordan--H\"older constituent of $L \otimes_{\O} \overline{\F}_p$, then there exists
a Jordan--H\"older constituent of $\LL(\rho \otimes_{\O} \overline{\CK})$,
and a lattice $L'$ in this constituent, such that $\pibar$ is a Jordan--H\"older
constituent of the mod $p$ reduction of $L'$.  The result thus follows
from Theorem~\ref{thm:LL semisimple}.
\end{proof}

Let $L$ be a lattice in $\LL(\rho \otimes_{\O} \CK)$, where $\rho:G_E
\rightarrow \GL_n(\O)$ is a lift of $\rhobar\otimes_k k'$
for some $\rhobar:G_E \rightarrow \GL_n(k)$.
As $L \otimes_{\O} \Fbar_p$ has a unique generic Jordan--H\"older constituent,
and up to isomorphism there is only one irreducible generic representation
of $G$ with given supercuspidal support, the generic Jordan--H\"older constituent
of $L \otimes_{\O} \Fbar_p$ likewise depends only on $\rhobar$.

We will also need to control the length of $L/\unif L$, for lattices $L$
of the sort appearing in the Corollary above.  We first show:

\begin{prop} \label{prop:induction length bound}
Let $P = MU$ be a parabolic subgroup of $\GL_n(E)$, and let $\pi = \pi_1 \otimes \dots \otimes \pi_r$
be an irreducible representation of $M$.  There exists an integer~$c$, depending only
on~$n$, such that the length of
$\Ind_P^{\GL_n(E)} \pi_1 \otimes \dots \otimes \pi_r$ is bounded above by $c$.
\end{prop}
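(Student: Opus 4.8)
The plan is to bound the length by reducing, in two steps, to an induction from a single supercuspidal tensor, and then to read off the bound from the Bernstein--Zelevinski geometric lemma. First I would observe that we may assume the coefficient field $K$ to be algebraically closed: extending scalars replaces $\Ind_P^{\GL_n(E)}(\pi_1 \otimes \dots \otimes \pi_r)$ by a direct sum of Galois twists of the parabolic induction of an absolutely irreducible constituent of $(\pi_1 \otimes \dots \otimes \pi_r) \otimes_K \overline{K}$, and a uniform bound in the algebraically closed case yields one in general (the $K$-constituents being indexed by Galois orbits of $\overline{K}$-constituents). Next I would reduce to the case in which every $\pi_i$ is supercuspidal: by the existence of the supercuspidal support, each $\pi_i$ is a Jordan--H\"older constituent of some $\Ind_{P_i}^{\GL_{n_i}(E)}(\sigma_{i,1} \otimes \dots \otimes \sigma_{i,s_i})$ with each $\sigma_{i,j}$ supercuspidal; since parabolic induction is exact and additive on lengths and since induction in stages identifies $\Ind_P^{\GL_n(E)}\bigl(\Ind_{P_1 \times \dots \times P_r}^{M}(\sigma_{1,1}\otimes\cdots\otimes\sigma_{r,s_r})\bigr)$ with $\Ind_{P'}^{\GL_n(E)}(\sigma_{1,1}\otimes\cdots\otimes\sigma_{r,s_r})$ for the evident parabolic $P'$, the length of $\Ind_P^{\GL_n(E)}(\pi_1\otimes\dots\otimes\pi_r)$ is at most that of $\Ind_{P'}^{\GL_n(E)}(\sigma_1\otimes\dots\otimes\sigma_s)$, where $\{\sigma_1,\dots,\sigma_s\}$ is a multiset of $s\le n$ supercuspidal representations, $\sigma_j$ of $\GL_{m_j}(E)$ with $\sum_j m_j = n$.

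The core of the argument is then the case of $\Ind_{P'}^{\GL_n(E)}(\sigma_1\otimes\dots\otimes\sigma_s)$. Let $L = \prod_j \GL_{m_j}(E) \subset \GL_n(E)$. By the properties of supercuspidal support recalled above, every irreducible constituent $\tau$ of this induction has $\scs(\tau) = \{\sigma_1,\dots,\sigma_s\}$, hence also $\cs(\tau) = \{\sigma_1,\dots,\sigma_s\}$; by the definition of cuspidal support and Frobenius reciprocity it follows that the Jacquet module of $\tau$ along a parabolic with Levi $L$ is non-zero. Since this Jacquet functor is exact (and sends finite length to finite length), the length of $\Ind_{P'}^{\GL_n(E)}(\sigma_1\otimes\dots\otimes\sigma_s)$ is at most the length of its Jacquet module along $L$. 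Finally, the geometric lemma of \cite{BZ} (in the form of \cite{Vig2}, \cite{Vig3} over $\Fbar_p$) exhibits that Jacquet module as an iterated extension of subquotients indexed by the double cosets $W_L\backslash W/W_L$ in $W = S_n$; because $\sigma_1\otimes\dots\otimes\sigma_s$ is cuspidal, all of these subquotients vanish except those attached to cosets normalising $W_L$, and each surviving one is a single (block-permuted and suitably twisted) copy of $\sigma_1\otimes\dots\otimes\sigma_s$, hence irreducible. The length of the Jacquet module is therefore at most $\#(W_L\backslash W/W_L) \le |W| = n!$, so one may take $c = n!$.

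The step I expect to be the main obstacle is the last one: one must check that in characteristic $p$ the notion of (super)cuspidal support really does behave as in characteristic zero --- so that the constituents of the induction all share the same support and have non-zero Jacquet modules along $L$ --- and that the geometric lemma together with the vanishing of Jacquet modules of cuspidal representations is available over $\Fbar_p$; all of this is in \cite{Vig2} and \cite{Vig3}, but it is the point that makes the bound work. By contrast, the passage from finite (or arbitrary) coefficient fields $K$ to $\overline{K}$ in the first step, and the bookkeeping with induction in stages in the second, are routine and will only require a few lines.
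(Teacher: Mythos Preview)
Your approach has a genuine gap, precisely at the step you flagged as the potential obstacle. In characteristic $p$, the implication ``$\scs(\tau) = \{\sigma_1,\dots,\sigma_s\}$, hence also $\cs(\tau) = \{\sigma_1,\dots,\sigma_s\}$'' is false. Cuspidal representations need not be supercuspidal over $\Fbar_p$: concretely, when $q \equiv -1 \pmod p$, the paper itself discusses a cuspidal representation $\pi(1)$ of $\GL_2(E)$ whose supercuspidal support is the pair of characters $\{1,\absbar\}$. This $\pi(1)$ occurs as a constituent of the principal series $\Ind_B^{\GL_2(E)}(1 \otimes \absbar)$, yet its Jacquet module along the torus vanishes. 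So the inequality ``length of the induction $\le$ length of its Jacquet module along $L$'' fails, because some constituents contribute nothing to that Jacquet module. The references \cite{Vig2}, \cite{Vig3} do not rescue this; they develop the theory but also exhibit exactly this phenomenon.

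The paper avoids the cuspidal/supercuspidal distinction entirely by working with the mirabolic subgroup $P_n$ instead of Jacquet modules. It bounds the $P_n$-length of the induction (which dominates the $G$-length) by induction on $n$: every irreducible $P_n$-module has a unique nonzero derivative, so the $P_n$-length equals the sum of the lengths of all derivatives; the Leibniz rule expresses each derivative as an induction from a smaller group, and the inductive hypothesis applies. This Kirillov-theoretic argument is insensitive to the characteristic-$p$ pathology above, which is why the paper takes that route. Your reduction steps (to $\overline K$ and to supercuspidal inducing data) are fine and would be compatible with the paper's argument, but the geometric-lemma endgame does not go through as written.
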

\begin{proof}
In fact, we bound the length of this induction as a representation of $P_n(E)$, by induction
on $n$.  Suppose we have a bound $c'$ on this length for representations of $P_m(E)$, $m < n$.
The length of a representation of $P_n(E)$ is equal to the sum of the lengths
of all of its derivatives, as every irreducible representation of $P_n(E)$ has a unique nonzero
derivative.  By the Leibniz rule for derivatives, each derivative of $P_n(E)$ is a sum
of parabolic inductions of proper subsets of $\{\pi_1, \dots \pi_r\}$; there are at most
$2^n - 1$ such subsets, and each such parabolic induction has length at most $c'$.  In
particular, we can take $c = (2^n - 1)c'$ (although this is most likely far from sharp).
\end{proof}

\begin{prop} \label{prop:length bound}
Let $\rhobar: G_E \rightarrow \GL_n(k)$ be a Galois representation, let
$\rho: G_E \rightarrow \GL_n(\O)$ be a lift of $\rhobar \otimes_k k'$, and let
$L$ be a $\GL_n(E)$-stable lattice in $\LL(\rho \otimes_{\O} \CK)$.  There exists
an integer $c$, depending only on $n$, such that the length of
$L/\unif L$ is bounded above by $c$.
\end{prop}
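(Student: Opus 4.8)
The plan is to reduce the bound on the length of $L/\unif L$ to a bound on the length of $\LL(\rho \otimes_{\O} \CK)$ itself as a $\GL_n(E)$-representation over $\CK$, and then to invoke Proposition~\ref{prop:induction length bound}. The key observation is that for a $\GL_n(E)$-stable $\unif$-adically separated $\O$-lattice $L$ in a finite length admissible representation $V = \LL(\rho \otimes_{\O} \CK)$, the Brauer--Nesbitt principle (\cite[Ch.~II.5.11]{Vig2}, as invoked in Section~\ref{subsec:AIG lattice}) guarantees that $L/\unif L$ has finite length and that the length of $(L/\unif L)^{\ss}$ is independent of the choice of $L$; in fact its length is bounded by the length of $V$ over $\CK$. (The point is that a composition series of $V$ over $\CK$ intersects $L$ in a filtration whose graded pieces are $\O$-lattices in the composition factors, and each such reduces mod $\unif$ to something of length at least one; counting $U$-fixed vectors for a small compact open $U$ then controls the total length.) So it suffices to bound the length of $\LL(\rho \otimes_{\O} \CK)$ over $\CK$ by a constant depending only on $n$.

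Next I would describe $\LL(\rho \otimes_{\O} \CK)$ explicitly using Section~\ref{subsec:BS}. Writing $(\rho',N)$ for the Frobenius-semisimple Weil--Deligne representation attached to $\rho \otimes_{\O} \CK$, it decomposes over $\overline{\CK}$ as a direct sum of representations $\Sp_{\rho_i',n_i}$, and by definition $\LL(\rho \otimes_{\O} \overline{\CK})$ is, up to a twist by $(\abs \circ \det)^{-\frac{n-1}{2}}$, the normalized parabolic induction $\Ind_Q^{\GL_n(E)} \St_{\pi_1,n_1} \otimes \cdots \otimes \St_{\pi_r,n_r}$, where each $\St_{\pi_i,n_i}$ is an irreducible (absolutely irreducible, generic) representation of some $\GL_{m_i}(E)$ with $\sum m_i = n$. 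This is exactly a parabolic induction of an irreducible representation of a Levi subgroup of $\GL_n(E)$, of the form treated in Proposition~\ref{prop:induction length bound}. Hence its length over $\overline{\CK}$ is bounded above by the constant $c$ of that proposition, which depends only on $n$. Since length is unchanged under the faithfully flat base extension $\CK \to \overline{\CK}$, the length of $\LL(\rho \otimes_{\O} \CK)$ over $\CK$ is at most $c$ as well.

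Finally I would assemble the two reductions. Combining the previous paragraphs: $\operatorname{length}_{k'} (L/\unif L)^{\ss} \le \operatorname{length}_{\CK} \LL(\rho \otimes_{\O} \CK) \le c$, where $c$ depends only on $n$. Since $L/\unif L$ has the same length as its semisimplification, this gives the desired bound (and one may need to invoke Corollary~\ref{cor:independent generic constituent} or the finite-length statement in Section~\ref{subsec:AIG lattice} merely to know that $L/\unif L$ has finite length in the first place, so that "length" is meaningful).

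I expect the main obstacle to be the first reduction, namely making precise the inequality $\operatorname{length}_{k} (L/\unif L) \le \operatorname{length}_{\CK} V$ for $\O$-lattices $L$ in an admissible $\CK$-representation $V$ of finite length. The cleanest route is: choose a composition series $0 = V_0 \subset V_1 \subset \cdots \subset V_s = V$ over $\CK$ with $s = \operatorname{length}_\CK V$; intersecting with $L$ gives $\O$-lattices $L_j = L \cap V_j$ with $L_j/L_{j-1}$ a $\unif$-adically separated $\O$-lattice in the irreducible $V_j/V_{j-1}$; each such quotient lattice reduces mod $\unif$ to a nonzero finite-length representation, but more importantly one checks that $L/\unif L$ is a subquotient-free-of-ambiguity of the associated graded, so its length is the sum of the lengths of the $(L_j/L_{j-1})/\unif(\cdots)$. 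One then still needs that each $(L_j/L_{j-1})/\unif(\cdots)$ has length bounded independently of the lattice — but this is handled, for instance, by bounding the dimension of $U$-invariants for a fixed small compact open $U$ depending only on $n$ (the conductor of the relevant Bernstein components being controlled by $n$), exactly the style of argument used in Corollary~\ref{cor:admissible AIG finiteness}. Since all the ingredients are already in hand, the remaining work is bookkeeping rather than a genuine new difficulty.
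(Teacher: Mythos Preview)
Your first reduction is where the argument breaks: the inequality
\[
\operatorname{length}_{k'}(L/\unif L) \le \operatorname{length}_{\CK} V
\]
is simply false in general. An irreducible principal series $\Ind_B^{\GL_2(E)} \chi_1 \otimes \chi_2$ over $\CK$ (with $\chi_1/\chi_2 \neq \abs^{\pm 1}$) can reduce mod~$\unif$ to a reducible principal series of length two if $\overline{\chi}_1/\overline{\chi}_2 = \absbar$; more generally, the length of the mod~$\unif$ reduction of an irreducible admissible representation can exceed~$1$. Your parenthetical justification actually produces a \emph{lower} bound (each graded piece reduces to something of length $\geq 1$), not the upper bound you need. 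The attempted rescue via $U$-invariants also fails: you would need a compact open $U$, depending only on $n$, such that every irreducible constituent of $(L/\unif L)$ has nonzero $U$-invariants, but the depth of the inertial type of $\rho$ is not bounded by $n$, so no such $U$ exists uniformly.

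The paper avoids this trap by applying Proposition~\ref{prop:induction length bound} over the \emph{residue field} rather than over $\CK$. Since $\LL(\rho \otimes_{\O} \CK)$ is (up to twist) a parabolic induction of generalized Steinbergs, and each $\St_{\pi_i,n_i}$ is itself a quotient of a parabolic induction of supercuspidals, one obtains a surjection from a parabolic induction of integral supercuspidals $\pi'_j$ onto $\LL(\rho \otimes_{\O}\CK)$. Choosing lattices $L_j \subset \pi'_j$ gives an integral model whose mod~$\unif$ reduction is a parabolic induction of the $L_j/\unif L_j$; each of these is cuspidal with one-dimensional top derivative, hence irreducible over $k'$. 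Now Proposition~\ref{prop:induction length bound}, applied over $k'$, bounds the length of this induction by a constant depending only on $n$, and $L/\unif L$ is a quotient of it. The point is that one never compares lengths across the reduction step; instead one carries an explicit integral structure through and bounds lengths only on the special fibre.
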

\begin{proof}
The length of $L/\unif L$ is independent of $L$.  As $\LL(\rho \otimes_{\O} \CK)$
is a parabolic induction of a tensor product of
integral Steinberg representations, we can write
$$\LL(\rho \otimes_{\O} \CK) = \Ind_P^{\GL_n(E)} \St_{\pi_1,n_1} \otimes \dots \otimes \St_{\pi_i, n_i},$$
where the $\pi_i$ are integral cuspidal representations of $\GL_n$.  For each $i$,
$\St_{\pi_i,n_i}$ arises as the normalized parabolic induction of a tensor product of
the form:
$$(\abs \circ \det)^{-\frac{n_i - 1}{2}}\pi_i \otimes \dots \otimes (\abs \circ \det)^{\frac{n_i - 1}{2}}\pi_i.$$
Thus there is a parabolic induction of a tensor product of irreducible, integral, cuspidal representations $\pi'_j$
(all of which are twists of the $\pi_i$) that maps surjectively onto $\LL(\rho \otimes_{\O} \CK)$; if we choose a 
lattice $L_j$ inside each of the $\pi'_j$, the parabolic induction of the tensor product of the $L_j$
maps into a lattice $L$ in $\LL(\rho \otimes_{\O} \CK)$.  We then have a surjection of the parabolic induction
of the tensor product of $L_j/\unif L_j$ onto $L/\unif L$.  As each $\pi'_j$ is cuspidal, so is $L_j/\unif L_j$;
as $(L_j/\unif L_j)^{(n)}$ is one-dimensional we must have $L_j/\unif L_j$ irreducible for all $j$.
Thus the length of $L/\unif L$ is bounded above by the maximum length of a parabolic induction of
an irreducible representation of a Levi subgroup of $\GL_n(E)$, and the desired result
follows by Proposition~\ref{prop:induction length bound}.
\end{proof}

We can now prove the main result of this subsection.

\begin{theorem}
\label{thm:mod p LL}
There is a map
$\rhobar \mapsto \LLbar(\rhobar)$
from the set of isomorphism classes
of continuous representations $G_E \rightarrow \GL_n(k)$
to the set of isomorphism classes of finite length admissible smooth
$\GL_n(E)$-representations on $k$-vector spaces, uniquely
determined by the following three conditions:

\begin{enumerate}
\item For any $\rhobar$, the associated $\GL_n(E)$-representation
$\LLbar(\rhobar)$ is essentially~AIG.
\item If $\CK$ is a finite extension of $\Q_p$, with ring of integers $\O$,
uniformizer $\unif$, and residue field $k'$ containing $k$, $\rho: G_E \rightarrow \GL_n(\O)$
is a continuous representation lifting $\rhobar \otimes_k k'$, and $L$ is a $\GL_n(E)$-invariant
$\O$-lattice in $\LL(\rho)$ such that $L/\unif L$ is essentially AIG,
then there is a $\GL_n(E)$-equivariant embedding
$L/\unif L \hookrightarrow \LLbar(\rhobar) \otimes_k k'$.  {\em (}Note that 
Proposition~{\em \ref{prop:lattice}} shows
that such an $L$ always exists, and that it is unique up to homethety.{\em )}
\item The representation $\LLbar(\rhobar)$ is minimal with respect
to satisfying conditions~{\em (1)} and {\em (2)}, i.e.\ given any continuous representation
$\rhobar: G_E \rightarrow \GL_n(k)$, and any representation $\pibar$ of $\GL_n(E)$
satisfying these two conditions with respect to $\rhobar$, there is a
$\GL_n(E)$-equivariant embedding $\LL(\rhobar) \hookrightarrow \pibar$.

\smallskip
\noindent
Furthermore:

\smallskip

\item The formation of $\LLbar(\rhobar)$ is compatible with
extending scalars, i.e.\ given $\rhobar: G_E \rightarrow \GL_n(k)$,
and a finite extension $k'$ of $k$, one has
$$\LLbar(\rhobar \otimes_k k') \cong \LLbar(\rhobar) \otimes_k k'.$$

\item The formation of $\LLbar(\rhobar)$ is compatible with
twists, i.e.\ given $\rhobar: G_E \rightarrow \GL_n(k)$, and
a continuous character $\chibar: G_E \rightarrow k^{\times}$, one has
$$\LLbar(\rhobar \otimes \chibar) = \LLbar(\rhobar) \otimes (\chibar \circ \det).$$

\item $\End_{\GL_n(E)}(\LLbar(\rhobar)) = k$.

\item The representation $\LLbar(\rhobar)$ has central character equal to
$\absbar^{\frac{n(n-1)}{2}} (\det \rhobar)$.

\item Suppose $(\rhobar \otimes_k \overline{k})^{\ss}$ is the direct sum
of irreducible representations $\rhobar_1, \dots, \rhobar_r$.
Then every Jordan--H\"older constituent of $\LL(\rhobar)$ has 
supercuspidal support equal to $\abs^{\frac{n-1}{2}}\{\LLbar_{\ss}(\rhobar_1), \dots, \LLbar_{\ss}(\rhobar_r) \} $

\end{enumerate}
\end{theorem}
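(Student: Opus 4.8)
The plan is to \emph{construct} $\LLbar(\rhobar)$ directly as the smallest essentially AIG representation into which all the relevant mod $\unif$ reductions of characteristic-zero Langlands representations map, and then to read off properties (1)--(8) almost for free. The starting point is Corollary~\ref{cor:independent generic constituent} (which rests on Vigneras' Theorem~\ref{thm:LL semisimple}): for every continuous lift $\rho\colon G_E\to\GL_n(\O)$ of $\rhobar\otimes_k k'$, the representation $\LL(\rho\otimes_\O\CK)$ is $\O$-integral and essentially AIG (Corollary~\ref{cor:essential}), and all Jordan--H\"older constituents of the reductions of its $\GL_n(E)$-stable lattices have one fixed supercuspidal support $\mathfrak s(\rhobar)$, depending only on $\rhobar$, which in the normalization of (8) equals $\abs^{(n-1)/2}\{\LLbar_{\ss}(\rhobar_1),\dots,\LLbar_{\ss}(\rhobar_r)\}$, where the $\rhobar_i$ are the irreducible summands of $(\rhobar\otimes_k\overline k)^{\ss}$. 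By Proposition~\ref{prop:generic supercuspidal support} there is a unique irreducible generic representation $\pi_0$ (a priori over $\overline k$) with supercuspidal support $\mathfrak s(\rhobar)$; as $\mathfrak s(\rhobar)$ is $\Gal(\overline k/k)$-stable and $\LLbar_{\ss}$ is Galois-equivariant, $\pi_0$ descends to $k$. By Proposition~\ref{prop:lattice} each $\rho$ has a homothety class of lattices $L_\rho$ with $L_\rho/\unif L_\rho$ essentially AIG, and its socle, being the unique generic constituent of the reduction, is $\pi_0$ after extension of scalars.

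Next I would build $\LLbar(\rhobar)$ inside the essentially AIG envelope $\env(\pi_0)$ of Proposition~\ref{prop:envelope}. By Lemma~\ref{lem:AIG homs} each $L_\rho/\unif L_\rho$ embeds into $\env(\pi_0)$ (after the appropriate scalar extension) uniquely up to a scalar, so its image is canonical; let $W$ be the sum of all these images. The crucial finiteness input is Proposition~\ref{prop:length bound}: the length of every $L_\rho/\unif L_\rho$ is bounded by a constant $c=c(n)$ independent of $\rho$, so $W$ is a sum of subrepresentations of length at most $c$ of the essentially AIG representation $\env(\pi_0)$, hence of finite length by Corollary~\ref{cor:AIG bounded} --- thus admissible, and essentially AIG by Lemma~\ref{lem:AIG basic}. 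Using Theorem~\ref{thm:specialization} and the Zelevinski segment combinatorics of Section~\ref{sec:LL zero}, the isomorphism classes of the $L_\rho/\unif L_\rho$ form a finite set (they are among the $\pi(\CS')$ with $\CS'$ a $\preceq$-predecessor of one of the finitely many segment collections with supercuspidal support $\mathfrak s(\rhobar)$), so $W$ is already obtained from lifts whose residue fields lie in a single finite extension $k_0'/k$, over which everything is defined. Since a $\Gal(k_0'/k)$-conjugate of a lift is again a lift and conjugates $L_\rho/\unif L_\rho$ accordingly, $W$ is $\Gal(k_0'/k)$-stable, and by Galois descent it descends to a subrepresentation $\LLbar(\rhobar)\subset\env(\pi_0)$ over $k$. (The compatibility of essentially AIG envelopes with finite extensions of the coefficient field, needed to justify these manipulations, follows from the construction in Proposition~\ref{prop:envelope} together with Lemma~\ref{lem:AIG descent}.)

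With $\LLbar(\rhobar)$ so defined, properties (1)--(3) are built in. For (1): $\LLbar(\rhobar)$ is finite length, admissible and essentially AIG; since lifts of $\rhobar$ exist ($G_E$ being topologically finitely generated), $\LLbar(\rhobar)\neq 0$ and $\soc(\LLbar(\rhobar))=\pi_0$. For (2): by construction the embedding $L_\rho/\unif L_\rho\hookrightarrow\env(\pi_0)\otimes_k\overline k$ lands in $\LLbar(\rhobar)\otimes_k\overline k$, and Lemma~\ref{lem:AIG descent} shows a scalar multiple descends to $L_\rho/\unif L_\rho\hookrightarrow\LLbar(\rhobar)\otimes_k k'$. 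For (3): $\LLbar(\rhobar)\otimes_k\overline k$ is generated by the images of the $L_\rho/\unif L_\rho$, so any $\pibar$ satisfying (1)--(2) contains each such image after extension of scalars, hence contains a copy of $\LLbar(\rhobar)$, the embedding again descending by Lemma~\ref{lem:AIG descent}. Uniqueness is then automatic: two solutions of (1)--(3) embed into one another by (3), and since both are finite length by (1) the composite embeddings are isomorphisms.

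Finally I would deduce (4)--(8). Property (4) follows from uniqueness, since $\LLbar(\rhobar)\otimes_k k'$ visibly satisfies (1)--(3) for $\rhobar\otimes_k k'$ (the lifts of $\rhobar\otimes_k k'$ are exactly the lifts of $\rhobar$ with residue field containing $k'$). Property (5) holds because twisting $\rhobar$ by $\chibar$, lifted to a Teichm\"uller character valued in $W(k)^\times\subset\O^\times$, twists each $\LL(\rho\otimes_\O\CK)$ and each lattice $L_\rho$ by $\chibar\circ\det$ (compatibility of the Breuil--Schneider correspondence and of Proposition~\ref{prop:lattice} with twists), hence twists $\LLbar(\rhobar)$. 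Property (6) is Lemma~\ref{lem:AIG homs} applied to the essentially AIG representation $\LLbar(\rhobar)$ over $k$; property (7) follows from Lemma~\ref{lem:AIG central} together with the observation that the central character of $\LLbar(\rhobar)$, being that of its socle $\pi_0$, which in turn is the reduction of the central character of $\LL(\rho\otimes_\O\CK)$, equals $\absbar^{n(n-1)/2}(\det\rhobar)$ by the Breuil--Schneider normalization; and property (8) records $\mathfrak s(\rhobar)$, i.e.\ it is Theorem~\ref{thm:LL semisimple} and Corollary~\ref{cor:independent generic constituent} applied to the Jordan--H\"older constituents of the $L_\rho/\unif L_\rho$. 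The main obstacle in all of this is the finiteness assertion of the second paragraph --- that the sum of all the reductions, taken over every lift and every coefficient field, stays of finite length --- which is precisely what makes $\LLbar(\rhobar)$ admissible; it relies on combining the uniform length bound of Proposition~\ref{prop:length bound} with Corollary~\ref{cor:AIG bounded}, and on the reduction, via the specialization theorem and the segment combinatorics, to finitely many coefficient fields so that the Galois descent to $k$ is legitimate.
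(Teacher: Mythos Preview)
Your approach is essentially the paper's own: define $\LLbar(\rhobar)$ as the sum, inside the essentially AIG envelope of the canonical generic socle, of all the essentially-AIG reductions $L_\rho/\unif L_\rho$; get finite length from Proposition~\ref{prop:length bound} together with Corollary~\ref{cor:AIG bounded}; descend to $k$ by Galois stability; and read off (1)--(8) exactly as you do.

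There is one misstep in your descent paragraph. You invoke Theorem~\ref{thm:specialization} and ``the Zelevinski segment combinatorics'' to claim that the isomorphism classes of $L_\rho/\unif L_\rho$ form a finite set, and that they are among the $\pi(\CS')$ for $\CS'\preceq\CS$. But Theorem~\ref{thm:specialization} is a \emph{characteristic-zero} specialization result (both $\CK$ and the residue field $K$ there have characteristic zero); it says nothing about mod $p$ reductions, and the paper does not establish that the $L_\rho/\unif L_\rho$ are of the form $\pi(\CS')$ in characteristic $p$. This claim is also unnecessary. Once you know $W$ has finite length (which you correctly prove), you can argue directly over $\overline{k}$: the action of $\Gal(\overline{k}/k)$ on $\env(\pi_0\otimes_k\overline{k})$ permutes the images of the $L_\rho/\unif L_\rho$ (a Galois conjugate of a lift is again a lift, exactly as you say), hence preserves $W$; and a $\Gal(\overline{k}/k)$-stable finite-length subobject of $\env(\pi_0)\otimes_k\overline{k}$ descends to $k$. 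This is how the paper handles it, and it sidesteps any need to control the set of isomorphism classes or to pass through an intermediate finite extension $k_0'$.
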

\begin{proof}
We first establish uniqueness: If $\pibar$ and $\pibar'$ are two finite length
admissible smooth representations of $\GL_n(E)$ that satisfy properties (1),
(2), and (3) with respect to $\rhobar$, then by property (3)
we have embeddings of $\pibar$ in $\pibar'$ and vice versa.  As both $\pibar$
and $\pibar'$ have finite length these embeddings are isomorphisms.

We now turn to the construction of $\LLbar(\rhobar)$.  Let $\rho: G_E \rightarrow
\GL_n(\O)$ be a lift of $\rhobar \otimes_k k'$, for some $\O,k'$ as in property {\em 2},
and suppose $L$ is an $\O$-lattice in
$\LL(\rho)$ such that $L/\unif L$ is essentially AIG.  The socle $V$ of
$L/\unif L$ is absolutely irreducible and generic, and its supercuspidal
support depends only on $\rhobar$ and not the specific lift $\rho$
chosen.  As there is a unique generic representation with given supercuspidal
support, $V$ depends only on $\rhobar$ up to isomorphism.  In particular
$V$ is defined over~$k$, as we can take $\O$ to have residue field $k$.

Let $\env(V_{\overline{k}})$ be the essentially AIG envelope of $V \otimes_k \overline{k}$.  
For each lift $\rho$
of $\rhobar$, and each lattice $L$ in $\pi(\rhobar)$ such that $L/\unif L$ is
essentially AIG, the socle of $(L/\unif L) \otimes_k \overline{k}$ is isomorphic to 
$V \otimes_k \overline{k}$.  Hence
$(L/\unif L) \otimes_{k'} \overline{k}$ embeds uniquely (up to the action of 
$\overline{k}^{\times}$) in $\env(V_{\overline{k}})$. 
Let $\LLbar(\rhobar)_{\overline{k}}$ be the sum, in $\env(V_{\overline{k}})$, of the images of
$(L/\unif L) \otimes_k \overline{k}$ in $\env(V_{\overline{k}})$ as $\rho$ ranges over all lifts of $\rhobar$.  

By construction, $\Gal(\overline{k}/k)$ acts on $\env(V_{\overline{k}})$.  This action
preserves $\LLbar(\rhobar)_{\overline{k}}$, as it permutes the images of $L/\unif L$
for various $\O$ and $\rho$.  Thus $\LLbar(\rhobar)_{\overline{k}}$ descends uniquely
to a submodule $\LLbar(\rhobar)$ of $\env(V)$.  Clearly,
$\LLbar(\rhobar)$ satisfies properties (1) and (2).  On the other hand,
if $\pibar$ is any other representation satisfying properties (1) and (2),
then the socle of $\pibar$ is isomorphic to $V$ and hence $\env(V)$ contains
a unique submodule isomorphic to $\pibar$.  As $\pibar$ satisfies property (2),
$\pibar \otimes_k \overline{k}$
contains the images of $L/\unif L$ in $\env(V_{\overline{k}})$ for all lifts $\rho$ of $\rhobar$,
and thus contains $\LLbar(\rhobar) \otimes_k \overline{k}$.  It follows
that $\pibar$ contains $\LLbar(\rhobar)$, so $\LLbar(\rhobar)$ satisfies property (3).
Finally, $\LLbar(\rhobar)$ is finite length by Proposition~\ref{prop:length bound}
and Corollary~\ref{cor:AIG bounded}.

Now let $k'$ be a finite extension of $k$.  Then $\LLbar(\rhobar) \otimes_k k'$ clearly
satisfies properties (1) and (2) with respect to $\rhobar \otimes_k k'$, and
thus admits an embedding of $\LLbar(\rhobar \otimes_k k')$ that is unique up to rescaling.
The above construction shows that $\LLbar(\rhobar) \otimes_k \overline{k}$ and 
$\LLbar(\rhobar \otimes_k k') \otimes_k \overline{k}$ coincide as submodules of $\env(V_{\overline{k}})$,
so this embedding is an isomorphism.

Similarly, if $\chibar$ is a character of $E^{\times}$ with values in $k^{\times}$,
we can choose a lift $\chi$ of $\chibar$ to a character with values in $W(k)^{\times}$.
Then if $\rho \otimes \chi$ is a lift of $\rhobar \otimes (\chibar \circ \det)$ to a representation
over $\O$, and $L \otimes \chi$
is a lattice in $\pi(\rho \otimes \chi)$ with $(L \otimes (\chi \circ \det))/\unif(L \otimes (\chi \circ \det))$ 
essentially AIG, 
then $L$ is a lattice in $\pi(\rho)$ with $L/\unif L$ essentially AIG.  Thus $L/\unif L$ embeds
in $\LLbar(\rhobar) \otimes_k k'$, so $(L/\unif L) \otimes (\chibar \circ \det)$ embeds in
$\LLbar(\rhobar) \otimes (\chibar \circ \det)$.  Thus $\LLbar(\rhobar) \otimes (\chibar \circ \det)$
has property (2) and hence contains $\LLbar(\rhobar \otimes \chibar)$.  Conversely,
replacing $\rhobar$ with $\rhobar \otimes \chibar$, we find that 
$\LLbar(\rhobar \otimes \chibar) \otimes(\chibar^{-1} \circ \det)$ contains $\LLbar(\rhobar)$.
Thus $\LLbar(\rhobar)$ and $\LLbar(\rhobar \otimes \chibar)$ have the same length,
and so $\LLbar(\rhobar \otimes \chibar)$ and $\LLbar(\rhobar) \otimes (\chibar \circ \det)$
are isomorphic.

The endomorphisms of $\LLbar(\rhobar)$ are all scalar because $\LLbar(\rhobar)$
is essentially AIG.  In particular the center of $\GL_n(E)$ acts on $\LLbar(\rhobar)$
(and hence on all of its submodules) via a character.  To compute this character,
let $\rho$ be any lift of $\rhobar$, and let $L$ be a lattice in $\pi(\rho)$ such
that $L/\unif L$ is essentially AIG.  The center of $\GL_n(E)$ acts on $\pi(\rho)$
via the character $\abs^{\frac{n(n-1)}{2}}\det \rho$, and hence on
$L/\unif L$ via the character $\absbar^{\frac{n(n-1)}{2}}\det \rhobar$.
As $L/\unif L$ embeds in $\LLbar(\rhobar)$, this character is also the central
character of $\LLbar(\rhobar)$.

As $\LLbar(\rhobar)$ is essentially AIG, every Jordan--H\"older constituent of $\LLbar(\rhobar)$
has the same supercuspidal support.  To determine this supercuspidal support, let $\rho$ be
any lift of $\rhobar$, and let $L$ be a lattice in $\pi(\rho)$ such that $L/\unif L$ is
essentially AIG.  The representations $\abs^{-\frac{n-1}{2}}\pi(\rho)$ and $\rho$
then correspond under unitary local Langlands, and so, by Theorem~\ref{thm:LL semisimple},
the supercuspidal support of any Jordan--H\"older constituent of
$\abs^{-\frac{n-1}{2}} L/\unif L$ is equal to $\{\LLbar_{\ss}(\rhobar_1), \dots \LLbar_{\ss}(\rhobar_r)\}$.
\end{proof}

\subsection{The local Langlands correspondence for $\GL_2$ in characteristic $p$}

For $\GL_2(E)$, at least in odd characteristic, the correspondence
$\rhobar \mapsto \LL(\rhobar)$ can be made fairly concrete.  The first thing to observe is:

\begin{prop}
Let $\rhobar: G_E \rightarrow \GL_2(\Fbar_p)$ be a representation, and suppose that
$\rhobar^{\ss}$ is not a twist of $1 \oplus \abs$.  Then $\LLbar(\rhobar)$ is the unique
representation of $\GL_2(E)$ whose supercuspidal support is given by part~{\em (8)}
of Theorem~{\em \ref{thm:mod p LL}}.
\end{prop}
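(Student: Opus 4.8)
The plan is to show that, under the hypothesis on $\rhobar^{\ss}$, there is a \emph{unique} irreducible representation $\pi$ of $\GL_2(E)$ over $\Fbar_p$ whose supercuspidal support equals the multiset $\abs^{1/2}\{\LLbar_{\ss}(\rhobar_1),\dots,\LLbar_{\ss}(\rhobar_r)\}$ named in part~(8) of Theorem~\ref{thm:mod p LL}, that this $\pi$ is generic, and finally to deduce that $\LLbar(\rhobar)=\pi$; once this is done the assertion of the proposition is immediate.

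First I would prove the existence, uniqueness and genericity of $\pi$ by splitting into two cases according to whether $\rhobar$ is irreducible. If it is, then $r=1$, $\LLbar_{\ss}(\rhobar_1)$ is an irreducible supercuspidal representation of $\GL_2(E)$ over $\Fbar_p$, and its twist $\sigma:=(\abs^{1/2}\circ\det)\otimes\LLbar_{\ss}(\rhobar_1)$ is again supercuspidal; the only irreducible representation of $\GL_2(E)$ with supercuspidal support $\{\sigma\}$ is $\sigma$ itself, and $\sigma$ is generic by Theorem~\ref{thm:kirillov}. Here the hypothesis on $\rhobar$ plays no role. If instead $\rhobar^{\ss}=\chibar_1\oplus\chibar_2$ for characters $\chibar_i\colon G_E\to\Fbar_p^\times$, set $\mu_i:=\LLbar_{\ss}(\chibar_i)$ --- a character of $E^\times$, since for $n=1$ the correspondence of \cite{Vig4} is local class field theory --- and $\nu_i:=\abs^{1/2}\mu_i$. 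The irreducible representations of $\GL_2(E)$ with supercuspidal support $\{\nu_1,\nu_2\}$ are exactly the Jordan--H\"older constituents of the normalized parabolic induction $\Ind_B^{\GL_2(E)}(\nu_1\otimes\nu_2)$, and by \cite{Vig2} this induction is irreducible over $\Fbar_p$ whenever $\nu_1\nu_2^{-1}\neq\abs^{\pm1}$. Since $\nu_1\nu_2^{-1}=\mu_1\mu_2^{-1}$ corresponds to $\chibar_1\chibar_2^{-1}$ under class field theory, the condition that $\rhobar^{\ss}$ is \emph{not} a twist of $1\oplus\abs$ is precisely the condition $\nu_1\nu_2^{-1}\neq\abs^{\pm1}$; so in this case $\pi:=\Ind_B^{\GL_2(E)}(\nu_1\otimes\nu_2)$ is the unique irreducible with the prescribed supercuspidal support, and it is generic because its top derivative is one-dimensional, by Proposition~\ref{prop:leibnitz} and the one-dimensionality of the top derivative of a character of $\GL_1(E)$.

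Next I would feed this into the structure of $\LLbar(\rhobar)$. By Theorem~\ref{thm:mod p LL}(1) the representation $\LLbar(\rhobar)$ is essentially~AIG, so $\soc(\LLbar(\rhobar))$ is absolutely irreducible and generic; by Theorem~\ref{thm:mod p LL}(8) it has supercuspidal support $\abs^{1/2}\{\LLbar_{\ss}(\rhobar_1),\dots\}$, whence $\soc(\LLbar(\rhobar))\cong\pi$ by Proposition~\ref{prop:generic supercuspidal support}. The quotient $\LLbar(\rhobar)/\soc(\LLbar(\rhobar))$ has, again by Theorem~\ref{thm:mod p LL}(8), all of its Jordan--H\"older constituents of supercuspidal support $\abs^{1/2}\{\LLbar_{\ss}(\rhobar_1),\dots\}$; by the uniqueness established above each such constituent is isomorphic to $\pi$ and is therefore generic. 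But $\LLbar(\rhobar)/\soc(\LLbar(\rhobar))$ contains no generic Jordan--H\"older constituents, once more because $\LLbar(\rhobar)$ is essentially~AIG. Hence $\LLbar(\rhobar)/\soc(\LLbar(\rhobar))=0$, i.e.\ $\LLbar(\rhobar)=\pi$, which is the unique (irreducible) representation of $\GL_2(E)$ with the supercuspidal support named in part~(8).

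I expect the main obstacle to be the input from \cite{Vig2} and \cite{Vig4} in the reducible case: one must check that, over $\Fbar_p$ and for every residue characteristic $\ell\neq p$ of $E$ --- including the degenerate situation $p\mid q-1$, in which $\abs$ reduces to the trivial character and an apparently innocuous principal series such as $\Ind_B^{\GL_2(E)}(\nu\otimes\nu)$ becomes reducible --- the normalized principal series $\Ind_B^{\GL_2(E)}(\nu_1\otimes\nu_2)$ is irreducible exactly when $\nu_1\nu_2^{-1}\neq\abs^{\pm1}$, and that this condition matches the hypothesis on $\rhobar^{\ss}$ under the suitably normalized $\GL_1$ local Langlands correspondence. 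Pinning down these normalization compatibilities, and confirming that there is no extra mod~$p$ reducibility, is the one point requiring genuine care rather than formal manipulation; everything else is a routine combination of Theorems~\ref{thm:mod p LL} and~\ref{thm:kirillov} with Propositions~\ref{prop:generic supercuspidal support} and~\ref{prop:leibnitz}.
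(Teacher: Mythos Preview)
Your proof is correct and follows essentially the same approach as the paper: both first establish that there is a unique irreducible representation $\pibar$ with the prescribed supercuspidal support and that it is generic, and then use the essentially~AIG property to force $\LLbar(\rhobar)=\pibar$. The only cosmetic difference is that the paper routes the last step through the envelope --- showing $\env(\pibar)=\pibar$ and then using $\LLbar(\rhobar)\subset\env(\pibar)$ --- whereas you argue directly that $\LLbar(\rhobar)/\soc\bigl(\LLbar(\rhobar)\bigr)=0$ from part~(8) of Theorem~\ref{thm:mod p LL}; your route is, if anything, slightly more direct.
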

\begin{proof}
Part (8) of Theorem~\ref{thm:mod p LL}, together with our hypothesis 
on $\rhobar$ implies that the supercuspidal
support of any Jordan--H\"older constituent of $\LLbar(\rhobar)$ is either a 
single supercuspidal representation of $\GL_2(E)$, or a pair
of characters of $\GL_1(E)$ that do not differ by a factor of $\abs$.  In either case,
there is, up to isomorphism, a {\em unique} irreducible representation $\pibar$ of $\GL_2$ that has that 
particular supercuspidal
support; in particular $\pibar$ is generic.  Thus every Jordan--H\"older constituent of $\env(\pibar)$
is isomorphic to $\pibar$; as $\pibar$ is generic and $\env(\pibar)$ is essentially AIG there is only
one such Jordan--H\"older constituent.  In particular $\pibar = \env(\pibar)$. and so
As $\LL(\rhobar)$ is contained in $\env(\pibar)$, we must have $\LL(\rhobar) = \pibar$.
\end{proof}

When $\rhobar^{\ss}$ is a twist of $1 \oplus \abs$, the situation is more complicated, as
$\LL(\rhobar)$ will typically not be irreducible.  As the correspondence $\rhobar \mapsto \LL(\rhobar)$
is compatible with twists, it suffices to describe $\LL(\rhobar)$ when $\rhobar^{\ss} = 1 \oplus \abs$.
In this case $\LL(\rhobar)$ has supercuspidal support $\{1,\absbar\}$.
The details of this will be carried out in~\cite{He}; here we content ourselves with summarizing the results.

First, assume that the order $q$ of the residue field of $E$ is not congruent to $\pm 1$
modulo $p$.  (This is the so-called {\em banal} situation.)  Here there are two irreducible
representations of $G$ with supercuspidal support $\{1,\absbar\}$: the character $\absbar \circ \det$
and the twisted Steinberg representation $\St \otimes (\absbar \circ \det)$.  The latter
representation is generic, and its envelope is the unique nonsplit extension
of $\absbar \circ \det$ by $\St \otimes (\absbar \circ \det)$. 

On the Galois side there is, up to isomorphism, a unique nonsplit $\rhobar$ whose
semisimplification is $1 \oplus \abs$.  Then $\LL(\rhobar)$ is equal
to $\St \otimes (\absbar \circ \det)$ if $\rhobar$ is nonsplit, and to the
unique nonsplit extension of $\absbar \circ \det$ by $\St \otimes (\absbar \circ \det)$
if $\rhobar$ is split.

Next, assume that $p$ is odd and $q$ is congruent to $-1$ modulo $p$.  In this case there
are three irreducible representations of $G$ with supercuspidal support $\{1,\absbar\}$:
the trivial character, the character $\absbar \circ \det$, and a cuspidal generic representation
that Vigneras denotes by $\pi(1)$ (see \cite[II.2.5]{Vig2} for
a discussion of this).  Up to isomorphism, there is a unique nonsplit extension
of the trivial character by $\pi(1)$ and similarly a unique nonsplit extension of
$\absbar \circ \det$ by $\pi(1)$.  The envelope $\env(\pi(1))$ is the unique extension of
$1 \oplus (\absbar \circ \det)$ by $\pi(1)$ that contains both of these nonsplit extensions
as submodules.

In this case $\LL(\rhobar) = \env(\pi(1))$ if $\rhobar$ is split.  If $\rhobar$ is not
split, it is either an extension of $\absbar$ by $1$ or an extension of $1$ by $\absbar$.  In the
first case, $\LL(\rhobar)$ is the nonsplit extension
of $(\absbar \circ \det)$ by $\pi(1)$; in the second case 
$\LL(\rhobar)$ is the nonsplit extension
of the trivial character by $\pi(1)$.

Finally, assume that $p$ is odd and $q$ is congruent to $1$ modulo $p$.  In this case $\absbar$
is the trivial character.  The only irreducible representations of $G$ with supercuspidal
support $\{1,1\}$ in this case are the Steinberg representation $\St$ and the trivial representation.
Moreover, $\Ext^1(1,\St)$ is two-dimensional, and naturally isomorphic to $H^1(G_E,1)$.  The
envelope $\env(\St)$ is isomorphic to the universal extension of $1$ by $\St$, and thus has length
three.

In this case $\LL(\rhobar)$ is equal to $\env(\St)$ if $\rhobar$ is split.  On the other hand,
the nonsplit $\rhobar$ with trivial semisimplification are in bijection with the one-dimensional 
subspaces of $H^1(G_E,1)$, and hence
in natural bijection with the one-dimensional subspaces of $\Ext^1(1,\St)$.  These in turn 
correspond to the nonsplit extensions of $1$ by $\St$.  For any nonsplit $\rhobar$, $\LL(\rhobar)$ 
is the corresponding extension of $1$ by $\St$.

%%%%TODO: regularize notation in section 6
\section{The local Langlands correspondence in families}
\label{sec:LL families}
\subsection{The set-up}
Throughout this section 
we will be considering representations over rings $A$ satisfying
the following condition:

\begin{condition}
\label{cond:A}
{\em
$A$ is a complete reduced Noetherian local ring,
with finite residue field $k$ of characteristic $p$,
which is flat over the ring of Witt vectors $W(k)$.
}
\end{condition}

We will typically write $\mathfrak m$ for the maximal ideal of $A$.
Note that the condition of being flat over $W(k)$ is equivalent
to $A$ being $p$-torsion free, or again (since $A$ is reduced),
to each minimal prime of $A$ being of residue characteristic $0$.
We will write $\kappa(\mathfrak p)$ to denote the residue
field of a prime ideal $\mathfrak p$ of $A$; thus
$\kappa(\mathfrak p)$ is the fraction field of the
complete local domain $A/\mathfrak p$.
We write
$\displaystyle \CK(A) := 
\prod_{\mathfrak a \text{ minimal}} \kappa(\mathfrak a)$
(where, as indicated, the product is taken over the finitely
many minimal primes of $A$) for the total quotient ring of $A$.
Since $A$ is reduced, the natural map
$ A\rightarrow \CK(A)$
is an embedding.

\subsection{Statement of the correspondence and related results}
\label{subsec:results}
Now let $E$ be a number field.  Let $v$ be a non-archimedean place of $E$,
and let
$\rho: G_{E_v} \rightarrow \GL_n(A)$ be a continuous representation
(when the target is equipped with its $\mathfrak m$-adic topology).
For each prime ideal $\mathfrak p$ of $A$,
let
$\rho_{\mathfrak p}: G_{E_v} \rightarrow
\GL_n\bigl(\kappa(\mathfrak p)\bigr)$
denote the representation
obtained from $\rho$ by extending scalars from $A$ to $\kappa(\mathfrak p)$.
In the particular case of the maximal ideal, we also write
$\rhobar := \rho_{\mathfrak m}$.  If $\mathfrak p$ is a prime of $A$
with residue characteristic zero, we write $\LLcheck(\rho_{\mathfrak p})$
for the smooth $\kappa(\mathfrak p)$-dual of the representation
$\LL(\rho_{\mathfrak p})$ defined in Definition~\ref{def:LL galois}.

In the situations that we will consider below,
we will have a finite $S$ of non-archimedean places of $E$, all prime to $p$,
and for each $v \in S$ we will have a continuous representation
$\rho_v: G_{E_v} \rightarrow \GL_n(A)$.
%In this case, for each $\ell \in S$
%we will write $X_{\ell-\ps}$, $X_{\ell-\sp}$,
%and $X_{\ell-\cusp}$ to denote the above loci defined
%in reference to $\rho_{\ell}$.

\medskip

We are now ready to describe the local Langlands correspondence
for local Galois representations over $A$.

\begin{theorem}
\label{thm:family}
Let $S$ denote a finite set of non-archimedean places of $E$, none of which lie over $p$,
and suppose for each $v \in S$ that we are given a representation
$\rho_v: G_{E_v} \rightarrow \GL_n(A)$.
If we write $G := \prod_{v \in S}\GL_n(E_v),$
then there is {\em (}up to isomorphism{\em )} at most one
admissible smooth representation
$V$ of $G$ over $A$ satisfying the following conditions:
\begin{enumerate}
\item $V$ is $A$-torsion free {\em (}i.e.\ all associated
primes of $V$ are minimal primes of~$A$, or equivalently,
the natural map $V \rightarrow \CK(A) \otimes_A V$ is an
embedding{\em )}.
\medskip
\item For each minimal prime $\mathfrak a$ of $A$,
there is a $G$-equivariant isomorphism
$$\bigotimes_{v \in S}\LLcheck(\rho_{v, \mathfrak a}) \iso
\kappa(\mathfrak a)\otimes_A V.$$
\item The $G$-cosocle $\cosoc(V/\mathfrak m V)$
of $V/\mathfrak m V$ is absolutely irreducible and generic,
while the kernel of the natural surjection
$V/\mathfrak m V \rightarrow  \cosoc(V/\mathfrak m V)$ 
contains no generic subrepresentations.  {\em (}In other words,
the smooth dual of $V/\mathfrak m V$ is essentially AIG.{\em )}
\smallskip
\newline
Any such $V$ satisfies the following additional conditions:
\newline
\item $V$ is cyclic as an $A[G]$-module.
\medskip
\item
$\End_{A[G]}(V) = A$.
\end{enumerate}
\end{theorem}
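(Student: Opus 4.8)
The plan is to show that any admissible smooth $A[G]$-representation $V$ satisfying conditions~(1)--(3) is pinned down by the Galois data, by recovering it as $V = A[G]\cdot\J_S(V)$ where $\J_S(V)$ is the space of Schwartz functions, and to extract~(4) and~(5) in the course of doing so. Here $\J_S = \J_{S}$ is the iterated Schwartz-functions functor of Section~\ref{subsec:kirillov}, left adjoint to the top-derivative functor $V\mapsto V^{(n),S}$, and $P_n := \prod_{v\in S}P_n(E_v)\subset G$. Note $G=\prod_{v\in S}\GL_n(E_v)$ satisfies Condition~\ref{cond:pro-ell} over $A$ since the $E_v$ have residue characteristic prime to $p$, and $A$ is a $W(k)$-algebra by Condition~\ref{cond:A}, so all of Sections~\ref{subsec:admissible} and~\ref{subsec:kirillov} apply.

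First I would dispose of~(4). Since $\cosoc(V/\mathfrak m V)$ is irreducible, $V/\mathfrak m V$ has a unique maximal $G$-submodule, hence is cyclic over $k[G]$; lifting a generator to $v\in V$ and setting $W := A[G]v$, Lemma~\ref{lem:quotient} shows $V/W$ is admissible with $(V/W)/\mathfrak m(V/W)=0$, so $V/W = 0$ by Lemma~\ref{lem:nakayama}, i.e.\ $V=A[G]v$ is cyclic. Next, the core technical point: $V^{(n),S}$ is free of rank one over $A$. Condition~(3) says $(V/\mathfrak m V)^{\vee}$ is essentially AIG, hence of finite length (Corollary~\ref{cor:admissible AIG finiteness}); the same holds after $\otimes_k\overline{k}$, so $(V/\mathfrak m V)^{\vee}\otimes_k\overline{k}$ has exactly one generic Jordan--H\"older factor, whence so does $V/\mathfrak m V$ (genericity and the count of generic Jordan--H\"older factors are preserved under smooth duality), and therefore $(V/\mathfrak m V)^{(n),S}$ is one-dimensional over $k$ (the top derivative of a tensor of absolutely irreducible generic representations is a tensor of lines, and vanishes on the non-generic constituents). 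By Theorem~\ref{thm:fgtensor}, $V^{(n),S}$ is a finitely generated $A$-module; as derivatives commute with base change, $V^{(n),S}\otimes_A k\iso(V/\mathfrak m V)^{(n),S}$ is one-dimensional, so $V^{(n),S}\cong A/I$ is cyclic. For each minimal prime $\mathfrak a$, condition~(2) and Corollary~\ref{cor:essential} show that $V\otimes_A\kappa(\mathfrak a)\cong\bigotimes_{v}\LLcheck(\rho_{v,\mathfrak a})$ is the smooth dual of a finite-length essentially AIG representation, so $V^{(n),S}\otimes_A\kappa(\mathfrak a)\neq 0$, forcing $I\subseteq\mathfrak a$. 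Since $A$ is reduced, $I\subseteq\bigcap_{\mathfrak a}\mathfrak a = 0$, so $V^{(n),S}\cong A$.

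Now $\J_S(V)$ is a $P_n$-invariant $A$-submodule of $V$ with $\J_S(V)^{(n),S}\iso V^{(n),S}$, and by the adjunction $\J_S\dashv(-)^{(n),S}$ (the product form of Proposition~\ref{prop:schwartz endomorphisms}) one has $\End_{A[P_n]}(\J_S(V))\iso\End_A(V^{(n),S})=\End_A(A)=A$. I would then prove $V = A[G]\cdot\J_S(V)$: writing $V_0 := A[G]\cdot\J_S(V)\subseteq V$, exactness of the derivative functors together with $\J_S(V)\subseteq V_0$ gives $V_0^{(n),S}=V^{(n),S}$, hence $(V/V_0)^{(n),S}=0$; if $V/V_0\neq 0$, then it is an admissible quotient of $V$ (Lemma~\ref{lem:quotient}), so $(V/V_0)\otimes_A k\neq 0$ by Lemma~\ref{lem:nakayama}, and being a nonzero quotient of $V/\mathfrak m V$ it surjects onto the irreducible generic $\cosoc(V/\mathfrak m V)$, whence $((V/V_0)\otimes_A k)^{(n),S}\neq 0$; but this equals $(V/V_0)^{(n),S}\otimes_A k = 0$, a contradiction. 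Condition~(5) follows at once: any $\phi\in\End_{A[G]}(V)$ restricts to $\J_S(\phi)\in\End_{A[P_n]}(\J_S(V))=A$, the resulting map $\End_{A[G]}(V)\to A$ is injective because $V$ is generated over $A[G]$ by $\J_S(V)$, and it splits the structure map $A\to\End_{A[G]}(V)$, so both are isomorphisms. For uniqueness, condition~(1) realizes $V$ as an $A[G]$-submodule of $\mathcal V := \CK(A)\otimes_A V\cong\prod_{\mathfrak a}\bigotimes_v\LLcheck(\rho_{v,\mathfrak a})$, a $\CK(A)[G]$-module depending only on $\rho$; then $\J_S(V)\subseteq\J_S(\mathcal V)=(\Phi^{+})^{n-1}\Psi^{+}(\mathcal V^{(n),S})$ is the image of the canonical lattice $(\Phi^{+})^{n-1}\Psi^{+}(V^{(n),S})$, and since $V^{(n),S}$ is a free rank-one $A$-lattice in the invertible $\CK(A)$-module $\mathcal V^{(n),S}$, $\J_S(V)$ is carried by a unit of $\CK(A)$ onto a fixed lattice $\mathcal J\subseteq\J_S(\mathcal V)$ independent of $V$. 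Hence $V=A[G]\cdot\J_S(V)$ is, up to a homothety of $\mathcal V$, equal to $A[G]\cdot\mathcal J$, so any two representations satisfying~(1)--(3) are isomorphic.

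The main obstacle, I expect, is twofold. First, one must confirm that the Kirillov/derivative machinery of Section~\ref{subsec:kirillov} is genuinely available in the required generality over $A$ and over the product group $G$ --- in particular the exactness of the functors, the defining identity $\J_S(V)^{(n),S}\iso V^{(n),S}$, and the adjunction computing $\End_{A[P_n]}(\J_S(V))$ --- keeping in mind that descent is over $W(k)$ rather than $W(\widetilde k)$ and that the natural surjection $V\to V^{(n),S}$ is not defined over $W(k)$. Second, the comparison step $V=A[G]\cdot\J_S(V)$ is where condition~(3) is used in an essential way, and one must argue carefully that $(V/V_0)\otimes_A k$ is nonzero and inherits an irreducible generic cosocle. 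The remaining ingredients --- Nakayama over complete local rings, finite generation of top derivatives, and the finiteness of admissible essentially AIG representations --- are already established in the excerpt.
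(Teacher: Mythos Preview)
Your proposal is correct and follows essentially the same approach as the paper: the paper packages the argument into Proposition~\ref{prop:second properties} (freeness of $V^{(n)}$, generation by $\J(V)$, and $\End=A$) and Proposition~\ref{prop:unique}/Lemma~\ref{lem:dense} (uniqueness by comparing Schwartz lattices inside $\CK(A)\otimes_A V$), but the underlying ideas---cyclicity of $V^{(n),S}$ via Nakayama, freeness by testing at minimal primes, generation of $V$ by $\J_S(V)$ via condition~(3), and comparison of two such $V$ via a homothety on their top derivatives---are exactly yours. Your direct Nakayama argument for cyclicity~(4) is in fact cleaner than the paper's terse citation of Lemma~\ref{lem:cyclic}, and your justification that $(V/\mathfrak m V)^{(n),S}$ is one-dimensional is precisely the content of Lemma~\ref{lem:Vbar Kirillov}.
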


We postpone the proof of the theorem to the following subsection.

\begin{df}
\label{def:family}
{\em
If in the context of the preceding theorem an $A[G]$-module $V$
satisfying conditions~(1), (2), and~(3) exists, then we write
$\LLcheck(\{\rho_v\}_{v \in S}) := V$.
(This is justified by the uniqueness statement of the theorem.)
If $S$ consists of a single place $v$ then we write
$\LLcheck(\rho_v)$ rather than $\LLcheck(\{\rho_v\}_{v \in S}).$
}
\end{df}

\begin{remark}
{\em
We don't consider here the problem of proving in general that a 
representation $V$ satisfying
conditions~(1), (2) and~(3) of Theorem~\ref{thm:family} exists, 
although we conjecture that it does.  (This is Conjecture~\ref{conj:LL}
of the introduction.)   When $n=2$ and $p$ is odd, this conjecture is
a result of the second author~\cite{He}.

In the global applications considered in the work of the first
author~\cite{Em8, Em9}, and in subsequent
applications,
the problem that we will confront will rather be that 
of having a smooth $G$-representation
at hand (for a certain ring~$A$), which we wish to show satisfies 
the conditions to be
$\LLcheck(\{\rho_v\}_{v \in S})$
for an appropriate $\rho$.  Thus one of our goals in the
following subsection is to establish a workable criterion
for recognizing 
$\LLcheck(\{\rho_v\}_{v \in S})$ (namely Theorem~\ref{thm:verify} below).
}
\end{remark}

The following result shows that the existence 
of $\LLcheck(\{\rho_v\}_{v \in S})$
is equivalent to the existence of the 
collection of representations $\LLcheck(\rho_v)$,
and explains the relation between them.
We postpone its proof to the following subsection.

\begin{prop}
\label{prop:individual}
In the context of Theorem~{\em \ref{thm:family}},
the $A[G]$-module $\LLcheck(\{\rho_v\}_{v \in S})$ exists
if and only if each of the individual $A[\GL_n(E_v)]$-modules
$\LLcheck(\rho_v)$ exist.
Furthermore, $\LLcheck(\{\rho_v\}_{v \in S})$ is isomorphic
to the maximal torsion free quotient of the tensor product
{\em (}taken over $A${\em )}
$\bigotimes_{v \in S} \LLcheck(\rho_v).$
\end{prop}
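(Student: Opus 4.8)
The plan is to deduce everything from the uniqueness assertion of Theorem~\ref{thm:family}, by exhibiting concrete candidates for $\LLcheck(\{\rho_v\}_{v\in S})$ and for the individual $\LLcheck(\rho_v)$ and checking the defining conditions~(1)--(3). For the implication ``$\Leftarrow$'' (which will also yield the final displayed description), assume each $\LLcheck(\rho_v)$ exists and set $W := \bigotimes_{v\in S}\LLcheck(\rho_v)$ (tensor product over $A$) and $V := W/W_{\mathrm{tor}}$, where $W_{\mathrm{tor}} := \kernel\bigl(W\to\CK(A)\otimes_A W\bigr)$ is a $G$-submodule. First I would check $W$ is admissible smooth as a $G$-representation: $G=\prod_{v\in S}\GL_n(E_v)$ satisfies Condition~\ref{cond:pro-ell} (a product of deep enough principal congruence subgroups has pro-order a product of powers of the $\ell_v$, invertible in $A$ as each $\ell_v\neq p$), so applying the decomposition~(\ref{eqn:decomp}) to each $\LLcheck(\rho_v)$ and tensoring over $A$ exhibits each graded piece of $W$ as a finite direct sum of tensor products of finitely generated $A$-modules, whence $W$ is admissible by Lemma~\ref{lem:adm} and $V$ is admissible by Lemma~\ref{lem:quotient}; together with $A$-torsion-freeness by construction this gives condition~(1). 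For condition~(2) I localize at a minimal prime $\mathfrak a$: since $A$ is reduced, $A_{\mathfrak a}=\kappa(\mathfrak a)$ is a field, localization is exact, $(W_{\mathrm{tor}})_{\mathfrak a}=0$, and hence $\kappa(\mathfrak a)\otimes_A V\iso\kappa(\mathfrak a)\otimes_A W\iso\bigotimes_{v\in S}\bigl(\kappa(\mathfrak a)\otimes_A\LLcheck(\rho_v)\bigr)\iso\bigotimes_{v\in S}\LLcheck(\rho_{v,\mathfrak a})$, the last step being condition~(2) for each $\LLcheck(\rho_v)$.

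For condition~(3) I pass to smooth duals. The surjection $W/\mathfrak m W\twoheadrightarrow V/\mathfrak m V$ dualizes to an embedding $(V/\mathfrak m V)^{\vee}\hookrightarrow(W/\mathfrak m W)^{\vee}$; and since $W/\mathfrak m W\iso\bigotimes_{v\in S}\bigl(\LLcheck(\rho_v)/\mathfrak m\bigr)$ and smooth duality commutes with finite external tensor products of admissible representations, $(W/\mathfrak m W)^{\vee}$ is the external tensor product of the essentially AIG representations $\bigl(\LLcheck(\rho_v)/\mathfrak m\bigr)^{\vee}$ supplied by condition~(3) for the individual $\LLcheck(\rho_v)$. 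One checks that an external tensor product of essentially AIG admissible representations is again essentially AIG: the socle is the external tensor product of the (absolutely irreducible, generic) socles, hence absolutely irreducible, and generic because the top derivative of an external tensor product is the tensor product of top derivatives (Proposition~\ref{prop:leibnitz}, Theorem~\ref{thm:kirillov tensor}); the quotient by the socle then has vanishing top derivative, and finite length is automatic. Since $V\neq 0$ (its localization at $\mathfrak a$ is nonzero) and hence $V/\mathfrak m V\neq 0$ by Lemma~\ref{lem:nakayama}, the nonzero subrepresentation $(V/\mathfrak m V)^{\vee}$ of this essentially AIG representation is again essentially AIG by Lemma~\ref{lem:AIG basic}(2), which is exactly condition~(3). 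Thus $V$ satisfies (1)--(3), so $V=\LLcheck(\{\rho_v\}_{v\in S})$ by the uniqueness in Theorem~\ref{thm:family}; this simultaneously proves the displayed identification of $\LLcheck(\{\rho_v\}_{v\in S})$ with the maximal $A$-torsion-free quotient of $\bigotimes_{v\in S}\LLcheck(\rho_v)$.

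For the implication ``$\Rightarrow$'', assume $V:=\LLcheck(\{\rho_v\}_{v\in S})$ exists, fix $v_0\in S$, put $S':=S\setminus\{v_0\}$, and consider the iterated partial top derivative $M:=V^{(n),S'}$, which (only the $S'$-indexed functors being applied) is a smooth $\GL_n(E_{v_0})$-representation over $A$; I claim $M=\LLcheck(\rho_{v_0})$. Since the derivative functors are exact and commute with tensor products (Proposition~\ref{prop:BZ}), $M$ injects into $\CK(A)\otimes_A M$, so $M$ is $A$-torsion free; and, by the multivariable analogue of Lemma~\ref{lem:Kirillov sub}, $M$ embeds $\GL_n(E_{v_0})$-equivariantly into $V$, hence into a direct sum of finitely generated $A$-modules, so applying Lemma~\ref{lem:nakayama sum} to the graded pieces of the decomposition~(\ref{eqn:decomp}) of $M$ — whose reductions mod $\mathfrak m$ are the graded pieces of the admissible $k$-representation $(V/\mathfrak m V)^{(n),S'}$, hence finite-dimensional — shows $M$ is admissible, giving condition~(1). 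For condition~(2), $M_{\mathfrak a}=(V_{\mathfrak a})^{(n),S'}=\bigl(\bigotimes_{v\in S}\LLcheck(\rho_{v,\mathfrak a})\bigr)^{(n),S'}\iso\bigl(\bigotimes_{v\in S'}\LLcheck(\rho_{v,\mathfrak a})^{(n)}\bigr)\otimes_{\kappa(\mathfrak a)}\LLcheck(\rho_{v_0,\mathfrak a})\iso\LLcheck(\rho_{v_0,\mathfrak a})$, since each $\LL(\rho_{v,\mathfrak a})$ is essentially AIG (Corollary~\ref{cor:essential}) so both it and its smooth dual have one-dimensional top derivative. The delicate point — and the step I expect to be the main obstacle — is condition~(3): one must show that $(M/\mathfrak m M)^{\vee}=\bigl((V/\mathfrak m V)^{(n),S'}\bigr)^{\vee}$ is essentially AIG, given that $(V/\mathfrak m V)^{\vee}$ is, i.e.\ that taking a partial top derivative preserves the property of having an absolutely irreducible generic cosocle whose kernel contains no generic subrepresentation. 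I would establish this by a direct analysis in the Bernstein--Zelevinski formalism, tracking how $\Psi^{\pm}$, $\Phi^{\pm}$ and their mutual duality interact with socles, cosocles and top derivatives, using that $V/\mathfrak m V$ has a unique generic Jordan--H\"older constituent, occurring with multiplicity one. Granting this, $M$ satisfies (1)--(3) and hence $M=\LLcheck(\rho_{v_0})$ by Theorem~\ref{thm:family}, completing the proof.
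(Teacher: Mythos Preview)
Your overall strategy coincides with the paper's: for ``$\Leftarrow$'' take the maximal $A$-torsion-free quotient of $\bigotimes_v \LLcheck(\rho_v)$ and verify conditions~(1)--(3); for ``$\Rightarrow$'' set $M := V^{(n),S\setminus\{v_0\}}$ and verify (1)--(3) for $\rho_{v_0}$. Your treatment of ``$\Leftarrow$'' is in fact more detailed than the paper's (which simply declares that direction ``clear''), and the argument you give there is sound.

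The genuine gap is precisely where you flag it: condition~(3) in the ``$\Rightarrow$'' direction, i.e.\ showing that the smooth dual of $(V/\mathfrak m V)^{(n),S'}$ is essentially AIG given only that the smooth dual of $V/\mathfrak m V$ is. You propose to do this by ``direct analysis in the Bernstein--Zelevinski formalism, tracking how $\Psi^\pm,\Phi^\pm$ and their duality interact with socles and cosocles.'' The paper does \emph{not} proceed this way, and it is unclear that such a direct derivative/duality computation would go through. Instead the paper proves a separate structural result, Proposition~\ref{prop:factor}: if $\Vbar$ is an admissible $K[G]$-module satisfying the conditions of Lemma~\ref{lem:Vbar Kirillov}, then there exist $K[G_v]$-modules $\Vbar_v$, each satisfying those conditions individually, together with a $G$-equivariant surjection $\bigotimes_v \Vbar_v \twoheadrightarrow \Vbar$. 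The proof is by induction on $|S|$: one chooses a maximal quotient $\Wbar$ of $\Vbar$ admitting such a factorization, and then uses the K\"unneth decomposition of $\Ext^1_G(\Vbar_{v_1}\otimes\Vbar',\thetabar_{v_1}\otimes\thetabar')$ to show that any irreducible constituent of $\ker(\Vbar\to\Wbar)$ would let one enlarge $\Wbar$, contradicting maximality. Once this factorization is in hand, Corollary~\ref{cor:factor} applies the exact functor $(\,\cdot\,)^{(n),S\setminus S'}$ to the surjection $\bigotimes_v \Vbar_v \twoheadrightarrow \Vbar$ to obtain $\bigotimes_{v\in S'}\Vbar_v \twoheadrightarrow \Vbar^{(n),S\setminus S'}$, from which condition~(3) is immediate. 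So the missing idea is not a Kirillov-model computation but a tensor-factorization lemma proved via a K\"unneth/$\Ext^1$ argument.
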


The following two theorems, whose proofs we again postpone, describe the sense
in which the representation $\LLcheck(\{\rho_v\})$ interpolates the Breuil-Schneider 
modified local Langlands correspondence over $\Spec A[\dfrac{1}{p}]$.

\begin{theorem}
\label{thm:interpolation1}
Let $\mathfrak p$ be a prime of $A[\dfrac{1}{p}]$, and suppose that
$\mathfrak p$ lies on exactly one irreducible component of
$\Spec A[\dfrac{1}{p}].$  
Then, assuming that $\LLcheck(\{\rho_v\}_{v \in S})$ exists, there is a
a $\kappa(\mathfrak p)$-linear $G$-equivariant surjection
$$\bigotimes_{v \in S}
\LLcheck(\rho_{v, {\mathfrak p}}) \rightarrow
\kappa(\mathfrak p) \otimes_A \LLcheck(\{\rho_v\}_{v \in S}).$$
Moreover, if there exists a minimal prime ${\mathfrak a}$ of $A$
such that $\rho_{\mathfrak a}$ is a minimal lift of~$\rho_{\mathfrak p}$,
then this surjection is an isomorphism.
\end{theorem}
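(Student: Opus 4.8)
The plan is to deduce Theorem~\ref{thm:interpolation1} from the earlier specialization result (Theorem~\ref{thm:specialization}) together with Proposition~\ref{prop:individual}, which reduces everything to the case $\#S = 1$: since $\LLcheck(\{\rho_v\}_{v\in S})$ is the maximal torsion-free quotient of $\bigotimes_{v\in S}\LLcheck(\rho_v)$ and since $\otimes$ commutes with base change, it suffices to treat each place separately and then tensor. So we may assume $S = \{v\}$, write $\rho = \rho_v$, $A' = A[\frac1p]$, and let $\mathfrak p$ be a prime of $A'$ lying on a unique irreducible component, say the one cut out by the minimal prime $\mathfrak q \subset \mathfrak p$ of $A'$ (equivalently a minimal prime $\mathfrak a$ of $A$). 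The idea is that $\LLcheck(\rho)$ is $A$-torsion free, so specializing at $\mathfrak q$ gives $\kappa(\mathfrak q)\otimes_A\LLcheck(\rho) = \LLcheck(\rho_{\mathfrak a})$ by condition~(2); we must then descend along the further specialization $A/\mathfrak q \rightsquigarrow \kappa(\mathfrak p)$.

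First I would reduce to the key local assertion: if $B$ is a complete local domain of characteristic $0$, residue characteristic $p$, with fraction field $\mathcal K$, if $W$ is an $A[\GL_n(E_v)]$-module that is $B$-torsion free with $\mathcal K\otimes_B W \cong \LLcheck(\rho \otimes_B \mathcal K)$, and if $\mathfrak p$ is a prime of $B[\frac1p]$, then there is a $\GL_n(E_v)$-equivariant surjection $\LLcheck(\rho_{\mathfrak p}) \to \kappa(\mathfrak p)\otimes_B W$, which is an isomorphism when $\rho\otimes_B\mathcal K$ is a minimal lift of $\rho_{\mathfrak p}$. Here $W$ will be $(A/\mathfrak q)\otimes_A\LLcheck(\rho)$ (which is $A/\mathfrak q$-torsion free because it is a quotient of the torsion-free $\LLcheck(\rho)$ by the prime $\mathfrak q$ — more precisely one restricts to the component), and $\mathcal K\otimes_B W = \LLcheck(\rho_{\mathfrak a})$. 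To prove this I would dualize: work with $\LL(\rho_{\mathfrak p})$ and the contragredient of $\kappa(\mathfrak p)\otimes_B W$, so that the target becomes a quotient of $\LLcheck$ and the source an essentially AIG representation, and the desired map becomes an embedding of the dual. Choosing a discrete valuation ring $\O$ inside $\mathcal K$ dominating $\mathfrak p$ (possible since $B$ is a domain with fraction field $\mathcal K$ — localize at $\mathfrak p$, complete, normalize, pick a DVR quotient-valuation), one applies Theorem~\ref{thm:specialization} to $\rho\otimes_B\O : G_{E_v}\to\GL_n(\O)$ to get the preferred lattice $\LL(\rho)^\circ$ with $\LL(\rho)^\circ/\unif \hookrightarrow \LL(\rho_{\mathfrak p})$ (using that the residue field of $\O$ is a finite extension of $\kappa(\mathfrak p)$, then descending by Proposition~\ref{prop:embedding descent} and a base-change argument). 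On the other hand one must identify $\LL(\rho)^\circ/\unif$ with (the essentially AIG reduction of) $\kappa(\mathfrak p)\otimes_B W$ up to the AIG envelope; this is where Proposition~\ref{prop:lattice} and the uniqueness-up-to-homothety of the essentially AIG lattice enters, together with the observation that $\O\otimes_B W$ is an invariant lattice in $\LL(\rho\otimes_B\mathcal K)$ whose generic part matches.

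The main obstacle I anticipate is the bookkeeping connecting the ``abstract'' torsion-free module $W = (A/\mathfrak q)\otimes_A\LLcheck(\rho)$ to the ``concrete'' lattice $\LL(\rho)^\circ$ coming from Theorem~\ref{thm:specialization}: a priori $\O\otimes_B W$ is merely \emph{some} $\GL_n(E_v)$-invariant $\O$-lattice in $\LL(\rho\otimes_B\mathcal K)$, not necessarily the preferred essentially AIG one, so $\O\otimes_B W/\unif$ need not be essentially AIG and need not equal $\LL(\rho)^\circ/\unif$. What is true is that both sit inside the common rational representation $\LL(\rho\otimes_B\mathcal K)$ and, by Lemma~\ref{lem:commens}, are commensurable; scaling so that $\LL(\rho)^\circ \subset \O\otimes_B W$ with nonzero reduction, one gets a nonzero $\GL_n(E_v)$-map $\LL(\rho)^\circ/\unif \to \O\otimes_B W/\unif$, and since the source has essentially AIG reduction with the correct generic socle (Theorem~\ref{thm:generic reduction}, Proposition~\ref{prop:lattice}) and any nonzero map out of an essentially AIG representation is injective (Lemma~\ref{lem:AIG basic}), this yields the embedding of duals, hence the surjection $\LLcheck(\rho_{\mathfrak p}) \to \kappa(\mathfrak p)\otimes_B W = \kappa(\mathfrak p)\otimes_A\LLcheck(\rho)$ after reducing mod the appropriate power of $\unif$ and descending from the residue field of $\O$ to $\kappa(\mathfrak p)$. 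For the final sentence, if $\rho_{\mathfrak a}$ is a minimal lift of $\rho_{\mathfrak p}$, then by definition the ranks of the powers of the monodromy operator agree, so the ``if and only if'' clause of Theorem~\ref{thm:specialization} makes $\LL(\rho)^\circ/\unif \xrightarrow{\sim}\LL(\rho_{\mathfrak p})$ an isomorphism; combined with the fact that $\kappa(\mathfrak p)\otimes_A\LLcheck(\rho)$ then has the same Jordan--Hölder content as $\LLcheck(\rho_{\mathfrak p})$ (both being reductions of the same rational module with matching monodromy), the surjection above becomes an isomorphism. Tensoring back over the places of $S$ via Proposition~\ref{prop:individual} gives the statement for general $S$.
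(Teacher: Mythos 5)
Your overall architecture (reduce to $\#S=1$ via Proposition~\ref{prop:individual}, then to $A$ a domain using the unique component through $\mathfrak p$, then specialize) matches the paper's, but at the crux you take a genuinely different route from the paper, and that route has a gap. The paper does not compare $\O\otimes_B W$ with $\LL(\rho)^{\circ}$ by commensurability; instead it builds an explicit integral model $M$ of $\LLcheck(\rho\otimes_A\CK')$ over (a localization of) the normalization of $A_{\mathfrak p}$ --- the smooth dual of $\Ind_Q\bigotimes_i(\St_{\pi_i,n_i}\otimes_{\CK_0}A'_{\mathfrak p'})\otimes\chi_i$, using Lemma~\ref{lemma:decompose} to make the twists integral --- and identifies $M\cong\LLcheck(\rho_v)\otimes_A A'_{\mathfrak p'}$ by the uniqueness criterion of Lemma~\ref{lem:dense} (both modules are generated by $\J(\cdot)$, have top derivative free of rank one, are torsion free, and agree generically). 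The reduction of $M$ at $\mathfrak p'$ is then literally $\pi(\overline{\CS})^{\vee}$, and Theorem~\ref{thm:specialization} supplies the embedding $\pi(\overline{\CS})\hookrightarrow\LL(\rhobar)$ whose dual is the desired surjection.

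The gap in your version is the injectivity of the map $\LL(\rho)^{\circ}/\unif\LL(\rho)^{\circ}\rightarrow\bigl((\O\otimes_B W)/\unif\bigr)^{\vee}$. Lemma~\ref{lem:AIG basic}(4) makes a nonzero map an embedding only when \emph{both} source and target are essentially AIG: if only the source is, the map could kill $\soc$ and factor through the non-generic quotient, landing in a non-generic subrepresentation of the target. Whether the reduction of $\O\otimes_B W$ has essentially AIG dual is exactly the point at issue, and commensurability of lattices cannot decide it --- commensurable lattices can have non-isomorphic reductions, and for a general $A$-torsion-free lattice with the correct generic fibre the conclusion of the theorem is simply false (this is why condition~(3) of Theorem~\ref{thm:family} is imposed at all). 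To close the gap you must propagate condition~(3) from the closed point to $\mathfrak p$: by Proposition~\ref{prop:second properties} the hypothesis at $\mathfrak m$ gives that $W^{(n)}$ is free of rank one and $\J(W)$ generates $W$ over $A[G]$; since derivatives and $\J$ commute with base change, $W\otimes_A\kappa(\mathfrak p)$ then satisfies the equivalent conditions of Lemma~\ref{lem:Vbar Kirillov}, so its smooth dual is essentially AIG. Once you have that, you do not even need the map argument: the dual lattice $(\O\otimes_B W)^{\vee}$ and $\LL(\rho)^{\circ}$ are both lattices in $\LL(\rho\otimes_B\CK)$ with essentially AIG reduction, hence homothetic by the uniqueness clause of Proposition~\ref{prop:lattice}, and Theorem~\ref{thm:specialization} finishes the argument as you describe. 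In short, your lattice-theoretic shortcut is viable, but only after inserting the Kirillov-model input ($V^{(n)}$ free of rank one, $\J(V)$ generates) that your write-up never invokes and that the paper's proof routes through Lemma~\ref{lem:dense}.
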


It seems likely that the above result holds even when
$\mathfrak p$ is contained in multiple irreducible components
of $\Spec A[\dfrac{1}{p}]$.  Nonetheless we are at present only
able to prove a somewhat weaker statement:

\begin{theorem}
\label{thm:interpolation2}
Assume that $\LLcheck(\{\rho_v\}_{v \in S})$ exists, 
let ${\mathfrak p}$ be a prime of $\Spec A[\dfrac{1}{p}]$,
let ${\mathfrak a}_1, \dots, {\mathfrak a}_r$ be the minimal primes
of $A$ containing ${\mathfrak p}$, for each $i = 1,\ldots, r$ let $V_i$
be the maximal $A$-torsion free quotient of $\LLcheck(\{\rho_v\}_{v \in S}) \otimes_A A/{\mathfrak a}_i$,
and denote by $W$ the image of the diagonal map
$$\kappa(\mathfrak p) \otimes_A \LLcheck(\{\rho_v\}_{v \in S}) \rightarrow
\prod_i \kappa(\mathfrak p) \otimes_{A/{\mathfrak a}_i} V_i.$$
Then there is a
a $\kappa(\mathfrak p)$-linear $G$-equivariant surjection
$$\bigotimes_{v \in S}
\LLcheck(\rho_{v,{\mathfrak p}}) \rightarrow
W.$$
Moreover, if there exists a minimal prime ${\mathfrak a}$ of $A$
such that $\rho_{\mathfrak a}$ is a minimal lift of~$\rho_{\mathfrak p}$,
then this surjection is an isomorphism.
\end{theorem}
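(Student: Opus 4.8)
The plan is to reduce the statement to Theorem~\ref{thm:interpolation1}, applied over the complete local domains $A/\mathfrak a_i$, and then to glue the resulting surjections using the rigidity of essentially AIG representations. Write $V := \LLcheck(\{\rho_v\}_{v\in S})$, let $q_i\colon \kappa(\mathfrak p)\otimes_A V \to \kappa(\mathfrak p)\otimes_{A/\mathfrak a_i}V_i$ be obtained from the quotient map $V\otimes_A A/\mathfrak a_i \to V_i$ by extending scalars, and set $D := \kappa(\mathfrak p)\otimes_A V$ and $T := \bigotimes_{v\in S}\LLcheck(\rho_{v,\mathfrak p})$, so that by definition $W$ is the image of $(q_i)_i\colon D\to\prod_i\kappa(\mathfrak p)\otimes_{A/\mathfrak a_i}V_i$. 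Note that $T^\vee \cong \bigotimes_{v\in S}\LL(\rho_{v,\mathfrak p})$ is essentially AIG (the product-group form of Corollary~\ref{cor:essential} underlying condition~(3) of Theorem~\ref{thm:family}), so that by Lemmas~\ref{lem:AIG basic} and~\ref{lem:AIG homs} every nonzero $G$-map between subquotients of $T$ having essentially AIG dual is an embedding, and $\Hom_G$ between any two such is at most one-dimensional.

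First I would identify $V_i$ with $\LLcheck(\{\rho_{v,\mathfrak a_i}\}_{v\in S})$ over the ring $A/\mathfrak a_i$ (which satisfies Condition~\ref{cond:A} and is a domain). Condition~(1) of Theorem~\ref{thm:family} is immediate; condition~(2) holds because, writing $\bigotimes_v\LLcheck(\rho_v) \to V$ for the quotient of Proposition~\ref{prop:individual} with $A$-torsion kernel $\tau$, one has $\tau\otimes_A\kappa(\mathfrak a_j)=0$ for every minimal prime $\mathfrak a_j$, so $V_i$ is also the maximal torsion-free quotient of $\bigl(\bigotimes_v\LLcheck(\rho_v)\bigr)\otimes_A A/\mathfrak a_i$ and therefore has the prescribed generic fibre; and condition~(3) holds because $\cosoc(V_i/\mathfrak m V_i)=\cosoc(V/\mathfrak m V)$ (a nonzero quotient of an absolutely irreducible generic module, nonzero by Nakayama) while $(V_i/\mathfrak m V_i)^{(n)}$ is one-dimensional, Theorem~\ref{thm:fgtensor} together with Nakayama forcing $V_i^{(n)}$ to be free of rank one over $A/\mathfrak a_i$, exactly as in the proof of Theorem~\ref{thm:family}. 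With this in hand, Theorem~\ref{thm:interpolation1} applied over $A/\mathfrak a_i$ — for which $\mathfrak p$ lies on the unique irreducible component — produces a $G$-equivariant surjection $\psi_i\colon T\twoheadrightarrow \kappa(\mathfrak p)\otimes_{A/\mathfrak a_i}V_i$, which is an isomorphism precisely when $\rho_{\mathfrak a_i}$ is a minimal lift of $\rho_{\mathfrak p}$.

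Next I would manufacture a single $G$-equivariant comparison map $\mu\colon T\to D$ with $\psi_i = q_i\circ\mu$ for all $i$. Since $\mathfrak p$ has residue characteristic zero, each $\LL(\rho_{v,\mathfrak p})$ embeds in its Kirillov model (Proposition~\ref{prop:AIG}), so $T$ is generated over $G$ by its Schwartz functions; dually $D$, being a quotient of $\kappa(\mathfrak p)\otimes_A\bigotimes_v\LLcheck(\rho_v)$, is generated by the image of a Whittaker vector. The top derivatives $T^{(n)}$, $D^{(n)}=\kappa(\mathfrak p)\otimes_A V^{(n)}$ and $(\kappa(\mathfrak p)\otimes_{A/\mathfrak a_i}V_i)^{(n)}$ are all one-dimensional over $\kappa(\mathfrak p)$ (using that $V^{(n)}$ and $V_i^{(n)}$ are free of rank one, by Theorem~\ref{thm:fgtensor}), and a $G$-map out of a module generated by its Schwartz functions is determined by the induced map on top derivatives; hence $\mu$ exists (its effect on $T^{(n)}$ may be prescribed arbitrarily), and since $q_i^{(n)}$ is an isomorphism the scalar ambiguity in $\psi_i$ can be removed so that $\psi_i^{(n)}=q_i^{(n)}\circ\mu^{(n)}$, whence $\psi_i=q_i\circ\mu$. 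Dualizing, $\mu^\vee\colon D^\vee\hookrightarrow T^\vee$ is an embedding and $\psi_i^\vee=\mu^\vee\circ q_i^\vee$; therefore $W^\vee=\sum_i q_i^\vee\bigl((\kappa(\mathfrak p)\otimes_{A/\mathfrak a_i}V_i)^\vee\bigr)\subseteq D^\vee$ is carried by $\mu^\vee$ isomorphically onto the $G$-subrepresentation $\sum_i\psi_i^\vee\bigl((\kappa(\mathfrak p)\otimes_{A/\mathfrak a_i}V_i)^\vee\bigr)$ of $T^\vee$, and the composite $T\xrightarrow{\mu}D\twoheadrightarrow W$, being dual to the injection $W^\vee\hookrightarrow T^\vee$, is the asserted surjection. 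Finally, if $\rho_{\mathfrak a}$ is a minimal lift of $\rho_{\mathfrak p}$ for some $\mathfrak a=\mathfrak a_j$, then $\psi_j$ is an isomorphism, so $\psi_j^\vee=\mu^\vee\circ q_j^\vee$ is an isomorphism, forcing $\mu^\vee$ and $q_j^\vee$ to be isomorphisms; then $\mu$ and $q_j$ are isomorphisms, $(q_i)_i\colon D\to\prod_i\kappa(\mathfrak p)\otimes_{A/\mathfrak a_i}V_i$ is injective, and $D\iso W$, so that $T\iso W$.

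The main obstacle is the construction of $\mu$ in the third step: showing that the surjections supplied by Theorem~\ref{thm:interpolation1} over the various quotients $A/\mathfrak a_i$, which are a priori only well-defined up to independent scalars, can be realized simultaneously as $q_i\circ\mu$ for a single $G$-equivariant map $\mu\colon T\to D$. This rests on the co-Whittaker/Kirillov formalism of Section~\ref{subsec:kirillov} (one-dimensionality of top derivatives, generation by Schwartz functions, and the fact that $G$-maps out of such modules are pinned down by their effect on top derivatives). A secondary, more routine point is the identification $V_i\cong\LLcheck(\{\rho_{v,\mathfrak a_i}\})$, and in particular the verification that condition~(3) of Theorem~\ref{thm:family} is inherited by the torsion-free quotient; everything else follows formally from Proposition~\ref{prop:individual}, Theorems~\ref{thm:interpolation1}, \ref{thm:fgtensor} and~\ref{thm:specialization}, and the rigidity of essentially AIG representations.
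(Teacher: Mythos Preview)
Your first two steps are correct and match the paper exactly: the identification of $V_i$ with $\LLcheck(\{\rho_{v}\otimes_A A/\mathfrak a_i\})$ is the content of Lemma~\ref{lem:component}, and the surjections $\psi_i\colon T\twoheadrightarrow \kappa(\mathfrak p)\otimes_{A/\mathfrak a_i}V_i$ come from Theorem~\ref{thm:interpolation1} over each domain $A/\mathfrak a_i$. The minimal-lift clause also follows exactly as you say once the main surjection is in place.

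The genuine gap is in your third step, and you have correctly flagged it yourself as the main obstacle: the map $\mu\colon T\to D$ need not exist. Your justification, ``a $G$-map out of a module generated by its Schwartz functions is determined by the induced map on top derivatives; hence $\mu$ exists (its effect on $T^{(n)}$ may be prescribed arbitrarily),'' conflates uniqueness with existence. The adjunction of Proposition~\ref{prop:BZ} says that a linear map $T^{(n)}\to D^{(n)}$ lifts to a $P_n$-equivariant map $\J(T)\to D$, and that a $G$-map $T\to D$ is \emph{determined} by its effect on top derivatives; it does not say that an arbitrary $P_n$-map $\J(T)\to D$ extends to a $G$-equivariant map $T\to D$. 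Dually, you are asking for an embedding $D^\vee\hookrightarrow T^\vee$ of essentially AIG representations, but both sit as (a priori incomparable) submodules of the common envelope $\env(\soc(T^\vee))$, and there is no reason $D^\vee$ should be contained in $T^\vee$. Indeed the theorem only claims a surjection $T\twoheadrightarrow W$, not $T\to D$.

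The paper's proof avoids $\mu$ entirely, and the fix to your argument is small. Take the diagonal map $(\psi_i)_i\colon T\to\prod_i \kappa(\mathfrak p)\otimes_{A/\mathfrak a_i}V_i$ and let $W'$ be its image; now $W$ and $W'$ live in the \emph{same} ambient product. Both $W^{(n)}$ and $(W')^{(n)}$ are one-dimensional $\kappa(\mathfrak p)$-lines projecting isomorphically onto each factor of $\prod_i\bigl(\kappa(\mathfrak p)\otimes V_i\bigr)^{(n)}$, so there is a diagonal automorphism $c=(c_{\mathfrak a_i})$ of the product with $(cW')^{(n)}=W^{(n)}$. Since both $W$ and $cW'$ have essentially AIG duals (Lemma~\ref{lem:Vbar Kirillov}), each is generated over $G$ by its Schwartz functions, hence each equals the $G$-span of $\J(W)=\J(cW')$ inside the product; thus $cW'=W$, and $T\twoheadrightarrow W'\cong W$ is the required surjection. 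This is exactly your rescaling idea, but applied inside the product rather than to a hypothetical $\mu$.
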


\begin{conj}
\label{conj:interpolation}
Under the hypotheses of Theorem~{\em \ref{thm:interpolation2}}, the map
$$\kappa({\mathfrak p}) \otimes_A \LLcheck(\{\rho_v\}_{v \in S}) \rightarrow W$$
is an isomorphism.  In particular the conclusion of Theorem~{\em \ref{thm:interpolation1}}
holds for all ${\mathfrak p}$.
\end{conj}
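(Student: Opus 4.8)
The ``in particular'' clause follows immediately from Theorem~\ref{thm:interpolation2}: once we know $\kappa(\mathfrak p)\otimes_A\LLcheck(\{\rho_v\}_{v\in S})\to W$ is an isomorphism, the surjection provided by Theorem~\ref{thm:interpolation2} becomes a surjection onto $\kappa(\mathfrak p)\otimes_A\LLcheck(\{\rho_v\}_{v\in S})$, and the minimal-lift criterion for it to be an isomorphism is unchanged. So write $V:=\LLcheck(\{\rho_v\}_{v\in S})$ and, as in the statement, $V_i$ for the maximal $A$-torsion free quotient of $V\otimes_A A/\mathfrak a_i$; since $\kappa(\mathfrak p)\otimes_A V\to W$ is surjective by the very definition of $W$, the entire content is that the diagonal map $\kappa(\mathfrak p)\otimes_A V\to\prod_i\kappa(\mathfrak p)\otimes_{A/\mathfrak a_i}V_i$ is \emph{injective}. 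As a preliminary reduction I would record that each $V_i$ is itself $\LLcheck(\{\rho_v\bmod\mathfrak a_i\}_{v\in S})$ over the domain $A/\mathfrak a_i$: torsion freeness is built in, condition~(2) of Theorem~\ref{thm:family} for $V$ gives condition~(2) for $V_i$ at the unique minimal prime $(0)$ of $A/\mathfrak a_i$ (whose residue field is $\kappa(\mathfrak a_i)$), and $V_i/\m V_i$ is a nonzero quotient of $V/\m V$ (nonzero because $V_i\neq 0$, by condition~(2), and then by Lemma~\ref{lem:nakayama}), hence — since quotients of a representation whose smooth dual is essentially~AIG again have this property, by Lemma~\ref{lem:AIG basic} — satisfies condition~(3). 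Thus Theorems~\ref{thm:interpolation1} and~\ref{thm:interpolation2} are available for $A/\mathfrak a_i$, which makes an inductive attack possible.

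Next I would localize at $\mathfrak p$. Since $V$ is $A$-torsion free it embeds into $\prod_j V_j$, the product over \emph{all} minimal primes $\mathfrak a_j$ of $A$, and localizing at $\mathfrak p$ kills the factors with $\mathfrak a_j\not\subseteq\mathfrak p$, so $V_{\mathfrak p}$ sits in a short exact sequence of $A_{\mathfrak p}$-modules
$$0\longrightarrow V_{\mathfrak p}\longrightarrow \prod_{i=1}^r (V_i)_{\mathfrak p}\longrightarrow C\longrightarrow 0.$$
Applying $-\otimes_{A_{\mathfrak p}}\kappa(\mathfrak p)$ and using right exactness, the kernel of $\kappa(\mathfrak p)\otimes_A V\to\prod_i\kappa(\mathfrak p)\otimes_{A/\mathfrak a_i}V_i$ is a quotient of $\mathrm{Tor}_1^{A_{\mathfrak p}}(C,\kappa(\mathfrak p))$. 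So the conjecture reduces to the claim that the image of this $\mathrm{Tor}_1$ in $\kappa(\mathfrak p)\otimes_A V$ vanishes — a statement purely about the cokernel module $C$, which measures the failure of $V$ to ``split along the components of $\Spec A$ passing through $\mathfrak p$''.

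To control $C$ I would aim to prove the expected structural description of $V_{\mathfrak p}$, namely that it is the full fibre product of the $(V_i)_{\mathfrak p}$ over the crossing data, i.e. $V_\mathfrak p=\{(v_i)_i : v_i\ \text{and}\ v_j\ \text{have the same image in}\ (V_i)_{\mathfrak p}\otimes_{A/\mathfrak a_i}A/(\mathfrak a_i+\mathfrak a_j)\ \text{for all}\ i,j\}$; equivalently, that $V$ satisfies an $S_2$/reflexivity condition relative to the subscheme structure of $\Spec A$. Granting this, $C$ embeds into $\bigoplus_{i<j}(V_i)_{\mathfrak p}\otimes_{A/\mathfrak a_i}(A/(\mathfrak a_i+\mathfrak a_j))_{\mathfrak p}$, a representation over the ``smaller crossing rings'' $(A/(\mathfrak a_i+\mathfrak a_j))_{\mathfrak p}$, all of equal characteristic zero since $\mathfrak p\in\Spec A[\tfrac1p]$. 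By the preliminary reduction these pieces are again $\LLcheck$'s of the relevant specializations of $\rho$, so one can feed them into the specialization package of Section~\ref{sec:LL zero} (Theorems~\ref{thm:specialization} and~\ref{thm:orbits}), identify the fibres with Breuil--Schneider representations, bound the relevant Tor, and close the induction on the number of components of $\Spec A$ meeting $\mathfrak p$ (the base case, one component, being exactly Theorem~\ref{thm:interpolation1}).

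The hard part — and the reason this is only a conjecture — is precisely establishing the structural description of $V_{\mathfrak p}$, equivalently pinning down the kernel of the canonical surjection $\bigotimes_{v\in S}\LLcheck(\rho_{v,\mathfrak p})\to\kappa(\mathfrak p)\otimes_A V$ at a crossing point. The examples of Section~\ref{sec:LL families} show that this surjection is genuinely not an isomorphism in general (the monodromy operator ``jumps'' as one passes to a more special point), so one really must compute which quotient occurs, and this amounts to a relative/family version of the orbit-closure (degeneration) analysis behind Theorem~\ref{thm:specialization}, now over a base that is neither a discrete valuation ring nor even a domain. The representation-theoretic inputs available — essential AIG-ness, finiteness of $\soc_c$, and the freeness of the top derivatives of the relevant $\GL_n$-representations — do force $\kappa(\mathfrak p)\otimes_A V$ and $W$ to share the same absolutely irreducible generic cosocle and to be squeezed between $\bigotimes_{v}\LLcheck(\rho_{v,\mathfrak p})$ and $\prod_i(V_i)_{\mathfrak p}\otimes\kappa(\mathfrak p)$, but these constraints are not by themselves enough to force the two to coincide. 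Closing the gap seems to require a genuinely new handle on $\LLcheck(\rho)$ as an $A$-module — for instance a Kirillov-model description exhibiting it as the ``compatible gluing'' of the $V_i$ — after which the $S_2$-type description, and hence the conjecture, would follow.
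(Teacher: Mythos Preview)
This statement is a \emph{conjecture}, and the paper does not prove it in general; there is no full proof to compare against. You correctly recognize this and offer a strategic outline rather than a claimed proof, explicitly identifying the obstruction (pinning down the $A$-module structure of $V_{\mathfrak p}$ across a crossing of components).

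What the paper \emph{does} prove is the special case $n\le 3$ (Proposition~\ref{prop:small n interpolation}), by an argument entirely different from your framework and much shorter. The observation there is that, since $V$ embeds in $\prod_i V_i$, every Jordan--H\"older constituent of $\kappa(\mathfrak p)\otimes_A V$ already occurs in some $\kappa(\mathfrak p)\otimes_{A/\mathfrak a_i} V_i$ and hence in $W$; so any constituent of the kernel of $\kappa(\mathfrak p)\otimes_A V\to W$ must appear in $\kappa(\mathfrak p)\otimes_A V$ with multiplicity at least two. But the smooth dual of $\kappa(\mathfrak p)\otimes_A V$ is essentially~AIG, and Corollary~\ref{cor:mult} (read off from Zelevinski's tables) says that for $n\le 3$ essentially~AIG representations are multiplicity-free. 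Hence the kernel vanishes. This argument is short precisely because it exploits a low-rank accident --- multiplicity one in the relevant parabolic inductions --- rather than any structural property of $V$ as an $A$-module; it says nothing for $n\ge 4$, where multiplicities genuinely occur.

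Your $\mathrm{Tor}_1$/fibre-product approach is in principle more robust, and your preliminary identification of $V_i$ with $\LLcheck(\{\rho_v\bmod\mathfrak a_i\})$ is exactly Lemma~\ref{lem:component}. But as you say yourself, the $S_2$-type description of $V_{\mathfrak p}$ that would make the induction close is not available with the tools of the paper; this is indeed why the statement remains open.
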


Although this conjecture seems difficult to establish in general, we
have the following result for small $n$, which we prove in the next section.

\begin{prop}
\label{prop:small n interpolation}
Conjecture~{\em \ref{conj:interpolation}} holds when $n=2$ or $n=3$.
\end{prop}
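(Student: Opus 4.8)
The plan is to reduce Conjecture~\ref{conj:interpolation} (for $n = 2$ or $n = 3$) to a multiplicity-one statement about essentially AIG representations, which we have already established in characteristic zero. First I would fix a prime $\mathfrak{p}$ of $\Spec A[\tfrac{1}{p}]$ and the minimal primes $\mathfrak{a}_1, \dots, \mathfrak{a}_r$ containing it, set $V := \LLcheck(\{\rho_v\}_{v \in S})$, and recall from Theorem~\ref{thm:interpolation2} that there is already a surjection $\bigotimes_{v \in S} \LLcheck(\rho_{v,\mathfrak{p}}) \twoheadrightarrow W$, where $W \subseteq \prod_i \kappa(\mathfrak{p}) \otimes_{A/\mathfrak{a}_i} V_i$ is the image of the diagonal map from $\kappa(\mathfrak{p}) \otimes_A V$. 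So the whole content of the conjecture is the injectivity of $\kappa(\mathfrak{p}) \otimes_A V \to W$; equivalently, that the kernel of $\kappa(\mathfrak{p}) \otimes_A V \to \prod_i \kappa(\mathfrak{p}) \otimes_{A/\mathfrak{a}_i} V_i$ vanishes.

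The key observation is that by Proposition~\ref{prop:individual} and Theorem~\ref{thm:interpolation1}, each $\kappa(\mathfrak{p}) \otimes_{A/\mathfrak{a}_i} V_i$ is a quotient of $\bigotimes_{v \in S} \LLcheck(\rho_{v,\mathfrak{p}})$, hence its smooth dual embeds in $\bigotimes_{v \in S} \LL(\rho_{v,\mathfrak{p}})$, which by Corollary~\ref{cor:essential} (applied to each tensor factor, together with the fact that the top derivative of a tensor product over the product group is the product of top derivatives, Theorem~\ref{thm:kirillov tensor}) is essentially AIG over $\kappa(\mathfrak{p})$. The same applies to (the dual of) $\kappa(\mathfrak{p}) \otimes_A V$ via condition (3) of Theorem~\ref{thm:family}. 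Dualizing, the map $\kappa(\mathfrak{p}) \otimes_A V \to \kappa(\mathfrak{p}) \otimes_{A/\mathfrak{a}_i} V_i$ becomes, after passing to smooth duals, a map of essentially AIG representations, which by Lemma~\ref{lem:AIG basic}(4) is either zero or an embedding; since $V_i$ is a nonzero quotient of $V \otimes_A A/\mathfrak{a}_i$ it is nonzero, so each such dual map is an embedding. The point is then that the images of these $r$ embeddings inside the common essentially AIG envelope $\env$ of their common socle must \emph{together} generate a subrepresentation isomorphic to the dual of $\kappa(\mathfrak{p}) \otimes_A V$, and I want to show this forces the diagonal dual map to be surjective — which is precisely the dual of the desired injectivity.

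To make this last step work I would use the multiplicity-one input available in small rank. By Corollary~\ref{cor:mult}, for $n = 2$ or $n = 3$ no Jordan--Hölder constituent of an essentially AIG representation over a characteristic-zero field occurs with multiplicity greater than one. Combined with Lemma~\ref{lem:char zero AIG}, which identifies the essentially AIG envelope of an irreducible generic $\pi$ with a specific parabolic induction $\Ind_P^{G} \pi_1 \otimes \dots \otimes \pi_r$, this means that inside the envelope $\env(\soc)$ a subrepresentation is determined by its set of Jordan--Hölder constituents. Thus I would argue: the dual $U$ of $\kappa(\mathfrak{p}) \otimes_A V$, and each dual $U_i$ of $\kappa(\mathfrak{p}) \otimes_{A/\mathfrak{a}_i} V_i$, all sit inside $\env(\soc(U))$; each $U_i \hookrightarrow U$; and since $V$ is $A$-torsion free with $\CK(A) \otimes_A V \cong \bigotimes_v \LLcheck(\rho_{v,\cdot})$ over each minimal prime, the constituents of $U$ are exactly the union of the constituents of the $U_i$ over the $\mathfrak{a}_i$ containing $\mathfrak{p}$ (this uses that $\kappa(\mathfrak{p}) \otimes_A V$ picks up contributions precisely from those components of $\Spec A$ through $\mathfrak{p}$, which can be seen by localizing the torsion-free module $V$ at $\mathfrak{p}$ and using that the completed tensor products are flat base changes of the generic fibers). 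By multiplicity one, $U$ is then the sum of the $U_i$ inside the envelope, i.e.\ $U \to \prod_i U_i$ is injective with image the diagonal, so dually $\kappa(\mathfrak{p}) \otimes_A V \to W$ is an isomorphism.

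The main obstacle I anticipate is the bookkeeping in the previous paragraph: carefully relating the Jordan--Hölder constituents of $\kappa(\mathfrak{p}) \otimes_A V$ to those of the various $\kappa(\mathfrak{p}) \otimes_{A/\mathfrak{a}_i} V_i$, which requires understanding how the torsion-free $A[G]$-module $V$ behaves under the (non-flat) base change $A \to \kappa(\mathfrak{p})$ versus the flat base changes $A \to \CK(A)$ and $A/\mathfrak{a}_i \to \kappa(\mathfrak{p})$. Concretely one wants: a constituent of the dual of $\kappa(\mathfrak{p}) \otimes_A V$ appears there if and only if it appears in the dual of $\kappa(\mathfrak{p}) \otimes_{A/\mathfrak{a}_i} V_i$ for some $i$ with $\mathfrak{a}_i \subseteq \mathfrak{p}$. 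The inclusion of constituents in one direction is automatic from the surjection of Theorem~\ref{thm:interpolation2}; the reverse direction — that $\kappa(\mathfrak{p}) \otimes_A V$ is ``big enough'' to see every component's contribution — is where the argument has real content, and I expect to need the torsion-freeness of $V$ together with a localization argument at $\mathfrak{p}$ plus the compatibility of the $\LLcheck$ construction with the flat base change $A/\mathfrak{a}_i \to \kappa(\mathfrak{p})$ (via Proposition~\ref{prop:individual} and the corresponding statement of Theorem~\ref{thm:interpolation1} applied on the irreducible component $\Spec A/\mathfrak{a}_i$, where $\mathfrak{p}$ lies on exactly one component). Once that is in hand, the multiplicity-one collapse is immediate.
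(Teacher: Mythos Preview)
Your strategy is the paper's strategy: both reduce Conjecture~\ref{conj:interpolation} for $n \leq 3$ to the multiplicity-one property of essentially~AIG representations in characteristic zero (Corollary~\ref{cor:mult}). The paper's execution, however, is far shorter and more direct than yours, and your version leaves the decisive step unproved.

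The paper does not dualize, does not pass to essentially~AIG envelopes, and does not try to identify $U$ with $\sum_i U_i$ inside an envelope. After reducing to a single place via Proposition~\ref{prop:individual}, it argues in three sentences: since $V = \LLcheck(\rho_v)$ is $A$-torsion free it embeds in $\prod_i V_i$, and therefore every Jordan--H\"older constituent of $\kappa(\mathfrak{p}) \otimes_A V$ already occurs in some $\kappa(\mathfrak{p}) \otimes_{A/\mathfrak{a}_i} V_i$, hence in~$W$. Consequently any constituent of the kernel of $\kappa(\mathfrak{p}) \otimes_A V \to W$ also occurs in~$W$, so it occurs with multiplicity at least two in $\kappa(\mathfrak{p}) \otimes_A V$ --- impossible by Corollary~\ref{cor:mult}, since the smooth dual of $\kappa(\mathfrak{p}) \otimes_A V$ is essentially~AIG. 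That is the entire argument.

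You correctly isolate the same intermediate claim (in dual form, ``the constituents of $U$ are exactly the union of the constituents of the~$U_i$'') as the crux, but you do not establish it, and your proposed mechanism is flawed. The map $A/\mathfrak{a}_i \to \kappa(\mathfrak{p})$ is \emph{not} flat --- it factors through the surjection $A/\mathfrak{a}_i \twoheadrightarrow A/\mathfrak{p}$ --- so the ``flat base change'' you invoke is unavailable, and there are no ``completed tensor products'' in play. You also misattribute the easy direction of the constituent comparison to Theorem~\ref{thm:interpolation2}; that direction is simply the fact that each $\kappa(\mathfrak{p}) \otimes_{A/\mathfrak{a}_i} V_i$ is a quotient of $\kappa(\mathfrak{p}) \otimes_A V$. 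What is missing from your sketch is exactly the one-line input the paper uses for the hard direction: the embedding $V \hookrightarrow \prod_i V_i$ coming from torsion-freeness (condition~(3) of Lemma~\ref{lem:tf}), which forces the constituents of $\kappa(\mathfrak{p}) \otimes_A V$ to lie among those of the factors.
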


In a similar vein, we conjecture:

\begin{conj}
\label{conj:closed point}
Assuming that $\LLcheck(\{\rho_v\}_{v \in S})$ exists, 
there is a $G$-equivariant $k$-linear surjection:
$$\bigotimes_{v \in S} \LLbarcheck(\rhobar) \rightarrow k \otimes_A \LLcheck(\{\rho_v\}_{v \in S}).$$
\end{conj}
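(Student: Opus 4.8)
The plan is to reduce the assertion to the case $\# S = 1$, dualize into the essentially~AIG formalism, and then realize $\Wbar := k\otimes_A\LLcheck(\rho_v)$ through characteristic zero specializations along formal curves in $\Spec A$, in the spirit of the proofs of Theorems~\ref{thm:interpolation1} and~\ref{thm:interpolation2}.

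First I would reduce to a single place. By Proposition~\ref{prop:individual}, $\LLcheck(\{\rho_v\}_{v \in S})$ is the maximal $A$-torsion free quotient of $\bigotimes_{v \in S} \LLcheck(\rho_v)$, so $k \otimes_A \LLcheck(\{\rho_v\}_{v \in S})$ is a $G$-equivariant quotient of $k \otimes_A \bigotimes_{v \in S} \LLcheck(\rho_v) \cong \bigotimes_{v \in S}\bigl( k \otimes_A \LLcheck(\rho_v)\bigr)$, using right-exactness of $\otimes$ and compatibility of the external tensor product with base change. It therefore suffices to produce, for each $v$, a $\GL_n(E_v)$-equivariant surjection $\LLbarcheck(\rhobar) \rightarrow k \otimes_A \LLcheck(\rho_v)$ and tensor these over $k$. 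Fix $v$ and put $W := \LLcheck(\rho_v)$, $\Wbar := k \otimes_A W$. By Theorem~\ref{thm:family}(3) the smooth dual $\Wbar^{\vee}$ is essentially~AIG, hence of finite length by Corollary~\ref{cor:admissible AIG finiteness}, so $\Wbar^{\vee\vee} = \Wbar$; thus a surjection $\LLbarcheck(\rhobar) \rightarrow \Wbar$ amounts to an embedding $\Wbar^{\vee} \hookrightarrow \LLbar(\rhobar)$ of finite length essentially~AIG representations, and by Lemmas~\ref{lem:AIG basic}(4) and~\ref{lem:AIG homs} any nonzero $\GL_n(E_v)$-map $\Wbar^{\vee} \rightarrow \LLbar(\rhobar)$ does the job (and is unique up to scalar).

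Next I would pin down the socle. Since $\kappa(\mathfrak a) \otimes_A W \cong \LLcheck(\rho_{v,\mathfrak a})$ for each minimal prime $\mathfrak a$, Corollary~\ref{cor:independent generic constituent} and Theorem~\ref{thm:LL semisimple} show the supercuspidal support of every Jordan--H\"older constituent of $\Wbar$ depends only on $\rhobar$, and comparison with Theorem~\ref{thm:mod p LL}(8) identifies $\soc(\Wbar^{\vee}) = \cosoc(\Wbar)$ with the common generic socle $V_0$ used to build $\LLbar(\rhobar)$ in Theorem~\ref{thm:mod p LL}. Hence both $\Wbar^{\vee}$ and $\LLbar(\rhobar)$ embed, uniquely up to scalar, into $\env(V_0)$, and it remains to show that inside $\env(V_0)$ the subrepresentation $\Wbar^{\vee}$ lies in $\LLbar(\rhobar)$. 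Recall that, by its construction, $\LLbar(\rhobar)_{\overline{k}}$ is the sum over all characteristic zero discrete valuation ring lifts $\rho'$ of $\rhobar$ of the reductions of invariant lattices in $\LL(\rho')$, so I would feed in lifts coming from $\rho_v$ itself: for a minimal prime $\mathfrak a$, take a suitably chosen one-dimensional quotient $A/\mathfrak a \twoheadrightarrow B$ (a formal curve through the closed point, lying on the single component $\overline{\{\mathfrak a\}}$ and avoiding $V(p)$ --- such curves exist) and let $\O$ be the normalization of $B$, a complete discrete valuation ring with residue field $k'$ finite over $k$; then $\rho_{v,\O} := \rho_v \otimes_A \O$ lifts $\rhobar \otimes_k k'$. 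Proposition~\ref{prop:lattice} furnishes the canonical lattice $\LLcheck(\rho_{v,\O})^{\circ}$ in $\LLcheck(\rho_{v,\O}\otimes_{\O}\mathrm{Frac}\,\O)$ with essentially~AIG-dual reduction, and Theorem~\ref{thm:mod p LL}(2) (via Theorem~\ref{thm:specialization}), dualized, gives a surjection $\LLbarcheck(\rhobar)\otimes_k k' \rightarrow \LLcheck(\rho_{v,\O})^{\circ}/\unif\LLcheck(\rho_{v,\O})^{\circ}$. Theorem~\ref{thm:interpolation1} exhibits $W \otimes_A \mathrm{Frac}\,\O$ as a $\GL_n(E_v)$-equivariant quotient of $\LLcheck(\rho_{v,\O}\otimes_{\O}\mathrm{Frac}\,\O)$; pushing $\LLcheck(\rho_{v,\O})^{\circ}$ through this quotient and comparing, via Lemma~\ref{lem:commens} and the uniqueness in Proposition~\ref{prop:lattice}, with the image of $W \otimes_A \O$, one gets $\LLbarcheck(\rhobar)\otimes_k k'$ surjecting onto a quotient of $\Wbar \otimes_k k'$; dually, a subrepresentation of $\Wbar^{\vee}\otimes_k\overline{k}$ contained in $\LLbar(\rhobar)\otimes_k\overline{k}$, the embeddings into $\env(V_0)$ being matched by Lemma~\ref{lem:AIG homs}. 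Summing over $\mathfrak a$ and over curves, and descending to $k$ via Lemma~\ref{lem:AIG descent}, would finish the argument provided these subrepresentations exhaust $\Wbar^{\vee}\otimes_k\overline{k}$.

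This exhaustion is the main obstacle, and it is exactly what blocks the upgrade of Theorem~\ref{thm:interpolation2} to Conjecture~\ref{conj:interpolation}: the reduction of $W \otimes_A \O$ is in general only a \emph{proper} quotient of $\Wbar \otimes_k k'$, because $W \otimes_A \O$ may acquire $\unif$-power torsion (equivalently $\mathrm{Tor}_1^A(W,\O) \neq 0$ --- $W$ is $A$-torsion free but need not be $A$-flat), so each curve recovers only a piece of $\Wbar^{\vee}$. Proving that the curves through the closed point jointly recover all of $\Wbar^{\vee}$ appears to require genuinely new input, e.g.\ a bound on the failure of $W$ to be locally free near $\mathfrak m$, or a cleverer family of specializations. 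The step is vacuous when $W$ is $A$-flat (one curve then suffices), and it should be accessible for $n \leq 3$ by combining the methods behind Proposition~\ref{prop:small n interpolation} with the explicit description of essentially~AIG envelopes in Lemma~\ref{lem:char zero AIG}; in general, however, it is the crux, which is why the statement remains conjectural.
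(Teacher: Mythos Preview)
This statement is recorded in the paper as Conjecture~\ref{conj:closed point}, not as a theorem: the paper gives no proof, remarking only that the case $n=2$, $p$ odd is established in~\cite{He}. So there is no ``paper's own proof'' to compare against; what you have written is an outline of a possible attack together with an honest identification of where it breaks down, and that is the appropriate thing to produce.

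Your reduction to $\#S=1$ via Proposition~\ref{prop:individual} and right-exactness of $\otimes$ is correct, as is the passage to the dual essentially~AIG picture and the identification of $\soc(\Wbar^{\vee})$ with the socle used in the construction of $\LLbar(\rhobar)$ (via Corollary~\ref{cor:independent generic constituent} and Proposition~\ref{prop:generic supercuspidal support}). The idea of probing $\Wbar$ by one-dimensional mixed-characteristic specializations $A\to\O$ and invoking Theorem~\ref{thm:mod p LL}(2) together with the uniqueness in (the dual of) Proposition~\ref{prop:lattice} is exactly in the spirit of the proofs of Theorems~\ref{thm:interpolation1} and~\ref{thm:interpolation2}. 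Your use of Lemma~\ref{lem:AIG homs} to pin down all the embeddings into $\env(V_0)$ up to scalar, so that the subrepresentations produced by different curves are literally comparable inside $\env(V_0)\otimes_k\overline{k}$, is the right bookkeeping.

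You have also correctly located the gap: each curve $A\to\O$ only sees $(W\otimes_A\O)^{\tf}/\unif$, which is the quotient of $\Wbar\otimes_k k'$ by the image of $(W\otimes_A\O)^{\tor}$, and one must show that as the curve varies these torsion images have trivial common intersection in~$\Wbar$. This is a genuine obstruction of the same flavor as Conjecture~\ref{conj:interpolation}, and nothing in the paper resolves it in general. Your remark that the argument goes through when $W$ is $A$-flat, and that for $n\le 3$ the multiplicity-one result of Corollary~\ref{cor:mult} should help, is accurate; indeed the paper's own Proposition~\ref{prop:small n interpolation} exploits precisely that multiplicity-one input. In short: your strategy is sound and faithful to the paper's methods, and the obstacle you isolate is the real one.
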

This conjecture is verified in~\cite{He} for $n=2$ and $p$ odd.

One can also describe the behavior of $\LLcheck(\{\rho_v\}_{v \in S})$ under
base change.
Suppose that $B$ is another ring satisfying Condition~\ref{cond:A},
and that $f: A \rightarrow B$ is a local homomorphism.
If we are given a Galois
representation $\rho_v: G_{E_v} \rightarrow \GL_n(A)$ 
where $v$ does not lie over $p$,
then we may then apply the preceding considerations
to the Galois representations $B\otimes_A \rho_v$.
The following proposition relates
$\LLcheck(\rho_v)$
and
$\LLcheck(B\otimes_A \rho_v)$.
(We again postpone the proof to the following subsection.)

\begin{prop}
\label{prop:tensor}
Suppose that for every minimal prime of $\Spec B$, 
its image ${\mathfrak p}$ in $\Spec A$ is contained in
a minimal prime ${\mathfrak a}$ of $A$ such that
$\rho_{v,\mathfrak a}$ is a minimal lift of $\rho_{v,\mathfrak p}$.
{\em (}For example, suppose that each component of $\Spec B$
dominates a component of $\Spec A$.{\em )}
Then if $\LLcheck(\{\rho_v\}_{v \in S})$ exists,
so does
$\LLcheck(\{B\otimes_A \rho_v\}_{v \in S})$,
and there is a natural surjection:
$B\otimes_A \LLcheck(\{\rho_v\}_{v \in S}) \rightarrow \LLcheck(\{B \otimes_A \rho_v\}_{v \in S})$.
\end{prop}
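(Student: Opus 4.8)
The plan is to produce $\LLcheck(\{B\otimes_A\rho_v\}_{v\in S})$ explicitly as $V_B$, the maximal $B$-torsion free quotient of $B\otimes_A V$, where $V:=\LLcheck(\{\rho_v\}_{v\in S})$; the quotient map $B\otimes_A V\twoheadrightarrow V_B$ will then be the asserted surjection. So I would first check that $V_B$ makes sense as an admissible smooth $G$-representation over $B$: the functorial decomposition~(\ref{eqn:decomp}) of $V$ base changes to a decomposition of $B\otimes_A V$ whose summands are finitely generated over $B$, so $B\otimes_A V$ is admissible by Lemma~\ref{lem:adm} (note $G=\prod_{v\in S}\GL_n(E_v)$ satisfies Condition~\ref{cond:pro-ell}, each $E_v$ being prime to $p$), and then $V_B$ is admissible by Lemma~\ref{lem:quotient}; also $B\otimes_A\rho_v$ is continuous. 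Condition~(1) of Theorem~\ref{thm:family} holds for $V_B$ by construction, so it remains to verify conditions~(2) and~(3), after which the uniqueness clause of Theorem~\ref{thm:family} identifies $V_B$ with $\LLcheck(\{B\otimes_A\rho_v\})$.

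For condition~(3), note that $(B\otimes_A V)/\mathfrak{m}_B(B\otimes_A V)=k_B\otimes_{k_A}(V/\mathfrak{m}_A V)$ (writing $k_A,k_B$ for the residue fields), so $V_B/\mathfrak{m}_B V_B$ is a $G$-equivariant quotient of $k_B\otimes_{k_A}(V/\mathfrak{m}_A V)$, and dually $(V_B/\mathfrak{m}_B V_B)^{\vee}$ is a $G$-subrepresentation of $k_B\otimes_{k_A}(V/\mathfrak{m}_A V)^{\vee}$. Condition~(3) for $V$ says $(V/\mathfrak{m}_A V)^{\vee}$ is essentially AIG, so I would first prove the auxiliary fact that the class of essentially AIG representations is stable under extension of the finite base field $k_A\hookrightarrow k_B$; the only point requiring argument is that the socle stays absolutely irreducible and generic, which follows from the facts that absolute irreducibility and genericity are geometric, together with a restriction-of-scalars argument ruling out any non-generic subrepresentation of the base change. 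Granting this, $k_B\otimes_{k_A}(V/\mathfrak{m}_A V)^{\vee}$ is essentially AIG, hence so is its subrepresentation $(V_B/\mathfrak{m}_B V_B)^{\vee}$ by Lemma~\ref{lem:AIG basic}(2) --- provided $V_B\neq 0$, which will come out of condition~(2).

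The heart of the matter is condition~(2). Fix a minimal prime $\mathfrak{b}$ of $B$, let $\mathfrak{p}$ be its image in $\Spec A$, and let $\mathfrak{a}\subseteq\mathfrak{p}$ be a minimal prime of $A$ with each $\rho_{v,\mathfrak{a}}$ a minimal lift of $\rho_{v,\mathfrak{p}}$ (so $\mathfrak{p}$, lying under a characteristic zero prime, is itself of characteristic zero). Since $\mathfrak{b}$ is minimal, the $B$-torsion in $B\otimes_A V$ dies after $\kappa(\mathfrak{b})\otimes_B(-)$, so
$$\kappa(\mathfrak{b})\otimes_B V_B\;=\;\kappa(\mathfrak{b})\otimes_B(B\otimes_A V)\;=\;\kappa(\mathfrak{b})\otimes_{\kappa(\mathfrak{p})}\bigl(\kappa(\mathfrak{p})\otimes_A V\bigr).$$
The key input I need is the isomorphism $\kappa(\mathfrak{p})\otimes_A V\iso\bigotimes_{v\in S}\LLcheck(\rho_{v,\mathfrak{p}})$. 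When $\mathfrak{p}$ meets only one irreducible component of $\Spec A[\tfrac{1}{p}]$ this is exactly Theorem~\ref{thm:interpolation1}, the surjection there being an isomorphism precisely because $\rho_{v,\mathfrak{a}}$ is a minimal lift of $\rho_{v,\mathfrak{p}}$. For general $\mathfrak{p}$ I would choose a complete discrete valuation ring $\O$ together with a local homomorphism $A\to\O$ having kernel $\mathfrak{a}$ and satisfying $\mathfrak{m}_\O\cap A=\mathfrak{p}$ (such an $\O$ is obtained by dominating a suitable localization of the complete local domain $A/\mathfrak{a}$ by a discrete valuation ring, with residue field chosen large enough); then $\rho_v\otimes_A\O$ has the same monodromy ranks in its generic and special fibres, so Theorem~\ref{thm:specialization} (together with the reduction theory of Proposition~\ref{prop:lattice}) applies, and, combined with the \emph{easy} case of the present proposition for the domain $\O$ --- which needs only condition~(2) of $V$ at the minimal prime $\mathfrak{a}$, not the interpolation theorems --- yields the key isomorphism after specializing $V$ through $\O$ and descending the resulting comparison map via Lemma~\ref{lem:AIG descent}. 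Granting the key isomorphism, base changing along $\kappa(\mathfrak{p})\to\kappa(\mathfrak{b})$ and using the compatibility of $\LL$, hence of $\LLcheck$, with field extensions gives $\kappa(\mathfrak{b})\otimes_B V_B\iso\bigotimes_{v\in S}\LLcheck\bigl((B\otimes_A\rho_v)_{\mathfrak{b}}\bigr)$, which is condition~(2); in particular $\kappa(\mathfrak{b})\otimes_B V_B\neq 0$, so $V_B\neq 0$ and condition~(3) is finished.

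I expect the main obstacle to be precisely the general (multi-component) case of the key isomorphism. Theorem~\ref{thm:interpolation2} directly controls only the image $W$ of $\kappa(\mathfrak{p})\otimes_A V$ in $\prod_i\kappa(\mathfrak{p})\otimes_{A/\mathfrak{a}_i}V_i$, and upgrading this to control of $\kappa(\mathfrak{p})\otimes_A V$ itself is (for general $n$) exactly the content of Conjecture~\ref{conj:interpolation}. The role of the minimal-lift hypothesis is that it lets one bypass that conjecture here, by realizing the specialization from $\mathfrak{a}$ to $\mathfrak{p}$ through the discrete valuation ring $\O$ above and appealing to Theorem~\ref{thm:specialization} in place of the full interpolation statement; the delicate bookkeeping is to arrange $\O$ (in particular its residue field) so that Lemma~\ref{lem:AIG descent} applies to descend the specialized comparison map back to $\kappa(\mathfrak{p})$, and to check that this map agrees with the natural one factoring the surjections of Theorems~\ref{thm:interpolation1} and~\ref{thm:interpolation2}, so that ``surjective on each side'' forces the desired isomorphism. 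Once condition~(2) is in hand the proposition follows from the uniqueness in Theorem~\ref{thm:family}, and running the same argument one place at a time recovers, via Proposition~\ref{prop:individual}, the existence of each individual $\LLcheck(B\otimes_A\rho_v)$ as the maximal torsion free quotient of $B\otimes_A\LLcheck(\rho_v)$.
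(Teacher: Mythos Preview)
Your candidate $V_B$ is the maximal $B$-torsion free quotient of $B\otimes_A V$, and you correctly compute that $\kappa(\mathfrak b)\otimes_B V_B = \kappa(\mathfrak b)\otimes_A V = \kappa(\mathfrak b)\otimes_{\kappa(\mathfrak p)}(\kappa(\mathfrak p)\otimes_A V)$. For condition~(2) of Theorem~\ref{thm:family} you therefore need the isomorphism $\kappa(\mathfrak p)\otimes_A V \cong \bigotimes_v\LLcheck(\rho_{v,\mathfrak p})$. When $\mathfrak p$ lies on a single component this is Theorem~\ref{thm:interpolation1}, but in the multi-component case it is precisely Conjecture~\ref{conj:interpolation} (under the minimal-lift hypothesis), which is open for $n\geq 4$. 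Your DVR workaround does not close this gap: the map $A\to\O$ you build factors through $A/\mathfrak a$, so the passage $V\mapsto V\otimes_A\O$ already kills all information coming from the components $\mathfrak a_i\neq\mathfrak a$. What you can recover from $\O$, via Theorem~\ref{thm:specialization}, is at best the identification of the torsion-free quotient of $V\otimes_A\O$ modulo $\unif$ with $\LLcheck(\rho_v\otimes K_\O)$; but the surjection $V\otimes_A K_\O\twoheadrightarrow (V\otimes_A\O)^{\tf}/\unif$ may have kernel, and that kernel is exactly the obstruction you are trying to rule out. In other words, the DVR sees the single component $A/\mathfrak a$ and no more, so it cannot detect (or exclude) the multiplicity-two constituents discussed in the proof of Proposition~\ref{prop:small n interpolation}.

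The paper sidesteps this entirely by choosing a \emph{smaller} candidate. After reducing to a single place $v$ via Proposition~\ref{prop:individual}, it takes the image of $B\otimes_A\LLcheck(\rho_v)$ under the composite
\[
B\otimes_A\LLcheck(\rho_v)\;\longrightarrow\;\prod_{\mathfrak b}\kappa(\mathfrak b)\otimes_A\LLcheck(\rho_v)\;\longrightarrow\;\prod_{\mathfrak b}\LLcheck(\rho_v\otimes_A\kappa(\mathfrak b)),
\]
where the second arrow comes from the surjections $\kappa(\mathfrak p)\otimes_A\LLcheck(\rho_v)\twoheadrightarrow W\cong\LLcheck(\rho_{v,\mathfrak p})$ furnished by Theorem~\ref{thm:interpolation2} (the isomorphism $W\cong\LLcheck(\rho_{v,\mathfrak p})$ being exactly where the minimal-lift hypothesis enters). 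Localizing this image at a minimal prime $\mathfrak b$ of $B$ collapses the product to its $\mathfrak b$-th factor, so condition~(2) is automatic; conditions~(1) and~(3) follow as in your argument. The point is that the proposition only promises a surjection from $B\otimes_A\LLcheck(\rho_v)$, not that the target is the maximal torsion-free quotient, so one is free to pass to this further quotient --- and doing so is what makes the proof go through without Conjecture~\ref{conj:interpolation}.
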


We now give some examples illustrating
Definition~\ref{def:family}.

\begin{example}
{\em
Suppose that $A = \mathcal O$ is the ring of integers in a finite extension
$\CK$ of $\Q_p$.  If $\rho:G_{E_v} \rightarrow \GL_n(\mathcal O)$
is continuous
(for some place $v$ of $E$ that does not lie over $p$),
write $\rho_{\CK} := \CK \otimes_{\mathcal O} \rho.$
Suppose that $\LL(\rho_{\CK})$ is absolutely irreducible.
Then $\LLcheck(\rho)$ exists, and is the smooth contragredient
to the lattice $\LL(\rho_{\CK})^{\circ}$ of Proposition~\ref{prop:lattice}.
%This example shows that in general the surjection in condition~(3) of Theorem~\ref{thm:family}
%need not be bijective, since it is dual to the injection provided by part~(2)
%of Theorem~\ref{thm:mod p LL}, which is typically not an isomorphism.
}
\end{example}

\begin{remark}
{\em
Suppose given $A$ as in Theorem~\ref{thm:family},
and a continuous representation
$\rho:G_{\Q_{\ell}} \rightarrow \GL_2(A)$ for some $\ell \neq p$.
Consider a point $\mathfrak p \in \Spec A[\dfrac{1}{p}]$.
If $\rho_{\mathfrak p}$ is not of the form $\chi \oplus \abs\chi$
for some character $\chi$ of $G_{\Q_{\ell}}$, then for {\em any}
minimal prime ${\mathfrak a}$ of $A$, containing ${\mathfrak p}$,
$\rho_{\mathfrak a}$ is necessarily a minimal lift of
$\rho_{\mathfrak p}$, and so (assuming that $V := \LLcheck(\rho)$ exists),
the surjection of Theorem~\ref{thm:interpolation1} 
is an isomorphism; that is, $V_{\mathfrak p}$ is isomorphic
to $\LLcheck(\rho_{\mathfrak p})$.
On the other hand if $\rho_{\mathfrak p}$ does have the form
$\chi \oplus \abs\chi$, then there exist non-minimal lifts of
$\rho_{\mathfrak p}$, and so $V_{\mathfrak p}$
need not {\em a priori} be isomorphic to $\LLcheck(\rho_{\mathfrak p})$
We now give an example showing that it can indeed happen that 
$V_{\mathfrak p}$ is not isomorphic to $\LLcheck(\rho_{\mathfrak p})$.
}
\end{remark}

\begin{example}
{\em
Suppose that $\ell$ and $p$ are distinct, and that $\ell \not\equiv 1 \bmod p$.
If $A$ is as in Theorem~\ref{thm:family},
then $\Ext^1_{\Z_{p}[G_{\Q_{\ell}}]}(\abs, 1)$ is free of rank $1$ over $A$.
Let $c$ denote a generator of this $\Ext^1$-module, and for any $a \in A$,
let $\rho_a:G_{\Q_{\ell}} \rightarrow \GL_2(A)$ be the rank two 
representation underlying $a \cdot c$.  
One checks that if $a$ is a regular element, then $V := \LLcheck(\rho_a)$ exists,
and in fact
is isomorphic to $\St_A$ (the Steinberg representation of $\GL_2(\Q_{\ell})$
with coefficients in $A$); in particular, it is independent of the
regular element $a$.
Note that if $a \in \mathfrak p \in \Spec A[\dfrac{1}{p}]$ (i.e.\ the regular 
function associated to $a$ vanishes at $\mathfrak p$),
then $\rho_{a,\mathfrak p} := \kappa(\mathfrak p)\otimes \rho_a$
is split and hence unramified,
and thus $\LLcheck(\rho_{a,\mathfrak p})$ is a non-split extension of
Steinberg by trivial.  In particular, at such a point $\mathfrak p$,
$V_{\mathfrak p}$ fails to be isomorphic to $\LLcheck(\rho_{a,\mathfrak p})$.
}
\end{example}

%%%% The updated version of this statement is now a part of thm:family.
%The following result, which we will prove in the next subsection,
%clarifies the situation.
%
%\begin{prop}
%\label{prop:non-gen}
%Let $A$ be as in Theorem~{\em \ref{thm:family}},
%and let
%$\rho:G_{\Q_{\ell}} \rightarrow \GL_2(A)$ be a continuous
%representation, for some $\ell \neq p$,
%for which $\LLcheck(\rho)$ exists.
%If $\mathfrak p \in \Spec A[\dfrac{1}{p}]$ is 
%a non-generic principal series point,
%then the surjection
%$$\LLcheck(\rho_{\mathfrak p}) \rightarrow
%\kappa(\mathfrak p)\otimes_A \LLcheck(\rho)$$
%given by Theorem~{\em \ref{thm:family}~(6)} is an isomorphism
%if and only if $\mathfrak p$ is contained in a principal
%series component of $\Spec A[\dfrac{1}{p}]$.
%\end{prop}

We conclude with a ``recognition theorem'' that is useful for verifying that
a given $A[G]$-module is isomorphic to $\LLcheck(\{\rho_v\}_{v \in S})$.  As with the other
results of this section, we defer its proof to the next subsection.
\begin{theorem}
\label{thm:verify}
Let $V$ be an admissible smooth $A[G]$-module, such that the smooth dual
of $V/{\mathfrak m}V$ is essentially AIG, and suppose that
there exists a Zariski dense subset $\Sigma$ of $\Spec A[\dfrac{1}{p}]$
such that:
\begin{enumerate}
\item For all $v \in S$, and all ${\mathfrak p}$ in $\Sigma$, there exists a
minimal prime ${\mathfrak a}$ of $A$ such that $\rho_{v, {\mathfrak a}}$
is a minimal lift of $\rho_{v, {\mathfrak p}}$.
\item For each point $\mathfrak p$ of $\Sigma$ there exists an isomorphism:
$$\kappa(\mathfrak p) \otimes_A V \iso \bigotimes_{v \in S} \LLcheck(\rho_{v,{\mathfrak p}}).$$
\item The diagonal map:
$$V \rightarrow \prod_{\mathfrak p \in \Sigma} 
\bigotimes_{v \in S} \LLcheck(\rho_{v,\mathfrak p}).$$
is an injection.
\end{enumerate}
Then $V$ satisfies conditions {\em (1)}, {\em (2)}, and {\em (3)} of
Theorem~{\em \ref{thm:family}};
that is, $\LLcheck(\{\rho_v\}_{v \in S})$ exists and is isomorphic to $V$.
\end{theorem}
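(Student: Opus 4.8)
The plan is to verify conditions~(1), (2) and~(3) of Theorem~\ref{thm:family} for $V$; the uniqueness assertion of that theorem, together with Definition~\ref{def:family}, then yields that $\LLcheck(\{\rho_v\}_{v \in S})$ exists and is isomorphic to $V$. Condition~(3) is exactly part of the hypothesis, and condition~(1) is immediate from hypothesis~(3): the target $\prod_{\mathfrak p \in \Sigma}\bigotimes_{v}\LLcheck(\rho_{v,\mathfrak p})$ of the given injection is a product of $\kappa(\mathfrak p)$-vector spaces, each $A$-torsion free (a regular element of $A$ lies in no minimal prime, hence acts invertibly on every $\kappa(\mathfrak p)$), so $V$ embeds in an $A$-torsion free module and is therefore $A$-torsion free. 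Thus the entire content of the theorem lies in condition~(2): for each minimal prime $\mathfrak a$ of $A$ one must identify $\kappa(\mathfrak a)\otimes_A V$ with $\bigotimes_v \LLcheck(\rho_{v,\mathfrak a})$.

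The first step is to show that the top derivative $V^{(n)}$, formed relative to $\prod_{v}P_n(E_v)$, is free of rank one over $A$. Since the smooth dual of $V/\mathfrak mV$ is essentially AIG by hypothesis~(3), the representation $V/\mathfrak mV$ has a single generic Jordan--H\"older constituent, of multiplicity one, and as the top-derivative functor is exact and annihilates non-generic constituents, $(V/\mathfrak mV)^{(n)}$ is one-dimensional over $k$; Theorem~\ref{thm:fgtensor} and Nakayama's lemma then show that $V^{(n)}$ is cyclic, say $V^{(n)} \cong A/I$. For $\mathfrak p \in \Sigma$, hypothesis~(2) and the compatibility of $(-)^{(n)}$ with base change give $\kappa(\mathfrak p)\otimes_A V^{(n)} \cong \bigotimes_v \LLcheck(\rho_{v,\mathfrak p})^{(n)}$, which is one-dimensional over $\kappa(\mathfrak p)$: each $\LLcheck(\rho_{v,\mathfrak p})$ is the smooth dual of the essentially AIG representation $\LL(\rho_{v,\mathfrak p})$ (Corollary~\ref{cor:essential}), so has one-dimensional top derivative, and top derivatives are multiplicative over $S$. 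Hence $I \subseteq \mathfrak p$ for all $\mathfrak p \in \Sigma$; by the Zariski density of $\Sigma$ in $\Spec A[1/p]$, and since every minimal prime of $A$ has residue characteristic $0$ and so lies in $\Spec A[1/p]$, $I$ is contained in every minimal prime of $A$, whence $I = 0$ as $A$ is reduced. Thus $V^{(n)} \cong A$, and the Schwartz submodule $\J_S(V)\subseteq V$ is the universal one, in particular compatible with base change to every $\kappa(\mathfrak p)$.

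Now fix a minimal prime $\mathfrak a$ and put $W := \kappa(\mathfrak a)\otimes_A V$ and $T := \bigotimes_v \LLcheck(\rho_{v,\mathfrak a})$; these are admissible smooth $G$-representations over the characteristic-zero field $\kappa(\mathfrak a)$, each with one-dimensional top derivative. First I would observe that $\Sigma_{\mathfrak a} := \Sigma \cap \Spec\bigl((A/\mathfrak a)[1/p]\bigr)$ is Zariski dense in $\Spec A/\mathfrak a$ (a Zariski-dense subset of a Noetherian scheme meets each irreducible component densely), and that after intersecting with the dense open locus on which, for every $v$, the ranks of the powers of the monodromy operator of $\rho_{v}$ attain their generic values on $\Spec A/\mathfrak a$, one obtains a Zariski-dense set $\Sigma'_{\mathfrak a}$ of primes $\mathfrak p$ for which $\rho_{v,\mathfrak a}$ is a minimal lift of $\rho_{v,\mathfrak p}$ for all $v \in S$. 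Applying the specialization theory of Section~\ref{sec:LL zero} (extended from discrete valuation rings to the complete local rings appearing in the definition of minimal lift, as in the arguments underlying Theorems~\ref{thm:interpolation1} and~\ref{thm:interpolation2}, and using Proposition~\ref{prop:embedding descent} and Corollary~\ref{cor:admissible sublattice}), one finds for $\mathfrak p \in \Sigma'_{\mathfrak a}$ that $\bigotimes_v \LLcheck(\rho_{v,\mathfrak p})$ is obtained from $T$ by specialization without collapse, with the same supercuspidal support and the same Jordan--H\"older constituents. Combined with hypothesis~(2), this forces the smooth duals of $W$ and of $T$ to share an absolutely irreducible generic socle $\sigma = \bigotimes_v \soc\LL(\rho_{v,\mathfrak a})$, and one deduces that the smooth dual $\widetilde W$ of $W$ is essentially AIG with socle $\sigma$; both $\widetilde W$ and $\widetilde T$ then embed in the envelope $\env(\sigma)$, and they must coincide there because both specialize to $\bigotimes_v \LL(\rho_{v,\mathfrak p})$ at every $\mathfrak p \in \Sigma'_{\mathfrak a}$, while the rigid structure of essentially AIG representations (Lemmas~\ref{lem:AIG basic} and~\ref{lem:AIG homs}) prevents two such submodules with matching specializations over a dense set from differing. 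This yields $W \cong T$, i.e.\ condition~(2).

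The step I expect to be the main obstacle is exactly this last comparison on the generic fibres. Hypothesis~(3) controls only the special fibre $V/\mathfrak mV$, so everything about the structure of $\kappa(\mathfrak a)\otimes_A V$ — that its dual is essentially AIG with the correct socle, and that it is no larger than $\bigotimes_v \LLcheck(\rho_{v,\mathfrak a})$ — has to be bootstrapped from the behaviour at the dense set $\Sigma$ via the specialization results of Section~\ref{sec:LL zero}; the genericity of the monodromy ranks is what spreads the minimal-lift condition over a dense subset of each component, and hypothesis~(1) is what guarantees that the representations realized at the points of $\Sigma$ are the full $\LLcheck$'s, so that the resulting bounds on the generic fibres are sharp rather than merely giving a surjection $\bigotimes_v \LLcheck(\rho_{v,\mathfrak a}) \twoheadrightarrow \kappa(\mathfrak a)\otimes_A V$.
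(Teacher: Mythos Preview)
Your claim that condition~(1) of Theorem~\ref{thm:family} is immediate from hypothesis~(3) is incorrect, and the reason you give is fallacious. The primes $\mathfrak p \in \Sigma$ lie in $\Spec A[1/p]$ but are typically \emph{not} minimal; a non-minimal prime of a reduced ring always contains some regular element, so a regular $a \in A$ can certainly act by zero on $\kappa(\mathfrak p)$. Thus the product $\prod_{\mathfrak p \in \Sigma}\bigotimes_v \LLcheck(\rho_{v,\mathfrak p})$ is \emph{not} $A$-torsion free in general, and injectivity of $V$ into it does not directly imply that $V$ is torsion free. Lemma~\ref{lem:tf} gives only the converse implication (torsion-free $\Rightarrow$ injectivity into any dense product), and the paper's proof makes essential use of a genuinely non-trivial argument to close this gap: one first passes to the maximal torsion-free quotient $V^{\tf}$, shows via Proposition~\ref{prop:tf verify} that $V^{\tf}$ realizes $\LLcheck(\{\rho_v\})$, and only then uses Theorem~\ref{thm:interpolation1} (which needs hypothesis~(1) on minimal lifts) together with Lemma~\ref{lem:dense} and hypothesis~(3) to conclude that $V \cong V^{\tf}$.

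Your treatment of condition~(2) also has a gap. The paper does not attempt any direct comparison of the generic fibres $\kappa(\mathfrak a)\otimes_A V$ and $\bigotimes_v \LLcheck(\rho_{v,\mathfrak a})$ inside an envelope via ``matching specializations''; that argument, as you yourself flag, is not rigorous, and in particular you never justify why the smooth dual of $\kappa(\mathfrak a)\otimes_A V$ should be essentially AIG (the hypothesis controls only the closed fibre). Instead, the paper reduces to a single place $v$ (Lemma~\ref{lem:reduction}), then in Proposition~\ref{prop:tf verify} constructs an explicit integral model for $\LLcheck(\rho_v)$ over the normalization $A'$ of $A/\mathfrak a$: a parabolic induction of Steinberg representations $\St_{\pi_i,n_i}$ twisted by characters $\chi_i$ taking values in $A'$ (via Lemma~\ref{lemma:decompose}), whose fibres realize $\LLcheck(\rho_{v,\mathfrak p})$ on a dense open set. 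One then compares $V^{\tf}\otimes_A A'$ with this model using Lemma~\ref{lem:dense}, and reads off condition~(2) at the generic point. The explicit integral model is the key device you are missing.
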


\subsection{The proofs of Theorem~\ref{thm:family} and some related results}
\label{subsec:proofs}
We develop a series of deductions involving the various conditions of
Theorem~\ref{thm:family}.  These will be used not only to prove
Theorem~\ref{thm:family}, and the other outstanding results
from the preceding subsection, but also to provide a criterion for
verifying the conditions of Theorem~\ref{thm:family}, which
will be useful in applications.

If $A$ is a local ring with residue field $K$, and $V$ is a
representation of $F$ over $A$, we let $\Vbar$ denote the 
representation $V \otimes_A K$.

\begin{lemma}
\label{lem:cyclic}
Let $A$ be a Noetherian $W(k)$-algebra that is a
local ring with residue field $K$.
If $V$ is an admissible smooth representation of $G$ over $A$,
then $\Vbar^{(n)}$ is $1$-dimensional over $K$
if and only if $V^{(n)}$ is a cyclic $A$-module.
\end{lemma}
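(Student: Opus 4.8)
The plan is to derive the lemma from two facts already in hand: that the top-derivative functor $V \mapsto V^{(n)}$ commutes with base change in the coefficients (so that $V^{(n)} \otimes_A (A/\mathfrak{m}) \cong (V \otimes_A A/\mathfrak{m})^{(n)} = \Vbar^{(n)}$), and that $V^{(n)}$ is a finitely generated $A$-module whenever $\Vbar^{(n)}$ is finite-dimensional over $K = A/\mathfrak{m}$ (Theorem~\ref{thm:fg}, or Theorem~\ref{thm:fgtensor} in the case that $G$ is a product of several factors $\GL_n(E_v)$). Granting these inputs, the equivalence is a formal consequence of Nakayama's lemma.

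First I would prove the forward implication. Assume $\Vbar^{(n)}$ is one-dimensional over $K$. In particular it is finite-dimensional, so Theorem~\ref{thm:fg} applies and $V^{(n)}$ is a finitely generated $A$-module. The base-change isomorphism then identifies $V^{(n)}/\mathfrak{m} V^{(n)}$ with $\Vbar^{(n)}$, a one-dimensional $K$-vector space. Choosing any element $x \in V^{(n)}$ whose image spans $V^{(n)}/\mathfrak{m} V^{(n)}$ over $K$, and applying Nakayama's lemma to the finitely generated module $V^{(n)}$, we conclude that $x$ generates $V^{(n)}$; hence $V^{(n)}$ is cyclic.

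Conversely, suppose $V^{(n)} = A\cdot x$ is cyclic. Then the image of $x$ generates $V^{(n)}/\mathfrak{m} V^{(n)}$ over $K$, so $\dim_K \Vbar^{(n)} = \dim_K\bigl(V^{(n)}/\mathfrak{m} V^{(n)}\bigr) \le 1$. Moreover, if $V^{(n)} \ne 0$ then, being finitely generated (indeed cyclic), Nakayama's lemma rules out $V^{(n)} = \mathfrak{m} V^{(n)}$, so $V^{(n)}/\mathfrak{m} V^{(n)} \ne 0$ and the dimension is exactly one. (If one wishes to accommodate the degenerate possibility $V^{(n)} = 0$, one reads ``cyclic'' as ``cyclic and nonzero'' in the statement.)

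The only genuinely nontrivial ingredient is the finiteness statement of Theorem~\ref{thm:fg}: it is this that relies on the description of the space of Schwartz functions $\J(V) \hookrightarrow V$ containing a copy of $V^{(n)}$, together with the direct-sum decomposition of an admissible smooth representation into finitely generated $A$-modules and Lemma~\ref{lem:nakayama sum}. I expect this to be the only step requiring anything beyond a routine application of Nakayama's lemma, and since it has already been established above, there is no serious obstacle to completing the argument.
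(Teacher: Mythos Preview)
Your argument is correct and matches the paper's own proof: invoke the base-change isomorphism $V^{(n)}\otimes_A K \cong \Vbar^{(n)}$, use Theorem~\ref{thm:fg} (or \ref{thm:fgtensor}) to get finite generation of $V^{(n)}$, and conclude by Nakayama. Your explicit treatment of the converse direction, including the degenerate case $V^{(n)}=0$, is a nice touch that the paper glosses over.
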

\begin{proof}
Theorem~\ref{thm:fgtensor} shows that
if $\Vbar^{(n)}$ is one-dimensional then
$V^{(n)}$ is a finitely generated
$A$-module, and so the lemma 
follows from Nakayama's lemma together with the isomorphism 
$V^{(n)} \otimes_A K \iso \Vbar^{(n)}$.
\end{proof}

\begin{lemma}
\label{lem:Kirillov generator}
Let $A$ be a Noetherian $W(k)$-algebra that is a local ring
with residue field $K$
in which all $\ell_i$ are invertible.
If $V$ is an admissible smooth representation
of $G$ over $A$,
then the following are equivalent:
\begin{enumerate}
\item For any non-zero quotient $K[G]$-module $\Wbar$ of $\Vbar$,
one has $\Wbar^{(n)} \neq 0$.
\item For any non-zero quotient $A[G]$-module $W$ of $V$,
one has $W^{(n)} \neq 0$.
\item $\J(\Vbar)$ generates $\Vbar$ over $K[G]$.
\item $\J(V)$ generates $V$ over $A[G]$.
\end{enumerate}
\end{lemma}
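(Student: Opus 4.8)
The plan is to deduce the four conditions from one another using only the formal properties of the Schwartz-function construction $\J = \J_S$ and the top-derivative functor $(-)^{(n)} = (-)^{(n),S}$ recalled in Section~\ref{subsec:kirillov}: that $(-)^{(n)}$ is exact and commutes with tensor products; that the formation of $\J(V)$ is functorial in $V$, factors through the functor $V \mapsto V^{(n)}$, and in particular yields $\J(V) = 0$ whenever $V^{(n)} = 0$; and that the inclusion $\J(V) \hookrightarrow V$ induces an isomorphism $\J(V)^{(n)} \iso V^{(n)}$.  The only inputs beyond this will be Nakayama's lemma for admissible representations (Lemma~\ref{lem:nakayama}) and the fact that quotients of admissible representations are admissible (Lemma~\ref{lem:quotient}); both apply because $G = \prod_i \GL_n(E_i)$ satisfies Condition~\ref{cond:pro-ell}, each factor doing so since $\ell_i$ is invertible in $A$, and a finite product of such groups again does.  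I will establish $(1) \Leftrightarrow (2)$, $(2) \Leftrightarrow (4)$, and $(1) \Leftrightarrow (3)$; together these give the lemma.

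For $(1) \Leftrightarrow (2)$: one direction is immediate, since a nonzero quotient $K[G]$-module $\Wbar$ of $\Vbar$ is, via $A \twoheadrightarrow K$, a nonzero quotient $A[G]$-module of $V$, and its top derivative over $A$ agrees with the one over $K$ because $\mathfrak{m}$ annihilates it.  Conversely, if $W$ is a nonzero quotient $A[G]$-module of $V$, then $W$ is admissible by Lemma~\ref{lem:quotient}, so $\Wbar := W/\mathfrak{m}W$ is nonzero by Lemma~\ref{lem:nakayama}; as $\Wbar$ is a quotient $K[G]$-module of $\Vbar$, hypothesis $(1)$ gives $\Wbar^{(n)} \neq 0$, and since $(-)^{(n)}$ commutes with tensor products we have $\Wbar^{(n)} = W^{(n)}/\mathfrak{m}W^{(n)}$, whence $W^{(n)} \neq 0$.

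For $(2) \Rightarrow (4)$: let $V'$ be the $A[G]$-submodule of $V$ generated by $\J(V)$, and put $W := V/V'$.  The inclusions $\J(V) \hookrightarrow V' \hookrightarrow V$ induce $\J(V)^{(n)} \to (V')^{(n)} \to V^{(n)}$ whose composite is an isomorphism and whose second arrow is injective (exactness of $(-)^{(n)}$), so $(V')^{(n)} \iso V^{(n)}$, and then exactness applied to $0 \to V' \to V \to W \to 0$ gives $W^{(n)} = 0$; hence $W = 0$ by $(2)$, i.e.\ $\J(V)$ generates $V$ over $A[G]$.  For $(4) \Rightarrow (2)$: let $W$ be a nonzero quotient $A[G]$-module of $V$ with quotient map $\pi$.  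Functoriality of $\J$ gives $\pi(\J(V)) \subseteq \J(W)$; if $W^{(n)} = 0$ then $\J(W) = 0$, so $\pi(\J(V)) = 0$, which is absurd because $\pi(\J(V))$ generates $W \neq 0$ (as $\J(V)$ generates $V$ by $(4)$).  Thus $W^{(n)} \neq 0$.

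Finally $(1) \Leftrightarrow (3)$ is proved by exactly the argument for $(2) \Leftrightarrow (4)$, carried out over the $W(k)$-algebra $K$ (in which all $\ell_i$ are invertible) with $\Vbar$ in place of $V$: all the properties of $\J$ and $(-)^{(n)}$ used above hold verbatim there.  I do not expect a genuine obstacle in any of this; the one point that requires care is the bookkeeping when $S$ has more than one place --- namely that $\J_S$ still factors through $V \mapsto V^{(n),S}$ via an additive exact functor, is functorial in $V$, and satisfies $\J_S(V)^{(n),S} \iso V^{(n),S}$ --- but this follows by iterating the one-place statements of Section~\ref{subsec:kirillov}, using that the functors attached to distinct places commute.
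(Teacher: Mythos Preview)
Your proof is correct and follows essentially the same approach as the paper's. The only difference is organizational: the paper establishes $(1)\Leftrightarrow(3)$ directly and then deduces $(3)\Leftrightarrow(4)$ via Nakayama (using that $\J(V)$ surjects onto $\J(\Vbar)$), whereas you prove $(2)\Leftrightarrow(4)$ directly and then repeat the argument over $K$ for $(1)\Leftrightarrow(3)$; the underlying ideas --- exactness of $(-)^{(n)}$, the isomorphism $\J(V)^{(n)}\iso V^{(n)}$, and the admissible Nakayama lemma --- are identical.
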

\begin{proof}
It is clear that (2) implies (1), as any quotient of $\Vbar$ is
also a quotient of~$V$.  Suppose that (1) holds and that
$W$ is a quotient of $V$ with $W^{(n)} = 0$.  Then $\Wbar^{(n)} = 0$,
and so~(1) implies that $\Wbar = 0$.  Then $W = 0$ by Nakayama's Lemma,
so (1) implies~(2).

If $\Wbar$ is a quotient of $\Vbar$, then
we have that $\Wbar^{(n)}$ 
is a quotient of $\Vbar^{(n)}$ 
since the derivative functor is exact.   If we let $\overline{U}$
denote the $K[G]$-submodule of $\Vbar$
generated by $\J(\Vbar)$,  we see that 
$\Wbar^{(n)}$ vanishes if and only if $\Wbar$
is a quotient of $\Vbar/\overline{U}$. 
Thus~(1) and~(3) are equivalent.

Clearly~(4) implies~(3), since $\J(\Vbar)$ is the image of
$\J(V)$ in $\Vbar$.  Conversely, suppose $\J(\Vbar)$ generates
$\Vbar$ over $K[G]$.  Since $\J(V)$ maps surjectively onto $\J(\Vbar)$,
Lemma~\ref{lem:nakayama} implies that $\J(V)$ generates $V$ over $A[G]$.
\end{proof}

\begin{lemma}
\label{lem:free}
Let $A$ be a local ring satisfying Condition~{\em \ref{cond:A}}.
If $V$ is an admissible
smooth representation of $G$ over $A$,
and if $V_{\mathfrak a}^{(n)}$ is nonzero 
for each minimal prime $\mathfrak a$ of~$A$,
then if $V^{(n)}$ is a cyclic $A$-module,
it is in fact free of rank $1$ over $A$.
\end{lemma}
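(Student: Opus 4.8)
The plan is to exhibit $V^{(n)}$ as $A$ itself by ruling out $A$-torsion, exploiting that the formation of the top derivative commutes with base change together with the reducedness of $A$ guaranteed by Condition~\ref{cond:A}.

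First I would use the cyclicity hypothesis to write $V^{(n)} \cong A/I$ for some ideal $I \subseteq A$; the claim to be proved is simply that $I = 0$.

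Next, for a minimal prime $\mathfrak a$ of $A$ I would invoke the fact that the functors $\Psi^-,\Phi^-$ (and hence the formation of $V^{(n)}$) commute with tensor products, which gives $V_{\mathfrak a}^{(n)} = (V \otimes_A \kappa(\mathfrak a))^{(n)} \cong V^{(n)} \otimes_A \kappa(\mathfrak a) \cong \kappa(\mathfrak a)/I\kappa(\mathfrak a)$. Since $\kappa(\mathfrak a)$ is the fraction field of the domain $A/\mathfrak a$, the module $\kappa(\mathfrak a)/I\kappa(\mathfrak a)$ is nonzero if and only if the image of $I$ in $A/\mathfrak a$ vanishes, i.e.\ if and only if $I \subseteq \mathfrak a$. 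By hypothesis $V_{\mathfrak a}^{(n)} \neq 0$, so we conclude $I \subseteq \mathfrak a$ for every minimal prime $\mathfrak a$ of $A$. As $A$ is reduced, the intersection of its minimal primes is the zero ideal, and therefore $I = 0$; that is, $V^{(n)} \cong A$ is free of rank one.

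This argument is essentially immediate, so there is no real obstacle; the only mildly delicate point is the elementary observation that $(A/I) \otimes_A \kappa(\mathfrak a)$ vanishes precisely when $I \not\subseteq \mathfrak a$, which I would verify by noting that a nonzero ideal of the domain $A/\mathfrak a$ generates the unit ideal after inverting all nonzero elements.
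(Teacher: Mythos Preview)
Your proof is correct and is essentially the same as the paper's: both write $V^{(n)}\cong A/I$, use that the top derivative commutes with base change to see that $I$ maps to zero in each $\kappa(\mathfrak a)=A_{\mathfrak a}$, and then invoke reducedness (phrased either as $\bigcap_{\mathfrak a}\mathfrak a=0$ or as the injection $A\hookrightarrow\prod_{\mathfrak a}A_{\mathfrak a}$) to conclude $I=0$.
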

\begin{proof}
Since $(V^{(n)})_{\mathfrak a} = (V_{\mathfrak a})^{(n)}$, 
we have that $(V^{(n)})_{\mathfrak a}$ is nonzero
for all $\mathfrak a$.  Our hypotheses on $A$ imply
that $A$ injects into the
product of the fields $A_{\mathfrak a}$; it follows
that the annihilator of $V^{(n)}$ in $A$ is the zero ideal.
The lemma follows.
\end{proof}

\begin{prop}
\label{prop:second properties}
Let $A$ be a local ring satisfying Condition~{\em \ref{cond:A}},
let $V$ be an admissible smooth representation of $G$
over $A$, and
suppose that $(V_{\mathfrak a})^{(n)}$ is nonzero for each minimal
prime $\mathfrak a$ of $A$,
that $\Vbar^{(n)} $ is one-dimensional,
and that for any non-zero quotient $k[G]$-module $\Wbar$ of $\Vbar$,
one has $\Wbar^{(n)} \neq 0$.
%satisfies
%conditions~{\em (1)} and~{\em (3)} of Theorem~{\em \ref{thm:family}},
Then:
\begin{enumerate}
\item
$V^{(n)}$ is free of rank $1$ over $A$.
\item
$\J(V)$ generates $V$ over $A[G]$.
\item
$\End_{A[G]}(V) = A$.
\end{enumerate}
\end{prop}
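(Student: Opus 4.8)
The plan is to deduce the three assertions in sequence from the machinery of the Kirillov functors assembled in Section~\ref{subsec:kirillov}, together with the Nakayama-type lemmas for admissible representations. First I would observe that the hypotheses are precisely set up to feed the preceding three lemmas. For~(1): since $\Vbar^{(n)}$ is one-dimensional over $k$, Lemma~\ref{lem:cyclic} (applied with $K = k$, the residue field of the local ring $A$) shows that $V^{(n)}$ is a cyclic $A$-module. Since $(V_{\mathfrak a})^{(n)} = (V^{(n)})_{\mathfrak a}$ is nonzero for each minimal prime $\mathfrak a$, Lemma~\ref{lem:free} then upgrades ``cyclic'' to ``free of rank one over~$A$,'' giving~(1). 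For~(2): the hypothesis that every nonzero quotient $k[G]$-module $\Wbar$ of $\Vbar$ has $\Wbar^{(n)} \neq 0$ is exactly condition~(1) of Lemma~\ref{lem:Kirillov generator}, whose equivalent condition~(4) states that $\J(V)$ generates $V$ over $A[G]$; this gives~(2). (Here I am using that $A$ satisfies Condition~\ref{cond:A}, hence in particular that the relevant residue characteristics $\ell_i$ are invertible in $A$, as required by that lemma.)

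The substantive point is~(3), the computation of the endomorphism ring. The key tool is Proposition~\ref{prop:schwartz endomorphisms}: since $V^{(n)}$ is free of rank one over~$A$ by part~(1), we have $A \iso \End_{P_n}(\J(V))$. It therefore suffices to produce an injection $\End_{A[G]}(V) \hookrightarrow \End_{P_n}(\J(V))$ whose composite with $A \iso \End_{P_n}(\J(V))$ recovers the structure map $A \to \End_{A[G]}(V)$; combined with the obvious inclusion $A \subseteq \End_{A[G]}(V)$ this forces equality. To build this injection: any $A[G]$-endomorphism $\phi$ of $V$ carries the Schwartz subspace $\J(V)$ into itself — indeed $\J(V) = (\Phi^+)^{n-1}\Psi^+(V^{(n)})$ is functorial in $V$, and more concretely $\J(V)$ is the $A[G]$-submodule characterized intrinsically (via the adjunction making $\J$ left adjoint to $V \mapsto V^{(n)}$) so that the image of any $G$-map out of a Schwartz-type module lands in it; in particular $\phi(\J(V)) \subseteq \J(V)$. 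Restriction thus gives a ring homomorphism $\End_{A[G]}(V) \to \End_{P_n}(\J(V)) = A$. This map is injective because, by part~(2), $\J(V)$ generates $V$ over $A[G]$: an endomorphism vanishing on $\J(V)$ vanishes on the $A[G]$-module it generates, namely all of~$V$. Finally the composite $A \hookrightarrow \End_{A[G]}(V) \to A$ is the identity, since the $A$-action on $V$ restricts to the $A$-action on $\J(V)$, which is the identification of Proposition~\ref{prop:schwartz endomorphisms}. Hence $\End_{A[G]}(V) = A$, proving~(3).

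The main obstacle I anticipate is the claim that an arbitrary $A[G]$-endomorphism of $V$ preserves $\J(V)$, and more precisely that $\End_{A[G]}(V)$ acts on $\J(V)$ through its $P_n$-module structure compatibly with the identification of Proposition~\ref{prop:schwartz endomorphisms}. This requires unwinding the adjunction $\Hom_{A[P_n]}(\J(V), W) \iso \Hom_A(V^{(n)}, W^{(n)})$ from Section~\ref{subsec:kirillov} and checking that, for $W = V$, a $G$-equivariant $\phi$ induces on $V^{(n)}$ an $A$-linear endomorphism (namely multiplication by the scalar corresponding to $\phi$ under the desired map), and that this is consistent under composition. Once the functoriality of $\J(-)$ and the precise form of the adjunction are pinned down this is formal, but it is the step where care is needed; everything else is a direct citation of the lemmas immediately preceding the proposition.
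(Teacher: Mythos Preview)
Your proposal is correct and follows essentially the same route as the paper: parts~(1) and~(2) are deduced from Lemmas~\ref{lem:cyclic}, \ref{lem:free}, and~\ref{lem:Kirillov generator} exactly as you indicate, and for~(3) the paper likewise factors $A \to \End_{A[G]}(V) \to \End_{A[P_n]}(\J(V))$, uses Proposition~\ref{prop:schwartz endomorphisms} to identify the composite with an isomorphism, and notes that the second arrow is injective because $\J(V)$ generates $V$. The preservation of $\J(V)$ under $G$-endomorphisms that you flag as an obstacle is handled silently in the paper by the functoriality of $\J$ in $P_n$-modules, which you correctly identify as the reason it works.
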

\begin{proof}
The first claim follows immediately from Lemmas~\ref{lem:cyclic}
and~\ref{lem:free}.  The second is a consequence of 
Lemma~\ref{lem:Kirillov generator}.

By Proposition~\ref{prop:schwartz endomorphisms},
the natural map $A \rightarrow \End_{A[P_n]}(\J(V))$
is an isomorphism.  This latter map factors as the composition
$$A \rightarrow \End_{A[G]}(V) \rightarrow \End_{A[P_n]}(\J(V)),$$
and restriction of endomorphisms from $V$ to $\J(V)$ is injective
because $\J(V)$ generates $V$.  Thus $\End_{A[G]}(V) = A$.
\end{proof}

The following result gives some equivalent formulations of the hypotheses on
$\Vbar$ appearing in the preceding proposition.

\begin{lemma}
\label{lem:Vbar Kirillov}
Let $K$ be a field in which all $\ell_i$ are invertible. 
If $\Vbar$ is an admissible smooth representation of 
$G$ over $K$, then the following are equivalent:
\begin{enumerate}
\item $\Vbar^{(n)}$ is one-dimensional,
and for any non-zero quotient $K[G]$-module $\Wbar$ of~$\Vbar$,
one has $\Wbar^{(n)} \neq 0$ {\em (}and hence 
$\Vbar^{(n)}$ is isomorphic to
$\Wbar^{(n)}$, so that $\Wbar^{(n)}$ 
is again one-dimensional{\em )}.
\item $\Vbar^{(n)}$ is one-dimensional, and $\J(\Vbar)$
generates $\Vbar$ over $K[G]$.
\item $\Vbar$ is of finite length {\em (}and hence has a cosocle{\em )},
$\cosoc(\Vbar)$ is absolutely irreducible, and 
$\Vbar^{(n)}$ is isomorphic to $\bigl(\cosoc(\Vbar)\bigr)^{(n)}$, 
with both being non-zero.
\item The smooth $K$-dual $\Vbar^{\vee}$ of $\Vbar$ is essentially AIG.
\end{enumerate}
\end{lemma}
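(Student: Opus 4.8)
The equivalence $(1)\Leftrightarrow(2)$ is immediate from Lemma~\ref{lem:Kirillov generator} applied over the local ring $K$: conditions $(1)$ and $(3)$ of that lemma are exactly ``every nonzero quotient has nonzero top derivative'' and ``$\J(\Vbar)$ generates $\Vbar$'', and in both $(1)$ and $(2)$ here one additionally imposes that $\Vbar^{(n)}$ be one--dimensional. For the remaining equivalences the plan is to run the cycle $(2)\Rightarrow(4)\Rightarrow(3)\Rightarrow(1)$. The recurring tool is that the top--derivative functor $V\mapsto V^{(n)}$ (for $G=\prod_v\GL_n(E_v)$, as set up above) is exact, commutes with base change, is one--dimensional on absolutely irreducible generic representations and zero on non--generic irreducible ones (Theorem~\ref{thm:kirillov}, Definition~\ref{df:generic}), and is compatible with smooth contragredients via the standard fact that an irreducible admissible representation of $\GL_n(E)$ is generic if and only if its contragredient is (checked by comparing Whittaker/Kirillov models). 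I would also use repeatedly that an irreducible smooth representation $\sigma$ over $K$ with $\sigma^{(n)}$ one--dimensional is automatically \emph{absolutely} irreducible: writing $\sigma\otimes_K\overline{K}=\bigoplus_j\sigma_j$ with $\Gal(\overline{K}/K)$ permuting the $\sigma_j$ transitively, base change gives $\bigoplus_j\sigma_j^{(n)}=\sigma^{(n)}\otimes_K\overline{K}$ one--dimensional and Galois--stable, so exactly one summand is nonzero and is fixed by the transitive action, forcing a single $\sigma_j$.

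$(3)\Rightarrow(1)$: if $\Vbar$ has finite length with $\cosoc(\Vbar)$ absolutely irreducible and the natural map $\Vbar^{(n)}\to\cosoc(\Vbar)^{(n)}$ an isomorphism onto a nonzero (hence, by Theorem~\ref{thm:kirillov}, one--dimensional) space, then any nonzero quotient $\Wbar$ of $\Vbar$ has $\cosoc(\Wbar)$ a nonzero quotient of $\cosoc(\Vbar)$, so $\cosoc(\Wbar)\cong\cosoc(\Vbar)$; composing $\Wbar\twoheadrightarrow\cosoc(\Wbar)\cong\cosoc(\Vbar)$ and applying $(\cdot)^{(n)}$ shows $\Wbar^{(n)}\neq0$, which is $(1)$. $(4)\Rightarrow(3)$: if $\Vbar^{\vee}$ is essentially AIG then, being admissible, it has finite length by Corollary~\ref{cor:admissible AIG finiteness}, hence so does $\Vbar$, which therefore has a cosocle with $\cosoc(\Vbar)=\soc(\Vbar^{\vee})^{\vee}$, absolutely irreducible and --- since $\soc(\Vbar^{\vee})$ is generic and contragredients preserve genericity --- also generic. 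Writing $\Vbar_1=\ker(\Vbar\to\cosoc(\Vbar))$, exactness of $(\cdot)^{(n)}$ turns $(\Vbar^{\vee}/\soc\,\Vbar^{\vee})^{(n)}=0$ into $(\Vbar^{\vee})^{(n)}\cong\soc(\Vbar^{\vee})^{(n)}$ (one--dimensional), and dualizing via ``generic $\Leftrightarrow$ dual generic'' on Jordan--H\"older factors gives $\Vbar_1^{(n)}=0$; then $0\to\Vbar_1\to\Vbar\to\cosoc(\Vbar)\to0$ yields $\Vbar^{(n)}\iso\cosoc(\Vbar)^{(n)}\neq0$, which is $(3)$.

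$(2)\Rightarrow(4)$: this is the substantive step. First, because $\J(\Vbar)$ generates $\Vbar$ over $K[G]$, every proper $K[G]$--submodule $\Ubar\subsetneq\Vbar$ has $\Ubar^{(n)}=0$: otherwise $0\neq\Ubar^{(n)}\subseteq\Vbar^{(n)}$ is the full one--dimensional space $\Vbar^{(n)}$, whence $\J(\Ubar)=(\Phi^+)^{n-1}\Psi^+(\Ubar^{(n)})=\J(\Vbar)\subseteq\Ubar$, forcing $\Ubar=\Vbar$. Dually, each irreducible subrepresentation $\sigma\hookrightarrow\Vbar^{\vee}$ is the contragredient of a nonzero quotient $\Vbar\twoheadrightarrow\sigma^{\vee}$ whose proper kernel has vanishing top derivative, so $\Vbar^{(n)}\iso(\sigma^{\vee})^{(n)}$ is nonzero and one--dimensional; hence $\sigma^{\vee}$, and hence $\sigma$, is generic and absolutely irreducible, and two distinct such $\sigma$ with trivial intersection would produce a nonzero quotient of $\Vbar$ with two--dimensional top derivative, which is impossible. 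Thus $\soc(\Vbar^{\vee})$ is absolutely irreducible and generic. For the remaining parts of ``essentially AIG'' one analyzes $\Vbar$ itself: the vanishing of top derivatives on proper submodules shows $\cosoc(\Vbar)$ is the \emph{unique} irreducible quotient $Q:=\soc(\Vbar^{\vee})^{\vee}$ of $\Vbar$ --- here one uses that $\Vbar$ is finitely generated over $K[G]$, which holds because $\J(\Vbar)=(\Phi^+)^{n-1}\Psi^+(\Vbar^{(n)})$ is (isomorphic to) the mirabolic Gelfand--Graev representation, which is irreducible, in particular finitely generated, over $K[P_n]$ --- so $\Vbar$ is indecomposable; then, just as in the proofs of Corollaries~\ref{cor:AIG supercuspidal support} and~\ref{cor:admissible AIG finiteness} (the $\Ext$--vanishing of Theorem~\ref{thm:ext} across distinct supercuspidal supports, indecomposability, and boundedness of multiplicities from admissibility), every Jordan--H\"older factor of $\Vbar$ has supercuspidal support $\scs(Q)$ and $\Vbar$ has finite length. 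Dualizing, $\Vbar^{\vee}$ has finite length with $\soc(\Vbar^{\vee})=Q^{\vee}$, and $\Vbar^{\vee}/Q^{\vee}\cong(\ker(\Vbar\to Q))^{\vee}$ has no generic Jordan--H\"older factor (as $\ker(\Vbar\to Q)$ has none), so $(\Vbar^{\vee}/Q^{\vee})^{(n)}=0$; thus $\Vbar^{\vee}$ is essentially AIG.

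\textbf{Expected main obstacle.} The delicate point throughout $(2)\Rightarrow(4)$ is the passage from admissibility to \emph{finite length}: in the non--banal characteristic--$p$ situation one cannot simply assume admissible representations have finite length, so one must genuinely bootstrap --- first pinning down $\cosoc(\Vbar)$ as a single absolutely irreducible generic constituent (via one--dimensionality of $\Vbar^{(n)}$), deducing indecomposability, and only then invoking the supercuspidal--support machinery of Section~\ref{subsec:AIG}. A secondary point requiring care is the finite generation (indeed irreducibility) of the mirabolic Gelfand--Graev representation $(\Phi^+)^{n-1}\Psi^+(K)$ over $K[P_n]$, which is what guarantees that $\Vbar$ is finitely generated over $K[G]$ and hence has a cosocle at all; together with the routine, but needed, compatibility of the top--derivative functor with smooth duality through ``irreducible generic $\Leftrightarrow$ contragredient irreducible generic''.
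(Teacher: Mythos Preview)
Your argument is correct, but the route you take is considerably more circuitous than the paper's. The paper runs the cycle $(1)\Leftrightarrow(2)$, $(2)\Rightarrow(3)$, $(3)\Rightarrow(1)$, and then $(3)\Leftrightarrow(4)$ by straightforward duality, whereas you go $(2)\Rightarrow(4)\Rightarrow(3)\Rightarrow(1)$. The substantive difference lies in how finite length is obtained. Once you observe that $\J(\Vbar)\cong(\Phi^+)^{n-1}\Psi^+(K)$ is a cyclic $K[P_n]$-module and generates $\Vbar$ over $K[G]$, you have that $\Vbar$ is finitely generated over $K[G]$; the paper then simply invokes the standard fact that a finitely generated admissible smooth representation of a reductive $p$-adic group over a field (of characteristic prime to $\ell$) has finite length, and passes directly to~(3). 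From there the cosocle is analyzed exactly as you do (each irreducible summand has one-dimensional top derivative by~(1), the surjection $\Vbar^{(n)}\to\bigoplus_j\Wbar_j^{(n)}$ forces a single summand, and absolute irreducibility follows from $\End_G(\cosoc\Vbar)=K$ via Proposition~\ref{prop:second properties}). Your detour through indecomposability, Theorem~\ref{thm:ext}, and the arguments of Corollaries~\ref{cor:AIG supercuspidal support}--\ref{cor:admissible AIG finiteness} re-derives finite length by heavier machinery that is not needed here; moreover, those corollaries as stated use the ``sum of finite length submodules'' axiom of essentially~AIG representations, so adapting them to $\Vbar$ requires a dual reformulation you do not spell out. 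Since you already have finite generation, you can (and should) shortcut all of this.

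Two further remarks. Your Galois-descent proof of absolute irreducibility of a generic irreducible (via one-dimensionality of the top derivative after base change) is a pleasant alternative to the paper's $\End=K$ argument and works fine. And your $(4)\Rightarrow(3)$ and $(3)\Rightarrow(1)$ are essentially the paper's $(3)\Leftrightarrow(4)$ and $(3)\Rightarrow(1)$ read in one direction, so those steps match.
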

\begin{proof}
The equivalence of conditions~(1) and~(2) follows from
Lemma~\ref{lem:Kirillov generator} (applied with $A = K$ and $V = \Vbar$).
If condition~(2) holds, then $\Vbar$ is finitely generated over $K[G]$,
and hence of finite length.  Write 
$\cosoc(\Vbar) = \bigoplus_j \Wbar_j$,
where each $\Wbar_j$ is irreducible.
Condition~(1) (which also holds, since it is equivalent
to condition~(2), as we have already observed) shows that $\Wbar_j^{(n)}$
is one-dimensional for each $j$.  Since the composition:
$$\Vbar^{(n)} \rightarrow \bigl(\cosoc(\Vbar)\bigr)^{(n)} 
\iso \bigoplus_j \Wbar_j^{(n)}$$
is surjective (the derivative functor is exact), we see that in fact
there is only one summand,
and hence that $\cosoc(\Vbar)$ is irreducible. 
Proposition~\ref{prop:second properties} (applied
with $A = K$ and $V = \Wbar$) then implies that
$\End_G\bigl(\cosoc(\Vbar)\bigr)) = K,$
and hence that $\cosoc(\Vbar)$ is in fact absolutely irreducible.
Thus~(2) implies~(3).

If condition~(3) holds, then by assumption $\bigl(\cosoc(\Vbar)\bigr)^{(n)}$
is nonzero.  It is therefore one-dimensional by Theorem~\ref{thm:kirillov tensor}.
%Since $\Vbar$ is absolutely irreducible, it is isomorphic to a tensor product
%$\cosoc(\Vbar) \iso \bigotimes_i \Vbar_i,$
%where $\Vbar_i$ is an absolutely irreducible
%representation of $G_i$.  Thus 
%$$F(\Vbar) \iso F\bigl(\cosoc(\Vbar)\bigr) \iso \bigotimes_i F(\Vbar_i).$$
%Each tensor factor is non-zero, by assumption, and hence is one-dimensional,
%by Theorem~\ref{thm:kirillov}.  
Thus $\Vbar^{(n)}$ is one-dimensional,
giving the first half of condition~(1).    The second half of condition~(1)
follows from the fact that $\cosoc(\Vbar)$ is irreducible, and satisfies
$\bigl(\cosoc(\Vbar)\bigr)^{(n)} \neq 0$.  Thus~(3) implies~(1).

Now consider the smooth dual $\Vbar^{\vee}$.  If $\Vbar$ is finite length,
the socle of $\Vbar^{\vee}$ is the smooth dual of the cosocle of $\Vbar$.  In
particular $\soc(\Vbar^{\vee})$ is absolutely
irreducible and generic if and only if $\cosoc(\Vbar)$ is.  Moreover, the 
map 
$\bigl(\soc(\Vbar^{\vee})\bigr)^{(n)} \rightarrow (\Vbar^{\vee})^{(n)}$ 
is dual to the map 
$\Vbar^{(n)} \rightarrow \bigl(\cosoc(\Vbar)\bigr)^{(n)}$
so that one is an isomorphism if and only if the other is.
Thus~(3) is equivalent to~(4).  
\end{proof}

\begin{lemma} \label{lem:tf}
If $A$ is a reduced Noetherian $W(k)$-algebra, and if $V$ be an admissible $A[G]$-module,
then the following are equivalent:
\begin{enumerate}
\item $V$ is $A$-torsion free,
i.e.\ every associated prime of $V$ is a minimal prime of~$A$.
\item The natural map $V \rightarrow V \otimes_A \CK(A)$ is an injection, where $\CK(A)$
is the product over minimal primes ${\mathfrak a}$ of $A$ of the fields
$A_{\mathfrak a}$.
\item The natural map 
$$V \rightarrow \prod_{\mathfrak a} (V/{\mathfrak a}V)^{\tf}$$
is injective, where ${\mathfrak a}$ runs over the minimal primes of $A$
and $(V/{\mathfrak a}V)^{\tf}$ is the maximal $A/{\mathfrak a}$-torsion free
quotient of $V/{\mathfrak a}V$.
\end{enumerate}
If these equivalent conditions hold, then for any Zariski dense set of
primes $\Sigma$ of $\Spec A$, the map
$$V \rightarrow \prod_{\mathfrak p \in \Sigma} V \otimes_A \kappa({\mathfrak p})$$
is injective.
\end{lemma}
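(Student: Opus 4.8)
The plan is to reduce the entire lemma --- the equivalence of (1), (2), (3) and the closing assertion --- to statements about finitely generated $A$-modules, and then to dispatch each by commutative algebra. First I would note that $G$ satisfies Condition~\ref{cond:pro-ell}: since $\ell_v$ is invertible in the $W(k)$-algebra $A$ for every $v \in S$, each $\GL_n(E_v)$ has a pro-$\ell_v$ open subgroup. Hence the admissible $A[G]$-module $V$ splits as $V = \bigoplus_i V_i$ by~(\ref{eqn:decomp}), with each $V_i$ finitely generated over $A$ by Lemma~\ref{lem:adm}. All the operations appearing in the statement --- $- \otimes_A \CK(A)$, $- \otimes_A \kappa(\mathfrak p)$, passage to the maximal torsion-free quotient modulo a minimal prime, and passage to associated primes (using $\mathrm{Ass}(\bigoplus_i V_i) = \bigcup_i \mathrm{Ass}(V_i)$) --- commute with this direct sum, so it suffices to prove the lemma with $V$ replaced by an arbitrary finitely generated $A$-module $M$. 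I will write $\mathfrak a_1, \dots, \mathfrak a_r$ for the minimal primes of the reduced Noetherian ring $A$, and $B_j := A/\mathfrak a_j$, so that $\kappa(\mathfrak a_j) = \mathrm{Frac}(B_j)$ and (the localizations $A_{\mathfrak a_j}$ being reduced Artinian local rings, hence fields) $\CK(A) = S^{-1}A = \prod_{j=1}^r \kappa(\mathfrak a_j)$, where $S = A \setminus \bigcup_j \mathfrak a_j$ is the set of non-zerodivisors of $A$.

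Next I would establish (1) $\Leftrightarrow$ (2) $\Leftrightarrow$ (3) for $M$. For (1) $\Leftrightarrow$ (2): the kernel of $M \to S^{-1}M = M \otimes_A \CK(A)$ is the $S$-torsion submodule of $M$, which vanishes precisely when no element of $S$ is a zerodivisor on $M$, i.e. when $\bigcup_{\mathfrak p \in \mathrm{Ass}(M)} \mathfrak p \subseteq \bigcup_j \mathfrak a_j$; by prime avoidance this holds if and only if every associated prime of $M$ is contained in some $\mathfrak a_j$, hence equals it --- which is condition~(1). For (2) $\Leftrightarrow$ (3): since $M \otimes_A \kappa(\mathfrak a_j) = (M/\mathfrak a_j M)^{\tf} \otimes_{B_j} \kappa(\mathfrak a_j)$, and since a torsion-free module over a domain always embeds in its generic fibre, the natural map $\prod_j (M/\mathfrak a_j M)^{\tf} \to \prod_j M \otimes_A \kappa(\mathfrak a_j)$ is injective; as the map of~(2) factors through the map of~(3) followed by this injection, the two injectivity statements are equivalent.

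Finally, for the closing assertion, assume (1)--(3) hold, so by~(3) there is an embedding $M \hookrightarrow \prod_{j=1}^r N_j$ with $N_j := (M/\mathfrak a_j M)^{\tf}$ finitely generated and torsion-free over the Noetherian domain $B_j$. I would first check that a Zariski dense $\Sigma \subseteq \Spec A$ meets each irreducible component $V(\mathfrak a_j)$ in a dense subset $\Sigma_j := \Sigma \cap V(\mathfrak a_j)$: since $\Sigma = \bigcup_j \Sigma_j$ and closures commute with finite unions, the generic point $\mathfrak a_j$ of $V(\mathfrak a_j)$ lies in $\overline{\Sigma_{j'}} \subseteq V(\mathfrak a_{j'})$ for some $j'$, which forces $j' = j$ by minimality, whence $\overline{\Sigma_j} = V(\mathfrak a_j)$. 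Transporting along the homeomorphism $V(\mathfrak a_j) \cong \Spec B_j$, the set $\bar\Sigma_j := \{\mathfrak p/\mathfrak a_j : \mathfrak p \in \Sigma_j\}$ is dense in $\Spec B_j$. Next, a finitely generated torsion-free module over the domain $B_j$ embeds in a finite free $B_j$-module $B_j^{d_j}$ (clear denominators after choosing a $\mathrm{Frac}(B_j)$-basis of the generic fibre out of a finite generating set); consequently an element of $N_j$ that dies in $N_j \otimes_{B_j} \kappa(\bar{\mathfrak q})$ for every $\bar{\mathfrak q} \in \bar\Sigma_j$ has all its coordinates in $B_j^{d_j}$ lying in $\bigcap_{\bar{\mathfrak q} \in \bar\Sigma_j} \bar{\mathfrak q} = \sqrt{0} = 0$, hence is zero. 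Now take $m \in M$ with $m \mapsto 0$ in $M \otimes_A \kappa(\mathfrak p)$ for all $\mathfrak p \in \Sigma$. For a fixed $j$ and any $\mathfrak p \in \Sigma_j$ (so $\mathfrak p \supseteq \mathfrak a_j$), the surjections $M \twoheadrightarrow M/\mathfrak a_j M \twoheadrightarrow N_j$ induce a surjection $M \otimes_A \kappa(\mathfrak p) \twoheadrightarrow N_j \otimes_{B_j} \kappa(\mathfrak p/\mathfrak a_j)$, so the image of $m$ in $N_j$ dies in $N_j \otimes_{B_j} \kappa(\bar{\mathfrak q})$ for every $\bar{\mathfrak q} \in \bar\Sigma_j$, and therefore vanishes by the previous step. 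As this holds for every $j$ and $M \hookrightarrow \prod_j N_j$, we conclude $m = 0$.

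The main obstacle is the closing assertion, not the (essentially formal) chain (1)--(3). Two points require care there: $V$ is not finitely generated over $A$, which is why one must first pass to the summands $V_i$; and, more substantively, at a non-generic prime $\mathfrak p$ the residue field $\kappa(\mathfrak p)$ does not detect only the torsion-free quotient $N_j$, so one cannot simply replace $M$ by $N_j$ --- instead one routes through the surjections $M \otimes_A \kappa(\mathfrak p) \twoheadrightarrow N_j \otimes_{B_j} \kappa(\mathfrak p/\mathfrak a_j)$ and uses that a Zariski dense subset of $\Spec A$ restricts to a Zariski dense subset of each irreducible component.
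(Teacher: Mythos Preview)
Your proposal is correct. For the equivalence of (1), (2), (3) your argument is essentially the paper's: the paper also proves (2) $\Leftrightarrow$ (3) by observing that the map in~(2) factors through the map in~(3) followed by an injection, and proves (1) $\Leftrightarrow$ (2) by a direct associated-prime argument (without first passing to the finitely generated summands $V_i$, though that reduction does no harm).

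The genuine difference is in the closing assertion. The paper passes from $V$ to the finitely generated module $V^U$ for a compact open $U$ fixing~$x$, and then asserts that ``the support of $x$ (considered as an element of $V^U$) is a closed subset of $\Spec A$ contained in the complement of $\Sigma$'', concluding that $\mathrm{Ann}(x)$ lies in no minimal prime. Read literally this step conflates vanishing in a fibre with vanishing in a stalk: the implication ``$x=0$ in $V^U\otimes_A\kappa(\mathfrak p)\ \Rightarrow\ \mathfrak p\notin V(\mathrm{Ann}(x))$'' fails in general (e.g.\ $A=k[s,t]$, $V^U=(s,t)\subset A$, $x=s$, $\mathfrak p=(s)$: here $x$ vanishes in the fibre at $(s)$ yet $\mathrm{Ann}(x)=0$). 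Your route sidesteps this: after reducing to a finitely generated $M$ you invoke condition~(3) to embed $M$ into $\prod_j N_j$ with each $N_j$ finitely generated torsion-free over the domain $B_j=A/\mathfrak a_j$, embed $N_j$ in a free $B_j$-module, and use that the intersection of a Zariski-dense set of primes in a domain is zero. The two auxiliary points you isolate --- that a dense $\Sigma\subset\Spec A$ restricts to a dense subset of each irreducible component, and that the surjection $M\otimes_A\kappa(\mathfrak p)\twoheadrightarrow N_j\otimes_{B_j}\kappa(\mathfrak p/\mathfrak a_j)$ transfers the vanishing hypothesis to $N_j$ --- are exactly what is needed to make this work, and your argument for the closing assertion is in fact more careful than the one printed in the paper.
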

\begin{proof}
If $x$ lies in the kernel of the map $V \rightarrow V \otimes_A \CK(A)$, then the annihilator
of $x$ is not contained in any minimal prime of $A$.  There then exists a multiple of $x$
whose annihilator is prime; this prime cannot be a minimal prime of $A$ and is therefore 
a non-minimal associated
prime of $V$.  Conversely, if there exists a non-minimal associated prime ${\mathfrak p}$ of $V$,
there is an element $x$ of $V$ whose annihilator is ${\mathfrak p}$; then $x$ maps to zero
in $V \otimes_A A/{\mathfrak a}$ for every minimal prime ${\mathfrak a}$ of $A$.  Thus (1)
and (2) are equivalent.

Note that the inclusion $V \rightarrow V \otimes_A \CK(A)$ factors as the composite:
$$V \rightarrow \prod_{\mathfrak a} (V/{\mathfrak a}V)^{\tf} \rightarrow V \otimes_A \CK(A) = \prod_{\mathfrak a}
V \otimes_A \kappa({\mathfrak a}),$$
and the second map is always injective.  It thus follows that (2) and (3) are equivalent.

Now let $\Sigma$ be a Zariski dense set of primes of $A$, suppose that the equivalent
conditions (1), (2), and (3) hold, and suppose that $x$ is an element of $V$
that maps to zero in $V \otimes_A \kappa({\mathfrak p})$ for all ${\mathfrak p} \in \Sigma$.
Choose a compact open
subgroup $U$ of $G$ fixing $x$; then $V^U$ is finitely generated over $A$, and
$x$ maps to zero in $V^U \otimes_A \kappa({\mathfrak p})$ for all $\mathfrak p$
in $\Sigma$.  It follows that the support of $x$ (considered as an element of $V^U$)
is a closed subset of $\Spec A$ contained in the complement of $\Sigma$.  In particular
that the annihilator of $x$ is not contained in any minimal prime of $A$, contradicting
condition~(1).
\end{proof}

\begin{lemma} \label{lem:dense}
Let $A$ be a reduced Noetherian $W(k)$-algebra, and let $V_1$ and $V_2$
be two admissible smooth $A[G]$-modules such that:
\begin{enumerate}
\item For each $i$, $V_i^{(n)}$ is free of rank one over $A$.
\item For each $i$, $V_i$ is generated by $\J(V_i)$ as an $A[G]$-module.
\item There exists a Zariski dense set $\Sigma$ of primes of $A$ such that
for all ${\mathfrak p} \in \Sigma$, $(V_1)_{\mathfrak p}$ is isomorphic to
$(V_2)_{\mathfrak p}$, as $A_{\mathfrak p}[G]$-modules.
\item The natural map:
$$V_i \rightarrow \prod_{\mathfrak p \in \Sigma} V_i \otimes_A \kappa(\mathfrak p)$$
is injective for each $i$.  {\em (}This is automatic if $V_i$ is $A$-torsion free.{\em )}
\end{enumerate}
Then there is an $A$-linear $G$-equivariant isomorphism $V_1 \iso V_2$.
\end{lemma}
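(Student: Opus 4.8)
The plan is to construct the isomorphism using the Kirillov models and the freeness of the top derivatives, following the strategy implicit in the proof of Theorem~\ref{thm:family}. First I would fix, for each $i$, an $A$-module generator $\xi_i$ of the free rank-one module $V_i^{(n)}$. Via the adjointness between $\J(-)$ and $V \mapsto V^{(n)}$ (recorded at the end of Section~\ref{subsec:kirillov}), the choice of $\xi_i$ determines a canonical $A[G]$-equivariant map $j_i: \J(V_i) \to V_i$, and since by hypothesis~(2) $\J(V_i)$ generates $V_i$, and since (by Proposition~\ref{prop:schwartz endomorphisms}) $\End_{P_n}(\J(V_i)) = A$, the module $V_i$ is realized as a quotient $\J(W) \twoheadrightarrow V_i$ where $\J(W) := (\Phi^+)^{n-1}\Psi^+(A)$ is the ``universal'' Schwartz module; the kernel of this surjection is an $A[G]$-submodule $K_i$ of $\J(W)$. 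Thus the task reduces to showing $K_1 = K_2$ as submodules of $\J(W)$.

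Next I would exploit hypothesis~(3) and~(4). For each $\mathfrak p \in \Sigma$, base-changing along $A \to \kappa(\mathfrak p)$ gives $\J(W) \otimes_A \kappa(\mathfrak p) \twoheadrightarrow (V_i)_{\mathfrak p}$ with kernel $K_i \otimes_A \kappa(\mathfrak p)$ (using exactness of the Kirillov functors under base change, and right-exactness of $\otimes$). Since $(V_1)_{\mathfrak p} \cong (V_2)_{\mathfrak p}$ as $\kappa(\mathfrak p)[G]$-modules, and since any such isomorphism is — after rescaling — compatible with the identification of top derivatives (both are one-dimensional over $\kappa(\mathfrak p)$, generated by the images of $\xi_1, \xi_2$; here one uses that $(V_i)_\mathfrak{p}^{(n)}$ is one-dimensional and that a $G$-isomorphism induces an isomorphism on top derivatives, so its effect on the generator is a nonzero scalar which we absorb), the two quotient maps $\J(W)\otimes\kappa(\mathfrak p) \to (V_i)_{\mathfrak p}$ have the same kernel. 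Hence $K_1 \otimes_A \kappa(\mathfrak p) = K_2 \otimes_A \kappa(\mathfrak p)$ inside $\J(W) \otimes_A \kappa(\mathfrak p)$ for all $\mathfrak p \in \Sigma$. Now consider the composite $K_1 \hookrightarrow \J(W) \twoheadrightarrow V_2$; I claim it is zero. Indeed its image is an $A[G]$-submodule of $V_2$ which, by hypothesis~(4) applied to $V_2$, injects into $\prod_{\mathfrak p \in \Sigma} V_2 \otimes_A \kappa(\mathfrak p)$, and in each factor the image of $K_1 \otimes \kappa(\mathfrak p)$ vanishes because $K_1 \otimes \kappa(\mathfrak p) = K_2 \otimes \kappa(\mathfrak p) = \ker(\J(W)\otimes\kappa(\mathfrak p) \to (V_2)_\mathfrak{p})$. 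Therefore $K_1 \subseteq K_2$, and by symmetry $K_2 \subseteq K_1$, so $K_1 = K_2$ and the induced map $V_1 = \J(W)/K_1 \iso \J(W)/K_2 = V_2$ is the desired $A$-linear $G$-equivariant isomorphism.

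A point requiring care, and the one I expect to be the main obstacle, is the bookkeeping with the scalar ambiguities: the isomorphism $(V_1)_{\mathfrak p} \cong (V_2)_{\mathfrak p}$ is only given up to $\kappa(\mathfrak p)^\times$, and a priori this scalar varies with $\mathfrak p$, so one cannot naively glue the local isomorphisms into a global one directly. The argument above sidesteps this by never trying to glue isomorphisms — instead it compares \emph{kernels} of the canonical quotient maps determined by the fixed global generators $\xi_i$, and the key observation is that a $G$-equivariant isomorphism $(V_1)_\mathfrak{p} \cong (V_2)_\mathfrak{p}$ automatically carries $\ker(\J(W)\otimes\kappa(\mathfrak p) \to (V_1)_\mathfrak{p})$ to $\ker(\J(W)\otimes\kappa(\mathfrak p)\to (V_2)_\mathfrak{p})$, regardless of how it interacts with $\xi_1, \xi_2$, because both kernels are intrinsically characterized as $\ker(\J(W)\otimes\kappa(\mathfrak p) \to (\J(W)\otimes\kappa(\mathfrak p))$'s largest Kirillov-generated quotient with one-dimensional top derivative$)$ — equivalently, both $(V_i)_\mathfrak{p}$ are the unique quotient of $\J(W)\otimes\kappa(\mathfrak p)$ isomorphic to the given $G$-module. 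I would also need to check that $\J(W)$ and its base changes are well-behaved — in particular that $\J(W) \otimes_A \kappa(\mathfrak p) = \J(W_{\mathfrak p})$, which is exactly the base-change compatibility of the functors $\Phi^+, \Psi^+$ noted after Proposition~\ref{prop:Kirillov descent} — and that the surjection $\J(V_i) \to V_i$ does indeed realize $V_i$ as $\J(W)\otimes_A V_i^{(n)} = \J(W)$ (after trivializing $V_i^{(n)}$), which follows from $\J(V) = (\Phi^+)^{n-1}\Psi^+(V^{(n)}) = [(\Phi^+)^{n-1}\Psi^+ A] \otimes_A V^{(n)}$ as in the proof of Lemma~\ref{lem:Kirillov sub}.
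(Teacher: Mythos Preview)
There is a genuine error in your setup: $\J(W) = (\Phi^+)^{n-1}\Psi^+(A)$ is only an $A[P_n]$-module, not an $A[G]$-module. The map $j_i: \J(V_i) \to V_i$ is the $P_n$-equivariant \emph{inclusion} of the Schwartz subspace, not a $G$-equivariant surjection; the fact that $\J(V_i)$ generates $V_i$ over $A[G]$ does not make $V_i$ a quotient of $\J(W)$, and your $K_i$ cannot be an $A[G]$-submodule of a $P_n$-module. Your kernel-comparison argument can be salvaged by replacing $\J(W)$ with $R := \cInd_{P_n}^G \J(W)$: Frobenius reciprocity together with the adjunction $(\Phi^+)^{n-1}\Psi^+ \dashv \Psi^-(\Phi^-)^{n-1}$ give $\Hom_{A[G]}(R, V) = V^{(n)}$ for any smooth $A[G]$-module $V$, so $\xi_i$ determines a $G$-equivariant map $R \to V_i$, surjective by hypothesis~(2). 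Since $\Hom_{A[G]}(R,(V_j)_{\mathfrak p}) = ((V_j)_{\mathfrak p})^{(n)}$ is one-dimensional over $\kappa(\mathfrak p)$, any two nonzero $G$-maps $R \otimes_A \kappa(\mathfrak p) \to (V_j)_{\mathfrak p}$ differ by a scalar and hence share a kernel; the rest of your argument then goes through verbatim.

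The paper's proof is the dual of this and is cleaner because it avoids building any auxiliary $G$-module. Rather than a common source, it uses a common \emph{target}: set $\CK' = \prod_{\mathfrak p \in \Sigma} \kappa(\mathfrak p)$, use hypothesis~(3) to identify $V_1 \otimes_A \CK' \cong V_2 \otimes_A \CK'$, and use hypothesis~(4) to regard both $V_i$ as $A[G]$-submodules of this module. Inside the rank-one $\CK'$-module $(V_1 \otimes_A \CK')^{(n)}$ one finds a single unit $c \in (\CK')^\times$ with $V_1^{(n)} = c\,V_2^{(n)}$ --- this packages your ``scalars varying with $\mathfrak p$'' into one element. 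Applying the functor $(\Phi^+)^{n-1}\Psi^+$ gives $\J(V_1) = c\,\J(V_2)$ as $P_n$-submodules, and then $V_1 = c\,V_2$ as $A[G]$-submodules since each $V_i$ is the $A[G]$-span of its Schwartz space inside the ambient module.
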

\begin{proof}
Let $\CK'$ be the product over ${\mathfrak p} \in \Sigma$ of the residue fields
$\kappa({\mathfrak p})$.  Condition 3) gives us an isomorphism
$V_1 \otimes_A \CK' \iso V_2 \otimes_A \CK'$.  Moreover, by Condition (4),
$V_i$ embeds in $V_i \otimes_A \CK'$ for each $i$.  We may thus regard
$V_1$ and $V_2$ as submodules of $V_1 \otimes_A \CK'$.

By (1), $(V_1 \otimes_A \CK')^{(n)}$ is free of rank one over $\CK'$,
and $V_i^{(n)}$ is a free $A$-submodule of $(V_1 \otimes_A \CK')^{(n)}$
for each $i$.  There thus exists an element $c$ of $(\CK')^{\times}$
such that $cV_2^{(n)}$ and $V_1^{(n)}$ coincide as submodules of
$(V_1 \otimes_A \CK')^{(n)}$.  It follows that
$c\J(V_2)$ and $\J(V_1)$ coincide as submodules
of $\J(V_1 \otimes_A \CK')$.  Since
$V_1$ and $V_2$ are generated by $\J(V_1)$ and $\J(V_2)$ over $A[G]$,
we must have $V_1 = cV_2$; in particular $V_1$ and $V_2$ are isomorphic.
\end{proof}

We can now prove the uniqueness claim of Theorem~\ref{thm:family}. 

\begin{prop}
\label{prop:unique}
Let $A$ be a local ring satisfying Condition~{\em \ref{cond:A}}, and let
$V_1$ and $V_2$ be two admissible smooth $A[G]$-modules.
Suppose that:
\begin{enumerate}
\item The $V_i$ are $A$-torsion free.
\item For each minimal prime ${\mathfrak a}$ of $A$, 
$(V_i)_{\mathfrak a}^{(n)}$
is nonzero.
\item For each $i$, $\Vbar_i$ satisfies the equivalent conditions of Lemma~{\em \ref{lem:Vbar Kirillov}}.
\item For each minimal prime $\mathfrak a$ of $A$, there is
a $G$-equivariant isomorphism $(V_1)_{\mathfrak a} \iso (V_2)_{\mathfrak a}$.
\end{enumerate}
Then there is an $A$-linear $G$-equivariant isomorphism
$V_1 \cong V_2$
{\em (}which, by part~{\em (3)} of Proposition~{\em \ref{prop:second properties}},
is uniquely determined up to multiplication by an element of~$A^{\times}${\em )}.
\end{prop}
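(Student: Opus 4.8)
The plan is to deduce the proposition directly from Lemma~\ref{lem:dense}, applied with the parameter set $\Sigma$ taken to be the (finite) set of minimal primes of $A$. First I would record that this set is Zariski dense in $\Spec A$: since $A$ is reduced, the intersection of its minimal primes equals the nilradical $(0)$, so the closure of the set of minimal primes is $V(0) = \Spec A$ (equivalently, the minimal primes are the generic points of the irreducible components). This density is what lets us later invoke the last assertion of Lemma~\ref{lem:tf}.

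Next I would verify the four hypotheses of Lemma~\ref{lem:dense} for each $V_i$. Hypotheses~(1) and~(2) of that lemma --- that $V_i^{(n)}$ is free of rank one over $A$, and that $V_i$ is generated over $A[G]$ by $\J(V_i)$ --- follow from Proposition~\ref{prop:second properties}, whose hypotheses are exactly met here: $A$ satisfies Condition~\ref{cond:A}; hypothesis~(2) of the present proposition gives $(V_i)_{\mathfrak a}^{(n)} \neq 0$ for every minimal prime $\mathfrak a$; and hypothesis~(3), which asserts that $\Vbar_i$ satisfies the equivalent conditions of Lemma~\ref{lem:Vbar Kirillov}, supplies in particular condition~(1) of that lemma, i.e.\ that $\Vbar_i^{(n)}$ is one-dimensional and that every non-zero quotient $k[G]$-module of $\Vbar_i$ has non-zero top derivative. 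Proposition~\ref{prop:second properties} then also records $\End_{A[G]}(V_i) = A$, which I will reuse at the end. Hypothesis~(3) of Lemma~\ref{lem:dense} --- that $(V_1)_{\mathfrak a} \iso (V_2)_{\mathfrak a}$ as $A_{\mathfrak a}[G]$-modules for each $\mathfrak a \in \Sigma$ --- is precisely hypothesis~(4) of the proposition, since for a minimal prime $\mathfrak a$ of the reduced ring $A$ the localization $A_{\mathfrak a}$ is the field $\kappa(\mathfrak a)$. Hypothesis~(4) of Lemma~\ref{lem:dense} --- injectivity of $V_i \to \prod_{\mathfrak a} V_i \otimes_A \kappa(\mathfrak a)$ --- follows from hypothesis~(1) of the proposition ($V_i$ is $A$-torsion free) together with the final assertion of Lemma~\ref{lem:tf} and the density of the minimal primes noted above.

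Having checked all four hypotheses, Lemma~\ref{lem:dense} produces an $A$-linear $G$-equivariant isomorphism $V_1 \iso V_2$. For the parenthetical uniqueness statement: if $\phi,\psi\colon V_1 \to V_2$ are two such isomorphisms, then $\psi^{-1}\circ\phi \in \End_{A[G]}(V_1) = A$, and being an automorphism it is multiplication by a unit of $A$; hence $\phi$ and $\psi$ differ by an element of $A^\times$. I do not expect a genuine obstacle here: the proposition is essentially an assembly of the structural facts already proved (freeness of the top derivative via Theorem~\ref{thm:fgtensor}, the endomorphism computation via Proposition~\ref{prop:schwartz endomorphisms}, and the generation/commensurability arguments packaged into Lemmas~\ref{lem:dense} and~\ref{lem:tf}). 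The only points demanding a little care are tracing the chain of equivalences in Lemma~\ref{lem:Vbar Kirillov} to see that hypothesis~(3) feeds exactly what Proposition~\ref{prop:second properties} requires, and confirming that the finitely many minimal primes do form a Zariski-dense subset so that Lemma~\ref{lem:tf} applies.
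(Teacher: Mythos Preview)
Your proof is correct and follows the same route as the paper: invoke Proposition~\ref{prop:second properties} to get freeness of $V_i^{(n)}$ and generation by $\J(V_i)$, note density of the minimal primes, and apply Lemma~\ref{lem:dense}. The paper's own proof is a terse two-sentence version of exactly this argument; you have simply spelled out the verification of each hypothesis of Lemma~\ref{lem:dense} more carefully.
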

\begin{proof}
By part (1) of Proposition~\ref{prop:second properties}, we have that
$V_i^{(n)}$ is free of rank $1$ over $A$ for each $i$.  As the minimal primes
of $A$ are dense in $\Spec A$, it thus follows
by Lemma~\ref{lem:dense} that $V_1$ is isomorphic over $A[G]$ to $V_2$.  
\end{proof}

The purpose of our next collection of results, which are rather 
technical, is to allow us to make a tensor factorization
in the context of Theorem~\ref{thm:family}, and hence work with
one $E_v$ at a time.

\begin{prop}
\label{prop:factor}
Let $K$ be a field in which all $\ell_i$ are invertible.
If $\Vbar$ is an admissible smooth representation of $G$ over $K$
satisfying the equivalent conditions of Lemma~{\em \ref{lem:Vbar Kirillov}},
there exist admissible smooth representations
$\Vbar_v$ of $G_v$ {\em (}$v \in S${\em )}, each individually satisfying
the equivalent conditions of Lemma~{\em \ref{lem:Vbar Kirillov}} 
{\em (}with $G$ replaced by $G_v${\em )},
together with a $G$-equivariant surjection
$\bigotimes_v \Vbar_v \rightarrow \Vbar$.
\end{prop}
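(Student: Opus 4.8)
The plan is to produce the $\Vbar_v$ as quotients of the cosocle of $\Vbar$. First I would recall that, by the equivalence of conditions~(1)--(3) in Lemma~\ref{lem:Vbar Kirillov}, $\Vbar$ has finite length, its cosocle $W := \cosoc(\Vbar)$ is absolutely irreducible, and the surjection $\Vbar \to W$ induces an isomorphism on top derivatives $\Vbar^{(n)} \iso W^{(n)}$, with $W^{(n)}$ one-dimensional. Since $W$ is an absolutely irreducible admissible representation of $G = \prod_{v \in S}\GL_n(E_v)$, it factors as a tensor product $W \cong \bigotimes_{v \in S} W_v$ of absolutely irreducible admissible representations $W_v$ of $\GL_n(E_v)$; moreover $W^{(n)} = \bigotimes_v W_v^{(n)}$, so one-dimensionality of $W^{(n)}$ forces each $W_v^{(n)}$ to be one-dimensional, i.e.\ each $W_v$ is generic. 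Composing $\bigotimes_v W_v \iso W$ with a choice of $K$-linear $G$-equivariant lift (using that $\Vbar \to W$ is surjective and $\bigotimes_v W_v$ is finite length, or simply taking any section as $K[G]$-modules is not available — so instead I would factor through the surjection itself) gives the desired map once we take $\Vbar_v := W_v$.

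Concretely, the second step is to set $\Vbar_v := \cosoc(\Vbar)_v$, the $v$-th tensor factor of $\cosoc(\Vbar)$. Each $\Vbar_v$ is absolutely irreducible and generic, hence in particular of finite length with irreducible (indeed absolutely irreducible) cosocle equal to itself, and $\Vbar_v^{(n)}$ is one-dimensional; so $\Vbar_v$ satisfies condition~(3) of Lemma~\ref{lem:Vbar Kirillov} (with $G$ replaced by $G_v$), and therefore all the equivalent conditions. The third step is to exhibit the surjection $\bigotimes_{v \in S}\Vbar_v \to \Vbar$: but $\bigotimes_v \Vbar_v \cong \cosoc(\Vbar)$, which is a \emph{quotient} of $\Vbar$, not a subobject, so this needs a twist of the argument. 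The fix is to replace $\Vbar$ by its smooth $K$-dual: by condition~(4) of Lemma~\ref{lem:Vbar Kirillov}, $\Vbar^{\vee}$ is essentially AIG, so $\soc(\Vbar^{\vee}) = \cosoc(\Vbar)^{\vee}$ is absolutely irreducible and generic and embeds in $\Vbar^{\vee}$. Dualizing the embedding $\soc(\Vbar^{\vee}) \hookrightarrow \Vbar^{\vee}$ back gives the surjection $\Vbar = (\Vbar^{\vee})^{\vee} \twoheadrightarrow \soc(\Vbar^{\vee})^{\vee} = \cosoc(\Vbar)$, and then I would compose with the inverse of the tensor factorization isomorphism $\bigotimes_v \Vbar_v \iso \cosoc(\Vbar)$ to get $\bigotimes_v \Vbar_v \to \Vbar$ — but this has the arrow the wrong way. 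So in fact the correct target of the surjection must be $\Vbar$ itself viewed via its cosocle: the statement asks for $\bigotimes_v \Vbar_v \twoheadrightarrow \Vbar$, so I would instead define $\Vbar_v$ using the \emph{socle}: take $\Vbar_v := \soc(\Vbar^\vee)_v^\vee$ and observe that $\bigotimes_v \Vbar_v \cong \cosoc(\Vbar)$ sits as a quotient, which still does not directly give a surjection \emph{onto} $\Vbar$.

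The resolution, and the step I expect to be the main obstacle, is that the surjection $\bigotimes_v \Vbar_v \to \Vbar$ cannot come from the cosocle but must be built by hand as a \emph{Kirillov-model} construction: since $\J(\Vbar)$ generates $\Vbar$ over $K[G]$ (Lemma~\ref{lem:Vbar Kirillov}(2)) and $\Vbar^{(n)}$ is one-dimensional, $\Vbar$ is a quotient of $(\Phi^{+,S})^{n-1}\Psi^{+,S}(\Vbar^{(n)})$, which by the tensor-product compatibility of the Kirillov functors over the various $E_v$ is isomorphic to $\bigotimes_v \bigl[(\Phi^+)^{n-1}\Psi^+(W(k))\bigr] \otimes \Vbar^{(n)}$. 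One then shows this object is itself a tensor product $\bigotimes_v \Xbar_v$ where $\Xbar_v := (\Phi^+)^{n-1}\Psi^+(\Vbar^{(n)})$ regarded as a $G_v$-representation, and finally replaces each $\Xbar_v$ by its image $\Vbar_v$ in a suitable essentially AIG envelope — or, more cleanly, takes $\Vbar_v$ to be the image of $\Xbar_v$ under the map to the essentially AIG envelope of $\Vbar_v^{\gen}$ so that each $\Vbar_v$ satisfies Lemma~\ref{lem:Vbar Kirillov}. Checking that the induced map $\bigotimes_v \Vbar_v \to \Vbar$ remains surjective after these replacements — i.e.\ that passing to the relevant quotients in each factor is compatible with the tensor product and with generation by Schwartz functions — is the delicate point; it will use the exactness of the derivative functors, Corollary~\ref{cor:zero} (to kill kernels with vanishing top derivative factor-by-factor), and the fact that $\Vbar^{(n)} = \bigotimes_v \Vbar_v^{(n)}$ throughout.
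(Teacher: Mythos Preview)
Your proposal does not contain a working argument. The first two attempts (taking $\Vbar_v$ to be the tensor factors of $\cosoc(\Vbar)$, or of its dual) you correctly diagnose as producing a map in the wrong direction: $\bigotimes_v \Vbar_v \cong \cosoc(\Vbar)$ is a \emph{quotient} of $\Vbar$, not something that surjects onto it. Your proposed resolution via Schwartz functions has a genuine gap: the object $(\Phi^{+,S})^{n-1}\Psi^{+,S}(\Vbar^{(n)}) = \J(\Vbar)$ is only a $P_n$-representation, not a $G$-representation, so the assertion that ``$\Vbar$ is a quotient of $(\Phi^{+,S})^{n-1}\Psi^{+,S}(\Vbar^{(n)})$'' does not make sense in the category of $G$-modules. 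The fact that $\J(\Vbar)$ \emph{generates} $\Vbar$ over $K[G]$ does not give $\Vbar$ as a $G$-quotient of $\J(\Vbar)$ (or of any finite-length tensor product built from it); and the subsequent step of ``replacing each $\Xbar_v$ by its image in an essentially AIG envelope'' is not well-defined for the same reason, nor do the $\Xbar_v$ you write down satisfy the finite-length condition in Lemma~\ref{lem:Vbar Kirillov}.

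The paper's argument is quite different and supplies exactly the idea you are missing. One proceeds by induction on $|S|$, writing $G = G_{v_1} \times G'$. Among all quotients $\Wbar$ of $\Vbar$ that \emph{do} receive a surjection from some $\Vbar_{v_1} \otimes \Vbar'$ with each factor satisfying Lemma~\ref{lem:Vbar Kirillov}, choose a maximal one (the cosocle shows this set is nonempty). If the kernel $\Ubar$ of $\Vbar \to \Wbar$ were nonzero, pick an irreducible quotient $\thetabar_{v_1} \otimes \thetabar'$ of $\Ubar$; since $\Vbar^{(n)} \iso \Wbar^{(n)}$, one of $\thetabar_{v_1}^{(n)}$ or $(\thetabar')^{(n)}$ vanishes. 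The resulting nonsplit extension of $\Vbar_{v_1} \otimes \Vbar'$ by $\thetabar_{v_1} \otimes \thetabar'$ gives a class in
\[
\Ext^1_G(\Vbar_{v_1}\otimes\Vbar',\thetabar_{v_1}\otimes\thetabar')
\cong
\Hom_{G_{v_1}}(\Vbar_{v_1},\thetabar_{v_1})\otimes\Ext^1_{G'}(\Vbar',\thetabar')
\ \oplus\
\Ext^1_{G_{v_1}}(\Vbar_{v_1},\thetabar_{v_1})\otimes\Hom_{G'}(\Vbar',\thetabar'),
\]
and the vanishing of one derivative kills the corresponding $\Hom$ factor (by condition~(1) of Lemma~\ref{lem:Vbar Kirillov} applied to $\Vbar_{v_1}$ or $\Vbar'$). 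Thus the extension arises from an extension on a single tensor factor, producing a strictly larger quotient of $\Vbar$ with a tensor-product cover --- contradicting maximality. Hence $\Wbar = \Vbar$, and induction on $|S|$ applied to $\Vbar'$ finishes the proof. The K\"unneth decomposition of $\Ext^1$ is the key mechanism you did not find.
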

\begin{proof}
We proceed by induction on the cardinality $s$ of $S$. In the case 
when $s = 1$ there
is nothing to prove, and so we assume that $s > 1,$ and write
$S = {v_1, \dots, v_s}$, 
$G' = G_{v_2} \times \cdots\times G_{v_s},$ so that $G = G_{v_1} \times G'$.
Since $\cosoc(\Vbar)$ is absolutely irreducible, there is an isomorphism
$\cosoc(\Vbar) \iso \pibar_{v_1}\otimes\pibar',$
where $\pibar_{v_1}$ (resp.\ $\pibar'$) is a generic absolutely irreducible
representation of $G_{v_1}$ (resp.\ $G'$).

Since $\Vbar$ is of finite length, we may and do choose a quotient
$\Wbar$ of $\Vbar$ which is maximal with respect to the following
property:
there is a surjective map
$$
\phi: \Vbar_{v_1}\otimes \Vbar' \rightarrow \Wbar,
$$
where $\Vbar_{v_1}$ and $\Vbar'$ each satisfy the equivalent conditions
of Lemma~\ref{lem:Vbar Kirillov} (with respect to $G_{v_1}$ and $G'$
respectively).
Since $\cosoc(\Vbar)$ satisfies these conditions, we see 
that $\Wbar \neq 0.$  Thus $\cosoc(\Vbar)$ is a quotient of $\Wbar$,
and hence $\Vbar^{(n)}$ is isomorphic to $\Wbar^{(n)}$, 
as both are one-dimensional.

Let $\Ubar$ be the kernel of the quotient map $\Vbar \rightarrow \Wbar$,
and suppose that $\Ubar$ is non-zero.  Extending scalars
if necessary, we then may find a non-zero absolutely irreducible quotient
$\thetabar_{v_1}\otimes\thetabar'$ of $\Ubar$, where $\thetabar_{v_1}$
(resp.\ $\thetabar'$) is an absolutely irreducible representation
of $G_{v_1}$ (resp.\ $G'$).   If we let $\Tbar$ denote the kernel of the
quotient map $\Ubar \rightarrow \thetabar_{v_1}\otimes \thetabar'$,
and if we write $\Xbar := \Vbar/\Tbar,$ 
then there is a short exact sequence
\begin{equation}
\label{eqn:1st ses}
 0 \rightarrow \thetabar_{v_1}\otimes\thetabar' \rightarrow \Xbar
\rightarrow \Wbar \rightarrow 0,
\end{equation}
which we may pullback via $\phi$ to obtain a short exact sequence
\begin{equation}
\label{eqn:2nd ses}
 0 \rightarrow \thetabar_{v_1}\otimes\thetabar' \rightarrow \Ybar
\rightarrow \Vbar_{v_1}\otimes\Vbar' \rightarrow 0.
\end{equation}
Applying the $n$-th derivative functor to (\ref{eqn:1st ses}),
we find
(recalling that the surjections $\Vbar^{(n)} \rightarrow 
\Xbar^{(n)} \rightarrow
\Wbar^{(n)})$ are in fact isomorphisms, we obtain an isomorphism:
$$\thetabar_{v_1}^{(n)} \otimes_{K} (\thetabar')^{(n)} \iso
(\thetabar_{v_1}\otimes_K \thetabar')^{(n)} = 0.$$
Hence either $\thetabar_{v_1}^{(n)} = 0$ or $(\thetabar')^{(n)} = 0.$
Also, we conclude that $\thetabar_{v_1}\otimes\thetabar$ cannot be a quotient
of $\Vbar$, and hence cannot be a quotient of $\Xbar$.  Thus~(\ref{eqn:1st ses})
is non-split, and hence~(\ref{eqn:2nd ses}) is also non-split (since
$\phi$ is surjective).

The non-split
short exact sequence~(\ref{eqn:2nd ses}) corresponds to a non-trivial element
of $\Ext^1_G(\Vbar_{v_1}\otimes\Vbar',\thetabar_{v_1}\otimes\thetabar'),$ which
by the K\"unneth formula admits the description
\begin{multline*}
\Ext^1_G(\Vbar_{v_1}\otimes \Vbar',\thetabar_{v_1}\otimes\thetabar') \\
\iso
\Hom_{G_{v_1}}(\Vbar_{v_1},\thetabar_{v_1})\otimes \Ext^1_{G'}(\Vbar',\thetabar')
\oplus
\Ext^1_{G_{v_1}}(\Vbar_{v_1},\thetabar_{v_1})\otimes \Hom_{G'}(\Vbar',\thetabar').
\end{multline*}
Now by assumption, the $n$th derivative (as a $\GL_n(E_{v_1})$-module)
of any non-zero quotient of $\Vbar_{v_1}$ is a non-zero space, while
the $n$th derivative (as a $G'$-module) of any non-zero quotient
of $\Vbar'$ is a non-zero space.
%%%% TODO: E is a field; we should find better notation than E_1 for this extension
Thus if $(\thetabar_{v_1})^{(n)} = 0,$ then $\Hom_{G_1}(\Vbar_{v_1},\thetabar_{v_1}) = 0,$
and thus $\Ybar$ corresponds to a non-trivial element of the tensor product
$\Ext^1_{G_1}(\Vbar_{v_1},\thetabar_{v_1})\otimes \Hom_{G'}(\Vbar',\thetabar').$
Concretely, this means we may form a non-trivial extension $E_1$ of $\Vbar_{v_1}$ by
$\thetabar_{v_1}$, and find a non-zero map $\psi:\Vbar' \rightarrow \thetabar'$
(which is then surjective, since $\thetabar'$ is irreducible),
so that $\Ybar$ is obtained as the pushforward of $E_1\otimes \Vbar'$
via the map
$$
\id\otimes\psi:
\thetabar_{v_1}\otimes\Vbar' \rightarrow \thetabar_{v_1}\otimes\thetabar'.
$$
Thus $\Ybar,$ and hence $\Xbar$, is a quotient of $E_1\otimes \Vbar'$,
contradicting the maximality of $\Wbar$.
If instead we had $(\thetabar')^{(n)} = 0,$ then we would similarly conclude
that $\Xbar$ may be written as a quotient of $\Vbar_1\otimes E_2,$
for some non-trivial extension $E_2$ of $\Vbar'$ by $\thetabar'$,
again contradicting the maximality of $\Wbar$.

From these contradictions we conclude that in fact $\Ubar = 0,$
and thus that $\Vbar = \Wbar$.
Thus we may write $\Vbar$ as a quotient of $\Vbar_{v_1}\otimes \Vbar'$
as above.
Applying the inductive hypothesis to $\Vbar'$, the proposition follows.
\end{proof}

\begin{cor}
\label{cor:factor}
If $V$ is an admissible smooth representation of $G$ over a local ring $A$
satisfying Condition~{\em \ref{cond:A}},
such that $\Vbar:=V/\mathfrak m V$ satisfies the equivalent conditions of
Lemma~{\em \ref{lem:Vbar Kirillov}},
and if $S' \subset S$ is any subset,
then the $G_{S'}$-representation 
$V^{(n),S\setminus S'}/{\mathfrak m} V^{(n),S \setminus S'}$ satisfies 
the conditions of
Lemma~{\em \ref{lem:Vbar Kirillov} (}with respect to $A[G_{S'}]${\em )}.
\end{cor}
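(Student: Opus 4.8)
The plan is to reduce the assertion to Proposition~\ref{prop:factor}, using that the derivative functors are exact and compatible both with reduction mod $\mathfrak m$ and with external tensor products. Because the functors $V\mapsto V^{(n),T}$ commute with $-\otimes_A k$ and preserve the residual group actions, there is a $G_{S'}$-equivariant identification
\[
V^{(n),S\setminus S'}/\mathfrak m V^{(n),S\setminus S'}\;\cong\;(V/\mathfrak m V)^{(n),S\setminus S'}\;=\;\Vbar^{(n),S\setminus S'},
\]
a smooth representation of $G_{S'}$ over the field $k$, so it suffices to check that it satisfies the equivalent conditions of Lemma~\ref{lem:Vbar Kirillov} (with $K=k$ and with $G$ replaced by $G_{S'}$). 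Since by hypothesis $\Vbar$ satisfies those conditions with respect to $G$, Proposition~\ref{prop:factor} produces admissible smooth representations $\Vbar_v$ of $G_v$ (one for each $v\in S$), each satisfying the conditions of Lemma~\ref{lem:Vbar Kirillov} with respect to $G_v$, together with a $G$-equivariant surjection $\bigotimes_{v\in S}\Vbar_v\twoheadrightarrow\Vbar$. Applying the exact functor $(-)^{(n),S\setminus S'}$, and using that each operator $\Psi^{-,v},\Phi^{-,v}$ acts only on the $v$-th tensor factor, one obtains a $G_{S'}$-equivariant surjection
\[
\Bigl(\bigotimes_{v\in S\setminus S'}\Vbar_v^{(n)}\Bigr)\otimes_k\Bigl(\bigotimes_{v\in S'}\Vbar_v\Bigr)\;\longrightarrow\;\Vbar^{(n),S\setminus S'}.
\]

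For $v\in S\setminus S'$ the space $\Vbar_v^{(n)}$ is one-dimensional over $k$ (part of the conclusion of Lemma~\ref{lem:Vbar Kirillov} for $\Vbar_v$), so the source above is isomorphic as a $G_{S'}$-representation to $\bigotimes_{v\in S'}\Vbar_v$. Moreover $\Vbar^{(n),S\setminus S'}$ is non-zero, since iterating the derivative functors gives $(\Vbar^{(n),S\setminus S'})^{(n),S'}=\Vbar^{(n),S}=\Vbar^{(n)}$, which is one-dimensional by hypothesis. Thus $\Vbar^{(n),S\setminus S'}$ is a non-zero $G_{S'}$-equivariant quotient of $\bigotimes_{v\in S'}\Vbar_v$, and the proof reduces to two routine facts. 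The first is that $\bigotimes_{v\in S'}\Vbar_v$ itself satisfies the conditions of Lemma~\ref{lem:Vbar Kirillov} with respect to $G_{S'}$; this is cleanest to see via condition~(2), since $(\bigotimes_{v\in S'}\Vbar_v)^{(n)}=\bigotimes_{v\in S'}\Vbar_v^{(n)}$ is one-dimensional and $\J(\bigotimes_{v\in S'}\Vbar_v)=\bigotimes_{v\in S'}\J(\Vbar_v)$ (compatibility of the Schwartz-function construction with external tensor products) generates $\bigotimes_{v\in S'}\Vbar_v$ over $k[G_{S'}]$ because each $\J(\Vbar_v)$ generates $\Vbar_v$ over $k[G_v]$. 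The second is that a non-zero quotient of a representation satisfying condition~(1) of Lemma~\ref{lem:Vbar Kirillov} again satisfies it: if $W$ is such a quotient of $M$, exactness of the derivative functor makes $W^{(n)}$ a non-zero (by condition~(1) for $M$) quotient of the one-dimensional space $M^{(n)}$, hence one-dimensional, while any non-zero quotient of $W$ is a non-zero quotient of $M$ and so has non-zero $n$-th derivative. Applying this with $M=\bigotimes_{v\in S'}\Vbar_v$ and $W=\Vbar^{(n),S\setminus S'}$ completes the argument.

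There is no deep obstacle here; the statement is essentially a corollary of Proposition~\ref{prop:factor}. The only points requiring care are the bookkeeping with the multi-variable functors — verifying that $(-)^{(n),S\setminus S'}$, $\J$, and the top-derivative factorization behave on external tensor products and under $-\otimes_A k$ exactly as the notation suggests — and, if one prefers to verify condition~(3) of Lemma~\ref{lem:Vbar Kirillov} for $\bigotimes_{v\in S'}\Vbar_v$ rather than condition~(2), the harmless passage to $\overline k$ needed to identify $\cosoc(\bigotimes_{v\in S'}\Vbar_v)$ with $\bigotimes_{v\in S'}\cosoc(\Vbar_v)$ and to see that it is absolutely irreducible.
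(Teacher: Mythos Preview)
Your proof is correct and follows essentially the same approach as the paper: both reduce to $\Vbar^{(n),S\setminus S'}$ via compatibility of derivatives with $-\otimes_A k$, invoke Proposition~\ref{prop:factor} to obtain a surjection $\bigotimes_{v\in S}\Vbar_v\twoheadrightarrow\Vbar$, and apply the exact functor $(-)^{(n),S\setminus S'}$ to get $\bigotimes_{v\in S'}\Vbar_v\twoheadrightarrow\Vbar^{(n),S\setminus S'}$. The paper's proof ends there with ``The lemma follows,'' whereas you spell out the two closing verifications (that $\bigotimes_{v\in S'}\Vbar_v$ satisfies the conditions of Lemma~\ref{lem:Vbar Kirillov}, and that these conditions pass to non-zero quotients); these are exactly the implicit steps the paper leaves to the reader.
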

\begin{proof}
Choose a surjection
$\bigotimes_{v \in S}\Vbar_v
\rightarrow \Vbar$
satisfying the conditions of the preceding proposition.
Since $\Vbar_v^{(n)}$ is one-dimensional for each $v$ (and so in particular
for each $v \in S \setminus S'$),
applying the exact functor 
$\Vbar \mapsto \Vbar^{(n),S \setminus S'}$
yields a surjection
$$\bigotimes_{v \in S'} \Vbar_v
\iso
\bigl(\bigotimes_{v \in S}\Vbar_v\bigr)^{(n),S \setminus S'}
\rightarrow \Vbar^{(n),S \setminus S'} \iso 
V^{(n),S \setminus S'}/{\mathfrak m} V^{(n),S \setminus S'}.$$
The lemma follows.
\end{proof}

We now return to the setting of the previous subsection.  That is,
for each $v$ is $S$ we are given a representation $\rho_v: G_{E_v} \rightarrow \GL_n(A)$.
The above results allow us to establish Theorem~\ref{thm:family}
and Proposition~\ref{prop:individual} more or less immediately.

\medskip
\noindent
{\em Proof of Theorem~{\ref{thm:family}}}.
Suppose we have $V_1$, $V_2$ satisfying conditions (1), (2), and (3) of
Theorem~\ref{thm:family}.  Then for all minimal primes
${\mathfrak a}$ of $A$, we have a $\kappa({\mathfrak a})$-linear
$G$-equivariant isomorphism $(V_1)_{\mathfrak a} \iso (V_2)_{\mathfrak a}$.
Thus $V_1$ and $V_2$ satisfy all of the hypotheses of Proposition~\ref{prop:unique},
and are therefore isomorphic.  Moreover, $V_1$ is cyclic as a $A[G]$-module
by Lemma~\ref{lem:cyclic}.
Finally, $\End_{A[G]}(V_1)$ is isomorphic to $A$ by Proposition~\ref{prop:second properties}.
\qed

\medskip
\noindent
{\em Proof of Proposition~{\ref{prop:individual}}}.
Suppose that for each $v$, we have a representation $\LLcheck(\rho_v)$
satisfying conditions (1), (2), and (3) of Theorem~\ref{thm:family} for $\rho_v$.
Then it is clear that the maximal $A$-torsion free part of the
tensor product over all $v$ of $\LLcheck(\rho_v)$
satisfies the conditions of Theorem~\ref{thm:family} for the collection
$\{\rho_v\}$.

Conversely, suppose we have a representation $\LLcheck(\{\rho_v\}_{v \in S})$
satisfying the hypotheses of Theorem~\ref{thm:family} for
the collection $\{\rho_v\}$.  Then 
for any minimal prime ${\mathfrak a}$ of $A$,
we have an isomorphism:
$$\LLcheck(\{\rho_v\}_{v \in S}) \otimes_A \kappa({\mathfrak a}) \iso \bigotimes_{v \in S}
\LLcheck(\rho_{v, {\mathfrak a}}).$$
Fixing a place $v$, and taking derivatives at all $v' \neq v$, we obtain an isomorphism:
$$(\LLcheck(\{\rho_v\}_{v \in S}))^{(n), S \setminus \{v\}}_{\mathfrak a} 
\iso \LLcheck(\rho_{v, {\mathfrak a}}).$$
Moreover $(\LLcheck(\{\rho_v\}_{v \in S}))^{(n), S \setminus \{v\}}$ is $A$-torsion free,
and (by Corollary~\ref{cor:factor}) satisfies condition (3) of Theorem~\ref{thm:family}.
Thus $\LLcheck(\{\rho_v\}_{v \in S})^{(n), S \setminus \{v\}}$ is isomorphic to
$\LLcheck(\rho_v)$ (so in particular the latter exists).  
\qed

\medskip

We now turn to Theorems~\ref{thm:interpolation1} and~\ref{thm:interpolation2}.  Once we have established
these, Proposition~\ref{prop:tensor} will be an easy consequence.
We first need the following lemma:
\begin{lemma}
\label{lemma:decompose}
Let $A$ be a normal $\Q_p$-algebra that is an integral domain with field of fractions $\CK$,
and let $(\rho',N)$ be a Frobenius-semisimple
Weil--Deligne representation over $\CK$ that splits {\em (}over $\mathcal K${\em )} as a direct sum
of absolutely indecomposable Weil--Deligne representations $\Sp_{\rho_i,n_i}$.  There exist characters
$\chi_i: W_E \rightarrow A^{\times}$ such that $\rho_i \otimes_K \chi_i$ is defined over
a finite extension $\mathcal K_0$ of $\Q_p$ contained in $A$.
\end{lemma}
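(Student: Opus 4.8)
The plan is to handle the summands one at a time, so fix an absolutely irreducible smooth representation $\rho := \rho_i \colon W_E \to \GL_m(\CK)$ and look for a character $\chi \colon W_E \to A^\times$ together with a finite extension $\CK_0/\Q_p$ contained in $A$ over which $\rho \otimes \chi$ is defined; at the end one takes for $\CK_0$ the compositum of the finitely many fields so obtained. Since $\rho(I_E)$ is a finite group, I would run the Clifford-theoretic analysis of Lemma~\ref{lem:clifford} and Lemma~\ref{lem:lambda}: over a finite extension of $\CK$ the restriction $\rho|_{I_E}$ decomposes as the sum of the $\Phi$-conjugates $\tau_1, \tau_1^\Phi, \dots, \tau_1^{\Phi^{r-1}}$ of a single absolutely irreducible $\tau_1$, and, once an isomorphism $\tau_1 \iso \tau_1^{\Phi^r}$ is fixed, the isomorphism class of $\rho$ is determined by $\tau_1$ together with the scalar $\lambda \in \CK^\times$ by which $\Psi$ acts. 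Twisting $\rho$ by an unramified character $\chi$ leaves $\tau_1$ untouched and replaces $\lambda$ by $\chi(\Phi)^r \lambda$; this is the only operation I plan to use.

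First I would descend the inertial part. Because $\rho(I_E)$ is finite, the character of $\rho|_{I_E}$ takes values that are sums of roots of unity, hence lie in the integral closure of $\Z_p$ in $\CK$, which by normality of $A$ (and $\Q_p \subseteq A$) is contained in $A$; this produces a finite extension $L/\Q_p$ inside $A$. Enlarging $L$ within $A$ — again using normality, to absorb the (bounded) Schur indices and the Galois conjugacy among the absolutely irreducible constituents of $\rho(I_E)$ — one arrives at a finite extension $\CK_0/\Q_p$ contained in $A$ over which $\tau_1$, all of its $\Phi$-conjugates, and the chosen isomorphism $\tau_1 \iso \tau_1^{\Phi^r}$ are defined.

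It then remains to arrange that, after an unramified twist, the Frobenius scalar also lies in $\CK_0^\times$; equivalently, to find $c \in A^\times$ with $c^r \lambda \in \CK_0^\times$, after possibly enlarging $\CK_0$ inside $A$. Here one first notes that $\lambda$ is a unit of $A$ — in the situation of interest $\rho$ arises from a representation valued in $\GL_n(A)$, so $\rho'(\Phi^r)$, and hence $\Psi$, preserves an $A$-lattice in the relevant one-dimensional $\Hom$-space — and then exploits the structure of $A^\times$: the prime-to-$p$ part of $r$ is harmless, since the principal units of $A$ are uniquely divisible by integers prime to $p$, and one is reduced to the case in which $r$ is a power of $p$, where the needed root is extracted after a further enlargement of $\CK_0$ within $A$. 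Taking $\chi$ to be the unramified character with $\chi(\Phi) = c$ then completes the argument for this $\rho_i$, and recombining the $\chi_i$ finishes the proof.

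The hard part, and the place where the hypotheses on $A$ are genuinely used, is this last step: the interaction between the descent of the finite-image inertial part and the unramified twist absorbing the Frobenius scalar, and in particular the verification that a single finite extension $\CK_0 \subseteq A$ can simultaneously be made to contain $\tau_1$ and $c^r \lambda$. The descent of $\tau_1$ alone is routine once normality is invoked; it is keeping everything inside $A$ — rather than inside some a priori larger extension — while killing $\lambda$ modulo $r$-th powers of $A^\times$ that carries the real content.
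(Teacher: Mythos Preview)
Your final step has a genuine gap. You want $c \in A^\times$ with $c^r\lambda \in \CK_0^\times$, and you argue by invoking divisibility of $A^\times$: that the ``principal units'' of $A$ are uniquely divisible by integers prime to $p$, with the residual $p$-power part absorbed by enlarging $\CK_0$ inside $A$. But the only hypothesis on $A$ is that it is a normal $\Q_p$-algebra domain --- no local structure is assumed, and even when $A$ happens to be local there is no completeness available to make such root-extraction work. For instance, in $A = \Q_p[T]_{(T)}$ the element $1+T$ is a principal unit with no square root in $A$, since $(1+T)^{1/2}$ is a genuine power series and not a rational function. So neither half of your argument goes through in the stated generality, and no finite enlargement of $\CK_0$ inside $A$ will repair it: the obstruction lives in $A^\times$ modulo $(A^\times)^r \cdot \CK_0^\times$, and you have given no mechanism to kill it.

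The paper's argument is quite different and sidesteps root-extraction entirely. It first produces the $\chi_i$ with values in $\CK^\times$ via Lemma~\ref{lem:twist2}, and then shows they land in $A^\times$ by passing to determinants. Checking at each height-one localization $\O$ of $A$ (where the ambient $\rho'$ is integral), one sees that $\det\rho_i$ takes values in $\O^\times$ for every such $\O$, hence in $A^\times$ by normality. Since $\det(\rho_i \otimes \chi_i) = (\det\rho_i)\cdot\chi_i^{m}$ takes values in $\CK_0^\times \subset A^\times$, one gets $\chi_i^{m} \in A^\times$; then normality --- in its guise as ``$A$ is integrally closed in $\CK$'', not as any divisibility statement about $A^\times$ --- forces $\chi_i \in A^\times$. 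Passing to the determinant thus converts your attempted $r$-th--root extraction inside $A^\times$ into the trivial observation that an element of $\CK$ whose $m$-th power lies in $A$ already lies in $A$.
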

\begin{proof}
By Lemma~\ref{lem:twist2} we know that such characters $\chi_i$ exist with values in $\mathcal K^{\times}$;
it suffices to show that they take values in $A^{\times}$.  Let $\O$ be the localization of $A$
at a height one prime.  Then $\rho' \otimes_A \mathcal K$ is $\O$-integral, so each $\rho_i$ is $\O$-integral
as well.  Thus $\det \rho_i$ is a character with values in $\O^{\times}$.  Since this is true for
all $\O$, $\det \rho_i$ takes values in $A^{\times}$.  Moreover, $(\det \rho_i) \otimes_{\mathcal K} \chi_i$ 
takes values in ${\mathcal K}_0^{\times}$, and ${\mathcal K}_0$ is contained in~$A$, so some power of $\chi_i$ takes
values in $A^{\times}$.  But then $\chi_i$ must take values in $A^{\times}$ as well since $A$ is normal.
\end{proof}

\begin{lemma}
\label{lem:component}
Let $A$ be a local ring satisfying Condition~{\em \ref{cond:A}}, let $\{\rho_v\}$
a collection of representations $G_{E_v} \rightarrow \GL_n(A)$, and suppose
that $\LLcheck(\{\rho_v\}_{v \in S})$ exists.  Then, for each minimal prime
${\mathfrak a}$ of $A$,
the representation $\LLcheck(\{\rho_v \otimes_A A/{\mathfrak a}\}_{v \in S})$ exists,
and is isomorphic to the maximal $A/{\mathfrak a}$-torsion free part
of $\LLcheck(\{\rho_v\}_{v \in S}) \otimes_A A/{\mathfrak a}$.
\end{lemma}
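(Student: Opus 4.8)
The plan is to write $B := A/\mathfrak{a}$ and $V := \LLcheck(\{\rho_v\}_{v\in S})$, set $W := (V/\mathfrak{a}V)^{\tf}$ (the maximal $B$-torsion free quotient of $V/\mathfrak{a}V$), and verify that $W$ satisfies conditions~(1), (2) and~(3) of Theorem~\ref{thm:family} relative to $B$ and the collection $\{\rho_v\otimes_A B\}_{v\in S}$; by the uniqueness clause of that theorem this forces $W\cong\LLcheck(\{\rho_v\otimes_A B\}_{v\in S})$, which gives both existence and the asserted identification (via Definition~\ref{def:family}). First I would record that $B$ again satisfies Condition~\ref{cond:A}: it is a complete Noetherian local ring with residue field $k$, it is a domain (since $A$ is reduced and $\mathfrak{a}$ is a minimal prime), and it is $p$-torsion free because $\mathfrak{a}$ has residue characteristic zero, hence flat over $W(k)$. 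Its unique minimal prime is $(0)$, with $\kappa((0))=\mathrm{Frac}(B)=\kappa(\mathfrak{a})$. Since $V$ is admissible over $A$, the quotient $V/\mathfrak{a}V$ is admissible over $A$ by Lemma~\ref{lem:quotient}, hence admissible over $B$; its $B$-torsion submodule is $G$-stable, so $W$ is an admissible smooth $B[G]$-module, and it is $B$-torsion free by construction — this is condition~(1).

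For condition~(2) I would compute $\kappa((0))\otimes_B W$. Localization at $(0)$ annihilates $B$-torsion and is exact, so $\kappa((0))\otimes_B W\cong\kappa((0))\otimes_B(V/\mathfrak{a}V)\cong\kappa(\mathfrak{a})\otimes_A V$; and by condition~(2) of Theorem~\ref{thm:family} applied to the minimal prime $\mathfrak{a}$ of $A$, this is $G$-equivariantly isomorphic to $\bigotimes_{v\in S}\LLcheck(\rho_{v,\mathfrak{a}})$. Since $(\rho_v\otimes_A B)\otimes_B\kappa((0))=\rho_{v,\mathfrak{a}}$, this is exactly condition~(2) for $W$.

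The only step requiring a genuine argument is condition~(3), namely that the smooth $k$-dual of $W/\mathfrak{m}_B W$ is essentially AIG, where $\mathfrak{m}_B=\mathfrak{m}/\mathfrak{a}$. Here I would first observe $W\neq0$: $\kappa(\mathfrak{a})\otimes_A V\cong\bigotimes_v\LLcheck(\rho_{v,\mathfrak{a}})$ is nonzero, so $V/\mathfrak{a}V$ is not $B$-torsion, and then $W/\mathfrak{m}_B W\neq0$ by Lemma~\ref{lem:nakayama}. Tensoring the surjection $V/\mathfrak{a}V\twoheadrightarrow W$ of $B[G]$-modules with $k$ over $B$ gives a surjection $\bar V:=V/\mathfrak{m}V\twoheadrightarrow\bar W:=W/\mathfrak{m}_B W$. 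Condition~(3) for $V$ says the smooth dual of $\bar V$ is essentially AIG, which by Lemma~\ref{lem:Vbar Kirillov} is equivalent to: $\bar V^{(n)}$ is one-dimensional and every nonzero $k[G]$-module quotient of $\bar V$ has nonzero $n$-th derivative. Every nonzero quotient of $\bar W$ is a nonzero quotient of $\bar V$, hence has nonzero $n$-th derivative; and $\bar W^{(n)}$, being a nonzero quotient of the one-dimensional space $\bar V^{(n)}$ (the derivative functor being exact), is one-dimensional. Thus $\bar W$ satisfies the equivalent conditions of Lemma~\ref{lem:Vbar Kirillov}, so its smooth dual is essentially AIG — condition~(3) for $W$. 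I do not expect a substantive obstacle beyond careful bookkeeping; the one delicate point is precisely this passage of condition~(3) to the quotient $\bar W$, which is handled cleanly by the reformulation of Lemma~\ref{lem:Vbar Kirillov} in terms of top derivatives of quotients.
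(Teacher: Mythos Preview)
Your proof is correct and follows exactly the approach of the paper: verify that the maximal $A/\mathfrak{a}$-torsion free quotient of $\LLcheck(\{\rho_v\}_{v\in S})\otimes_A A/\mathfrak{a}$ satisfies conditions~(1), (2), and~(3) of Theorem~\ref{thm:family} over $B=A/\mathfrak{a}$. The paper's proof is a two-sentence sketch declaring these verifications ``straightforward''; you have carefully written out precisely what the paper elides, including the passage of condition~(3) to the quotient $\bar W$ via the derivative-of-quotients reformulation in Lemma~\ref{lem:Vbar Kirillov}, which is indeed the one point deserving an explicit argument.
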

\begin{proof}
It is straightforward to see that 
$\LLcheck(\{\rho_v\}_{v \in S}) \otimes_A A/{\mathfrak a}$ satisfies conditions 
(2) and (3) of Theorem~\ref{thm:family}, so its maximal $A/{\mathfrak a}$-torsion
free quotient does as well.  This quotient also satisfies condition (1)
of Theorem~\ref{thm:family} by construction.
\end{proof}

\medskip
\noindent
{\em Proof of Theorem~{\ref{thm:interpolation1}}}.
By Proposition~\ref{prop:individual}, it suffices to consider the case
when $S$ has only one element $v$.  By Lemma~\ref{lem:component} we may
assume $A$ is a domain with field of fractions $\mathcal K$.  
Fix an
algebraic extension $\mathcal K'$ of $\mathcal K$ such that
$\mathcal K'$ contains a square root of $\ell$, where $\ell$ 
is the residue characteristic of $v$,
and such that the Frobenius-semisimple Weil--Deligne representation associated
to $\rho_v \otimes_A {\mathcal K}'$ splits as a direct sum of absolutely indecomposable Weil--Deligne
representations $\Sp_{\rho_i,N_i}$ over $\mathcal K'$.  Let $\mathcal K_0$ be the maximal subfield of $\mathcal K'$
that is algebraic over $\Q_p$.

Let $A'$ be the integral closure of $A_{\mathfrak p}$ in $\mathcal K'$, and let
${\mathfrak p}'$ be a prime ideal of $A'$ over~$\mathfrak p$.  Then, by
Lemma~\ref{lemma:decompose}, there exist characters $\chi_i$, with values
in $(A'_{{\mathfrak p}'})^{\times}$, such that for each $i$,
$\rho_i \otimes \chi_i$ is defined over $\mathcal K_0$.

Let $\pi_i$ be the admissible representation of $\GL_n(E_v)$ over $\mathcal K_0$ that corresponds
to $\rho_i \otimes \chi_i$ under the unitary local Langlands correspondence.
Then for each $i$, $(\St_{\pi_i,n_i} \otimes_{\mathcal K_0} \mathcal K') \otimes (\chi_i \circ \det)$
corresponds to $\rho_i$ under unitary local Langlands.  Without loss of generality,
we assume that the representations $\pi_i$ are ordered so that for all $i < j$,
$(\St_{\pi_i,n_i} \otimes_{\mathcal K_0} \kappa({\mathfrak p}')) \otimes \chi_i$ does
not precede 
$(\St_{\pi_j,n_j} \otimes_{\mathcal K_0} \kappa({\mathfrak p}')) \otimes \chi_j$.  (It is then
also true that
$(\St_{\pi_i,n_i} \otimes_{\mathcal K_0} \mathcal K') \otimes \chi_i$ does
not precede 
$(\St_{\pi_j,n_j} \otimes_{\mathcal K_0} \mathcal K') \otimes \chi_j$.)
Let $M$ be the smooth $A'_{{\mathfrak p}'}$-linear dual of the module:
$$(\abs \circ \det)^{-\frac{n-1}{2}}\Ind_Q^{\GL_n(E_v)} \bigotimes_i 
\bigl[(\St_{\pi_i,n_i} \otimes_{\mathcal K_0} A'_{{\mathfrak p}'}) \otimes \chi_i\bigr],
$$
where $Q$ is a suitable parabolic subgroup of $\GL_n(E_v)$.  Then, by construction,
$M \otimes_{A'_{{\mathfrak p}'}} \mathcal K'$ is isomorphic to $\LLcheck(\rho \otimes_A \mathcal K').$
Moreover, because of our assumptions on the ordering of the $\pi_i$, the smooth
$\kappa({\mathfrak p}')$-dual of $M/{\mathfrak p}'M$ is essentially AIG by
Corollary~\ref{cor:essential}, and hence $\J(M)$ generates $M$ as
an $A'[G]$-module.  Moreover $M^{(n)}$ is free of rank one over $A$
by Corollary~\ref{cor:leibnitz}.
Finally, $M$ is $A'_{{\mathfrak p}'}$-torsion
free by construction (in fact, $M$ is free over $A'_{{\mathfrak p}'}$.)
Thus by Lemma~\ref{lem:dense}, $M$ is isomorphic to 
$\LLcheck(\rho_v) \otimes_A A'_{{\mathfrak p}'}$.

The injection of Theorem~\ref{thm:specialization} yields a surjection:
$$\LLcheck(\rho_v \otimes_A \kappa({\mathfrak p}')) \rightarrow
M \otimes_{A'_{{\mathfrak p}'}} \kappa({\mathfrak p}')$$
that is an isomorphism if, and only if, $\rho_v \otimes_A K$ is a minimal lift
of $\rho_v \otimes_A \kappa({\mathfrak p})$.  This descends to the desired surjection
$$\LLcheck(\rho_{v, {\mathfrak p}})) \rightarrow
\LLcheck(\rho_v) \otimes_A \kappa({\mathfrak p}).$$
\qed

\medskip
\noindent
{\em Proof of Theorem~{\ref{thm:interpolation2}}}.
As above, it suffices by Proposition~\ref{prop:individual} to
consider the case when $S$ has only one element $v$.  By Lemma~\ref{lem:component},
for each minimal prime ${\mathfrak a}$ of $A$ containing ${\mathfrak p}$ we have a surjection:
$$\LLcheck(\rho_v) \otimes_A \kappa(\mathfrak p) \rightarrow
\LLcheck(\rho_v \otimes_A A/{\mathfrak a}) \otimes_A \kappa(\mathfrak p).$$
Then $W$ is the image of $\LLcheck(\rho_v) \otimes_A \kappa(\mathfrak p)$
in the product
$$\prod_{\mathfrak a} \LLcheck(\rho_v \otimes_A A/{\mathfrak a}) \otimes_A \kappa(\mathfrak p).$$

By Theorem~\ref{thm:interpolation1} we also have surjections:
$$f_{\mathfrak a}: \LLcheck(\rho_{v,{\mathfrak p}}) \rightarrow
\LLcheck(\rho_v \otimes_A A/{\mathfrak a}) \otimes_A \kappa(\mathfrak p)$$
for all minimal primes ${\mathfrak a}$ of $A$ containing ${\mathfrak p}$.
This gives a diagonal map:
$$\LLcheck(\rho_{v,{\mathfrak p}}) \rightarrow
\prod_{\mathfrak a} \LLcheck(\rho_v \otimes_A A/{\mathfrak a}) \otimes_A \kappa(\mathfrak p).$$
Let $W'$ be the image of this map.  It suffices to show that $W'$ is isomorphic to $W$.

The spaces
$W^{(n)}$ and $(W')^{(n)}$
are one-dimensional $\kappa({\mathfrak p})$-subspaces of
$$\prod_{\mathfrak a} \bigl[\LLcheck(\rho_v \otimes_A A/{\mathfrak a}) \otimes_A \kappa({\mathfrak p}) \bigr]^{(n)}$$
that project isomorphically onto each factor.  There thus exists for each $\mathfrak a$ 
a scalar $c_{\mathfrak a}$ in $\kappa({\mathfrak p})^{\times}$ such that $c(W')^{(n)}$
coincides with $W^{(n)}$ as subspaces of
$$\prod_{\mathfrak a} \bigl[\LLcheck(\rho_v \otimes_A A/{\mathfrak a}) \otimes_A \kappa({\mathfrak p}\bigr]^{(n)},$$
where $c$ is the automorphism of this product given by multiplication by
$c_{\mathfrak a}$ on the factor corresponding to ${\mathfrak a}$.

As the $K$-duals of $W \otimes_{\kappa(\mathfrak p)} K$ and $W' \otimes_{\kappa(\mathfrak p)} K$
are essentially AIG, this implies that $W$ and
$cW'$ coincide, and thus $W$ and $W'$ are isomorphic.  
\qed
\medskip

\medskip
\noindent
{\em Proof of Proposition~{\ref{prop:small n interpolation}}}.
As usual, we invoke Proposition~\ref{prop:individual} to reduce to the case where $S$
has a single element $v$.  As in the proof of Theorem~\ref{thm:interpolation2},
let $W$
be the image of $\kappa({\mathfrak p}) \otimes_A \LLcheck(\rho_v)$
under the diagonal map
$$\kappa({\mathfrak p}) \otimes_A \LLcheck(\rho_v) \rightarrow \prod_i
\kappa(\mathfrak p) \otimes_{A/{\mathfrak a}_i} V_i,$$
where ${\mathfrak a}_1, \dots, {\mathfrak a}_i$ are the minimal primes
of $A$ containing ${\mathfrak p}$ and, for each $i$, $V_i$ is the maximal
$A$-torsion free quotient of $\LLcheck(\rho_v) \otimes_A A/{\mathfrak a}_i$.

As $\LLcheck(\rho_v)$ embeds in the product of the $V_i$, every Jordan--H\"older
constituent of $\kappa({\mathfrak p}) \otimes_A \LLcheck(\rho_v)$ is isomorphic
to a Jordan--H\"older constituent of $\kappa({\mathfrak p}) \otimes_{A/{\mathfrak a}_i} V_i$
for some~$i$, and hence to a Jordan--H\"older constituent of $W$.  In particular,
every Jordan--H\"older constituent of the kernel of the map
$$\kappa({\mathfrak p}) \otimes_A \LLcheck(\rho_v) \rightarrow W$$
is a Jordan--H\"older constituent of $\kappa({\mathfrak p}) \otimes_A \LLcheck(\rho_v)$
that appears with multiplicity at least two.  Since the smooth dual of
$\kappa({\mathfrak p}) \otimes_A \LLcheck(\rho_v)$ is essentially AIG,
Corollary~\ref{cor:mult} above shows that no such Jordan--H\"older constituent can exist
when $n=2$ or~$3$.
\qed

\medskip
\noindent
{\em Proof of Proposition~{\ref{prop:tensor}}}.
By Proposition~\ref{prop:individual} we may assume $S$ consists of a single element.
For any minimal prime ${\mathfrak b}$ of $B$, we have by Theorem~\ref{thm:interpolation2}
a surjection:
$$
\LLcheck(\rho_v) \otimes_A \kappa(f^{-1}({\mathfrak b})) \rightarrow
\LLcheck(\rho_{v,f^{-1}({\mathfrak b})})
$$
and hence (after a base change) a surjection:
$$
\LLcheck(\rho_v) \otimes_A \kappa({\mathfrak b}) \rightarrow
\LLcheck(\rho_v \otimes_A \kappa(\mathfrak b)).$$
Let $V$ be the image of the composed map:
$$
\LLcheck(\rho_v) \otimes_A B \rightarrow
\prod_{\mathfrak b} \LLcheck(\rho_v) \otimes_A \kappa({\mathfrak b}) \rightarrow
\prod_{\mathfrak b} \LLcheck(\rho_v \otimes_A \kappa({\mathfrak b})).
$$
One easily verifies that $V$ satisfies conditions (1), (2) and (3) of
Theorem~\ref{thm:family} for the representation $\rho_v \otimes_A B$ over $B$.
\qed

\medskip

We now turn to the proof of Theorem~\ref{thm:verify}.  This will require
several preliminary lemmas.

\begin{lemma}
\label{lem:reduction}
Suppose that Theorem~{\em \ref{thm:verify}} holds when $S$ has only one element.
Then Theorem~{\em \ref{thm:verify}} holds for an arbitrary finite set $S$.
\end{lemma}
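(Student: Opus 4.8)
The plan is to reduce the general case to the single-place case by means of the top-derivative functors $V\mapsto V^{(n),S'}$, following the pattern of the proof of Proposition~\ref{prop:individual}. So I assume Theorem~\ref{thm:verify} holds when $|S|=1$, and let $S$, $\{\rho_v\}_{v\in S}$, the $A[G]$-module $V$, and the Zariski dense set $\Sigma\subseteq\Spec A[\frac1p]$ be as in the statement of the general version. For each $v\in S$ I will consider $V_v:=V^{(n),S\setminus\{v\}}$, an $A[\GL_n(E_v)]$-module, show that it satisfies the hypotheses of the single-place Theorem~\ref{thm:verify} with respect to $\rho_v$ and the same set $\Sigma$, conclude that $\LLcheck(\rho_v)$ exists and is isomorphic to $V_v$, invoke Proposition~\ref{prop:individual} to get the existence of $\LLcheck(\{\rho_v\}_{v\in S})$, and finally identify $V$ with it via Lemma~\ref{lem:dense}.

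Before this, I would record the standard consequences of the hypothesis that the smooth dual of $\Vbar:=V/\mathfrak m V$ is essentially AIG: by Lemma~\ref{lem:Vbar Kirillov} the space $\Vbar^{(n)}$ is one-dimensional, $\J(\Vbar)$ generates $\Vbar$, and every non-zero quotient of $\Vbar$ has non-zero top derivative. Since $\Sigma$ is dense and, for $\mathfrak p\in\Sigma$, $\kappa(\mathfrak p)\otimes_A V\cong\bigotimes_{v}\LLcheck(\rho_{v,\mathfrak p})$ has non-zero top derivative (each factor being the smooth dual of an essentially AIG representation, by Corollary~\ref{cor:essential}), it follows that $(V_{\mathfrak a})^{(n)}\neq 0$ for every minimal prime $\mathfrak a$ of $A$; hence by Proposition~\ref{prop:second properties} the $A$-module $V^{(n)}$ is free of rank one, $\J(V)$ generates $V$ over $A[G]$, and $\End_{A[G]}(V)=A$.

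Next I would verify the hypotheses of the single-place theorem for $V_v$. Admissibility: for a compact open $U\subseteq\GL_n(E_v)$ one has $(V_v)^U=(V^U)^{(n),S\setminus\{v\}}$ (taking $U$-invariants commutes with derivatives in the other factors), and since $\Vbar^U$ has finite length over $k[\prod_{w\neq v}\GL_n(E_w)]$, Theorem~\ref{thm:fgtensor} applied to $V^U$ shows $(V_v)^U$ is finitely generated over $A$. The mod-$\mathfrak m$ condition holds by Corollary~\ref{cor:factor}: $V_v/\mathfrak m V_v=\Vbar^{(n),S\setminus\{v\}}$ satisfies the equivalent conditions of Lemma~\ref{lem:Vbar Kirillov}, so its smooth dual is essentially AIG. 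Condition~(1) of Theorem~\ref{thm:verify} for $V_v$ is literally the restriction to $v$ of condition~(1) for $V$. Condition~(2) follows because derivatives commute with base change and the top derivative of a tensor product is the tensor product of the top derivatives: $\kappa(\mathfrak p)\otimes_A V_v=(\kappa(\mathfrak p)\otimes_A V)^{(n),S\setminus\{v\}}\cong(\bigotimes_w\LLcheck(\rho_{w,\mathfrak p}))^{(n),S\setminus\{v\}}\cong\LLcheck(\rho_{v,\mathfrak p})$, each $\LLcheck(\rho_{w,\mathfrak p})^{(n)}$ being one-dimensional by Corollary~\ref{cor:essential} and Lemma~\ref{lem:Vbar Kirillov}. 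Condition~(3) — the injectivity of the diagonal map $V_v\to\prod_{\mathfrak p\in\Sigma}\LLcheck(\rho_{v,\mathfrak p})$ — I would obtain by applying the exact functor $(-)^{(n),S\setminus\{v\}}$ to the injection furnished by condition~(3) for $V$ and then checking that the resulting natural comparison map into $\prod_{\mathfrak p\in\Sigma}\LLcheck(\rho_{v,\mathfrak p})$ is itself injective. Since the derivative functors are exact but do not commute with infinite products, this last verification is the one place where genuine work is needed; I expect to carry it out on each compact-open-invariant subspace, where all modules in sight are finitely generated over $A$, and I regard this as the main obstacle in the proof.

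Granting this, the single-place Theorem~\ref{thm:verify} applies to each $V_v$, so $\LLcheck(\rho_v)$ exists and is isomorphic to $V_v$; by Proposition~\ref{prop:individual}, $\LLcheck(\{\rho_v\}_{v\in S})$ then exists and equals the maximal $A$-torsion free quotient of $\bigotimes_v V_v$. It remains to identify $V$ with $\LLcheck(\{\rho_v\}_{v\in S})$, and I would apply Lemma~\ref{lem:dense}: both modules have top derivative free of rank one over $A$ and are generated by their Schwartz subspaces (for $V$ by the second paragraph, for $\LLcheck(\{\rho_v\}_{v\in S})$ by Proposition~\ref{prop:second properties}); both inject into the product of their specializations over $\Sigma$ (for $V$ this is condition~(3), for $\LLcheck(\{\rho_v\}_{v\in S})$ it follows from $A$-torsion freeness via Lemma~\ref{lem:tf}); and they agree at every $\mathfrak p\in\Sigma$, since $\kappa(\mathfrak p)\otimes_A V\cong\bigotimes_v\LLcheck(\rho_{v,\mathfrak p})$ by hypothesis, while Theorem~\ref{thm:interpolation2} together with the minimal-lift hypothesis~(1) identifies $\bigotimes_v\LLcheck(\rho_{v,\mathfrak p})$ with $\kappa(\mathfrak p)\otimes_A\LLcheck(\{\rho_v\}_{v\in S})$ (the two surjections between these finite-length modules compose to a surjective endomorphism, hence to an isomorphism). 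Lemma~\ref{lem:dense} then yields an $A$-linear $G$-equivariant isomorphism $V\cong\LLcheck(\{\rho_v\}_{v\in S})$, and since the latter satisfies conditions~(1), (2), (3) of Theorem~\ref{thm:family}, so does $V$.
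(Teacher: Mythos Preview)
Your approach is essentially identical to the paper's: pass to the partial top derivatives $V_v = V^{(n),S\setminus\{v\}}$, apply the single-place case to each, invoke Proposition~\ref{prop:individual} to obtain $\LLcheck(\{\rho_v\}_{v\in S})$, and then identify $V$ with it via Lemma~\ref{lem:dense}. The paper's proof simply asserts that $V_v$ satisfies the hypotheses of the single-place theorem; you are more explicit in verifying these, and you correctly isolate condition~(3) for $V_v$ as the one non-obvious point.

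For that step, there is a cleaner argument than the one you sketch. By the product analogue of Lemma~\ref{lem:Kirillov sub}, the module $\J_{S\setminus\{v\}}(V)\subset V$ contains an $A$-submodule $W$ isomorphic to $V_v$. The hypothesis~(3) gives an injection $V\hookrightarrow\prod_{\mathfrak p\in\Sigma}\kappa(\mathfrak p)\otimes_A V$; restricting to $W$ yields an injection $W\hookrightarrow\prod_{\mathfrak p}\kappa(\mathfrak p)\otimes_A V$, and this factors through the natural map $W\to\prod_{\mathfrak p}\kappa(\mathfrak p)\otimes_A W$, which is therefore injective. Since $W\cong V_v$ as $A$-modules and this diagonal map is natural in the $A$-module, the same injectivity holds for $V_v$. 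This sidesteps entirely the issue of derivatives failing to commute with infinite products, and is more direct than working on compact-open-invariant pieces. Your final step, using Theorem~\ref{thm:interpolation2} and the surjective-endomorphism argument to identify $\kappa(\mathfrak p)\otimes_A\LLcheck(\{\rho_v\})$ with $\bigotimes_v\LLcheck(\rho_{v,\mathfrak p})$, is actually more careful than the paper's, which elides the distinction between $\bigotimes_v V_v$ and its maximal torsion-free quotient at that point.
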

\begin{proof}
Suppose we have established Theorem~\ref{thm:verify} in the case in
which $S$ has only one element.  We can then establish the general case
as follows: suppose $V$ satisfies the conditions of
Theorem~\ref{thm:verify} for the collection $\{\rho_v\}$.  If we fix
a place $v \in S$, then $V^{(n), S \setminus \{v\}}$ satisfies
the conditions of Theorem~\ref{thm:verify} for the representation
$\rho_v$.  Thus $V^{(n), S \setminus \{v\}}$
is isomorphic to $\LLcheck(\rho_v)$.  It follows
by Proposition~\ref{prop:individual} that $\LLcheck(\{\rho_v\}_{v \in S})$
exists and is isomorphic to the maximal torsion-free quotient of the 
tensor product of the representations $V^{(n), S \setminus \{v\}}$.

For any prime ${\mathfrak p}$ of $A$ lying over a prime of $\Sigma$,
we have an isomorphism: 
$$V \otimes_{A} \kappa({\mathfrak p}) \iso
\bigotimes_{v \in S} V^{(n), S \setminus \{v\}} \otimes_{A} \kappa({\mathfrak p}).$$
It thus follows by Lemma~\ref{lem:dense} that $V$ and $\LLcheck(\{\rho_v\}_{v \in S})$
are isomorphic, as required.
\end{proof}

\begin{prop}
\label{prop:tf verify}
Let $A$ be a local ring satisfying Condition~{\em \ref{cond:A}}, let $\rho_v$ be
an $n$-dimensional representation of $G_{E_v}$ over $A$, and let
$V$ be an admissible $A[G]$-module such that:
\begin{enumerate}
\item $V$ is torsion free over $A$.
\item The smooth dual of $V/{\mathfrak m}V$ is essentially AIG.
\item There exists a Zariski dense set of primes $\Sigma$ in $\Spec A[\dfrac{1}{p}]$
such that for each prime ${\mathfrak p} \in \Sigma$,
$V \otimes_A \kappa({\mathfrak p})$ is isomorphic to $\LLcheck(\rho_{v, {\mathfrak p}})$,
\end{enumerate}
Then $V$ satisfies conditions {\em (1)}, {\em (2)}, and {\em (3)} of Theorem~{\em \ref{thm:family}} with respect to
$\rho_v$; that is, $\LLcheck(\rho_v)$ exists and is isomorphic to $V$.
\end{prop}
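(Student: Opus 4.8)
The plan is to verify that $V$ satisfies conditions~(1), (2) and~(3) of Theorem~\ref{thm:family} with respect to $\rho_v$; the remaining assertions of that theorem then give that $\LLcheck(\rho_v)$ exists and, by its uniqueness part, is isomorphic to $V$. Condition~(1) is hypothesis~(1), and condition~(3) is precisely the assertion that the smooth dual of $V/{\mathfrak m}V$ is essentially AIG, which is hypothesis~(2) read through Lemma~\ref{lem:Vbar Kirillov}. So the whole content lies in condition~(2): for each minimal prime ${\mathfrak a}$ of $A$ we must produce a $G$-equivariant isomorphism $\kappa({\mathfrak a})\otimes_A V\iso\LLcheck(\rho_{v,{\mathfrak a}})$.

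First I would extract the structural consequences of Proposition~\ref{prop:second properties}. By hypothesis~(2) and Lemma~\ref{lem:Vbar Kirillov}, $V/{\mathfrak m}V$ has one-dimensional top derivative and every nonzero quotient of it has nonzero top derivative. For each ${\mathfrak p}\in\Sigma$ the representation $\LL(\rho_{v,{\mathfrak p}})$ is essentially AIG by Corollary~\ref{cor:essential}, so $\LLcheck(\rho_{v,{\mathfrak p}})$ is nonzero with one-dimensional top derivative; hence by hypothesis~(3), $V^{(n)}\otimes_A\kappa({\mathfrak p})=(V\otimes_A\kappa({\mathfrak p}))^{(n)}$ is nonzero for every ${\mathfrak p}\in\Sigma$. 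Since $(V/{\mathfrak m}V)^{(n)}$ is finite-dimensional, $V^{(n)}$ is finitely generated over $A$ by Theorem~\ref{thm:fg}, so its support is a closed subset of $\Spec A$ containing the Zariski-dense set $\Sigma\subseteq\Spec A[\dfrac{1}{p}]$, hence containing all of $\Spec A[\dfrac{1}{p}]$, and in particular every minimal prime of $A$ (these all have residue characteristic zero, as $A$ is flat over $W(k)$). Thus $(V_{\mathfrak a})^{(n)}\neq0$ for each minimal prime, and Proposition~\ref{prop:second properties} then shows that $V^{(n)}$ is free of rank one over $A$, that $\J(V)$ generates $V$ over $A[G]$, and that $\End_{A[G]}(V)=A$.

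Now fix a minimal prime ${\mathfrak a}$ and put $B=A/{\mathfrak a}$, a complete local Noetherian domain with fraction field $\kappa({\mathfrak a})$; if $\Spec B[\dfrac{1}{p}]$ is a single point (which happens e.g.\ when $\dim B=1$) then ${\mathfrak a}\in\Sigma$ and condition~(2) is immediate from hypothesis~(3), so we may assume $\Spec B[\dfrac{1}{p}]$ is positive-dimensional. Following the proof of Theorem~\ref{thm:interpolation1}, I would pass to a finite extension ${\mathcal K}'$ of $\kappa({\mathfrak a})$ over which the Frobenius-semisimple Weil--Deligne representation of $\rho_v$ splits into special representations, take $B'$ to be the normalization of $B$ in ${\mathcal K}'$ (again a complete local normal Noetherian domain, finite over $B$, with fraction field ${\mathcal K}'$), use Lemma~\ref{lemma:decompose} to produce integral twisting characters, and form the $B'[G]$-module $M$ that is the smooth $B'$-dual of the appropriate parabolic induction of integral generalized Steinberg representations. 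By Corollaries~\ref{cor:essential} and~\ref{cor:leibnitz} and Lemma~\ref{lem:Kirillov generator}, $M$ is $B'$-torsion free, $M^{(n)}$ is free of rank one, $\J(M)$ generates $M$, the dual of $M/{\mathfrak m}M$ is essentially AIG, and $M\otimes_{B'}\kappa({\mathfrak q})\cong\LLcheck(\rho_{v,{\mathfrak q}})$ for all ${\mathfrak q}$ in a dense open of $\Spec B'[\dfrac{1}{p}]$. Let $W'$ be the maximal $B'$-torsion-free quotient of $V\otimes_A B'$ (equivalently, the image of $V$ in ${\mathcal K}'\otimes_A V$); base change from the facts just established, together with exactness of the derivative functors, the identity $(V\otimes_A B')^{(n)}=V^{(n)}\otimes_A B'$, Lemma~\ref{lem:Kirillov generator}, and part~(2) of Lemma~\ref{lem:AIG basic} (applied to the nonzero submodule $(W'/{\mathfrak m}W')^{\vee}$ of the essentially AIG $(V/{\mathfrak m}V)^{\vee}$), shows $W'$ has the same four properties as $M$. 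The set $\Sigma_{\mathfrak a}=\{{\mathfrak p}\in\Sigma:{\mathfrak a}\subseteq{\mathfrak p}\}$ is Zariski dense in $\Spec B[\dfrac{1}{p}]$ (each irreducible component of $\overline\Sigma=\Spec A[\dfrac{1}{p}]$ meets $\Sigma$ in a dense subset, since no component lies in the union of the others), hence so is its preimage $\Sigma'\subseteq\Spec B'[\dfrac{1}{p}]$; and for ${\mathfrak q}\in\Sigma'$ over ${\mathfrak p}\in\Sigma_{\mathfrak a}$ one has $W'\otimes_{B'}\kappa({\mathfrak q})=V\otimes_A\kappa({\mathfrak q})\cong\LLcheck(\rho_{v,{\mathfrak p}})\otimes_{\kappa({\mathfrak p})}\kappa({\mathfrak q})\cong\LLcheck(\rho_{v,{\mathfrak q}})$ (by Corollary~\ref{cor:LL descent}), provided ${\mathfrak q}$ avoids the support of the torsion submodule of $V\otimes_A B'$, which is a proper closed subset (contained in the preimage of the locus where the component of ${\mathfrak a}$ meets the others, a subset $\Sigma_{\mathfrak a}$ cannot lie in by density). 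So $M$ and $W'$ agree on a Zariski-dense set of primes, and Lemma~\ref{lem:dense} gives $M\cong W'$. Therefore ${\mathcal K}'\otimes_A V={\mathcal K}'\otimes_{B'}W'\cong{\mathcal K}'\otimes_{B'}M\cong\LLcheck(\rho_v\otimes_A{\mathcal K}')\cong\LLcheck(\rho_{v,{\mathfrak a}})\otimes_{\kappa({\mathfrak a})}{\mathcal K}'$, and since the smooth duals of $\kappa({\mathfrak a})\otimes_A V$ and $\LLcheck(\rho_{v,{\mathfrak a}})$ are essentially AIG, applying Lemma~\ref{lem:AIG descent} to those duals descends the isomorphism to $\kappa({\mathfrak a})\otimes_A V\iso\LLcheck(\rho_{v,{\mathfrak a}})$, which is condition~(2).

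The main obstacle is the last paragraph. All the conceptual input is in place once Proposition~\ref{prop:second properties} has been applied, but identifying $\kappa({\mathfrak a})\otimes_A V$ with the Breuil--Schneider representation $\LLcheck(\rho_{v,{\mathfrak a}})$ forces one to re-run the explicit construction from the proof of Theorem~\ref{thm:interpolation1} (which there is permitted to invoke the existence of $\LLcheck$), to manage the two layers of base change --- the normalization required by Lemma~\ref{lemma:decompose} and the finite extension required to split the Weil--Deligne representation --- and to check carefully the density statements needed for Lemma~\ref{lem:dense} and Lemma~\ref{lem:AIG descent}.
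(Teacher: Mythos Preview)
Your strategy is the same as the paper's: conditions~(1) and~(3) of Theorem~\ref{thm:family} are immediate, and for condition~(2) one builds an explicit ``model'' $M$ (the dual of a parabolic induction of integral Steinbergs) over a suitable normalization, then compares $M$ with (a torsion-free base change of) $V$ using Lemma~\ref{lem:dense}, and finally descends from $\mathcal K'$ to $\kappa(\mathfrak a)$. The density argument for $\Sigma_{\mathfrak a}$ and the use of Lemma~\ref{lem:AIG descent} at the end are fine.

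There is, however, a real technical gap in your construction of $M$. You place $M$ over $B'$, the normalization of $B=A/\mathfrak a$ in $\mathcal K'$; but $B'$ has residue characteristic~$p$, so it is not a $\Q_p$-algebra. Consequently Lemma~\ref{lemma:decompose} does not apply to $B'$ (its hypothesis requires a normal $\Q_p$-algebra), the field $\mathcal K_0\subset\mathcal K'$ is not contained in $B'$ (since $1/p\notin B'$), so the tensor products $\St_{\pi_i,n_i}\otimes_{\mathcal K_0}B'$ are not defined, and Corollary~\ref{cor:essential} --- which you invoke to show the dual of $M/\mathfrak m M$ is essentially AIG --- is stated only for fields of characteristic zero. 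The paper avoids all three issues by first reducing to the domain case and then taking $A'$ to be the integral closure of $A[1/p]$ in $\mathcal K'$, a normal $\Q_p$-algebra (so $\mathcal K_0\subset A'$ and Lemma~\ref{lemma:decompose} applies). The price is that $A'$ is no longer local, so one cannot directly conclude that $M^{(n)}$ is free of rank one or that $\J(M)$ generates $M$; the paper handles this by passing to $M'=A'[G]\cdot\J(M)$ and then twisting to $M''=M'\otimes_{A'}\bigl((M')^{(n)}\bigr)^{-1}$, which has free top derivative by construction. Once you make this correction, your argument and the paper's coincide.
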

\begin{proof}
Note that conditions (1) and (3) of Theorem~\ref{thm:family} are immediate from the hypotheses.
It thus suffices to construct, for each minimal prime ${\mathfrak a}$ of $A$,
an isomorphism $V_{\mathfrak a} \iso \LLcheck(\rho_{v,\mathfrak a})$.

Fix a minimal prime ${\mathfrak a}$ of $A$.  The map $V \rightarrow (V/{\mathfrak a}V)^{\tf}$
becomes an isomorphism after localizing at any prime ${\mathfrak p}$ of $A$
that contains ${\mathfrak a}$ but no other minimal prime of $A$.  Thus,
replacing $V$ with $(V/{\mathfrak a}V)^{\tf}$, $A$ with $A/\mathfrak a$,
and $\Sigma$ with the set of ${\mathfrak p}$ in $\Sigma$ that contain
${\mathfrak a}$ but no other minimal prime of $A$, we reduce to the case where
$A$ is a domain with field of fractions $\CK$.

Let $\CK'$ be an
algebraic extension of $\CK$ such that
$\CK'$ contains a square root of $\ell$, where $\ell$ is the residue characteristic of $v$,
and such that the Frobenius-semisimple Weil--Deligne representation associated
to $\rho_v \otimes_A \CK'$ splits as a direct sum of absolutely indecomposable Weil--Deligne
representations $\Sp_{\rho_i,N_i}$ over $\CK'$.  Let $\CK_0$ be the maximal subfield of $\CK'$
that is algebraic over $\Q_p$.

Let $A'$ be the integral closure of $A[\dfrac{1}{p}]$ in $K'$.  By
Lemma~\ref{lemma:decompose}, there exist characters $\chi_i$, with values
in $(A'_{{\mathfrak p}'})^{\times}$, such that for each $i$,
$\rho_i \otimes \chi_i$ is defined over $\CK_0$.

Let $\pi_i$ be the admissible representation of $\GL_n(E_v)$ over $K_0$ that corresponds
to $\rho_i \otimes \chi_i$ under the unitary local Langlands correspondence.
Then for each $i$, $(\St_{\pi_i,n_i} \otimes_{\CK_0} \CK') \otimes (\chi_i \circ \det)$
corresponds to $\rho_i$ under unitary local Langlands.  Without loss of generality,
we assume that the representations $\pi_i$ are ordered so that for all $i < j$,
$(\St_{\pi_i,n_i} \otimes_{\CK_0} \CK') \otimes (\chi_i \circ \det)$ does
not precede 
$(\St_{\pi_j,n_j} \otimes_{\CK_0} \CK') \otimes (\chi_j \circ \det)$.  Then, for all ${\mathfrak p}$ in
an open dense subset $U_1$ of $\Spec A'$, and all $i < j$,
$(\St_{\pi_i,n_i} \otimes_{\CK_0} \kappa({\mathfrak p})) \otimes (\chi_i \circ \det)$ does not precede
$(\St_{\pi_j,n_j} \otimes_{\CK_0} \kappa({\mathfrak p})) \otimes (\chi_j \circ \det).$ 

Let $M$ be the smooth $A'_{{\mathfrak p}'}$-linear dual of the module:
$$(\abs \circ \det)^{-\frac{n-1}{2}}\Ind_Q^{\GL_n(E_v)} \bigotimes_i 
\bigl[(\St_{\pi_i,n_i} \otimes_{\CK_0} A'_{{\mathfrak p}'}) \otimes \chi_i\bigr],
$$
where $Q$ is a suitable parabolic subgroup of $\GL_n(E_v)$.  
Let $U_2$
be the open subset of $\Spec A'$ consisting of those ${\mathfrak p'}$ such that
$\rho_v \otimes_A \CK'$ is a minimal lift
of $\rho_v \otimes_A \kappa({\mathfrak p'})$.  Then for all ${\mathfrak p'} \in U_2$,
$M \otimes_{A'} \kappa({\mathfrak p}')$
is isomorphic to $\LLcheck(\rho_v \otimes_A \kappa({\mathfrak p}')).$  

Now let $M'$ be
the $A'[G]$-submodule of $M$ generated by $\J(M)$.  Then $(M')^{(n)}$
is isomorphic to $M^{(n)}$, and the latter is locally free of rank one over $A'$
by Corollary~\ref{cor:leibnitz}.  The module $M'$ is $A'$-torsion free, as it is contained
in the free $A'$-module $M$.  Moreover, for all ${\mathfrak p'} \in U_1 \cap U_2$,
we have isomorphisms:
$$M' \otimes_{A'} \kappa({\mathfrak p'}) \iso M \otimes_{A'} \kappa({\mathfrak p'})
\iso \LLcheck(\rho_v \otimes_A \kappa({\mathfrak p}')).$$  Set
$$M'' = M' \otimes_{A'} \bigl((M')^{(n)}\bigr)^{-1}.$$  Then $(M'')^{(n)}$ is free of rank one over $A'$.

Let $\Sigma'$ be the set of primes of $A'$ lying over primes in $\Sigma$.  Then
$\Sigma' \cap U_1 \cap U_2$ is dense in $\Spec A'$, and we have isomorphisms:
$V \otimes_A \kappa({\mathfrak p'}) \iso M'' \otimes_{A'} \kappa({\mathfrak p'})$
for all ${\mathfrak p'} \in \Sigma' \cap U_1 \cap U_2$.  It follows by Lemma~\ref{lem:dense}
that $V \otimes_A A'$ is isomorphic to $M''$.  In particular $V \otimes_A \CK'$
is isomorphic to $\LLcheck(\rho_v \otimes_A \CK')$, and hence
$V \otimes_A \CK$ is isomorphic to $\LLcheck(\rho_v \otimes_A \CK)$, as required.
\end{proof}

\medskip
\noindent
{\em Proof of Theorem~{\ref{thm:verify}}}.
By Lemma~\ref{lem:reduction} it suffices to consider the case where $S$ has a single element.
Let $V^{\tf}$ be the maximal $A$-torsion free quotient of $V$.  As $V$ is a finitely
generated $A[G]$-module, the kernel
$V^{\tor}$ of the map $V \rightarrow V^{\tf}$ is also finitely generated over $A[G]$.
In particular its support is a closed subset $Z$ of $\Spec A$.  Let $U$ be the complement of
$Z$ in $\Spec A$.  Then for all ${\mathfrak p}$ in $\Sigma \cap U$, we have isomorphisms:
$$
V^{\tf} \otimes_A \kappa({\mathfrak p}) \iso V \otimes_A \kappa({\mathfrak p}) \iso
\LLcheck(\rho_{v,{\mathfrak p}}).$$
It follows by Proposition~\ref{prop:tf verify} that $V^{\tf}$ satisfies conditions
(1), (2), and (3) of Theorem~\ref{thm:family}; in particular $\LLcheck(\rho_v)$
exists and is isomorphic to $V^{\tf}$.  Thus, by Theorem~\ref{thm:interpolation1},
$V^{\tf} \otimes_A \kappa({\mathfrak p})$ is isomorphic to
$\LLcheck(\rho_v \otimes_A \kappa({\mathfrak p}))$ for all ${\mathfrak p}$
for which there exists a minimal prime ${\mathfrak a}$ of $A$ such that $\rho_{v,\mathfrak a}$
is a minimal lift of $\rho_{v,\mathfrak p}$.  In particular this holds for all
${\mathfrak p}$ in $\Sigma$.  Thus we have an isomorphism:
$$V^{\tf} \otimes_A \kappa({\mathfrak p}) \iso V \otimes_A \kappa({\mathfrak p})$$
for all ${\mathfrak p} \in \Sigma$; by Lemma~\ref{lem:dense} it follows that
$V$ is isomorphic to $V^{\tf}$, and hence that $V$ satisfies conditions (1), (2), and (3)
of Theorem~\ref{thm:family}.
\qed

\medskip

\end{document}